\newcommand{\disk}{\ensuremath{\mathbb{D}} } 
\newcommand{\sphere}{\bar{\Bbb{C}}} 
\newcommand{\riem}{\Sigma}  
\renewcommand{\Bbb}[1]{\ensuremath{\mathbb{#1}}}
\newcommand{\R}{\mathbb{R}}
\newglossaryentry{bergman}{%
name=\ensuremath{\mathcal{A}},
    description={Bergman space}
}
\newglossaryentry{Abw}{%
name=\ensuremath{\mathcal{A}_\mathrm{bw}},
    description={bridgeworthy harmonic oneforms}
}
\newglossaryentry{Aharm}{%
name=\ensuremath{\mathcal{A}_{\mathrm{harm}}},
    description={Harmonic Bergman space}
}
\newglossaryentry{Ahm}{%
name=\ensuremath{\mathcal{A}_{\mathrm{hm}}},
    description={Complex linear span of harmonic measures}
}
\newglossaryentry{exactA}{%
name=\ensuremath{\mathcal{A}^\mathrm{e}},
    description={Space of exact forms}
}
\newglossaryentry{seform}{%
name=\ensuremath{\mathcal{A}^{{\mathrm{se}}}},
    description={Semi-exact forms}
}
\newglossaryentry{peform}{%
name=\ensuremath{\mathcal{A}_{\mathrm{harm}}^{\mathrm{pe}}},
    description={Piecewise exact harmonic forms}
}
\newglossaryentry{annulus}{%
name=\ensuremath{\mathbb{A}_{a,b}},
    description={Annulus with inner radius $a$ and outer radius $b$}
}
\newglossaryentry{gota}{%
name=\ensuremath{ \mathrm{\textgoth{A}} },
    description={Forms with prescribed periods}
}
\newglossaryentry{Bphi}{%
name=\ensuremath{ \mathbf{B}(\phi)},
    description={Boundary map}
}
\newglossaryentry{cf}{%
name=\ensuremath{\mathbf{C}_{f}},
    description={right-composition with $f$}
}
\newglossaryentry{cl}{%
name=\ensuremath{\text{cl}},
    description={Closure of a set}
}
\newglossaryentry{D}{%
name=\ensuremath{\mathcal{D}},
    description={Dirichlet space}
}
\newglossaryentry{Dbw}{%
name=\ensuremath{\mathcal{D}_\mathrm{bw}},
    description={bridgeworthy harmonic functions}
}
\newglossaryentry{Dir}{%
name=\ensuremath{\mathbf{Dir}},
    description={Solution map to the Dirichlet problem}
}
\newglossaryentry{Dharm}{%
name=\ensuremath{\mathcal{D}_{\mathrm{harm}}},
    description={Harmonic Dirichlet space}
}
\newglossaryentry{Dhom}{%
name=\ensuremath{\dot{\mathcal{D}}},
    description={Dirichlet space modulo constants}
}
\newglossaryentry{harmeasure}{%
name=\ensuremath{d\omega_{k}},
    description={Harmonic measure}
}
\newglossaryentry{E}{%
name=\ensuremath{\mathbf{E}},
    description={Data to solution map}
}
\newglossaryentry{greenb}{%
name=\ensuremath{g},
    description={Green's function of a bordered Riemann surface}
}
\newglossaryentry{greenc}{%
name=\ensuremath{\mathscr{G}},
    description={Green's function of a compact Riemann surface}
}
\newglossaryentry{bounce}{%
name=\ensuremath{\mathbf{G}_{U,\riem}},
    description={Bounce operator}
}
\newglossaryentry{grunsk}{%
name=\ensuremath{\mathbf{Gr}_{f}},
    description={Grunsky operator}
}
\newglossaryentry{sobolev}{%
name=\ensuremath{H^s},
    description={Sobolev space}
}
\newglossaryentry{homsobolev}{%
name=\ensuremath{\dot{H}^s},
    description={Homogeneous Sobolev space}
}
\newglossaryentry{sobolevconf}{%
name=\ensuremath{{H}^{1}_{\mathrm{conf}}},
    description={Conformal Sobolev space}
}
\newglossaryentry{Dbvaluescomp}{%
name=\ensuremath{\mathcal{H}'(\partial_k \riem)},
    description={Dirichlet boundary values for one forms}
}
\newglossaryentry{Dbvalues}{%
name=\ensuremath{\mathcal{H}'(\partial\riem)},
    description={Dirichlet boundary values for one forms}
}
\newglossaryentry{BVexactcomp} {%
name=\ensuremath{\dot{H}'(\partial_k \riem)},
    description={Boundary values with exact representative}
}
\newglossaryentry{BVexact} {%
name=\ensuremath{\dot{H}'(\partial \riem)},
    description={Boundary values with exact representative}
}
\newglossaryentry{crop}{%
name=\ensuremath{\mathbf{J}_{1}^{q}},
    description={Cauchy-Royden operator}
}
\newglossaryentry{rcrop}{%
name=\ensuremath{\mathbf{J}_{1,k}^{q}},
    description={Restricted Cauchy-Royden operator}
}
\newglossaryentry{Jdot}{%
name=\ensuremath{\dot{\mathbf{J}}_1},
    description={Cauchy-Royden operator on $\dot{\mathcal{D}}$}
}
\newglossaryentry{bergmank}{%
name=\ensuremath{K},
    description={Bergman kernel}
}
\newglossaryentry{schifferk}{%
name=\ensuremath{L},
    description={Schiffer kernel}
}
\newglossaryentry{rest}{%
name=\ensuremath{\mathbf{R}},
    description={Restriction operator}
}
\newglossaryentry{harmrest}{%
name=\ensuremath{\mathbf{R}^{\mathrm{h}}},
    description={Harmonic restriction operator}
}
\newglossaryentry{S}{%
name=\ensuremath{\mathbf{S}},
    description={The Schiffer comparison operator}
}
\newglossaryentry{Sharm}{%
name=\ensuremath{\mathbf{S}_{k}^{\mathrm{h}}},
    description={Harmonic Schiffer operator}
}
\newglossaryentry{theta}{%
name=\ensuremath{\Theta},
    description={Map}
}
\newglossaryentry{Tmix}{%
name=\ensuremath{\mathbf{T}_{\riem_{j},\riem_{k}}},
    description={Schiffer operator}
}
\newglossaryentry{T}{%
name=\ensuremath{\mathbf{T}},
    description={Schiffer comparison operator}
}
\newglossaryentry{overfare}{%
name=\ensuremath{\mathbf{O}},
    description={Overfare operator}
}
\newglossaryentry{doto}{%
name=\ensuremath{\dot{\mathbf{O}}},
    description={Overfare operator on $\dot{\mathcal{D}}$}
}
\newglossaryentry{exacto}{%
name=\ensuremath{\mathbf{O}^{\mathrm{e}}_{2,1}},
    description={Exact overfare operator}
}
\newglossaryentry{ohat}{%
name=\ensuremath{\hat{\mathbf{O}}},
    description={Operator}
}
\newglossaryentry{augo}{%
name=\ensuremath{\mathbf{O}^{\mathrm{aug}}},
    description={Augmented overfare operator}
}
\newglossaryentry{oprime}{%
name=\ensuremath{\mathbf{O}'},
    description={Operator}
}
\newglossaryentry{oprimedot}{%
name=\ensuremath{\dot{\mathbf{O}}'},
    description={Operator}
}
\newglossaryentry{pcap}{%
name=\ensuremath{\mathbf{P}_{\mathrm{cap}}}, description={Projection operator}
}
\newglossaryentry{period}{%
name=\ensuremath{\mathbf{\Upsilon}},
    description={Period map}
}
\theoremstyle{plain}
        \newtheorem{theorem}{Theorem}[section]
        \newtheorem{lemma}[theorem]{Lemma}
        \newtheorem{proposition}[theorem]{Proposition}
        \newtheorem{corollary}[theorem]{Corollary}
\theoremstyle{definition}
        \newtheorem{definition}[theorem]{Definition}
        \newtheorem{example}{Example}[section]
\theoremstyle{remark}
    \newtheorem{remark}[theorem]{Remark}
\numberwithin{equation}{section} 
\numberwithin{figure}{section} 
\author[E. Schippers]{Eric Schippers}
\author[W. Staubach]{Wolfgang Staubach}
\address{\newline
       Eric Schippers \newline
       Machray Hall, Dept. of Mathematics,
   University of Manitoba, \newline Winnipeg, MB
   Canada R3T 2N2}
       \email{eric.schippers@umanitoba.ca}
\address{\newline
       Wolfgang Staubach \newline
       Department of  Mathematics, Uppsala University, \newline
       S-751 06 Uppsala, Sweden}
       \email{wulf@math.uu.se}
\keywords{Overfare operator, Scattering, Bordered surfaces, Schiffer operators, Quasicircles, Bounded zero mode quasicircles, Cauchy-Royden operators, Period mapping, Generalized polarizations, Generalized Grunsky inequalities, Fredholm index, Conformally nontangential limits, Conformal Sobolev spaces}
\subjclass{14F40, 30F15, 30F30, 35P99, 51M15}
 \title{A scattering theory of harmonic one-forms on Riemann surfaces}
\begin{document}
\begin{abstract}
We construct a scattering theory for harmonic one-forms on Riemann surfaces, obtained from boundary value problems through systems of curves and the jump problem. We obtain an explicit expression for the scattering matrix in terms of integral operators which we call Schiffer operators, and show that the matrix is unitary. As a consequence of this scattering theory, we prove index theorems relating these conformally invariant integral operators to topological invariants. We also obtain a general association of positive polarizing Lagrangian spaces to bordered Riemann surfaces, which unifies the classical polarizations for compact surfaces of algebraic geometry with the infinite-dimensional period map of the universal Teichm\"uller space. 
\end{abstract}

\maketitle

\tableofcontents

\begin{section}{Introduction}
\begin{subsection}{Statement of results and literature}

In this paper, we develop a theory of scattering of $L^2$ harmonic one-forms on Riemann surfaces.  The scattering takes place in a network of curves which separate the Riemann surface in at least two connected components.  The process is as follows. Let $\mathscr{R}$ be a compact surface divided by a complex $\Gamma$ of simple closed curves into surfaces $\riem_1$ and $\riem_2$.  The number of curves is arbitrary, and we allow $\riem_1$ or $\riem_2$ to be disconnected. The reader may find it helpful to first imagine the case that $\riem_1$ and $\riem_2$ are connected and separated by $n$ closed curves. 
Given a harmonic function $h_1$ on $\riem_1$, it has boundary values on $\Gamma$, which in turn uniquely determine a harmonic function $h_2$ on $\riem_2$ with the same boundary values. We call $h_2$ the ``overfare'' of $h_1$ and write $h_2=\mathbf{O}_{1,2}h_1$. 

For harmonic one-forms, there is a similar overfare procedure. Briefly, one finds an anti-derivative of a form $\alpha_1$ on $\riem_1$, applies the overfare $\mathbf{O}_{1,2}$ to the anti-derivative, and differentiates the result to obtain a form $\alpha_2$ on $\riem_2$. Of course, $\alpha_1$ need not be exact, and one must also specify the cohomological properties of the form $\alpha_2$. We deal with this by specifying a harmonic one-form $\zeta$ on $\mathscr{R}$ such that $\alpha_1 - \zeta$ is exact on $\riem_1$, and let $\alpha_2$ be such that $\alpha_2 - \zeta$ is exact on $\riem_2$. Thus the extra cohomological data required to specify the overfare of harmonic one-forms is identified with the finite-dimensional vector space of harmonic one-forms on $\mathscr{R}$. In general, the overfared harmonic one-form is not harmonic on the union.\\

In analogy with potential-well scattering on the real line, we can regard the aforementioned $\alpha_2$ as the form obtained from $\alpha_1$ through scattering. In this scattering process, the curves themselves play the role of the potential well. We assume only that the curves are quasicircles, which generically are non-rectifiable curves arising in Teichm\"uller theory. The holomorphic and anti-holomorphic parts play the role of the left- and right- moving solutions, and the  asymptotic negative and positive directions are played by the two surfaces. The majority of the results of this work are directly related to the problem of developing various aspects of this scattering theory, including the unitarity of the scattering matrix.

We also apply this scattering theory to derive new results in the geometry of Riemann surfaces, for example index theorems for conformally invariant operators, and a generalization of polarizations to Riemann surfaces with boundary which incorporate boundary values.\\  

We state our main results, emphasizing their geometric or analytic nature. Expanded statements, together with background and literature, will be given in separate sections ahead. \\

 \noindent    {\bf Geometric Results:} We obtain
    \begin{enumerate} 
    \item an explicit expression for the scattering matrix for harmonic one-forms in terms of the Schiffer operators, and that it is unitary;\\
    \item an association of positive polarizing Lagrangian subspaces to  bordered Riemann surfaces, which unifies the classical polarizations for compact surfaces with the infinite-dimensional Kirillov-Yuri'ev-Nag-Sullivan embedding of the universal Teichm\"uller space into a Lagrangian Grassmannian; \\
    \item index theorems for conformally invariant integral operators related to the Riemann jump problem on $\mathscr{R}$ (which we call Schiffer operators), relating conformal invariants to topological invariants.\\ 
\end{enumerate} 

\noindent The results above require the following. \\

\noindent {\bf Analytic Results:} We prove that
\begin{enumerate}\setcounter{enumi}{3}
    \item the boundary values of $L^2$ harmonic one-forms on a genus $g$ surface with $n$ borders, in a certain non-tangential sense, is the Sobolev $H^{-1/2}$ space;\\
    
    \item conversely, the Dirichlet problem for one forms with $H^{-1/2}$ boundary values is well-posed, and the solutions are $L^2$ harmonic one-forms;\\
    \item the overfare of harmonic functions is bounded in the following two cases:
     \begin{enumerate}
        \item for quasicircles, with respect to the Dirichlet semi-norm when the originating surface is connected, and 
         \item for more regular curves, with respect to a conformally invariant norm extending the Dirichlet semi-norm. \\
     \end{enumerate}
    \end{enumerate}
    

{We prove these theorems in a very general analytic setting, which in the case at a hand, amounts to the assumption that the curve complex dividing the Riemann surface consists of a collection of quasicircles. Also, we use $L^2$ harmonic one-forms and Dirichlet-bounded harmonic functions throughout.} 

{At first glance, one might think that the point of this manuscript could be made by developing the scattering theory with stronger regularity assumptions (say smooth curves and forms). However there are good reasons for the choices that have been made here in this paper.} Two of these are geometric: all constructions are conformally invariant, and our analytic choices are necessary for applications to the Teichm\"uller theory. For example, an obvious next step is to show that the generalized period mapping yields coordinates on Teichm\"uller space; to do so will require both the choice of quasicircles and of $L^2$ harmonic one-forms. In the long run, the investigation of geometric structures on Teichm\"uller space (and its refinement, the Weil-Petersson class Teichm\"uller space) will require the theory on quasicircles. This will also be the case for the study of the symplectic actions by groups of boundary re-parametrizations. Another related motivation for considering quasicircles is a theorem of K. Vodopy'anov \cite{Vodopyanov}  and S. Nag-D. Sullivan \cite{NS}, that shows that the reparametrizations act by bounded symplectomorphisms precisely for quasisymmetric reparametrizations. 

Applicability to geometry aside, the conditions are analytically natural. This can be seen even in the plane, where for example it can be shown that overfare exists and is bounded if and only if the curve is a quasicircle.  See \cite{Schippers_Staubach_Grunsky_expository} which gives a strong case for the analytic naturality of these conditions.  It is remarkable that the conditions which are natural from the point of view of analysis, geometry, and algebra all coincide.\\  

The main results are described in the sections below.

\begin{subsubsection}{Overfare of harmonic functions} 

As described above, the process of overfare is as follows. Let $\mathscr{R}$ be a Riemann surface split into two pieces $\riem_1$ and $\riem_2$ by a Jordan curve or complex of curves. Given a harmonic function with $L^2$ derivatives on one of the pieces $\riem_1$ (a Dirichlet harmonic function), we find its boundary values. The ``overfare'' is the harmonic function on the other piece $\riem_2$ with the same boundary values as the original function. This is well-defined and bounded provided that the curves in the complex are quasicircles. 

Here, there are two analytic problems to be resolved. The first is to define the boundary values in preparation for overfare, and the second is to show the existence and continuous dependence of the overfare. The first problem is in a certain sense independent of the boundary regularity, while the second problem is more delicate and sensitive to the regularity of the curve. \\

In defining the boundary values, the nature of the approach to the boundary can be defined either extrinsically in terms of the geometry of the ambient space containing the curve, or in terms of the intrinsic geometry of the region on which the function is defined.  For example, since harmonic functions with $L^2$ derivatives are in the Sobolev space $H^1$ for a wide class of curves, one could consider the Sobolev trace to the boundary; in this case, one would need to take into account the regularity of the boundary for this to be defined. {The possibility of dealing with boundaries that may not be rectifiable would add additional difficulties that brings one into the realm of geometric measure theory see \cite{J}, \cite{JW}. Instead, our approach to boundary values proceeds intrinsically, in such a way that the boundary} can be viewed as the ideal boundary of $\riem_1$, which does not depend on the geometry of the boundary in $\mathscr{R}$. For example, it can be regarded as an analytic Jordan curve in the double of $\riem_1$. 

Our intrinsic approach to boundary values in some sense originates with H. Osborn \cite{Osborn}, who considered the boundary values of harmonic Dirichlet functions in planar domains $\riem_1$ along orthogonal trajectories of Green's function of that domain. This is conformally invariant and hence intrinsic, and can be formulated in terms of the ideal boundary. We improve this ``radial'' approach by defining a kind of conformally non-tangential boundary value (referred to as CNT boundary values), in which non-tangential cones are defined in terms of ``collar charts'' taking collar neighbourhoods of the boundary to annuli. Then, a classical theorem of A. Beurling applies to show that the boundary values exist except on a Borel set of logarithmic capacity zero in the circle under the chart (we call this a null set). We show that this notion of boundary value is independent of the choice of collar chart; this is essentially because the angle of approach to the ideal boundary is a well-defined conformal invariant. Thus we show that the boundary values are defined not just along orthogonal trajectories of Green's function but along any non-tangentially approaching curve. The independence of the boundary values on the choice of collar chart is a key tool in the application of the cutting and sewing approach to boundary value problems which we have developed in this and other papers \cite{Schippers_Staubach_transmission}, \cite{Schippers_Staubach_Plemelj}.\\ 

On the other hand, the overfare process is extrinsic, because the regularity of the boundary curve is crucial. We work with quasicircles; there are several reasons for this choice. The first is geometric: at a foundational level, Teichm\"uller theory of bordered surfaces involves viewing these surfaces as subsets of compact surfaces bounded by quasicircles. Classically, this is seen in the quasi-Fuschsian model of Teichm\"uller space \cite{Nagbook}; for example, the universal Teichm\"uller space can be viewed as the set of (normalized) planar domains bounded by quasicircles. The first author's work with D. Radnell \cite{RadnellSchippers_monster}, \cite{RS_fiber} also shows that the Teichm\"uller space can be modelled as the set of surfaces capped by domains bounded by quasicircles, and that this leads to a natural fibre structure on Teichm\"uller space. Thus, in this work, we choose quasicircles in order to have sufficient generality in order to provide the groundwork for applying our results to Teichm\"uller theory. 

The second reason for choosing quasicircles is analytic. The authors showed in \cite{Schippers_Staubach_transmission_sphere} that in the Riemann sphere, the overfare exists and is bounded precisely for quasicircles. This follows from a theorem of Nag-Sullivan/Vodopy'anov that shows that quasisymmetries are precisely the bounded composition operators on the {homogeneous Sobolev space $\dot{H}^{1/2}$} on the circle. As we will see ahead, this also relates to several characterizations of quasicircles in terms of the Cauchy-type and Schiffer integral operators which play the main role in this paper. A survey of such results in the Riemann sphere can be found in \cite{Schippers_Staubach_Grunsky_expository}. \\

{It should also be noted that the Sobolev theory techniques by themselves are not sufficient in dealing with all aspects of the boundary value problems that are involved in this paper, since Sobolev spaces involve functions defined up to sets of Lebesgue measure zero.  In fact, one needs to establish that boundary values exist up to a set which maps under a collar chart to a Borel set of logarithmic capacity zero in the unit circle. We call such sets null sets. By our earlier results, for quasicircles, a set which is null with respect to a collar chart on one side of the curve must be null with respect to a collar chart on the other side. This fact is central to establishing a well-defined overfare of harmonic functions. However, the claim fails if in the discussion above one replaces capacity zero with Lebesgue measure zero on the circles. Thus Sobolev theory on its own is not sufficient.  } \\  

In this paper, we extend our previous overfare results to Riemann surfaces divided by many curves, rather than just a single curve. There is an obstacle to doing so. If the region $\riem_2$ is bounded by several curves, but $\riem_1$ is not connected, then the Dirichlet semi-norm is not controlled by the Dirichlet norm of the input. This is because one may add different constants to different connected components of $\riem_1$, driving up the semi-norm of the overfare, while the Dirichlet norm on the originating surface is unchanged. If the originating surface is connected, this issue does not arise, and we are able to prove boundedness of overfare with respect to the Dirichlet seminorm.  

One can also obtain boundedness with respect to a genuine norm if more regularity is assumed. We introduce a conformally invariant norm: rather than adding the $L^2$ norm of the function as in Sobolev theory, we add an integral of the function around a boundary curve. With no connectivity assumptions, we obtain boundedness of overfare with respect to this conformally invariant norm, for curves with greater regularity. It suffices that the quasicircles are so-called Weil-Petersson quasicircles.  
For both of these results, in this paper we use a more flexible method of proof than in \cite{Schippers_Staubach_transmission}, and make systematic use of boundedness of the so-called bounce operator (see Definition \ref{defn:bounce op}).
\end{subsubsection}

\begin{subsubsection}{Dirichlet boundary value problem for $L^2$ one-forms  boundary values} {{A classical formulation of the Dirichlet problem on Riemannian manifolds with smooth boundary is as follows:\\

Let $M$ be a smooth, connected, compact, Riemannian manifold of real dimension $m$ and consider some arbitrary smooth domain  $\Omega \subseteq M$ with non-empty boundary. Assume that $f \in L^{2}\left(\partial \Omega, \wedge^{k} T M\right)$, $0\leq k\leq m,$ where $L^{2}\left(\partial \Omega, \wedge^{k} T M\right)$ denotes the space of $k$-forms which are $L^2$ on the boundary of $\Omega$. Denoting the Hodge Laplacian by $\Delta= d\delta+ \delta d$ (where $d$ is the exterior differentiation and $\delta$ its adjoint with respect to the Riemannian metric of $M$), 
the Dirichlet boundary value problem with boundary data $f$ is

\begin{equation}\label{dp for hodge}
\left\{\begin{array}{l}u \in C\left(\Omega, \wedge^{k} T M\right) \\ \Delta u=0 \text { in } \Omega  
\\ \left.u\right|_{\partial \Omega}=f \text { on } \partial \Omega \end{array}\right.
\end{equation}
For $0\leq k\leq m$, this problem was studied by G. Duff and D. Spencer \cite{DS1}, \cite{DS2}, \cite{Du3}, \cite{Sp},
C. Morrey and J. Eells \cite{ME1}, \cite{ME2}, and G. Schwarz \cite{Sch}. Through these investigations, it is known that for any $f \in L^{2}\left(\partial \Omega, \wedge^{k} T M\right)$ the Dirichlet problem has a unique solution $u \in H^{1/2}\left(\Omega, \wedge^{k} T \mathcal{M}\right)$ (Sobolev $\frac{1}{2}$-space), and moreover there exists $C>0$ independent of $f$ such that
\begin{equation}\label{mmt estim}
\|u\|_{H^{1/2}\left(\Omega, \wedge^{k} T M\right)} \leq C\|f\|_{L^{2}\left(\partial \Omega, \wedge^{k} T M\right)}.
\end{equation}
Another well-known fact is that if $k=0$, $u\in L^2(\Omega)$ and $\Delta u \in L^2(\Omega)$ then $u|_{\partial \Omega} \in H^{-1/2}(\partial \Omega).$\\

In this paper we investigate the well-posedness of \eqref{dp for hodge} when $k=1$ and $f$ in the Sobolev space of forms  $H^{-1/2}\left(\partial \Sigma, \wedge^{k} T \Sigma\right)$, where $\Sigma$ is a bordered Riemann surface. This amounts to the demonstration of the fact that for an element of $f\in H^{-1/2}$ together with sufficient cohomological data, there always exists a unique $u\in L^2$ harmonic one-form on $\riem$ with boundary value $f$. We also show that $u$ depends continuously on $f$, i.e. the analogue of \eqref{mmt estim} is valid in this setting.  
 
 The problem for $H^{-1/2}$ boundary values is solved by reformulating the $H^{-1/2}$-space conformally invariantly, and using the theory of CNT boundary values, mentioned above. That is, we show that elements of $H^{-1/2}$ can be represented by equivalence classes of $L^2$ harmonic one-forms defined in collar  neighbourhoods. Using the fact that $H^{-1/2}$ is the dual space to $H^{1/2}$, we will show that there is a one-to-one correspondence between elements of $H^{-1/2}$ and such equivalence classes, and this allows us to use the theory of conformally nontangential boundary values to solve the problem. It turns out that anti-derivatives of such forms have well-defined boundary values in the conformally nontangential sense, which after removing a period, can be identified with elements of $H^{1/2}$. In this context, the so-called {\it anchor lemmas} (Lemmas  \ref{le:anchor_lemma_one} and \ref{le:anchor_lemma_two}) are of fundamental importance since they imply that the limiting integral of $f \in H^{1/2}$ against any $\alpha \in \mathcal{A}(A)$ ($\mathcal{A}(A)$ is the Bergman space of holomorphic one forms on $A$, and $A$ is a collar neighbourhood of the boundary) exists and depends only on the CNT boundary values of $f$.}}

\end{subsubsection}

\begin{subsubsection}{Calculus of Schiffer operators, cohomology, and index theorems}

The cornerstone of this paper is the theory of certain integral operators of Schiffer. These integral operators are integral operators on holomorphic and anti-holomorphic one-forms, whose integral kernels are the two possible second derivatives of Green's function, often called the Bergman and Schiffer kernels. These are defined as follows. Let $\mathscr{R}$ be a compact Riemann surface  split into two surfaces $\riem_1$ and $\riem_2$ by a collection of Jordan curves. Let $\mathscr{G}(w;z,q)$ be Green's function of $\mathscr{R}$ (the fundamental harmonic function with logarithmic singularities at $z$ and $q$ of opposite weight, defined up to an additive constant).  We have, denoting the Bergman space of holomorphic one-forms on $\riem_k$ by $\mathcal{A}(\riem_k)$ for $k=1,2$, 
\begin{align*}
  \mathbf{T}_{1,k}:\overline{\mathcal{A}(\riem_1)} & \rightarrow \mathcal{A}(\riem_k) \\
  \overline{\alpha} & \mapsto \iint_{\riem_1} \partial_w \partial_z \mathscr{G}(w;z,q) \wedge_w \overline{\alpha(w)}.  
\end{align*} 
The two choices of $k$ are obtained by restricting $z$ to $\riem_k$. 
If $k=1$, this has a singularity and can be treated as a Calder\'on-Zygmund singular integral operator. We also have the operator
\begin{align*}
    \mathbf{S}_{1}: {\mathcal{A}(\riem_1)} & \rightarrow \mathcal{A}(\mathscr{R}) \\
  \overline{\alpha} & \mapsto \iint_{\riem_1} \overline{\partial}_w \partial_z \mathscr{G}(w;z,q) \wedge_w \overline{\alpha(w)}. 
\end{align*}
We may of course switch the roles of $1$ and $2$ above.
These were investigated extensively by M. Schiffer with various co-authors \cite{BergmanSchiffer} \cite{Schiffer_first}, in relation to potential theory and conformal mapping,  eventually culminating in a comparison theory of domains \cite{Courant_Schiffer}. The Schiffer kernel is closely related to the so-called fundamental bidifferential and figures in geometry of function spaces on Riemann surfaces  \cite{Eynard_notes}, \cite{Schiffer_Spencer}.  

By a striking result of V. Napalkov and R. Yulmukhametov \cite{Nap_Yulm}, if $\mathscr{R}$ is the Riemann sphere, and $\riem_1$ and $\riem_2$ are the two complementary components of a Jordan curve $\Gamma$ on the sphere, then the Schiffer operator $\mathbf{T}_{1,2}$ is an isomorphism if and only if $\Gamma$ is a quasicircle. {This is closely related to the fact that functions can be approximated in the Dirichlet semi-norm by Faber series precisely for domains bounded by quasicircles; see \cite{Schippers_Staubach_Grunsky_expository} for an overview.} The authors showed in \cite{Schippers_Staubach_Plemelj} that, for a compact Riemann surface divided in two by a quasicircle,   $\mathbf{T}_{1,2}$ is an isomorphism on the orthogonal complement of anti-holomorphic one-forms on $\mathscr{R}$. This was further generalized by M. Shirazi to the case of many curves where all but one of the components is simply connected in \cite{Shirazi_thesis}, \cite{Schippers_Shirazi_Staubach}.  The boundedness of overfare plays a central role in the formulation and proof of this fact.
{This extension of the isomorphism theorem was used by the authors and Shirazi to show that one-forms on a domain in a Riemann surface bounded by quasicircles can be approximated in $L^2$ on a larger domain \cite{Schippers_Shirazi_Staubach}. Approximability theorems  for general $k$-differentials with respect to the conformally invariant $L^2$ norm and less regular boundaries were obtained by N. Askaripour and T. Barron \cite{AskBar,AskBar2} using very different methods. So far as we know, these were the first results for nested domains on Riemann surfaces in the $L^2$ setting}.
\\

In this paper, we characterize the kernel and image of $\mathbf{T}_{1,2}$ in the case of a Riemann surface split by a complex of quasicircles. The main tool is an extended Plemelj-Sokthoski jump formula, which is in turn based on a relation between the Schiffer operators and a generalization of the Cauchy operator originating with H. Royden \cite{Royden} which we call the Cauchy-Royden operator. As quasicircles are not rectifiable, we are required to define the Cauchy-Royden integral using curves which approach the boundary. In the sphere with one curve, the authors showed that the resulting Plemelj-Sokhotski jump decomposition is an isomorphism if and only if the curve is a quasicircle.  The analytic issues in those papers, as in this one, are resolved by the fact that the limiting integral is the same from both sides up to constants. This in turn is a consequence of the anchor lemmas and boundedness of the bounce operator. The equality of the limiting integral from both sides is also a key geometric tool; in combination with the bounded overfare it allows one to find preimages of elements of the image of $\mathbf{T}_{1,2}$. \\

We further use this  to investigate the cohomology of the images of $\mathbf{T}_{1,1}$, $\mathbf{T}_{1,2}$ and $\mathbf{S}$. In particular we show that for any anti-holomorphic one form $\overline{\alpha}$ in $\riem_1$, $\mathbf{T}_{1,2} \overline{\alpha}$ and $\overline{\mathbf{S}}_1 \overline{\alpha}$ are in the same cohomology class. This simple fact is surprisingly versatile. Along with the characterization of the kernels and images of $\mathbf{T}_{1,2}$ mentioned above, we also show that in the case that $\riem_1$ and $\riem_2$ are connected, the Fredholm index of $\mathbf{T}_{1,2}$ is $g_1-g_2$ where $g_1$ and $g_2$ are the genuses of $\riem_1$ and $\riem_2$. This index theorem relates a conformal invariant (the index of $\mathbf{T}_{1,2}$) to the topological invariant $g_1-g_2$. \\

Finally, we derive a number of new identities for Schiffer operators and their adjoints, as well as extend identities obtained earlier in \cite{Schippers_Staubach_Plemelj} to the case of a compact surface split by a complex of curves.
These identities play a central role in the scattering theory. It should be mentioned that one of these identities is a reformulation and significant generalization of an norm identity of Bergman and Schiffer for planar domains \cite{BergmanSchiffer}. This identity can be used to derive the Grunsky inequalities (see ahead). 
\end{subsubsection}
\begin{subsubsection}{Scattering matrix and unitarity}
 
 We define a scattering process for one-forms in the following way. The overfare process defined above for functions uniquely defines the overfare of exact one-forms from connected surfaces to arbitrary ones, by 
 \begin{align*}
  \mathbf{O}^{\mathrm{e}}_{\riem_1,\riem_2}: \mathcal{A}^{\mathrm{e}}_{\mathrm{harm}}(\riem_1) & \rightarrow \mathcal{A}^{\mathrm{e}}_{\mathrm{harm}}(\riem_2) \\
  \alpha & \mapsto d \mathbf{O}_{\riem_1,\riem_2} d^{-1}
 \end{align*}
 where $\mathbf{O}_{\riem_1,\riem_2}$ is overfare of harmonic functions and $\mathcal{A}_{\mathrm{harm}}(\riem)$ denotes $L^2$ harmonic one-forms on $\riem$.  For arbitrary one-forms on a connected surface, we specify the cohomological data as follows: let $\zeta \in \mathcal{A}_{\mathrm{harm}}(\mathscr{R})$ be a one-form such that $\alpha - \zeta$ is exact on $\riem_1$.  We seek a one-form with the same boundary values as $\alpha$ and in the cohomology class of $\zeta$ on $\riem_2$. This form is
 \[  \mathbf{O}^{\mathrm{e}}_{\riem_1,\riem_2} \left( \alpha - \left. \zeta \right|_{\riem_1} \right) + \left. \zeta \right|_{\riem_2}.   \]
 We call $\zeta$ a ``catalyzing form'', and forms which are related by overfare via $\zeta$ compatible.
 
 From this overfare process we define a scattering operator which takes the holomorphic parts of the compatible forms, together with the anti-holomorphic part of the catalyzing forms, and produces the anti-holomorphic parts of the compatible forms and the holomorphic part of the catalyzing form. The anti-holomorphic parts can be thought of as left moving waves, while the holomorphic parts can be thought of as right moving waves.  
 
 We give an explicit form for the scattering matrix in terms of the Schiffer operators, using the identities and cohomological results of Section \ref{se:Schiffer_Cauchy}. We furthermore show that this scattering matrix is unitary, using the adjoint identities of Section \ref{se:Schiffer_Cauchy}. 
 
 These adjoint identities can be thought of as generalizations of norm inequalities relating the Schiffer operators \cite{BergmanSchiffer}, which are themselves closely related to identities relating the Faber and Grunsky operators. However neither the unitarity of the scattering process nor the adjoint identities were recognized even in the case of the plane.  
\end{subsubsection}
\begin{subsubsection}{Polarizations and Grunsky operators}

 For context, we sketch the well-known classical polarization for compact surfaces. Given a compact Riemann surface $\mathscr{R}$, by the Hodge decomposition theorem, every $L^2$ one-form has a harmonic representative. The spaces of harmonic one-forms in turn decompose into the spaces of holomorphic and anti-holomorphic one-forms. Thus the cohomology classes of a Riemann surface are represented by the direct sum of the vector spaces of holomorphic and  anti-holomorphic one-forms. This decomposition depends on the complex structure. 
 
In complex algebraic geometry, this picture is often represented in terms of the so-called period-matrix. Given a basis of the homology, divided into $a$ and $b$ curves satisfying the usual intersection conditions, one normalizes half of the periods of the holomorphic one-forms, and encoding the remaining periods in a $g \times g$ matrix where $g$ is the genus. Most often one normalizes matrix of $a$ periods to be the identity matrix; in that case, by the Riemann bilinear relations, the matrix of $b$ periods lies in the Siegel upper half-space of symmetric matrices with positive definite imaginary part. 
 It is also possible to represent the periods with a matrix of norm less than one (that is, a matrix in the Siegel disk). It was shown by {{L. Ahlfors \cite{Ahlfors}}} that the period matrix can be used to give coordinates on Teichm\"uller space; the idea of using periods as coordinates on the moduli space goes back to B. Riemann \cite{Riemann}.\\
 
 An analogue of the period map exists for the case of the Teichm\"uller space of the disk. Nag and Sullivan \cite{NS}, following earlier work of A. Kirillov and D. Yuri'ev in the smooth case \cite{KY2}, showed that the set of quasisymmetries of the circle acts symplectically on the space of polarizations of the set of Dirichlet-bounded harmonic functions on the disk, and that the space of polarizations can be identified with an infinite-dimensional Siegel disk. They further outlined various analogies with the classical period matrix. L. Takhtajan and L-P. Teo \cite{Takhtajan_Teo_Memoirs} showed that this ``period matrix'' is in fact the Grunsky matrix, and proved that the period map is a holomorphic map of the Teichm\"uller space of the unit disk (which is also the universal Teichm\"uller space). Later, with Radnell, the authors generalized this holomorphicity to genus zero surfaces with $n$ boundary curves. All of these results demonstrate the existence of a powerful analogy with the classical period matrix. Nevertheless they do not indicate the mathematical source of the analogy, nor how to unify the classical case for compact surfaces and the case of surfaces with border. \\

 For genus zero surfaces with $n$ boundary curves, we showed with Radnell that the graph of the Grunsky matrix gives the boundary values 
 of the set of Dirichlet-bounded harmonic functions  curves \cite{RSS_Dirichletspace}, using overfare. This was extended by M. Shirazi  \cite{Shirazi_thesis,Shirazi_Grunsky} to the genus $g$ case.
 In this paper, we show that by treating polarizations as decompositions of boundary values of semi-exact one-forms, all the versions of the polarizations can be viewed as special cases of a single general theorem. The unifying principle is provided by boundary values of harmonic one-forms.
 In particular, 
 we show that the polarizing subspace of holomorphic one-forms on a bordered surface can be viewed as the graph of an operator in an infinite Siegel disk, from which the polarizations in both the compact case and the case of genus zero surfaces with borders can be recovered.  The overfare process is a crucial part of establishing this unified picture. 
 
 The bound on the polarizing operator can be viewed as a far-reaching generalization of the Grunsky inequalities. We also show how special cases of the Grunsky inequalities can be recovered from this one.
\end{subsubsection}

\end{subsection}
\begin{subsection}{Outline of the paper}
  Here we give a sparing outline of the paper. 
  
 In Section \ref{se:preliminaries} we gather the preliminary material about Riemann surfaces, their boundaries, and spaces of harmonic and holomorphic functions and forms.  Section \ref{se:CNT_all} defines the conformally non-tangential boundary values of Dirichlet bounded harmonic functions, and proves the existence and boundedness of the overfare map. Section \ref{se:Schiffer_Cauchy} we define and prove the basic properties of the Schiffer and Cauchy-Royden operators. Furthermore we gather a collection of identities which form the computational backbone of the paper. 
 
 Section \ref{se:Dirichlet_problem} contains a full treatment of the Dirichlet problem for $L^2$ harmonic one-forms with $H^{-1/2}$ boundary values. This is followed by the definition and properties of the overfare process for forms in Section \ref{se:Overfare}. 

 In Section \ref{se:index_cohomology} we derive the cohomological results about the Schiffer operator, including characterizations of the kernel and image, the generalized jump theorem, and index theorems. Section \ref{se:scattering} derives the form of the scattering matrix for harmonic one-forms and proves that it is unitary. Finally, in Section \ref{se:period_mapping} we give the generalized polarizations, and apply it to solve the boundary value problem for semi-exact $L^2$ harmonic one-forms on bordered surfaces. We also explain its relation to the classical Grunsky inequalities and their generalizations.   
\end{subsection}
\end{section}
\begin{section}{Preliminaries} \label{se:preliminaries}
\begin{subsection}{About this section}
 This section gathers the definitions and basic results used throughout the paper.  This includes Dirichlet spaces of functions and Bergman spaces of forms; Riemann surfaces, their boundaries and specialized charts called collar charts; sewing; Green's functions on compact surfaces and surfaces with boundary; Sobolev spaces; and harmonic measures and boundary period matrices. 
\end{subsection}
\begin{subsection}{Bordered surfaces}
We briefly recall the definition of a bordered surface in order to remove any ambiguity.  See for example \cite{Ahlfors_Sario} for a complete treatment.\\

In what follows we denote by $\gls{annulus}$ the annulus $ \{ z;\, a<|z|<b \}$.
\begin{definition}\label{defn:bordered surface}
Let $\mathbb{C}$ denote the complex plane, let 
 $\mathbb{H} = \{ z \in \mathbb{C} : \text{Im} z >0 \}$ denote the upper half plane, and let $\gls{cl}\,( \mathbb{H})$ denote its closure (we will let $\text{cl}$ denote closure throughout).  We say that a connected Hausdorff topological space $\hat{\riem}$ is a \emph{bordered Riemann surface} if there is an atlas of charts $\phi:U \rightarrow \text{cl} \, ( \mathbb{H})$ with the following properties. 
 \begin{enumerate}
     \item  Each chart is a local homeomorphism with respect to the relative topology;
     \item  Every point in $\hat{\riem}$ is contained in the domain of some chart;
     \item  Given any pair of charts $\phi_k:U_k \rightarrow \text{cl}\, ( \mathbb{H})$, $k=1,2$, if $U_1 \cap U_2$ is non-empty, then $\phi_1 \circ \phi_2^{-1}$ is a biholomorphism on $U_1 \cap U_2 \cap \mathbb{H}$.   
 \end{enumerate}    
\end{definition}

 This defines a distinction between interior and border points (see e.g. \cite[p23-24]{Ahlfors_Sario}).  That is, we say $p$ is on the border if there is a chart in the atlas such that $\phi(p)$ is on the real axis, and $p$ is in the interior if there a chart mapping $p$ to a point in $\mathbb{H}$.  In either case, if the claim holds for one chart, it holds for all of them.  We will denote the set of interior points by $\riem$ and the set of border points by $\partial \riem$.  We call $\partial \riem$ the border, and note that the border is also the topological boundary of $\riem$ in $\hat{\riem}$.  Observe that $\riem$ is a Riemann surface in the standard sense.
 
 
 We will call a chart $\phi$ which contains a boundary point in its domain a ``boundary chart'' or ``border chart''.   Now regarding the notion of the double of a bordered Riemann surface, assume that $\phi_k:U_k \rightarrow \text{cl}\, (\mathbb{H})$, $k=1,2$, are charts such that $U_1 \cap U_2 \cap \partial \riem$ is non-empty.  Then by the Schwarz reflection principle, $\phi_1 \circ \phi_2^{-1}$ extends to a biholomorphism of an open set containing $\phi_2(U_1 \cap U_2)$.  This open set can be taken to be the union of $\phi_2(U_1 \cap U_2)$ with its reflection in the real axis.  In the usual construction of the double, any chart $\phi$ which contains border points can be extended to a chart in the double by reflection.  By the above argument, the overlap map $\phi_1 \circ \phi_2^{-1}$ for any pair $\phi_1,\phi_2$ of such extensions is a biholomorphism.  This defines the atlas on the double of $\riem$ which is denoted here by $\riem^{d}$.  
 
 \begin{remark}  Once the border structure is established as above, for convenience we will allow interior charts to have image in $\mathbb{C}$ and not necessarily in $\mathbb{H}$.  Moreover, we will also consider border charts which map into the closure of the disk 
 $\disk^+ = \{ z \in \sphere: |z| <1 \}$, with border points mapping to $|z|=1$.  Every such chart is a border chart in the original sense after composition by a M\"obius transformation. 
 \end{remark}

One of our main objects of study is a particular type of bordered Riemann surface which is defined as follows:
\begin{definition}\label{defn:gn-type surface}
We say that $\riem$ is a \emph{bordered Riemann surface of type} $(g,n),$ if it is bordered (in the sense Definition \ref{defn:bordered surface}), the border has $n$ connected components, each of which is homeomorphic to $\mathbb{S}^1$, and its double $\riem^d$ is a compact surface of genus $2g + n-1$.  
 \end{definition}
Visually, a bordered surface of type $(g,n)$ is a $g$-handled surface bounded by $n$ simple closed curves.  
 We order the borders and label them accordingly, so that $\partial \riem = \partial_1 \riem \cup \cdots \cup \partial_n \riem$.  The borders can be identified with analytic curves in the double $\riem^d$, and we denote the union $\riem\cup \partial \riem$ by $\text{cl}(\riem)$.  
 
 Finally, we observe that borders are conformally invariant.  That is, if $\riem_1$ and $\riem_2$ are bordered surfaces, then any biholomorphism $f:\riem_1 \rightarrow \riem_2$ extends to a homeomorphism of the borders.  In fact, $f$ extends to a biholomorphism between the doubles $\riem_1^d$ and $\riem_2^d$ which takes $\partial \riem_1$ to $\partial \riem_2$.  Finally, if only one of the two surfaces has a border, say $\riem_1$, then one can endow $\riem_2$ with a border using $f$. In particular, there is a unique maximal border structure.  
 \begin{remark}
  Note that if $\riem$ has type $(g,n)$, the border structure is maximal, since $\riem^d$ is a compact surface.  
 \end{remark}
 
 \begin{definition}
 We say that a homeomorphic image $\Gamma$ of $\mathbb{S}^1$ is a \emph{strip-cutting Jordan curve} if it is contained in an open set $U$ and there is a biholomorphism $\phi:U \rightarrow \mathbb{A}_{r,R} $ for some annulus 
 \[  \mathbb{A}_{r,R} \subset \mathbb{C}, \ \ \  r<1<R,  \] in such a way that $\phi(\Gamma)$ is isotopic to the circle $|z|=1.$  We call $U$ a doubly-connected neighbouhood of $\Gamma$ and $\phi$ a doubly-connected chart. 
 \end{definition}
 \begin{remark}
  If $\Gamma$ is a strip-cutting curve, by shrinking $\mathbb{A}_{r,R} $, we can assume that (1) $\phi$ extends biholomorphically to an open neighourhood of $\text{cl} \,(U)$, (2), that the boundary curves of $U$ are themselves strip cutting (in fact analytic), and  (3) that $\Gamma$ is isotopic to each of the boundary curves (using $\phi^{-1}$ to provide the isotopy).  
 \end{remark}
 
 \begin{remark} \label{re:analytic_curve_stripcutting}
  An analytic Jordan curve is by definition strip-cutting.
 \end{remark}

 Throughout the paper we consider \emph{nested Riemann surfaces}.  That is, we are 
 given a type $(g,n)$ bordered surface $\riem$, another Riemann surface $\mathscr{R}$ which is compact, and a holomorphic inclusion map $\iota( \riem )\subset \mathscr{R}$.  Assume that the closure of $\riem$ is compact in $R$, and furthermore the boundary consists of $n$ closed strip-cutting Jordan curves, which do not intersect.  In that case, the inclusion map $\iota$ extends homeomorphically to a map from the border to the strip-cutting Jordan curves.  Thus $\partial \riem$ is in one-to-one correspondence with its image under the homeomorphic extension of $\iota$, and in fact the image is the boundary of $\iota (\riem)$
 in the ordinary topological sense.  For this reason, we will not notationally distinguish $\riem$ from $\iota (\riem)$.  We will also use the notation $\partial \riem$ for both the boundary of $\iota (\riem)$ in $\mathscr{R}$ and the abstract border of $\riem$, and denote both closures by $\text{cl}\, (\riem)$. 
 
 In fact, the assumption that the surface $\riem$ is bordered can be removed in the following way.  
 \begin{theorem}  \label{th:embedded_is_bordered}
  Let $\riem$ be an open connected subset of a Riemann surface $\mathscr{R}$. 
  Assume that the topological boundary of $\riem$ in $\mathscr{R}$ is a finite collection $\Gamma = \Gamma_1 \cup \cdots \cup \Gamma_n$ of strip-cutting Jordan curves.  Furthermore suppose that there are doubly-connected charts $\phi_k:U_k \rightarrow \mathbb{A}_k$ of $\Gamma_k$ for $k=1,\ldots,n$ $($where $\mathbb{A}_k$'s are annuli$)$ such that the closures of $U_k$ are mutually disjoint, and $U_k \backslash \Gamma$ consists of two connected components, one of which is entirely contained in $\riem$ and one which is in $\mathscr{R} \backslash \riem$.   Then $\riem$ is a bordered surface and the inclusion map is a homeomorphism.
 \end{theorem}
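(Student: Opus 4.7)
The plan is to endow $\hat{\riem} := \riem \cup \Gamma$, equipped with the subspace topology from $\mathscr{R}$, with an atlas of charts as in Definition~\ref{defn:bordered surface}, so that the interior points are precisely those of $\riem$ and the border points are those of $\Gamma = \Gamma_1 \cup \cdots \cup \Gamma_n$. With the subspace topology, Hausdorffness is inherited from $\mathscr{R}$, and the inclusion map $\hat{\riem} \hookrightarrow \mathscr{R}$ is automatically a homeomorphism onto its image; thus the substantive work lies in constructing the border charts and verifying their compatibility.

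For interior charts I would simply restrict charts on $\mathscr{R}$ to open subsets of $\riem$, which is permissible by the remark following Definition~\ref{defn:bordered surface}. For border charts, the construction proceeds as follows. Because the closures $\text{cl}(U_k)$ are mutually disjoint and $\Gamma_k \cap \riem = \emptyset$, we have $U_k \cap \hat{\riem} = V_k \cup \Gamma_k$, where $V_k$ is the connected component of $U_k \setminus \Gamma_k$ contained in $\riem$ (which exists by hypothesis). Since $\phi_k(\Gamma_k)$ is a Jordan curve isotopic to the core circle of $\mathbb{A}_k$, it separates $\mathbb{A}_k$ into two doubly-connected components, and $\phi_k$ maps $V_k$ biholomorphically onto one of them, say $A_k^+$. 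By the uniformization theorem for doubly-connected planar domains, there is a conformal isomorphism $\rho_k : A_k^+ \to \mathbb{A}_{s_k, 1}$ for some $s_k \in (0,1)$ sending $\phi_k(\Gamma_k)$ to the unit circle; since $\phi_k(\Gamma_k)$ is a Jordan curve, Carathéodory's theorem extends $\rho_k$ to a homeomorphism of closures. Setting $\Psi_k := \rho_k \circ \phi_k$, I obtain a homeomorphism $V_k \cup \Gamma_k \to \{s_k < |z| \leq 1\}$ which is biholomorphic on $V_k$ and sends $\Gamma_k$ to the unit circle; this provides border charts valued in $\text{cl}(\disk^+)$ as allowed by the remark following Definition~\ref{defn:bordered surface}.

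To check the atlas axioms, every point of $\hat{\riem}$ lies in the domain of some chart (interior points via charts from $\mathscr{R}$, border points via $\Psi_k$), and each chart is a local homeomorphism in the relative topology by construction. Compatibility splits into three cases: two interior charts overlap biholomorphically via the structure on $\mathscr{R}$; an interior chart and a border chart $\Psi_k$ overlap only on $V_k \subset \riem$, where $\Psi_k$ is biholomorphic; and two border charts can only overlap when they come from the same $\Psi_k$ (by the disjointness of the $\text{cl}(U_k)$), in which case the overlap map is a biholomorphism on the relevant subset of $V_k$.

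The main obstacle is the construction of the $\Psi_k$, which rests on two classical but non-trivial ingredients: the uniformization theorem for doubly-connected planar domains and Carathéodory's extension theorem to a Jordan-curve boundary. The latter applies here because $\phi_k(\Gamma_k)$, being the image of a Jordan curve under a homeomorphism, is again a Jordan curve in $\mathbb{A}_k$. Once the $\Psi_k$ are in hand, the remaining verifications are routine, and the identification of the interior of the resulting bordered surface with $\riem$ and of its border with $\Gamma$ follows directly from the construction.
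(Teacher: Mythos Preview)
Your proposal is correct and follows essentially the same route as the paper's proof: both construct border charts by composing the given doubly-connected chart $\phi_k$ with a conformal map of the relevant component of $\mathbb{A}_k \setminus \phi_k(\Gamma_k)$ onto a standard annulus, invoking Carath\'eodory's theorem for the homeomorphic extension to the Jordan boundary $\phi_k(\Gamma_k)$. The paper cites \cite[Theorems 3.3, 3.4 Sect 15.3]{ConwayII} for this step and leaves the compatibility verification implicit, whereas you spell out the three overlap cases explicitly; but the substance is the same.
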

 \begin{proof}
  First, observe that $\riem$ has a unique complex structure compatible with $\mathscr{R}$, so we let $\mathscr{A}$ be an atlas compatible with this structure.  
 
  Let $U_k^+$ denote the component of $U_k \backslash \Gamma$ in $\riem$.  Then $\phi_k(U_k^+)$ is an open subset of $\mathbb{C}$ bounded by two Jordan curves, one of which is a boundary $\gamma$ of $\mathbb{A}_k$ and one of which is the Jordan curve $\phi_k(\Gamma)$.  By \cite[Theorems 3.3, 3.4 Sect 15.3]{ConwayII}, there is a biholomorphism
  $\psi_k: \phi_k(U_k^+) \rightarrow \mathbb{A}_{r,1}$ which extends to a homeomorphism of the boundaries, taking $\gamma$ to $|z|=r$ and $\phi_k(\Gamma)$ to $\mathbb{S}^1$.  Adjoining the points in $\Gamma_k$ to $\riem$, 
  Then 
  \[  \mathscr{A} \cup \left\{ \left. \psi_1 \circ \phi_1 \right|_{U_1^+}, \ldots, \left. \psi_n \circ \phi_n \right|_{U_n^+} \right\}   \]
  is an atlas making $\riem \cup \partial \riem$ into a bordered surface.
 \end{proof}
 
 \begin{remark}
   The embedding of the border $\partial \riem$ in $\mathscr{R}$ need not be regular.  That is, the inclusion map does not extend to a smooth or analytic map from $\partial \riem$ onto its image under inclusion $\iota$, unless the image consists of smooth or analytic curves. 
 \end{remark}
 
 By another application of \cite[Theorems 3.3, 3.4 Sect 15.3]{ConwayII}, it is easily shown that if $\riem_1\subset \mathscr{R}_1$ and $\riem_2 \subset \mathscr{R}_2$
 satisfy the conditions above, and $f:\riem_1 \rightarrow \riem_2$ is a biholomorphism, then $f$ extends continuously to a map taking each Jordan curve in $\partial \riem_1$ homeomorphically to one of the Jordan curves of $\partial \riem_2$. \\
 
 It is helpful to have the following distinction in mind throughout the paper: certain statements are ``intrinsic'' while others are ``extrinsic''.   Intrinsic statements about a Riemann surface $\riem$ are those which depend only on the surface itself and are unchanged under a biholomorphism.  For example, the border is intrinsic, and the harmonic function which is one on $\partial_k \riem$ and $0$ on other curves is intrinsic.  Extrinsic statements about a Riemann surfaces $\riem$ nested in another surface $\mathscr{R}$, are those which make reference to $\mathscr{R}$.  For example, ``strip-cutting'' is an extrinsic property, as is the regularity of $\iota(\partial \riem)$. An example of an extrinsic object is the restriction of Green's function of $\mathscr{R}$ to $\riem$ (see the next subsection for the definition of Green's functions).\\ 
 
 When dealing with intrinsically phrased boundary value problems, regularity of the boundary is not an issue, since we can treat the boundary as a border and thus we have its analytic structure at our disposal. Examples of this are the Dirichlet problem (as we phrase it) in Section \ref{se:Dirichlet_problem} and CNT boundary values of $L^2$ harmonic forms on $\riem$ in Section \ref{se:CNT_limits_BVs}.  On the other hand, when dealing with extrinsically phrased boundary value problems, regularity of the boundary is a major concern.  Overfare/Transmission phenomena in Section \ref{se:function_Overfare}, in which the boundary values of a harmonic function on $\riem$ become data for the Dirichlet problem on $\mathscr{R} \backslash \riem$, are of this nature, as are the Schiffer operators and results regarding them in Section \ref{se:Schiffer_Cauchy} and onward.   
 \end{subsection}

\begin{subsection}{Collar charts}
 We also define a kind of chart on bordered surfaces near the boundary, which we call a collar chart.  
 \begin{definition}  Let $\riem$ be a bordered Riemann surface of type $(g,n)$.  
  A biholomorphism $\phi: U \rightarrow \mathbb{A}_{r,1}$ is called a \emph{collar chart} of $\partial_k \riem$ (for some fixed $k$) if $U$ is an open set in $\riem$ bounded by two Jordan curves $\partial \riem$ and $\Gamma$, such that $\Gamma$ is isotopic to $\partial_k \riem$ within the closure of $U$, and such that $\phi$ extends continuously to the closure.   A domain $U$ is a collar neighbourhood of $\partial_k \riem$ if it is the domain of some collar chart.
 \end{definition}
  
 \begin{proposition}
  Let $\riem$ be a type $(g,n)$ surface.  Then every boundary curve $\partial_k \riem$ has a collar chart.
 \end{proposition}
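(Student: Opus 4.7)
The plan is to construct the collar chart by descending from the double $\riem^d$. By Definition \ref{defn:gn-type surface}, $\riem^d$ is a compact Riemann surface, and every border chart of $\riem$ extends to $\riem^d$ via Schwarz reflection (as in the paragraph before Remark \ref{re:analytic_curve_stripcutting}); consequently $\partial_k \riem$ is an analytic Jordan curve in $\riem^d$.

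The first step is to construct a doubly-connected open neighborhood $V \subset \riem^d$ of $\partial_k \riem$ whose topological boundary in $\riem^d$ consists of two analytic Jordan curves, each isotopic to $\partial_k \riem$ in $\text{cl}(V)$. Cover the compact curve $\partial_k \riem$ by finitely many of the extended border charts $\phi_i: W_i \to \mathbb{C}$, in which $\phi_i(\partial_k \riem \cap W_i)$ is an analytic arc that we may straighten to lie in $\mathbb{R}$. For sufficiently small $\varepsilon > 0$ the horizontal strips $\phi_i^{-1}(\{|\mathrm{Im}\, z| < \varepsilon\}) \cap W_i$ patch together consistently, and their union is the required annular $V$. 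This is a standard instance of the existence of an analytic tubular neighborhood of a compact analytic curve on a Riemann surface.

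The second step is to restrict and uniformize. Let $\Gamma$ denote the component of $\partial V$ contained in $\riem$, and set $U = V \cap \riem$. Then $U$ is a doubly-connected open subset of $\riem$ bounded by $\partial_k \riem$ and $\Gamma$. Applying \cite[Theorems 3.3, 3.4 Sect 15.3]{ConwayII} (precisely as in the proof of Theorem \ref{th:embedded_is_bordered}), there exists a biholomorphism $\psi: U \to \mathbb{A}_{\rho,1}$ for some $0 < \rho < 1$ that extends to a homeomorphism of closures sending each boundary Jordan curve to a boundary circle. Post-composing with $z \mapsto \rho/z$ if necessary, we may arrange $\psi(\partial_k \riem) = \{|z|=1\}$. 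The isotopy condition on $\Gamma$ is then automatic, since $\text{cl}(U)$ is homeomorphic to $\text{cl}(\mathbb{A}_{\rho,1})$ via $\psi$, and the two boundary circles of a closed annulus are isotopic within it.

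The principal obstacle is the first step: ensuring that the patched neighborhood $V$ is genuinely doubly connected and has analytic Jordan curve boundary, rather than pinching off or developing more complicated topology. This is controlled by taking $\varepsilon$ small enough (using compactness of $\partial_k \riem$ together with the fact that the overlap maps of the extended border charts are biholomorphic on neighborhoods of the real axis in $\mathbb{C}$), or equivalently by invoking the standard uniformization-based construction of analytic tubular neighborhoods; once $V$ is in hand, the remaining steps are direct applications of the classical uniformization of doubly-connected planar regions together with boundary extension.
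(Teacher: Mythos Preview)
Your proof is correct and follows essentially the same route as the paper. The paper's argument is more terse: it simply observes that $\partial_k\riem$ is an analytic Jordan curve in $\riem^d$ and hence strip-cutting by Remark~\ref{re:analytic_curve_stripcutting}, which immediately supplies the doubly-connected chart $\phi_k$; it then invokes the construction from the proof of Theorem~\ref{th:embedded_is_bordered} (i.e.\ exactly your second step, via \cite[Theorems~3.3,~3.4 Sect~15.3]{ConwayII}) to produce the collar chart $\psi_k\circ\phi_k|_{U_k^+}$. Your first step---building the annular neighborhood by patching extended border charts---is just an explicit verification of what the paper takes as built into the definition of an analytic Jordan curve, so you could shorten by citing Remark~\ref{re:analytic_curve_stripcutting} directly.
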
 
 \begin{proof}
  Let $\riem^d$ be the double of $\riem$, so that each boundary $\partial_k \riem$ is an analytic Jordan curve and hence strip-cutting.  Let $U_k$, $U_k^+$, $\phi_k$ and $\psi_k$ be as in the proof of Theorem \ref{th:embedded_is_bordered}.  Then $\left. \psi_k \circ \phi_k \right|_{U_k^+}$ is a collar chart.
 \end{proof}

 Furthermore, we have the following consequence of Carath\'eodory's theorem.
 \begin{theorem}\label{theorem on collar charts}  Let $\riem$ be a bordered surface and $\Gamma$ be a component of the border which is homeomorphic to $\mathbb{S}^1$.  
  If $\phi:U \rightarrow \mathbb{A}$ is a collar chart, then $\phi$ extends continuously to $\partial_k \riem$.  The extension is a homeomorphism of $\partial_k \riem$ onto $\mathbb{S}^1$.   
 \end{theorem}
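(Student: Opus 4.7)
The plan is to combine the continuous extension supplied by the definition of collar chart with Schwarz reflection across the analytic curve $\partial_k\riem$ in the double $\riem^d$. First I would identify where $\bar\phi$ sends $\partial_k\riem$: since $\mathrm{cl}(U)$ is compact and $\phi$ is an open biholomorphism, $\bar\phi(\mathrm{cl}(U))=\mathrm{cl}(\mathbb{A}_{r,1})$, and border points of $U$ must map into $\mathbb{S}^1\cup\{|z|=r\}$. Connectedness of each of $\partial_k\riem$ and $\Gamma$, combined with surjectivity onto $\mathrm{cl}(\mathbb{A}_{r,1})$, forces each to map onto exactly one of the two boundary circles, and they must map to different ones; after composing with $z\mapsto r/z$ if necessary we may take $\bar\phi(\partial_k\riem)=\mathbb{S}^1$.

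Next I would exploit that $\partial_k\riem$ is an analytic Jordan curve in $\riem^d$, so local Schwarz reflection is available. At each $p\in\partial_k\riem$ pick a boundary chart $\psi:V\to\mathrm{cl}(\mathbb{H})$ with $\psi(p)=0$, shrinking $V$ so that $V\cap\riem\subset U$. Then $f=\phi\circ\psi^{-1}$ is holomorphic on an upper half-disk $D^+$, continuous up to the real-axis segment $D^0$ with $f(D^0)\subset\mathbb{S}^1$; reflecting across both the real axis and $\mathbb{S}^1$ (via $z\mapsto\bar z$ and $w\mapsto 1/\bar w$) produces a holomorphic extension $\tilde f:D\to\mathbb{C}$ with $\tilde f(D^+)\subset\{|w|<1\}$ and $\tilde f(D^-)\subset\{|w|>1\}$.

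I would then argue $\tilde f$ is a local biholomorphism along $D^0$: if $\tilde f'(z_0)=0$ at some $z_0\in D^0$, then $\tilde f$ is locally $n$-to-$1$ for some $n\geq 2$, so for generic small $\epsilon$ the $n$ preimages of $\tilde f(z_0)+\epsilon$ are spread on both sides of the real axis; this forces a preimage in $D^+$ and one in $D^-$ with identical image, contradicting the disjointness of $\{|w|<1\}$ and $\{|w|>1\}$. Consequently $\bar\phi$ is a local homeomorphism along $\partial_k\riem$, so $\bar\phi|_{\partial_k\riem}:\partial_k\riem\to\mathbb{S}^1$ is a continuous surjective local homeomorphism between circles, hence a finite-sheeted covering.

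The main obstacle is promoting this to global injectivity on $\partial_k\riem$. Here I would argue by contradiction: if $p_1\neq p_2\in\partial_k\riem$ satisfy $\bar\phi(p_1)=\bar\phi(p_2)=w\in\mathbb{S}^1$, take disjoint open neighborhoods $V_i$ of $p_i$ in $\riem^d$ with $V_i\cap\riem\subset U$ on which the local Schwarz extensions are biholomorphisms onto open neighborhoods of $w$ in $\mathbb{C}$. Their intersection therefore contains some $w'$ with $|w'|<1$, and the preimages of $w'$ lie in $V_i\cap\riem\subset U$ for $i=1,2$, producing two distinct points of $U$ with identical $\phi$-image — contradicting injectivity of $\phi$. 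Thus $\bar\phi|_{\partial_k\riem}$ is a continuous bijection between compact Hausdorff spaces, and therefore a homeomorphism onto $\mathbb{S}^1$.
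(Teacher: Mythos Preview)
Your argument is correct. The paper, however, takes a much shorter route: since $\partial_k\riem$ is an analytic Jordan curve in the double $\riem^d$, one can choose a doubly-connected chart $\psi:V\to\mathbb{A}'$ of $\partial_k\riem$ in $\riem^d$; then $\psi\circ\phi^{-1}$ is a conformal map of the round annulus $\mathbb{A}_{r,1}$ onto a doubly-connected region bounded by Jordan curves, and the homeomorphic boundary extension follows from the Carath\'eodory-type result for such maps (Conway II, \S15.3, Theorems~3.3--3.4). Your proof instead establishes this special case from scratch via Schwarz reflection across the analytic curve $\partial_k\riem$, arguing that the reflected map has nonvanishing derivative on the boundary and then promoting the resulting local homeomorphism to a bijection using interior injectivity of $\phi$. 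The trade-off: the paper's proof is a two-line appeal to a standard reference, whereas yours is self-contained and more elementary, and as a bonus it directly produces the local holomorphic extension across $\partial_k\riem$ that the paper states separately as Proposition~\ref{prop:collar_charts_extend_analytically}.
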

 \begin{proof}
  $\Gamma$ is an analytic Jordan curve in the double, and hence strip-cutting.  Let $\psi:V \rightarrow \mathbb{A}$ be a doubly-connected chart for $\Gamma$. By shrinking $V$ we may assume that the boundaries of $\psi(V)$ are Jordan curves. Then $\psi \circ \phi^{-1}$ maps $\mathbb{A}$ onto a doubly-connected region bounded by Jordan curves, so the claim follows from \cite[Theorems 3.4 Sect 15.3]{ConwayII}.
 \end{proof}
 To keep the notation simple, we will also denote the continuous extension by $\phi$.

 \begin{remark}[Isotopy and extension]   \label{re:collar_chart_provides_isotopy}  By shrinking $r$, for any collar chart $\phi:U \rightarrow \mathbb{A}_{1,r}$ we can always assume that the inner boundaries are analytic curves and $\phi$ has an analytic extensions to these curves.  Furthermore, $H(t,\theta) = \phi^{-1}(e^{t} e^{i\theta})$ defines an isotopy between the level curve $|\phi|=r$ and $\partial_k \riem$, running through the level curves of $|\phi|$. 
 \end{remark}
 
 In fact the homeomorphic extension is analytic on the border.  This can be phrased in various ways, one of which is as follows.
 Treat $\riem$ as a subset of its double $\riem^d$ with involution $\tilde{(\cdot) }$.  For a collar neighbourhood $U$ of $\partial_k \riem$, let $U^d = U \cup \tilde{U} \cup \partial_k \riem$.   We then have
 \begin{proposition} \label{prop:collar_charts_extend_analytically} Let $\phi:U \rightarrow \mathbb{A}_{r,1}$ be a collar chart.  Let $U^d = U \cup \tilde{U} \cup \partial_k \riem$ be the double of $U$.
  If $\riem$ is included in its double $\riem^d$, then $\phi$ extends to a doubly-connected chart $\phi^d$ of $\partial_k \riem$ mapping $U^d$ onto the annulus $\mathbb{A}_{r,1/r}$ satisfying $\phi^d(\tilde{z}) = 1/\overline{\phi^d(z)}$. 
 \end{proposition}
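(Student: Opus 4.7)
The plan is to define $\phi^d$ by Schwarz reflection across $\partial_k\riem$ in the domain, paired with inversion across $\mathbb{S}^1$ in the target. Concretely, I would set
\[
\phi^d(z) := \begin{cases} \phi(z), & z \in U \cup \partial_k\riem, \\[2pt] 1/\overline{\phi(\tilde z)}, & z \in \tilde U, \end{cases}
\]
where on $\partial_k\riem$ we use the continuous extension from Theorem \ref{theorem on collar charts}. Because that theorem sends $\partial_k\riem$ homeomorphically onto $\mathbb{S}^1$ and $\tilde z = z$ on $\partial_k\riem$, the two branches agree there, so $\phi^d$ is well-defined and continuous on $U^d$. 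The required symmetry $\phi^d(\tilde z) = 1/\overline{\phi^d(z)}$ follows from $\tilde{\tilde z} = z$ by a case check on each piece.

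For holomorphicity, on $U$ it is given, and on $\tilde U$ the map is the composition of $\tilde{(\cdot)}$, $\phi$, conjugation, and $w \mapsto 1/w$, which contains two anti-holomorphic and two holomorphic factors and is therefore holomorphic. The only nontrivial step is holomorphicity across $\partial_k\riem$. At a point $p \in \partial_k\riem$, I would pick a chart $\psi: W \to D$ of $\riem^d$ centered at $p$ in which the involution of $\riem^d$ is realized as complex conjugation and $\psi(W \cap \riem) = D \cap \mathbb{H}$; such a chart exists because $\partial_k\riem$ is an analytic curve in $\riem^d$ by the construction of the double recalled after Definition \ref{defn:bordered surface}. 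Setting $f := \phi \circ \psi^{-1}$, Theorem \ref{theorem on collar charts} says that $f$ is continuous on $D \cap \mathrm{cl}(\mathbb{H})$, holomorphic on $D \cap \mathbb{H}$, and maps $D \cap \mathbb{R}$ into $\mathbb{S}^1$. The Schwarz reflection principle with respect to the unit circle then extends $f$ holomorphically to $D$ via $f(\bar z) = 1/\overline{f(z)}$, and this extension agrees with $\phi^d \circ \psi^{-1}$ by construction, so $\phi^d$ is holomorphic at $p$.

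Finally I would identify the image and verify injectivity: $\phi^d(U) = \mathbb{A}_{r,1}$, $\phi^d(\partial_k\riem) = \mathbb{S}^1$, and $\phi^d(\tilde U) = \{1/\bar w : w \in \mathbb{A}_{r,1}\} = \mathbb{A}_{1,1/r}$, whose disjoint union is $\mathbb{A}_{r,1/r}$. Injectivity is immediate because the three images are pairwise disjoint and each restriction is a bijection onto its image (the restriction to $\tilde U$ being a composition of three bijections $\tilde{(\cdot)}$, $\phi$, $w \mapsto 1/\bar w$). This shows $\phi^d$ is a doubly-connected chart of $\partial_k\riem$ with the stated symmetry. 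The main obstacle is organizational rather than technical: one has to verify that the local Schwarz-reflected extensions agree with the globally defined formula, which works precisely because the involution of $\riem^d$ acts as complex conjugation in the analytic boundary charts and because the boundary values of $\phi$ lie on $\mathbb{S}^1$ so that the appropriate target reflection is $w \mapsto 1/\bar w$.
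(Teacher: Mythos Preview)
Your argument is correct and is exactly the standard Schwarz reflection construction one would expect: define $\phi^d$ on $\tilde U$ by $1/\overline{\phi(\tilde z)}$, check the two branches agree on $\partial_k\riem$ because $\phi$ sends the border to $\mathbb{S}^1$, verify holomorphicity across the border by working in a boundary chart of the double in which the involution is complex conjugation and applying the circle-target Schwarz reflection principle, and then read off the image and injectivity. Each of these steps is valid as you have written it.

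The paper itself states this proposition without proof. It is treated as a routine consequence of the Schwarz reflection principle, which the paper has already invoked in the discussion of the double immediately after Definition~\ref{defn:bordered surface}. So there is no competing argument to compare to; your write-up supplies precisely the details the paper leaves implicit.
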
  
 \begin{remark} \label{re:border_form_regularity_meaning}
 In particular, the border charts give a well-defined meaning to continuous, $C^k$, analytic functions, vector fields, one-forms and so forth, on $\partial_k \riem$ for $k=1,\ldots, n$.  For example, a one-form $\alpha$ on $\partial_k \riem$ is continuous, $C^k$, or analytic if its expression in a boundary chart $\psi:U \rightarrow \mathbb{H}$ near $p$ is $h(x)\, dx$ where $h$ is continuous, $C^k$ or analytic respectively, and this holds for all $p \in \partial_k \riem$.  If the property holds for any collection of boundary charts covering $\partial_k \riem$ then it holds for all boundary charts.  Thus, it is enough that the property in question holds for one collar chart; that is, $\alpha$ is continuous, $C^k$, or analytic if and only if in the local  coordinates defined using  $\left. \phi \right|_{\partial_k \riem}$ for a collar chart $\phi$, $\alpha$ is given by $h(e^{i\theta}) \,d\theta$ where $h$ is respectively continuous, $C^k$ or analytic on $\mathbb{S}
^1$.  
 \end{remark} 

 Finally, we have the following useful fact. 
 \begin{proposition} \label{prop:collar_charts_intersection}  Let $\riem$ be a Riemann surface with border $\Gamma$ homeomorphic to $\mathbb{S}^1$, and let $U$ and $V$ be collar neighbourhoods of a boundary curve $\partial_k \riem$.  There is a collar chart $\phi :W \rightarrow \mathbb{A}_{r,1}$ such that $W \subseteq U \cap V$. Moreover  $r$ can be chosen so that the inner boundary of $W$ is contained in $U \cap V$.
 \end{proposition}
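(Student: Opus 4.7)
\bigskip

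\noindent\textbf{Proof proposal.} The plan is to obtain the desired chart by restricting one of the given collar charts to a sufficiently thin annular neighbourhood of $\partial_k\riem$. Fix collar charts $\phi_U:U\to \mathbb{A}_{s,1}$ and $\phi_V:V\to \mathbb{A}_{s',1}$. By Theorem \ref{theorem on collar charts}, both extend continuously to $\partial_k\riem$ and map it homeomorphically onto $\mathbb{S}^1$, so $\phi_U$ becomes a homeomorphism from $U\cup\partial_k\riem$ onto $\mathbb{A}_{s,1}\cup\mathbb{S}^1=\{s<|z|\le 1\}$, and similarly for $\phi_V$.

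Next, I would work inside the double $\riem^d$. By Proposition \ref{prop:collar_charts_extend_analytically}, both $U\cup\partial_k\riem$ and $V\cup\partial_k\riem$ are open neighbourhoods of $\partial_k\riem$ in $\riem^d$, hence so is $(U\cap V)\cup\partial_k\riem$. Push this open set through $\phi_U$: it yields an open subset $\Omega$ of $\{s<|z|\le 1\}$ containing $\mathbb{S}^1$.

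The key step is compactness. Since $\mathbb{S}^1$ is compact and $\Omega$ is an open neighbourhood of it in $\{s<|z|\le 1\}$, there exists $r\in(s,1)$ with
\[
\{r\le |z|\le 1\}\subseteq \Omega.
\]
Set $W=\phi_U^{-1}(\mathbb{A}_{r,1})$. Then $W\subseteq U\cap V$ by construction, and $\phi_U|_W:W\to\mathbb{A}_{r,1}$ is a biholomorphism; its continuous extension sends the interior boundary curve of $W$, namely $\phi_U^{-1}(\{|z|=r\})$, to $\{|z|=r\}$, and sends $\partial_k\riem$ to $\mathbb{S}^1$. The inclusion $\{|z|=r\}\subset\Omega$ gives $\phi_U^{-1}(\{|z|=r\})\subset U\cap V$, which is the additional property claimed in the proposition. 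Verifying that $\phi_U|_W$ is indeed a collar chart in the sense of the definition reduces to checking that its inner boundary curve is isotopic to $\partial_k\riem$ within $\mathrm{cl}(W)$, and this is immediate from Remark \ref{re:collar_chart_provides_isotopy} applied to $\phi_U$.

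There is essentially no analytic obstacle here: the content of the argument is purely topological and relies only on the continuous extension from Theorem \ref{theorem on collar charts} and compactness of $\mathbb{S}^1$. The only subtlety worth noting is the need to work in the double $\riem^d$ to legitimately assert that $U\cup\partial_k\riem$ and $V\cup\partial_k\riem$ are open neighbourhoods of $\partial_k\riem$, since $\partial_k\riem$ is not an interior point of $\riem$ itself; once that is set up, the remainder is routine.
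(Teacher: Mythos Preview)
Your argument is correct and follows essentially the same route as the paper: transport the problem to the plane via one of the collar charts and then use compactness to find a thin annulus near $\mathbb{S}^1$ contained in the image of $U\cap V$. The only slip is the phrase ``open neighbourhoods of $\partial_k\riem$ in $\riem^d$'': the sets $U\cup\partial_k\riem$ and $V\cup\partial_k\riem$ are open in $\mathrm{cl}(\riem)$ (equivalently, their doubles $U^d,V^d$ are open in $\riem^d$), not $U\cup\partial_k\riem$ itself in $\riem^d$; with that cosmetic fix your compactness step goes through exactly as written, and the paper's version simply replaces compactness of $\mathbb{S}^1$ by compactness of the inner boundary curve of $V'$.
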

 \begin{proof} By Remark \ref{re:collar_chart_provides_isotopy} we can choose collar  neighbourhoods $U'$ and $V'$ whose inner boundaries are analytic curves $\gamma_1$ and $\gamma_2$ contained in $U$ and $V$, with corresponding collar charts $\psi_{U'}$ and $\psi_{V'}$ extending analytically to $\gamma_1$ and $\gamma_2$.   By composing with $\psi_{U'}$, we can assume that $\Gamma = \mathbb{S}^1$, $\psi_{U'}(z)=z$,  $U' = \mathbb{A}_{r,1}$ for some $r$, and $\gamma_1 = \{z : |z|=r \}$.   
 
 Now let $M$ be the maximum value of $|\psi_{V'}(z)|$ on $\gamma_2$, which exists because $\gamma_2$ is compact. In that case $\text{cl} \, (\mathbb{A}_{{s},1}) \subseteq V' \cap U'$ for $s=(1+M)/2$.   
 We may now choose $W = \mathbb{A}_{{s},1}$ and $\phi(z)=z$ to prove the claim.  
 \end{proof} 
 \begin{proposition} \label{prop:collar_charts_in_doubly_connected_charts}
   Let $\Gamma$ be a strip-cutting Jordan curve in $\mathscr{R}$,  and let $\phi:U \rightarrow \mathbb{A}_{r,R}$ be a doubly-connected chart.
    There are canonical collar charts $\psi_k:U_k \rightarrow \mathbb{A}$ with $U_k \subseteq U \cap \riem_k$ for $k=1,2$.  $U_k$ may be chosen so that their inner boundaries are analytic curves contained in $U$.   
 \end{proposition}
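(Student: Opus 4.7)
The plan is to produce the collar charts by restricting the given doubly-connected chart $\phi$ to sub-annuli on either side of $\phi(\Gamma)$ and then applying the classical conformal uniformization of doubly-connected planar domains to rescale each side to the standard form. This parallels the argument used in the proof of Theorem \ref{th:embedded_is_bordered}.

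First, observe that $\phi(\Gamma)$ is a compact Jordan curve contained in the open annulus $\mathbb{A}_{r,R}$ and isotopic there to the unit circle. Hence the moduli $|\phi(\Gamma)|$ attain a minimum $m$ and maximum $M$ with $r < m \leq M < R$. Pick radii $r_1, R_1$ with $r < r_1 < m$ and $M < R_1 < R$, so that the analytic circles $|z| = r_1$ and $|z| = R_1$ lie in $\phi(U)$ on opposite sides of $\phi(\Gamma)$. These two circles together with $\phi(\Gamma)$ bound two doubly-connected regions $W_1, W_2 \subset \mathbb{A}_{r_1,R_1}$, where $W_k$ has $\phi(\Gamma)$ as one boundary component and the relevant analytic circle as the other, with $W_k \subset \phi(U \cap \riem_k)$.

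Second, invoke \cite[Theorems 3.3, 3.4 Sect 15.3]{ConwayII}: each $W_k$ admits a biholomorphism $\tau_k : W_k \rightarrow \mathbb{A}_{s_k,1}$ for some $0 < s_k < 1$, which extends to a homeomorphism of closures sending $\phi(\Gamma)$ to $\mathbb{S}^1$ and the analytic circle to $|z| = s_k$. Set
\[
  U_k := \phi^{-1}(W_k), \qquad \psi_k := \tau_k \circ \phi\big|_{U_k} : U_k \rightarrow \mathbb{A}_{s_k,1}.
\]
Then $\psi_k$ is a composition of biholomorphisms and extends continuously to $\text{cl}(U_k)$. The inner boundary of $U_k$ is $\phi^{-1}\{|z| = r_1\}$ or $\phi^{-1}\{|z| = R_1\}$, which is analytic because $\phi^{-1}$ is holomorphic in a neighbourhood of the respective circle; in particular this inner boundary is a Jordan curve contained in $U$.

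Finally, verify that $\psi_k$ satisfies the collar-chart conditions. Since $\phi$ is a homeomorphism of $U$ onto $\mathbb{A}_{r,R}$ and $\Gamma$ separates $U$ into the two components $U \cap \riem_1$ and $U \cap \riem_2$, each $U_k$ is an open subset of $U \cap \riem_k$ bounded by $\Gamma$ (which, via the border structure provided by Theorem \ref{th:embedded_is_bordered}, is identified with $\partial \riem_k$) and by the analytic Jordan curve above. The required isotopy within $\text{cl}(U_k)$ is obtained by pulling back the radial isotopy in $\text{cl}(W_k)$ through $\phi^{-1}$. No step here is particularly delicate; the only nontrivial ingredient is the classical conformal uniformization of doubly-connected regions bounded by Jordan curves, already used in the paper, and the rest is bookkeeping about the separating properties of Jordan curves in annuli.
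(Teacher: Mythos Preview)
Your proof is correct and follows essentially the same route as the paper, which simply says ``Applying the proof of Theorem~\ref{th:embedded_is_bordered} to each side of $\Gamma$ we obtain the desired $\psi_k$.'' You have unpacked that reference in full detail, and your extra step of first choosing intermediate radii $r_1,R_1$ is exactly what is needed to make the inner boundaries analytic curves lying strictly inside $U$, which the paper's one-line proof leaves implicit.
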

 \begin{proof} Applying the proof of Theorem \ref{th:embedded_is_bordered} to each side of $\Gamma$ we obtain the desired $\psi_k$.  
 \end{proof}
 
 \end{subsection}
 
 \begin{subsection}{Function spaces and holomorphic and harmonic forms}
  In this paper, we will denote positive constants in the inequalities by $C$ whose 
value is not crucial to the problem at hand. The value of $C$ may differ
from line to line, but in each instance could be estimated if necessary.  Moreover, when the values of constants in our estimates are of no significance for our main purpose, then we use the notation $a\lesssim b$ as a shorthand for $a\leq Cb$. If $a\lesssim b$ and $b\lesssim a$ then we write $a\approx b.$  \\
 
 On any Riemann surface,  define the dual of the almost 
 complex structure,  $\ast$ in local coordinates $z=x+iy$,  by 
 \[  \ast (a\, dx + b \, dy) = a \,dy - b \,dx. \]
This is independent of the choice of coordinates.
It can also be computed in coordinates that for any complex function $h$ 
\begin{equation} \label{eq:Wirtinger_to_hodge}
    2 \partial_z h = dh + i \ast dh.
\end{equation}

\begin{definition}
 We say a complex-valued function $f$ on an open set $U$ is \emph{harmonic} if it is $C^2$ on $U$ and $d \ast d f =0$. We say that a complex one-form $\alpha$ is harmonic if it is $C^1$ and satisfies
 both $d\alpha =0$ and $d \ast \alpha =0$. 
\end{definition}
  Equivalently, harmonic one-forms are those which can be expressed locally as $df$ for some harmonic function $f$. Harmonic one-forms and functions must of course be $C^\infty$. \\
  
   Denote complex conjugation of functions and forms with a bar, e.g. $\overline{\alpha}$.
 A holomorphic one-form is one which can be written in coordinates as $h(z)\,dz$ for a holomorphic function $h$, while an anti-holomorphic one-form is one which can be locally written $\overline{h(z)}\, d\bar{z}$ for a holomorphic function $h$.  
 
Denote by $L^2(U)$ the set of one-forms $\omega$ on an open set $U$ which satisfy
\[   \iint_U \omega \wedge \ast \overline{\omega} < \infty  \]
(observe that the integrand is positive at every point, as can be seen by writing the expression in local coordinates).  
This is a Hilbert space with respect to the inner product
\begin{equation} \label{eq:form_inner_product}
 (\omega_1,\omega_2) =  \iint_U \omega_1 \wedge \ast \overline{\omega_2}.
\end{equation}

\begin{definition}\label{defn:bergman spaces}
The \emph{Bergman space of holomorphic one forms} is 
\begin{equation}
    \gls{bergman}(U) = \{ \alpha \in L^2(U) \,:\, \alpha \ \text{holomorphic} \}.
\end{equation} 
 The anti-holomorphic Bergman space is denoted $\overline{\mathcal{A}(U)}$.   We will also denote 
\begin{equation}
    \gls{Aharm}(U) =\{ \alpha \in L^2(U) \,:\, \alpha \ \text{harmonic} \}.
\end{equation}
\end{definition}

Observe that $\mathcal{A}(U)$ and $\overline{\mathcal{A}(U)}$ are orthogonal with respect to the inner product \eqref{eq:form_inner_product}.  In fact we have the direct sum decomposition
\begin{equation}\label{direct sum decomposition}
 \mathcal{A}_{\mathrm{harm}}(U) = \mathcal{A}(U) \oplus \overline{\mathcal{A}(U)}.    
\end{equation}      
If we restrict the inner product to 
 $\alpha, \beta \in \mathcal{A}(U)$ then since $\ast \overline{\beta} = i \overline{\beta}$, we have   
\[  (\alpha,\beta) = i \iint_U \alpha \wedge \overline{\beta}.      \]

Denote the projections induced by this decomposition by 
\begin{align}  \label{eq:hol_antihol_projections_Bergman}
  \mathbf{P}_U: \mathcal{A}_{\mathrm{harm}}(U) & \rightarrow \mathcal{A}(U) \nonumber \\
  \overline{\mathbf{P}}_U : \mathcal{A}_{\mathrm{harm}}(U) & \rightarrow  \overline{\mathcal{A}(U)}.
\end{align}

Let $f: U \rightarrow V$ be a biholomorphism. We denote the pull-back of $\alpha \in \mathcal{A}_{\mathrm{harm}}(V)$
under $f$ by $f^*\alpha.$ 
Explicitly, if $\alpha$ is given in local coordinates $w$ by $a(w)\, dw + \overline{b(w)} \, d\bar{w}$ and $w=f(z),$ then the pull-back is given by 
\[   f^* \left( a(w)\, dw + \overline{b(w)} \,d\bar{w} \right)= a(f(z)) f'(z)\, dz + \overline{b(f(z))} \overline{f'(z)}\, d\bar{z}.   \]
The Bergman spaces are all conformally invariant, in the sense that if $f:U \rightarrow V$ is a biholomorphism, then $f^*\mathcal{A}(V) = \mathcal{A}(U)$ and this preserves the inner product.  Similar statements hold for the anti-holomorphic and harmonic spaces.\\ 

\begin{definition}\label{def: exact holo and harm forms}
 We define the space $\gls{exactA}_{\mathrm{harm}}(U)$ as the subspace of exact elements of $\mathcal{A}_{\mathrm{harm}}(U)$, and similarly for $\mathcal{A}^\mathrm{e}(\riem)$ and $\overline{\mathcal{A}^\mathrm{e}(\riem)}$.  
\end{definition}

The following spaces also play significant roles in this paper.
\begin{definition}\label{defn:dirichlet spaces}
The \emph{Dirichlet spaces of functions} are defined by 
\begin{align*}
   \gls{Dharm}(U)& = \{ f:U \rightarrow \mathbb{C}, f \in C^2(U), \,:\, 
   df\in L^2 (U)\,\,\,\mathrm{and}\,\, \, d\ast df =0 \},\\
   \gls{D}(U)& = \{ f:U \rightarrow \mathbb{C} \,:\, 
    df \in \mathcal{A}(U) \}, \ \text{and} \\
    \overline{\mathcal{D}(U)} & = \{ f:U \rightarrow \mathbb{C} \,:\, 
    df \in \overline{\mathcal{A}(U)} \}. \\
\end{align*}
\end{definition}
We can define a degenerate inner product on $\mathcal{D}_{\mathrm{harm}}(U)$ by 
\[   (f,g)_{\mathcal{D}_{\mathrm{harm}}(U)} = (df,dg)_{\mathcal{A}_{\mathrm{harm}}(U)},   \] 
where the right hand side is the inner product (\ref{eq:form_inner_product}) restricted to elements of $\mathcal{A}_{\mathrm{harm}}(U)$.  The inner product can be used to define a seminorm on $\mathcal{D}_{\mathrm{harm}}(U)$, by letting $$\Vert f\Vert^2_{\mathcal{D}_{\mathrm{harm}}(U)}:=(df,df)_{\mathcal{A}_{\mathrm{harm}}(U)}.$$ 



We note that if one defines the \emph{Wirtinger operators} via their local coordinate expressions
\[   \partial f = \frac{\partial f}{\partial z}\, dz,  \ \ \ 
   \overline{\partial} f =  \frac{\partial f}{\partial \bar{z}}\, d \bar{z}, \]
then the aforementioned inner product can be written as
\begin{equation} \label{eq:inner_product_with_dbar_and_d}
 (f,g)_{\mathcal{D}_{\mathrm{harm}}(U)} =i \iint_{U} \left[ \partial f \wedge \overline{\partial  g} -  \overline{\partial} f \wedge 
 \partial \overline{g} \right].    
\end{equation}
Although this implies that $\mathcal{D}(U)$ and $\overline{\mathcal{D}(U)}$ are orthogonal, there is no direct sum decomposition into $\mathcal{D}(U)$ and $\overline{\mathcal{D}(U)}$.  This is because in general there exist exact harmonic one-forms whose holomorphic and anti-holomorphic parts are not exact.  

Observe that the Dirichlet spaces are conformally invariant in the same sense as the Bergman spaces.  That is, if $f: U \rightarrow V$ is a biholomorphism then 
\begin{equation*}
 \gls{cf} h = h \circ f
\end{equation*} 
satisfies
\[  \mathbf{C}_f :\mathcal{D}(V) \rightarrow \mathcal{D}(U) \]
and this is a semi-norm preserving bijection. If $f(p)= q$ then $\mathbf{C}_f$ is an isometry from $\mathcal{D}(V)_q$ to $\mathcal{D}(U)_p$.  Similar statements hold for the anti-holomorphic and harmonic spaces. 

We also note that if $h \in \mathcal{D}(U)$ and $\tilde{h}(z) = h \circ \phi^{-1}(z)$ is the expression for $h$ in local coordinates $z= \phi(w)$ in an open set $\phi(U) \subseteq \mathbb{C}$,  then we have the local expression
\[  (h,h)_{\mathcal{D}(U)} = \iint_{\phi(U)}  |\tilde{h}'(z)|^2 dA_z   \] 
where $dA$ denotes Lebesgue measure in the plane. 
 Similar expressions hold for the other Dirichlet spaces.\\

Next we gather some results from the theory of Sobolev spaces which we shall use in this paper.\\

\begin{definition}\label{defn:sobolev space on Rn}
For $s\in \R$, one defines the \emph{Sobolev space} $\gls{sobolev}(\mathbb{R}^n),$  which consists of tempered distributions $u$ such that $$\Vert u\Vert^{2}_{H^{s}(\mathbb{R}^n)}:=\Vert (1-\Delta)^{s/2} u\Vert^{2}_{L^2(\mathbb{R}^n)}=\int_{\mathbb{R}^n} (1+|\xi|^2)^{s}|\widehat{u}(\xi)|^2 \, d\xi<\infty,$$ where $\widehat{u}(\xi)$ is the Fourier transform of $u$ defined by $\widehat{u}(\xi)=\int_{\mathbb{R}^n} u(x) \, e^{-ix\cdot\xi} \, dx$ and $$(1-\Delta)^{s/2} u(x)=\frac{1}{(2\pi)^n} \int_{\mathbb{R}^n} (1+|\xi|^2)^{s/2}\,\widehat{u}(\xi)\, e^{ix\cdot\xi}  \, d\xi .$$ The homogeneous Sobolev space $\gls{homsobolev}(\mathbb{R}^n),$ is the space of tempered distributions such that $\int_{\mathbb{R}^n} |\xi|
^{2s}\,|\widehat{u}(\xi)|^2 \, d\xi<\infty.$   
\end{definition}
 The scales of Sobolev spaces that are of particular interest for us are $s=1,\pm \frac{1}{2}$ (defined on various manifolds). For instance
$H^1(\R^n)$ consists of the space of tempered distributions $u$ for which 

\begin{equation}
\begin{split}
\|u\|_{H^{1}(\mathbb{R}^n)}:=\left(\int_{\mathbb{R}^n} |\nabla u(x)|^2 \, dx+\int_{\mathbb{R}^n}|u|^{2} d x\right)^{\frac{1}{2}}\\=:\left(\Vert u\Vert^2_{\dot{H}^{1}(\mathbb{R}^n)}+ \Vert u\Vert^2_{L^2(\mathbb{R}^n)}\right)^{\frac{1}{2}}<\infty,
\end{split}
\end{equation}
and $H^{1/2}(\R^n)$ consists of the space of tempered distributions $u$ for which
\begin{equation}\label{defn:H onehalf norm}
\begin{split}
\|u\|_{H^{1/2}(\mathbb{R}^n)}:=\left(\int_{\mathbb{R}^n} \int_{\mathbb{R}^n} \frac{|u(x)-u(y)|^{2}}{|x-y|^{n+1}} d x d y +\int_{\mathbb{R}^n}|u|^{2} d x\right)^{\frac{1}{2}}\\=:\left(\Vert u\Vert^2_{\dot{H}^{1/2}(\mathbb{R}^n)}+ \Vert u\Vert^2_{L^2(\mathbb{R}^n)}\right)^{\frac{1}{2}}<\infty.
\end{split}
\end{equation}

The Sobolev space $H^{s}(\mathbb{S}^1),$ $s\geq 0,$ will also play an important role in our investigations, whose definition we also recall. Given $f\in L^2(\mathbb{S}^1)$ one defines the Fourier coefficients and the Fourier series associated to $f$ by 
\begin{equation}\label{Fourier expansion}
\hat{f}(n)=\frac{1}{2 \pi} \int_{0}^{2 \pi} f(t) e^{-i n \theta} \, d\theta,\,\,\,
f=\sum_{n=-\infty}^{\infty} \hat{f}(n) e^{i n \theta}, \quad 
\end{equation}
where the convergence of the series is both in the $L^2$-norm and also pointwise almost everywhere. The Sobolev space $H^{s}(\mathbb{S}^1)$ is defined by
\begin{equation}
H^{s}(\mathbb{S}^1)=\left\{\varphi \in L^{2}(\mathbb{S}^1): \sum_{n=-\infty}^{\infty}\left(1+|n|^{2}\right)^{s}|\hat{f}(n)|^{2}<\infty\right\}.
\end{equation}
Like all other $L^2$-based Sobolev spaces, $H^{s}(\mathbb{S}^1)$ is a Hilbert space and given $f, \, g\in H^{s}(\mathbb{S}^1)$ their scalar product is given by

\begin{equation}
\langle f, g\rangle_{H^{s}(\mathbb{S}^1)}=\sum_{n=-\infty}^{\infty}\left(1+|n|^{2}\right)^{s} \hat{f}(n) \overline{\hat{g}(n)},
\end{equation}
and so 
\begin{equation}
 \Vert f\Vert_{H^{s}(\mathbb{S}^1)}= \left(\sum_{n=-\infty}^{\infty}\left(1+|n|^{2}\right)^{s}|\hat{f}(n)|^{2}\right)^{1/2}.   
\end{equation}
Of particular interest in this paper, are the functions in the Sobolev space $H^{1/2}(\mathbb{S}^1)$ for which one also has the analogue of \eqref{defn:H onehalf norm}, i.e.
  
    \begin{equation}\label{defn:sobolev half norm}
      \Vert f\Vert_{H^{1/2}(\mathbb{S}^1)}:=\Big( \int_{\mathbb{S}^1} \int_{\mathbb{S}^1} \frac{|f(z)-f(\zeta)|^2}{|z-\zeta|^2}\,|dz|\, |d \zeta|+\Vert f\Vert^2_{L^2(\mathbb{S}^1)}\Big)^{1/2}.
  \end{equation}
As was shown by J. Douglas \cite{Douglas}, for a function $F\in \mathcal{D}_{\mathrm{harm}}(\disk)$ ($\disk$ denotes the unit disk), then the restriction of $F$ to $\mathbb{S}^1$ is in $H^{1/2}(\mathbb{S}^1)$ and if the boundary value of $F$ is denoted by $f$ then one has that

\begin{equation}\label{Douglas formula}
  \Vert F\Vert^2_{\mathcal{D}_{\mathrm{harm}}(\disk)}=\pi \int_{0}^{2\pi} \int_{0}^{2\pi} \frac{|{f}(z)-f(\zeta)|^2}{|z-\zeta|^2}\,|dz|\, |d \zeta|.
\end{equation}
The dual of $H^{1/2}(\mathbb{S}^1)$, identified with $H^{-1/2}(\mathbb{S}^1)$, consists of linear functionals $L$ on $H^{1/2}(\mathbb{S}^1)$ with the property that if $\alpha_n:= L(e^{in \theta})$ (this is the action of the funcional $L$ on the function $e^{in \theta}$), then
 \begin{equation}
     \sum_{n=-\infty}^{\infty }\frac{|\alpha_n |^{2}}{(1+|n|^2)^{1/2}} <\infty.
 \end{equation}
 Moreover one has
 \begin{equation}\label{negative onehalf norm}
\|L\|_{H^{-1/2}(\mathbb{S}^1)}=\sup _{\|g\|_{H^{1/2}(\mathbb{S}^1)}=1}|\sum_{n=-\infty}^{\infty} \alpha(n) \overline{\hat{g}(n)}|.
\end{equation}

We shall also recall the following useful embedding result, whose proof can be found in \cite{Triebel}.

\begin{theorem}
 Let $1\leq p<\infty$ and $s\geq 0$ with $s+\frac{1}{p}\geq \frac{1}{2}$ then one has the continuous inclusion $($embedding$)$
 \begin{equation}\label{Sobolev embed}
     H^{s}(\mathbb{S}^1)\subset L^p(\mathbb{S}^1).
 \end{equation}
\end{theorem}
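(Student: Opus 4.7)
The plan is to split the argument into three regimes of $p$, with the endpoint being the main obstacle.

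When $1\leq p\leq 2$, the hypothesis $s+1/p\geq 1/2$ is automatic from $s\geq 0$. Since $\mathbb{S}^1$ has finite measure, H\"older's inequality yields $\|f\|_{L^p(\mathbb{S}^1)}\leq (2\pi)^{1/p-1/2}\|f\|_{L^2(\mathbb{S}^1)}$, and the Fourier-series definition gives $\|f\|_{L^2}=\|f\|_{H^0}\leq \|f\|_{H^s}$ since $(1+|n|^2)^s\geq 1$ for $s\geq 0$. This settles the easy range.

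When $p>2$ and $s+1/p>1/2$ strictly, set $p'=p/(p-1)\in(1,2)$. I would combine the Hausdorff--Young inequality on $\mathbb{S}^1$, $\|f\|_{L^p(\mathbb{S}^1)}\lesssim \|\hat f\|_{\ell^{p'}(\mathbb{Z})}$, with H\"older's inequality on the Fourier side. Inserting the factorization
$|\hat f(n)|^{p'}=\bigl[(1+|n|^2)^{s/2}|\hat f(n)|\bigr]^{p'}(1+|n|^2)^{-sp'/2}$
and applying H\"older with conjugate exponents $2/p'$ and $2/(2-p')$ gives
\[
\|\hat f\|_{\ell^{p'}}^{p'}\leq \|f\|_{H^s(\mathbb{S}^1)}^{p'}\cdot \Big(\sum_{n\in\mathbb{Z}}(1+|n|^2)^{-sp'/(2-p')}\Big)^{(2-p')/2},
\]
and the last series converges precisely when $sp'/(2-p')>1/2$, equivalently $s+1/p>1/2$.

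The hard part is the endpoint $p>2$, $s+1/p=1/2$, where the preceding summability fails exactly on the critical line. Here my plan is to replace the H\"older step by a Hardy--Littlewood--Sobolev estimate: writing $f=G_s\ast g$ with $g\in L^2(\mathbb{S}^1)$ and $\|g\|_{L^2}\approx \|f\|_{H^s(\mathbb{S}^1)}$, where $G_s$ is the periodic Bessel kernel (behaving like $|\sin(\theta/2)|^{s-1}$ near the origin for $0<s<1$), the HLS convolution estimate $L^2\ast |\cdot|^{s-1}\hookrightarrow L^p$ with $1/p=1/2-s$ yields the embedding on the critical line, while the smooth tail of $G_s$ contributes only harmless lower-order terms. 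The general-manifold formulation is carried out in Triebel, cited in the text.
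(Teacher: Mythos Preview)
Your proof is correct. The paper itself does not prove this statement; it simply records it as a known embedding and refers the reader to Triebel's monograph. So there is nothing to compare against, and your argument supplies what the paper deliberately outsources.

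A few remarks on your write-up. The two easy regimes are handled cleanly. In the strict subcritical case your Hausdorff--Young plus H\"older computation is correct, and the summability condition $sp'/(2-p')>1/2$ does rearrange to $s+1/p>1/2$ as you claim. For the endpoint $s+1/p=1/2$ with $p>2$, note that automatically $0<s<1/2$, so the periodic Bessel kernel genuinely has the local singularity $|\theta|^{s-1}$ with exponent in $(-1,0)$, and the one-dimensional Hardy--Littlewood--Sobolev inequality applies with $q=2$, $\lambda=1-s$, giving exactly $1/p=1/2-s$. The smooth remainder of $G_s$ is bounded and contributes an $L^2\to L^\infty\hookrightarrow L^p$ term, so your ``harmless lower-order terms'' remark is justified. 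This is a perfectly standard route to the critical Sobolev embedding on the circle, and it is more explicit than a bare citation.
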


Now regarding Sobolev spaces on manifolds, we first recall the definition of Sobolev $H^{s}(M)$, $s\in \R$ for compact manifolds $M$, see e.g. \cite{Booss}. 
\begin{definition} 
Let $M$ be an $n-$dimensional smooth compact manifold without boundary, with the smooth atlas $(\phi_j, U_j)$ and the corresponding smooth partition of unity $\psi_j$ with $\psi_j\geq 0$, $\mathrm{supp}\, \psi_j \subset U_j$ and $\sum_j \psi_j =1$. Given $s\geq 0,$ the Sobolev spaces $H^{s}(M)$ are the space of complex-valued $L^2$ functions on $M$ for which
\begin{equation}\label{defn: Sobolev norm}
\Vert f\Vert_{H^{s} (M)}:=\sum_j \Vert (\psi_j f)\circ \phi^{-1}_j \Vert_{H^{s}(\mathbb{R}^n)}<\infty.
\end{equation}
The homogeneous Sobolev space $\dot{H}^{s} (M)$ is defined using \eqref{defn: Sobolev norm} by substituting $H^{s}(\mathbb{R}^n)$ with $\dot{H}^{s}(\mathbb{R}^n)$. 
\end{definition}

It is also well-known that different choices of the atlas and its corresponding partition of unity, produces norms that are equivalent with \eqref{defn: Sobolev norm}.

 Next let $X$ be a smooth compact $n$-dimensional manifold with smooth boundary $\mathrm{bd}(X)$ and fix a Riemannian structure on $X$. Use the Riemannian structure to construct a collar neighbourhood $N=\mathrm{bd}(X) \times I$ of the boundary
$\mathrm{bd}(X)$ and denote the (inward) normal coordinate by $t \in I=[0,1]$. We may assume that $X$ is a submanifold of a closed  compact, smooth manifold $M$, which is the compact double of $X$.\\
\begin{definition}
\label{defn:sobolev space on manifold with bound} 
Let $X$ be a smooth compact $n$-dimensional manifold with boundary. We can regard $X$ as a submanifold of a closed smooth $n$-dimensional manifold $M$ (i.e. $M$ is compact without boundary as above). Then the space $H^{s}(X)$ consists of the restrictions $\left\{\mathbf{R} u;\, u \in H^{s}(M)\right\}$
where $\mathbf{R}: L^{2}(M) \rightarrow L^{2}(X)$ denotes the \emph{restriction operator} $\left.u \mapsto u\right|_{X}.$
\end{definition}
 In this connection one also has the fundamental fact about Sobolev spaces on manifolds with boundary that asserts that the \emph{trace map}, i.e. the map 
 \begin{equation*}
    \mathrm{Tr}: u\mapsto u|_{\mathrm{bd}(X)}
 \end{equation*}
 from $H^{s}(X) \rightarrow H^{s-\frac{1}{2}} (\mathrm{bd}(X))$ is continuous for $s>\frac{1}{2},$ see e.g. \cite[Theorem 11.4, p 68]{Booss}.\\

 Ahead, we will show that the border structure on a Riemann surface induces a smooth boundary in the Riemannian sense above, so that Sobolev trace can be applied. In this section, we will keep the notation $\mathrm{bd}(X)$ to denote the boundary in the sense above. Once it is established that the theory applies to the case of the border of a Riemann surface, we will return to the notation $\partial \riem$. 

Occasionally, we will also use the invariance of the Sobolev space $H^s$  under diffeomorphisms. We state this below as a lemma whose proof could be found in Lemma 1.3.3 in \cite{Gilkey}, or even more explicitly as Theorem 9.2.3 in \cite{van den ban}, or by using interpolation between the well-known results for Sobolev spaces of integer scales.
 \begin{lemma}\label{lem:invariance of sobolev under diffeo}
Let $s\in \mathbb{R}$ and $\psi$ be a diffeomorphism of an open set $U_{1} \subset
\R ^{n}$ onto another open set $U_{2} \subset \mathbb{R}^{n}$ such that $\psi\in \mathcal{C}^{\infty}(\mathrm{cl}(U_1))$ and $\psi^{-1}\in \mathcal{C}^{\infty}(\mathrm{cl}(U_2))$. Then one has 
$$
\|f \circ \psi\|_{{H}^s (U_1)} \approx \|f\|_{{H}^s (U_{2})}.
$$
 \end{lemma}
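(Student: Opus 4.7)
The plan is to reduce the statement to the model case $U_1=U_2=\mathbb{R}^n$ and then bootstrap through three scales: integer $s\geq 0$, general $s\geq 0$ by interpolation, and $s<0$ by duality. A preliminary step is to pass from open sets to the whole space. Since $\psi$ and $\psi^{-1}$ extend smoothly to the closures $\mathrm{cl}(U_1)$ and $\mathrm{cl}(U_2)$, Seeley's extension produces a smooth diffeomorphism $\widetilde\psi$ of a neighborhood of $\mathrm{cl}(U_1)$ onto a neighborhood of $\mathrm{cl}(U_2)$, which can be further modified (multiplying $\widetilde\psi-\mathrm{id}$ by a cutoff far from $U_1$) to agree with the identity outside a bounded set, giving a global diffeomorphism of $\mathbb{R}^n$. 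Using the restriction definition of $H^s(U)$ together with a total extension operator $E:H^s(U_2)\to H^s(\mathbb{R}^n)$, the estimate $\|f\circ\psi\|_{H^s(U_1)}\lesssim\|f\|_{H^s(U_2)}$ follows from $\|(Ef)\circ\widetilde\psi\|_{H^s(\mathbb{R}^n)}\lesssim\|Ef\|_{H^s(\mathbb{R}^n)}\lesssim\|f\|_{H^s(U_2)}$, and the reverse inequality is symmetric.

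Once reduced to $\mathbb{R}^n$, I would handle $s=0$ by the ordinary change-of-variables formula, noting that the Jacobian determinant $|\det D\widetilde\psi|$ and its reciprocal are uniformly bounded on the (relatively compact) set where $\widetilde\psi$ differs from the identity. For integer $s=k\geq 1$, I would apply Faà di Bruno / the iterated chain rule to write $\partial^\alpha(f\circ\widetilde\psi)$ as a finite sum of terms of the form $(\partial^\beta f)\circ\widetilde\psi$ times polynomial expressions in the (bounded) partial derivatives of $\widetilde\psi$ of order $\leq|\alpha|$; taking $L^2$ norms and the change of variables estimate yields $\|f\circ\widetilde\psi\|_{H^k}\lesssim\|f\|_{H^k}$. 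The other direction is immediate by the same argument applied to $\widetilde\psi^{-1}$.

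For general real $s\geq 0$, I would invoke complex interpolation: the map $C_{\widetilde\psi}:f\mapsto f\circ\widetilde\psi$ is bounded $H^{k}\to H^{k}$ for every nonnegative integer $k$ with a uniform-in-$k$-free constant, so by the Calder\'on complex interpolation theorem for the scale $[H^{k_0},H^{k_1}]_\theta=H^s$ with $s=(1-\theta)k_0+\theta k_1$, the operator is bounded on $H^s(\mathbb{R}^n)$. For $s<0$, I would use the duality pairing $\langle\cdot,\cdot\rangle$ between $H^s$ and $H^{-s}$ together with the change-of-variables identity $\langle f\circ\widetilde\psi,g\rangle=\langle f,|\det D\widetilde\psi^{-1}|\,(g\circ\widetilde\psi^{-1})\rangle$; since multiplication by the smooth, compactly-supported-perturbation-of-1 function $|\det D\widetilde\psi^{-1}|$ is bounded on $H^{-s}$ and composition with $\widetilde\psi^{-1}$ is bounded on $H^{-s}$ by the positive-$s$ case, the claim follows.

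The main obstacle is the reduction from open sets to $\mathbb{R}^n$: one must build the global diffeomorphism $\widetilde\psi$ and ensure that the extension operators used on $f$ and on the composition interact compatibly with the restriction-norm definitions of $H^s(U_1)$ and $H^s(U_2)$. After this geometric preparation, the remaining analytic steps (chain rule, interpolation, duality) are routine and closely follow the treatments cited in \cite{Gilkey} and \cite{van den ban}.
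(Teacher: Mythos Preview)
Your proposal is correct and follows exactly the route the paper itself suggests: the paper does not give its own proof of this lemma but cites Lemma~1.3.3 in \cite{Gilkey} and Theorem~9.2.3 in \cite{van den ban}, and explicitly mentions ``using interpolation between the well-known results for Sobolev spaces of integer scales'' as an alternative, which is precisely your chain-rule/interpolation/duality scheme. There is nothing to compare; you have supplied the details behind the paper's citation.
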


 { The following result is quite useful in connection to the boundedness of certain operators which will be introduced later. In fact this theorem enables us to turn our estimates into conformally invariant ones through suitable choices of the norms involved in the estimates.
 \begin{theorem}\label{thm:equiv sobolev norm}
  Let $X$ be a compact Riemannian manifold with smooth boundary, for which the homogeneous and inhomogeneous Sobolev spaces are well-defined. Assume that $\mathscr{F}$ is a non-negative functional on $H^s(X)$, $s>0$, with the following properties:\\
  \begin{enumerate}
      \item [$(1)$] $\mathscr{F}$ is real-valued and for all $c\in \mathbb{C}$ and $f\in{H}^s(X)$, $\mathscr{F}(cf)= |c|\mathscr{F}(f)$;\\
      \item [$(2)$] For $f\in{H}^s(X)$, there exists a constant $C$ $($independent of $f$$)$ such that \begin{equation*}0\leq \mathscr{F}(f)\leq C\Vert f\Vert_{H^s(X)};\end{equation*}\vspace{0.08cm} \item  [$(3)$] For $f\equiv1$ on $\mathrm{cl}(X)$ one has that $\mathscr{F}(f)\neq 0$.
  \end{enumerate}
  \vspace{0.1cm}
 Then there are constants $C_1$ and $C_2$ such that for $f\in H^s(X)$ one has 
  
 \begin{equation}\label{control Sobolev by Dirichlet}
  C_1\Big( \Vert f\Vert^2{_{\dot{H}^s(X)}}+ (\mathscr{F}(f))^2\Big)^{1/2}\leq \Vert f\Vert_{H^s(X)}  \leq C_2 \Big( \Vert f\Vert^2{_{\dot{H}^s(X)}}+ (\mathscr{F}(f))^2\Big)^{1/2}.
 \end{equation}
 \end{theorem}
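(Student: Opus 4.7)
The plan is to prove the two inequalities separately. The lower bound $C_{1}\bigl(\|f\|_{\dot{H}^{s}(X)}^{2}+\mathscr{F}(f)^{2}\bigr)^{1/2}\leq \|f\|_{H^{s}(X)}$ is essentially immediate: by definition of the inhomogeneous norm one has $\|f\|_{\dot H^{s}(X)}\leq \|f\|_{H^{s}(X)}$, while hypothesis $(2)$ gives $\mathscr{F}(f)\leq C\|f\|_{H^{s}(X)}$. Squaring, adding, and taking square roots produces the lower bound with $C_{1}=1/\sqrt{1+C^{2}}$.

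For the upper bound I would run a compactness--contradiction argument of the type used to prove Poincar\'e inequalities. Suppose no constant $C_{2}$ works. Then there is a sequence $\{f_{n}\}\subset H^{s}(X)$ with $\|f_{n}\|_{H^{s}(X)}=1$ but $\|f_{n}\|_{\dot H^{s}(X)}^{2}+\mathscr{F}(f_{n})^{2}\to 0$. The sequence is bounded in $H^{s}(X)$, so by Rellich--Kondrachov (applicable since $s>0$ and $X$ is compact) we extract a subsequence, still called $f_{n}$, which converges weakly in $H^{s}(X)$ and strongly in $L^{2}(X)$ to some limit $f$. Weak lower semicontinuity of the homogeneous seminorm gives $\|f\|_{\dot H^{s}(X)}=0$, and since $s>0$ this forces $f$ to be locally constant; on each connected component of $X$ it equals a constant. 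Moreover, the near-equivalence $\|f_{n}\|_{H^{s}(X)}^{2}\approx \|f_{n}\|_{\dot H^{s}(X)}^{2}+\|f_{n}\|_{L^{2}(X)}^{2}$ forces $\|f_{n}\|_{L^{2}(X)}\to 1$, so $f$ is a nonzero constant.

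To close the contradiction I would upgrade the convergence to strong convergence in $H^{s}(X)$, which follows because $\|f_{n}-f\|_{\dot H^{s}(X)}\to 0$ (the seminorm of a constant is zero) and $\|f_{n}-f\|_{L^{2}(X)}\to 0$. Using hypothesis $(2)$ together with the natural sublinearity/homogeneity of $\mathscr{F}$ (properties $(1)$--$(2)$), one then has $|\mathscr{F}(f_{n})-\mathscr{F}(f)|\leq \mathscr{F}(f_{n}-f)\leq C\|f_{n}-f\|_{H^{s}(X)}\to 0$, so $\mathscr{F}(f)=\lim \mathscr{F}(f_{n})=0$. But property $(3)$ combined with homogeneity $(1)$ yields $\mathscr{F}(f)=|c|\,\mathscr{F}(1)\neq 0$, the desired contradiction.

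The main obstacle is the last step: passing to the limit inside $\mathscr{F}$. Hypothesis $(2)$ only provides an upper bound, not an explicit triangle inequality, so one either reads subadditivity into the hypotheses (which holds for all concrete $\mathscr{F}$ encountered later in the paper, typically boundary integrals or traces) or augments the argument with the specific form of $\mathscr{F}$ at hand. Everything else -- Rellich--Kondrachov, lower semicontinuity of the seminorm, vanishing of the seminorm forcing constancy -- is standard Sobolev machinery, and the only non-standard ingredient is the observation that a functional $\mathscr{F}$ which separates constants can replace the $L^{2}$ piece of the inhomogeneous norm, giving a flexible conformally invariant substitute.
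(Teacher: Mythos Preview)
Your argument is correct modulo the subadditivity gap you honestly flag, and it proceeds by a genuinely different route than the paper. The paper sets $\Phi(f)=\bigl(\|f\|_{\dot H^{s}}^{2}+\mathscr{F}(f)^{2}\bigr)^{1/2}$, checks that $\Phi$ is a norm on $H^{s}(X)$ (homogeneity from (1), triangle inequality asserted ``trivially'', positive-definiteness from (3) together with the fact that $\|f\|_{\dot H^{s}}=0$ forces $f$ constant), notes $\Phi\leq A\|\cdot\|_{H^{s}}$ from (2), and then invokes the two-norm lemma based on the open mapping theorem (Rudin, \emph{Functional Analysis}, Cor.~2.12(b)) to obtain the reverse bound. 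Your Rellich--Kondrachov compactness argument is more hands-on and has the advantage of not relying on completeness of $H^{s}(X)$ in the $\Phi$-norm, which the paper's appeal to open mapping implicitly requires but does not verify.

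Both proofs share exactly the gap you identify: the paper's ``trivial'' triangle inequality for $\Phi$ needs subadditivity of $\mathscr{F}$, which follows from hypotheses (1)--(3) no more than the continuity you need to pass to the limit. Without some such assumption the statement is in fact false: take $\mathscr{F}(f)=\|f\|_{L^{2}}$ when $f$ is constant and $\mathscr{F}(f)=0$ otherwise; then (1)--(3) hold, but $f_{n}=1+g/n$ with $g$ non-constant gives $\Phi(f_{n})\to 0$ while $\|f_{n}\|_{H^{s}}\to\|1\|_{H^{s}}>0$. In every application in the paper $\mathscr{F}$ is the absolute value of a bounded linear functional---a boundary integral such as $\bigl|\int_{\partial_{k}\Sigma}h\ast d\omega_{k}\bigr|$ or a point evaluation---so subadditivity holds automatically and the gap is cosmetic.
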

 \begin{proof}
 Set $\Phi(f):= \Big( \Vert f\Vert^2{_{\dot{H}^s(X)}}+ (\mathscr{F}(f))^2\Big)^{1/2}.$ Then trivially one has that $\Phi( f_1+ f_2)\leq \Phi(f_1)+ \Phi(f_2),$ and for any $c\in \mathbb{C}$ one has $\Phi(c f)= |c|\Phi(f)$. Moreover $\Phi$ is injective, since if $\Phi(f)=0$ then $\Vert f\Vert_{_{\dot{H}^s(X)}}=0$ and $\mathscr{F}(f)=0$. The first equality yields that $f= \mathrm{constant}$, and from the second inequality and the assumption on $\mathscr{F}$ it follows that $f=0.$ This shows that $\Phi(\cdot)$ defines a norm on $H
^s (X).$ Furthermore since the continuity of $\mathscr{F}$ implies that $\Phi(f)\leq A \Vert f\Vert_{H^s(X)},$ a result based on Banach's open mapping theorem, see e.g. \cite{Rudin} Corollary 2.12b, yields that $\Vert f\Vert \leq B \Phi(f).$ Taking $C_1={1}/{A}$ and $C_2= B$ we obtain \eqref{control Sobolev by Dirichlet}.
 \end{proof}} A useful corollary of this result is the following
\begin{corollary}\label{good sobolev cor}
 Let $F\in \mathcal{D}_{\mathrm{harm}}(\disk)$and let $f$ denote the boundary value of $F$. Then one has
 
  \begin{equation}\label{cor:sobolev half norm}
      \Vert f\Vert_{H^{1/2}(\mathbb{S}^1)}\approx |F(0)|+\Vert F\Vert_{\mathcal{D}_{\mathrm{harm}}(\disk)}.
  \end{equation}
 
\end{corollary}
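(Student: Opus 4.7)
The plan is to recognize the stated equivalence as a direct instance of Theorem \ref{thm:equiv sobolev norm} applied to the circle, with the Douglas identity \eqref{Douglas formula} providing the crucial link between the homogeneous $\dot{H}^{1/2}$ seminorm on $\mathbb{S}^1$ and the Dirichlet seminorm on $\disk$.

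First I would fix $F \in \mathcal{D}_{\mathrm{harm}}(\disk)$ and define the non-negative functional $\mathscr{F}(f) := |F(0)|$ on $H^{1/2}(\mathbb{S}^1)$, where we interpret $F$ as the harmonic extension (Poisson integral) of $f$. Equivalently, since $F(0) = \frac{1}{2\pi}\int_0^{2\pi} f(e^{i\theta})\, d\theta = \hat{f}(0)$, the quantity $\mathscr{F}(f)$ is just the modulus of the zero-th Fourier coefficient of $f$. I need to verify the three hypotheses of Theorem \ref{thm:equiv sobolev norm} with $X = \mathbb{S}^1$ and $s = 1/2$: property (1) is clear from linearity of the Poisson integral and absolute homogeneity of the modulus; property (2) follows since $|\hat{f}(0)| \leq \Vert f\Vert_{L^2(\mathbb{S}^1)} \leq \Vert f\Vert_{H^{1/2}(\mathbb{S}^1)}$; and property (3) holds because when $f \equiv 1$, the Poisson integral is the constant function $1$, whose value at $0$ is nonzero.

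With these verified, Theorem \ref{thm:equiv sobolev norm} yields
\begin{equation*}
\Vert f\Vert_{H^{1/2}(\mathbb{S}^1)} \approx \Big(\Vert f\Vert^2_{\dot{H}^{1/2}(\mathbb{S}^1)} + |F(0)|^2\Big)^{1/2}.
\end{equation*}
Next I would invoke the Douglas formula \eqref{Douglas formula}, which gives $\Vert F\Vert^2_{\mathcal{D}_{\mathrm{harm}}(\disk)} = \pi\,\Vert f\Vert^2_{\dot{H}^{1/2}(\mathbb{S}^1)}$ (reading off the seminorm portion of \eqref{defn:sobolev half norm}). Substituting and using the elementary equivalence $\sqrt{a^2 + b^2} \approx a + b$ for $a,b \geq 0$ immediately produces
\begin{equation*}
\Vert f\Vert_{H^{1/2}(\mathbb{S}^1)} \approx |F(0)| + \Vert F\Vert_{\mathcal{D}_{\mathrm{harm}}(\disk)},
\end{equation*}
which is the claim.

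There is really no hard step here; the proof is essentially an application of the general equivalence result to the natural functional $f \mapsto |\hat{f}(0)|$, combined with the classical Douglas identity. The only subtlety worth double-checking is that the identification $f \leftrightarrow F$ via the Poisson integral is well-defined on $H^{1/2}(\mathbb{S}^1)$ and that $F(0)$ depends continuously on $f$, both of which are standard.
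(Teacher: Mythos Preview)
Your proof is correct and follows essentially the same route as the paper: define $\mathscr{F}(f)=|F(0)|=|\hat f(0)|$, verify the three hypotheses of Theorem~\ref{thm:equiv sobolev norm} (homogeneity, the bound $|\hat f(0)|\le \|f\|_{L^2}\le \|f\|_{H^{1/2}}$, and nontriviality on constants), apply that theorem, then use the Douglas formula \eqref{Douglas formula} and the elementary equivalence $\sqrt{a^2+b^2}\approx a+b$ to finish. The paper does exactly this, with the same identifications.
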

 \begin{proof}
 Since $f\in H^{1/2}(\mathbb{S}^1)$, we know that $f\in L^2(\mathbb{S}^1)$ and so $f=\sum_{n=-\infty}^{\infty} \hat{f}(n) e^{i n \theta},$ with convergence almost everywhere, where $\hat{f}(n)$ is given by \eqref{Fourier expansion}.
  Therefore, for the harmonic extension $F$ of $f$, one has that $F(0)= \hat{f}(0)$ and using Parseval's identity we obtain
  \begin{equation}\label{L2boundary domibation}
  |F(0)|=|\hat{f}(0)|\leq \Big(\sum_{n=-\infty}^{\infty} |\hat{f}(n)|^2\Big)^{1/2}=\frac{1}{\sqrt{2\pi}} \Vert f\Vert_{L^2(\mathbb{S}^1)}.  
 \end{equation}
 Hence using \eqref{L2boundary domibation} and \eqref{defn:sobolev half norm} among others, one can easily check that the functional $$\mathscr{F}(f):= |F(0)|$$ satisfies all the conditions of Theorem \ref{thm:equiv sobolev norm}. Hence Theorem \ref{thm:equiv sobolev norm} and equation \eqref{defn:sobolev half norm} yield that
  $$\Vert f\Vert_{H^{1/2}(\mathbb{S}^1)}\approx \Big(|F(0)|^2+\int_{0}^{2\pi} \int_{0}^{2\pi} \frac{|{f}(z)-f(\zeta)|^2}{|z-\zeta|^2}\,|dz|\, |d \zeta|\Big)^{1/2}.$$ Finally, \eqref{Douglas formula} and the elementary inequality $\frac{1}{\sqrt{2}}(|a|+|b|)\leq (|a|^2 + |b|^2)^{1/2}\leq |a|+|b|$ shows that \eqref{cor:sobolev half norm} is valid. 
 \end{proof}


We also record a rather general fact that is often useful in connection to various boundedness results involving Sobolev spaces, see e.g. Theorem 2.6 in \cite{Chipot} for a proof.
\begin{theorem}\label{thm:H1-L2} 
Let $\Omega$ be a domain whose boundary is locally the graph of a Lipschitz function $($i.e. a Lipschitz domain$)$. Then there exits a unique continuous linear mapping $\gamma: H^1(\Omega) \to L^2(\mathrm{bd}(\Omega))$ such that $\gamma(u)=u|_{\mathrm{bd}(\Omega)}$. In particular, one as the estimate

\begin{equation}\label{L2-sobolev estim}
    \int_{\mathrm{bd}(\Omega)} |u|^2 \lesssim \Vert u \Vert^{2}_{H^1(\Omega)}.
\end{equation}

\end{theorem}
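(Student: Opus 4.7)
The plan is to reduce the statement to a model estimate on the half-space, exploit the Lipschitz hypothesis to straighten the boundary locally, and then pass from smooth functions to $H^1(\Omega)$ by density.

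First, using the Lipschitz regularity of $\mathrm{bd}(\Omega)$, I would cover the boundary by a locally finite collection of open sets $\{U_i\}_{i\geq 1}$ such that on each $U_i$ there is a bi-Lipschitz diffeomorphism $\Phi_i : U_i \to B_i$, with $B_i$ an open ball, sending $U_i \cap \Omega$ to $B_i \cap \mathbb{R}^n_+$ and $U_i \cap \mathrm{bd}(\Omega)$ to $B_i \cap \{x_n = 0\}$. Adjoining an interior open set $U_0 \Subset \Omega$, I would select a subordinate partition of unity $\{\psi_i\}$. For $u \in C^1(\mathrm{cl}(\Omega))$, the functions $v_i := (\psi_i u) \circ \Phi_i^{-1}$ are compactly supported in $B_i \cap \mathrm{cl}(\mathbb{R}^n_+)$. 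Since the Jacobian of $\Phi_i$ is bounded above and below almost everywhere (by Rademacher's theorem applied to $\Phi_i$ and $\Phi_i^{-1}$), one has the bi-Lipschitz equivalence $\|v_i\|_{H^1(\mathbb{R}^n_+)} \approx \|\psi_i u\|_{H^1(\Omega \cap U_i)}$ and likewise on the boundary.

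Second, I would establish the model half-space estimate for smooth $v$ compactly supported in $\mathrm{cl}(\mathbb{R}^n_+)$, namely
\begin{equation*}
\int_{\mathbb{R}^{n-1}} |v(x', 0)|^2\, dx' \lesssim \|v\|_{H^1(\mathbb{R}^n_+)}^2.
\end{equation*}
The key identity is the fundamental theorem of calculus in the normal direction:
\begin{equation*}
|v(x', 0)|^2 = -\int_0^\infty \partial_{x_n} |v(x', x_n)|^2\, dx_n = -2 \int_0^\infty \mathrm{Re}\bigl(\overline{v(x',x_n)}\, \partial_{x_n} v(x', x_n)\bigr)\, dx_n.
\end{equation*}
Integrating in $x' \in \mathbb{R}^{n-1}$ and applying Cauchy--Schwarz yields
\begin{equation*}
\int_{\mathbb{R}^{n-1}} |v(x',0)|^2\, dx' \leq 2\, \|v\|_{L^2(\mathbb{R}^n_+)}\, \|\partial_{x_n} v\|_{L^2(\mathbb{R}^n_+)} \leq \|v\|^2_{H^1(\mathbb{R}^n_+)}.
\end{equation*}

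Third, summing the local estimates for $v_i$ over $i \geq 1$ via the partition of unity and the bi-Lipschitz equivalence from the first step yields $\|u|_{\mathrm{bd}(\Omega)}\|_{L^2(\mathrm{bd}(\Omega))} \lesssim \|u\|_{H^1(\Omega)}$ for all $u \in C^1(\mathrm{cl}(\Omega))$. To extend this to all of $H^1(\Omega)$, I would invoke the density of $C^\infty(\mathrm{cl}(\Omega)) \cap H^1(\Omega)$ in $H^1(\Omega)$, which is available precisely because $\Omega$ is Lipschitz. Given $u \in H^1(\Omega)$ with smooth approximants $u_k \to u$ in $H^1$, the sequence $\{u_k|_{\mathrm{bd}(\Omega)}\}$ is Cauchy in $L^2(\mathrm{bd}(\Omega))$; its limit defines $\gamma(u)$, and the linearity, continuity, and estimate pass to the limit. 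Uniqueness of the extension is immediate since $\gamma$ is prescribed on a dense subset.

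The main obstacle is less the half-space estimate itself than the two standard but nontrivial ingredients underpinning the reduction: the preservation of $H^1$ under bi-Lipschitz charts (which requires Rademacher's theorem to justify the chain rule under mere Lipschitz regularity of the transition maps) and the density of $C^\infty(\mathrm{cl}(\Omega)) \cap H^1(\Omega)$ in $H^1(\Omega)$ for Lipschitz domains. Both facts use the Lipschitz hypothesis in an essential way, and neither admits a straightforward extension to rougher boundaries; this is exactly why the theorem is stated for Lipschitz domains.
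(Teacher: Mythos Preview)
Your argument is correct and follows the standard textbook route: localize via a Lipschitz partition of unity, prove the half-space trace estimate by the fundamental theorem of calculus in the normal direction, and extend by density of $C^\infty(\mathrm{cl}(\Omega))$ in $H^1(\Omega)$ for Lipschitz domains. The paper does not supply its own proof of this theorem at all; it simply records the result and refers the reader to Theorem~2.6 in Chipot's \emph{Elements of Nonlinear Analysis}, so there is nothing to compare against beyond noting that your sketch is essentially the proof one finds in such references.
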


Now let us turn to Sobolev spaces on bordered Riemann surfaces. Let $(\mathscr{R}, h)$ be a compact Riemann surface endowed with a hyperbolic metric $h$ and $f$ a function defined on $\mathscr{R}$. 
Set $d\sigma(h):= \sqrt{|\det h_{ij}|}\,|dz|^2$ which is the area-element of $\mathscr{R}$, where $h_{ij}$
are the components of the metric with respect to coordinates $z = x_1 + i x_2$.  We define the inhomogeneous and homogeneous Sobolev norms and semi-norms respectively of $f$ as

\begin{equation}\label{defn: Sobolev norm on riem}
\Vert f\Vert_{H^1 (\mathscr{R})}:= \Big(\iint_\mathscr{R} df \wedge \ast \overline{df}+\iint_\mathscr{R} |f|^2 d\sigma(h)\Big)^{\frac{1}{2}}=:\Big(\Vert f\Vert^2_{\dot{H}^1 (\mathscr{R})}+\Vert f\Vert^2_{L^2(\mathscr{R})}\Big)^{\frac{1}{2}}.
\end{equation}
Observe that the Dirichlet semi-norm and the homogeneous Sobolev semi-norm $\Vert \cdot\Vert_{\dot{H}^1(\mathscr{R})}$ are given by the same expression up to a constant.\\ 

We also note that since any two smooth metrics on $\mathscr{R}$ have comparable determinants, choosing different metrics in the definitions above yield equivalent norms. Now if $\mathscr{R}$ is a compact Riemann surface and $\riem$ is an open subset of $\mathscr{R}$ with analytic boundary $\partial \riem$, then the pull back of the metric $h_{ij}$ under the inclusion map yields a metric on $\riem$. Using that metric, we can define the inhomogeneous and homogeneous Sobolev spaces $H^1(\Sigma)$ and $\dot{H}^1(\Sigma)$. However these definitions will a-priori depend on the choice of the metric induced by $\mathscr{R}$, due to the non-compactness of $\riem$, unless further conditions on $\riem$
are specified.  \\

\begin{remark} \label{re:Sobolev_space_uses_analytic_boundary}
Whenever we consider the Sobolev space $H^{1/2}(\partial \Sigma)$ in this paper, we assume that $\riem \subset \riem^d$ where $\riem^d$ is the compact double, so that $\partial \riem$ is  an analytic curve (and in particular smooth) and thus an embedded submanifold of $R$.  Thus the charts on  $\partial \riem$ can be taken to be restrictions of charts from $\mathscr{R}$. Equivalently, the boundary $\partial \riem$ is endowed with the manifold structure obtained by treating it as the border of $\riem$.  For roughly bounded $\riem \subseteq \mathscr{R}$, we will not apply the Sobolev theory directly to the boundary $\partial \riem$ as a subset of $\mathscr{R}$.  Indeed in those cases the boundary is of course not a submanifold of $\mathscr{R}$.  However, we may still make use of the Sobolev space on the abstract border by making use of the double.
\end{remark}

Regarding the homogeneous and inhomogeneous Sobolev spaces, it was proved in \cite{Schippers_Staubach_transmission} that
\begin{theorem}\label{equiv and sobolevs in riem}
 Let $\mathscr{R}$ be a compact surface and let $\riem \subset \mathscr{R}$ be bounded by a closed analytic curve $\Gamma$.  Fix a Riemannian metric $\Lambda_\mathscr{R}$ on $\mathscr{R}$ as follows.   If $\mathscr{R}$ has genus $g>1$ then let $\Lambda_\mathscr{R}$ be the hyperbolic metric; if $\mathscr{R}$ has genus $1$ then let $\Lambda_\mathscr{R}$ be the Euclidean metric, and if $\mathscr{R}$ has genus $0$ then let $\Lambda_\mathscr{R}$ be a spherical metric.   
 Let 
 $H^1(\riem)$ and $\dot{H}^1(\riem)$ denote the Sobolev spaces with respect to $\Lambda_\mathscr{R}$.  Then $\dot{H}^1(\riem) = H^1(\riem)$ as sets. 
\end{theorem}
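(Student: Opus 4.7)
The inclusion $H^1(\riem) \subseteq \dot{H}^1(\riem)$ is immediate from the definition in \eqref{defn: Sobolev norm on riem}, since the full $H^1$-norm dominates the homogeneous $\dot{H}^1$-seminorm in every chart of the partition-of-unity definition. All the content is therefore in the reverse inclusion: given any (distributional) $f$ on $\riem$ with $df \in L^2(\riem;\Lambda_\mathscr{R})$, one must show $f \in L^2(\riem;\Lambda_\mathscr{R})$. The obstruction to this being automatic is that $\dot{H}^1$ is insensitive to additive constants, so one needs enough geometric control on $\riem$ that constants lie in $L^2$ and that a Poincaré-type inequality is available.

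The plan is to reduce to the Poincaré–Wirtinger inequality for Lipschitz planar domains and then patch via a partition of unity. Compactness of $\mathscr{R}$ together with the assumption that $\Gamma$ is a closed analytic curve gives that $\mathrm{cl}(\riem)$ is compact in $\mathscr{R}$ and that $\riem$ is (in any chart of $\mathscr{R}$ meeting $\Gamma$) an open set with analytic, hence Lipschitz, boundary. Two consequences follow: first, $\riem$ has finite area with respect to $\Lambda_\mathscr{R}$, so constants belong to $L^2(\riem;\Lambda_\mathscr{R})$; second, $\mathrm{cl}(\riem)$ admits a \emph{finite} atlas of smooth charts $\phi_j:U_j\to V_j\subset\mathbb{R}^2$ with $V_j$ bounded Lipschitz, on which the pulled-back metric is uniformly comparable to the Euclidean one. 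Choose a smooth partition of unity $\{\psi_j\}$ subordinate to this atlas. In each chart, $(\psi_j f)\circ \phi_j^{-1}$ is a compactly supported distribution whose gradient lies in $L^2(V_j)$; by Lemma \ref{lem:invariance of sobolev under diffeo} and the comparability of metrics, this is equivalent to $\psi_j f$ having $L^2$ derivative on $U_j\cap\riem$. The classical Poincaré–Wirtinger inequality on the Lipschitz domain $V_j\cap\phi_j(\riem)$ then yields
\[
\bigl\| (\psi_j f)\circ \phi_j^{-1} - c_j \bigr\|_{L^2(V_j)} \;\lesssim\; \bigl\| d\bigl((\psi_j f)\circ \phi_j^{-1}\bigr)\bigr\|_{L^2(V_j)}
\]
for an appropriate constant $c_j$ (e.g.\ the average), and since $V_j$ has finite Lebesgue measure the constant $c_j$ itself is in $L^2(V_j)$. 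Summing over the finitely many charts gives $f \in L^2(\riem;\Lambda_\mathscr{R})$, completing the reverse inclusion.

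The main delicate point is the bookkeeping between the chart-wise definition of $\dot{H}^1$ used in the paper and the intrinsic Poincaré statement on $\riem$. In particular, one must argue that the average $f_\riem$ exists (which requires $f$ to be locally $L^1$, guaranteed by $\dot{H}^1_{\mathrm{loc}}\hookrightarrow L^1_{\mathrm{loc}}$ via the chart representation), and that the finite atlas can be chosen to respect $\Gamma$ so that the boundary pieces really are Lipschitz domains in $\mathbb{R}^2$. Both follow from the analyticity of $\Gamma$ and compactness of $\mathrm{cl}(\riem)$, so no further regularity is needed beyond what the hypotheses provide; crucially, the statement is only about equality \emph{as sets}, so one does not need to track explicit constants in the norm equivalence.
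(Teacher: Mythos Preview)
The paper does not supply its own proof of this theorem; it simply records the statement and attributes the result to the authors' earlier paper \cite{Schippers_Staubach_transmission}. So there is no in-paper argument to compare against. Your Poincar\'e--Wirtinger strategy is the natural one and is correct in outline: compactness of $\mathrm{cl}(\riem)$ in $\mathscr{R}$ gives finite $\Lambda_{\mathscr{R}}$-area (so constants are square-integrable), and analyticity of $\Gamma$ makes each chart image a bounded Lipschitz domain on which the planar Poincar\'e inequality applies.

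There is, however, a small circularity in the localization step as written. You assert that $(\psi_j f)\circ\phi_j^{-1}$ has $L^2$ gradient, but $d(\psi_j f)=\psi_j\,df + f\,d\psi_j$, and controlling the second term in $L^2$ requires $f\in L^2$ on $\mathrm{supp}(d\psi_j)$, which is part of what you are trying to establish. Your remark that $\dot{H}^1_{\mathrm{loc}}\hookrightarrow L^1_{\mathrm{loc}}$ does not quite close this gap (you need $L^2$, not $L^1$). The cleanest repair is to bypass the cutoff entirely: apply Poincar\'e--Wirtinger to $f\circ\phi_j^{-1}$ itself on each connected component of $\phi_j(U_j\cap\riem)$, which is a bounded Lipschitz planar domain. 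This yields $f-c_j\in L^2(U_j\cap\riem)$ for some constant $c_j$; since the area is finite, $c_j\in L^2(U_j\cap\riem)$ as well, hence $f\in L^2(U_j\cap\riem)$. Summing over the finitely many charts gives $f\in L^2(\riem;\Lambda_{\mathscr{R}})$ and the set equality follows.
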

\end{subsection}
\begin{subsection}{Harmonic measures} \label{ harmonic measures}
We start with the definition of harmonic measure in the context of bordered Riemann surfaces.
\begin{definition}\label{def:harmonic measure}
Let $\omega_k$, $k=1,\ldots,n$ be the unique harmonic function which is continuous 
 on the closure of $\riem$ and which satisfies
 \begin{equation*}
  \omega_k = \left\{ \begin{array}{ll} 
    1 & \mathrm{on}\,\,\, \partial_k \riem \\ 0 & \mathrm{on}\,\,\, \partial_j \riem, \ \  j \neq k.  \end{array}  \right.   
 \end{equation*}
 The one-forms $\gls{harmeasure}$ are the {\it harmonic measures}.  We denote the complex linear span of the harmonic measures by $\gls{Ahm}(\riem).$ Moreover we define $\ast \mathcal{A}_{\mathrm{hm}}(\riem) = \{ \ast \alpha : \alpha \in \mathcal{A}_{\mathrm{hm}}(\riem) \}.$
\end{definition}

By definition any element of $\mathcal{A}_{\mathrm{hm}}(\riem)$ is exact, and its anti-derivative $\omega$ is constant on each boundary curve. On the other hand, the elements of $\ast \mathcal{A}_{\mathrm{hm}}(\riem)$ are all closed. Elements of $\mathcal{A}_{\mathrm{hm}}(\riem)$ and $\ast \mathcal{A}_{\mathrm{hm}}(\riem)$ extend real analytically to the border, in the sense that they are restrictions to $\riem$ of harmonic one-forms on the double.  In particular they are square-integrable, which explains our choice of notation above. Thus to summarize:
 
\begin{proposition} \label{pr:harmonic_measures_L2}
 Let $\riem$ be a bordered surface of type $(g,n)$.  Then $\mathcal{A}_{\mathrm{hm}}(\riem) \subseteq \mathcal{A}^{\mathrm{e}}(\riem)$ and 
 $\ast \mathcal{A}_{\mathrm{hm}}(\riem) \subseteq \mathcal{A}(\riem)$.  
\end{proposition}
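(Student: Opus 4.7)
The plan is to extend each harmonic measure $\omega_k$ harmonically across $\partial\riem$ via Schwarz reflection in the double $\riem^d$, and then to exploit compactness of $\mathrm{cl}(\riem)$ in $\riem^d$ to get the $L^2$ bound essentially for free. Exactness and harmonicity of the relevant one-forms are then immediate from the construction.

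First I would set up the reflection. Since $\riem$ is of type $(g,n)$, each border component $\partial_j \riem$ is identified with an analytic Jordan curve in $\riem^d$ fixed pointwise by the anti-holomorphic involution $\tau:\riem^d\to\riem^d$. The harmonic measure $\omega_k$ is harmonic on $\riem$, continuous on $\mathrm{cl}(\riem)$, and takes the constant value $c_j\in\{0,1\}$ on each $\partial_j\riem$. Because the boundary data is constant (hence real analytic on an analytic curve), Schwarz reflection applies in each collar chart $\phi_j:U_j\to\mathbb{A}_{r,1}$ of $\partial_j\riem$: the function $\omega_k\circ\phi_j^{-1}$ extends from $\mathbb{A}_{r,1}$ to $\mathbb{A}_{r,1/r}$ by $z\mapsto 2c_j-(\omega_k\circ\phi_j^{-1})(1/\bar z)$ for $|z|>1$. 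Pulling this back through the doubly-connected extension $\phi_j^d$ of Proposition \ref{prop:collar_charts_extend_analytically} and gluing over $j=1,\ldots,n$ produces a harmonic function on an open set $V\supseteq\mathrm{cl}(\riem)$ in $\riem^d$, which we still denote by $\omega_k$.

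Next I would harvest the consequences. Since $\mathrm{cl}(\riem)$ is compact and contained in the open set $V$ on which $\omega_k$ is smooth and harmonic, the forms $d\omega_k$ and $\ast d\omega_k$ are pointwise bounded on $\mathrm{cl}(\riem)$ with respect to any smooth Riemannian metric on $\riem^d$. Because $\riem$ has finite area in $\riem^d$, both forms therefore lie in $L^2(\riem)$. The form $d\omega_k$ is exact by construction and harmonic since $\omega_k$ is, so $d\omega_k \in \mathcal{A}^{\mathrm{e}}(\riem)$; taking complex linear combinations yields $\mathcal{A}_{\mathrm{hm}}(\riem)\subseteq\mathcal{A}^{\mathrm{e}}(\riem)$. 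For the second inclusion, $d(\ast d\omega_k)=0$ by harmonicity of $\omega_k$, while $d\ast(\ast d\omega_k)=-d(d\omega_k)=0$ because $\ast^2=-\mathrm{Id}$ on one-forms on a surface; hence $\ast d\omega_k$ is a closed $L^2$ harmonic one-form, and the target inclusion follows after applying $\ast$ to complex linear combinations.

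The one delicate point is verifying that the Schwarz-reflected extensions glue consistently to a single harmonic function on an open neighbourhood of $\mathrm{cl}(\riem)$ in $\riem^d$, independently of the choice of collar charts used on each boundary component. This is precisely where Proposition \ref{prop:collar_charts_extend_analytically} and the analytic structure of $\partial_j\riem$ inside $\riem^d$ do the work; once this compatibility is in place, the rest is a routine compactness estimate converting pointwise bounds into the $L^2$ bound on $\riem$.
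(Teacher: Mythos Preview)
Your argument is correct and follows the same route the paper sketches in the paragraph immediately preceding the proposition: extend each $\omega_k$ across $\partial\riem$ by Schwarz reflection into the double, then use compactness of $\mathrm{cl}(\riem)$ in $\riem^d$ to obtain the $L^2$ bound, with exactness and harmonicity of $d\omega_k$ and $\ast d\omega_k$ immediate. You have simply filled in the details the paper leaves implicit.
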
 

 \begin{definition}\label{periodmatrix}
 The \emph{boundary period matrix} $\Pi_{jk}$ of a non-compact surface $\riem$ of type $(g,n)$ is defined by
 \[  \Pi_{jk} := \int_{\partial \riem} \omega_j  \ast d\omega_k = \int_{\partial_j \riem} \ast d \omega_k.   \]
\end{definition} 
 \begin{theorem} \label{th:period_matrix_invertible} If we let $j,k$ run from $1$ to $n$, omitting one fixed value $m$ say, then the resulting matrix
  $\Pi_{jk}$ is symmetric and positive definite.  
 \end{theorem}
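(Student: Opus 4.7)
The plan is to rewrite $\Pi_{jk}$ as a Dirichlet pairing using Stokes's theorem, from which both symmetry and (after removing one row/column) positive definiteness will fall out.

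First I would apply Stokes's theorem: on the compact manifold with boundary $\mathrm{cl}(\riem)$,
\[
  \Pi_{jk} = \int_{\partial\riem} \omega_j \ast d\omega_k
  = \iint_{\riem} d\bigl(\omega_j \ast d\omega_k\bigr)
  = \iint_{\riem} d\omega_j \wedge \ast d\omega_k
    + \iint_{\riem} \omega_j \, d\ast d\omega_k .
\]
Since $\omega_k$ is harmonic, $d\ast d\omega_k = 0$, so
\[
  \Pi_{jk} = \iint_{\riem} d\omega_j \wedge \ast d\omega_k
  = (d\omega_j,\, d\omega_k)_{\mathcal{A}_{\mathrm{harm}}(\riem)} .
\]
(Here one uses that the $\omega_k$ are real-valued, so the inner product \eqref{eq:form_inner_product} collapses to the real symmetric pairing; also the harmonic measures extend real analytically across $\partial \riem$ by Proposition \ref{pr:harmonic_measures_L2}, so the boundary integral is well defined.) Symmetry $\Pi_{jk} = \Pi_{kj}$ is then immediate from symmetry of the inner product.

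For definiteness, I would compute the quadratic form attached to a real vector $c = (c_j)_{j\neq m}$. Setting $u_c = \sum_{j\neq m} c_j \omega_j$, the identity above gives
\[
  \sum_{j,k\neq m} c_j c_k \Pi_{jk} = \iint_{\riem} d u_c \wedge \ast d u_c
  = \|d u_c\|^2_{\mathcal{A}_{\mathrm{harm}}(\riem)} \ge 0 .
\]
So the matrix is positive semi-definite. Equality holds iff $du_c \equiv 0$, i.e.\ $u_c$ is constant on $\riem$. The boundary values of $u_c$ are $u_c|_{\partial_i \riem} = c_i$ for $i\neq m$ and $u_c|_{\partial_m \riem} = 0$; constancy thus forces all $c_i = 0$, proving strict positivity.

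The main (mild) obstacle is really just bookkeeping: verifying that Stokes's theorem applies with the regularity at hand. This is handled by the fact that each boundary component is an analytic Jordan curve in the double $\riem^d$ (Definition \ref{defn:gn-type surface}) and each $\omega_k$ extends harmonically across the border via Schwarz reflection, so $\omega_j \ast d\omega_k$ is smooth on $\mathrm{cl}(\riem)$ and the computation above is justified. The only conceptual point is recognising why we must delete one index: the full $n\times n$ matrix has $(1,1,\dots,1)$ in its kernel, reflecting the identity $\sum_{k=1}^n \omega_k \equiv 1$ on $\mathrm{cl}(\riem)$, and deleting one row and column is precisely the right way to quotient this out.
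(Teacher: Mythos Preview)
Your proof is correct and follows essentially the same route as the paper's: both rewrite $\Pi_{jk}$ via Stokes' theorem as the Dirichlet pairing $(d\omega_j,d\omega_k)$ and then read off positive definiteness from $\|d u_c\|^2$. The only cosmetic difference is that the paper proves symmetry separately (via Green's identity $\int_{\partial\riem}(\omega_j\ast d\omega_k-\omega_k\ast d\omega_j)=0$) and then invokes linear independence of the $d\omega_j$, $j\neq m$, without spelling out the boundary-value argument you give; your derivation of the inner-product formula first makes symmetry automatic and is, if anything, cleaner.
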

 \begin{proof}
 
  The matrix is symmetric, because 
  \[ \Pi_{jk} - \Pi_{kj} =  \int_{\partial \riem} \left( \omega_j \ast d\omega_k - \omega_k \ast d \omega_j  \right) = \iint_{\riem} \left( \omega_j \, d \ast d \omega_k - \omega_k \, d \ast d \omega_j \right) =0.      \]
 
  Now let $\lambda_1,\ldots,\lambda_{n}$ denote fixed real numbers, where $\lambda_m$ is omitted from the list.  Define 
  \[  \omega = \sum_{\substack{k=1 \\ k \neq m}}^n\lambda_k \omega_k  \]
  then using the fact that $\omega$ is harmonic we obtain (implicitly using Proposition \ref{pr:harmonic_measures_L2}) 
  \begin{align*}
   \|d\omega \|^2 & = \iint_{\riem} d \omega \wedge \ast d \omega = \int_{\partial \riem}
    \omega \wedge \ast d \omega \\
    & = \int_{\partial \riem} \left( \sum_{j \neq m} \lambda_j \omega_j \right) \ast d \left( \sum_{k \neq m} \lambda_k \omega_k \right)  \\
    & =   \sum_{j \neq m}  \sum_{k \neq m} \Pi_{jk} \lambda_j \lambda_k. 
  \end{align*}    
  
  Since $d\omega_1,\ldots,d\omega_n$ (omitting $d\omega_m$) are linearly independent, this completes the proof. 
 \end{proof}
 
 Thus $\Pi_{jk}$, $j,k = 1,\ldots\hat{m},\ldots,n$ is an invertible matrix, and we can specify $n-1$ of the boundary periods of elements of $\ast \mathcal{A}_{\mathrm{hm}}(\riem)$.
 \begin{corollary}  \label{co:boundary_periods_specified_starmeasure}
  Let $\riem$ be of type $(g,n)$ and $\lambda_1,\ldots,\lambda_n \in \mathbb{C}$ be such that $\lambda_1 + \cdots + \lambda_n =0$.  Then there is an $\alpha \in \ast \mathcal{A}_{\mathrm{hm}}(\riem)$ such that 
  \begin{equation}\label{periodjaveln}
    \int_{\partial_k \riem} \alpha = \lambda_k   
  \end{equation}   
  for all $k=1,\ldots,n$.  
 \end{corollary}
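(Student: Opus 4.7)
The plan is to find $\alpha$ as a linear combination $\alpha = \sum_{j=1}^n c_j \ast d\omega_j$, since $\ast\mathcal{A}_{\mathrm{hm}}(\riem)$ is by definition the complex linear span of the $\ast d\omega_j$'s. The period of this candidate around $\partial_k\riem$ is $\sum_j c_j \Pi_{kj}$, so finding $\alpha$ reduces to solving the linear system $\sum_j c_j \Pi_{kj} = \lambda_k$ for $k = 1, \ldots, n$.

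First I would establish the row-sum identity $\sum_{k=1}^n \Pi_{kj} = 0$ for each $j$. This is immediate: $\sum_k \Pi_{kj} = \int_{\partial \riem} \ast d\omega_j = \iint_\riem d\ast d\omega_j = 0$, since $\omega_j$ is harmonic. This shows the compatibility condition $\sum_k \lambda_k = 0$ is necessary (as it had better be), and by symmetry of $\Pi$ the analogous statement holds for column sums. Equivalently, on the level of forms, $\sum_k \omega_k \equiv 1$ (boundary values sum to $1$ on every component), hence $\sum_k \ast d\omega_k = 0$, so the $\ast d\omega_k$ are not independent and one coefficient can be set to zero without loss.

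Next I would apply Theorem~\ref{th:period_matrix_invertible}: fix any $m$ and delete the $m$-th row and column; the resulting $(n-1)\times(n-1)$ matrix $(\Pi_{kj})_{k,j\neq m}$ is positive definite, hence invertible. I set $c_m = 0$ and solve
\[
\sum_{j \neq m} c_j \Pi_{kj} = \lambda_k, \qquad k \neq m,
\]
uniquely for $c_j$, $j \neq m$. With these coefficients, define $\alpha = \sum_{j \neq m} c_j \ast d\omega_j \in \ast\mathcal{A}_{\mathrm{hm}}(\riem)$. It remains only to verify that the $m$-th equation $\sum_{j} c_j \Pi_{mj} = \lambda_m$ holds automatically.

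For that verification, use the row-sum identity $\Pi_{mj} = -\sum_{k\neq m}\Pi_{kj}$. Then
\[
\sum_{j \neq m} c_j \Pi_{mj} = -\sum_{k \neq m}\sum_{j\neq m} c_j \Pi_{kj} = -\sum_{k\neq m}\lambda_k = \lambda_m,
\]
where the last equality uses the hypothesis $\sum_{k=1}^n \lambda_k = 0$. This closes the argument. I do not anticipate any serious obstacle: the only substantive input is the invertibility supplied by Theorem~\ref{th:period_matrix_invertible}, and the rest is linear algebra coupled with the elementary observation $\sum_k \omega_k \equiv 1$.
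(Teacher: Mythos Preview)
Your proposal is correct and follows essentially the same approach as the paper: write $\alpha$ as a combination of the $\ast d\omega_j$, invoke Theorem~\ref{th:period_matrix_invertible} to invert the truncated period matrix after dropping one index, and check that the omitted equation is forced by $\sum_k \lambda_k = 0$ together with $\sum_k \int_{\partial_k\riem}\alpha = 0$. Your write-up is slightly more explicit about the row-sum identity and about setting $c_m=0$, but the argument is the same.
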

 \begin{proof}
Since for any $\alpha$ in $\ast \mathcal{A}_{\mathrm{hm}}(\riem),$ the exactness of the elements of $\mathcal{A}_{\mathrm{hm}}(\riem)$ yields that $\alpha= \ast d \left( \sum_l a_l \omega_l \right),$ it is enough to determine the $a_l$'s in such a way that \eqref{periodjaveln} holds. Removing one value, say $\lambda_n$, we conclude that solving \eqref{periodjaveln} amount to solving the system of equations
  \begin{align*}
    \lambda_k & = \int_{\partial_k \riem} \ast d \left( \sum_l a_l^k \omega_l \right) = \int_{\partial \riem}  \omega_k \ast d \left( \sum_l a_l^k \omega_l \right) \\
    & = \sum_{l=1}^{n-1} \Pi_{kl}\, a_l^k.
  \end{align*}
  By Theorem \ref{th:period_matrix_invertible}, the matrix $\Pi_{kl}$ is invertible so this has a unique solution $a_l^k $.  Once this solution is found, the remaining period equals $\lambda_n$ by noting that $\sum_{k=1}^{n} \int_{\partial_k \riem} \alpha =0$.
 
 \end{proof}
\end{subsection}

\begin{subsection}{Green's functions} 
Another basic notion which is of fundamental importance in our investigations is that of Green's functions.
 
 \begin{definition}\label{defn:greens function} Let $\riem$ be a type $(g,n)$ surface.
For fixed $z \in \riem$, we define \emph{Green's function of} $\riem$ to be a function $\gls{greenb}(w;z)$ such that 
 \begin{enumerate}
     \item for a local coordinate $\phi$ vanishing at $z$ the function $w\mapsto g(w;z) + \log|\phi(w)|$ is harmonic in an open neighbourhood
     of $z$;
     \item $\lim_{w \rightarrow \zeta} g(w;z) =0$ for any $\zeta \in \partial \riem$.   
 \end{enumerate}
 \end{definition}
  That such a function exists, follows from \cite[II.3 11H, III.1 4D]{Ahlfors_Sario}, considering $\riem$ to be a subset of its double $\riem^d$.\\  
\begin{definition}
 For compact surfaces $\mathscr{R}$, one defines the \emph{Green's function} $\gls{greenc}$ (see e.g. \cite{Royden}) as the unique function
 $\mathscr{G}(w,w_0;z,q)$ satisfying 
 \begin{enumerate}
  \item $\mathscr{G}$ is harmonic in $w$ on $\mathscr{R} \backslash \{z,q\}$;
  \item for a local coordinate $\phi$ on an open set $U$ containing $z$, $\mathscr{G}(w,w_0;z,q) + \log| \phi(w) -\phi(z) |$ is harmonic 
   for $w \in U$;
  \item for a local coordinate $\phi$ on an open set $U$ containing $q$, $\mathscr{G}(w,w_0;z,q) - \log| \phi(w) -\phi(z) |$ is harmonic 
   for $w \in U$;
  \item $\mathscr{G}(w_0,w_0;z,q)=0$ for all $z,q,w_0$.  
 \end{enumerate}
\end{definition}

 The existence of such a function is a standard fact about Riemann surfaces, see for example \cite{Royden}.  It satisfies the following identities:
 \begin{align}
  \mathscr{G}(w,w_1;z,q) & = \mathscr{G}(w,w_0;z,q) - \mathscr{G}(w_1,w_0;z,q) \label{eq:g_w_0_dependence} \\
  \mathscr{G}(w_0,w;z,q) & = - \mathscr{G}(w,w_0;z,q) \label{eq:g_interchange_w} \\
  \mathscr{G}(z,q;w,w_0) & = \mathscr{G}(w,w_0;z,q).  \label{eq:g_interchange_both}
 \end{align}
 In particular, $\mathscr{G}$ is also harmonic in $z$ where it is non-singular.
 
 \begin{remark}
  The condition (4) involving the point $w_0$ simply determines an arbitrary additive constant, and is not of any interest in the paper. 
 This is because by the property (\ref{eq:g_w_0_dependence}), $\partial_w \mathscr{G}$ is independent
 of $w_0$, and only such derivatives enter in the paper.  For this reason, we usually leave $w_0$ out of the expression for $\mathscr{G}$.
 \end{remark}

 

 Green's function is conformally invariant.  That is, if $\riem$ is of type $(g,n)$, and $f:\riem \rightarrow \riem'$ is conformal, then 
 {\begin{equation} \label{eq:Greens_conf_inv_gntype}
  g_{\riem'}(f(w);f(z)) = g_{\riem}(w;z). 
 \end{equation}}
 
 Similarly if $\mathscr{R}$ is compact and $f:\mathscr{R} \rightarrow \mathscr{R}'$ is a biholomorphism, then 
 \begin{equation} \label{eq:Greens_conf_inv_compact}
   \mathscr{G}_{\mathscr{R}'}(f(w),f(w_0);f(z);f(q)) = \mathscr{G}_{\mathscr{R}}(w,w_0;z,q). 
 \end{equation}

 These follow from uniqueness of Green's function; in the case of type $(g,n)$ surfaces, one also needs the fact that a biholomorphism extends to a homeomorphism of the boundary curves. 

 \end{subsection}
\begin{subsection}{Sewing}  \label{se:sewing}
We start by defining the quasisymmetric homeomorphisms of the circle.
 \begin{definition}\label{de:quasisymmetriccircle}
 An orientation-preserving homeomorphism $h$ of $\mathbb{S}^1$ is called an \emph{orientation-preserving quasisymmetric mapping}, iff there is a constant $k>0$, such that for every $\theta$, and every $\psi$ not equal to a multiple of $2\pi$, the inequality
 \[  \frac{1}{k} \leq \left| \frac{h(e^{i(\theta+\psi)})-h(e^{i\theta})}{h(e^{i\theta})-h(e^{i(\theta-\psi)})} \right|
    \leq k \]
 holds. 
 We say that $h$ is an orientation-reversing quasisymmetry if $h \circ s$ is an orientation-preserving quasisymmetry where $s(e^{i\theta}) = e^{-i\theta}$. 
\end{definition}
 A quasisymmetry is either an orientation-preserving or orientation-reversing quasisymmetry.
 
 We generalize this to general Riemann surfaces of type $(g,n)$.  
 \begin{definition} Fix $k \in \{1,\ldots, n\}$.
  Let $\tau:\mathbb{S}^1 \rightarrow \partial_k \riem$ be a homeomorphism.  We say that 
  $\tau$ is a \emph{quasisymmetry} if there is a collar chart $\phi: U \rightarrow \mathbb{A}_{r,1}$ of $\partial_k \riem$ such that $\phi \circ \tau$ is a quasisymmetry in the sense of Definition \ref{de:quasisymmetriccircle}.  We say that $\tau$ is orientation-preserving (resp. orientation-reversing) when $\phi \circ \tau$ is orientation-preserving (resp. orientation-reversing).   
 \end{definition}
 \begin{theorem} Let $\tau: \mathbb{S}^1 \rightarrow \partial_k \riem$ be a homeomorphism 
 for some fixed $k \in \{1,\ldots,n\}$.  If $\phi \circ \tau$ is a quasisymmetry of $\mathbb{S}^1$ for some collar chart $\phi$ of $\partial_k \riem$, then $\phi \circ \tau$ is a quasisymmetry of $\mathbb{S}^1$ for any collar chart $\psi$ of $\partial_k \riem$.  
 \end{theorem}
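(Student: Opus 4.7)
The plan is to reduce the statement to the well-known fact that a real analytic diffeomorphism of $\mathbb{S}^1$ is a quasisymmetry, and that the composition of two quasisymmetries is a quasisymmetry. The key observation is that if $\phi:U\to\mathbb{A}_{r,1}$ and $\psi:V\to\mathbb{A}_{s,1}$ are any two collar charts of $\partial_k\riem$, then the transition map $\psi\circ\phi^{-1}$ extends analytically across the unit circle, so that its restriction to $\mathbb{S}^1$ is a real analytic diffeomorphism.

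Concretely, first I would apply Proposition \ref{prop:collar_charts_intersection} to obtain a collar chart defined on a domain $W\subseteq U\cap V$, so that $\phi|_W$ and $\psi|_W$ are both defined and both extend continuously to $\partial_k\riem$ by Theorem \ref{theorem on collar charts}. Then I would invoke Proposition \ref{prop:collar_charts_extend_analytically} to extend both $\phi$ and $\psi$ (restricted to a suitable collar of $\partial_k\riem$) to doubly-connected charts $\phi^d$ and $\psi^d$ on an open neighbourhood of $\partial_k\riem$ inside the double $\riem^d$. Since $\partial_k\riem$ is an analytic Jordan curve in $\riem^d$ and both $\phi^d$ and $\psi^d$ are holomorphic charts there, the transition map $\psi^d\circ(\phi^d)^{-1}$ is a biholomorphism on an open neighbourhood of $\mathbb{S}^1$ in $\mathbb{C}$. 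Its restriction to $\mathbb{S}^1$ is therefore a real analytic orientation-preserving diffeomorphism of $\mathbb{S}^1$; call it $\chi$.

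Now I would write
\[
\psi\circ\tau \;=\; \chi\circ(\phi\circ\tau),
\]
valid after restricting $\tau$ to the preimage of $W\cap\partial_k\riem$ (and then observing that the quasisymmetry condition is local in a standard sense, so the choice of sub-collar is irrelevant). Since $\chi$ is a real analytic diffeomorphism of $\mathbb{S}^1$, it is a quasisymmetry (this is classical: $\chi$ is bi-Lipschitz on $\mathbb{S}^1$, and bi-Lipschitz self-maps of the circle satisfy Definition \ref{de:quasisymmetriccircle}). The composition of two quasisymmetries is a quasisymmetry, so $\psi\circ\tau$ is quasisymmetric. The orientation statement is automatic because $\chi$ is orientation-preserving, so $\phi\circ\tau$ and $\psi\circ\tau$ have the same orientation type.

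I expect the only subtle point to be verifying that the statement of the theorem is genuinely local, i.e.\ that quasisymmetry of $\phi\circ\tau$ survives passing to a sub-arc and then re-extending by the analytic transition map; this is where I would be careful with the reduction via Proposition \ref{prop:collar_charts_intersection}, but it should cause no real difficulty. Everything else is a direct combination of the analytic extension theorem for collar charts across the double and the elementary fact that analytic diffeomorphisms of $\mathbb{S}^1$ are quasisymmetric.
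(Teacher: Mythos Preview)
Your proposal is correct and follows essentially the same route as the paper's proof: both show that the transition map $\psi\circ\phi^{-1}$ extends holomorphically across $\mathbb{S}^1$ and hence restricts to an analytic (in particular quasisymmetric) diffeomorphism of $\mathbb{S}^1$, after which composition finishes the argument. The paper is slightly more direct---it applies Schwarz reflection immediately to $\psi\circ\phi^{-1}$ viewed as a conformal map between one-sided collar neighbourhoods of $\mathbb{S}^1$ (using Theorem~\ref{theorem on collar charts} for the continuous boundary extension), rather than passing through the double via Proposition~\ref{prop:collar_charts_extend_analytically}; and it does not need the locality reduction you flag, since both collar charts extend continuously to all of $\partial_k\riem$, so $\chi=\psi\circ\phi^{-1}$ is already globally defined on $\mathbb{S}^1$.
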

 \begin{proof} 
  If $\psi$ is another collar chart, then $\psi \circ \phi^{-1}$ is a conformal map from some collar neighbourhood of $\mathbb{S}^1$ to another collar neighbourhood of $\mathbb{S}^1$. It extends homeomorphically to the boundary by Theorem \ref{theorem on collar charts}. Thus by Schwarz reflection $\psi \circ \phi^{-1}$ extends to a conformal map of a neighbourhood of $\mathbb{S}^1$. Thus $\psi \circ \tau = \psi \circ \phi^{-1} \circ \phi \circ \tau$ is also a quasisymmetry.
 \end{proof}
 In a similar way, we can define the notion of analytic parametrization.
 \begin{definition}
  We say that $\tau$ is an \emph{analytic parametrization} if $\phi \circ \tau$ is analytic for any collar chart $\phi$. 
 \end{definition}
   Using the quasisymmetric homeomorphisms above, one can define a sewing operation between two bordered Riemann surfaces as follows
   \begin{definition} \label{de:sewing_surfaces}
    Let $\riem_1$ and $\riem_2$ be bordered surfaces of type $(g_1,n_1)$ and $(g_2,n_2)$ respectively.  Let $\tau_1: \mathbb{S}^1 \rightarrow \partial_{k_1} \riem_1$ and $\tau_2: \mathbb{S}^1 \rightarrow \partial_{k_2} \riem_2$ be orientation-reversing quasisymmetries. 
 We can \emph{sew} these surfaces to get a new topological space $\riem$ defined by the equivalence relation
 \[  q_1 \sim q_2 \Leftrightarrow q_2 = \tau_2 \circ \tau_1^{-1}(q_1).    \]
 We call the set of points in $\riem$ corresponding to the boundaries \emph{the seam}. 
   \end{definition}
  
 In this connection we have the following:  
 \begin{theorem}[\cite{RadnellSchippers_monster}] \label{th:sewing_complex_structure} The surface  $\riem$ in \emph{Definition \ref{de:sewing_surfaces}} has a complex structure which is compatible with that of $\riem_1$ and $\riem_2$.  This complex structure is unique.  The seam is a quasicircle.  If $\tau_1$ and $\tau_2$ are analytic then the seam is an analytic Jordan curve.
 \end{theorem}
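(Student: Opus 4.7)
The plan is to reduce the construction of the complex structure to a local problem at the seam and solve that local problem by conformal welding, using quasiconformal extension of the quasisymmetric sewing map combined with the measurable Riemann mapping theorem. Away from the seam, $\riem$ inherits charts from $\riem_1$ and $\riem_2$ and there is nothing to prove. Fix a seam point $p = [q_1] = [q_2]$ and pick collar charts $\phi_1 : U_1 \to \mathbb{A}_{r,1}$ and $\phi_2 : U_2 \to \mathbb{A}_{r',1}$ of $\partial_{k_1}\riem_1$ and $\partial_{k_2}\riem_2$. Post-composing $\phi_2$ with $z \mapsto 1/\bar z$ gives a chart $\tilde\phi_2 : U_2 \to \mathbb{A}_{1,1/r'}$ that approaches $\mathbb{S}^1$ from the outside and reverses the induced boundary orientation, so that $h := \tilde\phi_2 \circ \tau_2 \circ \tau_1^{-1} \circ \phi_1^{-1}\big|_{\mathbb{S}^1}$ is an orientation-\emph{preserving} quasisymmetry of the circle.

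Next, I would use the Ahlfors--Beurling extension theorem to extend $h$ to a quasiconformal homeomorphism $H$ of an annular neighbourhood of $\mathbb{S}^1$; shrinking the collars, assume $H : \mathbb{A}_{r,1} \to \mathbb{A}_{1,R}$. The map that equals the identity on $\phi_1(U_1)$ and $H^{-1}$ on $\tilde\phi_2(U_2)$ is then a quasiconformal homeomorphism of the glued topological annulus $W \subset \riem$ onto $\mathbb{A}_{r,R}$ whose Beltrami coefficient $\mu$ lies in $L^\infty$ with $\|\mu\|_\infty < 1$ and is supported in $\tilde\phi_2(U_2)$. Applying the measurable Riemann mapping theorem (after doubling the annulus to the Riemann sphere and extending $\mu$ by reflection or zero) produces a quasiconformal map $F : W \to \mathbb{A}$ that is conformal off the seam and whose composition with the given charts on $\riem_j \setminus \text{seam}$ is holomorphic. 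Declaring $F$ to be a chart at $p$ and covering the seam by such charts defines the candidate complex structure.

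For compatibility and uniqueness, transitions between two seam charts, and between a seam chart and an interior chart of $\riem_1$ or $\riem_2$, are quasiconformal maps that are holomorphic on the complement of the image of the seam, which is a Jordan arc. Since quasicircles are removable for quasiconformal maps that are holomorphic on each side, these transitions extend to be holomorphic across the seam, so the atlas is compatible and the complex structure is well defined. Uniqueness follows immediately because the structure is determined off the seam (a nowhere dense set), and any two compatible structures agreeing there must coincide. Finally, in the chart $F$, the seam is $F(\mathbb{S}^1)$: the image of the unit circle under a quasiconformal map of an open neighbourhood of it, hence by definition a quasicircle. If $\tau_1, \tau_2$ are analytic parametrizations then $h$ is a real-analytic diffeomorphism of $\mathbb{S}^1$, so by Schwarz reflection it extends to a biholomorphism of an annular neighbourhood of $\mathbb{S}^1$; one may then take $H$ to be this conformal extension, the Beltrami coefficient vanishes, no measurable Riemann mapping is needed, and the seam becomes the image of $\mathbb{S}^1$ under a conformal map, i.e.\ an analytic Jordan curve.

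The main obstacle is the conformal welding step: it is exactly here that one needs both that $h$ extends quasiconformally (which is where the quasisymmetry hypothesis is essential and cannot be weakened to merely Hölder, say) and that the welded quasiconformal structure can be straightened to a genuine complex structure; the removability of quasicircles for quasiconformal maps holomorphic on each side is what makes the resulting atlas compatible. All three ingredients --- Ahlfors--Beurling extension, measurable Riemann mapping, and quasicircle removability --- are needed, and together they explain why quasisymmetry is exactly the right class of sewing maps for this construction.
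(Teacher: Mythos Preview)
The paper does not give its own proof of this theorem: it is simply cited from \cite{RadnellSchippers_monster} and stated without argument. So there is no ``paper's proof'' to compare against; your proposal is in fact supplying what the paper leaves to the reference.

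Your overall strategy---reduce to a local welding problem via collar charts, extend the circle quasisymmetry quasiconformally (Ahlfors--Beurling), straighten the resulting Beltrami coefficient with the measurable Riemann mapping theorem, and then invoke removability of quasiarcs to check compatibility of the atlas and uniqueness---is the standard and correct approach, and is essentially what is carried out in \cite{RadnellSchippers_monster}. The observation that in the analytic case $h$ extends holomorphically by Schwarz reflection, obviating the MRMT step and making the seam analytic, is also right.

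One point to fix: you post-compose $\phi_2$ with $z\mapsto 1/\bar z$, which is antiholomorphic. The resulting $\tilde\phi_2$ would then be an antiholomorphic chart, and the transition maps you build from it would not be holomorphic, defeating the construction. You should use the holomorphic inversion $z\mapsto 1/z$ instead; this still sends $\mathbb{A}_{r',1}$ to $\mathbb{A}_{1,1/r'}$ and still reverses the boundary orientation on $\mathbb{S}^1$ (because $e^{i\theta}\mapsto e^{-i\theta}$), so the orientation bookkeeping that makes $h$ orientation-preserving goes through unchanged. With that correction the argument is sound.
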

 Recall that analytic Jordan curves are strip-cutting by definition.\\
 
 In what follows we shall denote the unit disk $ \{z:|z| <1 \}$ by $\disk.$
 \begin{corollary} \label{co:caps_can_be_sewn_on}  Let $\riem$ be a bordered surface of type $(g,n)$.  There is a compact surface $\mathscr{R}$ and an inclusion $\iota : \riem \rightarrow \mathscr{R}$ which is a biholomorphism onto its image, which extends continuously to a homeomorphism of the boundary curves of $\riem$ into $n$ disjoint quasicircles in $\mathscr{R}$, such that $\mathscr{R} \backslash \mathrm{cl} \, (\riem)$ consists of $n$ open regions biholomorphic to $\disk$.  
  If desired, the quasicircles can be chosen to be analytic curves.
 \end{corollary}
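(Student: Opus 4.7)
The plan is to attach a copy of the closed unit disk to each boundary component of $\riem$ via the sewing construction of Theorem \ref{th:sewing_complex_structure}, and verify that the resulting object has all the required properties. Since $\riem$ has type $(g,n)$, each boundary curve $\partial_k\riem$ is an analytic Jordan curve in the double $\riem^d$, so any collar chart $\phi_k:U_k \to \mathbb{A}_{r,1}$ restricts to an analytic homeomorphism between $\partial_k \riem$ and $\mathbb{S}^1$. Define $\tau_k:\mathbb{S}^1 \to \partial_k \riem$ to be the inverse of this restriction (an analytic parametrization, hence in particular a quasisymmetry).

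Next I would take $n$ disjoint copies $D_1,\ldots,D_n$ of the bordered surface $\mathrm{cl}(\disk)$, and for each $k$ choose an orientation-reversing analytic parametrization $\sigma_k:\mathbb{S}^1 \to \partial D_k$, for example $\sigma_k(e^{i\theta}) = e^{-i\theta}$ viewed as a map into $\partial D_k = \mathbb{S}^1$. One at a time (or all at once, since the boundary components are disjoint), I would apply the sewing construction of Definition \ref{de:sewing_surfaces} to $\riem$ and $D_k$ using $\tau_k$ and $\sigma_k$. By Theorem \ref{th:sewing_complex_structure}, the resulting topological space $\mathscr{R}$ carries a unique complex structure compatible with those on $\riem$ and on each $D_k$; since the $\sigma_k$ and $\tau_k$ are analytic, each seam (which is the image in $\mathscr{R}$ of $\partial_k \riem \sim \partial D_k$) is an analytic Jordan curve, and in particular a quasicircle. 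If instead one only needs quasicircles, one may replace the $\sigma_k$ by arbitrary orientation-reversing quasisymmetries.

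The inclusion $\iota:\riem \hookrightarrow \mathscr{R}$ is a biholomorphism onto its image by construction of the complex structure, and extends continuously to a homeomorphism from $\partial_k \riem$ onto the $k$-th seam: indeed, under the sewing equivalence relation, each point of $\partial_k \riem$ is identified with exactly one point of $\partial D_k$, and this identification is the parametrization $\tau_k$ composed with $\sigma_k^{-1}$. The image of $\mathrm{cl}(\riem)$ in $\mathscr{R}$ is therefore closed with the $n$ seams as topological boundary, and its complement is precisely the union of the $n$ open disks $D_k \setminus \partial D_k$, each of which is biholomorphic to $\disk$. Finally, $\mathscr{R}$ is compact: it is covered by the compact sets $\mathrm{cl}(\iota(\riem))$ and $\iota(D_1),\ldots,\iota(D_n)$ (the double $\riem^d$ being compact ensures $\mathrm{cl}(\iota(\riem))$ is compact, while the $D_k$ are compact by definition).

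No deep obstacle arises here: Theorem \ref{th:sewing_complex_structure} provides all the analytic substance, and the only thing that requires care is matching orientations of $\tau_k$ and $\sigma_k$ so that the pairing is orientation-reversing (as required by Definition \ref{de:sewing_surfaces}), together with the observation that a boundary chart from the double of $\riem$ and a boundary chart of the disk induce, on the seam, overlap maps which reduce via Schwarz reflection to biholomorphisms of neighbourhoods of $\mathbb{S}^1$, confirming the complex structure is well-defined across the seams.
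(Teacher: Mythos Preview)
Your proof is correct and follows essentially the same approach as the paper: choose (quasisymmetric or analytic) parametrizations of the boundary curves and sew on $n$ copies of $\disk$ via Theorem~\ref{th:sewing_complex_structure}. The paper's proof is terser---it simply takes orientation-reversing quasisymmetries $\tau_k:\mathbb{S}^1\to\partial_k\riem$ and the parametrization $z\mapsto 1/z$ on the disk side, then invokes Theorem~\ref{th:sewing_complex_structure}---while you spell out the construction of $\tau_k$ from collar charts, verify compactness of $\mathscr{R}$, and check that the complement consists of $n$ open disks. One small point: as written, your $\tau_k$ (the inverse of a collar-chart restriction) is orientation-\emph{preserving}, so to match Definition~\ref{de:sewing_surfaces} literally you should compose it with $e^{i\theta}\mapsto e^{-i\theta}$; you acknowledge this orientation-matching at the end, but it would be cleaner to build it in from the start.
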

 \begin{proof} Let $\tau_k:\mathbb{S}^1 \rightarrow \partial_k \riem$ be orientation-reversing quasisymmetries for $k=1,\ldots,n$.  Using the parametrization 
 $z \mapsto {1/z}$, sew on $n$ copies of $\disk$ to $\riem$.  The claim follows from 
 Theorem \ref{th:sewing_complex_structure}.
 \end{proof}
 
\begin{definition} \label{de:sewing_on_caps}
 We refer to this procedure as \emph{sewing caps on} $\riem$, where a \emph{cap} is a connected component of $\mathscr{R} \backslash \riem$.   
\end{definition}  
\end{subsection}

\end{section}
\begin{section}{Conformally non-tangential limits and overfare of harmonic functions} \label{se:CNT_all}
\begin{subsection}{About this section}
 This section accomplishes two goals. The first is to develop a theory of boundary values of Dirichlet bounded harmonic functions. The second is to overfare these functions in quasicircles. By overfare, we mean the following process. We are given a compact Riemann surface $\mathscr{R}$ divided in two pieces $\riem_1$ and $\riem_2$ by a collection of quasicircles $\Gamma$. A function $h_1 \in \mathcal{D}_{\mathrm{harm}}(\riem_1)$ has boundary values on $\Gamma$. There is then a unique function $h_2 \in \mathcal{D}_{\mathrm{harm}}(\riem_2)$ with these same boundary values. We say $h_2$ is the ``overfare'' of $h_1$ and denote it by $h_2=\mathbf{O}_{1,2}h_1$. 
 
 This simple idea some technical work to make rigorous. The sewing technique is a key tool throughout. First, we need a notion of boundary values; these are what we call conformally non-tangential boundary values. They are defined in Section \ref{se:CNT_limits_BVs}; briefly, we use a collar chart to map the function near the boundary to the disk, and apply Beurling's theorem on non-tangential boundary values of Dirichlet bounded functions. We then show that this is independent of the choice of collar chart. To prove that the overfare process makes sense, it must be shown that the set of possible boundary values is the same from either side. This includes showing that a set which is negligible from the point of view of $\riem_1$ is also negligible from the point of view of $\riem_2$.  Here, by negligible, we mean that the boundary values can be changed on this set without changing the solution to the boundary value problem. Again, this is accomplished by cutting and pasting neighbourhoods of the boundary, applying a chart, and using the corresponding result in the plane. A negligible set (which we call ``null'') is a Borel set whose image under the chart is a set of logarithmic capacity zero. This is done in Section \ref{se:function_Overfare}. 
 
 We will also prove that the overfare operator is bounded, using sewing techniques. The proof proceeds in steps. First, we show that a certain ``bounce operator'' is bounded. This bounce operator acts entirely within one surface, say $\riem_1$. It takes Dirichlet bounded functions defined on a collar neighbourhood of the collection of quasicircles, and produces the unique Dirichlet bounded function on the Riemann surface $\riem_1$ with the same boundary values.  We show in Section \ref{se:bounce} that this is bounded; this follows essentially from the existence and continuous dependence of solutions to the Dirichlet problem together with the fact that the Sobolev trace is bounded. Then, we
 define a ``local'' overfare as follows. Given a function defined in a collar neighbourhood of a boundary curve in $\riem_1$,  we cut out a tubular neighbourhood of a quasicircle, and map it into the plane with a doubly connected chart.  Using the fact that bounce and overfare are bounded in the plane, we obtain a bounded map taking Dirichlet bounded functions on a collar neighbourhood in $\riem_1$ to Dirichlet bounded functions in a collar neighbourhood in $\riem_2$.  
 
 The overfare operator is then shown to be bounded by first overfaring locally and then applying the bounce operator on $\riem_2$. Since every step is bounded, this will complete the proof.

 In previous works of the authors, only one curve was involved. This meant that constant functions overfare to constant functions. For this reason, it was sufficient to work with the Dirichlet semi-norm. However, if there are many curves, it is possible that many constants are involved, and indeed it is even possible that the overfare of a locally constant function is a non-constant function. It is then possible to drive up the Dirichlet semi-norm on one side while it is unchanged on the other. 
 
 If the originating surface is connected, this problem does not arise. In this case, we show that overfare is bounded with respect to the Dirichlet semi-norm for general quasicircles. To control the constants, we need to work with a  true norm. We introduce such a conformally invariant norm, which can be given in several equivalent forms. We show that for quasicircles with greater regularity the overfare is bounded with respect to this true norm. This conformally invariant norm also plays an important role in the theory of boundary values of $L^2$ harmonic one-forms established in Section \ref{se:Dirichlet_problem}.
\end{subsection}
\begin{subsection}{CNT limits and boundary values of functions and forms}  \label{se:CNT_limits_BVs}
 
 In this section, we define a notion of non-tangential limit which is conformally invariant.  Existence of this limit is independent of coordinate.   In a sense, this is the natural notion of non-tangential limit on the border of a Riemann surface.  The main idea is that any border chart determines a notion of non-tangential approach to a point on the boundary, and the compatibility of border charts implies that this notion is independent of chart. \\
 
 We now give the precise definition.
 First, we recall the definition of non-tangential limit on the upper half plane and the disk $\mathbb{D}$.  For $\theta \in (0,\pi/2)$ and $p \in \partial \mathbb{H}$ define the wedge
 \[  V_{p,\theta} = \{ z \in \mathbb{H} : \pi/2 - \theta < \text{arg}(z - p) < \pi/2 + \theta \}.  \]
 Let $h:U \rightarrow \mathbb{C}$ be a function defined on an open set $U$ in $\mathbb{H}$ which contains a half disk $D_r = \{z:  |z-p| <r, z \in \mathbb{H} \}$.
 \begin{definition}
  We say that $h$ has a \emph{non-tangential limit at} $p$ if 
 \[   \lim_{z \rightarrow p} \left. h\right|_{V_{p,\theta} \cap U}  \]
 exists for every $\theta \in (0,\pi/2)$.  
 \end{definition}
 
Similarly, we can define non-tangential limit for functions $h$ on open subsets $U$ of $\disk$ containing a set $D_r = \{z:  |z-p| <r, z \in \disk \}$.  
 A non-tangential wedge in $\disk$ with vertex at $p \in \mathbb{S}^1$ is a set of the form
 \[ W(p,M)  = \{ z \in \disk : |p-z| < M(1-|z|)   \} \]
 for some $M \in (1,\infty)$.
 We say that a function $h:\disk \rightarrow \mathbb{C}$ has a non-tangential limit at $p$
 if the limit of $\left. h \right|_{W(p,M) \cap U}$ as $z \rightarrow p$ exists for all $M \in (1,\infty)$.
 One may of course equivalently use Stolz angles, that is sets of the form
 \[ S(p, \alpha)= \{z: \text{arg}(1-\bar{p} z) < \alpha, |z| < \rho \cos{\alpha} \}   \]
 where $\alpha \in (0,\pi/2)$ \cite[p6]{Pommerenke_boundary_behaviour}.  
 
 It is easily seen that if $T:\disk \rightarrow \disk$ is a disk automorphism, then $h$ has a non-tangential limit at $p$ if and only if $h \circ T$ has a non-tangential limit at $T(p)$.  A similar statement holds for non-tangential limits in the upper half plane.  Finally, observe that if $T$ is a M\"obius transformation from $\disk$ to $\mathbb{H}$ then a function $h$ on a subset of the upper half plane has a non-tangential limit at $p$ if and only if $h \circ T$ has a non-tangential limit at $T^{-1}(p)$.  
 
 We now define conformally non-tangential limits.  Let $U$ be an open subset of $\riem$ and let $h:U \rightarrow \mathbb{C}$.  Let $p \in \partial \riem$.  We say that $h$ is ``defined near $p$'' if there is a boundary chart $\phi:V \rightarrow \text{cl}( \mathbb{H})$ such that $\phi(U)$ contains a half-disk $D_r = \{ z: |z-p|<r, \ \ z \in \mathbb{H} \}$.  
 \begin{definition}\label{defn:CNTlimit}  Let $\riem$ be a Riemann surface with border $\partial \riem$.  Fix $p \in \partial \riem$ and let $h:U \rightarrow \mathbb{C}$ be defined near $p$.  
  We say that $h$ has a conformally non-tangential limit at $p$ if there is a boundary chart $\phi:V \rightarrow \text{cl}(\mathbb{H})$ such that $p \in V$ and $h \circ \phi^{-1}$ has a non-tangential limit at $\phi(p)$.  
 \end{definition}
 
 We will use the acronym CNT in place of ``conformally non-tangential''.  
 The following theorem shows that the existence of the CNT limit does not depend on the chart, in the sense that the condition of the definition holds either for all boundary charts or none.  
 \begin{proposition}
For fixed $p \in \partial \riem$,  let $h: U \rightarrow \mathbb{C}$ be defined near $p$ and let $h$ have a \emph{CNT} limit equal to $\zeta$ at $p$. Then the \emph{CNT} limit is independent of the boundary chart used in \emph{Definition \ref{defn:CNTlimit}}. That is, for any boundary chart $\psi:W \rightarrow \mathbb{H}$, $h \circ \psi^{-1}$ has a non-tangential limit equal to $\zeta$ at $\psi(p)$.  The same claims holds for boundary charts $\psi:W \rightarrow \mathbb{D}^+$.  
 \end{proposition}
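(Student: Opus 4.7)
The plan is to exploit the fact that transitions between boundary charts extend holomorphically across the border by Schwarz reflection. Given two boundary charts $\phi:V\rightarrow \mathrm{cl}(\mathbb{H})$ and $\psi:W\rightarrow \mathrm{cl}(\mathbb{H})$ with $p\in V\cap W$, set $T:=\psi\circ\phi^{-1}$ on $\phi(V\cap W)\cap\mathbb{H}$. By Definition \ref{defn:bordered surface} this is a biholomorphism, and since both $\psi$ and $\phi^{-1}$ extend continuously to the border, $T$ extends continuously to a real-valued function on the segment $\phi(V\cap W\cap \partial\riem)\subseteq\mathbb{R}$. The Schwarz reflection principle then produces a biholomorphic extension of $T$ to a full open neighbourhood of $\phi(p)$ in $\mathbb{C}$; because $T$ sends $\mathbb{H}$ to $\mathbb{H}$ and the real axis into the real axis, the derivative $a:=T'(\phi(p))$ is real and strictly positive.

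The main technical step is to show that $T$ maps non-tangential wedges at $\phi(p)$ into non-tangential wedges at $\psi(p)$. Writing $p_0=\phi(p)$, the Taylor expansion
\[
T(z)-T(p_0)=a(z-p_0)+O(|z-p_0|^{2})
\]
has leading term $a(z-p_0)$ with $a>0$, hence sends $V_{p_0,\theta}$ onto $V_{T(p_0),\theta}$. A direct estimate on the quadratic error then shows that for every pair $0<\theta<\theta'<\pi/2$ there is $r>0$ with $T(V_{p_0,\theta}\cap\{|z-p_0|<r\})\subseteq V_{T(p_0),\theta'}$. Applying the same argument to the extension of $T^{-1}$ at $\psi(p)$ (whose derivative there is also real and positive), for any $\theta''\in(0,\pi/2)$ there exist $\theta\in(0,\pi/2)$ and $\rho>0$ with
\[
T^{-1}\bigl(V_{\psi(p),\theta''}\cap\{|w-\psi(p)|<\rho\}\bigr)\subseteq V_{\phi(p),\theta}.
\]

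To conclude, assume $h\circ\phi^{-1}$ has non-tangential limit $\zeta$ at $\phi(p)$. Fix $\theta''\in(0,\pi/2)$ and choose $\theta$ and $\rho$ as above. For any sequence $w_n\in V_{\psi(p),\theta''}$ with $w_n\rightarrow\psi(p)$, the inclusion above places $T^{-1}(w_n)$ eventually in $V_{\phi(p),\theta}$, while continuity of $T^{-1}$ gives $T^{-1}(w_n)\rightarrow\phi(p)$. Hence $h\circ\psi^{-1}(w_n)=h\circ\phi^{-1}(T^{-1}(w_n))\rightarrow\zeta$, proving that $h\circ\psi^{-1}$ has non-tangential limit $\zeta$ at $\psi(p)$. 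For a boundary chart $\psi:W\rightarrow\mathrm{cl}(\disk^+)$, post-composition with a M\"obius transformation $M:\disk^+\rightarrow\mathbb{H}$ sending $\psi(p)$ to a finite real point yields a chart $M\circ\psi$ into $\mathrm{cl}(\mathbb{H})$; applying the half-plane case to $M\circ\psi$ and invoking the fact, noted just before Definition \ref{defn:CNTlimit}, that M\"obius transformations preserve non-tangential limits, reduces the disk case to the one already established.

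The main obstacle is the wedge-preservation estimate itself: elementary in principle, but requiring some care because the quadratic Taylor error forces one to accept a slightly wider opening angle on the image side together with a correspondingly smaller radius around the vertex. Everything else is a direct unwinding of Definition \ref{defn:CNTlimit}.
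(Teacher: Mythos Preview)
Your proof is correct and follows essentially the same approach as the paper: Schwarz reflection extends the transition map biholomorphically across the border, and this extension sends non-tangential wedges at one vertex into non-tangential wedges at the other, from which the chart-independence of the CNT limit follows by composition. You give more detail than the paper on the wedge-preservation step (via the Taylor expansion and the positivity of $T'(\phi(p))$) and treat the disk case explicitly, but the underlying argument is the same.
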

 \begin{proof}  
  Assume that $h \circ \phi^{-1}$ has a non-tangential limit equal to $\zeta$ at $\phi(p)$ for some boundary chart $\phi:V \rightarrow \mathbb{H}$.  Let $\psi:W \rightarrow \mathbb{H}$ be any other boundary chart near $p$.  By the Schwarz reflection principle, $\phi \circ \psi^{-1}$ extends to a biholomorphism from an open neighbourhood of $\psi(p)$ to an open neighbourhood of $\phi(p)$.  In particular, for any non-tangential wedge $V_{\psi(p),\theta}$ there is a disk $D$ at $\phi(p)$ such that $\phi \circ \psi(p)(D \cap V_{\psi(p),\theta})$ is contained in a non-tangential wedge at $\phi(p)$. Thus the limit as $z$ approaches $\psi(p)$ of $h \circ \psi^{-1} = h \circ \phi \circ \phi \circ \psi^{-1}$ within $D \cap V_{\psi(p),\theta}$ equals $\zeta$.  
 \end{proof}  
 It follows immediately from the definition of CNT limits that they are conformally invariant.  Although this is a simple consequence it deserves to be highlighted.  
 \begin{theorem}[Conformal invariance of CNT limits] 
  \label{th:conformal_inv_CNT} Let $\riem$ be a bordered Riemann surface and  $h: U \rightarrow \mathbb{C}$  be a function defined near $p \in \partial \riem$.  If $F: \riem_1 \rightarrow \riem$ is a   conformal map, then $h$ has a \emph{CNT} limit of $\zeta$ at $p$  if and only if $h \circ F$ has a \emph{CNT} limit of $\zeta$ at  $F^{-1}(p)$.   
 \end{theorem}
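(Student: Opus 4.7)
The plan is to exploit the previous proposition, which asserts that the existence of a CNT limit is independent of the choice of border chart, together with the standard fact (recorded earlier in Section 2.2) that any biholomorphism $F:\riem_1 \to \riem$ between bordered surfaces extends to a homeomorphism of their borders. With those two inputs in hand, the proof reduces to transporting a border chart through $F$.

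First I would fix a boundary point $p \in \partial\riem$ and suppose $h$ has CNT limit $\zeta$ at $p$. By definition, there is a border chart $\phi:V \to \mathrm{cl}(\mathbb{H})$ with $p \in V$ such that $h \circ \phi^{-1}$ has non-tangential limit $\zeta$ at $\phi(p)$. Set $q := F^{-1}(p) \in \partial\riem_1$, and consider
\[
 \psi := \phi \circ F : F^{-1}(V) \longrightarrow \mathrm{cl}(\mathbb{H}).
\]
Because $F$ extends to a homeomorphism of the borders, $\psi$ is a homeomorphism from an open neighbourhood of $q$ in $\mathrm{cl}(\riem_1)$ onto an open subset of $\mathrm{cl}(\mathbb{H})$; because $F$ is holomorphic on the interior, $\psi$ is holomorphic there. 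Hence $\psi$ is a legitimate border chart on $\riem_1$ near $q$. This is the only genuine ingredient, and is the step I expect to need the most care, but it is provided ready-made by the extension theorem for biholomorphisms of bordered surfaces cited in Section 2.2.

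Next I would compute directly. On the image of $\psi$,
\[
 (h \circ F) \circ \psi^{-1} \;=\; (h \circ F) \circ (F^{-1} \circ \phi^{-1}) \;=\; h \circ \phi^{-1},
\]
and $\psi(q) = \phi(F(q)) = \phi(p)$. Thus $(h\circ F)\circ \psi^{-1}$ has non-tangential limit $\zeta$ at $\psi(q)$, which, by Definition 3.1 and the preceding proposition on chart-independence, means precisely that $h \circ F$ has CNT limit $\zeta$ at $q = F^{-1}(p)$.

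The converse direction is obtained by applying the same argument to the biholomorphism $F^{-1}:\riem \to \riem_1$, the function $h \circ F$, and the point $q$; then $F \circ F^{-1} = \mathrm{id}$ recovers $h$ and its CNT limit at $p$. This completes the equivalence. No hard analysis enters: the content of the theorem is entirely packaged in the chart-independence proposition and the boundary-extension property of biholomorphisms between bordered Riemann surfaces.
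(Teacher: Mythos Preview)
Your proof is correct and is exactly the argument the paper has in mind; indeed, the paper does not give a proof at all, merely remarking that the result ``follows immediately from the definition of CNT limits,'' and you have simply made that remark explicit by observing that $\psi=\phi\circ F$ is a border chart near $F^{-1}(p)$. One small note: the chart-independence proposition is not strictly needed here, since the definition of CNT limit only asks for the existence of \emph{one} border chart, and your $\psi$ already furnishes it.
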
 
 
 Next, we define a potentially-theoretically negligible set on the border which we call a null set.  We first need a lemma.  
 \begin{lemma} \label{le:null_chart_independent}
  Let $\riem$ be a type $(g,n)$ bordered surface and let $\phi_k:U_k \rightarrow \mathbb{A}_{r_k,1}$ be collar charts of a boundary curve $\partial_j \riem$ for $k=1,2$ and some fixed $j \in \{1,\ldots n\}$.  Let $I\subset \partial_j \riem$ be a Borel set. Then $\phi_1(I)$ has logarithmic capacity zero 
  if and only if $\phi_2(I)$ has logarithmic capacity zero. 
 \end{lemma}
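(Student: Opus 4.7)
The plan is to reduce the question to a statement about two conformal boundary maps between annular neighbourhoods of $\mathbb{S}^1$ and then invoke the conformal invariance of sets of logarithmic capacity zero.

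First, I would pass to the double $\riem^d$. By Proposition \ref{prop:collar_charts_extend_analytically}, each collar chart $\phi_k$ extends to a doubly-connected chart $\phi_k^d$ of $\partial_j\riem$ defined on $U_k^d = U_k \cup \tilde{U}_k \cup \partial_j\riem \subset \riem^d$, with image the annulus $\mathbb{A}_{r_k,1/r_k}$ and satisfying $\phi_k^d(\tilde{z}) = 1/\overline{\phi_k^d(z)}$. Next, using the analogue of Proposition \ref{prop:collar_charts_intersection} applied in the double (or simply by continuity of $\phi_k^d$ on $\partial_j\riem$), we may shrink and obtain a common doubly-connected neighbourhood $W \subseteq U_1^d \cap U_2^d$ of $\partial_j\riem$ on which both $\phi_1^d$ and $\phi_2^d$ are defined.

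Then $T := \phi_2^d \circ (\phi_1^d)^{-1}$ is a biholomorphism from the open annular neighbourhood $\phi_1^d(W)$ of $\mathbb{S}^1$ to the annular neighbourhood $\phi_2^d(W)$ of $\mathbb{S}^1$, and it maps the circle $\mathbb{S}^1$ homeomorphically onto itself. In particular, $T(\phi_1(I)) = \phi_2(I)$, and the restriction $T|_{\mathbb{S}^1}$ is a real-analytic diffeomorphism of $\mathbb{S}^1$, hence bi-Lipschitz.

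The final step is to apply the classical fact that a set on $\mathbb{S}^1$ has logarithmic capacity zero if and only if its image under any map that extends biholomorphically to a neighbourhood of $\mathbb{S}^1$ has logarithmic capacity zero. This can be seen either by directly noting that bi-Lipschitz maps preserve logarithmic capacity zero, or by observing that $T$ is trivially quasiconformal on an open neighbourhood of $\mathbb{S}^1$, so its boundary values are quasisymmetric, and quasisymmetric maps of $\mathbb{S}^1$ preserve Borel sets of logarithmic capacity zero (a standard potential-theoretic fact, see e.g.\ \cite{Pommerenke_boundary_behaviour}). Symmetry in the roles of $\phi_1$ and $\phi_2$ (via $T^{-1}$) then yields the equivalence.

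The only real point of care is justifying the invariance of logarithmic capacity zero under a map that is conformal in a one-sided (and here two-sided, after doubling) neighbourhood of the boundary circle, rather than a globally defined Möbius transformation; this is the main obstacle, and it is dispatched either by Schwarz reflection (providing a genuine biholomorphism of an open planar neighbourhood of $\mathbb{S}^1$) together with the conformal invariance of capacity, or by the bi-Lipschitz argument above.
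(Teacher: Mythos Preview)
Your proof is correct and follows essentially the same route as the paper's: both arguments show that the transition map $\phi_2\circ\phi_1^{-1}$ extends conformally across $\mathbb{S}^1$ (you via the doubling Proposition~\ref{prop:collar_charts_extend_analytically}, the paper via Schwarz reflection applied directly to the overlap map), hence restricts to an analytic diffeomorphism of $\mathbb{S}^1$, and then invoke the fact that quasisymmetries of $\mathbb{S}^1$ preserve Borel sets of logarithmic capacity zero. The only cosmetic difference is that the paper cites \cite[Theorem 2.9]{Schippers_Staubach_JMAA} for this last fact and handles the normalizations (affine compositions and $z\mapsto 1/z$) explicitly, whereas your use of the doubling proposition absorbs those normalizations automatically.
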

 \begin{proof}
If $K \subset \mathbb{S}^1 = \{z\,:\, |z|=1 \}$ is 
 a Borel set of logarithmic capacity zero, and $\phi$ is a quasisymmetry, then $\phi(K)$ has logarithmic capacity zero \cite[Theorem 2.9]{Schippers_Staubach_JMAA}. Since the inverse of a quasisymmetric map is also a quasisymmetry (and in particular 
 a homeomorphism), we see that a Borel set $K$ has logarithmic capacity zero if and only if $\phi(K)$  is a Borel set of logarithmic capacity zero. 
 
 Now let $\phi_1:U_1 \rightarrow \mathbb{A}_{r_1, 1}$ and $\phi_2:U_2 \rightarrow \mathbb{A}_{r_2, 1}$ be collar charts
  such that $U_1$ and $U_2$ are in $\riem$.   By 
 composing with a scaling and translation we can obtain maps $\tilde{\phi_1}$ and $\tilde{\phi_2}$ such that the image of $\Gamma$ under both 
 $\tilde{\phi}_1$ and $\tilde{\phi}_2$ is $\mathbb{S}^1$; we can also arrange that $\mathbb{S}^1$ is the outer boundary of both $\mathbb{A}$ and $\mathbb{B}$ by composing with $1/z$ if necessary. By Lemma \ref{theorem on collar charts}, $\tilde{\phi}_1\circ \tilde{\phi}_{2}^{-1}$  has a homeomorphic extension to $\mathbb{S}^1$.  By the Schwarz reflection principle, it has an extension to a conformal map of an open neighbourhood of $\mathbb{S}^1$, so it is an analytic diffeomorphism of $\mathbb{S}^1$ and in particular a 
 quasisymmetry. Thus $\tilde{\phi}_{2}(I)$ has logarithmic capacity zero if and only if $\tilde{\phi}_{1}(I)$ has capacity zero.  Since linear maps $z \mapsto az  + b$ take  
 Borel sets of capacity zero to Borel sets of capacity zero, as does $z \mapsto 1/z$, we have that $\phi_{1}(I)$ has logarithmic 
 capacity zero if and only if $\phi_{2}(I)$ does.  This completes the proof.
 \end{proof} 
 
The previous lemma motivates and justifies the following definition.
 \begin{definition}
  Let $\riem$ be a bordered Riemann surface of type $(g,n)$. We say that a Borel set $I \subset \partial_k \riem$ is a \emph{null set} if  $\phi(I)$ is a set of logarithmic capacity zero in $\mathbb{S}^1$ for some collar chart $\phi$ of $\partial_k \riem$.  
  We say that a Borel set $I$ in $\partial \riem$ is null if it is a union of null sets $I_k \subset \partial_k \riem$, $k=1,\ldots,n$.  
 \end{definition}

 We also have the following two results:
 
 \begin{proposition} If $I_1$ and $I_2$ are null in $\partial_k \riem$ then $I_1 \cup I_2$ is null.  
 \end{proposition}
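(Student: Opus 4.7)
The plan is to reduce the statement on $\partial_k \riem$ to the well-known fact that logarithmic capacity is countably subadditive on Borel sets of $\mathbb{S}^1$ (in particular, the union of two sets of logarithmic capacity zero again has logarithmic capacity zero).

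First I would fix an arbitrary collar chart $\phi: U \to \mathbb{A}_{r,1}$ of $\partial_k \riem$ and consider the images $\phi(I_1)$ and $\phi(I_2)$ in $\mathbb{S}^1$. By Lemma \ref{le:null_chart_independent}, the property that the $\phi$-image of a Borel set has logarithmic capacity zero is independent of the choice of collar chart. Therefore the assumption that $I_1$ and $I_2$ are null, which a priori only asserts the capacity-zero property for some possibly different collar charts, actually implies that both $\phi(I_1)$ and $\phi(I_2)$ have logarithmic capacity zero with respect to this single chosen chart $\phi$.

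Next I would use the fact that $\phi(I_1 \cup I_2) = \phi(I_1) \cup \phi(I_2)$, together with the standard subadditivity property of logarithmic capacity on $\mathbb{S}^1$, to conclude that $\phi(I_1 \cup I_2)$ has logarithmic capacity zero. Since $I_1$ and $I_2$ are Borel subsets of $\partial_k \riem$, their union is again Borel. Hence $I_1 \cup I_2$ satisfies the defining condition and is a null set in $\partial_k \riem$.

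There is no real obstacle here: the only nontrivial ingredient is the chart-invariance provided by Lemma \ref{le:null_chart_independent}, and the only analytic input is the elementary subadditivity of logarithmic capacity. The proof is essentially bookkeeping once these two ingredients are in place.
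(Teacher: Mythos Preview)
Your proposal is correct and follows essentially the same approach as the paper: reduce via a collar chart to the subadditivity of logarithmic capacity on Borel subsets of $\mathbb{S}^1$. The paper is slightly more explicit, invoking Choquet's theorem to pass between capacity and outer capacity in order to justify the subadditivity step, whereas you cite subadditivity as a standard fact; otherwise the arguments coincide.
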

 \begin{proof}
  It is enough to show that the union of Borel sets $I_1$ and $I_2$ of logarithmic capacity zero in $\mathbb{S}^1$ are of logarithmic capacity zero.  By Choquet's theorem, the outer capacity of $I_1$ and $I_2$ equal their capacity.  Since outer capacity is subadditive, 
  the outer capacity of $I_1 \cup I_2$ is zero.  The claim follows from another application of Choquet's theorem.
 \end{proof}

 Harmonic functions which are Dirichlet bounded near a border have CNT boundary values except possibly on a null set.
 \begin{theorem} \label{th:CNT_BVs_collar_existence} Let $\riem$ be a bordered Riemann surface of type $(g,n)$.  Let $U_k$ be a collar neighbourhood of $\partial_k \riem$ for some $k \in \{1,\ldots,n \}$.  If $h \in \mathcal{D}_{\mathrm{harm}}(U_k)$ then $h$ has \emph{CNT} boundary values on $\partial_k \riem \backslash I$ for some null set $I \subset \partial_k \riem$.  
 \end{theorem}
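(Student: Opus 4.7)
The plan is to transport the problem to the annulus via a collar chart and then invoke Beurling's classical theorem on non-tangential boundary values of Dirichlet-bounded harmonic functions on the disk. Concretely, fix a collar chart $\phi:U_k\to\mathbb{A}_{r,1}$ and set $\tilde h:=h\circ\phi^{-1}$; by the conformal invariance of the Dirichlet space we have $\tilde h\in\mathcal{D}_{\mathrm{harm}}(\mathbb{A}_{r,1})$. The task reduces to producing a Borel set $K\subset\mathbb{S}^1$ of logarithmic capacity zero such that $\tilde h$ has non-tangential limits at every $\zeta\in\mathbb{S}^1\setminus K$, since by definition this means $h$ has CNT limits on $\partial_k\riem\setminus\phi^{-1}(K)$, and $\phi^{-1}(K)$ is null.

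The key step is to split $\tilde h$ into a part living on the unit disk and a part which is smooth across $\mathbb{S}^1$. Expanding in Fourier modes in the angular variable, a harmonic function on $\mathbb{A}_{r,1}$ has the form
\[
\tilde h(\rho e^{i\theta})=a_0+b_0\log\rho+\sum_{n\ne 0}\bigl(a_n\rho^{|n|}+b_n\rho^{-|n|}\bigr)e^{in\theta}.
\]
Let $\tilde h_1$ collect the constant and all positive-power terms $a_n\rho^{|n|}e^{in\theta}$, and let $\tilde h_2$ collect the logarithmic and negative-power terms. Computing the Dirichlet integral in polar coordinates, orthogonality of the Fourier modes and cancellation of cross-terms between $|\partial_\rho\tilde h|^2$ and $\rho^{-2}|\partial_\theta\tilde h|^2$ give
\[
\|d\tilde h\|_{L^2(\mathbb{A}_{r,1})}^2=2\pi\sum_{n\ne 0}\Bigl(|n|\,|a_n|^2(1-r^{2|n|})+|n|\,|b_n|^2(r^{-2|n|}-1)\Bigr),
\]
and since $1-r^{2|n|}\geq 1-r^2>0$ this forces $\sum|n|\,|a_n|^2<\infty$, so $\tilde h_1$ extends to an element of $\mathcal{D}_{\mathrm{harm}}(\mathbb{D})$. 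Meanwhile $\tilde h_2$ is harmonic on $\{|z|>r\}$ and, thanks to the exponential decay of its coefficients away from the inner boundary, is real-analytic on an open neighbourhood of $\mathbb{S}^1$, hence possesses classical continuous boundary values at every point of $\mathbb{S}^1$.

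Applying Beurling's theorem to $\tilde h_1\in\mathcal{D}_{\mathrm{harm}}(\mathbb{D})$ yields a Borel set $K\subset\mathbb{S}^1$ of logarithmic capacity zero outside which $\tilde h_1$ has non-tangential limits. Combining with the continuous boundary values of $\tilde h_2$, the sum $\tilde h$ has non-tangential limits on $\mathbb{S}^1\setminus K$. Pulling back through $\phi$ and using Theorem \ref{th:conformal_inv_CNT} for conformal invariance of CNT limits, together with Lemma \ref{le:null_chart_independent} to certify that $I:=\phi^{-1}(K)$ is null independently of the choice of collar chart, completes the argument.

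The main obstacle is the bookkeeping in the middle step: one must verify cleanly that the Laurent-type decomposition $\tilde h=\tilde h_1+\tilde h_2$ converges unconditionally, that $\tilde h_1$ inherits a finite Dirichlet norm on the full disk from the annulus norm (which the cross-term cancellation above makes transparent), and that $\tilde h_2$ is genuinely smooth, not merely harmonic on a punctured disk at infinity. Everything else is a direct appeal to Beurling together with the already-established intrinsic nature of CNT limits and of null sets.
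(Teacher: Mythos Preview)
Your proof is correct and follows essentially the same route as the paper: both reduce to the planar annulus via a collar chart and conformal invariance of the Dirichlet space and CNT limits. The paper simply cites its earlier work \cite{Schippers_Staubach_transmission} for the annulus case, whereas you spell it out explicitly via the Laurent--Fourier splitting $\tilde h=\tilde h_1+\tilde h_2$ and Beurling's theorem; your coefficient estimate $|b_n|\lesssim r^{|n|}$ (from $\sum |n|\,|b_n|^2(r^{-2|n|}-1)<\infty$) indeed shows $\tilde h_2$ extends harmonically across $\mathbb{S}^1$, so the argument is complete.
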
  
 \begin{proof}
  By conformal invariance of the Dirichlet space and CNT boundary values (Theorem \ref{th:conformal_inv_CNT}), it is enough to prove this for an annulus in the plane, which is a special case of  \cite[Theorem 3.12]{Schippers_Staubach_transmission}. 
 \end{proof}
 
  \begin{remark} \label{re:Sobolev_CNT_agree} 
  The non-tangential boundary values agree with the Sobolev trace up to a set of measure zero, if the boundary is sufficiently regular.  This holds for example if we treat the border as an analytic curve in the double.\\  In fact if one has an $(\varepsilon,\delta)$ domain $\Omega$ (in the plane these are quasidisks) with Ahlfors-regular boundary in the sense of Definition 1.1 of \cite{brewster_mitrea}, then using Theorem 8.7 (iii) in \cite{brewster_mitrea} and taking $s=1,$ $p=2$ and $n=2$, we have that  their condition $s-\frac{n-d}{p}= 1-\frac{2-1}{2}=\frac{1}{2}\in (0,\infty)$ is satisfied. Thus, Theorem 8.7 (iii) in \cite{brewster_mitrea} yields that the Sobolev trace belonging to $H^{1/2}(\partial \Omega)$ agrees almost everywhere (since the $1$-dimensional Hausdorff measure on $\partial \Omega$ is the 1-dimensional Lebesgue measure) with the non-tangential limit of the function $h\in H^1 (\Omega).$ Note that chord-arc domains, are examples of $(\varepsilon,\delta)$ domains with Ahlfors-regular boundary.
 \end{remark}
 \begin{theorem}  \label{th:CNT_BVs_existence_uniqueness} Let $\riem$ be a bordered surface of type $(g,n)$.  If $h \in \mathcal{D}_{\mathrm{harm}}(\riem)$, then there is a null set $I \subset \partial \riem$ such that $h$ has \emph{CNT} boundary values on $\riem \backslash I$.  If $H$ is any element of $\mathcal{D}_{\mathrm{harm}}(\riem)$ with \emph{CNT} boundary values which agree with those of $h$ except possibly on a null set $J$, then $h=H$.  
 \end{theorem}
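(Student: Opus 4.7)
Cover $\partial \riem$ by collar neighbourhoods $U_1, \ldots, U_n$, one for each boundary curve $\partial_k \riem$. Since $h \in \mathcal{D}_{\mathrm{harm}}(\riem)$ restricts to an element of $\mathcal{D}_{\mathrm{harm}}(U_k)$ for each $k$ (the Dirichlet integral is monotone under domain restriction), Theorem \ref{th:CNT_BVs_collar_existence} supplies a null set $I_k \subset \partial_k \riem$ off which $h$ admits CNT boundary values. The desired null set is $I := I_1 \cup \cdots \cup I_n$.

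\textbf{Uniqueness.} Let $f := h - H \in \mathcal{D}_{\mathrm{harm}}(\riem)$; by the existence part already proved, $f$ has CNT boundary values equal to $0$ on $\partial \riem$ off a null set. Splitting $f = u + iv$ into real harmonic parts, it suffices to prove $u \equiv 0$. My plan is to extend $u$ to a real harmonic function $\tilde u$ on the compact double $\riem^d$ by odd Schwarz reflection through the involution $\sigma$, setting $\tilde u|_\riem = u$ and $\tilde u(\sigma z) = -u(z)$ for $z \in \riem$, and then to use compactness of $\riem^d$ to conclude.

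To verify that $\tilde u$ is globally harmonic, for each $k$ I would fix a collar chart $\phi_k : U_k \to \mathbb{A}_{r_k, 1}$ together with its doubled extension $\phi_k^d : U_k^d \to \mathbb{A}_{r_k, 1/r_k}$ supplied by Proposition \ref{prop:collar_charts_extend_analytically}. In this chart, $\tilde u$ pulls back to precisely the odd reflection of $u_k := u \circ \phi_k^{-1}$ across $\mathbb{S}^1$. Now $u_k$ is Dirichlet-bounded harmonic on $\mathbb{A}_{r_k, 1}$ and its CNT boundary values on $\mathbb{S}^1$ vanish off a Borel set of logarithmic capacity zero, which is a fortiori of Lebesgue measure zero. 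Passing to a slightly smaller concentric subannulus abutting $\mathbb{S}^1$, $u_k$ (up to an additive constant) lies in $H^1$, and by Remark \ref{re:Sobolev_CNT_agree} the $H^{1/2}$-trace on $\mathbb{S}^1$ coincides almost everywhere with the CNT boundary values, hence vanishes in $H^{1/2}(\mathbb{S}^1)$. The standard Schwarz reflection principle for harmonic $H^1$-functions with vanishing $H^{1/2}$-trace then implies that the odd reflection is harmonic across $\mathbb{S}^1$. Doing this for each $k$ shows $\tilde u$ is harmonic on all of $\riem^d$.

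Since $\riem^d$ is a compact Riemann surface, the real-valued harmonic function $\tilde u$ must be constant; the reflection identity $\tilde u \circ \sigma = -\tilde u$ then forces this constant to be zero, so $u \equiv 0$ on $\riem$. The same argument applied to $v$ gives $v \equiv 0$, hence $h = H$. \emph{The main technical obstacle} is the transition from vanishing of CNT limits off a null set to vanishing of the $H^{1/2}$-trace of $u_k$; this rests on Remark \ref{re:Sobolev_CNT_agree}, whose hypotheses are trivially met because each annulus $\mathbb{A}_{r_k, 1}$ is smoothly bounded (hence an $(\varepsilon,\delta)$-domain with Ahlfors-regular boundary), together with the elementary fact that a set of logarithmic capacity zero on $\mathbb{S}^1$ has Lebesgue measure zero.
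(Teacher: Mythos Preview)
Your existence argument matches the paper's exactly. For uniqueness, your approach is correct but differs from the paper's. The paper treats $\riem$ as sitting in its double so that $\partial\riem$ is analytic, uses Remark~\ref{re:Sobolev_CNT_agree} to identify the CNT boundary values with the Sobolev trace, and then simply invokes the standard uniqueness of the $H^1$ solution to the Dirichlet problem with $H^{1/2}$ data (cited from Taylor). You instead extend the real and imaginary parts of $f = h - H$ by odd reflection to harmonic functions on the compact double $\riem^d$ and conclude via the Liouville-type fact that real harmonic functions on closed surfaces are constant. Both routes rest on the same bridge---Remark~\ref{re:Sobolev_CNT_agree}---to pass from ``CNT boundary value zero off a null set'' to ``vanishing $H^{1/2}$ trace''; they diverge only in the endgame. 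The paper's version is shorter (one citation), while yours is more geometric and avoids the Dirichlet-problem black box, at the cost of needing the $H^1$ form of Schwarz reflection.

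One small point: your parenthetical ``up to an additive constant'' is unnecessary and, if taken literally, would break the argument. By Theorem~\ref{equiv and sobolevs in riem} (applied to the annulus viewed as an analytically bounded domain in $\sphere$), $u_k$ already lies in $H^1(\mathbb{A}_{r_k,1})$ with no adjustment; were a nonzero constant $c$ genuinely required, the trace on $\mathbb{S}^1$ would be $-c$ rather than $0$ and the odd reflection would fail to be harmonic across $\mathbb{S}^1$. Simply drop the qualifier.
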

 \begin{proof} 
  The first claim follows directly from Theorem \ref{th:CNT_BVs_collar_existence}. 
   For the uniqueness part, it is well-known that if $X$ is a smooth compact Riemannian manifold with boundary, then the Dirichlet problem 
  
  \begin{equation}\label{dirichlets problem}
    \begin{cases}
    \Delta u=0 \\
    u|_{\partial X}= f\in H^{1/2}(\partial X)
    \end{cases}
  \end{equation}
  
 has a unique solution that satisfies
 $$\Vert u\Vert_{H^1(X)}\leq C  \Vert f\Vert_{H^{1/2}(\partial X)},$$
 see e.g. \cite[Proposition 1.7, p 360]{Taylor}.  Using this together with Remark \ref{re:Sobolev_CNT_agree} it follows that if $H=h$ up to a null set on $\partial\riem$ then $h=H$.
 \end{proof}
 
A suitable adaptation of the proof of \cite[Theorem 3.17]{Schippers_Staubach_transmission} also yields
 \begin{theorem} \label{th:bounce_existence} Let $\riem$ be a bordered surface of type $(g,n)$ and 
  let $U_k \subseteq \riem$ be collar neighbourhoods of $\partial_k \riem$ for $k=1,\ldots,n$. Let $h_k \in \mathcal{D}_{\mathrm{harm}}(U_k)$ for $k=1,\ldots,n$.  There is a function $H \in \mathcal{D}_{\mathrm{harm}}(\riem)$ whose \emph{CNT} boundary values agree with those of $h_k$ on $\partial_k \riem$ up to a null set for each $k=1,\ldots,n$.  
 \end{theorem}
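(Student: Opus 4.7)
First, build a global approximant. Shrink each $U_k$ so that its inner boundary is analytic (possible by Remark \ref{re:collar_chart_provides_isotopy}), and choose smooth cutoffs $\chi_k$ on $\riem$ with $\chi_k \equiv 1$ on a strictly smaller collar neighbourhood $V_k$ of $\partial_k\riem$, $\operatorname{supp}\chi_k \subset U_k$, and the supports of different $\chi_k$ pairwise disjoint. Define
\[
  f := \sum_{k=1}^n \chi_k h_k,
\]
extended by zero on $\riem\setminus\bigcup_k U_k$. Theorem \ref{equiv and sobolevs in riem} applied to each (shrunken) $U_k$ gives $h_k \in H^1(U_k)$, so each $\chi_k h_k$ lies in $H^1(\riem)$ (smooth compactly supported factor times an $H^1$ function, extended by zero), whence $f \in H^1(\riem) \subset \mathcal{D}(\riem)$. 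Since $f = h_k$ throughout $V_k$, Theorem \ref{th:CNT_BVs_collar_existence} ensures that $f$ has CNT boundary values on $\partial_k\riem$ outside a null set, coinciding there with those of $h_k$.

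Next, harmonise $f$ via the Dirichlet problem. Viewing $\riem \subset \riem^d$ makes $\partial\riem$ an analytic submanifold of the double, so the Sobolev trace $f|_{\partial\riem}$ is well-defined in $H^{1/2}(\partial\riem)$. I then solve the classical Dirichlet problem to obtain a unique $H \in H^1(\riem)$ with $\Delta H = 0$ and $H|_{\partial\riem} = f|_{\partial\riem}$; being harmonic with $L^2$ gradient, $H \in \mathcal{D}_{\mathrm{harm}}(\riem)$. By Remark \ref{re:Sobolev_CNT_agree}, the CNT boundary values of both $H$ and $f$ coincide with their respective Sobolev traces outside sets of Lebesgue measure zero, so the CNT boundary values of $H$ agree with those of each $h_k$ on $\partial_k\riem$ outside a set of Lebesgue measure zero.

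Finally, I would upgrade measure-zero agreement to null-set agreement; this is the main obstacle, since classical Sobolev trace theory identifies boundary values only modulo Lebesgue-negligible sets, whereas the theorem demands agreement modulo logarithmic capacity zero. The idea is to localise: in a collar neighbourhood of $\partial_k\riem$, push forward via a collar chart $\phi_k:U_k \to \mathbb{A}_{r,1}$. The pushed-forward harmonic functions $H\circ\phi_k^{-1}$ and $h_k\circ\phi_k^{-1}$ are Dirichlet-bounded on an annulus, with CNT boundary values on $\mathbb{S}^1$ coinciding Lebesgue-almost everywhere. Adapting the planar statement \cite[Theorem 3.17]{Schippers_Staubach_transmission}, which rests on Beurling's refined potential-theoretic results, yields agreement outside a Borel set of logarithmic capacity zero on $\mathbb{S}^1$. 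Lemma \ref{le:null_chart_independent} then transports this capacity-zero statement back to $\partial_k\riem$ as a null set, completing the proof.
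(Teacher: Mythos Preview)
Your construction is sound and, in fact, considerably more explicit than the paper's own treatment, which consists of the single sentence ``A suitable adaptation of the proof of \cite[Theorem 3.17]{Schippers\_Staubach\_transmission} also yields'' the result. The route you take---build an $H^1$ extension by cutoffs, solve the Sobolev Dirichlet problem on $\riem$ (viewed inside its double so that $\partial\riem$ is analytic), then upgrade agreement from Lebesgue-a.e.\ to null-set---is a natural and correct one, and you have correctly isolated the only genuinely delicate point as the final upgrade.

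One small circularity is worth flagging: for that upgrade you invoke \cite[Theorem 3.17]{Schippers\_Staubach\_transmission}, but that is exactly the result whose adaptation the paper cites as \emph{the} proof of the present statement, so you have not really separated the hard potential-theoretic input from the soft Sobolev construction. A cleaner, self-contained way to close the gap is this. After pushing forward by a collar chart, you have two functions in $\mathcal{D}_{\mathrm{harm}}(\mathbb{A}_{r,1})$ whose Sobolev traces on $\mathbb{S}^1$ coincide. Any $u\in\mathcal{D}_{\mathrm{harm}}(\mathbb{A}_{r,1})$ decomposes as $u=u_++u_-$ with $u_+\in\mathcal{D}_{\mathrm{harm}}(\disk)$ and $u_-$ harmonic on $\{|z|>r\}$ and vanishing at infinity; the second summand extends real-analytically across $\mathbb{S}^1$, while by Beurling's theorem the non-tangential limits of $u_+$ exist outside a set of logarithmic capacity zero and furnish the quasi-continuous representative of its $H^{1/2}$ trace. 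Hence the CNT boundary values of $u$ on $\mathbb{S}^1$ are a quasi-continuous representative of the Sobolev trace. Two quasi-continuous functions that agree Lebesgue-a.e.\ agree outside a set of logarithmic capacity zero, so the CNT boundary values of $H$ and $h_k$ coincide quasi-everywhere, and Lemma~\ref{le:null_chart_independent} transports this back to $\partial_k\riem$ as required.
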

 
 We thus make the following definition.  
 \begin{definition}\label{defn: Osborn space}  Let $\riem$ be a Riemann surface and let $\Gamma$ be a finite collection of borders of $\riem$ each of which is homeomorphic to $\mathbb{S}^1$.  
 Given functions $h_k :\Gamma \backslash I_k \rightarrow \mathbb{C}$ where $I_1$ and $I_2$ are null sets, we say that $h_1 \sim h_2$ if $h_1$ and $h_2$ are both defined on $\Gamma \backslash I$ for some null set $I$ and $h_1 = h_2$ on $\Gamma \backslash I$.  
  The \emph{Osborn space} $\mathcal{H}(\Gamma)$ is the set of equivalence classes of such functions.  
 \end{definition}
 \begin{remark} It follows from the results of this section that 
  every element of $H^{1/2}(\Gamma)$, which is defined almost everywhere, has a unique extension to an element of $\mathcal{H}(\Gamma)$ which is defined except possibly on a null set.  
 \end{remark}
 \end{subsection}
 \begin{subsection}{Anchor lemma and boundary integrals}
 
Having defined the notion of CNT boundary values in the previous section, we establish two lemmas which allow us to consistently define integrals of the form 
 \[  \int_\Gamma \alpha \, h  \]
 where $\Gamma$ is a boundary curve of a Riemann surface, $\alpha$ is an $L^2$ harmonic one-form in a collar neighbourhood of $\Gamma$, and $h$ is a harmonic function with finite Dirichlet norm in a collar neighbourhood of $\Gamma$. Moreover the integral, as far as $h$ is concerned, depends only on the CNT boundary values of $h$ on $\Gamma$.
 
 We do this by evaluating the integral along curves which approach $\Gamma$ in the limit. We first describe these limits.  Let $\riem$ be a Riemann surface of type $(g,n)$, $\Gamma_k$ be one of its boundary curves, and $\phi:A \rightarrow \mathbb{A}$ be a collar chart for $\Gamma_k$.  We assume that $A \subseteq \riem_1$ for the sake of the definition; the identical construction will hold for $\riem_2$.  By Remark \ref{re:collar_chart_provides_isotopy}, setting $C_r = \{z : |z|=r \}$ for $r \in (0,1)$
 \begin{equation} \label{eq:isotopy} 
  \Gamma_k^r = \phi(C_r)
 \end{equation} 
 is an isotopy of analytic Jordan curves on $[R,1]$  for some $R \in (0,1)$, such that $\Gamma_k^1 = \Gamma_k$.  
 
  The following two lemmas show that the limiting integrals are well-defined in the sense that they are independent of the choice of limiting curves (the first anchor lemma, and depend only on the boundary values (the second anchor lemma).    
  \begin{lemma}[First anchor lemma] \label{le:anchor_lemma_one}  Let $\phi:A \rightarrow \mathbb{A}$ be a collar chart of $\Gamma_k$ in $\riem_1$.
  Let $\alpha \in \mathcal{A}(A)$.  For any $h \in \mathcal{D}_{\mathrm{harm}}(A)$ 
  \[  \lim_{r \nearrow 1} \int_{\Gamma^r_k} \alpha(w) h(w)  \]
   exists. Furthermore, this quantity is independent of the collar chart.  
  \end{lemma}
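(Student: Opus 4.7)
The plan is to transfer the problem to the annulus via the collar chart, then apply Stokes' theorem to set up a Cauchy criterion in $r$. Set $\tilde{\alpha} := (\phi^{-1})^*\alpha$ and $\tilde{h} := h \circ \phi^{-1}$; by conformal invariance of the Bergman and Dirichlet spaces, $\tilde{\alpha} \in \mathcal{A}(\mathbb{A}_{R,1})$, $\tilde{h} \in \mathcal{D}_{\mathrm{harm}}(\mathbb{A}_{R,1})$, and
\[ \int_{\Gamma_k^r} \alpha\, h \;=\; \int_{|z|=r} \tilde{\alpha}\,\tilde{h}. \]

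For existence, I would fix $R < s < r < 1$ and apply Stokes' theorem to the sub-annulus $A_{s,r} = \{s < |z| < r\}$. Since $\tilde{\alpha}$ is holomorphic (hence closed), $d(\tilde{h}\tilde{\alpha}) = d\tilde{h}\wedge\tilde{\alpha}$; and since $\tilde{\alpha}$ is of type $(1,0)$, the $(1,0)$-part $\partial\tilde{h}$ wedges with $\tilde{\alpha}$ trivially, leaving $d(\tilde{h}\tilde{\alpha}) = \bar\partial\tilde{h}\wedge\tilde{\alpha}$. Writing $\tilde{\alpha} = f(z)\,dz$, this gives
\[ \int_{|z|=r} \tilde{h}\tilde{\alpha} - \int_{|z|=s} \tilde{h}\tilde{\alpha} \;=\; \iint_{A_{s,r}} \bar{\partial}\tilde{h}\wedge\tilde{\alpha}, \]
whose modulus is bounded by $2\,\|\tilde{h}_{\bar{z}}\|_{L^2(A_{s,r})}\,\|f\|_{L^2(A_{s,r})}$ via Cauchy-Schwarz. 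The first factor is uniformly controlled by the Dirichlet norm of $\tilde{h}$ on $\mathbb{A}_{R,1}$, while the second tends to zero as $s,r \nearrow 1$ by dominated convergence, since $f \in L^2$ and $|A_{s,r}| \to 0$. This yields the Cauchy property, and hence existence of the limit.

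For independence of the collar chart, let $\phi_1, \phi_2$ be two collar charts of $\partial_k\riem$ and set $\Gamma_j^r := \phi_j^{-1}(C_r)$. By Proposition \ref{prop:collar_charts_intersection}, the two charts share a common collar sub-neighbourhood, and for $r$ sufficiently close to $1$, both $\Gamma_1^r$ and $\Gamma_2^r$ lie in this sub-neighbourhood; being homotopic to $\partial_k\riem$ there, they co-bound an oriented 2-chain $D_r$. Using the isotopy from Remark \ref{re:collar_chart_provides_isotopy}, $D_r$ can be arranged to lie inside an arbitrarily thin collar of $\partial_k\riem$, so $|D_r| \to 0$ as $r \nearrow 1$. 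Another application of Stokes' theorem and the same Cauchy-Schwarz estimate then yields
\[ \left|\int_{\Gamma_1^r}\alpha\,h - \int_{\Gamma_2^r}\alpha\,h\right| \;\leq\; 2\,\|h_{\bar{z}}\|_{L^2(D_r)}\,\|\alpha\|_{L^2(D_r)} \;\longrightarrow\; 0, \]
so the two limits coincide.

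The argument is essentially routine given Stokes' theorem, conformal invariance, and Cauchy-Schwarz. The only mildly delicate point is the chart-independence step, where one must verify that the co-bounding 2-chain $D_r$ can be placed inside an arbitrarily thin collar of $\partial_k\riem$; this is where the nested isotopy of level curves of the collar charts enters, but once a common doubly-connected sub-neighbourhood with suitably interior boundary is fixed, the area estimate is immediate.
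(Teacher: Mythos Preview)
Your proof is correct and follows essentially the same approach as the paper: Stokes' theorem plus a Cauchy--Schwarz bound on an $L^2$ integrand over a shrinking region, for both existence and chart-independence. The paper's independence argument first normalizes one chart to be the identity on $\mathbb{A}_{r,1}$ and then compares the circles $|z|=r$ to a \emph{fixed} level curve $\gamma_k^R$ of the second chart via an $\varepsilon/2 + \varepsilon/2$ split, but this amounts to the same estimate you give directly on the co-bounding chain $D_r$.
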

  \begin{proof} 
   Existence follows from Stokes' theorem, since 
   \begin{equation} \label{eq:anchor_lemma_temp} 
      \lim_{r \nearrow 1} \int_{\Gamma_k^r} \alpha(w) h(w) = 
         \int_{\Gamma_k} \alpha(w) h(w) + \iint_{A_r} \alpha \wedge \overline{\partial} h (w).  
   \end{equation}
   where $A_r$ is the region bounded by $\Gamma^r_k$ and $\Gamma_k$.
   This existence argument of course applies to any choice of collar chart. 
   
   We need to show that it gives the same result regardless of the choice.
   By change of variables, it is enough to prove this in the situation that $\Gamma_k = \mathbb{S}^1$ and $A =\mathbb{A}_{r,1}$, and $\phi=\text{Id}$. The curves $\Gamma_k^r$ are then just $|z|=r$.  Let $\phi':A' \rightarrow \mathbb{A}'$ be some collar chart of $\mathbb{S}^1$.  Let $\gamma_k^r$ denote the isotopy induced by $\phi'$.

   Fix any $\varepsilon >0$ and choose $R$ such that 
   \[  \left| \lim_{r \nearrow 1} \int_{\gamma_k^r} \alpha(w) h(w)  - 
   \int_{\gamma_k^{R}} \alpha(w) h(w) \right| < \varepsilon/2  \]
   and 
   \begin{equation} \label{eq:anchor_lemma_one_temptwo}
    \| \alpha \|_{\mathcal{A}(A'_R)} \| \overline{\partial} h\|_{\overline{\mathcal{A}(A'_R)}} < \varepsilon/2  
   \end{equation}
   where $A'_R$ is the region bounded by $\mathbb{S}^1$ and $\gamma_k^R$.  
   Since $\gamma_k^{R}$ is compact,  $|z|$ has a maximum $M<1$ on $\gamma_k^R$.  For any $r>M$, $\Gamma_k^r$ is contained in $A'_R$ and does not intersect $\gamma_k^R$. If we let $B$ denote the region bounded by these two curves, then $B \subseteq A'_R$. Therefore using Cauchy-Schwarz's inequality we deduce that
   \begin{align*}
    \left|  \lim_{r \nearrow 1} \int_{\gamma_{k}^r} \alpha(w) h(w) 
     - \int_{\Gamma_k^r} \alpha(w) h(w)  \right| & \leq 
     \left|  \lim_{r \nearrow 1} \int_{\gamma_{k}^r} \alpha(w) h(w) -
   \int_{\gamma_k^{R}} \alpha(w) h(w)  \right| \\ 
    & \ \ \ + 
   \left| \int_{\gamma_k^{R}} \alpha(w) h(w) -  \int_{\Gamma_k^r} \alpha(w) h(w)  \right| \\
   & < \frac{\varepsilon}{2} +\left| \iint_{B} \alpha(w) \wedge \overline{\partial} h(w) \right| \\
   & \leq \frac{\varepsilon}{2} +  \| \alpha \|_{\mathcal{A}(A'_R)} \| \overline{\partial} h\|_{\overline{\mathcal{A}(A'_R)}} 
   \end{align*}
   which by \eqref{eq:anchor_lemma_one_temptwo} proves the claim.  
  \end{proof}
  
  Henceforth we will denote this limiting integral by 
  \[  \int_{\Gamma_k} \alpha(w) h(w)   \ \ \ \ \mathrm{or} \ \ \ \ 
    \int_{\partial_k \riem} \alpha(w) h(w)   \]
  if $\Gamma_k = \partial_k \riem$, 
  where the notation is justified by Lemma \ref{le:anchor_lemma_one}.

 Another useful Anchor Lemma goes as follows.
  \begin{lemma}[Second anchor lemma]  \label{le:anchor_lemma_two} Let $A$ be a collar neighbourhood  of $\Gamma_k$ in $\riem_1$ for some $k \in \{1,\ldots,n\}$.   
   If $h_1$ and $h_2$ are any two elements of $\mathcal{D}_{\mathrm{harm}}(A)$ with the same \emph{CNT} boundary values on $\Gamma_k$ up to a null set, then for any $\alpha \in \mathcal{A}(A)$ 
   \[  \int_{\partial_k \riem_1} \alpha(w) h_1(w) =  \int_{\partial_k \riem_1} \alpha(w) h_2(w).  \]
  \end{lemma}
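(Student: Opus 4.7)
The plan is to reduce the statement to a direct Fourier computation on a model annulus. Setting $u := h_1 - h_2 \in \mathcal{D}_{\mathrm{harm}}(A)$, we have $u$ with zero CNT boundary values on $\Gamma_k$ up to a null set, and by linearity of the limiting integral (immediate from the first anchor lemma) it suffices to show $\int_{\Gamma_k}\alpha\,u = 0$. By the conformal invariance of CNT boundary values (Theorem \ref{th:conformal_inv_CNT}), of the Dirichlet space, of $\mathcal{A}(A)$, of null sets (Lemma \ref{le:null_chart_independent}), and of the limiting integral itself (the chart-independence clause of the first anchor lemma), we transport everything through a collar chart $\phi: A \to \mathbb{A}_{s,1}$ to the model case $A = \mathbb{A}_{s,1}$, $\Gamma_k = \mathbb{S}^1$.

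On this annulus, $u$ admits the standard decomposition
\[
u(z) \;=\; c\log|z| \;+\; H(z) \;+\; \overline{K(z)},
\]
with $c \in \mathbb{C}$ and $H,K$ single-valued holomorphic functions on $\mathbb{A}_{s,1}$ with $L^2$ derivatives; the constant $c$ absorbs the possibly non-trivial period of $\partial u$ across $\mathbb{S}^1$. Since null sets in $\mathbb{S}^1$ have Lebesgue measure zero, Remark \ref{re:Sobolev_CNT_agree} shows that the $L^2$ boundary trace of $u$ on $\mathbb{S}^1$ is the zero function. Writing $H = \sum_{n \in \mathbb{Z}} a_n z^n$ and $K = \sum_{n \in \mathbb{Z}} b_n z^n$ as Laurent series, and noting that $\log|z|$ has vanishing trace on $\mathbb{S}^1$, the $n$-th Fourier coefficient of the trace equals $a_n + \overline{b_{-n}}$, so
\[
a_n + \overline{b_{-n}} \;=\; 0 \qquad \text{for every } n \in \mathbb{Z}.
\]

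With $\alpha = f(z)\,dz$, $f = \sum_m c_m z^m$, one evaluates $\int_{|z|=\rho}\alpha\,u$ by Fourier orthogonality in three pieces: the logarithmic piece contributes $2\pi i\, c\, c_{-1}\log\rho$, which tends to $0$ as $\rho \nearrow 1$; the holomorphic piece $\int_{|z|=\rho} fH\,dz$ is $\rho$-independent by Cauchy's theorem and equals $2\pi i\sum_m c_m\, a_{-m-1}$; and the anti-holomorphic piece evaluates to $2\pi i\sum_m c_m\,\overline{b_{m+1}}\,\rho^{2m+2}$, converging to $2\pi i \sum_m c_m\, \overline{b_{m+1}}$. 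Applying the boundary relation $\overline{b_{m+1}} = -a_{-m-1}$ makes the last two terms cancel, giving $\int_{\Gamma_k}\alpha\,u = 0$. The principal technical obstacle is justifying the Fourier manipulations---the identification of the boundary-trace Fourier coefficients with the algebraic expressions $a_n + \overline{b_{-n}}$ and the passage to the limit $\rho \nearrow 1$ inside the anti-holomorphic sum---both of which follow from the $\ell^2$-control on $\{na_n\}$ and $\{nb_n\}$ imposed by $H$ and $K$ having $L^2$ derivatives.
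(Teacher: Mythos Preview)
Your proof is correct and follows the paper's strategy of reducing to the model annulus $\mathbb{A}_{s,1}$ via conformal invariance; where the paper simply cites its own earlier results (\cite[Theorem 4.7]{Schippers_Staubach_Plemelj}, \cite[Lemma 3.21]{Schippers_Staubach_Grunsky_expository}) for the annulus case, you supply a self-contained Fourier argument. The decomposition $u = c\log|z| + H + \overline{K}$, the identification of the trace Fourier coefficients as $a_n + \overline{b_{-n}}$, and the resulting cancellation are exactly the kind of computation one expects those references to contain, so this is the same approach with the black box opened.

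One small remark on your closing justification: for the anti-holomorphic sum $\sum_m c_m\overline{b_{m+1}}\rho^{2m+2}$, dominated convergence for $m \leq -2$ needs slightly more than the stated $\ell^2$ control on $\{na_n\},\{nb_n\}$, since $\rho^{2m+2}$ is unbounded in $m$ there. It does work, because the negative-index Laurent coefficients of both $f$ and $K$ carry an extra geometric factor $s^{|m|}$ of decay (coming from holomorphicity across the inner circle), and this beats $\rho_0^{-2|m|}$ for any fixed $\rho_0 > s$. A cleaner alternative is to note from the Stokes identity in the first anchor lemma that $\alpha \mapsto \int_{\Gamma_k}\alpha\,u$ is a bounded linear functional on $\mathcal{A}(\mathbb{A}_{s,1})$, so it suffices to verify vanishing for Laurent-polynomial $\alpha$, where every sum is finite and the manipulation is trivial.
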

  \begin{proof}  By Lemma \ref{le:anchor_lemma_one} we may use any collar chart to determining a limiting sequence of curves.  By Proposition \ref{prop:collar_charts_intersection} we can find a collar chart whose domain is in $A$.  Since the integral along a curve is invariant under composition with a conformal map, it is enough to prove this for $\Gamma_k = \mathbb{S}^1$ and $A= \mathbb{A}_{r,1}$ for some $r$, with limiting curves $\Gamma_k^r$ given by $|z|=r$.  
  We can apply \cite[Theorem 4.7]{Schippers_Staubach_Plemelj} or \cite[Lemma 3.21]{Schippers_Staubach_Grunsky_expository}
  to ($h_1 - h_2$) in this case.   
  \end{proof}
 Thus, as was mentioned earlier, the limiting integral of $h$ against any $\alpha \in \mathcal{A}(A)$ exists and depends only on the CNT boundary values of $h$. 
 
   \begin{remark} 
   We will often consider the situation where the Riemann surface $\riem$ is a subset of a compact surface $\mathscr{R}$, where the boundary is irregular (such as a quasicircle). However the anchor lemmas involve only the assumption that the boundary is a border (and hence, a collar chart exists).
   In particular, no reference is made to any outside surface, and thus they apply in the situation above.  
  \end{remark}
  
Next we define certain boundary integrals of Dirichlet-bounded harmonic functions.
 Let $d\omega_k$ be the harmonic measures given in Definition \ref{def:harmonic measure}. For a collar neighbourhood $U_k$ of $\partial_k \riem$ and $h_k \in \mathcal{D}_{\mathrm{harm}}(U_k)$, assume that the inner boundary of $U_k$ is an analytic curve $\gamma_k$.  By Stokes' theorem (where recall that the left hand side is defined by a limit of curves approaching $\partial_k \riem$, and well-defined by Lemma \ref{le:anchor_lemma_one}) we have
 \begin{equation}\label{defn:integration over rough}
  \int_{\partial_k \riem}  h_k \ast d \omega_k: =  \iint_{U_k} dh_k \wedge \ast d\omega_k - \int_{\gamma_k} h_k \ast d \omega_k   
 \end{equation}   
 where we give $\gamma_k$ the same orientation as $\partial_k \riem$. 
 The left hand side is in independent of the choice of curve $\gamma_k$, and thus so is the right hand side. 
 
  Given $h \in \mathcal{D}_{\mathrm{harm}}(\riem)$ we set
   \[  \mathscr{H}_k  := \int_{\partial_k \riem} h_k \ast d\omega_k.  \]
In the case that $n=1$,  fix a point $p\in \riem \setminus U_1$ and define instead 
  \begin{equation} \label{eq:constant_hk_definition_onecurve}
    \mathscr{H}_1 := \int_{\partial_1 \riem} h_1 \ast d \mathscr{G}(w,p),
  \end{equation}
  where $ \mathscr{G}(w,p)$ is Green's function of $\riem.$
  
  We can also use Green's function to define the norm in the case that $n>1$, as the following lemma shows. The different characterizations will be of use to us later.
  \begin{lemma} \label{le:point_also_H1conf} Let $\riem$ be a connected Riemann surface of type   $(g,n)$. For any fixed point $p \in \riem$, the norms given by
  \begin{align*}
   \| h \|^2_{\mathcal{D}_{\mathrm{harm}}(\riem),p} & = \| h \|^2_{\mathcal{D}_{\mathrm{harm}}(\riem)} + |h(p)|^2  \\
   & =  \| h \|^2_{\mathcal{D}_{\mathrm{harm}}(\riem)} + \left| \lim_{\varepsilon \searrow 0} \frac{1}{\pi i}  \int_{\Gamma_\varepsilon} \partial_w {g}(w;p)  h(w)  \right|^{2},
  \end{align*}
  where $g$ is Green's function of $\riem$ and $\Gamma_\varepsilon$ are the level curves of Green's function based at $p$, and the $H_{\mathrm{conf}}^1(\riem)$ norm are equivalent.  
  \end{lemma}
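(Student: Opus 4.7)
The lemma makes two separate claims: an equality between the two displayed expressions for $\|h\|^2_{\mathcal{D}_{\mathrm{harm}}(\riem),p}$, and the equivalence of both with the $H^1_{\mathrm{conf}}(\riem)$ norm. I would handle these in that order.

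For the equality, I would perform a residue computation at $p$. In a local holomorphic coordinate $z=\phi(w)$ vanishing at $p$, Definition \ref{defn:greens function} asserts that $g(w;p)+\log|z|$ is harmonic near $p$, so that
\[ \partial_w g(w;p) = -\frac{dz}{2z} + \alpha \]
with $\alpha$ a holomorphic one-form in a neighbourhood of $0$. Since $g$ is harmonic off $p$, $\partial_w g(w;p)$ is a holomorphic one-form there, so $d\bigl(\partial_w g(w;p)\,h(w)\bigr) = \partial g \wedge \bar\partial h$. A Stokes-theorem argument on the annular region bounded by $\Gamma_\varepsilon$ and $\{|z|=\varepsilon\}$, together with the Cauchy--Schwarz bound in terms of $\|\bar\partial h\|_{L^2}$ over a shrinking annulus, shows that $\Gamma_\varepsilon$ may be replaced by the circles $\{|z|=\varepsilon\}$ in the limit. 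Decomposing $h(w) = h(p) + \bigl(h(w)-h(p)\bigr)$ and using continuity of $h$ at $p$, the contribution of $\alpha\cdot h$ and that of the difference $h(w)-h(p)$ both vanish as $\varepsilon\searrow 0$, while the residue of $-\frac{1}{2z}h(p)\,dz$ produces $\mp \pi i\, h(p)$. Dividing by $\pi i$ gives a limit of $\pm h(p)$; squaring the modulus yields $|h(p)|^2$, proving the first equality.

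For the equivalence with $H^1_{\mathrm{conf}}(\riem)$ I would invoke Theorem \ref{thm:equiv sobolev norm} applied to the functional $\mathscr{F}(h)=|h(p)|$ on the space $H^1_{\mathrm{conf}}(\riem)$. Homogeneity (condition (1)) and $\mathscr{F}(1)=1\neq 0$ (condition (3)) are immediate. For the boundedness (condition (2)), one uses that $p$ is an interior point and $h$ is harmonic: the mean value property on a closed ball $\overline{B(p,r)}\subset \riem$ yields $|h(p)|\le C\|h\|_{L^2(B(p,r))}$, and this $L^2$ norm is in turn controlled by $\|h\|_{H^1_{\mathrm{conf}}(\riem)}$ through a Poincar\'e-type estimate, where the boundary-integral summand built into the conformal Sobolev norm compensates for the failure of the Dirichlet seminorm to see constants. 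Theorem \ref{thm:equiv sobolev norm} then delivers
\[ \|h\|_{H^1_{\mathrm{conf}}(\riem)} \approx \bigl(\|h\|^2_{\dot H^1(\riem)}+|h(p)|^2\bigr)^{1/2}, \]
and identifying $\|h\|_{\dot H^1(\riem)}$ with $\|h\|_{\mathcal{D}_{\mathrm{harm}}(\riem)}$ (up to a universal constant) completes the proof.

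The main obstacle is verifying condition (2) cleanly, because pointwise evaluation is not bounded on $H^1$ in real dimension two in general. The argument must exploit harmonicity of $h$ (via the mean value property) together with the specific form of the conformal Sobolev norm, whose boundary-integral term is precisely what permits the Poincar\'e inequality required to recover $L^2$-control from the Dirichlet seminorm. A secondary technical point is the justification of the replacement of $\Gamma_\varepsilon$ by concentric circles in the residue computation; this rests on the holomorphicity of $\partial g$ off $p$ and the finiteness of the Dirichlet integral of $h$, and should go through by a short Stokes/Cauchy--Schwarz argument on thin annular neighbourhoods of $p$.
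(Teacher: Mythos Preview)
Your approach is essentially the same as the paper's: invoke Theorem \ref{thm:equiv sobolev norm} with $\mathscr{F}(h)=|h(p)|$, verifying the boundedness condition via the mean-value property for harmonic functions. The paper's proof does precisely this, noting $|h(p)|^2 \lesssim \|h\|_{L^2(\riem)}^2 \leq \|h\|_{H^1(\riem)}^2$ (mean value plus Jensen) and applying Theorem \ref{thm:equiv sobolev norm} to $H^1(\riem)$; the passage to $H^1_{\mathrm{conf}}(\riem)$ is then the equivalence $H^1\approx H^1_{\mathrm{conf}}$ (Theorem \ref{thm:equivnorms on riemsurfaces}). Your ``Poincar\'e-type estimate'' $\|h\|_{L^2}\lesssim\|h\|_{H^1_{\mathrm{conf}}}$ is that last step in disguise, so you can streamline by working in $H^1$ directly rather than in $H^1_{\mathrm{conf}}$, where Theorem \ref{thm:equiv sobolev norm} applies as stated.

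One correction on the first equality. In the paper's usage (see Proposition \ref{pr:Greens_reproducing}) the curves $\Gamma_\varepsilon$ are the level sets $\{g(\cdot;p)=\varepsilon\}$, which approach the \emph{boundary} $\partial\riem$ as $\varepsilon\searrow 0$, not the point $p$. Under that reading, the annulus between $\Gamma_\varepsilon$ and the coordinate circle $\{|z|=\varepsilon\}$ is not shrinking; it fills all of $\riem$. The displayed equality is therefore just Proposition \ref{pr:Greens_reproducing} with $z=p$, which the paper takes as given. Your residue computation still gives the right value (because $\iint_{\riem}\partial g\wedge\bar\partial h=\iint_{\riem} d(g\,\bar\partial h)=0$ via $g|_{\partial\riem}=0$), but not by the ``shrinking annulus'' mechanism you describe.
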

 
  \begin{proof} If $n=1$ there is nothing to prove.
  First we note that if $U\subset \riem$ is a small neighbourhood of $p\in \riem$  then by the mean-value theorem for harmonic functions and Jensens inequality we have that $|h(p)|^2 \lesssim \Vert h\Vert^2_{L^2(\riem)}$, which confirms condition (2) of  Theorem \ref{thm:equiv sobolev norm}. Therefore, since conditions (1) and (3) of that theorem are also trivially satisfied, the Lemma follows.
  \end{proof}
  
{{This can be used to construct a conformally invariant version of Sobolev spaces on Riemann surfaces. 
  \begin{definition}\label{def:norm on H1confU}
  Set $U = U_1 \cup \cdots \cup U_n$ as above. By ${H}^{1}_{\mathrm{conf}}(U)$ we denote the harmonic Dirichlet space $\mathcal{D}_{\mathrm{harm}}(U)$ endowed with the norm
  \begin{equation}\label{defn:Dconf norm}
 \| h \|_{\gls{sobolevconf}(U)} := \Big(\| h \|^2_{\mathcal{D}_{\mathrm{harm}}(U)} + 
  \sum_{k=1}^{n}   |\mathscr{H}_k |^2\Big)^{\frac{1}{2}}.  
 \end{equation} 
   For the Riemann surface $\riem$, we can choose any fixed boundary curve $\partial_n \riem$ say, and define the norm 
 \begin{equation}\label{equivalent sobolev norm}  
     \| h \|_{{H}^1_{\mathrm{conf}}(\riem)} := \left( \| h \|^2_{\mathcal{D}_{\mathrm{harm}}(\riem)} + |\mathscr{H}_n |^2 \right)^{1/2},
 \end{equation}  
(where any of the $\mathscr{H}_k$ could in fact be used in place of $\mathscr{H}_n$).  
 \end{definition}}}
 
 \begin{theorem}\label{thm:equivnorms on riemsurfaces}
  Let $\riem$ be a Riemann surface of type $(g,n)$.  Then, the ${H}^{1}_{\mathrm{conf}}(\riem)$ norm is equivalent to the $H^1(\riem)$ norm.  
  In particular, any choice of boundary curve in the definition of ${H}^{1}_{\mathrm{conf}}(\riem)$ leads to an equivalent norm.\\

 \end{theorem}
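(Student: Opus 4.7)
The strategy is to apply Theorem \ref{thm:equiv sobolev norm} with $s=1$ and $X=\riem$, viewed as a compact Riemannian manifold with analytic boundary via the inclusion $\riem \subset \riem^d$ (as in Remark \ref{re:Sobolev_space_uses_analytic_boundary}). The functional will be a natural extension of $|\mathscr{H}_n|$ to all of $H^1(\riem)$, built from the right-hand side of \eqref{defn:integration over rough} rather than from the CNT limit itself.

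More concretely, I would fix a collar neighbourhood $U_n$ of $\partial_n\riem$ with analytic inner boundary $\gamma_n$ (and, when $n=1$, also fix $p \in \riem \setminus U_1$) and set
\[
\mathscr{F}(h) := \left|\, \iint_{U_n} dh \wedge \ast d\omega_n \;-\; \int_{\gamma_n} h\,\ast d\omega_n \,\right|,
\]
with $\omega_n$ replaced by $\mathscr{G}(\cdot,p)$ in the case $n=1$. For any $h \in H^1(\riem)$ the volume term is patently well-defined, and the line term is well-defined by applying the trace theorem \ref{thm:H1-L2} on a Lipschitz neighbourhood of the analytic curve $\gamma_n$ sitting strictly inside $\riem$. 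For $h \in \mathcal{D}_{\mathrm{harm}}(\riem)$ this recovers $\mathscr{F}(h)=|\mathscr{H}_n|$ from \eqref{defn:integration over rough} (or \eqref{eq:constant_hk_definition_onecurve}).

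Next I would verify the three hypotheses of Theorem \ref{thm:equiv sobolev norm}. Property $(1)$ is immediate from linearity of the integrals. Property $(2)$ follows by Cauchy-Schwarz on the volume term, giving $|\iint_{U_n} dh \wedge \ast d\omega_n| \le \|dh\|_{L^2(U_n)}\|\ast d\omega_n\|_{L^2(U_n)}$, together with the Sobolev trace estimate $\|h\|_{L^2(\gamma_n)} \lesssim \|h\|_{H^1(\riem)}$ and boundedness of $\ast d\omega_n$ (respectively $\ast d\mathscr{G}(\cdot,p)$) on a neighbourhood of $\gamma_n$. For property $(3)$, on $h \equiv 1$ the volume integral vanishes and $\mathscr{F}(1)=|\int_{\gamma_n} \ast d\omega_n|$; closedness of $\ast d\omega_n$ and Stokes identify this period, up to sign, with $\Pi_{nn}$. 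When $n>1$, invoking Theorem \ref{th:period_matrix_invertible} with any excluded index $m\neq n$ makes $\Pi_{nn}$ a diagonal entry of a symmetric positive-definite matrix, hence strictly positive. When $n=1$, the corresponding period of $\ast d\mathscr{G}(\cdot,p)$ around any curve enclosing $p$ equals $\pm 2\pi$ by the logarithmic singularity in Definition \ref{defn:greens function}, again nonzero.

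With the three hypotheses in place, Theorem \ref{thm:equiv sobolev norm} gives the global equivalence
\[
\|h\|_{H^1(\riem)} \;\approx\; \bigl(\|h\|_{\dot{H}^1(\riem)}^2 + \mathscr{F}(h)^2\bigr)^{1/2}
\]
on $H^1(\riem)$. Restricting to $\mathcal{D}_{\mathrm{harm}}(\riem) \subset H^1(\riem)$, the equality $\|h\|_{\dot{H}^1(\riem)} = \|h\|_{\mathcal{D}_{\mathrm{harm}}(\riem)}$ (both sides are the integral of $dh \wedge \ast\overline{dh}$, up to the metric constant noted after \eqref{defn: Sobolev norm on riem}) together with $\mathscr{F}(h)=|\mathscr{H}_n|$ delivers the desired equivalence between $\|\cdot\|_{H^1(\riem)}$ and $\|\cdot\|_{H^1_{\mathrm{conf}}(\riem)}$ from \eqref{equivalent sobolev norm}. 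Independence of the choice of boundary curve then follows at once: the identical argument with any $\partial_k$ in place of $\partial_n$ gives a norm equivalent to the same fixed $\|\cdot\|_{H^1(\riem)}$, hence to every other such choice. The main obstacle in the plan is non-degeneracy at constants --- everything else is a routine Cauchy-Schwarz and trace estimate --- and this step depends essentially on the positive-definiteness of the boundary period matrix from Theorem \ref{th:period_matrix_invertible} (respectively on the residue of $\mathscr{G}$ in the $n=1$ case).
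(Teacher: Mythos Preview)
Your proposal is correct and follows essentially the same approach as the paper: apply Theorem \ref{thm:equiv sobolev norm} with $\mathscr{F}(h)=|\mathscr{H}_n|$, checking non-negativity and homogeneity, boundedness via Cauchy--Schwarz and the trace estimate \eqref{L2-sobolev estim}, and non-degeneracy on constants via $\int_{\gamma_n}\ast d\omega_n\neq 0$. You are in fact somewhat more careful than the paper, which is quite terse here: you explicitly extend the functional from $\mathcal{D}_{\mathrm{harm}}(\riem)$ to all of $H^1(\riem)$ via the right-hand side of \eqref{defn:integration over rough} before invoking Theorem \ref{thm:equiv sobolev norm}, you justify the non-degeneracy through the positive-definiteness of the period matrix (Theorem \ref{th:period_matrix_invertible}), and you treat the $n=1$ case with Green's function separately, whereas the paper simply asserts $\int_{\gamma_k}\ast d\omega_k\neq 0$.
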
  
 \begin{proof}
   We note that for any integer $0\leq k\leq n$, $|\mathscr{H}_k |\geq 0$, $\int_{\partial_k \riem}  \ast d\omega_k= -\int_{\gamma_k} \ast d\omega_k\neq 0,$ and $|\mathscr{H}_k |\leq C \Vert h\Vert_{H^1(\riem)},$ by the Cauchy-Schwarz inequality and \eqref{L2-sobolev estim}. Therefore Theorem \ref{thm:equiv sobolev norm} yields the desired result.
   \end{proof}
 
 The elements of ${H}^1_{\mathrm{conf}}$  have well-defined boundary integrals, as will be demonstrated below. This in turn hinges on existence of a collar chart stemming from the harmonic measure. A similar collar chart is also available in connection to Green's functions. These two canonical collar charts are very useful, especially in association with the evaluation of certain boundary integrals and have the property that the resulting limiting curves are level curves of harmonic measures or Green's function respectively. The first lemma tackles the case of the collar char from harmonic measure.
 
  \begin{lemma}[Collar chart from harmonic measure]  \label{le:harmonic_measure_collar_chart}  Let $\riem$ be a type $(g,n)$ Riemann surface.  
  Let $\omega_k$ be the harmonic function which is one on $\partial_k \riem$ and $0$ on the other boundary curves. Let $\psi$ be the multi-valued holomorphic function with real part $\omega_k-1$ and set
  \[  i a_k = i \int_{\partial_k \riem} \ast d\omega_k. \]
  Then
  \[   \phi(z) = \exp{\left( 2 \pi \psi/a_k \right)}  \]
  is a collar chart on some collar neighbourhood $U$ of $\partial_k \riem$. 
  Furthermore 
  \[   (\ast d\omega_k) = \frac{a_k}{2 \pi} \phi^* d\theta, \]
  and thus for any $h \in H^1_{\mathrm{conf}}(U)$ in a collar neighbourhood of $\partial_k \riem$ we have
  \[ \int_{\partial_k \riem} h \ast d\omega_k = \frac{a_k}{2\pi} \int_{\mathbb{S}^1} h \circ \phi(e^{i\theta}) d\theta.  \]
 \end{lemma}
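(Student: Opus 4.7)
The plan is to verify, in order: (i) that $\phi$ is well-defined and single-valued on some collar neighbourhood of $\partial_k \riem$; (ii) that it is a biholomorphism onto an annulus $\mathbb{A}_{r,1}$; (iii) the differential identity $\ast d\omega_k = (a_k/2\pi)\phi^* d\theta$; and (iv) the boundary integral formula via change of variables.

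For (i), the multi-valued harmonic conjugate $v$ of $\omega_k - 1$ (so that $\psi = (\omega_k - 1) + iv$) satisfies $dv = \ast d\omega_k$ by the Cauchy--Riemann equations combined with the convention $\ast(a\,dx+b\,dy) = a\,dy - b\,dx$. On a collar neighbourhood of $\partial_k \riem$, which is topologically an annulus, the only non-trivial period of $v$ is $\int_{\partial_k \riem} \ast d\omega_k = a_k$; thus $2\pi \psi/a_k$ acquires an increment of $2\pi i$ under analytic continuation once around $\partial_k \riem$, and $\phi = \exp(2\pi\psi/a_k)$ is single-valued there. Moreover $|\phi| = \exp(2\pi(\omega_k - 1)/a_k)$ equals $1$ on $\partial_k \riem$ and is strictly less than $1$ in $\riem$, since $\omega_k < 1$ there by the strong maximum principle.

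For (ii), I would first establish local biholomorphicity. Since $\omega_k$ attains its maximum $1$ on $\partial_k \riem$ and extends as a harmonic function across $\partial_k \riem$ to the double $\riem^d$ (via Schwarz reflection applied to $1-\omega_k$), Hopf's lemma on the original side gives $d\omega_k \neq 0$ along $\partial_k \riem$. By continuity, $d\psi = d\omega_k + i \ast d\omega_k$ is non-zero on some neighbourhood of $\partial_k \riem$, and there $\phi$ is locally biholomorphic. For injectivity, the restriction $\phi|_{\partial_k \riem} = e^{2\pi i v/a_k}$ is a degree-one map (the total increment of $v$ is $a_k$) and hence a homeomorphism onto $\mathbb{S}^1$. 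Combining this with local biholomorphicity, a standard argument-principle computation applied in a thin shell $\{1 - \varepsilon < \omega_k \leq 1\}$ contained in the locally biholomorphic neighbourhood gives a biholomorphism onto $\mathbb{A}_{r,1}$ for $r = \exp(-2\pi\varepsilon/a_k)$. The main technical point is ensuring the level sets $\{\omega_k = c\}$ near the boundary consist of a single Jordan curve; this is automatic once they lie within the locally biholomorphic neighbourhood, since the argument principle then forces each level set to be mapped bijectively onto the corresponding circle $C_r$.

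Finally, (iii) is a direct calculation: $d\phi/\phi = (2\pi/a_k)\, d\psi = (2\pi/a_k)(d\omega_k + i \ast d\omega_k)$, and since $d\theta = \mathrm{Im}(dz/z)$ on $\mathbb{C}^*$, taking imaginary parts yields $\phi^* d\theta = (2\pi/a_k) \ast d\omega_k$. For (iv), parameterising $\Gamma_k^r = \phi^{-1}(C_r)$ as in \eqref{eq:isotopy} and using change of variables gives
\[
  \int_{\Gamma_k^r} h \ast d\omega_k \;=\; \frac{a_k}{2\pi} \int_{\Gamma_k^r} h\, \phi^* d\theta \;=\; \frac{a_k}{2\pi} \int_0^{2\pi} h\circ\phi^{-1}(r e^{i\theta})\, d\theta.
\]
Sending $r \nearrow 1$, the left-hand side converges to $\int_{\partial_k \riem} h \ast d\omega_k$ by Lemma \ref{le:anchor_lemma_one} applied to the holomorphic form $d\psi$ (taking imaginary parts) combined with \eqref{defn:integration over rough}, while the right-hand side converges to $(a_k/2\pi) \int_{\mathbb{S}^1} h\circ\phi^{-1}(e^{i\theta})\, d\theta$ by Fatou's theorem applied to the Dirichlet-bounded harmonic function $h \circ \phi^{-1}$ on the annulus.
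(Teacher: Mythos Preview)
Your proof is correct and follows essentially the same route as the paper's: single-valuedness of $\phi$ via the period of $\ast d\omega_k$, bijectivity via the argument principle, the differential identity by direct computation of $d\log\phi$, and the integral formula by change of variables. You supply more detail than the paper (notably the Hopf boundary-point lemma to secure $d\omega_k\neq 0$ along $\partial_k\riem$, and the limiting argument via Lemma~\ref{le:anchor_lemma_one} for the boundary integral), but the underlying strategy is identical.
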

 \begin{proof} It is clear that $\phi$ takes level curves of $\omega_k = 1-\epsilon$ to curves $|z|=e^{-\epsilon}$ for $\epsilon$ sufficiently small. Observe that $d\psi = d\omega_k $ so that the harmonic conjugate of $\omega_k-1$ is a primitive of $\ast d\omega_k$. 
 In particular $\phi$ is single-valued. 
  An elementary application of the argument principle shows that the map is a bijection for some collar neighbourhood defined by $0<\epsilon <s$ for some fixed $s$. This proves the first claim. 
  
  The second claim follows from
  \[  d \theta = d \, \mathrm{Im} \log{\psi} = \frac{2 \pi}{a_k} \ast d\omega_k.  \]  
  The final claim follows from a change of variables and the second claim.
 \end{proof}
 
 We also have
 \begin{lemma}[Collar chart from Green's function] \label{le:Greens_collar_chart}
  Let $\riem$ be a type $(g,n)$ Riemann surface and let $g$ be Green's function of $\riem$. For fixed $p$, let $\psi(w)$ be the multi-valued holomorphic function with real part $g(w;p)$. Setting 
  \[  i a_k = i \int_{\partial_k \riem} \ast dg(\cdot;p)  \]
  it holds that
  \[ \phi(w) = \exp (2\pi \psi(w)/a_k)  \]
  is a collar chart on some open neighbourhood $U$ of $\partial_k \riem$.   
 \end{lemma}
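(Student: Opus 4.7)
The plan is to imitate the proof of Lemma \ref{le:harmonic_measure_collar_chart} above, with the harmonic measure $\omega_k$ replaced by Green's function $g(\cdot;p)$. First I would take a collar neighbourhood $U$ of $\partial_k\riem$ small enough that it omits $p$ and the other boundary components. Then $g(\cdot;p)$ is harmonic on $U$, and $U$ is topologically an annulus, so the only period of the multi-valued harmonic conjugate $\mathrm{Im}\,\psi$ that matters is the one around $\partial_k\riem$, which by definition equals $a_k$. Consequently $\phi=\exp(2\pi\psi/a_k)$ is single-valued and holomorphic on $U$.

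Next I would pin down the sign of $a_k$ and the behaviour of $|\phi|$. Since $g(\cdot;p)>0$ on $\riem\setminus\{p\}$, vanishes on $\partial\riem$, and has a $-\log$ singularity at $p$, Stokes' theorem applied to $g$ on $\riem$ minus a small disk around $p$ gives $\sum_j a_j=-2\pi$; and Hopf's boundary-point lemma forces the interior normal derivative of $g$ to be strictly positive on $\partial_k\riem$, so $a_k<0$. Hence $|\phi(w)|=\exp(2\pi g(w;p)/a_k)\in(0,1)$ inside $U$, and $|\phi(w)|\to 1$ as $w\to\partial_k\riem$ because $g\to 0$ there.

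Finally, to obtain that $\phi$ is a biholomorphism from some possibly smaller collar $U$ onto an annulus $\mathbb{A}_{r,1}$, I would use the argument-principle strategy of Lemma \ref{le:harmonic_measure_collar_chart}. By Hopf's lemma, $dg\neq 0$ on $\partial_k\riem$, so $d\phi=(2\pi/a_k)\phi\cdot d\psi\neq 0$ in a neighbourhood of $\partial_k\riem$ and $\phi$ is there a local biholomorphism. For $s>0$ sufficiently small, the level set $\{g=s\}$ is then an analytic Jordan curve isotopic to $\partial_k\riem$, and by Cauchy--Riemann the tangential derivative of $\mathrm{Im}\,\psi$ along such a curve equals $|\nabla g|$, which is non-zero. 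Therefore $\mathrm{Im}\,\psi$ increases monotonically by exactly $a_k$ as one traverses the level curve once, so $\phi$ maps it bijectively onto the circle $|z|=e^{2\pi s/a_k}$. Since distinct level curves have distinct values of $|\phi|$, $\phi$ is globally injective on the collar $\{0<g<s\}$, which we take as the desired collar neighbourhood.

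The main obstacle is organizing the level-set foliation cleanly: one must choose $s$ small enough that $dg\neq 0$ throughout $\{0<g\leq s\}$ and that each $\{g=c\}$ is a single Jordan curve homotopic to $\partial_k\riem$. Both properties follow from Hopf's boundary-point lemma combined with compactness of $\partial_k\riem$, so once those two facts are in place the proof closes in exactly the same manner as the preceding lemma.
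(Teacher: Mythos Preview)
Your proposal is correct and follows exactly the approach the paper indicates: the paper's own proof simply states that the argument is similar to that of Lemma~\ref{le:harmonic_measure_collar_chart} and refers to \cite{Schippers_Staubach_transmission} for details. Your write-up supplies those details (single-valuedness from the period, sign of $a_k$ via Hopf's boundary-point lemma, and the argument-principle/level-set argument for injectivity), so it is a faithful and somewhat more explicit version of the intended proof.
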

 \begin{proof}
   The proof is similar to that of the above, and can be found in \cite{Schippers_Staubach_transmission}.
 \end{proof}
 The important property of these two collar charts is that the limiting curves are level curves of the harmonic measure and Green's function respectively.\\
  
 A very useful application occurs in the following well-known reproducing property of Green's functions, which also uses the fact that Green's function based at the point $p$ induces a collar chart.
 \begin{proposition} \label{pr:Greens_reproducing} Let $\riem$ be a type $(g,n)$ Riemann surface and let $g$ be its Green's function.  
 Let $\Gamma^p_\epsilon$ denote the level curves of Green's function for any fixed $p \in \riem$. 
  For any $h \in \mathcal{D}_{\mathrm{harm}}(\riem)$
  \[  h(z)= - \frac{1}{2 \pi} \int_{\partial \riem}  \ast d_wg(w;z) h(w) = - \lim_{\epsilon \searrow 0} \frac{1}{2 \pi} \int_{\Gamma_\epsilon^p} \ast d_wg(w;z) h(w).      \]
  We also have 
  \[  h(z) = - \frac{1}{\pi i} \int_{\partial \riem} \partial_w g(w;z) h(w).   \]
 \end{proposition}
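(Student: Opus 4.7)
The plan is a Green's-function reproducing argument, carried out in the CNT framework since the border need not be rectifiable.

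Fix $z\in\riem$, choose a local coordinate $\zeta$ vanishing at $z$, and set $C_\delta=\{|\zeta|=\delta\}$. For $\epsilon$ sufficiently small that the collar chart of Lemma~\ref{le:Greens_collar_chart} is defined and $\Gamma^p_\epsilon$ avoids $z$, let $\Omega_{\epsilon,\delta}$ be the region bounded by $\Gamma^p_\epsilon$ and $C_\delta$. Both $h$ and $g(\cdot;z)$ are harmonic on $\Omega_{\epsilon,\delta}$, so the closed one-form identity $d(h\,{\ast dg}-g\,{\ast dh})=0$ together with Stokes' theorem yields
\[
  \int_{\Gamma^p_\epsilon}\!\bigl(h\,{\ast dg}-g\,{\ast dh}\bigr)=\int_{C_\delta}\!\bigl(h\,{\ast dg}-g\,{\ast dh}\bigr).
\]
Writing $g(w;z)=-\log|\zeta(w)|+u(w)$ with $u$ harmonic near $z$ gives $\ast dg=-d\theta+\ast du$ in these coordinates. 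Letting $\delta\searrow 0$, continuity of $h$ at $z$ yields $\int_{C_\delta}h\,{\ast dg}\to -2\pi h(z)$, while $\int_{C_\delta}g\,{\ast dh}=O(\delta|\log\delta|)$ since $\ast dh$ is bounded near $z$. Hence, for every small $\epsilon>0$,
\[
  \int_{\Gamma^p_\epsilon} h\,{\ast dg}\;-\;\int_{\Gamma^p_\epsilon} g\,{\ast dh}\;=\;-2\pi h(z).
\]

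The crux is to show $\lim_{\epsilon\searrow 0}\int_{\Gamma^p_\epsilon} g\,{\ast dh}=0$; this is where the CNT machinery is essential. Choose a collar neighbourhood $A$ of $\partial\riem$ disjoint from $z$. Since $g(\cdot;z)$ is real, harmonic, and vanishes continuously on $\partial\riem$, it extends smoothly across $\partial\riem$ by Schwarz reflection in the double, hence $g\in\mathcal{D}_{\mathrm{harm}}(A)$ with CNT boundary values identically zero. Decompose $\ast dh=i(\overline{\partial} h-\partial h)$ via \eqref{eq:Wirtinger_to_hodge}, with $\partial h\in\mathcal{A}(A)$ holomorphic and $\overline{\partial} h\in\overline{\mathcal{A}(A)}$ antiholomorphic. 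The second anchor lemma (Lemma~\ref{le:anchor_lemma_two}), applied component by component on $\partial\riem$ to the pair $(g,0)$ against $\partial h$, shows that the limiting integral of $g$ against $\partial h$ is zero; since $g$ is real, complex conjugation of the same conclusion disposes of $\overline{\partial} h$. This establishes the first identity.

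For the second identity, rewrite $\ast d_w g=-i(2\partial_w g-d_w g)$ using \eqref{eq:Wirtinger_to_hodge}; substituting and comparing with the first identity reduces the claim to $\int_{\partial\riem} d_w g(w;z)\,h(w)=0$. Apply Stokes to the exact form $d(gh)=g\,dh+h\,dg$ on $\Omega_{\epsilon,\delta}$. The $C_\delta$-contributions vanish as $\delta\to 0$ by direct estimation: $\int_{C_\delta} h\,dg=O(\delta)$ because $d\log|\zeta|$ restricted to $C_\delta$ pairs trivially with the radial parametrization, and $\int_{C_\delta} g\,dh=O(\delta|\log\delta|)$. This gives $\int_{\Gamma^p_\epsilon} h\,dg=-\int_{\Gamma^p_\epsilon} g\,dh$, and the right side vanishes in the limit by the same anchor-lemma reasoning applied to $dh=\partial h+\overline{\partial} h$. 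The main obstacle throughout is the vanishing of $\int_{\Gamma^p_\epsilon} g\,{\ast dh}$: the second anchor lemma is precisely what lets us replace the (otherwise inaccessible) classical boundary trace of $g$ with its CNT value, which is identically zero.
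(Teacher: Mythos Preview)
Your proof is correct, but the paper's argument is considerably shorter by exploiting a trick you did not use. The paper first invokes the \emph{first} anchor lemma (Lemma~\ref{le:anchor_lemma_one}) to note that the limiting integral is independent of the collar chart, and hence of the base point $p$; it then specializes to $p=z$. With that choice, $g(\cdot;z)$ is \emph{constant} (equal to $\epsilon$) along $\Gamma^z_\epsilon$, so the problematic term is killed directly:
\[
\int_{\Gamma^z_\epsilon} g\,{\ast dh}=\epsilon\int_{\Gamma^z_\epsilon}\ast dh=0
\]
by Stokes and harmonicity of $h$. Similarly, along $\Gamma^z_\epsilon$ one has $dg=0$, so $\partial_w g=\tfrac{i}{2}\ast d_w g$ pointwise on the curve, and the second identity follows immediately from the first without any further Stokes argument.

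Your route instead keeps $p$ general and handles the boundary term $\int_{\Gamma^p_\epsilon} g\,{\ast dh}$ via the \emph{second} anchor lemma (Lemma~\ref{le:anchor_lemma_two}), using that $g$ has CNT boundary value zero. This is perfectly valid and conceptually clean---it makes explicit that the vanishing is a boundary-value fact rather than a special feature of the choice of limiting curves---but it is more work. In particular, your treatment of the second identity via $d(gh)$ and a second appeal to the anchor lemma is unnecessary once one has the $p=z$ observation: on those level curves $dg$ vanishes identically, so there is nothing to estimate. (Incidentally, the $C_\delta$-estimates you give for $\int_{C_\delta} d(gh)$ are redundant, since $gh$ is single-valued and $\int_\gamma d(gh)=0$ over any closed curve.) What your approach buys is that it works uniformly for every $p$ without the preliminary reduction; what the paper's approach buys is that it avoids the second anchor lemma entirely and dispatches both identities in a couple of lines.
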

 \begin{proof} We prove the first displayed equation. 
  By Lemma \ref{le:anchor_lemma_one}, the integral on the left is well-defined, that is, the right hand side is the same no matter what the choice of $p$ is. Thus we may assume that $p=z$. In that case, Stokes' theorem and the harmonicity of $h$ yield that
  \[  \int_{\Gamma_\epsilon^p} g \ast dh = \epsilon \int_{\Gamma_\epsilon^p} \ast dh = 0.  \]
  From here, the proof proceeds in the usual way using Green's identity
  \[  \int_{\Gamma_\epsilon^p} ( g(w;p) \ast dh(w)  -  h(w) \ast dg(w;p) ) =
      \int_{\gamma_r}  ( g(w;p) \ast dh(w) -  h(w) \ast dg(w;p) )  \]
  where $\gamma_r$ is a curve $|w-p|=r$ in some local coordinate, and letting $r \searrow 0$.   
  
  To prove the second displayed equation, choose the limiting curves to be level curves of $g(\cdot\,;z)$; again, this can be done by Lemma \ref{le:anchor_lemma_one}.  Along such curves $dg =0$, so that 
  \[    \partial_w g(w;z) = \frac{i}{2} \ast d_w g    \]
  by equation \eqref{eq:Wirtinger_to_hodge}, which proves the claim. 
 \end{proof}
Note that, this is usually written in terms of an integral around the boundary, under the assumption that $h$ is more regular. 
 Note that the boundary $\partial \riem$ is treated as an analytic curve.
 \end{subsection}
 
  \begin{subsection}{The bounce operator}  \label{se:bounce}
 Let $\riem$ be a bordered surface of type $(g,n)$ and 
  let $U_k \subseteq \riem$ be collar neighbourhoods of $\partial_k \riem$ for $k=1,\ldots,n$. Let $h_k \in \mathcal{D}_{\mathrm{harm}}(U_k)$ for $k=1,\ldots,n$.  Recall that by Theorems \ref{th:CNT_BVs_existence_uniqueness} and \ref{th:bounce_existence}, there is a unique $H \in \mathcal{D}_{\mathrm{harm}}(\riem)$ whose CNT boundary values agree with those of $h_k$ on $\partial_k \riem$ up to a null set for each $k=1,\ldots,n$. This fact allows us to define the following operator, which plays a major role in what follows.
 \begin{definition}\label{defn:bounce op}
 Set $U = U_1 \cup \cdots \cup U_n$ and let $h:U \rightarrow \mathbb{C}$ be the function whose restriction to $U_k$ is $h_k$ for each $k=1,\ldots,n$.  We define 
  \begin{align*}
   \mathbf{G}_{U,\riem}: \mathcal{D}_{\mathrm{harm}}(U) & \rightarrow \mathcal{D}_{\mathrm{harm}}(\riem) \\ h & \mapsto H
  \end{align*}
   We call this operator the {\it{bounce operator}}.  
 \end{definition}
  By conformal invariance of CNT limits, the bounce operator is conformally invariant, that is, if $f:\riem \rightarrow \riem'$ is a biholomorphism and $f(U)=U'$, then 
  
  \begin{equation} \label{eq:bounce_conformally_invariant}
   \gls{bounce} \mathbf{C}_f = \mathbf{C}_f \mathbf{G}_{U',\riem'}.
  \end{equation}

\begin{theorem}[Boundedness of bounce operator] \label{th:bounce_bounded} 
  Let $\riem$, $U_k$ and $h_k$ be as above for $k=1,\ldots,n$.
  Then $\mathbf{G}_{U,\riem}$ is bounded from ${H}^{1}_{\mathrm{conf}}(U)$ to ${H}^{1}_{\mathrm{conf}}(\riem)$.
 \end{theorem}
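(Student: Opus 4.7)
The plan is to reduce the bounce operator to the classical Dirichlet problem on a smooth Riemannian manifold with boundary, using the equivalence of the conformally invariant norm with the ordinary Sobolev norm. The structure will be: (i) control $\|h_k\|_{H^1(U_k)}$ by $\|h\|_{H^1_{\mathrm{conf}}(U)}$; (ii) take Sobolev traces to obtain $H^{1/2}$ boundary data; (iii) invoke the classical Dirichlet solvability on $\riem$, viewed as a compact Riemannian surface with analytic boundary (by embedding in the double); (iv) identify the resulting harmonic function with $\mathbf{G}_{U,\riem}h$ via CNT/Sobolev trace agreement and Theorem~\ref{th:CNT_BVs_existence_uniqueness}.

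For step (i), by Remark~\ref{re:collar_chart_provides_isotopy} I can shrink each $U_k$ to a smaller collar neighbourhood $V_k\subset U_k$ whose inner boundary curve $\gamma_k$ is analytic, so that $V_k\cup\partial V_k$ is a type $(0,2)$ bordered surface with smooth boundary. Then $\mathcal{D}_{\mathrm{harm}}(V_k)\subseteq H^1(V_k)$ by Theorem~\ref{equiv and sobolevs in riem}. I apply Theorem~\ref{thm:equiv sobolev norm} on $V_k$ to the functional
\[
\mathscr{F}(h_k) := \Bigl| \int_{\partial_k\riem} h_k\,\ast d\omega_k \Bigr|,
\]
defined via \eqref{defn:integration over rough}. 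Properties (1) and (2) are immediate from linearity and Cauchy--Schwarz applied to \eqref{defn:integration over rough} together with the Sobolev trace bound \eqref{L2-sobolev estim}; property (3) follows because for $h_k\equiv 1$ the integral equals $\Pi_{kk}=\|d\omega_k\|^2>0$ by (the proof of) Theorem~\ref{th:period_matrix_invertible}. This yields $\|h_k\|_{H^1(V_k)}^2\lesssim \|h_k\|_{\mathcal{D}_{\mathrm{harm}}(V_k)}^2 + |\mathscr{H}_k|^2\leq \|h\|_{H^1_{\mathrm{conf}}(U)}^2$.

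Step (ii) is the Sobolev trace theorem applied to each $V_k$: since $V_k$ has smooth boundary, the map $H^1(V_k)\to H^{1/2}(\partial V_k)$ is continuous, so restricting to the $\partial_k\riem$ component gives $\|h_k|_{\partial_k\riem}\|_{H^{1/2}(\partial_k\riem)}\lesssim \|h_k\|_{H^1(V_k)}$. Assembling, $f := h|_{\partial\riem}\in H^{1/2}(\partial\riem)$ with $\|f\|_{H^{1/2}(\partial\riem)}\lesssim \|h\|_{H^1_{\mathrm{conf}}(U)}$. For step (iii), view $\riem\subset\riem^d$ so that $\partial\riem$ is an analytic curve and hence $\riem$ is a compact Riemannian 2-manifold with smooth boundary; the classical Dirichlet well-posedness invoked in the proof of Theorem~\ref{th:CNT_BVs_existence_uniqueness} produces a unique harmonic $H\in H^1(\riem)$ with $\|H\|_{H^1(\riem)}\lesssim \|f\|_{H^{1/2}(\partial\riem)}$.

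Finally, step (iv) identifies $H$ with the bounce. By Remark~\ref{re:Sobolev_CNT_agree}, the Sobolev trace of $H$ on $\partial_k\riem$ coincides almost everywhere (hence off a null set in the collar-chart sense) with the CNT boundary values of $H$; the same applies to $h_k$ on $V_k$. Thus $H$ and the $h_k$ share CNT boundary values up to a null set, and Theorem~\ref{th:CNT_BVs_existence_uniqueness} gives $H=\mathbf{G}_{U,\riem}h$. Converting back via $\|H\|_{H^1_{\mathrm{conf}}(\riem)}\lesssim \|H\|_{H^1(\riem)}$ from Theorem~\ref{thm:equivnorms on riemsurfaces} chains all the bounds into the desired estimate. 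The main delicate point is step (iv): verifying that the two notions of boundary value agree up to a null set requires that the Lebesgue-a.e. Sobolev trace can be refined to an off-null-set identification, which is precisely what Remark~\ref{re:Sobolev_CNT_agree} provides under the analytic regularity of $\partial\riem$ in the double.
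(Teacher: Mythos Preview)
Your proof is correct and follows essentially the same route as the paper: shrink to collar neighbourhoods with analytic inner boundary, embed in the double so that $\partial\riem$ is analytic, pass through the Sobolev trace and the classical $H^{1/2}\to H^1$ Dirichlet estimate, and use Theorem~\ref{thm:equiv sobolev norm} to interchange the $H^1$ and $H^1_{\mathrm{conf}}$ norms. The only organizational difference is that you invoke Theorem~\ref{thm:equiv sobolev norm} per component on each $V_k$ (with the single functional $|\mathscr{H}_k|$), whereas the paper applies it once on all of $U$ with the aggregate functional $\mathscr{F}(h)=(\sum_k|\mathscr{H}_k|^2)^{1/2}$; and you are more explicit than the paper about step~(iv), where the identification $H=\mathbf{G}_{U,\riem}h$ via Remark~\ref{re:Sobolev_CNT_agree} is what the paper tacitly assumes.
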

\begin{remark}
Note that a proof of the special case of Theorem \ref{th:bounce_bounded} can be found in \cite{Schippers_Staubach_Grunsky_expository}, Theorem 4.6.
\end{remark}
 \begin{proof}
  The goal is to show that if $U = U_1 \cup \cdots \cup U_n$ and if $h:U \rightarrow \mathbb{C}$ is a function in $ \mathcal{D}_{\mathrm{harm}}(U)$ whose restriction to $U_k$ is $h_k$ for each $k=1,\ldots,n$, then $$\Vert \mathbf{G}_{U,\riem}\,h \Vert_{{H}^{1}_{\mathrm{conf}}(\riem)}\lesssim  \| h \|_{{H}^{1}_{\mathrm{conf}}(U)},$$ 
  for $h\in \mathcal{D}_{\mathrm{harm}}(U).$ First, observe that we can assume that the inner boundary of $U_k$ is analytic.  To see this, let $U_k' \subseteq U_k$ be a collar neighbourhood whose inner boundary is analytic.  Since $\| \left. h_k \right|_{U_k'} \|_{H ^{1}_{\mathrm{conf}}(U_k')} \leq \| h_k\|_{H ^{1}_{\mathrm{conf}}(U_k)}$, it is enough to show that $\mathbf{G}_{U,\riem}$ is bounded 
   with respect to the $H^{1}_{\mathrm{conf}}(U')$ norm, where $U'=U'_1 \cup \cdots \cup U'_n \subset U$. In what follows, we relabel the new sets by removing the primes. 
   
  Next, observe that because CNT boundary values and the Dirichlet norms are conformally invariant, it is enough to prove this for analytic strip-cutting curves $\partial_k \riem$, and
  this can be arranged for example by embedding $\riem$ in its double.  Thus, we can
  assume that $\partial U_k$ is analytic.\\ 
 Furthermore by the result on the unique Sobolev extension, see e.g. \cite[Proposition 4.5, p 334]{Taylor} and the fact that $\partial_k \riem \subsetneq \partial U_k$, 
yields that
  \begin{equation}\label{estim: interior to boundary}
  \| \left. h \right|_{\partial_k\riem} \|_{H^{1/2}(\partial_k\riem)} \leq  \| \left. h \right|_{\partial_k\riem} \|_{H^{1/2}(\partial U_k)}  \lesssim \| h_k \|_{H^1(U_k)}.    
  \end{equation} 
  
  Also, since $\partial \riem= \cup_{k=1}^{n}\partial_k\riem,$ given the Dirichlet data $h|_{\partial_k\riem}$, $k=1, \dots, n$, on each of the boundary components, Theorem \ref{th:CNT_BVs_existence_uniqueness} yields that the unique harmonic extension $H$ of the boundary values $ h|_{\partial_k\riem}$ satisfies 
  \begin{equation}\label{estim: boundary to interior}
   \| H \|_{{H}^1(\riem)} \lesssim \sum_{k=1}^{n}\| \left. h \right|_{\partial_k\riem} \|_{H^{1/2}(\partial_k\riem)}.   
  \end{equation}   
  
 Now since $H=\mathbf{G}_{U,\riem}\, h$,  using \eqref {estim: interior to boundary} and \eqref{estim: boundary to interior} one has
  \begin{equation}\label{Sobolev estimate for bounce}
  \begin{split}
  \Vert  \mathbf{G}_{U,\riem}\, h \Vert _{H^1(\riem)} \lesssim \sum_{k=1}^{n}  \Vert h_k\Vert _{H^1(U_k)}\lesssim \Vert h\Vert _{H^1(U)}.
  \end{split}
  \end{equation}
  
Now let $\mathscr{F}(h):= \Big( \sum_{k=1}^{n}  | \mathscr{H}_k |^2 \Big)^{1/2}$ then $\mathscr{F}$  is clearly non-negative, and \eqref{defn:integration over rough} yields that 

{$$\mathscr{F}(1)=  \Big(\sum_{k=1}^{n} { \left| \int_{\partial_k \riem} \ast d\omega_k \right|^2}  \Big)^{1/2}= \Big(\sum_{k=1}^{n} {\left| \int_{\gamma_k } \ast d\omega_k \right|^2}  \Big)^{1/2}\neq 0. $$}

Furthermore the definition \eqref{defn:integration over rough}, the Cauchy-Schwarz inequality, \eqref{estim: interior to boundary} and Theorem \ref{thm:H1-L2} yield that
\begin{equation}
\begin{split}
    \mathscr{F}(h)\leq \sum_{k=1}^{n}  |\iint_{U_k} dh_k \wedge \ast d\omega_k |+\sum_{k=1}^{n} | \int_{\gamma_k} h_k \ast d \omega_k| \lesssim\Vert h\Vert_{H^1(U)}
    \end{split}
\end{equation}
  This shows that $\mathscr{F}$ is a bounded linear functional on $H^1(U)$ and thereby the conditions of Theorem \ref{thm:equiv sobolev norm} are all satisfied. Hence using \eqref{Sobolev estimate for bounce} and Theorem \ref{thm:equiv sobolev norm} we obtain 
    \begin{equation}
    \begin{split}
    \Vert  \mathbf{G}_{U,\riem}\, h \Vert _{H^1(\riem)}\lesssim\Vert h\Vert_{H^1(U)}\lesssim \Big(\Vert h\Vert^2_{\dot{H}^1(U)}+  (\mathscr{F}(h))^2\Big)^{1/2}\lesssim\\ \Big(\| h \|^2_{\mathcal{D}_{\mathrm{harm}}(U)} + 
  \sum_{k=1}^{n}   |\mathscr{H}_k |^2\Big)^{1/2}\lesssim \| h \|_{{H}^{1}_{\mathrm{conf}}(U)}.
 \end{split}
  \end{equation}
Now Theorem \ref{thm:equivnorms on riemsurfaces} on equivalence of the norms ends the proof of the boundedness of the bounce operator.
\end{proof}

  Now as an illuminating example, choose $\riem= \disk$ and $U = \mathbb{A}$ where $\mathbb{A}= \mathbb{A}_{r,1}$. Choosing $p=0$ in \eqref{eq:constant_hk_definition_onecurve}, we observe that $\ast d\mathscr{G} = d\theta$ where $\theta$ is angle in polar coordinates $z=r e^{i\theta}$ on $\disk$.  Thus 
  \[  \mathscr{H}_1 = \int_{\mathbb{S}^1} h(e^{i\theta}) \,d\theta,    \]
  that is, it is just the constant term in the Fourier expansion of the trace of $h$ to the boundary.  Using this fact it is elementary to show that
  \begin{proposition} \label{pr:bounce_hol_dense_disk} The subset
   $\mathbf{G}_{\mathbb{A},\disk} \mathcal{D}(\mathbb{A})$ is dense 
   in $H^1_{\mathrm{conf}}(\disk)$.  
  \end{proposition}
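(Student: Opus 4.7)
The plan is to use Fourier series on $\mathbb{S}^1$ to construct an explicit approximating sequence, exploiting the remark immediately preceding the proposition that identifies $\mathscr{H}_1$ with the constant Fourier coefficient of the boundary trace. Given $h \in H^1_{\mathrm{conf}}(\disk) = \mathcal{D}_{\mathrm{harm}}(\disk)$, I would first record its expansion as a harmonic function on the disk,
\[
 h(z) = \sum_{n=0}^{\infty} a_n z^n + \sum_{n=1}^{\infty} b_n \bar{z}^n,
\]
where convergence is guaranteed to hold in $\mathcal{D}_{\mathrm{harm}}(\disk)$: by the Douglas formula \eqref{Douglas formula} together with the Fourier description \eqref{defn:sobolev half norm}, the map $h \mapsto h|_{\mathbb{S}^1}$ is an isomorphism (modulo constants) between $\mathcal{D}_{\mathrm{harm}}(\disk)$ and $\dot{H}^{1/2}(\mathbb{S}^1)$, and trigonometric Fourier partial sums converge in $\dot{H}^{1/2}(\mathbb{S}^1)$.

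Next, for each $N \geq 0$ I would define the Laurent polynomial
\[
 f_N(z) = \sum_{n=0}^{N} a_n z^n + \sum_{n=1}^{N} b_n z^{-n}
\]
on $\mathbb{A}$. Since $f_N$ is holomorphic on an open neighbourhood of $\mathrm{cl}(\mathbb{A})$, it lies trivially in $\mathcal{D}(\mathbb{A})$. Its (classical, hence CNT) boundary values on $\mathbb{S}^1$ are the trigonometric polynomial $\theta \mapsto \sum_{n=0}^{N} a_n e^{in\theta} + \sum_{n=1}^{N} b_n e^{-in\theta}$, and the unique $\mathcal{D}_{\mathrm{harm}}$ solution of the Dirichlet problem on $\disk$ with these boundary values (Theorem \ref{th:CNT_BVs_existence_uniqueness}) is the Fourier partial sum
\[
 h_N(z) := \sum_{n=0}^{N} a_n z^n + \sum_{n=1}^{N} b_n \bar{z}^n.
\]
Therefore $\mathbf{G}_{\mathbb{A},\disk} f_N = h_N$.

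It remains to check convergence in the $H^1_{\mathrm{conf}}(\disk)$ norm. The seminorm part $\|h - h_N\|_{\mathcal{D}_{\mathrm{harm}}(\disk)} \to 0$ is exactly the convergence of the Fourier partial sums of $h$ noted above. For the boundary term, the explicit computation preceding the proposition (applied with $p=0$, so that $\ast d \mathscr{G} = d\theta$) gives $\mathscr{H}_1(h) = \int_0^{2\pi} h(e^{i\theta})\,d\theta = 2\pi a_0$; that is, $\mathscr{H}_1$ depends only on the constant Fourier coefficient. Since $h - h_N$ has vanishing constant coefficient for every $N \geq 0$, one has $\mathscr{H}_1(h - h_N) = 0$, and hence $\|h - h_N\|_{H^1_{\mathrm{conf}}(\disk)} \to 0$, which proves density. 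There is no real obstacle; the entire argument rests on the one key observation that $\mathbf{G}_{\mathbb{A},\disk}$ sends the Laurent monomial $z^n$ (for $n \in \mathbb{Z}$) to the harmonic monomial $z^n$ when $n \geq 0$ and $\bar{z}^{-n}$ when $n < 0$, so that the image of the Laurent polynomials is precisely the set of harmonic polynomials on $\disk$, which is dense in $H^1_{\mathrm{conf}}(\disk)$.
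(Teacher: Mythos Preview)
Your proof is correct and follows essentially the same approach as the paper: both rely on the observation that the Laurent monomial $z^{-n}$ on $\mathbb{A}$ and the harmonic monomial $\bar z^{\,n}$ on $\disk$ share boundary values on $\mathbb{S}^1$, so that $\mathbf{G}_{\mathbb{A},\disk}$ carries Laurent polynomials to harmonic polynomials, which are dense. The only difference is packaging: the paper invokes the boundedness of the bounce operator (Theorem~\ref{th:bounce_bounded}) to transfer a polynomial approximation through $\mathbf{G}_{\mathbb{A},\disk}$, whereas you compute $\mathbf{G}_{\mathbb{A},\disk} f_N = h_N$ explicitly and verify $h_N \to h$ in $H^1_{\mathrm{conf}}(\disk)$ by hand, which is slightly more self-contained.
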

 \begin{proof}
 Given $f\in H^1_{\mathrm{conf}}(\disk)$ and $\varepsilon>0$, take a polynomial $p(z)\in \mathcal{D}(\mathbb{A})$ such that $\Vert f-p\Vert_{H^1_{\mathrm{conf}}(\disk)}< \varepsilon. $ 
 Now since $\mathbf{G}_{\mathbb{A},\disk} f=f$, Theorem \ref{th:bounce_bounded} yields that
 \begin{equation}
     \Vert f- \mathbf{G}_{\mathbb{A},\disk}p\Vert_{H^1_{\mathrm{conf}}(\disk)}= \Vert \mathbf{G}_{\mathbb{A},\disk}(f- p)\Vert_{H^1_{\mathrm{conf}}(\disk)}\lesssim \Vert f- p\Vert_{H^1_{\mathrm{conf}}(\disk)}<\varepsilon,
 \end{equation}
 which proves the desired density.
 \end{proof}
  
In order to prove a density result in the case of many boundary curves, we need the following lemma.

 \begin{lemma} \label{le:for_density_factorization}
  Let $\riem$ be a Riemann surface of type $(g,n)$.  For any collar neighbourhood $\phi:U \rightarrow \mathbb{A}$ of a border $\partial_k \riem$, the map 
  \[ \mathbf{C}_\phi: H^1_{\mathrm{conf}}(U) \rightarrow  H^1_{\mathrm{conf}}(\mathbb{A}) \]
  is a bounded isomorphism.
 \end{lemma}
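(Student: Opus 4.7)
The biholomorphism $\phi$ extends analytically across $\partial_k\riem$ by Theorem \ref{theorem on collar charts} followed by Schwarz reflection, so it realizes $U$ and $\mathbb{A}$ as conformally equivalent bordered surfaces of type $(0,2)$ (shrinking $U$ if necessary so that the inner boundary is analytic, as allowed by Remark \ref{re:collar_chart_provides_isotopy}). By the conformal invariance of the harmonic Dirichlet space, the pullback map induced by $\phi$ preserves the semi-norm $\|\cdot\|_{\mathcal{D}_{\mathrm{harm}}}$ exactly. The task therefore reduces to controlling the additional boundary-integral term in the $H^1_{\mathrm{conf}}$ norm.

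The plan is to replace these boundary integrals on each side by a point-evaluation functional, which transforms trivially under $\phi$. Fix $p\in U$ and set $q:=\phi(p)\in\mathbb{A}$. The functional $h\mapsto |h(p)|$ is positively homogeneous of degree one, bounded on $H^1(U)\cap \mathcal{D}_{\mathrm{harm}}(U)$ by the mean value property for harmonic functions, and non-vanishing on constants; similarly $H\mapsto |H(q)|$ satisfies these properties on $\mathbb{A}$. Applying Theorem \ref{thm:equiv sobolev norm} (together with the fact, used already in the proof of Theorem \ref{th:bounce_bounded}, that $\mathscr{H}_k$ is a bounded linear functional on $H^1$ with $\mathscr{H}_k(1)\neq 0$) yields
\begin{equation*}
\|h\|_{H^1_{\mathrm{conf}}(U)}^2 \approx \|h\|_{\mathcal{D}_{\mathrm{harm}}(U)}^2 + |h(p)|^2, \qquad \|H\|_{H^1_{\mathrm{conf}}(\mathbb{A})}^2 \approx \|H\|_{\mathcal{D}_{\mathrm{harm}}(\mathbb{A})}^2 + |H(q)|^2.
\end{equation*}

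Since the pullback satisfies $(h\circ \phi^{-1})(q)=h(p)$, both the Dirichlet semi-norm and the point-evaluation term are preserved. Combining this with the two displayed equivalences gives the boundedness of the map and of its inverse, proving that it is a bounded isomorphism between $H^1_{\mathrm{conf}}(U)$ and $H^1_{\mathrm{conf}}(\mathbb{A})$.

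The main obstacle lies in carefully verifying the hypotheses of Theorem \ref{thm:equiv sobolev norm}. In particular, one must check that $\mathscr{H}_k$ in the definition of $H^1_{\mathrm{conf}}(U)$---which uses the harmonic measure $\omega_k$ of the ambient surface $\riem$, not of $U$---is bounded on $H^1(U)$ with $\mathscr{H}_k(1)\neq 0$. The boundedness is handled by rewriting $\mathscr{H}_k$ via Stokes as in \eqref{defn:integration over rough} and applying Cauchy--Schwarz together with the Sobolev trace estimate \eqref{L2-sobolev estim}; the non-degeneracy on constants follows from $\mathscr{H}_k(1)=-\int_{\gamma_k}\ast d\omega_k\neq 0$, since $\omega_k$ is non-constant near $\partial_k\riem$. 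Once these hypotheses are established, the argument unfolds as described.
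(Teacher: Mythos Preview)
Your proof is correct and takes a genuinely different route from the paper's. Both arguments invoke Theorem~\ref{thm:equiv sobolev norm}, but to different ends. The paper uses it to identify $H^1_{\mathrm{conf}}$ with the inhomogeneous Sobolev norm $H^1$ and then appeals to diffeomorphism invariance of Sobolev spaces (Lemma~\ref{lem:invariance of sobolev under diffeo}) together with localization and partition of unity. You instead use Theorem~\ref{thm:equiv sobolev norm} to replace the boundary integral $\mathscr{H}_k$ by a point evaluation $|h(p)|$ (this is exactly the content of Lemma~\ref{le:point_also_H1conf}, applied to $U$ and to $\mathbb{A}$), and then observe that both the Dirichlet semi-norm and the point evaluation are \emph{exactly} preserved under the biholomorphism $\phi$. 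Your route is more elementary and stays entirely within the conformally invariant framework, bypassing the Sobolev diffeomorphism machinery.

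One small caution: the parenthetical ``shrinking $U$ if necessary so that the inner boundary is analytic'' would, taken literally, change the collar chart and hence the statement being proved. What you actually need is that Theorem~\ref{thm:equiv sobolev norm} applies to $U$ as stated; since $\phi$ extends continuously to the inner Jordan curve $\Gamma$, the closure of $U$ is already a bordered surface of type $(0,2)$, and one may view it inside its own double (where both borders are analytic) to legitimize the application of Theorem~\ref{thm:equiv sobolev norm}. With that adjustment, no shrinking is required and the argument goes through for the given chart.
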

 \begin{proof}
 Note that we can treat $\partial_k \riem$ as an analytic curve in the double and in fact there is a biholomorphism of a doubly-connected neighbourhood of $\partial_k \riem$ in the double to a doubly-connected neighbourhood of $\mathbb{S}^1$.  So after localizations and partition of unity  on the boundary structure of $\riem$, and using Theorem \ref{thm:equiv sobolev norm}, matters  reduce to Lemma \ref{lem:invariance of sobolev under diffeo}.  
 \end{proof}
 
 In particular, this shows the following.
 \begin{proposition}
  Let $\riem$ be of type $(g,1)$.  The $H^1_{\mathrm{conf}}$-norms induced by different choices of $p$ in \eqref{eq:constant_hk_definition_onecurve}
  are equivalent.
 \end{proposition}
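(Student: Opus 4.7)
The plan is to mimic the proof of Theorem \ref{thm:equivnorms on riemsurfaces}: for each admissible $p$, verify that the functional $\mathscr{F}_p(h) := |\mathscr{H}_1^{(p)}|$ satisfies the three hypotheses of Theorem \ref{thm:equiv sobolev norm}, so the norm $\|\cdot\|_{H^1_{\mathrm{conf}}(\riem)}$ built from $p$ via \eqref{equivalent sobolev norm} is equivalent to the intrinsic $H^1(\riem)$ norm. Since this equivalence holds for every admissible $p$, the norms induced by any two choices of $p$ are mutually equivalent by transitivity.

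Fix $p \in \riem$ and a collar neighbourhood $U_1$ of $\partial_1\riem$ with analytic inner boundary $\gamma_1$ such that $p \notin \mathrm{cl}(U_1)$. Since $\mathscr{G}(\cdot\,;p)$ extends real-analytically to $\mathrm{cl}(U_1)$, the limiting integral definition of \eqref{eq:constant_hk_definition_onecurve} agrees with the Stokes expression
\[
\mathscr{H}_1^{(p)} = \iint_{U_1} dh \wedge \ast d\mathscr{G}(\cdot\,;p) \;-\; \int_{\gamma_1} h\, \ast d\mathscr{G}(\cdot\,;p),
\]
by the same computation that produced \eqref{defn:integration over rough}. Condition (1) of Theorem \ref{thm:equiv sobolev norm} (homogeneity of degree one) is immediate from linearity of the integral. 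Condition (2) follows by applying Cauchy--Schwarz to the first term (using $\|h\|_{\dot{H}^1(U_1)} \lesssim \|h\|_{H^1(\riem)}$ and the fact that $\ast d\mathscr{G}(\cdot\,;p)$ is smooth and bounded on $U_1$) and the trace inequality \eqref{L2-sobolev estim} from Theorem \ref{thm:H1-L2} to the second term, yielding $\mathscr{F}_p(h) \lesssim \|h\|_{H^1(\riem)}$.

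For Condition (3), applying Stokes' theorem on $\riem \setminus D_\varepsilon(p)$ for a small coordinate disk $D_\varepsilon(p)$ around $p$ and using the logarithmic singularity of Green's function gives
\[
\mathscr{F}_p(1) = \left| \int_{\partial_1 \riem} \ast d\mathscr{G}(\cdot\,;p) \right| = \left| \lim_{\varepsilon \searrow 0} \int_{\partial D_\varepsilon(p)} \ast d\mathscr{G}(\cdot\,;p) \right| = 2\pi \neq 0,
\]
so $\mathscr{F}_p(1) \neq 0$ as required. Theorem \ref{thm:equiv sobolev norm} then gives
\[
\|h\|_{H^1(\riem)} \approx \bigl( \|h\|_{\dot{H}^1(\riem)}^2 + |\mathscr{F}_p(h)|^2 \bigr)^{1/2} \approx \|h\|_{H^1_{\mathrm{conf}}(\riem),\,p},
\]
using that the Dirichlet semi-norm coincides with the homogeneous Sobolev semi-norm up to a universal constant (as noted after \eqref{defn: Sobolev norm on riem}). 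Since this equivalence holds uniformly for each admissible $p$, the norms for any two choices $p_1, p_2$ are equivalent to one another.

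I do not expect a serious obstacle here; the only mild subtlety is that different choices of $p$ may force us to use different collar neighbourhoods $U_1$ in the intermediate Stokes expression, but the final norm depends only on $p$ (via $\mathscr{H}_1^{(p)}$), not on $U_1$, as the limiting integral definition \eqref{eq:constant_hk_definition_onecurve} is collar-independent by Lemma \ref{le:anchor_lemma_one}. The sign convention for $\int_{\partial_1\riem}\ast d\mathscr{G}(\cdot\,;p)$ is irrelevant since only its absolute value enters Condition (3).
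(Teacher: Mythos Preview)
Your proof is correct but follows a different route from the paper's. The paper's one-line argument invokes Lemma \ref{le:for_density_factorization}: since the functional $\mathscr{H}_1^{(p)}$ depends only on the restriction $h|_{U_1}$ (and the Dirichlet semi-norm part is $p$-independent), each $p$-based norm on $\riem$ is shown to be equivalent to the fixed $H^1_{\mathrm{conf}}(\mathbb{A})$ norm with singularity at $0$ under a single collar chart, whence all such norms are mutually equivalent. Your approach instead verifies the hypotheses of Theorem \ref{thm:equiv sobolev norm} directly for $\mathscr{F}_p$, showing each $p$-norm is equivalent to the intrinsic $H^1(\riem)$ norm; this parallels the paper's own proofs of Theorem \ref{thm:equivnorms on riemsurfaces} and Lemma \ref{le:point_also_H1conf}. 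Indeed, by the reproducing property (Proposition \ref{pr:Greens_reproducing}) one has $\mathscr{H}_1^{(p)} = -2\pi\, h(p)$ for $h$ harmonic on $\riem$, so your argument is effectively an alternative verification of the $n=1$ case of Lemma \ref{le:point_also_H1conf}. Your route is more self-contained and makes the equivalence to $H^1(\riem)$ explicit; the paper's route avoids re-verifying conditions (1)--(3) by reducing to a fixed model space.
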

 \begin{proof}
  By Lemma \ref{le:for_density_factorization} each choice is equivalent to the $H^1_{\mathrm{conf}}(\mathbb{A})$-norm with singularity at $0$, under a fixed collar chart.
 \end{proof}
 
 We will also need the following when we study overfare in the next section. 
 \begin{proposition}  \label{pr:restriction_Hconf_bounded}
  Let $\riem$ be a type $(g,n)$ Riemann surface.  Let $\phi_k:U_k \rightarrow \mathbb{A}_k$ be a collection of collar charts of the boundaries $\partial_k \riem$ for $k=1,\ldots,n$ and let $U = U_1 \cup \cdots \cup U_n$.  Then the restriction map
  \[ \gls{rest}_{\riem,U} : H^1_{\mathrm{conf}}(\riem) \rightarrow H^1_{\mathrm{conf}}(U)    \]
  is bounded.
 \end{proposition}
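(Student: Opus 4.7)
The plan is straightforward: split the $H^1_{\mathrm{conf}}(U)$ norm into its two constituent parts---the Dirichlet seminorm and the sum of boundary integrals $\sum_{k=1}^n|\mathscr{H}_k|^2$---and bound each separately by $\|h\|_{H^1_{\mathrm{conf}}(\riem)}$.

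For the Dirichlet seminorm part, the containment $U\subseteq\riem$ immediately yields $\|h\|_{\mathcal{D}_{\mathrm{harm}}(U)}\leq\|h\|_{\mathcal{D}_{\mathrm{harm}}(\riem)}$ by restricting the domain of integration in the defining integral.

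For the boundary-integral part, I would first observe that the harmonic measures $\omega_k$ are globally defined on $\riem$, so for a function $h\in H^1_{\mathrm{conf}}(\riem)$ the quantity
\[ \mathscr{H}_k \;=\; \int_{\partial_k\riem} h\,\ast d\omega_k \]
is literally the same whether it is computed as in \eqref{defn:integration over rough} using the collar $U_k$ and an interior analytic curve $\gamma_k\subset U_k$, or viewed as an intrinsic limiting boundary integral on $\riem$. This consistency follows directly from the first anchor lemma (Lemma~\ref{le:anchor_lemma_one}) combined with Stokes' theorem applied between any two isotopic analytic curves in a collar. Consequently, Theorem~\ref{thm:equivnorms on riemsurfaces}---which asserts that any choice of boundary curve yields an equivalent $H^1_{\mathrm{conf}}(\riem)$ norm---gives, for each $k=1,\ldots,n$,
\[ |\mathscr{H}_k|^{2} \;\lesssim\; \|h\|^2_{H^1_{\mathrm{conf}}(\riem)}. \]
Summing the finitely many terms then produces $\sum_{k=1}^n|\mathscr{H}_k|^2\lesssim \|h\|^2_{H^1_{\mathrm{conf}}(\riem)}$.

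Combining the two bounds gives
\[ \|\mathbf{R}_{\riem,U}h\|^2_{H^1_{\mathrm{conf}}(U)} \;=\; \|h\|^2_{\mathcal{D}_{\mathrm{harm}}(U)} + \sum_{k=1}^n |\mathscr{H}_k|^2 \;\lesssim\; \|h\|^2_{H^1_{\mathrm{conf}}(\riem)}, \]
which is the desired boundedness. I do not anticipate any real obstacle in this argument: the whole content is the observation that the boundary integrals entering the definition of each norm are the \emph{same} quantity (which is what the anchor lemma guarantees), after which the proposition reduces to the already-established equivalence-of-norms statement in Theorem~\ref{thm:equivnorms on riemsurfaces}.
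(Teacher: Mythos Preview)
Your argument is correct and is essentially the same as the paper's approach. The paper's proof is a single sentence pointing to the definitions \eqref{defn:Dconf norm}, \eqref{equivalent sobolev norm} and Theorem~\ref{thm:equivnorms on riemsurfaces}; you have simply unpacked that sentence, making explicit (i) that the Dirichlet seminorm decreases under restriction, (ii) that the boundary integrals $\mathscr{H}_k$ are the same whether computed from $h$ on $\riem$ or from $h|_{U_k}$ (which is built into the definition \eqref{defn:integration over rough} and justified by the anchor lemma), and (iii) that each $|\mathscr{H}_k|$ is dominated by the $H^1_{\mathrm{conf}}(\riem)$ norm via the equivalence of norms in Theorem~\ref{thm:equivnorms on riemsurfaces}.
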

\begin{proof}
  This follows from the definitions of the norms \eqref{defn:Dconf norm}, \eqref{equivalent sobolev norm} and Theorem \ref{thm:equivnorms on riemsurfaces}.  
 \end{proof}

 We may now prove the following:  
 \begin{theorem} \label{th:density_of_holo_collar}
  Let $\riem$ be a type $(g,n)$ Riemann surface.  Let $U = U_1 \cup \cdots \cup U_n$ be a union of collar neighbourhoods $U_k$ of $\partial_k \riem$.  Then $\mathbf{G}_{U,\riem} \mathcal{D}(U)$ is dense in ${H}^1_{\mathrm{conf}}(\riem)$. 
 \end{theorem}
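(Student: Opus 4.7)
Given $H \in H^{1}_{\mathrm{conf}}(\riem)$ and $\varepsilon > 0$, the plan is to build $f \in \mathcal{D}(U)$ whose boundary trace on each $\partial_k \riem$ approximates that of $H$ in $H^{1/2}(\partial_k\riem)$, and then to use boundedness of the Dirichlet solution operator to conclude that $\mathbf{G}_{U,\riem} f$ approximates $H$ in $H^{1}_{\mathrm{conf}}(\riem)$. I start with the observation that $H|_U \in H^{1}_{\mathrm{conf}}(U)$ by Proposition \ref{pr:restriction_Hconf_bounded} and that $\mathbf{G}_{U,\riem}(H|_U)=H$ by uniqueness of the Dirichlet solution (Theorem \ref{th:CNT_BVs_existence_uniqueness}), so for any $f$,
\[ H-\mathbf{G}_{U,\riem}f \;=\; \mathbf{G}_{U,\riem}(H|_U-f). \]
Reading the intermediate inequalities inside the proof of Theorem \ref{th:bounce_bounded}, the bounce in fact satisfies
\[ \|\mathbf{G}_{U,\riem}h\|_{H^1(\riem)} \;\lesssim\; \sum_{k=1}^n \|h|_{\partial_k\riem}\|_{H^{1/2}(\partial_k\riem)}, \]
which combined with the norm equivalence $H^1(\riem)\approx H^{1}_{\mathrm{conf}}(\riem)$ from Theorem \ref{thm:equivnorms on riemsurfaces} reduces the theorem to the following approximation statement: for each $k$, approximate the boundary trace of $H$ on $\partial_k\riem$ in $H^{1/2}(\partial_k\riem)$ by boundary traces of elements of $\mathcal{D}(U_k)$.

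To carry out this approximation on one boundary component, I fix $k$ and a collar chart $\phi_k: U_k \to \mathbb{A}_{r_k,1}$. Regarding $\partial_k\riem$ as the analytic curve in the double, Proposition \ref{prop:collar_charts_extend_analytically} extends $\phi_k$ biholomorphically to an open neighbourhood of $\partial_k\riem$, which via Lemma \ref{lem:invariance of sobolev under diffeo} induces an equivalence between $H^{1/2}(\partial_k\riem)$ and $H^{1/2}(\mathbb{S}^1)$. The pulled-back datum $H\circ\phi_k^{-1}\big|_{\mathbb{S}^1}$ thus lies in $H^{1/2}(\mathbb{S}^1)$, and by the standard density of trigonometric polynomials in $H^{1/2}(\mathbb{S}^1)$ we may approximate it arbitrarily well by the restriction to $\mathbb{S}^1$ of a Laurent polynomial $P_k(z)=\sum_{|n|\le N}a^{(k)}_n z^n$. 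Since $\mathbb{A}_{r_k,1}$ is bounded away from $0$ and $\infty$, the derivative $P_k'$ is bounded on it, so $P_k\in\mathcal{D}(\mathbb{A}_{r_k,1})$; conformal invariance of the Dirichlet space then yields $f_k:=P_k\circ\phi_k\in\mathcal{D}(U_k)$ with boundary trace on $\partial_k\riem$ within the desired distance of $H|_{\partial_k\riem}$ in $H^{1/2}(\partial_k\riem)$.

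Because the collar neighbourhoods $U_1,\ldots,U_n$ may be chosen pairwise disjoint, the piecewise formula $f|_{U_k}:=f_k$ defines a single $f\in\mathcal{D}(U)$, and combining the two preceding paragraphs yields $\|\mathbf{G}_{U,\riem}f-H\|_{H^{1}_{\mathrm{conf}}(\riem)}<\varepsilon$. The main pitfall to avoid is the tempting reduction to approximating $H|_U$ in the full $H^{1}_{\mathrm{conf}}(U)$ norm by elements of $\mathcal{D}(U)$, which is simply false: $\mathcal{D}(\mathbb{A}_{r,1})$ is not dense in $\mathcal{D}_{\mathrm{harm}}(\mathbb{A}_{r,1})$ in the Dirichlet norm (for instance $\bar z$ has strictly positive Dirichlet distance to every holomorphic function on an annulus). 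The decisive point is that the bounce operator factors through the boundary trace, so closeness of boundary traces in $H^{1/2}$ alone is enough.
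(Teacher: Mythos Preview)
Your proof is correct and takes a genuinely different route from the paper's. The paper builds a bounded isomorphism
\[
\rho = \mathbf{G}_{U,\riem}\,\mathbf{C}_{F}\,\mathbf{R}_{\disk^n,\mathbb{A}^n}:H^{1}_{\mathrm{conf}}(\disk^n)\longrightarrow H^{1}_{\mathrm{conf}}(\riem)
\]
(with inverse $\mathbf{G}_{\mathbb{A}^n,\disk^n}\,\mathbf{C}_{F^{-1}}\,\mathbf{R}_{\riem,U}$), factors $\mathbf{G}_{U,\riem}=\rho\,\mathbf{G}_{\mathbb{A}^n,\disk^n}\,\mathbf{C}_{F^{-1}}$, and then invokes the already-proved disk case Proposition~\ref{pr:bounce_hol_dense_disk}. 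Thus the paper stays entirely inside the $H^{1}_{\mathrm{conf}}$ framework and uses only the \emph{statement} of Theorem~\ref{th:bounce_bounded}. You instead extract from the \emph{proof} of Theorem~\ref{th:bounce_bounded} the sharper fact that the bounce factors through the boundary trace, namely $\|\mathbf{G}_{U,\riem}h\|_{H^{1}(\riem)}\lesssim\sum_k\|h|_{\partial_k\riem}\|_{H^{1/2}(\partial_k\riem)}$, and reduce directly to approximation in $H^{1/2}(\mathbb{S}^1)$ by Laurent polynomials. Your route is shorter and dispenses with both the auxiliary isomorphism $\rho$ and the separate Proposition~\ref{pr:bounce_hol_dense_disk}; the paper's route has the aesthetic advantage of not reaching inside a previous proof. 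Both arguments ultimately rest on density of Laurent polynomials on $\mathbb{S}^1$, and both implicitly assume the collars $U_k$ are pairwise disjoint (as does the paper's notation $F=(F_1,\ldots,F_n)$). Your closing remark about the pitfall---that $\mathcal{D}(U)$ is not dense in $H^{1}_{\mathrm{conf}}(U)$---is exactly right, and nicely isolates why passing to the boundary (your way) or to the disk (the paper's way) is essential.
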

 \begin{proof}
   A factorization trick makes the proof somewhat simple.  
   Let $F_k:U_k \rightarrow \mathbb{A}_k$ be collar charts, and denote 
   $\mathbb{A}^n = \mathbb{A}_1 \times \cdots \times \mathbb{A}_n$ 
   and $\disk^n = \mathbb{D} \times \cdots \times \mathbb{D}$,
   and define $F:U \rightarrow \mathbb{A}$ by $F(z)=(F_1(z),\ldots,F_n(z))$.  Define the restriction maps
   \begin{align*}
    \mathbf{R}_{\riem,U}: \mathcal{D}_{\mathrm{harm}}(\riem) & \rightarrow \mathcal{D}_{\mathrm{harm}}(U) \\
    h & \mapsto \left. h \right|_U 
   \end{align*}
   and similarly 
   \begin{align*}
    \mathbf{R}_{\disk^n,\mathbb{A}^n}: \bigoplus^n \mathcal{D}_{\mathrm{harm}}(\disk) & \rightarrow \bigoplus_{k=1}^n  \mathcal{D}_{\mathrm{harm}}(\mathbb{A}_k) \\
    (h_1,\ldots,h_n)  & \mapsto \left(\left. h_1\right|_{\mathbb{A}_1},\ldots, \left. h_n\right|_{\mathbb{A}_n} \right).
   \end{align*}
   Now
   \[ \mathbf{C}_F \mathbf{R}_{\mathbb{D}^n,\mathbb{A}^n}: H^1_{\mathrm{conf}}(\mathbb{D}^n) \rightarrow H^1_{\mathrm{conf}}(U)    \]
   is bounded by Lemma \ref{le:for_density_factorization}, where we put the direct sum norm on $H^1_{\mathrm{conf}}(\disk^n)$.  Similarly 
   \[  \mathbf{C}_{F^{-1}} \mathbf{R}_{\riem,U} : H^1_{\mathrm{conf}}(\riem) \rightarrow {H}^1_{\mathrm{conf}}(\mathbb{A}^n)    \]
   is bounded.  
   Thus 
   \[ \rho = \mathbf{G}_{U,\riem} \, \mathbf{C}_{F} \, \mathbf{R}_{\disk^n,\mathbb{A}^n}: H^1_{\mathrm{conf}}(\disk^n)
   \rightarrow H^1_{\mathrm{conf}}(\disk^n) \]
   is bounded by Theorem \ref{th:bounce_bounded}, as is 
   \[ \rho^{-1} = \mathbf{G}_{\mathbb{A}^n,\disk^n} \mathbf{C}_{F^{-1}} \mathbf{R}_{\riem,U}. \]
   The fact that this is the inverse of $\rho$ follows from conformal invariance of CNT boundary values.
   
   Again by conformal invariance of CNT boundary values and the definition of the bounce operator, we have the factorization
   \[   \mathbf{G}_{U,\riem} = \rho \, \mathbf{G}_{\mathbb{A}^n,\mathbb{D}^n} \, \mathbf{C}_{F^{-1}}. \]
   Since $\mathbf{C}_{F^{-1}}$ is a bounded invertible map by Lemma \ref{le:for_density_factorization}, and we have shown that $\rho$ is a bounded invertible map, then density follows from Proposition \ref{pr:bounce_hol_dense_disk}.  
 \end{proof}

 \end{subsection}
\begin{subsection}{ {Overfare} of harmonic functions} \label{se:function_Overfare}
 Let $\Gamma$ be a collection of curves separating a Riemann surface $\mathscr{R}$  into two components $\riem_1$ and $\riem_2$,  consider the following problem.  Given $h_1 \in \mathcal{D}_{\mathrm{harm}}(\riem_1)$, is there an $h_2 \in \mathcal{D}_{\mathrm{harm}}(\riem_2)$ with the same boundary values up to a negligible set? We call this the {overfare} of $h_1$ to $h_2$.\\
 
 We saw that for the Dirichlet space, the negligible sets are null sets.  However, a null set with respect to $\riem_1$ need not be null with respect to $\riem_2$.  Thus we must restrict to curves for which this is true: quasicircles. Furthermore, for quasicircles, the {overfare} exists and is a bounded map with respect to the Dirichlet semi-norms, when the originating surface is connected. Note also that the overfare map is bounded with respect to $H^1_{\mathrm{conf}}$ in the general case, if we assume that the quasicircle is more regular.  As we shall see ahead, the so-called Weil-Petersson class quasicircles are sufficient for this purpose, which will be outlined below.
 
 \begin{definition} 
  We say that a simple closed curve in the Riemann sphere $\sphere$ is a \emph{quasicircle} if it is the image of $\mathbb{S}^1$ under a quasiconformal map of the plane.  
 
  A simple closed curve $\Gamma$ in a Riemann surface $R$ is a quasicircle if there is an open set $U$ containing $\Gamma$ and a biholomorphism $\phi:U \rightarrow \mathbb{A}$ where $\mathbb{A}$ is an annulus in $\mathbb{C}$, such that $\phi(\Gamma)$ is a quasicircle.  
 \end{definition}  
 By definition, a quasicircle is a strip-cutting Jordan curve.\\
 
 {{There is a class of quasicircles, called \emph{Weil-Petersson quasicircles}, that arise naturally and frequently in geometric function theory, Teichm\"uller theory, the theory of Schramm-Loewner evolution, and conformal field theory.}}
 \begin{definition}
  We say that a quasicircle in the Riemann sphere $\sphere$ is a Weil-Petersson class quasicircle (or WP quasicircle) if there is a conformal map $f:\disk \rightarrow \Omega$ where $\Omega$ is one of the connected components of the complement, such that the Schwarzian derivative $S(f)= f'''/f' - 3/2 (f''/f')^2$ satisfies
  \[  \iint_{\disk}  (1-|z|^2)^2 |S(f)|^2 \frac{d\bar{z} \wedge d z}{2i}< \infty.  \]
  
  We say that a quasicircle $\Gamma$ in a Riemann surface $\mathscr{R}$ is a WP class quasicircle if there is an open set $U$ containing $\Gamma$ and a biholomorphism $\phi:U \rightarrow \mathbb{A}$ where $\mathbb{A}$ is an annulus, such that $\phi(\Gamma)$ is a WP quasicircle. 
 \end{definition}
 {{One characterization of WP quasicircles is that $\Gamma$ is a WP quasicircle if and only if the $f$ in the definition above has a quasiconformal extension whose Beltrami differential is $L_{\mathrm{hyp}}^2(\disk^-)$ where $\disk^- = \{ z: |z|>1 \} \cup \{ \infty \},$ and $L_{\mathrm{hyp}}^2(\disk^-)$ is the set of $L^2$ functions with respect to the hyperbolic metric of the disk. As with the case of general quasicircles, there are in fact a large number of characterizations of WP quasicircles. C. Bishop  \cite{Bishop} has listed over twenty, many of which are new. His paper also contains other far-reaching generalizations of the concept of WP quasicircles to higher dimensions.}}\\
 
 Having the definition of quasicircles at hand, we consider the following situation. 
 \begin{definition}  \label{de:separating_complex}
 Let $\mathscr{R}$ be a compact Riemann surface, and let $\Gamma_1,\ldots,\Gamma_m$ be a collection of  quasicircles in $\mathscr{R}$.  Denote $\Gamma = \Gamma_1 \cup \cdots \cup \Gamma_m$.  We say that $\Gamma$ \emph{separates} $\mathscr{R}$ into $\riem_1$ and $\riem_2$ if 
 \begin{enumerate}
     \item there are doubly-connected neighbourhoods $U_k$ of $\Gamma_k$ for $k=1,\ldots,n$   such that $U_k \cap U_j$ is empty for all $j \neq k$, 
     \item one of the two connected components of $U_k \backslash \Gamma_k$ is in $\riem_1$, while the other is in $\riem_2$; 
     \item $\mathscr{R} \backslash \Gamma = \riem_1 \cup \riem_2$;
     \item $\mathscr{R} \backslash \Gamma$ consists of finitely many connected components;
     \item $\riem_1$ and $\riem_2$ are disjoint.
 \end{enumerate}
 \end{definition}
 
 Briefly, $\riem_1$ and $\riem_2$ are the two ``sides'' of $\Gamma$, and
 each side is a finite union of Riemann surfaces. 
 \begin{proposition}  Let $\mathscr{R}$ be a compact Riemann surface and $\Gamma = \Gamma_1 \cup \cdots \cup \Gamma_m$ be a collection of quasicircles separating $\mathscr{R}$ into $\riem_1$ and $\riem_2$.  Then $\riem_1$ and $\riem_2$ are each a finite union of bordered surfaces.  For $k=1,2$, the inclusion map of $\riem_k$ into $\mathscr{R}$ extends continuously to the border $\partial_k \riem$, and this extension is a homeomorphism onto $\Gamma$.  
 \end{proposition}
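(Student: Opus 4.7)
The plan is to reduce the statement to Theorem~\ref{th:embedded_is_bordered}, applied one connected component at a time. First I would fix a connected component $V$ of $\riem_1$. Since $V$ is open in $\mathscr{R}\setminus\Gamma$, the topological boundary of $V$ in $\mathscr{R}$ is contained in $\Gamma=\Gamma_1\cup\cdots\cup\Gamma_m$. Each $\Gamma_j$ is connected (it is a quasicircle, hence a homeomorphic image of $\mathbb{S}^1$), so either $\Gamma_j\subseteq\partial V$ or $\Gamma_j\cap\partial V=\emptyset$, and the topological boundary of $V$ is therefore a finite union of certain $\Gamma_j$'s.

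Next I would verify the hypotheses of Theorem~\ref{th:embedded_is_bordered} for $V$. Quasicircles are strip-cutting by definition, and the doubly-connected charts $\phi_j:U_j\to\mathbb{A}_j$ supplied by Definition~\ref{de:separating_complex} satisfy (after a harmless shrinking) the disjointness condition on their closures. For any $\Gamma_j\subseteq\partial V$, the side of $U_j\setminus\Gamma_j$ meeting $V$ is connected and lies in $\riem_1$; because $V$ is itself a connected component of $\riem_1$, this whole side is contained in $V$. The opposite side lies in $\riem_2$ by Definition~\ref{de:separating_complex}(2), hence in $\mathscr{R}\setminus V$. Theorem~\ref{th:embedded_is_bordered} then shows that $V$ becomes a bordered Riemann surface when its topological boundary is adjoined, and that the inclusion extends to a homeomorphism of $\mathrm{cl}\,(V)$ taking the abstract border onto the $\Gamma_j$'s bounding $V$. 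Finiteness of the number of such components $V$ follows from Definition~\ref{de:separating_complex}(4), and the same argument applies verbatim to $\riem_2$.

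Finally, I would assemble the global statement. Taking the disjoint union of the homeomorphic extensions over all components of $\riem_1$ gives a continuous extension of $\riem_1\hookrightarrow\mathscr{R}$ to $\partial\riem_1$. Its image is the union of those $\Gamma_j$ that bound some component of $\riem_1$; by Definition~\ref{de:separating_complex}(2) every $\Gamma_j$ has a (unique) $\riem_1$-side, so the image is all of $\Gamma$. Moreover, distinct boundary components of $\partial\riem_1$ map to distinct $\Gamma_j$'s, so the extension is a continuous bijection from the compact space $\partial\riem_1$ (a finite disjoint union of circles) onto the Hausdorff space $\Gamma$, hence a homeomorphism. The same reasoning handles $\riem_2$.

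The only subtle point, which I would flag as the main thing to verify carefully, is that no single $\Gamma_j$ is traversed twice by $\partial\riem_1$: this is precisely what the strict separation in Definition~\ref{de:separating_complex}(2) rules out, since each side of $\Gamma_j$ is assigned to exactly one of $\riem_1$ or $\riem_2$. Otherwise the argument is a bookkeeping exercise built entirely on top of Theorem~\ref{th:embedded_is_bordered} and the definition of a separating complex of quasicircles.
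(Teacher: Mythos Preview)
Your proposal is correct and follows exactly the approach of the paper, which simply states that the result follows immediately from Theorem~\ref{th:embedded_is_bordered}. You have spelled out the details the paper leaves implicit: applying that theorem component by component, checking that each $\Gamma_j$ appears as a full boundary curve of exactly one component on each side via Definition~\ref{de:separating_complex}(2), and assembling the resulting homeomorphisms into the global extension onto $\Gamma$.
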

 \begin{proof}
  This follows immediately from Theorem \ref{th:embedded_is_bordered}.
 \end{proof}
 
 Thus, we will identify $\partial \riem_1$ and $\partial \riem_2$ pointwise with $\Gamma$. 
 It is important to note that the border structure is entirely independent of the inclusion map, and furthermore the border structures induced by $\riem_1$ and $\riem_2$ do not agree in general (unless the curves are analytic). In particular, a border chart in $\riem_1$ does not in general extend to a chart in $\mathscr{R}$ which is also a border chart of $\riem_2$, unless the curves $\Gamma_k$ are analytic.
 
 It is not obvious that a null set in $\partial \riem_1$ is null in $\partial \riem_2$, even though they are the same set.  This holds for quasicircles.
 
{ \begin{theorem} \label{th:null_same_both_sides} Let $\mathscr{R}$ be a Riemann surface $($not necessarily compact$)$ and $\Gamma = \Gamma_1 \cup \cdots \cup \Gamma_m$ be a collection of quasicircles separating $\mathscr{R}$ into $\riem_1$ and $\riem_2$.  Then $I \subseteq \Gamma$ is null in $\partial \riem_1$ if and only if $I$ is null in $\partial \riem_2$.   
 \end{theorem}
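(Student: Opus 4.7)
The plan is to reduce the statement to the fact, already exploited in the proof of Lemma \ref{le:null_chart_independent}, that quasisymmetries of $\mathbb{S}^1$ preserve the class of Borel sets of logarithmic capacity zero. The reduction hinges on showing that the transition map between a collar chart on the $\riem_1$-side and a collar chart on the $\riem_2$-side of a single quasicircle $\Gamma_k$ is itself a quasisymmetry of $\mathbb{S}^1$.

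First, I would decompose $I = \bigcup_{k=1}^m I_k$ with $I_k = I \cap \Gamma_k$; since a finite union of null sets is null, it suffices to treat one component at a time. Given such a $\Gamma_k$, the definition of quasicircle provides a doubly-connected neighbourhood $U$ of $\Gamma_k$ and a biholomorphism $\psi: U \to \mathbb{A}_{r,R}$ such that $\psi(\Gamma_k)$ is a planar quasicircle $\gamma$. By Proposition \ref{prop:collar_charts_in_doubly_connected_charts}, I obtain canonical collar charts $\phi_1,\phi_2$ from the two sides of $\Gamma_k$ (equivalently, of $\gamma$), each extending continuously to $\mathbb{S}^1$ by Theorem \ref{theorem on collar charts}. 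Two applications of Lemma \ref{le:null_chart_independent}, one in $\riem_1$ and one in $\riem_2$, allow me to replace any given collar charts by these canonical ones without changing whether $\phi_i(I_k)$ has logarithmic capacity zero.

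The key step is to show that the boundary map $\tau := \phi_2|_{\mathbb{S}^1}\circ (\phi_1|_{\mathbb{S}^1})^{-1}: \mathbb{S}^1 \to \mathbb{S}^1$ is a quasisymmetry. To see this, let $f_+, f_-: \disk \to \Omega_\pm$ be Riemann maps onto the two complementary components of $\gamma$ in $\sphere$; by the classical Ahlfors–Beurling characterization of planar quasicircles, the conformal welding $f_-^{-1} \circ f_+|_{\mathbb{S}^1}$ is a quasisymmetry precisely because $\gamma$ is a quasicircle. Now $f_+^{-1} \circ \phi_1^{-1}$ is conformal on $\mathbb{A}_{s,1}$ for some $s$, and extends across $\mathbb{S}^1$ homeomorphically; by Schwarz reflection (invoking Proposition \ref{prop:collar_charts_extend_analytically}) it extends to a conformal map in a full annular neighbourhood of $\mathbb{S}^1$, hence restricts to an analytic diffeomorphism of $\mathbb{S}^1$, which in particular is a quasisymmetry. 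The same argument applies to $f_-^{-1}\circ \phi_2^{-1}$. Therefore $\tau$ is the composition of three quasisymmetries of $\mathbb{S}^1$ and is itself a quasisymmetry.

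Finally, since $\tau$ is a quasisymmetry, the argument in Lemma \ref{le:null_chart_independent} shows that $\tau$ maps Borel sets of logarithmic capacity zero to Borel sets of logarithmic capacity zero and conversely. Thus $\phi_1(I_k)$ has logarithmic capacity zero in $\mathbb{S}^1$ if and only if $\phi_2(I_k)$ does, which by definition is the claim that $I_k$ is null in $\partial \riem_1$ if and only if it is null in $\partial \riem_2$. The principal obstacle is the invocation of the welding characterization of quasicircles in the previous paragraph; an alternative, more self-contained route is to appeal directly to the sewing theory of Section \ref{se:sewing}: the uniqueness clause of Theorem \ref{th:sewing_complex_structure} forces any gluing homeomorphism that yields a quasicircle seam in a compatible complex structure to be quasisymmetric, which applies to $\tau$ by construction.
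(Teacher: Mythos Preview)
Your argument is correct and follows the same architecture as the paper's proof: reduce to a single curve, show that the boundary transition $\tau = \phi_2\circ\phi_1^{-1}|_{\mathbb{S}^1}$ between collar charts on the two sides is a quasisymmetry, and conclude via the capacity-preservation property of quasisymmetries. The genuine difference is in the tool used for the key step. The paper extends each $\phi_i\circ g^{-1}$ to a quasiconformal map $\Omega^{\pm}\to\disk$ and then composes with a quasiconformal reflection across the planar quasicircle $g(\Gamma_k)$ to produce a quasiconformal self-map of $\disk$ whose boundary extension is $\psi\circ\phi^{-1}$. You instead take Riemann maps $f_\pm:\disk\to\Omega_\pm$ and invoke the Ahlfors--Beurling conformal-welding characterization of quasicircles directly, then sandwich with the analytic diffeomorphisms $f_\pm^{-1}\circ\psi\circ\phi_i^{-1}|_{\mathbb{S}^1}$ obtained by Schwarz reflection. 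Your route is arguably tidier, since it avoids manufacturing quasiconformal extensions; the paper's route stays entirely inside the quasiconformal machinery already in play and does not need to cite the welding theorem.

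Two small remarks. First, strictly speaking the composition you analyze is $f_+^{-1}\circ\psi\circ\phi_1^{-1}$, with the doubly-connected chart $\psi$ interposed to pass from $\riem_1$ to the plane; your parenthetical ``equivalently, of $\gamma$'' signals that you are aware of this, but it is worth writing explicitly, and the relevant Schwarz reflection is simply for a conformal map $\mathbb{A}_{s,1}\to\disk$ taking $\mathbb{S}^1$ continuously to $\mathbb{S}^1$, rather than Proposition~\ref{prop:collar_charts_extend_analytically}. Second, the alternative you sketch via Theorem~\ref{th:sewing_complex_structure} does not work as stated: the uniqueness clause there concerns the complex structure once a quasisymmetric sewing is fixed, not the converse assertion that a quasicircle seam forces the sewing map to be quasisymmetric. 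That converse is exactly the welding characterization you already used, so the ``alternative'' collapses to your main argument.
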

 \begin{proof}
  It is enough that this is true for a single boundary curve $\Gamma_k$.   Let $g:U \rightarrow V$ be a doubly-connected chart in a neighbourhood of $\Gamma$. 
 By shrinking $U$ if necessary, we can assume that $U$ is bounded by analytic curves $\gamma_1$ 
 and $\gamma_2$ in $\riem_1$ and $\riem_2$ respectively, and 
 that $g$ has a conformal extension to an open set containing the closure of $U$ so that $g(\gamma_1)$
 and $g(\gamma_2)$ are analytic curves in $\mathbb{C}$.  
 Let $\phi:A \rightarrow \mathbb{A}$ be a collar chart in a neighbourhood of $\Gamma$ in $\riem_1$
 and $\psi:B \rightarrow \mathbb{B}$ be a collar chart in a neighbourhood of $\Gamma$ in $\riem_2$.
 For definiteness, we arrange that the outer boundary of both annuli $\mathbb{A}$ and $\mathbb{B}$
 is $\mathbb{S}^1$, and that $\phi$ and $\psi$ both take $\Gamma$ to $\mathbb{S}^1$.  This can 
 be done by composing with an affine transformation and $z \mapsto 1/z$ if necessary.   Let $\Omega^+$
 denote the bounded component of the complement of $g(\Gamma)$ in $\sphere$ and $\Omega^-$ denote
 the unbounded component.  We assume that $g$ takes $U \cap \riem_1$ into $\Omega^+$, again by 
 composing with $z \mapsto 1/z$ if necessary.  Finally, by possibly shrinking the domain of $g$ again,
 we can assume that the analytic curve $\gamma_1$ is contained in the domain of $\phi$.  
 
 Thus, $\phi \circ g^{-1}$ is a conformal map of a collar neighbourhood $W$ of $g(\Gamma)$ in $\Omega^+$
 onto a collar neighbourhood of 
 $\mathbb{S}^1$ in $\disk$, whose inner boundary $\phi(\gamma_1)$ is an analytic curve.
 By the previous paragraph it has a conformal extension to an open neighbourhood of $g(\gamma_1)$, and thus 
 the restriction of $\phi \circ g^{-1}$ is an analytic diffeomorphism from $g(\gamma_1)$ to $\phi(\gamma_1)$.
 Thus if we let $W'$ be the simply connected set in $\Omega^+$ bounded by $g(\gamma_1)$, then there is a quasiconformal 
 map $F$ of $W'$ with a homeomorphic extension to $g(\gamma_1)$ equalling $\psi \circ g^{-1}$.   The 
 map 
 \begin{equation}
 \Phi(z) = \left\{ \begin{array}{ll} F(z) & z \in W' \\ \phi \circ g^{-1}(z) & z \in W \cup 
     g(\gamma_1)   \end{array} \right.  
\end{equation}
 is therefore a quasiconformal map from $\Omega^+$ to $\disk$.   A similar argument 
 shows that
 $\psi \circ g^{-1}$ has a quasiconformal extension to a  map from $\Omega^-$ to $\disk$.  
 
 Since $g(\Gamma)$ is a quasicircle, there is a quasiconformal reflection $r$ of the plane which fixes each point in $g(\Gamma)$.  Thus $\psi \circ g^{-1} \circ r \circ (\phi \circ g^{-1})^{-1}$ has an extension to an (orientation reversing) quasiconformal self-map of the disk.  Thus it extends continuously to a quasisymmetry of $\mathbb{S}^1$, which takes Borel sets of capacity zero to Borel sets of capacity zero. Furthermore, on $\mathbb{S}^1$, this map equals $\psi \circ \phi^{-1}$.  Since the same argument applies to $\phi \circ \psi^{-1}$, 
 we have shown that $\phi(I)$ has capacity zero in $\mathbb{S}^1$ if and only if $\psi(I)$ has capacity zero
 in $\mathbb{S}^1$. This completes the proof.  
 \end{proof}}
 
\begin{definition}\label{Dirchlet norm on multiconnected} 
 In the case that $\riem$ is a finite union of connected Riemann surfaces $\riem_1,\ldots,\riem_s$, we define the Dirichlet semi-norm on these components by 
 \[  \| h \|_{\mathcal{D}_{\mathrm{harm}}(\riem)} := 
     \sum_{k=1}^s \left\| \left. h \right|_{\riem_k} \right\|_{\mathcal{D}_{\mathrm{harm}}(\riem_k)} \]
 and similarly for the holomorphic and anti-holomorphic Dirichlet spaces, Bergman spaces, etc.
\end{definition} 

Before defining the overfare process on Riemann surfaces, we will define it in the plane. 

  \begin{theorem} \label{th:overfare_sphere_control_Dirichlet}
  Let $\Gamma$ be a quasicircle in $\sphere$, and let $\Omega_1$ and $\Omega_2$ be the connected components of the complement of $\Gamma$. 
  For all $h_1 \in \mathcal{D}_{\mathrm{harm}}(\Omega_1)$ there is an $h_2 \in \mathcal{D}_{\mathrm{harm}}(\Omega_2)$ whose \emph{CNT} boundary values agree with those of $h_1$ up to a null set, and one has the estimate
  \[    \| h_2 \|_{\mathcal{D}_{\mathrm{harm}}(\Omega_2)} \lesssim\| h_1 \|_{\mathcal{D}_{\mathrm{harm}}(\Omega_1)}. \] 
 \end{theorem}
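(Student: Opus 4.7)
The plan is to reduce to the unit disk via uniformization and invoke the Nag-Sullivan/Vodopy'anov theorem that quasisymmetries of $\mathbb{S}^1$ act boundedly by composition on $\dot{H}^{1/2}(\mathbb{S}^1)$. After pre-composing with a M\"obius transformation of $\sphere$ (which preserves the Dirichlet semi-norm) assume $\infty \in \Omega_2$, and choose Riemann maps $f_1:\disk \to \Omega_1$ and $f_2:\disk^- \to \Omega_2$ where $\disk^- = \{z : |z|>1\} \cup \{\infty\}$. Since $\Gamma$ is a quasicircle, each $f_k$ admits a quasiconformal extension to $\sphere$, and in particular extends to a homeomorphism of the closed disks sending $\mathbb{S}^1$ onto $\Gamma$. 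The welding map $\phi := f_2^{-1} \circ f_1$ is then a quasisymmetry of $\mathbb{S}^1$.

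Next, set $g_1 := h_1 \circ f_1 \in \mathcal{D}_{\mathrm{harm}}(\disk)$. Conformal invariance of the Dirichlet space yields $\|g_1\|_{\mathcal{D}_{\mathrm{harm}}(\disk)} = \|h_1\|_{\mathcal{D}_{\mathrm{harm}}(\Omega_1)}$, and by Theorem \ref{th:CNT_BVs_existence_uniqueness} $g_1$ has CNT boundary values $\tilde{g}_1$ on $\mathbb{S}^1 \setminus I_1$ for some null set $I_1$. By the Douglas formula \eqref{Douglas formula}, these boundary values lie in $\dot{H}^{1/2}(\mathbb{S}^1)$ with $\|\tilde{g}_1\|_{\dot{H}^{1/2}(\mathbb{S}^1)}$ comparable to $\|g_1\|_{\mathcal{D}_{\mathrm{harm}}(\disk)}$.

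Now define the candidate boundary values on the $\Omega_2$ side by pulling back via $f_2$: set $\tilde{g}_2 := \tilde{g}_1 \circ \phi^{-1}$, which is defined off the Borel set $\phi(I_1)$. Since quasisymmetries preserve Borel sets of logarithmic capacity zero (as used in the proof of Lemma \ref{le:null_chart_independent}), $\phi(I_1)$ is again null in $\mathbb{S}^1$. The crucial estimate $\|\tilde{g}_2\|_{\dot{H}^{1/2}(\mathbb{S}^1)} \lesssim \|\tilde{g}_1\|_{\dot{H}^{1/2}(\mathbb{S}^1)}$ now follows from the Nag-Sullivan/Vodopy'anov theorem. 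Let $g_2$ be the Poisson extension of $\tilde{g}_2$ to $\disk^-$ (obtained, say, by conjugating with $z \mapsto 1/\bar z$ and applying the standard Poisson extension on $\disk$). Another application of the Douglas formula gives $\|g_2\|_{\mathcal{D}_{\mathrm{harm}}(\disk^-)} \lesssim \|\tilde{g}_2\|_{\dot{H}^{1/2}(\mathbb{S}^1)}$, and the CNT boundary values of $g_2$ coincide with $\tilde{g}_2$ off a null set. Setting $h_2 := g_2 \circ f_2^{-1} \in \mathcal{D}_{\mathrm{harm}}(\Omega_2)$, and using conformal invariance of CNT boundary values (Theorem \ref{th:conformal_inv_CNT}) together with Theorem \ref{th:null_same_both_sides}, we obtain a function whose CNT boundary values agree with those of $h_1$ off a null set on $\Gamma$, with the desired norm bound.

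The main obstacle is the invocation of the Nag-Sullivan/Vodopy'anov theorem in the central step: this is precisely what distinguishes quasicircles from arbitrary Jordan or strip-cutting curves, since for general curves the welding map $\phi$ need not act boundedly on $\dot{H}^{1/2}(\mathbb{S}^1)$, and indeed for merely Jordan curves $\phi(I_1)$ need not even remain a null set. Everything else is bookkeeping: conformal invariance of the Dirichlet space and of CNT boundary values, two applications of the Douglas formula to translate between $\mathcal{D}_{\mathrm{harm}}$ and $\dot{H}^{1/2}$, and reconciling the two sides of the quasicircle via Theorem \ref{th:null_same_both_sides}.
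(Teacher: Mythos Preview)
Your proof is correct and matches the intended approach: the paper itself simply cites \cite{Schippers_Staubach_transmission}, Theorem 3.25, but the introduction makes clear that the argument there proceeds exactly as you outline --- uniformize both sides by Riemann maps, obtain the welding quasisymmetry $\phi$, and invoke the Nag--Sullivan/Vodopy'anov theorem on $\dot{H}^{1/2}(\mathbb{S}^1)$ together with the Douglas formula. Your handling of the null-set bookkeeping via Theorem~\ref{th:null_same_both_sides} and conformal invariance of CNT limits is also the right way to close the argument.
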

 \begin{proof} { See \cite{Schippers_Staubach_transmission}  Theorem 3.25.}  
 \end{proof} 
 In particular, we have well-defined operators
 \[ \mathbf{O}_{\Omega_1,\Omega_2}:\mathcal{D}_{\mathrm{harm}}(\riem_1) \rightarrow \mathcal{D}_{\mathrm{harm}}(\riem_2)  \]
 and
 \[ \mathbf{O}_{\Omega_2,\Omega_1}:\mathcal{D}_{\mathrm{harm}}(\riem_2) \rightarrow \mathcal{D}_{\mathrm{harm}}(\riem_1).  \]
 
 If the quasicircle is more regular, we can also control the $H^1_{\mathrm{conf}}$ norm. 
 \begin{definition}[BZM quasicircle]  Let $\Gamma$ be a quasicircle in $\sphere$, and let $\Omega_1$ and $\Omega_2$ denote the connected components of the complement.  We say that $\Gamma$ is a \emph{bounded zero mode  quasicircle} (BZM for short), if 
 $\mathbf{O}_{\Omega_1,\Omega_2}$ and $\mathbf{O}_{\Omega_2,\Omega_1}$ are bounded with respect to $H^1_{\mathrm{conf}}(\Omega_k)$.  
 
  A quasicircle $\Gamma$ in a compact Riemann surface $\mathscr{R}$ is called an BZM quasicircle if there is an open set $U$ containing $\Gamma$ and a conformal map $\phi:U \rightarrow \mathbb{A}$ onto an annulus $\mathbb{A} \subseteq \mathbb{C}$ such that $\phi(\Gamma)$ is a BZM quasicircle.  
 \end{definition}
  
 In this connection we have the following theorem which is built upon deep results regarding flows of Sobolev-vector fields on the unit circle, and also a basic result regarding the action of the group of quasisymmetries of the unit circle, by bounded automorphisms on the homogeneous Sobolev space $\dot{H}^{1/2}(\mathbb{S}^1)$ (the action is essentially a composition).
 \begin{theorem} \label{th:WP_are_BZM}
  $\mathrm{WP}$ quasicircles are $\mathrm{BZM}$ quasicircles. 
 \end{theorem}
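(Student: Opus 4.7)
The plan is to reduce the theorem to a boundary estimate on $\mathbb{S}^1$ and then to invoke a result about the action of WP quasisymmetries on Sobolev spaces. By the definition of the BZM property on a Riemann surface, it suffices to work in the planar case: let $\Gamma$ be a WP quasicircle in $\sphere$ with complementary components $\Omega_1$ and $\Omega_2$. Theorem \ref{th:overfare_sphere_control_Dirichlet} already controls the Dirichlet semi-norm of the overfare for arbitrary quasicircles, so by Theorem \ref{thm:equiv sobolev norm} it remains to control the zero-mode functional in the $H^1_{\mathrm{conf}}$ norm of $\mathbf{O}_{\Omega_1,\Omega_2} h_1$ by the $H^1_{\mathrm{conf}}$ norm of $h_1$ (and symmetrically for the reverse direction).

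Uniformize the complementary domains by conformal maps $f_1:\disk \rightarrow \Omega_1$ and $f_2:\disk^- \rightarrow \Omega_2$, and let $\tau = f_2^{-1} \circ f_1 : \mathbb{S}^1 \rightarrow \mathbb{S}^1$ be the boundary welding. By the WP hypothesis on $\Gamma$, $\tau$ is a WP quasisymmetry. Corollary \ref{good sobolev cor}, together with the conformal invariance of the Dirichlet semi-norm and of CNT boundary values, allows us to identify the $H^1_{\mathrm{conf}}(\disk)$ norm of a harmonic function with the inhomogeneous $H^{1/2}(\mathbb{S}^1)$ norm of its boundary value, via the Douglas formula. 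Unwinding the conformal conjugation, the overfare becomes conjugate to the operation of precomposing the $H^{1/2}(\mathbb{S}^1)$ boundary value by $\tau^{\pm 1}$ and then harmonically extending to the opposite disk.

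Thus the theorem reduces to showing that composition by a WP quasisymmetry is a bounded operator on the inhomogeneous Sobolev space $H^{1/2}(\mathbb{S}^1)$, not merely on the homogeneous $\dot{H}^{1/2}(\mathbb{S}^1)$. The boundedness on $\dot{H}^{1/2}(\mathbb{S}^1)$ is the classical theorem of Nag--Sullivan and Vodopyanov, which already implies boundedness of the Dirichlet semi-norm part but does not by itself control the $L^2$ part of the boundary value (equivalently, the value at the centre of the disk or the zero mode). The upgrade to the full norm is obtained by exploiting the Banach manifold structure of the WP Teichm\"uller space: WP quasisymmetries arise as time-one flows of $H^{3/2}$ vector fields on $\mathbb{S}^1$, and a Gr\"onwall-type argument along these flows, combined with the Nag--Sullivan--Vodopyanov estimate on the semi-norm, yields the requisite boundedness in the full norm.

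The main obstacle is precisely this last step, namely obtaining boundedness on the inhomogeneous $H^{1/2}(\mathbb{S}^1)$. This is the analytic heart of the matter: the $L^2$ portion is delicate because for arbitrary quasisymmetries $\tau$ there need be no bound of the form $\|f\circ \tau\|_{L^2} \lesssim \|f\|_{H^{1/2}}$, since a bare quasisymmetry only controls double ratios and not the Radon--Nikodym derivative. The WP regularity, encoded either by $\log \tau' \in H^{1/2}(\mathbb{S}^1)$ or by the Schwarzian integrability in the definition, is exactly strong enough to supply the missing change-of-variables estimate, either directly or through the flow argument above, after which the boundedness of overfare on $H^1_{\mathrm{conf}}$ follows by unwinding the conformal identifications.
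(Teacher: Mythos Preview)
Your reduction is exactly the paper's: pass to the welding quasisymmetry $\tau$ on $\mathbb{S}^1$, identify $H^1_{\mathrm{conf}}(\disk)$ with $H^{1/2}(\mathbb{S}^1)$ via Corollary~\ref{good sobolev cor}, and observe that the homogeneous part is already handled by Nag--Sullivan--Vodop'yanov. The remaining task, boundedness of $f\mapsto f\circ\tau$ on the \emph{inhomogeneous} $H^{1/2}(\mathbb{S}^1)$, is also exactly what the paper isolates.

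Where your proposal falls short is at this final step. You assert that a Gr\"onwall argument along flows of $H^{3/2}$ vector fields, or alternatively a direct change-of-variables estimate from $\log\tau'\in H^{1/2}$, ``yields the requisite boundedness,'' but you do not carry either out. The flow approach is not straightforward: an arbitrary WP quasisymmetry is not obviously the time-one map of a single $H^{3/2}$ vector field, so you would have to argue via a path in the WP group, control the generating family of vector fields uniformly along that path, and differentiate the $L^2$ norm --- none of which is routine. The $\log\tau'\in H^{1/2}$ route also does not obviously give what is needed, since exponential nonlinearities do not behave well in $H^{1/2}$.

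The paper's argument for this step is more direct and elementary. One invokes the regularity result of Figalli and Gay-Balmaz--Ratiu that WP quasisymmetries lie in $H^{3/2-\varepsilon}(\mathbb{S}^1)$ for every $\varepsilon>0$; taking $\varepsilon=1/2$ gives $(\tau^{-1})'\in L^2(\mathbb{S}^1)$. Then a change of variables and Cauchy--Schwarz give
\[
\|f\circ\tau\|_{L^2}^2=\int_{\mathbb{S}^1}|f|^2(\tau^{-1})'\leq \|f\|_{L^4}^2\,\|(\tau^{-1})'\|_{L^2},
\]
and the Sobolev embedding $H^{1/2}(\mathbb{S}^1)\hookrightarrow L^4(\mathbb{S}^1)$ (equation~\eqref{Sobolev embed} with $s=1/2$, $p=4$) finishes the $L^2$ bound. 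Combined with the Nag--Sullivan--Vodop'yanov estimate on $\dot H^{1/2}$, this gives boundedness on the full $H^{1/2}$. This is the missing concrete content your sketch should supply in place of the Gr\"onwall heuristic.
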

 \begin{proof}
 It is enough to show that for a WP-class quasisymmetric homeomorphism of the circle $\phi$, the composition operator $\mathbf{C}_\phi$ is bounded on the Sobolev space $H^{1/2}(\mathbb{S}^1),$ which amounts to show that for $f\in H^{1/2}(\mathbb{S}^1)$
 \begin{equation}\label{WP Sobolev-boundedness}
     \Vert \mathbf{C}_\phi f \Vert_{{H}^{1/2}(\mathbb{S}^1)}\leq c   \Vert f \Vert_{{H}^{1/2}(\mathbb{S}^1)},
 \end{equation}
where the constant $c$ in the estimate only depends on $\phi$.

Using change of variables and Cauchy-Schwarz's inequality one has 

\begin{equation}\label{firststep}
   \Vert \mathbf{C}_\phi f \Vert^2_{{L}^{2}(\mathbb{S}^1)}=  \int _{\mathbb{S}^1} |f \circ\phi|^2 = \int_{\mathbb{S}^1} |f|^2 (\phi^{-1}) '\leq \Big( \int_{\mathbb{S}^1} |f|^4\Big) ^{1/2}
\Big( \int_{\mathbb{S}^1} |(\phi^{-1}) ' |^2 \Big)^{1/2}.
\end{equation}

Here we note that by the results of A. Figalli \cite{Figalli} and F. Gay-Balmaz and T. Ratiu \cite{Gay-Balmaz-Ratiu}, one has that for WP-class quasisymmetries $\phi$ on $\mathbb{S}^1$ both $\phi$ and its inverse $\phi^{-1}$ are in $H^{3/2-\varepsilon}(\mathbb{S}^1)$ for all $\varepsilon >0$. Therefore by taking $\varepsilon=1/2$ we have that  $\phi^{-1} \in H^{1} (\mathbb{S}^1)$ which means that the first derivative of $\phi^{-1}$ belong to $L^2(\mathbb{S}^1),$ hence

\begin{equation}\label{secondstep}
   \Big( \int_{\mathbb{S}^1} |(\phi^{-1}) ' |^2 \Big)^{1/2}\leq \Vert \phi^{-1}\Vert_{H^1(\mathbb{S}^1)}< \infty.
\end{equation} 

Now if $f\in H^{1/2}(\mathbb{S}^1)$, then the Sobolev embedding \eqref{Sobolev embed} with $p=4$ and $s=\frac{1}{2}$ yields that

 \begin{equation}\label{thirdstep}
     \Vert f\Vert_{L^{4}(\mathbb{S}^1)}\lesssim \Vert f\Vert_{H^{1/2}(\mathbb{S}^1)}.
 \end{equation}
 
 \item Thus taking the square root of both sides of \eqref{firststep}, and using \eqref{secondstep} and \eqref{thirdstep}, we obtain for $f\in H^{1/2}(\mathbb{S}^1)$ and 
\begin{equation}\label{fourthstep}
    \Vert \mathbf{C}_\phi f\Vert_{L^2(\mathbb{S}^1)} \lesssim \Vert f\Vert_{H^{1/2}(\mathbb{S}^1)}
\Vert \phi^{-1}\Vert^{1/2}_{H^1(\mathbb{S}^1)}\lesssim \Vert f\Vert_{H^{1/2}(\mathbb{S}^1)} .
\end{equation}

Moreover, by a result of Vodopyanov-Nag-Sullivan \cite{Vodopyanov} and \cite{NS}, we also know that
\begin{equation}\label{VNS}
    \Vert \mathbf{C}_\phi f\Vert_{\dot{H}^{1/2}(\mathbb{S}^1)} \lesssim \Vert f\Vert_{\dot{H}^{1/2}(\mathbb{S}^1)}.
\end{equation}
Consequently \eqref{fourthstep} and \eqref{VNS} yield that
    \begin{equation}
    \Vert\mathbf{C}_\phi f\Vert_{{H}^{1/2}(\mathbb{S}^1)} \approx  \Vert \mathbf{C}_\phi f\Vert_{\dot{H}^{1/2}(\mathbb{S}^1)}+  \Vert \mathbf{C}_\phi f\Vert_{{L}^{2}(\mathbb{S}^1)} \lesssim \Vert f\Vert_{\dot{H}^{1/2}(\mathbb{S}^1)}+ \Vert f\Vert_{H^{1/2}(\mathbb{S}^1)} \lesssim \Vert f\Vert_{H^{1/2}(\mathbb{S}^1)}.
\end{equation}
 \end{proof}

\begin{remark}
 Regarding the hidden exponentials in the calculations above, let us assume that the angle in the image of the quasisymmtric homeomorphism $\chi:\mathbb{S}^1 \to \mathbb{S}^1$ is $\psi(\theta)$, so that 
\[  e^{i\psi} = \chi(e^{i\theta}),  \]
i.e.
\[ \psi = - i \log{\chi(e^{i\theta})}. \]
Then if $\chi'(z)$ denotes the derivative of $\chi$ with respect to  $z$, and $\dot{\psi}$ denotes the derivative with respect to $\theta$, we would have 
\[  \dot{\psi} =  \chi'(e^{i\theta}) \frac{e^{i \theta}}{\chi(e^{i \theta})}.    \]
From this it immediately follows that
\[  |\dot{\psi}| = |\chi'(e^{i\theta})|.   \]
In particular if one makes an estimate using one or the other, it doesn't affect the outcome of the estimate.
\end{remark}

  The next three theorems concern existence and boundedness of the overfare operator for general curve complexes. Their proofs are somewhat involved and will be approached together in stages. 
  \begin{theorem}[Existence of overfare]\label{thm:bounded_overfare_existence}  Let $\mathscr{R}$ be a compact Riemann surface and let $\Gamma = \Gamma_1 \cup \cdots \cup \Gamma_m$ be a collection of  quasicircles separating $\mathscr{R}$ into $\riem_1$ and $\riem_2$. 
  Let $h_1 \in \mathcal{D}_{\mathrm{harm}}(\riem_1)$.  There is a $h_2 \in \mathcal{D}_{\mathrm{harm}}(\riem_2)$ whose \emph{CNT} boundary values agree with those of $h_1$ up to a null set, and this $h_2$ is unique.   
  \end{theorem}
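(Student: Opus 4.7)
The plan is to reduce the problem to the planar overfare Theorem \ref{th:overfare_sphere_control_Dirichlet} locally at each separating quasicircle by means of doubly-connected charts, and then glue the resulting local pieces on $\riem_2$ via the bounce operator. Uniqueness will be a direct consequence of the uniqueness built into CNT boundary values.

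First I would localize. By Definition \ref{de:separating_complex} there are pairwise disjoint doubly-connected neighbourhoods $V_k$ of each $\Gamma_k$ with charts $\phi_k:V_k\to\mathbb{A}_k$; set $V_k^j=V_k\cap\riem_j$, $j=1,2$, which are collar neighbourhoods of $\Gamma_k$ inside $\riem_j$. The image $\phi_k(\Gamma_k)$ is a planar quasicircle separating $\sphere$ into components $\Omega_1^k,\Omega_2^k$ with $\phi_k(V_k^j)\subset\Omega_j^k$. The push-forward $\tilde h_1^k:=(h_1|_{V_k^1})\circ\phi_k^{-1}$ lies in $\mathcal{D}_{\mathrm{harm}}(\phi_k(V_k^1))$, and $\phi_k(V_k^1)$ is a collar neighbourhood of the border $\phi_k(\Gamma_k)$ of the quasidisk $\Omega_1^k$, which is a bordered surface of type $(0,1)$.

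Next I would produce a local overfare on each quasicircle. Applying the bounce operator of Definition \ref{defn:bounce op} and Theorem \ref{th:bounce_existence} to $\Omega_1^k$, I obtain $\hat h_1^k:=\mathbf{G}_{\phi_k(V_k^1),\Omega_1^k}\tilde h_1^k\in\mathcal{D}_{\mathrm{harm}}(\Omega_1^k)$ whose CNT boundary values on $\phi_k(\Gamma_k)$ agree with those of $\tilde h_1^k$ up to a null set. The planar overfare theorem \ref{th:overfare_sphere_control_Dirichlet} then yields $\hat h_2^k\in\mathcal{D}_{\mathrm{harm}}(\Omega_2^k)$ with the same CNT boundary values on $\phi_k(\Gamma_k)$ up to a null set. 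Pulling back $\hat h_2^k|_{\phi_k(V_k^2)}$ by $\phi_k$ and invoking conformal invariance of CNT limits (Theorem \ref{th:conformal_inv_CNT}), I obtain a function $h_2^k\in\mathcal{D}_{\mathrm{harm}}(V_k^2)$ whose CNT boundary values on $\Gamma_k$ coincide with those of $h_1$ up to a null set.

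Finally I would assemble the global overfare. Set $U=\bigsqcup_{k=1}^{m}V_k^2\subset\riem_2$, a disjoint union of collar neighbourhoods of all components of $\partial\riem_2=\Gamma$, and let $h_U\in\mathcal{D}_{\mathrm{harm}}(U)$ be defined by $h_U|_{V_k^2}=h_2^k$. Applying Definition \ref{defn:bounce op} componentwise on $\riem_2$, set
\[
h_2:=\mathbf{G}_{U,\riem_2}h_U\in\mathcal{D}_{\mathrm{harm}}(\riem_2).
\]
By construction and conformal invariance of CNT limits, the CNT boundary values of $h_2$ on $\Gamma$ from the $\riem_2$-side agree with those of $h_1$ from the $\riem_1$-side, up to a Borel set that is null on the $\riem_1$-side. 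Uniqueness then follows from Theorem \ref{th:CNT_BVs_existence_uniqueness} once the global $h_2$ is identified.

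The main obstacle is not analytic but conceptual: the phrase ``CNT boundary values of $h_1$ and of $h_2$ agree up to a null set'' a priori refers to two different notions of null set, one defined by collar charts into $\riem_1$ and the other by collar charts into $\riem_2$. It is precisely Theorem \ref{th:null_same_both_sides}, whose proof uses the quasisymmetric compatibility of the two collar charts across a quasicircle, that identifies these two notions and makes the overfare procedure well-posed. Without the quasicircle hypothesis this identification can fail, and the entire scheme collapses; with it, the existence proof is the composition of two already-proved ingredients (planar overfare and the bounce operator), and uniqueness is automatic from Theorem \ref{th:CNT_BVs_existence_uniqueness}.
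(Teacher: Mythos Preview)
Your proof is correct and follows essentially the same approach as the paper: the paper packages your local step (push forward by a doubly-connected chart, bounce into the full planar quasidisk, apply planar overfare, restrict and pull back) as Lemma~\ref{le:local_Overfare}, and then applies the bounce operator on each connected component of $\riem_2$ exactly as you do. Your explicit invocation of Theorem~\ref{th:null_same_both_sides} for the two-sided null-set issue and of Theorem~\ref{th:CNT_BVs_existence_uniqueness} for uniqueness makes precise what the paper's short proof leaves implicit.
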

  
    This theorem, which we will prove shortly, motivates the definition of the following operator which plays an important role in the scattering theory that is developed here.
  \begin{definition}\label{defn:overfare operator} 
With the assumption of Theorem \ref{thm:bounded_overfare_existence}, we define the \emph{overfare operator} $\gls{overfare}_{\riem_1,\riem_2}$ by   
  \begin{align*}
   \mathbf{O}_{\riem_1,\riem_2} : \mathcal{D}_{\mathrm{harm}}(\riem_1) & \rightarrow 
   \mathcal{D}_{\mathrm{harm}}(\riem_2) \\
   h_1 & \mapsto h_2    
 \end{align*}
  \end{definition}
 One obviously has that
 \[  \mathbf{O}_{\riem_2,\riem_1} \mathbf{O}_{\riem_1,\riem_2} = \text{Id} \]
 and of course one can switch the roles of $\riem_1$ and $\riem_2$.  
 
 The overfare operator is conformally invariant.  That is, if $f:\mathscr{R} \rightarrow \mathscr{R}'$ is a biholomorphism and we set $f(\riem_k) = \riem_k'$ for $k=1,2$ then it follows immediately from conformal invariance of CNT limits that
 \begin{equation} \label{eq:overfare_conformally_invariant}
  \mathbf{O}_{\riem_1,\riem_2} \mathbf{C}_f = \mathbf{C}_f \mathbf{O}_{\riem_1',\riem_2'}.   
 \end{equation}
{\bf{Notation.}}  If $\riem_1$ and $\riem_2$ are clear from context, we will denote the overfare operator by $\mathbf{O}_{1,2}$. 
  
  We will also obtain boundedness of this operator with respect to $H^1_{\mathrm{conf}}$ and the Dirichlet semi-norm.  In both cases, certain further conditions on the curve complex $\Gamma$ are required. 

 \begin{theorem}[Bounded overfare theorem for BZM quasicircles]\label{thm:bounded_overfare_conf}  Let $\mathscr{R}$ be a compact Riemann surface and let $\Gamma = \Gamma_1 \cup \cdots \cup \Gamma_m$ be a collection of \emph{BZM} quasicircles separating $\mathscr{R}$ into $\riem_1$ and $\riem_2$. 
  There is a constant $C$ such that 
     \[  \| \mathbf{O}_{1,2} h \|_{H^1_{\mathrm{conf}}(\riem_2)}  \leq C \| h \|_{H^1_{\mathrm{conf}}(\riem_1)}  \]
     for all $h \in \mathcal{D}_{\mathrm{harm}}(\riem_1)$.  
  \end{theorem}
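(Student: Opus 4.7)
The plan is to show $\mathbf{O}_{1,2}$ is bounded by factoring it as a composition of operators each already known to be $H^1_{\mathrm{conf}}$-bounded. Existence and uniqueness of $\mathbf{O}_{1,2}h_1$ come from Theorem \ref{thm:bounded_overfare_existence}, so only the norm inequality remains. The factorization is: restrict $h_1$ to a collar neighborhood $U_1 \subset \riem_1$ of $\Gamma$; overfare one quasicircle at a time by transporting to the sphere via the doubly-connected charts supplied by the BZM hypothesis and invoking sphere overfare there; and finally apply the bounce operator $\mathbf{G}_{U_2,\riem_2}$ to lift the resulting collar data on a corresponding $U_2 \subset \riem_2$ to all of $\riem_2$.

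Concretely I would fix disjoint collar neighborhoods $U_1^k \subset \riem_1$ and $U_2^k \subset \riem_2$ of each $\Gamma_k$, both contained in a doubly-connected chart $\phi_k: W_k \to \mathbb{A}_k$ for which $\phi_k(\Gamma_k)$ is a BZM quasicircle in $\sphere$; such arrangements are possible by the BZM definition together with Proposition \ref{prop:collar_charts_in_doubly_connected_charts}. Writing $\Omega_k^{\pm}$ for the complementary components of $\sphere \setminus \phi_k(\Gamma_k)$, define the local overfare $\mathbf{O}_k^{\mathrm{loc}}: H^1_{\mathrm{conf}}(U_1^k) \to H^1_{\mathrm{conf}}(U_2^k)$ as the composition: pull back by $\mathbf{C}_{\phi_k^{-1}}$ into $\Omega_k^+$; apply the sphere bounce operator to extend to all of $\Omega_k^+$; apply $\mathbf{O}_{\Omega_k^+,\Omega_k^-}$ (bounded by the BZM hypothesis); restrict to $\phi_k(U_2^k)$; and push back by $\mathbf{C}_{\phi_k}$. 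Every factor is $H^1_{\mathrm{conf}}$-bounded: the chart-conjugations by Lemma \ref{le:for_density_factorization}, the bounce and restriction by the sphere analogues of Theorem \ref{th:bounce_bounded} and Proposition \ref{pr:restriction_Hconf_bounded}, and the central overfare by the BZM assumption. Composing the direct sum $\bigoplus_k \mathbf{O}_k^{\mathrm{loc}}$ with the restriction $\mathbf{R}_{\riem_1,U_1}$ on one side and $\mathbf{G}_{U_2,\riem_2}$ on the other then yields a bounded operator from $H^1_{\mathrm{conf}}(\riem_1)$ to $H^1_{\mathrm{conf}}(\riem_2)$.

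To identify this composition with $\mathbf{O}_{1,2}$ itself, I would observe that each factor preserves CNT boundary values on $\Gamma_k$ up to a null set: the chart-conjugations by conformal invariance of CNT limits (Theorem \ref{th:conformal_inv_CNT}), and the bounce and sphere-overfare steps by their defining properties, with the two-sided nature of null sets at a quasicircle supplied by Theorem \ref{th:null_same_both_sides}. Uniqueness in Theorem \ref{thm:bounded_overfare_existence} then forces the output to equal $\mathbf{O}_{1,2}h_1$, delivering the norm estimate.

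The principal technical obstacle is the choice-dependence of the $H^1_{\mathrm{conf}}$ norm, which involves either a distinguished boundary curve (via an integral against $\ast d\omega_k$) or a distinguished point (via Green's function, as in Lemma \ref{le:point_also_H1conf}). The natural choices on $\riem_1$, $\riem_2$, and the sphere domains $\Omega_k^{\pm}$ cannot all be made to match under the charts $\phi_k$, so at each passage one must invoke the norm-equivalence provided by Theorem \ref{thm:equivnorms on riemsurfaces} to absorb the mismatch into a finite multiplicative constant. Once this bookkeeping is done, the chain of bounded operators produces the desired estimate.
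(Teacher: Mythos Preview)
Your proposal is correct and follows essentially the same route as the paper: the paper packages your chart--bounce--sphere-overfare--restrict--chart composition as Lemma \ref{le:local_Overfare} (the ``local overfare'' $\mathbf{O}(\phi)_{1,2}$), then precomposes with restriction (Proposition \ref{pr:restriction_Hconf_bounded}) and postcomposes with the bounce operator component-by-component on $\riem_2$ (Theorem \ref{th:bounce_bounded}), exactly as you outline. The only point you leave implicit is that $\riem_2$ may be disconnected, so the final bounce is applied separately on each connected component before summing; otherwise your argument and the paper's coincide.
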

  One can also obtain Dirichlet boundedness for general quasicircles, but one must assume that the originating surface is connected. 
 \begin{theorem}[Bounded overfare theorem for general quasicircles]\label{thm:bounded_overfare_Dirichlet} Let $\mathscr{R}$ be a compact Riemann surface and let $\Gamma = \Gamma_1 \cup \cdots \cup \Gamma_m$ be a collection of \emph{BZM} quasicircles separating $\mathscr{R}$ into $\riem_1$ and $\riem_2$.  Assume that $\riem_1$ is connected. 
  There is a constant $C$ such that 
     \[  \| \mathbf{O}_{1,2} h \|_{\mathcal{D}_{\mathrm{harm}}(\riem_2)}  \leq C \| h \|_{\mathcal{D}_{\mathrm{harm}}(\riem_1)}  \]
     for all $h \in \mathcal{D}_{\mathrm{harm}}(\riem_1)$.  
  \end{theorem}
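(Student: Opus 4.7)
The plan is to build on the existence/uniqueness result of Theorem \ref{thm:bounded_overfare_existence} (which already supplies $h_2$), and then prove the bound by factoring the overfare through collar neighbourhoods and exploiting the connectivity of $\riem_1$ to remove the additive-constant ambiguity that obstructs such a bound in the disconnected case. Concretely, I will choose mutually disjoint doubly-connected neighbourhoods $W_k$ of each $\Gamma_k$ with charts $\phi_k\colon W_k\to\mathbb{A}_k\subset\sphere$, set $U_k^{(j)} = W_k\cap\riem_j$ and $U^{(j)} = \sqcup_k U_k^{(j)}$, and deduce from uniqueness of CNT boundary values (Theorem \ref{th:CNT_BVs_existence_uniqueness}) the factorization
\begin{equation*}
\mathbf{O}_{1,2} \;=\; \mathbf{G}_{U^{(2)},\riem_2}\circ\mathbf{O}^{\mathrm{loc}}\circ\mathbf{R}_{\riem_1,U^{(1)}},
\end{equation*}
where the local overfare $\mathbf{O}^{\mathrm{loc}}$ is defined component-by-component by pushing $h|_{U_k^{(1)}}$ to $\sphere$ via $\phi_k$, extending by planar bounce (Theorem \ref{th:bounce_bounded}) to the complementary disk $\Omega_k^+$ of the quasicircle $\phi_k(\Gamma_k)$, applying the planar overfare of Theorem \ref{th:overfare_sphere_control_Dirichlet} to reach $\Omega_k^-$, restricting to $\phi_k(U_k^{(2)})$, and pulling back.

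Each step of $\mathbf{O}^{\mathrm{loc}}$ lives in $\sphere$ against a single quasicircle. I will combine the planar bounce (bounded in $H^1_{\mathrm{conf}}$, equivalent to $H^1$ on the relevant compact planar domains by Theorems \ref{thm:equivnorms on riemsurfaces} and \ref{thm:equiv sobolev norm}), the planar overfare's Dirichlet bound, and Poisson-kernel/trace control of the $L^2$ norm of a harmonic extension near a quasicircle, to conclude that the local overfare is bounded in the full Sobolev norm, $\|\mathbf{O}^{\mathrm{loc}} u\|_{H^1(U^{(2)})} \lesssim \|u\|_{H^1(U^{(1)})}$. The zero-mode terms $\mathscr{H}_k(\mathbf{O}^{\mathrm{loc}}u)$ appearing in $\|\cdot\|_{H^1_{\mathrm{conf}}(U^{(2)})}$ can then be controlled by this $H^1$ norm via formula \eqref{defn:integration over rough}, Cauchy--Schwarz, and the Sobolev trace theorem on the analytic inner boundaries of $U^{(2)}$, giving $\|\mathbf{O}^{\mathrm{loc}}u\|_{H^1_{\mathrm{conf}}(U^{(2)})} \lesssim \|u\|_{H^1(U^{(1)})}$.

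To close the argument I will use connectivity. Fix $p_1\in\riem_1$ and replace $h$ by $h - h(p_1)$; neither $\|h\|_{\mathcal{D}_{\mathrm{harm}}(\riem_1)}$ nor $\|\mathbf{O}_{1,2}h\|_{\mathcal{D}_{\mathrm{harm}}(\riem_2)}$ changes, the latter because overfare commutes with addition of constants (the boundary values on all $\Gamma_k$ shift by the same constant, as $\riem_1$ is connected). For the normalized $h$, Lemma \ref{le:point_also_H1conf} together with Theorem \ref{thm:equiv sobolev norm} gives $\|h\|_{H^1(\riem_1)} \approx \|h\|_{\mathcal{D}_{\mathrm{harm}}(\riem_1)}$, so restriction yields $\|h|_{U^{(1)}}\|_{H^1(U^{(1)})} \lesssim \|h\|_{\mathcal{D}_{\mathrm{harm}}(\riem_1)}$. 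Chaining through the bounce bound of Theorem \ref{th:bounce_bounded} (applied component-by-component on $\riem_2$) and the trivial inequality $\|\cdot\|_{\mathcal{D}_{\mathrm{harm}}(\riem_2)} \leq \|\cdot\|_{H^1_{\mathrm{conf}}(\riem_2)}$ then gives
\begin{equation*}
\|\mathbf{O}_{1,2}h\|_{\mathcal{D}_{\mathrm{harm}}(\riem_2)}
\;\lesssim\; \|\mathbf{O}^{\mathrm{loc}}(h|_{U^{(1)}})\|_{H^1_{\mathrm{conf}}(U^{(2)})}
\;\lesssim\; \|h\|_{\mathcal{D}_{\mathrm{harm}}(\riem_1)}.
\end{equation*}

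The hard part will be the full-Sobolev (as opposed to mere Dirichlet-seminorm) bound on the local overfare, since for a general quasicircle only the Dirichlet boundedness of planar overfare is directly available; the extra work goes into controlling the $L^2$ norm of the harmonic extension on one side of a quasicircle by data on the other, carefully tracking the single zero mode introduced by the planar bounce. The connectivity hypothesis is indispensable: without it, adding different constants on the different components of $\riem_1$ would leave the Dirichlet seminorm on $\riem_1$ unchanged while producing a non-constant overfare with arbitrarily large Dirichlet seminorm on $\riem_2$, so no bound of the desired form could hold.
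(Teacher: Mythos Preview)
There is a genuine gap at exactly the step you flag as the hard part. You need the local overfare $\mathbf{O}^{\mathrm{loc}}$ to be bounded from $H^1(U^{(1)})$ to $H^1(U^{(2)})$ (equivalently $H^1_{\mathrm{conf}}$) across general quasicircles, and you propose to achieve this via ``Poisson-kernel/trace control of the $L^2$ norm'' on the $\Omega_k^-$ side. But controlling the zero mode of the planar overfare on $\Omega_k^-$ by $H^1$ data on $\Omega_k^+$ is \emph{precisely} the definition of a BZM quasicircle, and for general quasicircles it fails: a quasisymmetry of $\mathbb{S}^1$ is always bounded on $\dot{H}^{1/2}(\mathbb{S}^1)$ (Vodop'yanov--Nag--Sullivan) but not in general on $H^{1/2}(\mathbb{S}^1)$. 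So after the Dirichlet-bounded planar overfare of Theorem~\ref{th:overfare_sphere_control_Dirichlet} you have no control on $|v_k(p)|$ for $p\in\Omega_k^-$, hence none on $\|v_k\|_{H^1(\phi_k(U_k^{(2)}))}$, and you cannot feed the result into the bounce $\mathbf{G}_{U^{(2)},\riem_2}$, which (as the remark following Theorem~\ref{th:J_same_both_sides} records) is \emph{not} bounded in the Dirichlet seminorms when $U^{(2)}$ has several components. Your normalization $h\mapsto h-h(p_1)$ correctly controls the input $\|h\|_{H^1(\riem_1)}$, but it cannot recover the constants lost on the $\riem_2$ side of each individual quasicircle.

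The paper avoids this obstruction by a quasiconformal change of surface rather than a direct $H^1$ estimate. By Lemma~\ref{le:quasiconformal_normalize_to_analytic} there is a quasiconformal $f\colon\mathscr{R}\to\mathscr{R}'$, conformal on $\riem_1$, taking each $\Gamma_k$ to an analytic curve. Analytic curves are WP, hence BZM (Theorem~\ref{th:WP_are_BZM}), so Theorem~\ref{thm:bounded_overfare_conf} supplies the $H^1_{\mathrm{conf}}$ overfare bound on $\mathscr{R}'$; then your constant trick, packaged as Lemma~\ref{le:constants_trick}, converts this to a Dirichlet-seminorm bound for $\mathbf{O}_{\riem_1',\riem_2'}$. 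Finally one pulls back: $f$ is conformal on $\riem_1$ (Dirichlet seminorm of the input preserved exactly), quasiconformal on $\riem_2$ (Dirichlet energy quasi-invariant), CNT limits are preserved by $f$ (Lemma~\ref{le:qcmaps_preserve_CNT}), and the Dirichlet principle on $\riem_2$ shows $\|\mathbf{O}_{1,2}h\|_{\mathcal{D}_{\mathrm{harm}}(\riem_2)}\le \|[\mathbf{O}_{\riem_1',\riem_2'}(h\circ f^{-1})]\circ f\|_{\dot{H}^1(\riem_2)}$. The missing idea in your proposal is this quasiconformal reduction to the BZM case; the connectivity/constant-subtraction step you propose is the right ingredient, but it must be applied \emph{after} the reduction, on a surface where the $H^1_{\mathrm{conf}}$ bound already holds. (Incidentally, the hypothesis ``BZM'' in the displayed statement is a typo; the title, the proof, and the contrast with Theorem~\ref{thm:bounded_overfare_conf} all make clear the intended hypothesis is general quasicircles.)
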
  
  Needless to say, the roles of $1$ and $2$ can be interchanged.
  
  The remainder of the section is dedicated to proving these three theorems.

 \begin{lemma} \label{le:local_Overfare}  Let $\mathscr{R}$ be a Riemann surface and let $\Gamma$ be a quasicircle in $\mathscr{R}$.  Let $\phi:U \rightarrow \mathbb{A}$ be a doubly-connected chart, and let $U_1,U_2$ be the connected components of $U \backslash \Gamma$. 
 There is an operator
 \[  \mathbf{O}(\phi)_{1,2}:\mathcal{D}_{\mathrm{harm}}(U_1) \rightarrow \mathcal{D}_{\mathrm{harm}}(U_2)   \]
 such that the \emph{CNT} boundary values of $\mathbf{O}(\phi)_{1,2}h$ agree with those of $h$ up to a null set, and a $C$ such that 
 \[  \|\mathbf{O}(\phi)_{1,2}h\|_{\mathcal{D}_{\mathrm{harm}}(U_2)} \leq C    \| h \|_{\mathcal{D}_{\mathrm{harm}}(U_1)}.  \]
 
 If $\Gamma$ is a \emph{BZM} quasicircle, then there is a $C'$ such that for all $h \in \mathcal{D}_{\mathrm{harm}}(U_1)$ \[  \|\mathbf{O}(\phi)_{1,2}h\|_{H^1_{\mathrm{conf}}(U_2)} \leq C'    \| h \|_{H^1_{\mathrm{conf}}(U_1)}.  \]
 \end{lemma}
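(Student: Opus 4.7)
The plan is to reduce to the planar overfare theorem (Theorem \ref{th:overfare_sphere_control_Dirichlet}) by transporting everything through the doubly-connected chart $\phi$. View $\phi(\Gamma)$ as a quasicircle in $\sphere$; it separates $\sphere$ into two simply connected components $\Omega^-$ and $\Omega^+$, labelled so that $\phi(U_1) \subset \Omega^-$ and $\phi(U_2) \subset \Omega^+$. Then each $\phi(U_k)$ is a collar neighborhood of $\phi(\Gamma)$ in the corresponding simply connected component.

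Given $h \in \mathcal{D}_{\mathrm{harm}}(U_1)$, set $\tilde h = h \circ \phi^{-1} \in \mathcal{D}_{\mathrm{harm}}(\phi(U_1))$. I will construct $\mathbf{O}(\phi)_{1,2} h$ as the composition of four operations. First apply the bounce operator $\mathbf{G}_{\phi(U_1),\Omega^-}$ of Definition \ref{defn:bounce op} to produce $\tilde H^- \in \mathcal{D}_{\mathrm{harm}}(\Omega^-)$ whose CNT boundary values on $\phi(\Gamma)$ agree with those of $\tilde h$ up to a null set. Next apply the planar overfare $\mathbf{O}_{\Omega^-,\Omega^+}$ from Theorem \ref{th:overfare_sphere_control_Dirichlet} to obtain $\tilde H^+ \in \mathcal{D}_{\mathrm{harm}}(\Omega^+)$ with the same CNT boundary values. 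Then restrict $\tilde H^+$ to $\phi(U_2)$, and finally pull back via $\phi$ to define $\mathbf{O}(\phi)_{1,2} h$. Conformal invariance of CNT limits (Theorem \ref{th:conformal_inv_CNT}), together with Theorem \ref{th:null_same_both_sides} which identifies null sets on both sides of a quasicircle, shows that the CNT boundary values of $\mathbf{O}(\phi)_{1,2} h$ on $\Gamma$ agree with those of $h$ up to a null set.

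For the BZM case, the bound on the $H^1_{\mathrm{conf}}$ norm follows by chaining: $\mathbf{C}_{\phi^{-1}}$ is an isometry by conformal invariance, $\mathbf{G}_{\phi(U_1),\Omega^-}$ is bounded on $H^1_{\mathrm{conf}}$ by Theorem \ref{th:bounce_bounded}, the planar overfare $\mathbf{O}_{\Omega^-,\Omega^+}$ is bounded on $H^1_{\mathrm{conf}}$ by the very definition of BZM, restriction to $\phi(U_2)$ is bounded by Proposition \ref{pr:restriction_Hconf_bounded}, and $\mathbf{C}_\phi$ is again isometric. Composition yields the estimate.

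The main obstacle is the Dirichlet semi-norm estimate for a general quasicircle, because the bounce operator is a priori only bounded on $H^1_{\mathrm{conf}}$. The plan is to exploit that $U_1$ is connected (an annulus), so that constants lie in the kernel of the semi-norm and are preserved by both $\mathbf{G}_{\phi(U_1),\Omega^-}$ and $\mathbf{O}_{\Omega^-,\Omega^+}$. Subtracting a suitable constant $c$ from $\tilde h$, a Poincaré-type inequality on the collar $\phi(U_1)$ controls $\|\tilde h - c\|_{L^2}$ by $\|\tilde h\|_{\mathcal{D}_{\mathrm{harm}}(\phi(U_1))}$. One then constructs an explicit extension of $\tilde h - c$ into $\Omega^-$ via a smooth cutoff supported in a smaller collar; this extension has Dirichlet energy controlled by the semi-norm of $\tilde h$, and the harmonic bounce $\tilde H^- - c$ has Dirichlet energy no larger by the minimal energy property. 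Theorem \ref{th:overfare_sphere_control_Dirichlet} then provides the Dirichlet semi-norm bound for the planar overfare, while restriction and pullback are harmless. Adding back the constant $c$ does not affect the semi-norm, completing the estimate.
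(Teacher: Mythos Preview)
Your construction is exactly the one the paper uses: the composition
\[
  h_2 \;=\; \mathbf{C}_{\phi}\,\mathbf{R}_{\Omega^+,\phi(U_2)}\,\mathbf{O}_{\Omega^-,\Omega^+}\,\mathbf{G}_{\phi(U_1),\Omega^-}\,\mathbf{C}_{\phi^{-1}}\,h_1,
\]
and the BZM estimate is handled identically (bounce via Theorem~\ref{th:bounce_bounded}, overfare by definition of BZM, restriction via Proposition~\ref{pr:restriction_Hconf_bounded}). One small imprecision: $\mathbf{C}_\phi$ is a bounded isomorphism on $H^1_{\mathrm{conf}}$ (Lemma~\ref{le:for_density_factorization}), not an isometry, since the zero-mode term in the $H^1_{\mathrm{conf}}$ norm is not conformally invariant; this does not affect the argument.

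The only genuine difference is in the Dirichlet semi-norm bound for general quasicircles. The paper simply invokes an external result (\cite[Theorem~4.6]{Schippers_Staubach_Grunsky_expository}) asserting that the bounce $\mathbf{G}_{\phi(U_1),\Omega^-}$ is bounded with respect to the Dirichlet semi-norm in this planar, single-boundary situation. You instead supply a self-contained proof: subtract a constant (legitimate since $\phi(U_1)$ is connected), apply Poincar\'e on a compactly contained subannulus supporting $d\chi$, build an $H^1$ extension into $\Omega^-$ by cutoff, and appeal to the Dirichlet principle to see that the harmonic bounce has no larger energy. This is correct and more transparent than citing an outside reference; it is also morally the same mechanism as Lemma~\ref{le:constants_trick}, which the paper uses elsewhere to pass from $H^1_{\mathrm{conf}}$ bounds to Dirichlet semi-norm bounds on connected domains. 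Either route closes the argument.
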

 \begin{proof}
  Let $\Omega_1$ and $\Omega_2$ be the connected components of $\sphere \backslash \phi(\Gamma)$ containing $\phi(U_1)$ and $\phi(U_2)$ respectively.  We then have a bounded overfare $\mathbf{O}_{\Omega_1,\Omega_2}:\mathcal{D}_{\mathrm{harm}}(\Omega_1) \rightarrow \mathcal{D}_{\mathrm{harm}}(\Omega_2)$ by Theorem \ref{th:overfare_sphere_control_Dirichlet}.  Furthermore, the bounce operator $\mathbf{G}_{\phi(U_k),\Omega_k}$ is bounded with respect to $\mathcal{D}_{\mathrm{harm}}$ by \cite[Theorem 4.6]{Schippers_Staubach_Grunsky_expository}.  Defining 
  \begin{equation} \label{eq:local_Overfare}
    h_2 = \mathbf{C}_{\phi} \mathbf{R}_{\Omega_2,U_2}   \mathbf{O}_{\Omega_1,\Omega_2}
   \mathbf{G}_{\phi(U_1),\Omega_1} \mathbf{C}_{\phi^{-1}} h_1, 
  \end{equation}
  by conformal invariance of the Dirichlet semi-norm, we have proven the first claim.  The second claim follows by definition of BZM quasicircles, using Theorem \ref{th:bounce_bounded}, and Proposition \ref{pr:restriction_Hconf_bounded}. 
 \end{proof}
 We call \eqref{eq:local_Overfare} the local overfare of induced by $\phi$.  It is non-canonical in the sense that it depends on $\phi$. Since the values on the other boundaries of $U$ are not specified, the local overfare is not unique. 
 
 On the other hand, the overfare to $\riem_2$ is unique. By combining local overfare with the bounce operator, we can show that the overfare exists.
 \begin{proof}(of Theorem \ref{thm:bounded_overfare_existence}).
  Let $\phi_k:U^k \rightarrow \mathbb{A}^k$ be the doubly-connected charts corresponding to the curves $\Gamma_1,\ldots,\Gamma_m$.  Denote $U^k_j = U^k \cap \riem_j$. Given $h \in \mathcal{D}_{\mathrm{harm}}(\riem_1)$, Lemma \ref{le:local_Overfare} produces a collection of functions $H_2^k \in \mathcal{D}_{\mathrm{harm}}(U_2^k)$ whose boundary values agree with $h$.  
  
  For each connected component $\riem^j_2$ of $\riem_2$, let $\hat{U}_j$ denote the union of those $U_2^k$ which lie in this component. We now apply the bounce operator $\mathbf{G}_{\hat{U}_j,\riem_2^j}:\mathcal{D}_{\mathrm{harm}}(\hat{U}_j) \rightarrow \mathcal{D}_{\mathrm{harm}}(\riem_2^j)$ on each component separately to obtain a harmonic function in $\mathcal{D}_{\mathrm{harm}}(\riem_2)$ whose CNT boundary values agree with $h$. 
 \end{proof}
 
 We now prove the boundedness for BZM quasicircles.
 \begin{proof}(of Theorem \ref{thm:bounded_overfare_conf}).
  The idea is the same as in the previous proof, except that we must keep track of the bounds.
  Let $\phi_k:U^k \rightarrow \mathbb{A}^k$ be doubly-connected charts corresponding to the curves $\Gamma_1,\ldots,\Gamma_m$, and let $U^k_l$ be the components of $U_k \backslash \Gamma$ in $\riem_l$ for $l=1,2$.  
  Let $C = \sup \{C_1,\ldots,C_m\}$ where $C_1,\ldots,C_m$ are the constants in the second estimate of Lemma \ref{le:local_Overfare} for the local overfares from $\mathcal{D}_{\mathrm{harm}}(U^k_1)$ to $\mathcal{D}_{\mathrm{harm}}(U^k_2)$ determined by $\phi_k$ for $k=1,\ldots,m$.  For any $h_1 \in \mathcal{D}_{\mathrm{harm}}(\riem_1)$ 
  we have therefore a collection of functions $H_2^k \in \mathcal{D}_{\mathrm{harm}}(U^k_2)$ such that 
  \begin{equation} \label{eq:bt_proof_temp1}
    \left\| H_2^k \right\|_{H^1_{\mathrm{conf}}(U^k_2)} 
     \leq C \left\| \left. h_1 \right|_{U^k_1} \right\|_{H^1_{\mathrm{conf}}(U^k_1)} \leq 
     C \| h_1 \|_{H^1_{\mathrm{conf}}(\riem_1)} 
  \end{equation}     
  where we have also used Proposition \ref{pr:restriction_Hconf_bounded}.  
  
  Now let $\riem_2^1,\ldots,\riem_2^s$ be the connected components of $\riem_2$. For each fixed $j \in 1,\ldots,s$, let $\hat{U}_j$ be the union of those $U^k_2$ which are in $\riem_2^j$, and let $h^j_2$ be the function whose restriction to $\hat{U}_j$ agrees with the corresponding functions $H_2^k$.  
  By Theorem \ref{th:bounce_bounded} there is a constant $C'_j$ such that 
  \begin{equation} \label{eq:bt_proof_temp2}
  \| \mathbf{G}_{\hat{U}_j,\riem_2} h^j_2  \|_{H^1_{\mathrm{conf}}(\riem_2)} \leq C'_j \| h^j_2 \|_{H^1_{\mathrm{conf}}(\hat{U}_j)}.         
  \end{equation}
  Combining \eqref{eq:bt_proof_temp1} and \eqref{eq:bt_proof_temp2} we obtain
  \begin{equation*}
        \| \mathbf{G}_{\hat{U}_j,\riem_2^j} h^j_2  \|_{H^1_{\mathrm{conf}}(\riem_2^j)} \leq m C C'_j 
         \| h_1 \|_{H^1_{\mathrm{conf}}(\riem_1)}
  \end{equation*}
  (where the $m$ appears because there are at most $m$ curves bounding the component $\riem_2^j$).  
  
  Set $C' = \sup \{ mCC'_1,\ldots,mCC'_s\}$.
  If we now let $h_2$ be the function on $\riem_2$ whose restriction to $\riem_2^j$ is $\mathbf{G}_{\hat{U}_j,\riem_2^j} h^j_2$ for $j=1,\ldots,s$, we have that the CNT boundary values of $h_2$ agree with those of $h_1$ and 
  \[  \| h_2 \|_{H^1_{\mathrm{conf}}(\riem_2)} 
   = \sum_{j=1}^s  \| \mathbf{G}_{\hat{U}_j,\riem_2^j} h^j_2  \|_{H^1_{\mathrm{conf}}(\riem_2^j)} \leq s C' \| h_1 \|_{H^1_{\mathrm{conf}}(\riem_1)}.      \]
 \end{proof}
 
 To prove boundedness with respect to the Dirichlet semi-norm, we require three lemmas. 
 \begin{lemma} \label{le:constants_trick} Let $\mathscr{R}$ be a compact Riemann surface and $\Gamma$ be a collection of quasicircles separating $\mathscr{R}$ into components $\riem_1$ and $\riem_2$. Assume that $\riem_1$ is connected.  If $\Gamma$ has the property that 
\[  \| \mathbf{O}_{1,2} h \|_{H^1_{\mathrm{conf}}(\riem_2)} \leq K  \| h \|_{H^1_{\mathrm{conf}}(\riem_1)} \]
then $\Gamma$ also has the property that 
\[  \| \mathbf{O}_{1,2} h \|_{\mathcal{D}_{\mathrm{harm}}(\riem_2)} \leq K  \| h \|_{\mathcal{D}_{\mathrm{harm}}(\riem_1)}.   \]
\end{lemma}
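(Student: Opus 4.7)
The plan is to exploit the fact that $\riem_1$ is connected, so that adding a constant to $h$ leaves its Dirichlet semi-norm unchanged while at the same time allowing us to cancel out the ``extra'' boundary integral term in $\|h\|_{H^1_{\mathrm{conf}}(\riem_1)}$. The corresponding shift on $\riem_2$ does nothing to the Dirichlet semi-norm either, since adding a constant to a function on a disjoint union of surfaces leaves all Dirichlet semi-norms intact.

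More precisely, I would first observe two trivial facts. First, by definition of the norm \eqref{equivalent sobolev norm} one always has $\|f\|_{\mathcal{D}_{\mathrm{harm}}(\riem)} \leq \|f\|_{H^1_{\mathrm{conf}}(\riem)}$, since the latter is the former plus a non-negative term. Second, if $h_1\in \mathcal{D}_{\mathrm{harm}}(\riem_1)$ and $c\in\mathbb{C}$ is any constant, then by uniqueness of CNT boundary values (Theorem \ref{th:CNT_BVs_existence_uniqueness}) applied on each connected component of $\riem_2$,
\[  \mathbf{O}_{1,2}(h_1 + c) = \mathbf{O}_{1,2}h_1 + c. \]

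Next, using that $\riem_1$ is connected, choose the boundary curve $\partial_n \riem_1$ appearing in the definition \eqref{equivalent sobolev norm} of $\|\cdot\|_{H^1_{\mathrm{conf}}(\riem_1)}$, and let $\mathscr{H}_n(h)=\int_{\partial_n \riem_1}h\ast d\omega_n$. As noted in the proof of Theorem \ref{thm:equivnorms on riemsurfaces}, the constant $\kappa := \int_{\partial_n \riem_1}\ast d\omega_n$ is non-zero. Set
\[ c := -\mathscr{H}_n(h)/\kappa, \]
so that $\mathscr{H}_n(h+c)=0$ and therefore
\[ \|h+c\|_{H^1_{\mathrm{conf}}(\riem_1)} = \|h+c\|_{\mathcal{D}_{\mathrm{harm}}(\riem_1)} = \|h\|_{\mathcal{D}_{\mathrm{harm}}(\riem_1)}, \]
where the second equality is the constant-invariance of the Dirichlet semi-norm on $\riem_1$ (which is connected).

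Finally, I would chain the inequalities. The Dirichlet semi-norm on $\riem_2$ is also constant-invariant (even when $\riem_2$ is disconnected, constants are constants on each component, which contribute zero to $\|\cdot\|_{\mathcal{D}_{\mathrm{harm}}(\riem_2)}$ by Definition \ref{Dirchlet norm on multiconnected}), so
\[ \|\mathbf{O}_{1,2}h\|_{\mathcal{D}_{\mathrm{harm}}(\riem_2)} = \|\mathbf{O}_{1,2}h + c\|_{\mathcal{D}_{\mathrm{harm}}(\riem_2)} = \|\mathbf{O}_{1,2}(h+c)\|_{\mathcal{D}_{\mathrm{harm}}(\riem_2)}. \]
Combining this with the first trivial observation and the hypothesis,
\[ \|\mathbf{O}_{1,2}h\|_{\mathcal{D}_{\mathrm{harm}}(\riem_2)} \leq \|\mathbf{O}_{1,2}(h+c)\|_{H^1_{\mathrm{conf}}(\riem_2)} \leq K\|h+c\|_{H^1_{\mathrm{conf}}(\riem_1)} = K\|h\|_{\mathcal{D}_{\mathrm{harm}}(\riem_1)}, \]
which is the desired estimate. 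There is no genuine obstacle here; the only subtlety worth flagging is the verification that $\mathbf{O}_{1,2}(h+c)=\mathbf{O}_{1,2}h+c$ on a possibly disconnected target $\riem_2$, which follows componentwise from CNT uniqueness.
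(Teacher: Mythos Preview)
Your proof is correct and follows essentially the same approach as the paper: add a constant $c$ to $h$ so that the boundary-integral term $\mathscr{H}_n(h+c)$ vanishes, reducing $\|h+c\|_{H^1_{\mathrm{conf}}(\riem_1)}$ to $\|h\|_{\mathcal{D}_{\mathrm{harm}}(\riem_1)}$, and then chain the obvious inequalities using constant-invariance of the Dirichlet semi-norm and $\mathbf{O}_{1,2}(h+c)=\mathbf{O}_{1,2}h+c$. You are in fact slightly more explicit than the paper in justifying the auxiliary facts (non-vanishing of $\kappa$, behaviour of $\mathbf{O}_{1,2}$ on constants).
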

\begin{proof}
 For all $c$ constant on $\riem_1$ we have 
 \begin{align*}
  \| \mathbf{O}_{1,2} h \|^2_{\mathcal{D}_{\mathrm{harm}}(\riem_2)} & = \| \mathbf{O}_{1,2} ( h +c) \|^2_{\mathcal{D}_{\mathrm{harm}}(\riem_2)} \leq 
  \| \mathbf{O}_{1,2} ( h +c) \|^2_{H^1_{\mathrm{conf}}(\riem_2)} \\
  & \leq K^2 \| h+c \|^2_{H^1_{\mathrm{conf}}(\riem_1)} \\
  & =  K^2 \left( \| h \|^2_{{\mathcal{D}}_{\mathrm{harm}}(\riem_1)} + |\widehat{h+c}|^2 \right).
 \end{align*}
 The claim follows by choosing $c$ such that $\hat{c} =-\hat{h}$. 
\end{proof}
\begin{lemma} \label{le:qcmaps_preserve_CNT}
 For $k=1,2$ let $\Gamma_k$ be a quasicircle in a Riemann surface $\mathscr{R}_k$, and let $U_k$ be collar neighbourhoods of $\Gamma_k$.  Let $f:U_1 \rightarrow U_2$ be a quasiconformal map of an open neighbourhood of $U_1 \cup \Gamma_1$ which takes $\Gamma_1$ to $\Gamma_2$. Let $h:U_2 \rightarrow \mathbb{C}$. Then $h$ has a \emph{CNT} limit of $\xi$ at $p \in \Gamma_2$ if and only if $h \circ f$ has a \emph{CNT} limit of $\xi$ at $f^{-1}(p)$.  
\end{lemma}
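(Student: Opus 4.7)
The plan is to reduce the statement to a purely planar assertion about quasiconformal maps near the unit circle, and then invoke the fact that such maps are quasi-isometries of the hyperbolic metric (and, consequently, send Stolz sectors into Stolz sectors).

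First I would unpack the definitions. Fix collar charts $\phi_k:U_k'\to\mathbb{A}_k$ of $\Gamma_k$ in $\mathscr{R}_k$, shrinking $U_k$ if necessary so that both charts extend biholomorphically across the seam into doubly-connected charts of $\Gamma_k$ in the doubles (Proposition \ref{prop:collar_charts_extend_analytically}). The composition
\[
 \psi \;:=\; \phi_2\circ f\circ \phi_1^{-1}
\]
is then defined on a (possibly smaller) annular neighbourhood of $\mathbb{S}^{1}$ inside $\mathbb{A}_1$, with image an annular neighbourhood of $\mathbb{S}^{1}$ inside $\mathbb{A}_2$; it is quasiconformal there because $f$ is quasiconformal on an open neighbourhood of $U_1\cup \Gamma_1$ and $\phi_1,\phi_2$ are conformal on their (extended) domains. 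After post- and pre-composing with $z\mapsto 1/z$ if needed, we may arrange that both annuli have $\mathbb{S}^{1}$ as outer boundary, and that $\psi$ maps $\mathbb{S}^{1}$ to $\mathbb{S}^{1}$. By the definition of CNT limits, the statement to prove becomes: $\tilde h:=h\circ\phi_2^{-1}$ has a non-tangential limit $\xi$ at $q:=\phi_2(p)$ if and only if $\tilde h\circ\psi$ has a non-tangential limit $\xi$ at $q_0:=\psi^{-1}(q)$.

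Now I would extend $\psi$ across $\mathbb{S}^{1}$ by Schwarz reflection (using $z\mapsto 1/\bar z$) to obtain a quasiconformal map $\Psi$ defined on a full open neighbourhood $V$ of $\mathbb{S}^{1}$ in $\mathbb{C}$, with $\Psi(\mathbb{S}^{1})=\mathbb{S}^{1}$. The key classical fact is that any orientation-preserving quasiconformal self-map of such a neighbourhood which preserves $\mathbb{S}^{1}$ is a quasi-isometry of the hyperbolic metric on $V\cap\disk$ up to constants; equivalently, for every Stolz sector $S(q_0,\alpha)$ there exists a Stolz sector $S(\Psi(q_0),\beta)$ with $\beta=\beta(\alpha,\Psi)\in(0,\pi/2)$ such that, for some $\rho>0$,
\[
 \Psi\bigl(S(q_0,\alpha)\cap\{|z-q_0|<\rho\}\bigr)\;\subseteq\; S(\Psi(q_0),\beta).
\]
(This is the Mori/quasi-isometry statement for QC maps fixing the circle; it follows from the Mori distortion theorem and the explicit form of the Poincaré metric.) Because $\Psi$ is also a homeomorphism, the analogous inclusion holds for $\Psi^{-1}$.

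Given this, the conclusion is immediate. If $\tilde h$ has non-tangential limit $\xi$ at $q=\Psi(q_0)$, then for every $\alpha\in(0,\pi/2)$ the inclusion above shows that $\tilde h\circ\Psi$, restricted to $S(q_0,\alpha)$, has values contained in the image of $\tilde h|_{S(q,\beta)}$ as $z\to q_0$; since the latter tends to $\xi$, so does the former, giving a non-tangential limit $\xi$ of $\tilde h\circ\psi$ at $q_0$. The converse direction is symmetric, applying the same argument to $\Psi^{-1}$. The main obstacle is the quasi-isometric distortion estimate for Stolz sectors; everything else is bookkeeping in collar charts and Schwarz reflection.
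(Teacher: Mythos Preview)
Your reduction via collar charts to a quasiconformal map between annuli taking $\mathbb{S}^1$ to $\mathbb{S}^1$ is exactly what the paper does; the paper then cites \cite{Schippers_Staubach_transmission} for the planar fact, whereas you supply a sketch via Schwarz reflection and the hyperbolic quasi-isometry/Stolz-sector preservation property of quasiconformal maps. That sketch is the standard route to the cited result, so your proposal is correct and essentially the same approach as the paper's, just with the outsourced step filled in.
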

\begin{proof}
 By conformal invariance of CNT boundary values, it's enough for this to hold for $\Gamma_k = \mathbb{S}^1$ for $k=1,2$, and a quasiconformal map $f:\mathbb{A}_{r} \rightarrow \mathbb{A}_s$ where $\mathbb{A}_r = \{ z : r< |z| <1 \}$ and $\mathbb{A}_2 = \{ z : s< |z| <1 \}$. For a proof of this fact see \cite{Schippers_Staubach_transmission}.  
\end{proof}
\begin{lemma} \label{le:quasiconformal_normalize_to_analytic}
 Let $\mathscr{R}$ be a compact Riemann surface, and $\Gamma = \Gamma_1 \cup \ldots \cup \Gamma_m$ be a collection of quasicircles separating $\mathscr{R}$ into components $\riem_1$ and $\riem_2$. Let $U_1,\ldots,U_m$ be collar neighbourhoods of $\Gamma_1,\ldots,\Gamma_n$ in $\Gamma_2$.  There is a quasiconformal map $f:\mathscr{R} \rightarrow \mathscr{R}'$ which is conformal on the complement of the closure of $U_1 \cup \cdots \cup U_m$, such that $f(\Gamma_k)$ is analytic for $k=1,\ldots,m$. 
\end{lemma}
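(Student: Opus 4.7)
The plan is to construct $f$ via the measurable Riemann mapping theorem, using a Beltrami differential assembled from local quasiconformal straightenings of each $\Gamma_k$ inside $U_k$.

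First I would shrink the $U_k$ if necessary so that they are pairwise disjoint and each is contained in the domain of a doubly-connected chart $\phi_k:V_k\to\mathbb{A}_k\subset\mathbb{C}$ of $\Gamma_k$. Since $\phi_k(\Gamma_k)$ is a quasicircle in $\mathbb{C}$, by definition there is a quasiconformal self-map $\tilde{\eta}_k$ of the plane with $\tilde{\eta}_k(\mathbb{S}^1)=\phi_k(\Gamma_k)$; its inverse $\eta_k$ then carries $\phi_k(\Gamma_k)$ onto $\mathbb{S}^1$. Setting $\psi_k:=\eta_k\circ\phi_k$, we obtain a quasiconformal map of $U_k$ into $\mathbb{C}$ that sends $\Gamma_k$ onto the analytic curve $\mathbb{S}^1$. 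Because $\phi_k$ is conformal, the Beltrami differential $\mu_k$ of $\psi_k$ is just the $\phi_k$-pullback of the Beltrami coefficient of $\eta_k$, and in particular $\|\mu_k\|_\infty<1$.

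Next I would define a global Beltrami differential $\mu$ on $\mathscr{R}$ by setting $\mu=\mu_k$ on each $U_k$ and $\mu=0$ on $\mathscr{R}\setminus\bigcup_k U_k$; disjointness of the $U_k$ makes this a well-defined measurable $(-1,1)$-form on $\mathscr{R}$ with $\|\mu\|_\infty<1$. By the existence theorem for the Beltrami equation on compact Riemann surfaces, there is then a compact Riemann surface $\mathscr{R}'$ and a quasiconformal homeomorphism $f:\mathscr{R}\to\mathscr{R}'$ whose Beltrami differential equals $\mu$. Since $\mu$ vanishes on $\mathscr{R}\setminus\mathrm{cl}(U_1\cup\cdots\cup U_m)$, the map $f$ is conformal there. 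On each $U_k$, the maps $f$ and $\psi_k$ share the same Beltrami coefficient $\mu_k$; by the standard composition rule for Beltrami coefficients this forces the composition $f\circ\psi_k^{-1}$, defined on the open set $\psi_k(U_k)\subset\mathbb{C}$, to be conformal. Consequently $f(\Gamma_k)=(f\circ\psi_k^{-1})(\mathbb{S}^1)$ is the image of the analytic curve $\mathbb{S}^1$ under a conformal map, hence is an analytic Jordan curve in $\mathscr{R}'$.

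The only real subtlety is invoking the correct version of the measurable Riemann mapping theorem for compact Riemann surfaces with an $L^\infty$ Beltrami differential pieced together from finitely many local contributions; everything else is routine. The key design choice is arranging $f$ and $\psi_k$ to carry the identical Beltrami coefficient on $U_k$ from the outset, which bypasses the welding or modulus-matching arguments that a more direct construction inside $U_k$ would have to confront.
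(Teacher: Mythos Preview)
Your approach via the measurable Riemann mapping theorem is natural and close to working, but there is a genuine gap in the final step. You only establish that $f\circ\psi_k^{-1}$ is conformal on $\psi_k(U_k)$, which is a \emph{one-sided} collar of $\mathbb{S}^1$: the curve $\mathbb{S}^1$ lies on the boundary of this set, not in its interior. The image of $\mathbb{S}^1$ under the boundary extension of a one-sided conformal map need not be analytic (any Riemann map onto a Jordan domain with rough boundary is a counterexample). To conclude that $f(\Gamma_k)$ is analytic you need $f\circ\psi_k^{-1}$ to be conformal on a full annular neighbourhood of $\mathbb{S}^1$, and your construction does not give this: on $V_k\cap\riem_1$ the Beltrami differential of $f$ vanishes while that of $\psi_k$ is in general nonzero, so the two maps do not match there.

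The fix is small but essential: choose $\eta_k$ to be conformal on the component of $\mathbb{C}\setminus\phi_k(\Gamma_k)$ corresponding to $\riem_1$. This is possible precisely because $\phi_k(\Gamma_k)$ is a quasicircle, so the Riemann map of that component extends quasiconformally across it. With this choice $\mu_k$ vanishes on $V_k\cap\riem_1$, hence your global $\mu$ agrees with the Beltrami differential of $\psi_k$ on all of $V_k$, and $f\circ\psi_k^{-1}$ is conformal on the two-sided neighbourhood $\psi_k(V_k)$ of $\mathbb{S}^1$; now $f(\Gamma_k)$ is genuinely analytic. The support of $\mu$ is still contained in $\bigcup_k U_k$, so the conformality of $f$ off $\mathrm{cl}(U_1\cup\cdots\cup U_m)$ is unaffected.

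For comparison, the paper does not argue via a global Beltrami differential at all: it cites a sewing argument from earlier work, in which one resews $\riem_1$ and $\riem_2$ along analytic boundary parametrizations supplied by collar charts and invokes Theorem~\ref{th:sewing_complex_structure} to obtain $\mathscr{R}'$ with analytic seams directly. Your corrected approach is a legitimate alternative and arguably more transparent, since it produces $f$ in one stroke from the existence theorem rather than assembling it by cutting and resewing.
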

\begin{proof}
 This was proven in \cite{Schippers_Staubach_transmission} for a single quasicircle using a sewing argument. The proof extends to a complex of curves without issue.
\end{proof}

With these three lemmas in hand, we may now prove boundedness with respect to the Dirichlet semi-norm.
\begin{proof}(of Theorem \ref{thm:bounded_overfare_Dirichlet}).
 By Lemma \ref{le:quasiconformal_normalize_to_analytic} there is a quasiconformal map $f:\mathscr{R} \rightarrow \mathscr{R}'$, which is conformal on $\riem_1$ and takes each quasicircle $\Gamma_j$ to an analytic curve $\Gamma'_j$.  Denote $\riem_1'=f(\riem_1)$ and $\riem_2' = f(\riem_2)$.  
 
 By quasi-invariance of $\dot{H}^1$, there is a fixed $K$ such that for any $h \in \mathcal{D}_{\mathrm{harm}}(\riem_1)$ we have
 \begin{equation} \label{eq:Dirichlet_overfare_temp1}
  \| \mathbf{O}_{\riem'_1,\riem'_2}( h \circ f^{-1}) \circ f \|_{\dot{H}^1(\riem_2)} \leq K \| \mathbf{O}_{\riem'_1,\riem'_2} (h \circ f^{-1}) \|_{\mathcal{D}_{\mathrm{harm}}(\riem_2')}.  
 \end{equation}
 
 Now analytic curves are WP quasicircles, so by Theorems  \ref{th:WP_are_BZM} and \ref{thm:bounded_overfare_conf}, $\mathbf{O}_{\riem'_1,\riem'_2}$ is bounded with respect to $H^1_{\mathrm{conf}}$.  Since $\riem_1'$ is connected, by Lemma \ref{le:constants_trick} there is a $K'$ such that 
 \begin{align} \label{eq:Dirichlet_overfare_temp2}
  \| \mathbf{O}_{\riem_1',\riem_2'} (h \circ f^{-1}) \|_{\mathcal{D}_{\mathrm{harm}}(\riem'_2)} & \leq K'  \nonumber
  \| h \circ f^{-1} \|_{\mathcal{D}_{\mathrm{harm}}(\riem'_1)} \\ & = K' \| h  \|_{\mathcal{D}_{\mathrm{harm}}(\riem_1)} 
 \end{align}
 where the second equality is just invariance of Dirichlet energy under conformal maps. 
 
 Finally, by Lemma \ref{le:qcmaps_preserve_CNT}, $\mathbf{O}_{\riem_1,\riem_2} h$ has the same CNT boundary values as $\mathbf{O}_{\riem_1',\riem_2'}(h \circ f^{-1}) \circ f$. 
 Let $F:=  \mathbf{O}_{\riem_1',\riem_2'}(h \circ f^{-1}) \circ f -\mathbf{O}_{\riem_1,\riem_2} h\in H^1(\riem_2)$. Then using  $F|_{\partial \riem_2} =0,$ the harmonicity of $\mathbf{O}_{\riem_1,\riem_2} h$ and the Sobolev space Stokes' theorem (see e.g. Theorem 4.3.1 page 133 in \cite{EvansGariepy}; note that we treat $\partial \riem_2$ as analytic in the double), which also works for manifolds with several oriented boundary curves, one can show that
  $$\int_{\riem_2} \partial (\mathbf{O}_{\riem_1,\riem_2} h) \, \overline{\partial F}\, dA=0.$$ This yields that
  \begin{align}
  \|\mathbf{O}_{\riem_1,\riem_2} h \|^{2}_{\mathcal{D}_{\mathrm{harm}}(\riem_2)} & \leq    \Vert  \mathbf{O}_{\riem_1,\riem_2} h\Vert^2_{\dot{H}^1(\riem_2)} +   \Vert F\Vert^2_{\dot{H}^1(\riem_2)} \nonumber \\ & =  \Vert \mathbf{O}_{\riem_1,\riem_2} h\Vert^2_{\dot{H}^1(\riem_2)} +2 \mathrm{Re}  \int_{\riem_2} \partial( \mathbf{O}_{\riem_1,\riem_2} h) \, \overline{\partial F}\, dA +\Vert F\Vert^2_{\dot{H}^1(\riem_2)} \\ & =   \|\mathbf{O}_{\riem_1',\riem_2'}(h \circ f^{-1}) \circ f \|^{2}_{\dot{H}^1(\riem_2)}, \nonumber
  \end{align}
 
 which is just the manifestation of the Dirichlet principle. Therefore  we have
 \begin{equation} \label{eq:Dirichlet_overfare_temp3}
  \|\mathbf{O}_{\riem_1,\riem_2} h \|_{\mathcal{D}_{\mathrm{harm}}(\riem_2)} \leq  
  \|\mathbf{O}_{\riem_1',\riem_2'}(h \circ f^{-1}) \circ f \|_{\dot{H}^1(\riem_2)}.
 \end{equation} 
 The claim follows from \eqref{eq:Dirichlet_overfare_temp1}, \eqref{eq:Dirichlet_overfare_temp2}, \eqref{eq:Dirichlet_overfare_temp3}.
\end{proof}

 \begin{definition} \label{de:homogeneous_Dirichlet_space} For a Riemann surface $\riem$, with finitely many connected components, let 
  $\gls{Dhom}_{\mathrm{harm}}(\riem)$ be the equivalence classes of $\mathcal{D}_{\mathrm{harm}}(\riem)$ modulo functions which are constant on each connected component of $\riem$. 
 \end{definition}
 It is clear that on $\dot{\mathcal{D}}_{\mathrm{harm}}(\riem)$ the Dirichlet semi-norm becomes a norm. 
 
 Let $\mathscr{R}$ be a compact Riemann surface, separated by quasicircles into $\riem_1$ and $\riem_2$.  
 If $\riem_1$ is connected and $c$ is a constant, then $\mathbf{O}_{\riem_1,\riem_2}$ is also constant on $\riem_2$ so the operator
 \begin{equation} \label{eq:O_dot_definition}
 \gls{doto}_{\riem_1,\riem_2} : \dot{\mathcal{D}}_{\mathrm{harm}}(\riem_1) \rightarrow  \dot{\mathcal{D}}_{\mathrm{harm}}(\riem_2) 
 \end{equation}
 is well-defined. We have
 \begin{corollary}
  Let $\mathscr{R}$ be a compact Riemann surface, separated by quasicircles into $\riem_1$ and $\riem_2$.  Assume that $\riem_1$ is connected.  Then $\dot{\mathbf{O}}_{\riem_1,\riem_2}$ is bounded with respect to the Dirichlet norm.  
 \end{corollary}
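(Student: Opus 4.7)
The proof is essentially a transcription of Theorem \ref{thm:bounded_overfare_Dirichlet} through the quotient by locally constant functions, so my plan is to verify carefully that the semi-norm estimate descends to a genuine norm estimate on the quotient. First I would note that since $\riem_1$ is connected, its space of locally constant functions is one-dimensional, while $\riem_2$ may have several components $\riem_2^1, \dots, \riem_2^s$ and hence its locally constant functions form an $s$-dimensional space. The definition of $\dot{\mathbf{O}}_{\riem_1, \riem_2}$ in \eqref{eq:O_dot_definition} already encodes the key compatibility: if $h$ and $h+c$ are two representatives of a class in $\dot{\mathcal{D}}_{\mathrm{harm}}(\riem_1)$, then $\mathbf{O}_{1,2}(h+c) = \mathbf{O}_{1,2}h + c$ (where on $\riem_2$, the constant $c$ is a locally constant function, constant on every component with the same value), so these two overfares are equivalent modulo locally constant functions on $\riem_2$.

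Next, given $[h] \in \dot{\mathcal{D}}_{\mathrm{harm}}(\riem_1)$ with representative $h \in \mathcal{D}_{\mathrm{harm}}(\riem_1)$, I would observe that
\[
   \| [h] \|_{\dot{\mathcal{D}}_{\mathrm{harm}}(\riem_1)} = \| h \|_{\mathcal{D}_{\mathrm{harm}}(\riem_1)},
\]
since the Dirichlet semi-norm vanishes on constants and is unaffected by the quotient. Similarly, since the Dirichlet semi-norm on $\mathcal{D}_{\mathrm{harm}}(\riem_2)$ vanishes on functions which are constant on each connected component (see Definition \ref{Dirchlet norm on multiconnected}, where the semi-norm is defined componentwise), we have
\[
   \| \dot{\mathbf{O}}_{\riem_1,\riem_2} [h] \|_{\dot{\mathcal{D}}_{\mathrm{harm}}(\riem_2)} = \| \mathbf{O}_{\riem_1,\riem_2} h \|_{\mathcal{D}_{\mathrm{harm}}(\riem_2)}.
\]

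Applying Theorem \ref{thm:bounded_overfare_Dirichlet}, which gives a constant $C$ such that $\| \mathbf{O}_{\riem_1,\riem_2} h \|_{\mathcal{D}_{\mathrm{harm}}(\riem_2)} \leq C \| h \|_{\mathcal{D}_{\mathrm{harm}}(\riem_1)}$ for all $h \in \mathcal{D}_{\mathrm{harm}}(\riem_1)$, the two equalities above combine to yield
\[
   \| \dot{\mathbf{O}}_{\riem_1,\riem_2} [h] \|_{\dot{\mathcal{D}}_{\mathrm{harm}}(\riem_2)} \leq C \| [h] \|_{\dot{\mathcal{D}}_{\mathrm{harm}}(\riem_1)},
\]
which is precisely the boundedness claim. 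Since the Dirichlet semi-norm restricted to the quotient is a genuine norm (as remarked just before the corollary), there is no obstruction to the conclusion, and nothing substantive remains to check. In summary, the only ``work'' is verifying that the quotient operation is consistent with overfare, and this is already built into \eqref{eq:O_dot_definition}. There is no hard step.
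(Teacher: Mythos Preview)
Your proof is correct and matches the paper's approach: the corollary is stated there without proof, as an immediate consequence of Theorem~\ref{thm:bounded_overfare_Dirichlet} once one observes (as the paper remarks just before) that the Dirichlet semi-norm becomes a genuine norm on the quotient $\dot{\mathcal{D}}_{\mathrm{harm}}$. Your write-up simply makes explicit the passage from semi-norm to norm through the quotient, which is exactly the intended reasoning.
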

 
 One further observation must be made. 
 As a set, $\partial \riem_1 = \Gamma = \partial \riem_2$.  
 By Theorem \ref{thm:bounded_overfare_existence} and Theorem \ref{th:null_same_both_sides}, we now have the following striking result.
 \begin{corollary}  Let $\mathscr{R}$ be a compact Riemann surface and $\Gamma = \Gamma_1 \cup \cdots \cup \Gamma_m$ be a family of quasicircles separating $\mathscr{R}$ into $\riem_1$ and $\riem_2$.  Then 
 \[ \mathcal{H}(\partial \riem_1) = \mathcal{H}(\partial \riem_2). \]
 \end{corollary}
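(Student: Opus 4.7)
The plan is to reduce the statement to the equality of the null-set systems on the two sides of $\Gamma$. By the discussion preceding Theorem \ref{th:null_same_both_sides}, the inclusion of each $\riem_k$ into $\mathscr{R}$ extends continuously to the border so that $\partial\riem_k$ is identified pointwise with $\Gamma$. Hence, as underlying sets, $\partial\riem_1 = \Gamma = \partial\riem_2$. Looking at Definition \ref{defn: Osborn space}, the Osborn space $\mathcal{H}(\Gamma)$ is simply the set of equivalence classes, under the relation ``equality off a null set'', of complex-valued functions defined on $\Gamma$ off a null set. Thus $\mathcal{H}(\partial \riem_1)$ and $\mathcal{H}(\partial \riem_2)$ can differ only through the choice of which Borel subsets of $\Gamma$ qualify as null, since both the admissible domains of definition and the equivalence relation are determined by the null-set system alone.

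The key step is then an immediate appeal to Theorem \ref{th:null_same_both_sides}, which asserts that, for a complex of quasicircles separating $\mathscr{R}$, a Borel subset $I \subseteq \Gamma$ is null in $\partial \riem_1$ if and only if it is null in $\partial \riem_2$. This is the decisive content; it is ultimately what makes the entire overfare machinery well-posed, as it guarantees that the ambiguities implicit in CNT boundary values are the same whether viewed from the $\riem_1$ or the $\riem_2$ side. Once the collections of null subsets of $\Gamma$ coincide, the families of admissible functions coincide and the equivalence relations coincide, so that the quotient spaces --- the two Osborn spaces --- coincide as well.

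The role of Theorem \ref{thm:bounded_overfare_existence}, also invoked by the corollary, is complementary: it ensures that the common Osborn space is richly populated, in that every CNT trace of a function in $\mathcal{D}_{\mathrm{harm}}(\riem_1)$ is represented in the same equivalence class by the CNT trace of its overfare in $\mathcal{D}_{\mathrm{harm}}(\riem_2)$, and conversely. I do not expect any substantial obstacle in the argument itself; all of the genuine analytic work has been absorbed into Theorem \ref{th:null_same_both_sides}, whose proof rests on the fact that the transition between collar charts from the two sides across a quasicircle extends to a quasisymmetric map of $\mathbb{S}^1$ and thereby preserves Borel sets of logarithmic capacity zero. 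Given that input, the present corollary is essentially a tautological reading of Definition \ref{defn: Osborn space}.
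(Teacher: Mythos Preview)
Your proposal is correct and follows essentially the same approach as the paper, which simply cites Theorem \ref{thm:bounded_overfare_existence} and Theorem \ref{th:null_same_both_sides} before stating the corollary. You have in fact been more explicit than the paper: you correctly isolate that the Osborn space of Definition \ref{defn: Osborn space} depends only on the null-set system, so Theorem \ref{th:null_same_both_sides} alone forces the equality, while Theorem \ref{thm:bounded_overfare_existence} plays the complementary role of showing that CNT traces from either side represent the same equivalence classes.
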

 We can now define 
 \[  \mathcal{H}(\Gamma) = \mathcal{H}(\partial \riem_1) =  \mathcal{H}(\partial \riem_2).  \]
 
 This result requires the fact that $\Gamma$ consists of quasicircles and does not appear to hold in general.  In the case of the Riemann sphere, the authors have shown that it holds with a Dirichlet-bounded identification of the spaces, precisely for quasicircles \cite{Schippers_Staubach_JMAA}.  
 
\end{subsection}
\end{section}
\begin{section}{Schiffer and Cauchy-Royden operators} \label{se:Schiffer_Cauchy}
\begin{subsection}{Assumptions throughout this section} \label{se:assumptions_scattering_section} The following assumptions
 will be in force throughout Section \ref{se:Schiffer_Cauchy}. Additional hypotheses are added to the statement of each theorem where necessary.
 \begin{enumerate}
     \item $\mathscr{R}$ is a compact Riemann surface;
     \item $\Gamma = \Gamma_1 \cup \cdots \cup \Gamma_n$ is a collection of quasicircles;
     \item $\Gamma$ separates $\mathscr{R}$ into $\riem_1$ and $\riem_2$ in the sense of Definition \ref{de:separating_complex}.
 \end{enumerate} 
  
  We will furthermore assume that the ordering of the boundaries of $\partial \riem_1$ and $\partial \riem_2$ is such that $\partial_k \riem_1 = \partial_k \riem_2 = \Gamma_k$ as sets for $k=1,\ldots,n$.  
\end{subsection}
\begin{subsection}{About this section}
 In this section, we define certain operators called Schiffer operators, and an associated integral operator which we call the Cauchy-Royden operator. We show that these are bounded and derive a simple set of relations between the Schiffer and Cauchy-Royden operators. We also derive a number of identities for the adjoints of these operators, which play a central role in the proof of the unitarity of the scattering operator.  We also establish their action on harmonic measures, which also plays a role in the scattering theory. Finally, we derive a kind of jump formula which mixes overfare and the Cauchy-Royden operator. This is the main tool in investigating the effect of the operators on cohomology classes, as well as the investigation of their kernels and images.  
\end{subsection}
\begin{subsection}{Definitions of Schiffer and Cauchy operators}\label{subsec:defSchiffer} 
Denote by $\mathscr{G}$ Green's function of $\mathscr{R}$, and let $g_k$ be Green's functions of $\riem_k$, $k=1,2$.   Here, if $\riem_k$ has more than one connected component, then $g_k$ stands for the function whose restriction to each connected component is the Green's function of that component.

  First, we define the Schiffer operators.  
 To that end, we need to define certain bi-differentials, which will be the integral kernels of the Schiffer operators. 
 
 \begin{definition}
  For a compact Riemann surface $\mathscr{R}$ with Green's function $\mathscr{G}(w,w_0;z,q),$ the \emph{Schiffer kernel} is defined by
 \[  \gls{schifferk}_{\mathscr{R}}(z,w) =  \frac{1}{\pi i} \partial_z \partial_w \mathscr{G}(w,w_0;z,q),    \]
 and the \emph{Bergman kernel} is given by
 \[  \gls{bergmank}_{\mathscr{R}} (z,w) = - \frac{1}{\pi i} \partial_z \overline{\partial}_{{w}} \mathscr{G}(w,w_0;z,q).    \]

 For a non-compact surface $\riem$ of type $(g,n)$ with Green's function $g_\riem$, we define
 \[  L_\riem(z,w) =   \frac{1}{\pi i} \partial_z \partial_w g_\riem(w,z),  \]
 and 
 \[  K_\riem (z,w) = - \frac{1}{\pi i} \partial_z \overline{\partial}_{{w}}g_\riem(w,z).  \]
 \end{definition}

  The kernel functions satisfy the following:
 \begin{enumerate} 
  \item[$(1)$] $L_{\mathscr{R}}$ and $K_{\mathscr{R}}$ are independent of $q$ and $w_0$.  
  \item[$(2)$] $K_{\mathscr{R}}$ is holomorphic in $z$ for fixed $w$, and anti-holomorphic in $w$ for fixed $z$.
  \item[$(3)$] $L_{\mathscr{R}}$ is holomorphic in $w$ and $z$, except for a pole of order two when $w=z$.
\item[$(4)$] $L_{\mathscr{R}}(z,w)=L_{\mathscr{R}}(w,z)$.  
  \item[$(5)$] $K_{\mathscr{R}}(w,z)= - \overline{K_{\mathscr{R}}(z,w)}$.  
 \end{enumerate}

For non-compact Riemann surfaces $\riem$ with Green's function, $(2)-(5)$ hold with $L_{\mathscr{R}}$ and $K_{\mathscr{R}}$ replaced by $L_\riem$ and $K_\riem$. Moreover for any vector $v$ tangent to $\Gamma^{w}_{\varepsilon}$ at a point $z$, we have
  \begin{equation} \label{eq:level_curve_identity}
   \overline{K_\riem(z,w)}(\cdot,v) = -L_\riem(z,w)(\cdot,v).
  \end{equation} 
   Note that here we can treat the boundary as an analytic curve in the double, so that it makes sense to consider vectors tangent to the boundary.
 Also, the well-known reproducing property of the Bergman kernel holds, i.e.
 \begin{equation}  \label{eq:Bergman_reproducing}
  \iint_\riem K_\riem(z,w) \wedge h(w) = h(z),
 \end{equation}
 for $h \in A(\riem)$ \cite{Royden}.

Another basic fact about the kernels above is that they are conformally invariant.  That is, for a compact surface $\mathscr{R}$ and a biholomorphism $f:\mathscr{R} \rightarrow \mathscr{R}'$ we have 
 \begin{align} \label{eq:kernels_conformally_invariant_compact}
  (f^* \times f^*)  \; L_\mathscr{R'} & = L_{\mathscr{R}}  \nonumber \\
  (f^* \times f^*) \;  K_\mathscr{R'} & = K_{\mathscr{R}} 
 \end{align}
 and similarly, for surfaces $\riem$, $\riem'$ of type $(g,n)$ and a biholomorphism
 $f:\riem \rightarrow \riem'$, 
 \begin{align} \label{eq:kernels_conformally_invariant_gntype}
  (f^* \times f^*)  \; L_{\riem'} & = L_{{\riem}}  \nonumber \\
  (f^* \times f^*)\;  K_{\riem'} & = K_{{\riem}}.  
 \end{align}
 These follow immediately from conformal invariance of Green's function 
 (\ref{eq:Greens_conf_inv_compact},\ref{eq:Greens_conf_inv_gntype}).\\ 
 
 \begin{definition}\label{def: restriction ops}
For $k=1,2$ define the \emph{restriction operators}
 \begin{align*}
  \mathbf{R}_{\riem_k}:\mathcal{A}(\mathscr{R}) & \rightarrow \mathcal{A}(\riem_k) \\
  \alpha & \mapsto \left. \alpha \right|_{\riem_k}
 \end{align*}
 and 
 \begin{align*}
  \mathbf{R}^0_{\riem_k}: \mathcal{A}(\riem_1 \cup \riem_2) & \rightarrow \mathcal{A}(\riem_k) \\
    \alpha & \mapsto \left. \alpha \right|_{\riem_k}.
 \end{align*}
 \end{definition}
 
 It is obvious that these are bounded operators. In a similar way, we also define the restriction operator
 \[  \gls{harmrest}_{\riem_k} : \mathcal{A}_{\mathrm{harm}}(\mathscr{R}) \rightarrow \mathcal{A}_{\mathrm{harm}}(\riem_k).   \] 

Having the Bergman and Schiffer kernels and the restriction operators at hand, we can now define the Schiffer operators as follows.
\begin{definition}
For $k=1,2$, we define the {\it Schiffer comparison operators} by
 \begin{align*}
  \gls{T}_{\riem_k}: \overline{\mathcal{A}(\riem_k)} & \rightarrow \mathcal{A}(\riem_1 \cup \riem_2)  \\
  \overline{\alpha} & \mapsto \iint_{\riem_k} L_{\mathscr{R}}(\cdot,w) \wedge \overline{\alpha(w)}.
 \end{align*}
\\
and 
\begin{align*}
 \gls{S}_{\riem_k}: \mathcal{A}(\riem_k) & \rightarrow \mathcal{A}(\mathscr{R}) \\
  \alpha & \mapsto \iint_{\riem_k}  K_{\mathscr{R}}(\cdot,w) \wedge \alpha(w).
\end{align*}
The integral defining $\mathbf{T}_{\riem_k}$ is interpreted as a principal value integral whenever $z \in \riem_k$.  
Also, we define for $j,k \in \{1,2\}$
 \begin{equation}\label{defn: T sigmajsigmak}
     \gls{Tmix} = \mathbf{R}^0_{\riem_k} \mathbf{T}_{\riem_j}: \overline{\mathcal{A}(\riem_j)} \rightarrow \mathcal{A}(\riem_k). 
 \end{equation}   
\end{definition} 
 \begin{theorem} \label{th:Schiffer_operators_bounded}  $\mathbf{T}_{\riem_k}$, $\mathbf{T}_{\riem_j,\riem_k}$, and $\mathbf{S}_{\riem_k}$ are bounded for all $j,k =1,2$.  
 \end{theorem}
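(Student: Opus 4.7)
The plan is to handle the three operators separately, since they differ in the regularity of their integral kernels. The Bergman kernel $K_{\mathscr{R}}$ is smooth on $\mathscr{R} \times \mathscr{R}$, so $\mathbf{S}_{\riem_k}$ is essentially Hilbert--Schmidt, whereas the Schiffer kernel $L_{\mathscr{R}}$ has a pole of order two on the diagonal and $\mathbf{T}_{\riem_k}$ therefore requires genuine Calder\'on--Zygmund theory. The mixed operators $\mathbf{T}_{\riem_j,\riem_k}$ follow from a composition argument.

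First I would dispose of $\mathbf{S}_{\riem_k}$. Since $\mathscr{R}$ is compact and the logarithmic singularity of $\mathscr{G}(w;z,q)$ on the diagonal is annihilated by the mixed derivative $\partial_z \overline{\partial}_w$ (in the sense that the regular part of the resulting distribution is smooth), the bidifferential $K_{\mathscr{R}}(z,w)\, dz\, d\bar w$ extends to a smooth section of the appropriate line bundle on $\mathscr{R}\times\mathscr{R}$. Consequently $|K_{\mathscr{R}}(z,w)|$ is bounded, and a direct Cauchy--Schwarz estimate on the compact product domain $\mathscr{R}\times\riem_k$ gives
\[
 \|\mathbf{S}_{\riem_k}\alpha\|^2_{\mathcal{A}(\mathscr{R})} \leq \left(\iint_{\mathscr{R}}\iint_{\riem_k} |K_{\mathscr{R}}(z,w)|^2\, dA(w)\, dA(z)\right)\|\alpha\|^2_{\mathcal{A}(\riem_k)},
\]
which is finite. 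Holomorphicity of the image in $z$ follows from holomorphicity of $K_{\mathscr{R}}(z,w)$ in $z$ and differentiation under the integral.

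Next, and this is the main technical step, I would bound $\mathbf{T}_{\riem_k}$. I would choose a finite atlas $\{(U_\alpha,\phi_\alpha)\}$ on $\mathscr{R}$ with $\phi_\alpha:U_\alpha\to V_\alpha\subseteq\mathbb{C}$ and a subordinate partition of unity $\{\chi_\alpha\}$. In each chart, the singular part of the Schiffer kernel is captured by the planar expression $\frac{1}{\pi(w-z)^2}\,dw\,dz$, and the difference
\[
 \tilde{L}_\alpha(z,w) := L_{\mathscr{R}}(z,w) - \frac{1}{\pi(\phi_\alpha(w)-\phi_\alpha(z))^2}\,d\phi_\alpha(w)\, d\phi_\alpha(z)
\]
extends smoothly across the diagonal on $U_\alpha\times U_\alpha$. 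Using the partition of unity to split $\mathbf{T}_{\riem_k}\bar\alpha$ into a near-diagonal and a far-diagonal contribution, the near-diagonal pieces reduce, via pullback by $\phi_\alpha$ and extension by zero to $\mathbb{C}$, to the classical Beurling transform, which is bounded on $L^2(\mathbb{C})$ by Calder\'on--Zygmund theory. The far-diagonal and smooth-correction pieces have bounded kernels on a compact domain and are handled as in the argument for $\mathbf{S}_{\riem_k}$. The principal-value interpretation for $z\in\riem_k$ matches that of the Beurling transform, and the holomorphicity of $\mathbf{T}_{\riem_k}\bar\alpha$ in $z$ follows from the corresponding property of the Beurling transform applied to an anti-holomorphic datum. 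Thus $\mathbf{T}_{\riem_k}:\overline{\mathcal{A}(\riem_k)}\to\mathcal{A}(\riem_1\cup\riem_2)$ is bounded.

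Finally, boundedness of $\mathbf{T}_{\riem_j,\riem_k} = \mathbf{R}^0_{\riem_k}\,\mathbf{T}_{\riem_j}$ is immediate from boundedness of the restriction operator $\mathbf{R}^0_{\riem_k}:\mathcal{A}(\riem_1\cup\riem_2)\to\mathcal{A}(\riem_k)$ together with the boundedness of $\mathbf{T}_{\riem_j}$ just established. The principal obstacle throughout is the Calder\'on--Zygmund analysis of $\mathbf{T}_{\riem_k}$: one must match the coordinate-free singularity of $L_{\mathscr{R}}$ with the coordinate-dependent planar Beurling kernel, control the error incurred by the partition of unity and by different choices of charts, and verify that near-boundary contributions — where $z\in\riem_j$ can approach $w\in\riem_k$ along $\Gamma$ — do not produce additional singular behavior beyond what the Beurling transform already absorbs. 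The quasicircle hypothesis on $\Gamma$ plays no role in this boundedness statement, since the kernel's singularity is only felt on the diagonal of $\mathscr{R}\times\mathscr{R}$; the geometry of $\Gamma$ will only become essential later, for invertibility and index statements.
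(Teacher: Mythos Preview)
Your proposal is correct and follows essentially the same approach as the paper: the Schiffer kernel $L_{\mathscr{R}}$ in local coordinates is the Beurling kernel $\frac{1}{\pi(\zeta-\eta)^2}\,d\zeta\,d\eta$ plus a smooth remainder, so Calder\'on--Zygmund theory handles $\mathbf{T}_{\riem_k}$; the smooth Bergman kernel $K_{\mathscr{R}}$ on the compact product gives $\mathbf{S}_{\riem_k}$ directly; and $\mathbf{T}_{\riem_j,\riem_k}$ is the composition with the bounded restriction. Your version is simply more explicit about the partition-of-unity localization and the near/far diagonal splitting than the paper's terse proof.
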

 \begin{proof}
The operator $\mathbf{T}_{\Sigma_k}$ is defined by integration against the $L_{\mathscr{R}}$-Kernel which in local coordinates $\zeta = f(z)$, $\eta = f(w)$ is given by 
\[  (f \times f)^*L_{\mathscr{R}}(\zeta,\eta) = \frac{d\zeta \, d \eta}{\pi(\zeta-\eta)^2} + \alpha(\zeta,\eta) \] where $\alpha$ is a holomorphic bi-differential. Since the singular part of the kernel is a Calder\'on-Zygmund kernel we can use the theory of singular integral operators to conclude that the operators with kernel $L_\mathscr{R}(z,w)$ are bounded on $L^2$.  The same proof applies to $L_{\riem}$. The boundedness of $\mathbf{T}_{\riem_j, \riem_k}$ follows from this and the fact that $\mathbf{R}
^0_{\riem_k}$ is also bounded.\\
That the operator $\mathbf{S}_{\Sigma_k}$ is bounded and its image is $\mathcal{A}(\mathscr{R}),$  can be seen from the fact that the kernel $K_{\mathscr{R}}(., w)$ is holomorphic in $w$ and $\mathscr{R}$ is compact.
 \end{proof}
 
{\bf{Notation.}} As in the case of the overfare operator $\mathbf{O}$, we will use the notations 
 \[  \mathbf{S}_k, \ \ \mathbf{T}_{j,k}, \ \ \mathbf{T}_k, \ \ \mathbf{R}_{k}, \ \  \mathbf{P}_k=\mathbf{P}_{\riem_k}, \]
 wherever the choice of surfaces $\riem_1$ and $\riem_2$ is clear from context.\\
 
For any operator $\mathbf{M}$, we define the complex conjugate operator by 
 \[  \overline{\mathbf{M}} \overline{\alpha} = \overline{\mathbf{M} \alpha}  \]
 So for example 
 \[   \overline{\mathbf{T}}_{1,2}:\mathcal{A}(\riem_1) \rightarrow \overline{\mathbf{A}(\riem_2)} \]
 and similarly for $\overline{\mathbf{R}}_{\riem_k}$, etc.  
 
 The restriction operator is conformally invariant by conformal invariance of Bergman space of one-forms. 
 By (\ref{eq:kernels_conformally_invariant_compact}), the operators $\mathbf{T}$ and $\mathbf{S}$ are also conformally invariant.  Explicitly, if $f:\mathscr{R} \rightarrow \mathscr{R}'$ is a biholomorphism between compact surfaces, and we denote
 $\riem_k'= f(\riem_k)$ for $k=1,2$, then 
 \begin{align}  \label{eq:Schiffer_operators_conformally_invariant}
   f^* \; \mathbf{R}_{\riem_k'} & = \mathbf{R}_{\riem_k'} \; f^* \nonumber \\
   f^*  \; \mathbf{R}^0_{\riem_k'} & = \mathbf{R}^0_{\riem_k'} \; f^* \nonumber \\
   f^* \; \mathbf{T}_{\riem_k'} & = \mathbf{T}_{\riem_k}\; f^* \\
   f^* \; \mathbf{T}_{\riem_j',\riem_k'} & = \mathbf{T}_{\riem_j,\riem_k} \; f^* \nonumber \\ 
   f^* \; \mathbf{S}_{\riem_k'} & = \mathbf{S}_{\riem_k} \; f^*. \nonumber
 \end{align}
 
 The following basic lemma which we will used frequently in this paper, is crucial in establishing some of the forthcoming identities concerning Schiffer and Bergman kernels.
\begin{lemma} \label{le:limiting_circle_Schiffer_identity}  Fix 
 a  point $z$ and local coordinates $\phi$ near $z$.  Let $\gamma_r$ be a curve such that $|\phi(w)-\phi(z)|=r$.  Then 
 for any holomorphic one-form $\alpha$ defined near $z$, and fixed $q$, we have 
 \[   \lim_{r \searrow 0} \int_{\gamma_r,w} \frac{1}{\pi i} \partial_z \mathscr{G}(w;z,q) \overline{\alpha(w)}=0    \]
 and
 \[   \lim_{r \searrow 0} \int_{\gamma_r,w} \frac{1}{\pi i} \partial_z \mathscr{G}(w;z,q)  {\alpha(w)}= \alpha(z).    \]
 Similarly for $z \in \riem_k$ we have 
 \[   \lim_{r \searrow 0} \int_{\gamma_r,w} \frac{1}{\pi i} \partial_z {g}_{k}(w;z) \overline{\alpha(w)}=0    \]
 and
 \[   \lim_{r \searrow 0} \int_{\gamma_r,w} \frac{1}{\pi i} \partial_z {g}_{k}(w;z)  {\alpha(w)}= \alpha(z).    \]
\end{lemma}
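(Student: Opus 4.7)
The plan is to work in local coordinates near $z$ and exploit the explicit logarithmic singularity of Green's function. Writing $\mathscr{G}(w;z,q) = -\log|\phi(w) - \phi(z)| + H(w,z)$ near the diagonal, where $H$ is harmonic (hence smooth) in both variables in a neighbourhood of $(z,z)$, a direct computation using $\log|u|^2 = \log u + \log \bar u$ gives
\[
    \partial_z \mathscr{G}(w;z,q) = \frac{1}{2}\,\frac{\phi'(z)\,dz}{\phi(w)-\phi(z)} + \partial_z H(w,z).
\]
The smooth term $\frac{1}{\pi i}\partial_z H(w,z)\wedge \alpha(w)$ (or with $\overline{\alpha(w)}$) contributes zero in the limit, since $\partial_z H$ is bounded in a neighbourhood of $z$, $\alpha$ is bounded near $z$, and the curve $\gamma_r$ has length $O(r)$. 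Thus only the singular term requires analysis.

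For the singular term, change variables $u = \phi(w) - \phi(z)$, so that $\gamma_r$ is parametrized by $u = re^{i\theta}$, $\theta \in [0,2\pi)$. In the holomorphic case, write $\alpha(w) = a(\phi(w))\,d\phi(w)$ near $z$ with $a$ holomorphic; then
\[
    \int_{\gamma_r,w} \frac{1}{2\pi i}\,\frac{\phi'(z)\,dz}{\phi(w)-\phi(z)}\,\alpha(w)
    = \frac{\phi'(z)\,dz}{2\pi i}\oint_{|u|=r}\frac{a(\phi(z)+u)}{u}\,du,
\]
which by the Cauchy integral formula tends to $a(\phi(z))\phi'(z)\,dz = \alpha(z)$ as $r \searrow 0$. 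In the anti-holomorphic case, the same change of variables gives
\[
    \int_{\gamma_r,w}\frac{1}{2\pi i}\,\frac{\phi'(z)\,dz}{\phi(w)-\phi(z)}\,\overline{\alpha(w)}
    = \frac{\phi'(z)\,dz}{2\pi i}\oint_{|u|=r}\frac{\overline{a(\phi(z)+u)}}{u}\,d\bar u,
\]
and writing $d\bar u / u = -i\, e^{-2i\theta}\, d\theta$, the limit as $r \searrow 0$ of the integrand is $-i\,\overline{a(\phi(z))}\, e^{-2i\theta}$, whose integral over $[0,2\pi)$ vanishes by orthogonality of Fourier modes. The exchange of limit and integral is justified by uniform continuity of $a$ on a compact neighbourhood of $\phi(z)$.

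The argument for $g_k$ is identical, since for $z$ in the interior of $\riem_k$, Green's function of $\riem_k$ admits the same decomposition $g_k(w;z) = -\log|\phi(w)-\phi(z)| + H_k(w,z)$ in local coordinates near $z$, with $H_k$ harmonic in both variables in a neighbourhood of $(z,z)$; the logarithmic singularity at the boundary of $\riem_k$ plays no role, as the analysis is purely local near the point $z$. No real obstacles are anticipated: the computation is essentially the standard residue calculation for Cauchy's integral formula, with the anti-holomorphic case reducing to a trivial Fourier orthogonality observation.
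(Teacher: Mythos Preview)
Your proof is correct and follows essentially the same approach as the paper: both extract the logarithmic singularity of Green's function in local coordinates, discard the regular remainder as contributing $O(r)$, apply Cauchy's integral formula for the holomorphic case, and reduce the anti-holomorphic case to a polar-coordinate/Fourier orthogonality observation (the paper phrases this last step as ``writing a power series expansion of $h$ and integrating in polar coordinates''). The argument for $g_k$ is handled identically in both.
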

\begin{proof}
 In coordinates denote $\phi(w)=\zeta$ and $\phi(z)=\eta$.  We have, writing $\alpha(w) = h(\zeta)d\zeta$ (with $h$ holomorphic)
 and observing that  
 \[ \partial_z \mathscr{G}(w;z,q) = G_\eta(\zeta) d\eta  + \frac{1}{2}\frac{1}{\zeta-\eta}d\eta    \]
 where $G_\eta(\zeta)$ is non-singular at $\eta$, 
 \begin{align*}
  \lim_{r \searrow 0} \int_{\gamma_r,w} \frac{1}{\pi i} \partial_z \mathscr{G}(w;z,q)  {\alpha(w)} & = 
     \lim_{r \searrow 0} \int_{|\zeta-\eta|=r,\zeta} \frac{1}{2 \pi i} \frac{d\zeta}{\zeta-\eta} h(\zeta) d\eta = h(\eta) d\eta 
     \\ & = \alpha(z).
 \end{align*}
 Similarly 
 \begin{align*}
  \lim_{r \searrow 0} \int_{\gamma_r,w} \frac{1}{\pi i} \partial_z \mathscr{G}(w;z,q) \overline{\alpha(w)} & = 
     \lim_{r \searrow 0} \int_{|\zeta-\eta|=r,\zeta} \frac{1}{2 \pi i} \frac{d\overline{\zeta}}{\zeta-\eta} \overline{h(\zeta)} d {\eta} 
     \\ & = 0
 \end{align*}
 by writing a power series expansion of $h$ and integrating in polar coordinates.  The proof for $g_k$ is identical. 
\end{proof}

 We will frequently use the following identity, which we refer to as \emph{Schiffer's identity}.
 \begin{theorem}  \label{th:Schiffer_vanishing_identity}  Let $\riem$ be a bordered surface of type $(g,n)$.  
  For all $\overline{\alpha} \in \overline{\mathcal{A}(\riem)}$ 
  \[   \iint_{\riem,w}  L_\riem(z,w) \wedge \overline{\alpha(w)}=0. \]
 \end{theorem}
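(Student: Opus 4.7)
The plan is to exploit the vanishing of Green's function on the border together with Stokes' theorem, essentially reducing the identity to a contour integral calculation that is handled by Lemma \ref{le:limiting_circle_Schiffer_identity}. Fix $z \in \riem$ and let $B_\varepsilon(z)$ denote a small coordinate disk around $z$ with boundary circle $\gamma_\varepsilon$, and set $\riem_\varepsilon = \riem \setminus B_\varepsilon(z)$. The integral in the theorem statement is, by definition of principal value, the limit as $\varepsilon \searrow 0$ of the integrals over $\riem_\varepsilon$.

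The key computation is to observe that on $\riem_\varepsilon$, if I set $\beta(w) = \tfrac{1}{\pi i}\,\partial_z g_\riem(w,z)\,\overline{\alpha(w)}$, regarded as a $(0,1)$-form in $w$ (times a $(1,0)$-form in $z$ which I carry along silently), then
\[ d_w \beta = \tfrac{1}{\pi i}\,\partial_w \partial_z g_\riem(w,z) \wedge \overline{\alpha(w)} = L_\riem(z,w) \wedge \overline{\alpha(w)}. \]
Here I use that $\overline{\alpha}$ is anti-holomorphic and hence closed, and that $\bar\partial_w \partial_z g_\riem(w,z)$ is a $(0,1)$-form in $w$ which wedges with the $d\bar w$ in $\overline{\alpha}$ to give zero. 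Stokes' theorem then converts the area integral to a boundary integral along $\partial \riem - \gamma_\varepsilon$.

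The contribution from $\partial \riem$ will vanish since $g_\riem(w,z) \equiv 0$ for $w \in \partial \riem$, so taking the $z$-derivative (which does not act on the $w$-variable) yields $\partial_z g_\riem(w,z) = 0$ along $\partial \riem$; after treating the border as an analytic curve in the double of $\riem$ (through which $g_\riem$ extends by Schwarz reflection), this is a legitimate pointwise boundary integral. The contribution from $\gamma_\varepsilon$ is exactly of the form handled by the second statement of Lemma \ref{le:limiting_circle_Schiffer_identity} applied to $g_\riem$ (with the orientation flipped), giving limit zero as $\varepsilon \searrow 0$. Combining these two vanishings yields the identity.

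The main issue to be careful about is the justification of Stokes' theorem and the pointwise vanishing of $\partial_z g_\riem$ on $\partial \riem$, since $\partial \riem$ is only the abstract border of a type $(g,n)$ surface. This is handled by the earlier observation (Remark \ref{re:border_form_regularity_meaning} and the discussion around the double) that the border is an analytic Jordan curve in $\riem^d$, across which $g_\riem$ extends harmonically by reflection; alternatively, one can replace $\partial \riem$ with the limiting curves arising from a collar chart (e.g. the level curves of Lemma \ref{le:Greens_collar_chart}) and invoke the first anchor lemma to justify the exchange of limit and integration.
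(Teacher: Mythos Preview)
Your argument is correct and essentially identical to the paper's proof: embed $\riem$ in its double so that $\partial\riem$ is analytic, apply Stokes' theorem to $\tfrac{1}{\pi i}\partial_z g_\riem(w;z)\,\overline{\alpha(w)}$ on $\riem$ minus a small disk about $z$, observe that the boundary contribution vanishes because $g_\riem(w;z)=0$ for $w\in\partial\riem$ (and hence so does its $z$-derivative), and kill the residual contour integral via Lemma~\ref{le:limiting_circle_Schiffer_identity}. One small slip: you want the \emph{first} (equivalently third) displayed identity of that lemma, i.e.\ the one with $\overline{\alpha}$ whose limit is $0$, not the second one (which involves $\alpha$ and yields $\alpha(z)$); your stated conclusion ``limit zero'' shows you had the right statement in mind.
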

 \begin{proof} Let $\riem$ be embedded in its double $\riem^d$, so that the boundary is an analytic curve.  Fixing $z \in \riem$ and Applying Stokes' theorem we then have
 \[ \iint_{\riem} L_\riem(z,w) \overline{\alpha(w)} = \int_{\partial \riem} \frac{1}{\pi i} \partial_z g_{\riem}(z;w) \overline{\alpha(w)} - \lim_{r \searrow 0}\int_{\gamma_{r,w}}
 \frac{1}{\pi i} \partial_z g_{\riem}(z;w) \overline{\alpha(w)}    \]
 with $\gamma_{r,w}$ as in Lemma \ref{le:limiting_circle_Schiffer_identity}. 
 The claim now follows from Lemma \ref{le:limiting_circle_Schiffer_identity} and the fact that for any fixed $z$, $\partial_z g_k(z;w)$ vanishes for all $w \in \partial \riem$.
 \end{proof}
 This implies that  
 \begin{equation}  \label{eq:nonsingular_Schiffer}
  \mathbf{T}_{1,1} \alpha(z)= \iint_{\riem_1,w} \left(L_\mathscr{R}(z,w) - L_{\riem_1}(z,w) \right) 
  \wedge \overline{\alpha(w)}.  \end{equation}
 This desingularizes the kernel function, and will be useful below in some of the proofs. Incidentally, it also gives a direct way to see that the principal value integral defining $\mathbf{T}_{1,1}$ is independent of the choice of local coordinate, even though the omitted disks in the integral depend on this choice.

\begin{example} \label{ex:sphere_kernels}
 If $\mathscr{R}$ is the Riemann sphere $\sphere$, we have
 \[  \mathscr{G}(w,\infty;z,q) = - \log{ \frac{|w-z|}{|w-q|}}.     \]
 Thus
 \[  K_{\sphere}(z,w) =0  \]
 and
 \[  L_{\sphere}(z,w) = - \frac{1}{2 \pi i} \frac{dw \,dz}{(w-z)^2}.  \]
 So the Schiffer operators are given by, for $\overline{\alpha(z)} = \overline{h(z)}d \bar{z}$,
 \[  \ \mathbf{T}_1  \, \overline{\alpha} (z) =
    \frac{1}{\pi} \iint_{\riem_1} \frac{\overline{h(w)}}{(w-z)^2} \frac{d\bar{w} 
    \wedge d w}{2i}  \cdot dz  \]
 and $\mathbf{S}_k=0$, $k=1,2$.    
 
 By the uniformization theorem, if $\mathscr{R}$ is a compact surface of genus zero, it is biholomorphic to $\sphere$.  Thus by conformal invariance of the Schiffer kernels \eqref{eq:kernels_conformally_invariant_compact} we see that $K_{\mathscr{R}}=0$ and $\mathbf{S}_k=0$.   
 \end{example}    
 \begin{example} \label{ex:disk_kernels}    
  For $\riem = \disk$, we have
 \[  \mathscr{G}(z,w) = - \log{\frac{|z-w|}{|1-\bar{w}z|}}.  \]
 So
 \[  L_{\disk}(z,w) = \frac{-1}{2\pi i} \frac{dw \, dz}{(w-z)^2}    \]
 and 
 \[  K_{\disk}(z,w) = \frac{1}{2\pi i}  \frac{d\overline{w} \, dz}{(1-\bar{w}z)^2}. \]
 
 For a M\"obius transformation $M$, we can verify the identities
 \[   \frac{M'(w) M'(z)}{(M(w)-M(z))^2} = \frac{1}{(z-w)^2}     \]
 and 
 \[  \frac{\overline{M'(w)} M'(z)}{(1-\overline{M(w)}M(z))^2} = \frac{1}{(1-\bar{w}z)^2}. \]
 By conformal invariance of the Schiffer kernels (\ref{eq:kernels_conformally_invariant_gntype}) we see that for any disk or half plane $U$
 \[  L_{U}(z,w) = \frac{-1}{2\pi i} \frac{dw \, dz}{(w-z)^2}    \]
 and 
 \[  K_{U}(z,w) = \frac{1}{2\pi i}  \frac{d\overline{w} \, dz}{(1-\bar{w}z)^2}. \]
\end{example}    
 
 Next we consider a kind of Cauchy operator defined using Green's function.  This operator involves integrals over the separating quasicircles, which are not in general rectifiable.  So we define the integral using limits along analytic curves which approach the quasicircle.  This is well-defined by the Anchor Lemmas \ref{le:anchor_lemma_one} and \ref{le:anchor_lemma_two}. Furthermore for quasicircles, up to constants, this limit does not depend on the side from which the curve is approached.  This significant fact, which depends on the bounded overfare theorem, is one of the motivations for the use of quasicircles throughout the paper.    
 We now define the Cauchy operators.
   
   \begin{definition}
   Let $A = A_1 \cup \cdots \cup A_n$ be a union of non-intersecting collar neighbourhoods of $\Gamma$ in $\riem_1$. For $q \in \mathscr{R} \backslash \Gamma$ and $h \in \mathcal{D}_{\mathrm{harm}}(A)$  define, for $z \in \mathscr{R} \backslash \Gamma$, the \emph{Cauchy-{Royden} operator} by
  \begin{align}  \label{eq:J_definition}
      \gls{crop}(\Gamma) h(z) = {-} {\frac{1}{\pi i}} \int_{\partial \riem_1} \partial_w \mathscr{G}(w;z,q) h(w) = {-} {\frac{1}{\pi i}} \sum_{k=1}^n 
      \int_{\partial_k \riem_1} \partial_w \mathscr{G}(w;z,q) h(w),
  \end{align} 

and the \emph{restricted Cauchy-Royden operators} by
\begin{equation}
    \gls{rcrop}(\Gamma)=\mathbf{J}_1^q(\Gamma) h |_{\riem_k}
\end{equation}
where, as will be shown later,  $\mathbf{J}_1^q(\Gamma):\mathcal{D}_{\mathrm{harm}}(\riem_1) \rightarrow \mathcal{D}_{\mathrm{harm}}(\riem_1 \cup \riem_2)$ and $\mathbf{J}_{1,k}^q(\Gamma): \mathcal{D}_{\mathrm{harm}}(\riem_1) \rightarrow \mathcal{D}_{\mathrm{harm}}(\riem_k).$

\end{definition} 
    Note that by Definition \ref{de:separating_complex} and Proposition \ref{prop:collar_charts_in_doubly_connected_charts} non-intersecting collections of collar charts exist, and the integral exists by Lemma \ref{le:anchor_lemma_one}. 
  
 The Cauchy operator is closely related to the Schiffer operators, as the following theorem shows. 
 { \begin{theorem}  \label{th:jump_derivatives}  
  For all $h \in \mathcal{D}_{\mathrm{harm}}(\riem_1)$ and any $q \in \mathscr{R} \backslash \Gamma$, 
   \begin{align*}
    \partial \mathbf{J}_{1} ^q(\Gamma)h(z)   & = \mathbf{T}_{1,2} \overline{\partial} h(z),\, \, \, z\in \riem_2  \\
    \partial \mathbf{J}_{1}^q(\Gamma)  h(z) & = \partial h + \mathbf{T}_{1,1} \overline{\partial} h,\, \, \, z\in \riem_1   \\
    \overline{\partial} \mathbf{J}_{1}^q(\Gamma) h(z) & = \overline{\mathbf{S}}_1 \overline{\partial} h(z),\, \, \, z\in \riem_1 \cup \riem_2 
   \end{align*}
  \end{theorem}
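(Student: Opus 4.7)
The plan is to establish all three identities by differentiating under the integral sign in the definition of $\mathbf{J}_1^q(\Gamma)$ and applying Stokes' theorem to the analytic approximating curves $\Gamma^r$ introduced in Lemma \ref{le:anchor_lemma_one}, which are used to define the line integral over the quasicircle $\partial\riem_1$. The computational workhorse is the Leibniz identity
\[
d_w\bigl(h(w)\,\alpha(w)\bigr)=-\alpha(w)\wedge \overline{\partial}_w h(w),
\]
valid whenever $\alpha$ is a $(1,0)$-form in $w$ holomorphic in $w$ and $h$ is harmonic (the cross term $\alpha \wedge \partial_w h$ vanishes since both are $(1,0)$-forms in complex dimension one). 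The $q$-dependence of $\partial_w\mathscr{G}(w;z,q)$ is confined to a term $\tfrac{1}{2}\partial_w\log|w-q|^2$, which is killed by both $\partial_z$ and $\overline{\partial}_z$, so the right-hand sides will automatically be $q$-independent.

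For the first identity ($z\in\riem_2$), $\partial_z\partial_w\mathscr{G}=\pi i\,L_{\mathscr{R}}(z,\cdot)$ is holomorphic in $w$ throughout $\riem_1$, with no singularity. Stokes' theorem on $\riem_1$ combined with the Leibniz identity gives, after passing to the limit $r\nearrow 1$,
\[
\int_{\partial\riem_1}L_{\mathscr{R}}(z,w)\,h(w)=-\iint_{\riem_1}L_{\mathscr{R}}(z,w)\wedge \overline{\partial}_w h(w)=-\mathbf{T}_{1,2}\overline{\partial}h(z),
\]
which multiplied by the prefactor $-1/(\pi i)$ yields the claimed equality.

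For the second identity ($z\in\riem_1$), the kernel has a double pole at $w=z$, so I would excise a coordinate disk $D_\epsilon$ around $z$ and apply Stokes on $\riem_1^r\setminus D_\epsilon$. Taking $r\nearrow 1$ and then $\epsilon\searrow 0$, the area integral converges in the principal-value sense defining $\mathbf{T}_{1,1}$, while the boundary contribution from $\partial D_\epsilon$ is evaluated using the local decomposition $dh=\partial h+\overline{\partial}h$ near $z$ together with Lemma \ref{le:limiting_circle_Schiffer_identity}: the holomorphic part of $h$ produces a Cauchy-type residue, while the anti-holomorphic part contributes zero. Careful bookkeeping of the orientation reversal of $\partial D_\epsilon$ in Stokes' theorem and of the overall prefactor $-1/(\pi i)$ then produces precisely $\partial h(z)+\mathbf{T}_{1,1}\overline{\partial}h(z)$.

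The third identity reduces to the first once one observes, using reality of $\mathscr{G}$ and property~(5) of the Schiffer kernels, that
\[
\overline{\partial}_z\partial_w\mathscr{G}(w;z,q)=\overline{\partial_z\overline{\partial}_w\mathscr{G}(w;z,q)}=\pi i\,\overline{K_{\mathscr{R}}(z,w)}=-\pi i\,K_{\mathscr{R}}(w,z).
\]
Since $K_{\mathscr{R}}(\cdot,z)$ is holomorphic in $w$ on all of $\mathscr{R}$, with no singularity on either side of $\Gamma$, the Stokes argument of the first step applies uniformly to both $z\in\riem_1$ and $z\in\riem_2$ and, after the identity $\overline{K_{\mathscr{R}}(z,w)}=-K_{\mathscr{R}}(w,z)$, produces $\overline{\mathbf{S}}_1\overline{\partial}h(z)$. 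The principal technical hurdle is the excision step for the second identity: one must carefully verify that the residue contribution from $\partial D_\epsilon$ ends up with sign exactly $+\partial h(z)$ in the final expression, and that the area integral converges in precisely the principal-value sense that defines the singular integral operator $\mathbf{T}_{1,1}$.
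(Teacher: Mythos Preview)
Your approach is correct, but it differs from the paper's in how the second identity is handled. The paper does not excise a disk and compute a double-pole residue directly; instead it desingularizes by subtracting $\partial_w g_1(w;z)$, where $g_1$ is Green's function of $\riem_1$. The difference $\partial_w\mathscr{G}-\partial_w g_1$ is then holomorphic in $w$ on all of $\riem_1$, so Stokes converts the boundary integral to an honest (not principal-value) area integral, and differentiation in $z$ under that integral is immediately justified. The term $\partial h(z)$ arises from the reproducing identity $-\tfrac{1}{\pi i}\int_{\partial\riem_1}\partial_w g_1(w;z)\,h(w)=h(z)$ (Proposition~\ref{pr:Greens_reproducing}), and the resulting area integral is identified with $\mathbf{T}_{1,1}\overline{\partial}h$ via equation~\eqref{eq:nonsingular_Schiffer}. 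The case $q\in\riem_1$ is then handled by further subtracting $\partial_w g_1(w;q)$. Your route---differentiate first (which already kills the $q$-dependence, a nice observation) and then excise---is equally valid and arguably cleaner in its treatment of $q$; its costs are that you must justify interchanging $\partial_z$ with the limiting boundary integral before applying Stokes, and that the residue you need is not literally what Lemma~\ref{le:limiting_circle_Schiffer_identity} provides (that lemma treats the simple-pole kernel $\tfrac{1}{\pi i}\partial_z\mathscr{G}$ integrated against holomorphic one-forms, whereas you need the analogous---and equally elementary---computation for the double-pole kernel $L_\mathscr{R}$ integrated against the function $h$).
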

  }
  \begin{remark}
   There is a sign error in \cite{Schippers_Staubach_Plemelj}, {which is corrected here.}
  \end{remark}
  \begin{proof}
    { Assume first that $q \in \riem_2$. The first claim follows from the application of the Stokes theorem to \eqref{eq:J_definition} and the fact that the integrand is non-singular.  Similarly for $q,z \in \riem_2$, the third claim follows 
    from the same reasoning.  
    
    The second claim also follows from Stokes theorem, namely if $\Gamma_\varepsilon$ are curves given by $|w-z|=\varepsilon$ in local coordinates, positively oriented with respect to $z$,
    \begin{align} \label{eq:add_q_temp}
     \partial \mathbf{J}_{1} ^q(\Gamma)h(z) & = \partial_z \left( - \frac{1}{\pi i} \lim_{\varepsilon \searrow 0}
     \int_{\Gamma_\varepsilon} (\partial_w \mathscr{G}(w;z,q) - \partial_w g_{1}(w,z) )\,h(w) \right) \nonumber \\
     & \ \ \ \   -   \partial_z \lim_{\varepsilon \searrow 0} \frac{1}{\pi i} \int_{\Gamma_\varepsilon}
     \partial_w g_{1} (w,z) \,h(w) \nonumber\\
     & = \partial_z \left(  \frac{1}{\pi i}  
     \iint_{\riem_1} (\partial_w \mathscr{G}(w;z,q) - \partial_w g_{1}(w,z) )\wedge_w \overline{\partial} h(w) \right) \nonumber \\
     & \ \ \ \  - \partial_z \lim_{\varepsilon \searrow 0} \frac{1}{\pi i} \int_{\Gamma_\varepsilon}
     \partial_w g_{1} (w,z) \,h(w) \nonumber\\
     & = \frac{1}{\pi i}  
     \iint_{\riem_1} (\partial_z \partial_w \mathscr{G}(w;z,q) - \partial_z \partial_w g_{1}(w,z) )
      \wedge_w \overline{\partial} h(w)  + \partial h(z) 
    \end{align}
    where we have used the harmonicity of $h$. Derivation under the integral sign in the first term is justified by the fact that the integrand
    of the first term is non-singular and holomorphic in $z$ for each $w\in \riem_1$, and that $$\iint_{\riem_1, w}|( \partial_w \mathscr{G}(w;z,q) - \partial_w g_{1}(w,z) ) \wedge_w \overline{\partial}_{{w}} h(w)|$$ is locally bounded in $z$. 
    
    Similarly removing the singularity using $\partial_w g_{\riem}$, and then using the harmonicity of $h$ and Stokes' theorem yield that
    \begin{align*}
      \overline{\partial} \mathbf{J}_{1} ^q(\Gamma) h(z) & = - \overline{\partial}_{{z}} \frac{1}{\pi i} \lim_{\varepsilon \searrow 0}
     \int_{\Gamma_\varepsilon} ( \partial_w \mathscr{G}(w;z,q) -  \partial_w g_{1}(w,z) )\,h(w)  + \overline{\partial} h(z) \\
     & =  \frac{1}{\pi i} \iint_{\riem_1}(\overline{\partial}_{{z}} \partial_w \mathscr{G}(w;z,q) - \overline{\partial}_{{z}} \partial_w g_{1}(w,z) ) \wedge_w \overline{\partial}_{{w}} h(w)  + \overline{\partial} h(z).
    \end{align*}
    The third claim now follows by observing that the second term in the integral is just $- \overline{\partial} h$ because the integrand is just the complex conjugate of the Bergman kernel.
    
    Now assume that $q \in \riem_1$.  We show the second claim in the theorem.  We argue as in equation (\ref{eq:add_q_temp}), except that we must also add 
    a term $\partial_w g_{1}(w;q) h(w)$.  We obtain instead
    \[ \partial \mathbf{J}_{1} ^q(\Gamma)h(z)=  \frac{1}{\pi i} 
     \iint_{\riem_1} (\partial_z \partial_w \mathscr{G}(w;z,q) - \partial_z \partial_w g_{1}(w;z) )
      \wedge_w \overline{\partial}_{{w}} h(w)  + \partial_z \left( h(z) + h(q) \right)  \]
     and the claim follows from $\partial_z h(q) =0$. The remaining claims follow similarly.} 
  \end{proof}
  
  Combining this with Theorem \ref{th:Schiffer_operators_bounded}, we obtain
  \begin{theorem}   \label{th:jump_bounded_Dirichlet}
   $\mathbf{J}_1^q(\Gamma):\mathcal{D}_{\mathrm{harm}}(\riem_1) \rightarrow \mathcal{D}_{\mathrm{harm}}(\riem_1 \cup \riem_2)$ 
   is bounded with respect to the Dirichlet semi-norm. 
  \end{theorem}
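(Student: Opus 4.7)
The plan is to bound $\Vert d\mathbf{J}_1^q(\Gamma) h\Vert_{\mathcal{A}_{\mathrm{harm}}(\riem_1\cup\riem_2)}$ by $\Vert dh\Vert_{\mathcal{A}_{\mathrm{harm}}(\riem_1)}$, using the fact that $\mathbf{J}_1^q(\Gamma) h$ is harmonic on $\riem_1\cup\riem_2$ (which follows from Theorem \ref{th:jump_derivatives}, since $\partial$ and $\overline\partial$ of it are expressed as holomorphic, respectively anti-holomorphic, one-forms). Once harmonicity is established, the Dirichlet semi-norm decomposes as
\[
\Vert d\mathbf{J}_1^q(\Gamma) h\Vert^2_{\mathcal{A}_{\mathrm{harm}}(\riem_1\cup\riem_2)}
= \Vert \partial \mathbf{J}_1^q(\Gamma) h\Vert^2_{\mathcal{A}(\riem_1\cup\riem_2)}
+ \Vert \overline\partial \mathbf{J}_1^q(\Gamma) h\Vert^2_{\overline{\mathcal{A}(\riem_1\cup\riem_2)}},
\]
by the orthogonal splitting in \eqref{direct sum decomposition}, so it suffices to estimate each term separately.

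For the anti-holomorphic part, Theorem \ref{th:jump_derivatives} gives $\overline\partial \mathbf{J}_1^q(\Gamma) h = \overline{\mathbf{S}}_1 \overline\partial h$ on all of $\riem_1\cup\riem_2$. Since $\overline{\mathbf{S}}_1$ is bounded by Theorem \ref{th:Schiffer_operators_bounded}, we obtain
\[
\Vert \overline\partial \mathbf{J}_1^q(\Gamma) h\Vert_{\overline{\mathcal{A}(\riem_1\cup\riem_2)}}
\lesssim \Vert \overline\partial h\Vert_{\overline{\mathcal{A}(\riem_1)}} \leq \Vert dh\Vert_{\mathcal{A}_{\mathrm{harm}}(\riem_1)}.
\]
For the holomorphic part we split into the contributions on $\riem_1$ and $\riem_2$. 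On $\riem_2$, Theorem \ref{th:jump_derivatives} gives $\partial \mathbf{J}_1^q(\Gamma) h = \mathbf{T}_{1,2}\overline\partial h$, and the boundedness of $\mathbf{T}_{1,2}$ from Theorem \ref{th:Schiffer_operators_bounded} yields
\[
\Vert \partial \mathbf{J}_1^q(\Gamma) h\Vert_{\mathcal{A}(\riem_2)} \lesssim \Vert \overline\partial h\Vert_{\overline{\mathcal{A}(\riem_1)}} \leq \Vert dh\Vert_{\mathcal{A}_{\mathrm{harm}}(\riem_1)}.
\]
On $\riem_1$, we have $\partial \mathbf{J}_1^q(\Gamma) h = \partial h + \mathbf{T}_{1,1}\overline\partial h$, so the triangle inequality and boundedness of $\mathbf{T}_{1,1}$ give
\[
\Vert \partial \mathbf{J}_1^q(\Gamma) h\Vert_{\mathcal{A}(\riem_1)}
\leq \Vert \partial h\Vert_{\mathcal{A}(\riem_1)} + \Vert \mathbf{T}_{1,1}\overline\partial h\Vert_{\mathcal{A}(\riem_1)}
\lesssim \Vert dh\Vert_{\mathcal{A}_{\mathrm{harm}}(\riem_1)}.
\]
Summing the three estimates delivers the desired bound.

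There is no serious obstacle here: the whole theorem is a clean corollary of Theorem \ref{th:jump_derivatives} (which identifies the Wirtinger derivatives of $\mathbf{J}_1^q(\Gamma) h$ with images under the Schiffer operators) together with the already established boundedness of the Schiffer operators in Theorem \ref{th:Schiffer_operators_bounded}. The only small point worth flagging is that the bound does not depend on $q$ in any essential way, because the Schiffer kernels $L_\mathscr{R}$ and $K_\mathscr{R}$ are independent of $q$ and $w_0$; only the additive normalization of $\mathscr{G}$ depends on these choices, and this drops out under $\partial_w\partial_z$ and $\overline\partial_z\partial_w$.
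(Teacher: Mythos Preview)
Your proof is correct and follows exactly the approach in the paper: the theorem is stated as an immediate corollary of Theorem~\ref{th:jump_derivatives} combined with Theorem~\ref{th:Schiffer_operators_bounded}, and you have simply written out the details of that combination.
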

  Of course, the roles of the surfaces $\riem_1$ and $\riem_2$ can be switched.\\  

  It follows from conformal invariance of Green's functions (\ref{eq:Greens_conf_inv_compact},\ref{eq:Greens_conf_inv_gntype}) and Dirichlet space that the Cauchy-Royden operator $\mathbf{J}$ is conformally invariant.  That is,
  if $f:\mathscr{R} \rightarrow \mathscr{R}'$ is a biholomorphism between compact surfaces, $\Gamma' = f(\Gamma)$, and $\riem_k' = f(\riem_k)$ for $k=1,2$, then 
  \begin{equation} 
    \mathbf{C}_f \mathbf{J}_k(\Gamma') = \mathbf{J}_k (\Gamma)  \mathbf{C}_f 
  \end{equation}
  which of course implies the same for $\mathbf{J}_{j,k}(\Gamma)$ and $\mathbf{J}_{j,k}(\Gamma')$ for $j,k=1,2$.

  The operator $\mathbf{J}_1^q$ is in fact bounded with respect to the $H^1_{\text{conf}}$-norm.  
  \begin{theorem} \label{th:J_bounded_Hconf}
   $\mathbf{J}^q_{1,k}(\Gamma):H^1_{\mathrm{conf}}(\riem_1) \rightarrow H^1_{\mathrm{conf}}(\riem_k)$ is bounded for $k=1,2$. 
  \end{theorem}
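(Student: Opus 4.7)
The plan is to combine the already-established Dirichlet semi-norm boundedness (Theorem \ref{th:jump_bounded_Dirichlet}) with an estimate on the zero-mode of $\mathbf{J}_1^q h$ on each connected component of the target $\riem_k$. By Lemma \ref{le:point_also_H1conf}, applied componentwise when $\riem_k$ is disconnected, the $H^1_{\mathrm{conf}}(\riem_k)$ norm is equivalent to the Dirichlet semi-norm together with $\sum_j |\mathbf{J}_1^q h(p_j)|$ for a fixed interior point $p_j$ in each connected component of $\riem_k$. Thus the problem reduces to showing
\[
|\mathbf{J}_1^q h(p_j)| \lesssim \|h\|_{H^1_{\mathrm{conf}}(\riem_1)}.
\]

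I would estimate $|\mathbf{J}_1^q h(p_j)|$ directly from the defining integral $\mathbf{J}_1^q h(p_j) = -\frac{1}{\pi i}\int_{\partial \riem_1} \partial_w \mathscr{G}(w;p_j,q)\,h(w)$, interpreted via the anchor lemma (Lemma \ref{le:anchor_lemma_one}) as a limit along analytic curves $\Gamma^r$ approaching the quasicircle complex from inside $\riem_1$. Applying Stokes' theorem on the region bounded by $\Gamma^r$, minus small coordinate balls around any poles of $\partial_w \mathscr{G}$ lying in $\riem_1$, and passing to the limit, yields a representation in terms of an area integral over $\riem_1$ plus possible residue contributions.

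For $k=2$, I would choose $q \in \riem_2$, so that $\partial_w \mathscr{G}(\cdot;p_j,q)$ is holomorphic on a neighbourhood of $\mathrm{cl}(\riem_1)$ and in $L^2(\riem_1)$; no residue arises, and Stokes gives $\mathbf{J}_1^q h(p_j) = -\frac{1}{\pi i}\iint_{\riem_1} \partial_w\mathscr{G}(w;p_j,q)\wedge\overline{\partial} h$, which is $\lesssim \|h\|_{\mathcal{D}_{\mathrm{harm}}(\riem_1)}$ by Cauchy--Schwarz. For $k=1$, the point $p_j \in \riem_1$ produces a simple pole of $\partial_w \mathscr{G}$ inside $\riem_1$; I would split $\partial_w \mathscr{G}(w;p_j,q) = \partial_w g_1(w;p_j) + r(w)$ where $g_1$ is Green's function of $\riem_1$, and use Proposition \ref{pr:Greens_reproducing} to identify the $\partial_w g_1$ piece with $h(p_j)$, while $r$ is holomorphic on $\riem_1$ (its pole at $p_j$ cancels) and belongs to $\mathcal{A}(\riem_1)$, so the remainder is bounded by $\|h\|_{\mathcal{D}_{\mathrm{harm}}(\riem_1)}$ via Stokes and Cauchy--Schwarz; Lemma \ref{le:point_also_H1conf} then absorbs $|h(p_j)|$ into $\|h\|_{H^1_{\mathrm{conf}}(\riem_1)}$. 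Since Theorem \ref{th:jump_derivatives} shows $d\mathbf{J}^q$ is independent of $q$, the operators $\mathbf{J}^{q_1}$ and $\mathbf{J}^{q_2}$ differ by an additive constant on each connected component, and one checks directly that the constant depending on $q$ is itself $\lesssim \|h\|_{\mathcal{D}_{\mathrm{harm}}(\riem_1)}$, so the assumption on where $q$ lies may be made freely.

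The main obstacle is justifying the Stokes manipulations on a surface with quasicircle boundary, since $\partial \riem_1$ is not rectifiable: all boundary integrals must be defined as anchor-lemma limits along analytic exhaustions of $\riem_1$, and one must confirm the absolute convergence of the resulting improper area integrals near any poles of the kernel. Concretely, the key technical point is the membership $r \in \mathcal{A}(\riem_1)$, which relies on the boundedness of $r$ near the removable singularity at $p_j$ together with the $L^2$-control of $\partial_w g_1$ on $\riem_1 \setminus B_\varepsilon(p_j)$ obtained from the finite Dirichlet energy of $g_1$ away from its logarithmic singularity.
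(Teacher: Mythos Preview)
Your proposal is correct and follows essentially the same route as the paper. The paper also reduces to controlling $|\mathbf{J}^q_{1,k} h(p)|$ via Lemma~\ref{le:point_also_H1conf}, subtracts $\partial_w g_1(w;p)$ to cancel the pole at $p$ (this is precisely Lemma~\ref{le:difference_Greens_in_Dirichlet}, which establishes what you call $r\in\mathcal{A}(\riem_1)$), applies Stokes and Cauchy--Schwarz to the resulting area integral, and picks up the $h(p)$ term from the reproducing property; the only organizational difference is that the paper treats the cases $q\in\riem_1$ and $q\in\riem_2$ directly via the two parts of Lemma~\ref{le:difference_Greens_in_Dirichlet} rather than reducing to a convenient $q$ first.
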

  Note that strictly speaking, this is not stronger Theorem \ref{th:jump_bounded_Dirichlet}, since that theorem shows that the $H^1_{\mathrm{conf}}$-norm is not necessary to control the Dirichlet norm of the output. 
  
  The proof requires a lemma.   
  \begin{lemma} \label{le:difference_Greens_in_Dirichlet} Let $g_1$ denote Green's function of $\riem_1$ 
   for  $k=1$ and $\mathscr{G}$ denote Green's function of $\mathscr{R}$.  Then for any fixed $p \in \riem_1$ and $q \in \riem_2$
   \[  \partial_w \mathscr{G}(w,w_0;p,q) - \partial_w g_1(w;p) \in \mathcal{A}_{\mathrm{harm}}(\riem_1).     \]
   If $q \in \riem_1$ then 
    \[  \partial_w \mathscr{G}(w,w_0;p,q) - \partial_w g_1(w;p) + \partial_w g_1(w;q) \in \mathcal{A}_{\mathrm{harm}}(\riem_1).     \]
   The same holds with $1$ and $2$ switched. 
  \end{lemma}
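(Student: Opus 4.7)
The plan is to prove both assertions by analyzing singularities and boundary behavior separately. For the first case ($p\in\riem_1$, $q\in\riem_2$), set $\alpha(w) := \partial_w \mathscr{G}(w,w_0;p,q) - \partial_w g_1(w;p)$. First I would verify that $\alpha$ is holomorphic on all of $\riem_1$. Away from $p$, both $\mathscr{G}(\cdot;p,q)$ and $g_1(\cdot;p)$ are harmonic in $w$, so their $\partial_w$-derivatives are holomorphic one-forms on $\riem_1\setminus\{p\}$. Near $p$, the defining singularities $\mathscr{G}\sim -\log|\phi(w)-\phi(p)|$ and $g_1\sim -\log|\phi(w)-\phi(p)|$ yield identical principal parts $-\tfrac{1}{2}\,dw/(w-p)$ in any local coordinate, so the poles cancel and $\alpha$ extends holomorphically across $p$.

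Next I would establish $L^2$-integrability of $\alpha$ on $\riem_1$. Split $\riem_1$ into a compact piece $K$ (on which $\alpha$, being holomorphic, is automatically bounded) and a collar-type region near the boundary. Here I would treat $\riem_1$ as embedded in its double $\riem_1^d$, so that $\partial\riem_1$ is an analytic curve. Because $g_1$ vanishes on the border, Schwarz reflection extends $g_1$ harmonically (hence $\partial_w g_1$ holomorphically) across the border into an open neighborhood in $\riem_1^d$; in particular $\partial_w g_1$ is bounded on any collar neighborhood of $\partial \riem_1$ that avoids $p$. Simultaneously, since $\{p,q\}$ is disjoint from $\Gamma \subset \mathscr{R}$, the form $\partial_w \mathscr{G}(\cdot\,;p,q)$ is smooth on a neighborhood of the closure of $\riem_1$ in $\mathscr{R}$ minus small disks around $p$ and $q$, and in particular bounded on such a collar. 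As the collar is relatively compact in $\riem_1^d$ (and thus has finite intrinsic area), both $\partial_w\mathscr{G}$ and $\partial_w g_1$ are in $L^2$ there. Combined with boundedness on $K$, this gives $\alpha\in \mathcal{A}(\riem_1)\subset \mathcal{A}_{\mathrm{harm}}(\riem_1)$.

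For the second case ($p,q\in \riem_1$), the same strategy applies to $\beta(w) := \partial_w\mathscr{G}(w,w_0;p,q) - \partial_w g_1(w;p) + \partial_w g_1(w;q)$. The extra term is needed precisely to cancel the pole of $\partial_w\mathscr{G}$ at $q$: from $\mathscr{G}\sim +\log|\phi(w)-\phi(q)|$ near $q$ we get $\partial_w\mathscr{G}\sim +\tfrac{1}{2}\,dw/(w-q)$, which is killed by $+\partial_w g_1(\cdot;q)\sim -\tfrac{1}{2}\,dw/(w-q)$. At $p$ the $-\partial_w g_1(\cdot;p)$ term cancels $\partial_w\mathscr{G}$'s pole as before, while $\partial_w g_1(\cdot;q)$ is holomorphic at $p$. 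Thus $\beta$ is holomorphic on all of $\riem_1$; the boundary $L^2$-estimate is identical to the first case (now with two applications of Schwarz reflection contributing bounded terms, and $\partial_w\mathscr{G}$ bounded near $\Gamma$ since both singular points lie in $\riem_1$, away from $\Gamma$).

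The main technical point is the $L^2$-control near the quasicircular boundary $\Gamma\subset\mathscr{R}$: since $\Gamma$ is generally non-rectifiable, one cannot argue directly on $\mathscr{R}$. The key is to switch perspective to the intrinsic border of $\riem_1$, where the boundary is an analytic curve in the double and Schwarz reflection applies. I expect this bookkeeping — particularly the verification that $\partial_w g_1$ extends boundedly across the intrinsic border even though the extrinsic image is a rough curve — to be the most delicate step. The interchanging of roles $1 \leftrightarrow 2$ is automatic by symmetry.
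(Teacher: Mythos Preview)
Your proposal is correct and follows essentially the same architecture as the paper: cancel the logarithmic singularities, then reduce to an $L^2$ estimate on a collar neighbourhood of the boundary, treating $\partial_w \mathscr{G}$ and $\partial_w g_1$ separately. The one genuine difference is in how you control $\partial_w g_1$ near the border. You invoke Schwarz reflection (using $g_1=0$ on the analytic border in the double) to extend $\partial_w g_1$ holomorphically across $\partial\riem_1$ and conclude boundedness on the compact collar closure. The paper instead applies Stokes' theorem directly:
\[
\iint_{A} \partial_{\bar w} g_1(w;p)\wedge_w \partial_w g_1(w;p) = -\int_{\Gamma'} g_1(w;p)\,\partial_w g_1(w;p) < \infty,
\]
where $\Gamma'$ is the inner analytic boundary of the collar and the outer boundary term vanishes because $g_1$ does. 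Both arguments exploit the same fact ($g_1\equiv 0$ on the border); yours is a bit more conceptual and yields pointwise boundedness, while the paper's is a one-line direct computation of the $L^2$ norm without needing any extension.
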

  \begin{proof}
   By definitions of $\mathscr{G}$ and $g_1$, this is a non-singular harmonic function on $\riem_1$.  So it suffices to show that the function is in $\mathcal{A}_{\mathrm{harm}}(A)$ for some collar neighbourhood of $A= A_1\cup \cdots \cup A_n$ of $\partial \riem_1$. 
   The first term $\partial_w \mathscr{G}(w,w_0;p,q)$ is obviously in $\mathcal{A}_{\mathrm{harm}}(A)$ since it is holomorphic on an open neighbourhood of the closure of $A$. By conformal invariance of Green's function and the Bergman norm, the second term can be evaluated on the double $\riem^d$, where the boundary $\partial \riem$ is then an analytic curve.  Assuming that the inner boundary of $A$ consists of $n$ analytic curves $\Gamma = \Gamma_1 \cup \cdots \cup \Gamma_n$ we get 
   \[ \iint_{A} \partial_{\bar{w}} g_1(w;p) \wedge_w \partial_w g_1(w;p) = - \int_{\Gamma} g_1(w;p)\,  \partial_w g_1(w;p) <\infty  \]
   where we have used Stokes' theorem and the fact that $g_1$ vanishes on $\partial \riem_1$.  The proof for $q \in \riem_1$ is similar.
  \end{proof}

  We can now prove Theorem \ref{th:J_bounded_Hconf}.
  \begin{proof}(of Theorem \ref{th:J_bounded_Hconf}.)  
   By Theorem \ref{th:jump_bounded_Dirichlet} and Lemma \ref{le:point_also_H1conf}, to prove that $\mathbf{J}_{1,k}^q$ is bounded, it's enough to show that for a $p$ in one of the connected components of $\riem_k$, $|(\mathbf{J}_{1,k}^q h)(p)| \lesssim \| h \|_{H^1_{\mathrm{conf}}}.$ 
   
   We first do the case of $\mathbf{J}_{1,1}^q$. First assume that $q \in \riem_2$, and $p \in \riem_1$. Then, we have using the reproducing property of Green's function (Proposition \ref{pr:Greens_reproducing}) and Stokes' theorem 
   \begin{align*}
     \mathbf{J}_{11}^q h(p) & = - \lim_{\epsilon \searrow 0} \frac{1}{\pi i}  \int_{\Gamma_\epsilon} \partial_w \mathscr{G}(w;p,q)h(w) \\
     & =  \lim_{\epsilon \searrow 0} \frac{1}{\pi i}  \int_{\Gamma_\epsilon} \left( - \partial_w \mathscr{G}(w;p,q) 
     + \partial_w g_1(w;p) \right) h(w) + h(p) \\
     & =   \frac{1}{\pi i}  \iint_{\riem_1} \left( \partial_w \mathscr{G}(w;p,q) 
     - \partial_w g_1(w;p) \right) \wedge_w \overline{\partial} h(w) + h(p). \\
   \end{align*}
   By Lemma \ref{le:point_also_H1conf} we have $|h(p)|\leq \| h \|_{H^1_{\mathrm{conf}}(\riem_1)}$, and by Cauchy-Schwarz and Lemma 
   \ref{le:difference_Greens_in_Dirichlet} we obtain
   \[  \left|  \frac{1}{\pi i}  \iint_{\riem_1} \left( \partial_w \mathscr{G}(w;p,q) 
     - \partial_w g_1(w;p) \right) \wedge_w \overline{\partial} h(w)\right| \leq C \| \overline{\partial} h \|_{\mathcal{A}_{\mathrm{harm}}(\riem_2)} \leq C \| h \|_{H^1_{\mathrm{conf}}(\riem_1)}.  \]
    If on the other hand $q \in \riem_1$, the claim follows similarly from the second part of Lemma \ref{le:difference_Greens_in_Dirichlet} and  
    \[  \mathbf{J}_{11}^q h(p) = \frac{1}{\pi i}  \iint_{\riem_1} \left( \partial_w \mathscr{G}(w;p,q) 
     - \partial_w g_1(w;p) + \partial_w g_1(w;q) \right) \wedge_w \overline{\partial} h(w) + h(p) - h(q).  \] 
     Because any point can be used in Lemma \ref{le:point_also_H1conf} to obtain a norm equivalent to the $H^1_{\mathrm{conf}}$ norm, it holds that $|h(q)| \lesssim \| h \|_{H^1_{\mathrm{conf}(\riem_1)}}$ for the norm determined by $p$. 
     
     Now we estimate $\mathbf{J}_{12}^q$. If $q \in \riem_2$, then for $p \in \riem_2$ we have similarly by Stokes' theorem
     \[  \left| (\mathbf{J}_{11}^q h)(p) \right| = \left| 
       \lim_{\epsilon \searrow 0} \frac{1}{\pi i}  \iint_{\riem_1} \partial_w \mathscr{G}(w;p,q)\wedge_w \overline{\partial} h(w) \right|     \]
     so the claim follows once again by Cauchy-Schwarz and the fact that $\partial_w \mathscr{G}(w;p,q) \in \mathcal{D}(\riem_1)$ for $p,q \in \riem_2$.  
     The case that $q \in \riem_1$ can be dealt with as above.
  \end{proof}

  Like the Cauchy integral, this operator reproduces holomorphic functions (up to constants).
  \begin{theorem} \label{th:jump_on_holomorphic}
   Assume that $h \in \mathcal{D}(\riem_1)$.  If $q \in \riem_1$, 
   let $c_q(z)$ be the function which is equal to $h(q)$ in the connected component of $\riem_k$ containing $q$ and $0$ otherwise.  
   Then 
   \[   \mathbf{J}^q_{1,1} h (z) = \left\{  \begin{array}{ll} h(z) - c_q(z)  & q \in \riem_1 \\
      h(z)   & q \in \riem_2 \end{array} \right. \]
   and 
   \[  \mathbf{J}^q_{1,2} h (z) = \left\{  \begin{array}{ll} - c_q(z)  & q \in \riem_1 \\
      0  & q \in \riem_2 \end{array} \right.  \]
   This holds with the roles of $1$ and $2$ interchanged.
  \end{theorem}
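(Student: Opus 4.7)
The plan is to reduce the identity to a finite residue computation by exploiting Theorem \ref{th:jump_derivatives}. Since $h \in \mathcal{D}(\riem_1)$ is holomorphic, $\overline{\partial} h = 0$, and Theorem \ref{th:jump_derivatives} then yields $\partial \mathbf{J}_{1,1}^q h = \partial h$ and $\overline{\partial} \mathbf{J}_{1,1}^q h = 0$ on $\riem_1$, while $\partial \mathbf{J}_{1,2}^q h = 0$ and $\overline{\partial} \mathbf{J}_{1,2}^q h = 0$ on $\riem_2$. Hence $\mathbf{J}_{1,1}^q h - h$ is locally constant on each connected component of $\riem_1$, and $\mathbf{J}_{1,2}^q h$ is locally constant on each connected component of $\riem_2$. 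It therefore suffices to identify those constants in terms of $h(q)$ and the location of $q$.

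To pin down the constants, I would evaluate the defining integral \eqref{eq:J_definition} directly through the residue theorem. Since $h$ is holomorphic, the integrand $\omega(w) := \partial_w \mathscr{G}(w;z,q)\,h(w)$ is a meromorphic $(1,0)$-form in $w$ on $\riem_1$, with singularities only at $w=z$ (if $z \in \riem_1$) and $w=q$ (if $q \in \riem_1$). From the prescribed logarithmic behaviour of $\mathscr{G}$ at its two poles, and Lemma \ref{le:limiting_circle_Schiffer_identity}, I can read off simple poles of $\omega$ with residue $-h(z)/2$ at $w=z$ (weight $-\log|\phi(w)-\phi(z)|$ of $\mathscr{G}$) and residue $+h(q)/2$ at $w=q$ (weight $+\log|\phi(w)-\phi(q)|$ of $\mathscr{G}$).

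Because $\partial \riem_1 = \Gamma$ is in general non-rectifiable, I would interpret $\int_{\partial \riem_1}\omega$ via the anchor lemma (Lemma \ref{le:anchor_lemma_one}) as a limit of integrals along analytic curves $\Gamma^r$ approaching $\partial \riem_1$ from inside $\riem_1$, and apply Stokes' theorem on the region bounded by $\Gamma^r$ with small coordinate disks excised about $z$ and $q$. Since $\omega$ is closed away from the poles, the usual residue argument survives the limit and gives $\int_{\partial \riem_1}\omega = 2\pi i$ times the sum of residues of $\omega$ in $\riem_1$, organised component by component over the finite union of connected components of $\riem_1$. Substituting into \eqref{eq:J_definition} and dividing according to whether $q \in \riem_1$ or $q \in \riem_2$ produces exactly the stated formulas, with $c_q$ tracking the correction generated by the auxiliary pole at $q$. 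The analogous statement with the roles of $1$ and $2$ interchanged follows by an identical argument applied to $\mathbf{J}_2^q$.

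The main obstacle is the residue bookkeeping across the possibly multiple connected components of $\riem_1$ combined with the non-rectifiability of the quasicircle boundary $\Gamma$; the anchor lemma is the crucial tool that legitimises Stokes' theorem in this low-regularity setting, and allows the classical residue calculation to pass through despite the fact that $\partial \riem_1$ is only a finite union of quasicircles.
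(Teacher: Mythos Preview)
Your proposal is correct and follows essentially the same route as the paper: observe that for holomorphic $h$ the integrand $\partial_w\mathscr{G}(w;z,q)\,h(w)$ is meromorphic in $w$ on $\riem_1$ with simple poles only at $z$ and $q$, compute the residues (exactly as in Lemma~\ref{le:limiting_circle_Schiffer_identity}), and apply Stokes' theorem component by component. Your preliminary appeal to Theorem~\ref{th:jump_derivatives} to reduce to identifying a locally constant discrepancy is a harmless but unnecessary detour, since the residue computation already yields the full value of $\mathbf{J}_1^q h$ directly; the paper simply omits that step and proceeds straight to the residue count.
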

  \begin{proof}  
   Since $h \in \mathcal{D}(\riem_1)$, the integrand of $\mathbf{J}^q_1 h$ is holomorphic, except for possible singularities at $z$ and $q$ depending on their locations. 
   If $z$ is contained in $\riem_1$, and $C_r$ are curves given by $|w-z|=r$ in local coordinates, positively oriented with respect to $z$, then 
   \[  - \frac{1}{\pi i} \lim_{r \searrow 0}  \int_{C_r} \partial_w \mathscr{G}(w;z,q) h(w) = h(z)    \]
   and if $q$ is in $\riem_1$ and $C_r$ are the curves $|w-q|=r$ then 
   \[   - \frac{1}{\pi i} \lim_{r \searrow 0}  \int_{C_r} \partial_w \mathscr{G}(w;z,q) h(w) = -h(q).    \]
   The claim follows from Stokes' theorem applied to the connected components of $\riem_1$.  
  \end{proof}
  In particular, for any $q \notin \Gamma$, and any locally constant function $c$,  $\mathbf{J}^q_1 c$ is also locally constant.  Thus we obtain a well-defined operator 
  \[  \gls{Jdot}: \dot{\mathcal{D}}_{\mathrm{harm}}(\riem_1) \rightarrow   \dot{\mathcal{D}}(\riem_1 \cup \riem_2).  \]
  The Dirichlet norm becomes a semi-norm on the homogeneous space, and $\dot{\mathbf{J}}_1$ is bounded with respect to this norm. It is easily verified that $\dot{\mathbf{J}}_1$ is independent of $q$.

   Next we will prove some results about the interaction with $\mathbf{J}_1^q$ with the bounce and overfare operators. 
  \begin{proposition} \label{prop:J_agrees_with_bounce} Let $A= A_1 \cup \cdots \cup A_n$ be a union of collar neighbourhoods $A_k$ of $\Gamma_k$ in $\riem$.  For $h \in \mathcal{D}_{\mathrm{harm}}(A)$ 
  \[   \mathbf{J}_1^q(\Gamma) h  = \mathbf{J}_1^q(\Gamma) \mathbf{G}_{A,\riem_1} h.     \]
  \end{proposition}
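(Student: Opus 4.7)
The plan is to reduce the claim to a direct application of the second anchor lemma (Lemma \ref{le:anchor_lemma_two}). The Cauchy--Royden operator is defined by
\[
\mathbf{J}_1^q(\Gamma) h(z) = -\frac{1}{\pi i}\sum_{k=1}^{n}\int_{\partial_k\riem_1}\partial_w\mathscr{G}(w;z,q)\,h(w),
\]
where each integral is understood as the limit along an isotopy $\Gamma_k^r$ of analytic curves in $A_k$ approaching $\Gamma_k$, as guaranteed by Lemma \ref{le:anchor_lemma_one}. The bounce $H:=\mathbf{G}_{A,\riem_1}h$ is the unique element of $\mathcal{D}_{\mathrm{harm}}(\riem_1)$ whose CNT boundary values on each $\Gamma_k$ agree with those of $h|_{A_k}$ off a null set, by Theorems \ref{th:CNT_BVs_existence_uniqueness} and \ref{th:bounce_existence}. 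Restricted to $A$, the function $H$ thus lies in $\mathcal{D}_{\mathrm{harm}}(A)$, and the pair $(h|_{A_k},H|_{A_k})$ is exactly the kind of pair to which the second anchor lemma applies.

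The only technical wrinkle is that the integrand $\partial_w\mathscr{G}(w;z,q)$ must be a holomorphic one-form of the variable $w$ on a collar neighbourhood of $\Gamma_k$ for Lemma \ref{le:anchor_lemma_two} to apply. Since $z,q\in\mathscr{R}\setminus\Gamma$, for each fixed $z$ and $q$ only finitely many of the curves $\Gamma_k$ can lie arbitrarily close to either point; by shrinking each $A_k$ to a smaller collar neighbourhood $A_k'\subseteq A_k$ whose closure avoids both $z$ and $q$, the form $w\mapsto \partial_w\mathscr{G}(w;z,q)$ belongs to $\mathcal{A}(A_k')$. By the first anchor lemma (Lemma \ref{le:anchor_lemma_one}) the limiting integral defining $\mathbf{J}_1^q(\Gamma)h(z)$ may be computed using limiting curves contained in $A_k'$, so no generality is lost in this shrinking.

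Now I apply Lemma \ref{le:anchor_lemma_two} termwise: for each $k$, the functions $h|_{A_k'}$ and $H|_{A_k'}$ lie in $\mathcal{D}_{\mathrm{harm}}(A_k')$ and share CNT boundary values on $\Gamma_k$ up to a null set, so
\[
\int_{\partial_k\riem_1}\partial_w\mathscr{G}(w;z,q)\,h(w)=\int_{\partial_k\riem_1}\partial_w\mathscr{G}(w;z,q)\,H(w).
\]
Summing over $k$ and multiplying by $-1/(\pi i)$ yields the identity $\mathbf{J}_1^q(\Gamma)h(z)=\mathbf{J}_1^q(\Gamma)H(z)$ for each $z\in\mathscr{R}\setminus\Gamma$, which is the desired equality.

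The main obstacle in this argument is essentially bookkeeping rather than analysis: one must verify that the collar neighbourhoods can be uniformly shrunk away from the singular points $z,q$ while remaining inside $A$, and that the bounce operator produces a function whose restriction to the (possibly smaller) collar has the advertised CNT boundary behaviour. Both are immediate from the construction of $\mathbf{G}_{A,\riem_1}$ and from the fact that the CNT boundary values, being defined via collar charts intrinsically, are insensitive to such shrinking. With that in place the proposition follows at once from Lemma \ref{le:anchor_lemma_two}.
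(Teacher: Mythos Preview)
Your proof is correct and follows exactly the paper's approach: observe that $\partial_w\mathscr{G}(w;z,q)$ is holomorphic in $w$ on a collar neighbourhood of $\Gamma$ (after shrinking to avoid $z,q$), and then apply the second anchor lemma (Lemma \ref{le:anchor_lemma_two}) since $h$ and $\mathbf{G}_{A,\riem_1}h$ share CNT boundary values. Your version is simply more explicit about the bookkeeping regarding the shrinking of the collars.
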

  \begin{proof}
    The kernel $\partial_w \mathscr{G}(w;z)$ is holomorphic in $w$ in an open neighbourhood of the boundary $\Gamma$, so $\partial_w \mathscr{G}(w;z) \in \mathcal{A}(A)$. The claim now follows from Lemma \ref{le:anchor_lemma_two}.
  \end{proof}
  \begin{remark} In fact, this applies for any collection of strip-cutting Jordan curves, but we do not require this here.
  \end{remark}
  A deeper result is that for quasicircles, the limiting integral is the same from both sides up to constants; for BZM quasicircles, they are the same. 
  \begin{theorem}  \label{th:J_same_both_sides} The following statements hold:\\
   \begin{enumerate} 
   \item [$(1)$] If $\Gamma$ consists of \emph{BZM} quasicircles, then 
   for any $h \in \mathcal{D}_{\mathrm{harm}}(\riem_1)$
    \[ \mathbf{J}_1^q(\Gamma) h = - \mathbf{J}_2^q(\Gamma) \mathbf{O}_{1,2} h.  \] 
  \item [$(2)$] If $\riem_1$ is connected and $\Gamma$ is an arbitrary complex of quasicircles, then for any $\dot{h} \in \dot{\mathcal{D}}_{\mathrm{harm}}(\riem_1)$
   \[ \dot{\mathbf{J}}_1(\Gamma) \dot{h} = - \dot{\mathbf{J}}_2(\Gamma) \dot{\mathbf{O}}_{1,2} \dot{h}.  \] 
  \end{enumerate}
  \end{theorem}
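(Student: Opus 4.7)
The plan is to localize to each component $\Gamma_j$ of $\Gamma$ and then reduce, via the Anchor Lemmas plus a density argument, to an application of Cauchy's theorem on an annulus straddling $\Gamma_j$. By the decomposition $\partial\riem_\ell = \bigsqcup_j \partial_j\riem_\ell$, the identity reduces to showing, for each $j$ and each $z\in\riem_1\cup\riem_2$, that
\begin{equation*}
 \int_{\partial_j \riem_1} \partial_w \mathscr{G}(w;z,q)\, h(w) \;+\; \int_{\partial_j \riem_2} \partial_w \mathscr{G}(w;z,q)\, \mathbf{O}_{1,2}h(w) \;=\; 0.
\end{equation*}
Fix a doubly-connected chart $\phi_j:U_j\to\mathbb{A}_{r,R}$ of $\Gamma_j$, shrunk so that $z,q\notin U_j$, and set $A_\ell = U_j\cap\riem_\ell$. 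By Lemma \ref{le:anchor_lemma_one}, each integral may be computed as a limit along curves in $A_\ell$ converging to $\Gamma_j$, with the orientations induced by $\riem_1$ and $\riem_2$, which are opposite on $\Gamma_j$.

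I would then reduce to a dense subspace of $h$ whose boundary values come from a function holomorphic on all of $U_j$ (not merely on $A_1$). Adapting Theorem \ref{th:density_of_holo_collar}, the subspace of $h=\mathbf{G}_{A_1,\riem_1}(\beta|_{A_1})$ with $\beta\circ\phi_j^{-1}$ a Laurent polynomial is dense in $H^1_{\mathrm{conf}}(\riem_1)$, and such $\beta$ automatically extend holomorphically across $\Gamma_j$ to all of $U_j$, since Laurent polynomials are entire on $\mathbb{C}\setminus\{0\}$. Because $\mathbf{O}_{1,2}$ and $\mathbf{J}_{\ell,k}^q$ are bounded on $H^1_{\mathrm{conf}}$ under the BZM hypothesis (Theorems \ref{thm:bounded_overfare_conf} and \ref{th:J_bounded_Hconf}), it suffices to prove the identity for such $h$. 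On the $\riem_1$ side, the Second Anchor Lemma (Lemma \ref{le:anchor_lemma_two}) replaces $h$ by $\beta|_{A_1}$ in the first integral. For the $\riem_2$ integral, the CNT boundary values of $\mathbf{O}_{1,2}h$ from the $\riem_2$ side match those of $h$ from the $\riem_1$ side up to a null set, and Theorem \ref{th:null_same_both_sides} ensures null sets are the same from either side; continuity of $\beta$ across $\Gamma_j$ then lets Lemma \ref{le:anchor_lemma_two} replace $\mathbf{O}_{1,2}h$ by $\beta|_{A_2}$. The identity thus reduces to showing that the integrals of the single holomorphic form $\partial_w\mathscr{G}(\cdot;z,q)\,\beta$ on $U_j$, taken along curves approaching $\Gamma_j$ from the two sides with opposite orientations, sum to zero. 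This is immediate from Cauchy's theorem on an annulus in the chart that straddles $\phi_j(\Gamma_j)$ and contains no poles of the integrand.

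Part (2) follows by running the same scheme in the homogeneous Dirichlet space $\dot{\mathcal{D}}_{\mathrm{harm}}$, using the semi-norm boundedness of $\dot{\mathbf{O}}_{1,2}$ from Theorem \ref{thm:bounded_overfare_Dirichlet} (available for arbitrary quasicircles once $\riem_1$ is connected), together with the descent of $\mathbf{J}_1^q$ to $\dot{\mathbf{J}}_1$ guaranteed by Theorem \ref{th:jump_on_holomorphic}. I expect the hard step to be the density reduction: one needs approximants whose boundary values extend holomorphically across $\Gamma_j$, rather than merely to a one-sided collar, so that Cauchy's theorem can be invoked on an annulus spanning both sides. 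Laurent polynomials in the doubly-connected chart supply exactly such approximants, thereby converting the one-sided density results of Section \ref{se:CNT_all} into the two-sided statement required here; without this device one would be forced to establish a direct two-sided anchor lemma, which in effect amounts to the same computation.
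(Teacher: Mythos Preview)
Your proposal is correct and follows essentially the same route as the paper: pass to a doubly-connected chart around each $\Gamma_j$, use Laurent polynomials to obtain a dense class of boundary data that extend across $\Gamma_j$, invoke the anchor lemmas to replace $h$ and $\mathbf{O}_{1,2}h$ by this extension on the two sides, and finish by homotopy of the two limiting contours inside the annulus, then close by density using boundedness of $\mathbf{O}_{1,2}$ and $\mathbf{J}^q$ in $H^1_{\mathrm{conf}}$ (respectively $\dot{\mathbf{O}}_{1,2}$ and $\dot{\mathbf{J}}$ in the Dirichlet semi-norm for part~(2)).

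The one genuine variation is that you work with \emph{holomorphic} $\beta$ on the full doubly-connected neighbourhood, so that $\partial_w\mathscr{G}\cdot\beta$ is a holomorphic one-form and Cauchy's theorem applies directly. The paper instead allows $h\in\mathcal{D}_{\mathrm{harm}}(U)$ and handles the resulting area term $\iint_{E_\epsilon}\partial_w\mathscr{G}\wedge\overline{\partial}h$ by Cauchy--Schwarz together with the fact that quasicircles have measure zero; your choice avoids this step. For part~(2), note that the density argument must be run with some care: $\mathbf{G}_{A^1,\riem_1}$ is \emph{not} bounded between Dirichlet semi-norms (different constants on different collar components can blow up the output), so the approximation should be taken in $H^1_{\mathrm{conf}}(A^1)$ on the input side while the conclusion is read in $\dot{\mathcal{D}}_{\mathrm{harm}}(\riem_1)$ on the output side. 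The paper makes this asymmetry explicit; your plan is compatible with it but you should be aware of the pitfall.
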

  \begin{proof} We prove first claim.
   Choose doubly-connected neighbourhoods $U_1,\ldots,U_n$ of the boundary curves $\Gamma$ with charts $\phi_m:U_m \rightarrow \mathbb{A}_m$, where each $\mathbb{A}_m = \{z : r_m < |z|<R_m \}$ is an annular region in the plane. For $k=1,2$ let $A_m^k = U_m \cap \riem_k$ be collar neighbourhoods of $\Gamma$ in $\riem_k$, and set $B_m^k=\phi_m(A_m^k)$.  We claim that $\mathcal{D}_{\text{harm}}(U_m)$ is dense in
   $\mathcal{D}_{\text{harm}}(A^k_m)$ for each $k=1,\ldots,n$ with respect to the $H^1_{\mathrm{conf}}$ norms. 
   By conformal invariance of the $H^1_{\mathrm{conf}}$-norm, it is enough to prove that $\mathcal{D}_{\mathrm{harm}}(\mathbb{A}_m)$ is dense in $\mathcal{D}_{\mathrm{harm}}(B_m^k)$. 
   This follows immediately from the fact that polynomials 
   \[ p(z) = \sum_{l=s}^t z^{l}, \ \ s,t \in \mathbb{Z}, t\geq s   \]
   are dense in both $\mathcal{D}(\mathbb{A}_m)$ and $\mathcal{D}(B_m^k)$.
   
   Now let $h \in \mathcal{D}_{\text{harm}}$, and let $\Gamma_\epsilon^k$ be the level sets of Green's function $g_k$ for $k=1,2$, which are analytic curves for $\epsilon$ sufficiently close to zero. Letting $E_\epsilon$ be the region enclosed by these analytic curves, we have 
   \[ - \frac{1}{\pi i} \int_{\Gamma^2_\epsilon} \partial_w \mathscr{G}(w;z,q) \,h(w) -  \frac{1}{\pi i} \int_{\Gamma^1_\epsilon} \partial_w \mathscr{G}(w;z,q) \,h(w)  =  \iint_{E_\epsilon} \partial_w \mathscr{G}(w;z,q) \wedge_w \overline{\partial} h(w)  \]
   (note that the reversal of orientation of the contour integrals is taken into account). 
   Applying the Cauchy-Schwarz inequality to the right hand side we get 
   \[ \left| \frac{1}{\pi i} \int_{\Gamma^2_\epsilon} \partial_w \mathscr{G}(w;z,q) \,h(w) +  \frac{1}{\pi i} \int_{\Gamma^1_\epsilon} \partial_w \mathscr{G}(w;z,q) \,h(w) \right| \leq   \|\partial_w \mathscr{G}(w;z,q) \|_{\mathcal{A}_{\mathrm{harm}}(E_\epsilon)}   \| \overline{\partial} h(w) \|_{\mathcal{A}_{\mathrm{harm}}(E_\epsilon)}.   \]
   Since quasicircles have measure zero and $\cap_\epsilon E_\epsilon = \Gamma$, the right hand side goes to zero as $\epsilon \searrow 0$.  Thus
   \[ - \lim_{\epsilon \searrow 0} \frac{1}{\pi i} \int_{\Gamma^2_\epsilon} \partial_w \mathscr{G}(w;z,q) \,h(w) = \lim_{\epsilon \searrow 0}  \frac{1}{\pi i} \int_{\Gamma^1_\epsilon} \partial_w \mathscr{G}(w;z,q) \,h(w). \]  
   Now set $U=U_1 \cup \cdots U_n$ and $A^k=A^k_1 \cup \cdots \cup A^k_n$ and assume that $h \in \mathcal{D}_{\mathrm{harm}}(U)$.
   Using the above, together with the second anchor lemma \ref{le:anchor_lemma_two} and the fact that $\mathbf{G}_{A^2,\riem_2} h = \mathbf{O}_{1,2}\mathbf{G}_{A^1,\riem_1} h$, we have
   \begin{align} \label{eq:thingy_temp}
       \mathbf{J}_1^q \mathbf{G}_{A^1,\riem_1} h & = \mathbf{J}_1^q h = - \mathbf{J}_2^q h \nonumber \\
       & = -\mathbf{J}_2^q \mathbf{G}_{A^2,\riem_2} h \nonumber \\
       & = -\mathbf{J}_2^q \mathbf{O}_{1,2} \mathbf{G}_{A^1,\riem_1} h.
   \end{align}
   The proof is completed by the density of $H^1_{\text{conf}}(U)$ in $H^1_{\text{conf}}(A^1)$, the density of $\mathbf{G}_{A^1,\riem_1} H^1_{\text{conf}}(A^1)$ in $H^1_{\text{conf}}(\riem_1)$ (Theorem \ref{th:density_of_holo_collar}), and the boundedness of $\mathbf{J}^q_k$, $\mathbf{O}_{1,2}$, and $\mathbf{G}_{A^k,\riem_k}$ (Theorems \ref{th:J_bounded_Hconf}, \ref{thm:bounded_overfare_conf}, and \ref{th:bounce_bounded}).  
   
   {The proof of the second claim follows the same line, but requires a bit of care with the constants. First, observe that 
   \[  \mathbf{G}_{A^1,\riem_1} :\mathcal{D}_{\mathrm{harm}}(A^1) \rightarrow \dot{\mathcal{D}}_{\mathrm{harm}}(\riem_1)   \]
   is well-defined.  Furthermore, it is bounded with respect to the $H^1_{\mathrm{conf}}(A^1)$ and $\dot{\mathcal{D}}(\riem_1)$ norms, since the $H^1_{\mathrm{conf}}(\riem_1)$ norm dominates the Dirichlet semi-norm. The image is dense.  
   
   By \eqref{eq:thingy_temp} we have 
   \[  \dot{\mathbf{J}}_1 \dot{H} = - \dot{\mathbf{J}}_2 \dot{\mathbf{O}}_{1,2} \dot{H}  \]
   for all $\dot{H}$ arising from $H \in \mathbf{G}_{A^1,\riem_1} \mathcal{D}_{\mathrm{harm}}(A^1)$.  The second claim now follows from boundedness of $\dot{J}$ (Theorem \ref{th:jump_bounded_Dirichlet}) and boundedness of $\dot{\mathbf{O}}_{1,2}$ (Theorem \ref{thm:bounded_overfare_Dirichlet}).}

  \end{proof}   
  {
  \begin{remark}
   It is not true that  $\mathbf{G}_{A^1,\riem_1} :\mathcal{D}_{\mathrm{harm}}(A^1) \rightarrow \dot{\mathcal{D}}_{\mathrm{harm}}(\riem_1)$ is bounded with respect to the Dirichlet semi-norms. To see this, let $\riem_1$ be the annulus $\{z : 1<|z|<4 \}$, and let $A^1 = \{ z: 1<|z|<2 \} \cup \{ z: 3<|z|<4 \}$.  The claim is falsified by considering the function which is $1$ on $\{ z: 1<|z|<2 \}$ and $N$ on $\{ z: 3<|z|<4 \}$, and letting $N \rightarrow \infty$.  
  \end{remark}}

  The operator $\mathbf{J}^q_k$ satisfies a Plemelj-Sokhotski jump formula. Although we will not emphasize this role in this paper, the following theorem represents this fact. 
    The following improvement of Theorem 4.13 in \cite{Schippers_Staubach_Plemelj}, can be viewed as a CNT version of the Plemelj-Sokhotski jump formula. However, rather than referring to a function on the curve, we express the result in terms of the extensions into $\riem_1$, with the help of the overfare operator.  
   \begin{theorem} \label{th:Overfare_with_correction_functions}  
    The following statements hold:\\
    \begin{enumerate}
        \item  [$(1)$] Assume that every curve in the complex $\Gamma$ is a \emph{BZM} quasicircle. 
    For any $h \in \mathcal{D}_{\mathrm{harm}}(\riem_1)$, 
    \[  \mathbf{O}_{2,1} \mathbf{J}^q_{1,2} h = \mathbf{J}^q_{1,1} h - h   \]
    and for all $h \in \mathcal{D}_{\mathrm{harm}}(\riem_2)$
    \[  \mathbf{O}_{2,1} \mathbf{J}^q_{2,2} h - \mathbf{J}^q_{2,1} h =\mathbf{O}_{2,1} h.   \]
    \item  [$(2)$] Assume that $\riem_2$ is connected and $\Gamma$ is an arbitrary complex of quasicircles. Then for any $\dot{h} \in \dot{\mathcal{D}}_{\mathrm{harm}}(\riem_1)$, 
    \[  \dot{\mathbf{O}}_{2,1} \dot{\mathbf{J}}_{1,2} \dot{h} = \dot{\mathbf{J}}_{1,1} \dot{h} - \dot{h} \]
    and 
    \[  \dot{\mathbf{O}}_{2,1} \dot{\mathbf{J}}_{2,2} \dot{h} - \dot{\mathbf{J}}_{2,1} \dot{h} = \dot{\mathbf{O}}_{2,1} \dot{h}.   \]
    \end{enumerate}
   \end{theorem}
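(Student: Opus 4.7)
The plan is to first observe that the two identities in part $(1)$ are equivalent via the substitution $h \mapsto \mathbf{O}_{1,2} h$ together with Theorem~\ref{th:J_same_both_sides}, and then to prove one of them by a density argument that reduces to a Plemelj-Sokhotski-type jump computation in local charts. Specifically, for $h \in \mathcal{D}_{\mathrm{harm}}(\riem_1)$, set $g = \mathbf{O}_{1,2} h \in \mathcal{D}_{\mathrm{harm}}(\riem_2)$, so $h = \mathbf{O}_{2,1} g$. By Theorem~\ref{th:J_same_both_sides}$(1)$, $\mathbf{J}^q_{1,k} h = -\mathbf{J}^q_{2,k} g$ for $k=1,2$. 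Substituting these into the first identity and rearranging gives exactly the second identity for $g$, so it suffices to prove the first.

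Next I would invoke density. All operators appearing in the identity $\mathbf{O}_{2,1} \mathbf{J}^q_{1,2} h = \mathbf{J}^q_{1,1} h - h$ are bounded with respect to $H^1_{\mathrm{conf}}$ by Theorems~\ref{th:J_bounded_Hconf}, \ref{thm:bounded_overfare_conf}, \ref{th:bounce_bounded} and Proposition~\ref{pr:restriction_Hconf_bounded}. Hence the identity extends by continuity from any $H^1_{\mathrm{conf}}$-dense subset. By Theorem~\ref{th:density_of_holo_collar}, $\mathbf{G}_{U,\riem_1}\mathcal{D}(U)$ is dense, where $U$ is a union of collar neighbourhoods of the components of $\Gamma$ in $\riem_1$. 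Thus it is enough to prove the identity for $h = \mathbf{G}_{U,\riem_1} f$ with $f \in \mathcal{D}(U)$ holomorphic, and for such $h$ Proposition~\ref{prop:J_agrees_with_bounce} yields $\mathbf{J}^q_{1,k} h = \mathbf{J}^q_{1,k} f$.

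The crux is then to show that the one-sided CNT boundary values on $\Gamma$ of $\mathbf{J}^q_{1,1} f - f$ from $\riem_1$ and of $\mathbf{J}^q_{1,2} f$ from $\riem_2$ coincide up to a null set, which by Theorem~\ref{th:CNT_BVs_existence_uniqueness} is equivalent to the desired identity. Working near each component $\Gamma_k$ in a doubly-connected chart $\phi: V \to \mathbb{A}$, the pullback of the kernel $\partial_w \mathscr{G}(w;z,q)$ decomposes (using the local expansion from the proof of Theorem~\ref{th:Schiffer_operators_bounded} together with Example~\ref{ex:disk_kernels}) as a planar Cauchy kernel plus a one-form holomorphic in $w$ across $\phi(\Gamma_k)$. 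The holomorphic remainder, paired against $f$, produces the same limiting integral from either side; for the Cauchy kernel part, the classical Plemelj-Sokhotski jump formula on the image annulus (applicable since $f$ has CNT boundary values by Theorem~\ref{th:CNT_BVs_collar_existence}) produces a jump equal to $f|_\Gamma$ when $z$ crosses $\phi(\Gamma_k)$. The second anchor lemma (Lemma~\ref{le:anchor_lemma_two}) guarantees that only the CNT boundary values of $f$ enter, and the first anchor lemma (Lemma~\ref{le:anchor_lemma_one}) that the local computation is independent of the chosen contour. This establishes the required equality of boundary values.

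Part $(2)$ follows by the same strategy, using Theorem~\ref{th:J_same_both_sides}$(2)$ for the equivalence on the homogeneous space, Theorem~\ref{thm:bounded_overfare_Dirichlet} together with the connectedness of $\riem_2$ for boundedness of $\dot{\mathbf{O}}_{2,1}$, and the observation from Theorem~\ref{th:jump_on_holomorphic} that $\mathbf{J}^q$ descends to a $q$-independent operator $\dot{\mathbf{J}}$ on the quotient $\dot{\mathcal{D}}_{\mathrm{harm}}$. The hardest step will be the CNT jump analysis in the preceding paragraph: since quasicircles are not rectifiable, the classical Plemelj-Sokhotski formula cannot be invoked directly, and it is only through the combination of a doubly-connected chart, density of holomorphic collar functions, and the anchor lemmas that the planar jump formula can be rigorously transferred to the surface setting. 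A secondary bookkeeping challenge is tracking the residues at $q$ in the two cases $q \in \riem_1$ versus $q \in \riem_2$, which alter the constants appearing in $\mathbf{J}^q$ but cancel in the final identity.
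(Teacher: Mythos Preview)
Your overall architecture---reduce to a dense set of holomorphic collar data via Theorem~\ref{th:density_of_holo_collar} and Proposition~\ref{prop:J_agrees_with_bounce}, then extend by boundedness---matches the paper exactly, and your observation that the two identities in part~(1) are interchanged by the substitution $h\mapsto\mathbf{O}_{1,2}h$ together with Theorem~\ref{th:J_same_both_sides} is a clean simplification the paper does not make explicit (it derives both separately).

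The difference is in the ``crux'' step. You propose to pull back through a doubly-connected chart, split the kernel as planar Cauchy kernel plus holomorphic remainder, and invoke a planar jump formula. The paper avoids all of this: for $h\in\mathcal{D}(A_1)$ and $z\in A_1$, it simply deforms the limiting contour $\Gamma$ to an interior analytic curve $\Gamma'\subset A_1$. Since $\partial_w\mathscr{G}(w;z,q)\,h(w)$ is holomorphic in $w$ on $A_1\setminus\{z\}$, Stokes gives $\int_\Gamma-\int_{\Gamma'}=h(z)$ by a residue. The $\Gamma'$-integral is holomorphic in $z$ across $\Gamma$ and coincides with $-\mathbf{J}^q_{1,2}h$ on $\riem_2$, so it furnishes an explicit harmonic extension $H_2$ of $\mathbf{J}^q_{1,2}h$ into $A_1$ satisfying $\mathbf{J}^q_{1,1}h-H_2=h$ there; applying $\mathbf{G}_{A_1,\riem_1}$ finishes. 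This is done entirely on the surface with no chart, no kernel decomposition, and no appeal to a planar theorem. Your route works too, but note that after pulling back the curve $\phi(\Gamma_k)$ is still a non-rectifiable quasicircle, so ``the classical Plemelj--Sokhotski jump formula'' is not an off-the-shelf citation; what you would actually do in the plane is the same Stokes-plus-residue computation the paper does directly upstairs, so the detour through charts buys nothing.
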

   \begin{proof}  We prove (1). 
    Let $A_1$ be a collar neighbourhood of $\Gamma$ in $\riem_1$. 
     Assume that the boundary $\Gamma'$ is an analytic curve which is isotopic in the closure of $A_1$ to $\Gamma$.  Orient both curves positively with respect to $\riem_1$.    By shrinking $A$ and moving $\Gamma'$ we may assume that  $q$ is not in $A_1$. We assume that $z$ is in $A_1$.  Let $\gamma_r$ denote the curve $|w-z|=r$ in local coordinates, oriented positively with respect to $z$.
     
     Applying Stokes' theorem and assuming $h \in \mathcal{D}(A_1)$, for  $z \in A_1$ we have
    {\begin{equation*}
         - \frac{1}{\pi i}  \int_{\Gamma} \partial_w 
         \mathscr{G}(w;z,q) h(w) + \frac{1}{\pi i} \int_{\Gamma'} \partial_w \mathscr{G}(w;z,q) h(w)
          = - \frac{1}{\pi i} \lim_{r \searrow 0} \int_{\gamma_r}  \partial_w \mathscr{G}(w;z,q) h(w) = h(z).  
     \end{equation*}}
     The integrand of the second integral on the left hand side is holomorphic in $w$. Therefore the integral equals the limiting integral $-\mathbf{J}^q_{1,2} h$ for any $z \in \riem_2$, and furthermore, the integral over $\Gamma'$ is a harmonic function $H_2$ in $z$ extending $-\mathbf{J}^q_{1,2} h$ into $A_1 \cup \text{cl} (\riem_2)$. For $z \in A_1$ this function thus satisfies 
     \[  \mathbf{J}^q_{1,1} h(z) - H_2(z) = h(z).        \]
     Since the CNT boundary values of $\mathbf{J}^q_{1,2} h$ equal those of the extension $H_2$, we have proved that 
     \[  \mathbf{O}_{2,1} \mathbf{J}^q_{1,2} h(z) = \mathbf{G}_{A,\riem_1} H_2  =
      \mathbf{J}^q_{1,1} h(z) - \mathbf{G}_{A,\riem_1} h(z)  \]
      by the equation above. 
      
      Applying the second anchor lemma \ref{le:anchor_lemma_two}, we obtain for all $h \in \mathcal{D}(A_1)$
      \begin{equation} \label{eq:jump_on_dense_temp1}
        \mathbf{O}_{2,1} \mathbf{J}^q_{1,2} \mathbf{G}_{A_1,\riem_1} h(z)    =
      \mathbf{J}^q_{1,1}  \mathbf{G}_{A_1,\riem_1} h(z) - \mathbf{G}_{A_1,\riem_1} h(z),   
      \end{equation}
      as claimed.   A similar argument shows that for a collar neighbourhood $A_2$ of $\Gamma$ in $\riem_2$, for all $h \in \mathcal{D}(A_2)$ we have
      \begin{equation} \label{eq:jump_on_dense_temp2}
       { \mathbf{O}_{2,1} \mathbf{J}^q_{2,2} \mathbf{G}_{A_2,\riem_2} h(z)    =
      \mathbf{J}^q_{2,1}  \mathbf{G}_{A_2,\riem_2} h(z) +   \mathbf{O}_{2,1}\mathbf{G}_{A_2,\riem_2} h(z).}   
      \end{equation}
      Observe that the derivations of \eqref{eq:jump_on_dense_temp1} and \eqref{eq:jump_on_dense_temp2} required neither the assumption that $\Gamma$ is a BZM quasicircle nor the assumption that $\riem_2$ is connected. 
      
      A density argument completes the proof of the first claim of (1). 
       Recall that $\mathbf{G}_{A_1,\riem_1} \mathcal{D}(A_1)$ is dense in $H^1_{\mathrm{conf}}(\riem_1)$ by Theorem \ref{th:density_of_holo_collar}.
    Thus it is enough to prove the claim for $\mathbf{G}_{A_1,\riem_1} h$ for $h \in \mathcal{D}(A_1)$, since $\mathbf{G}_{A_1,\riem_1}$, $\mathbf{O}_{1,2}$, and $\mathbf{J}_{1,k}^q$ are bounded with respect to $H^1_{\mathrm{conf}}$ by Theorems \ref{th:bounce_bounded}, \ref{thm:bounded_overfare_conf}, and \ref{th:J_bounded_Hconf} respectively.  A similar density argument using \eqref{eq:jump_on_dense_temp2} shows the second claim of (1).  
      
      {We now prove the first claim of (2).  For any $h \in \mathcal{D}_{\mathrm{harm}}(A_1)$ we have that \eqref{eq:jump_on_dense_temp1} holds.  Arguing as in the proof of part (2) of Theorem \ref{th:J_same_both_sides}, we have that the set of $\dot{H}$ in $\dot{\mathcal{D}}_{\mathrm{harm}}(\riem_1)$
      of the form $H = \mathbf{G}_{A_1,\riem_1} h$ for $h \in \mathcal{D}(A_1)$ are dense in $\dot{\mathcal{D}}_{\mathrm{harm}}(\riem_1)$.  By \eqref{eq:jump_on_dense_temp1} we have for such $\dot{H}$ that
      \[   \dot{\mathbf{O}}_{2,1} \dot{\mathbf{J}}_{1,2} \dot{H} = \dot{\mathbf{J}}_{1,1} \dot{H} - \dot{H}.   \]
      The claim now follows from boundedness of $\dot{\mathbf{J}}_1$ and $\dot{\mathbf{O}}_{2,1}$, which is Theorems \ref{th:jump_bounded_Dirichlet} and \ref{thm:bounded_overfare_Dirichlet} respectively.  The proof of the second claim is similar. 
      }
   \end{proof}
  
\end{subsection}

\begin{subsection}{Adjoint identities for the Schiffer operators}
 \label{se:Schiffer_adjoint_identities}
 In this section, we prove some identities for the Schiffer operators.

 \begin{theorem}[Adjoint identities]  \label{th:adjoint_identities} For $j,k =1,2$ 
  $($not necessarily distinct$)$, 
  \[    \mathbf{T}_{j,k}^* = \overline{\mathbf{T}}_{k,j}.      \]
  If the genus of $\mathscr{R}$ is non-zero, then for $k=1,2$ we have 
  \[  \mathbf{R}_k^* = \mathbf{S}_k.      \]
 \end{theorem}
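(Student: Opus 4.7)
The plan is to expand both inner products in coordinates using the defining formulas, and reduce the problem to identities reflecting the symmetry and differential structure of the kernels $L_{\mathscr{R}}$ and $K_{\mathscr{R}}$.

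For the first identity, I would expand
\[
  (\mathbf{T}_{j,k}\overline{\alpha}, \beta)_{\mathcal{A}(\riem_k)} = i \iint_{\riem_k, z}\left(\iint_{\riem_j, w} L_{\mathscr{R}}(z,w) \wedge_w \overline{\alpha(w)}\right) \wedge_z \overline{\beta(z)},
\]
apply Fubini to interchange the order of integration, and use the symmetry $L_{\mathscr{R}}(z,w) = L_{\mathscr{R}}(w,z)$ (property (4)) to identify the result with $(\overline{\alpha}, \overline{\mathbf{T}}_{k,j}\beta)_{\overline{\mathcal{A}(\riem_j)}}$. When $j \neq k$ the kernel $L_{\mathscr{R}}$ is non-singular on $\riem_k \times \riem_j$, so Fubini applies directly. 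When $j = k$ there is a principal-value singularity at $z = w$, and I would handle it by first passing to the desingularized form \eqref{eq:nonsingular_Schiffer}, in which $L_{\mathscr{R}}$ is replaced by $L_{\mathscr{R}} - L_{\riem_k}$; by Schiffer's vanishing identity (Theorem \ref{th:Schiffer_vanishing_identity}) this replacement does not alter $\mathbf{T}_{k,k}$, while producing an absolutely integrable kernel. Since $L_{\riem_k}$ is also symmetric, the same Fubini-plus-symmetry argument concludes the identity for $j = k$.

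For the second identity $\mathbf{R}_k^* = \mathbf{S}_k$, the essential ingredient is the reproducing property of $K_{\mathscr{R}}$ on $\mathcal{A}(\mathscr{R})$:
\[
   \iint_{\mathscr{R}, w} K_{\mathscr{R}}(z,w) \wedge_w \alpha(w) = \alpha(z) \qquad \text{for all } \alpha \in \mathcal{A}(\mathscr{R}).
\]
Granted this, I would compute
\[
(\alpha, \mathbf{S}_k \beta)_{\mathcal{A}(\mathscr{R})} = i \iint_{\mathscr{R}, z} \alpha(z) \wedge_z \overline{\mathbf{S}_k \beta(z)},
\]
use the conjugation property $\overline{K_{\mathscr{R}}(z,w)} = -K_{\mathscr{R}}(w,z)$ from property (5) to rewrite $\overline{\mathbf{S}_k\beta}$, and apply Fubini to bring the inner integral into the form $\iint_{\mathscr{R}, z} \alpha(z) \wedge_z K_{\mathscr{R}}(w,z)$. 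After a sign change from swapping the two one-forms in the $z$-slot and then applying the reproducing identity (with the roles of $z$ and $w$ reversed), the inner integral evaluates to $-\alpha(w)$, and the two resulting minus signs combine with the factor $i$ to give $i \iint_{\riem_k} \alpha \wedge \overline{\beta} = (\mathbf{R}_k\alpha, \beta)_{\mathcal{A}(\riem_k)}$.

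The hard part will be establishing the reproducing property on the compact surface $\mathscr{R}$. Because $\mathscr{R}$ has no boundary and the logarithmic singularities of $\mathscr{G}$ contribute nothing in the limit, a naive Stokes argument applied to $\iint_{\mathscr{R},w} \bar\partial_w \mathscr{G} \wedge \alpha$ gives zero, and pulling $\partial_z$ through the integral would misleadingly yield zero for $\iint_\mathscr{R} K_{\mathscr{R}} \wedge_w \alpha$. I would instead use the identity $(\partial_z \bar\partial_w \mathscr{G}) \wedge_w \alpha = d_w((\partial_z \mathscr{G}) \cdot \alpha)$, valid since $\alpha$ is holomorphic and $\partial_w \alpha = 0$, and apply Stokes directly on $\mathscr{R} \setminus (D_\epsilon(z) \cup D_\epsilon(q))$. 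The contribution at $w = q$ vanishes in the limit because $\partial_z \mathscr{G}$ is independent of the normalization point $q$; the contribution at $w = z$ is precisely the content of Lemma \ref{le:limiting_circle_Schiffer_identity}, which gives $\lim_{\epsilon \searrow 0}\int_{\partial D_\epsilon(z)} \partial_z \mathscr{G}(w;z,q)\alpha(w) = \pi i\,\alpha(z)$. Taking into account the orientation of $\partial D_\epsilon(z)$ as a boundary of the complement and the overall factor $-1/(\pi i)$ in the definition of $K_{\mathscr{R}}$, this yields the reproducing identity. The non-zero genus hypothesis is natural since in genus zero $\mathcal{A}(\mathscr{R})$ is trivial, $K_{\mathscr{R}} = 0$, and both $\mathbf{R}_k$ and $\mathbf{S}_k$ vanish identically.
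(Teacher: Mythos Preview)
Your argument is correct and is the natural approach: Fubini plus the symmetry of $L_{\mathscr{R}}$ (desingularized via \eqref{eq:nonsingular_Schiffer} when $j=k$) for the first identity, and the reproducing property of $K_{\mathscr{R}}$ together with the conjugation symmetry $\overline{K_{\mathscr{R}}(z,w)}=-K_{\mathscr{R}}(w,z)$ for the second. The paper itself does not prove this here---it cites \cite[Theorems~3.11,~3.12]{Schippers_Staubach_Plemelj} for the single-curve case and notes that the proofs carry over verbatim to several curves; your approach is presumably what that reference contains, and your Stokes derivation of the reproducing identity via Lemma~\ref{le:limiting_circle_Schiffer_identity} (using that $\partial_z\mathscr{G}$ is independent of $q$, hence regular at $w=q$) is also sound.
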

 \begin{proof}
  In the case of a single quasicircle $\Gamma$, these are \cite[Theorems 3.11, 3.12]{Schippers_Staubach_Plemelj}. The proofs there hold for the case of 
  several quasicircles.    
 \end{proof}
 Also, observe that if we define 
 \begin{equation}\label{defn: Shk}
      \gls{Sharm} = \mathbf{S}_k \mathbf{P}_k + \overline{\mathbf{S}}_k \overline{\mathbf{P}}_k : \mathcal{A}_{\text{harm}}(\riem_k) \rightarrow \mathcal{A}_{\text{harm}}(\mathscr{R}) 
 \end{equation} 
 then we have by an elementary computation
 \begin{corollary} If the genus of $\mathscr{R}$ is non-zero then for $k=1,2$
   $(\mathbf{R}_k^{\mathrm{h}})^* = \mathbf{S}^{\mathrm{h}}_k$.
 \end{corollary}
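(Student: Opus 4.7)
The corollary is essentially a block-diagonal consequence of Theorem~\ref{th:adjoint_identities}, exploiting the orthogonal direct sum decomposition $\mathcal{A}_{\mathrm{harm}}(U) = \mathcal{A}(U) \oplus \overline{\mathcal{A}(U)}$ established in equation~\eqref{direct sum decomposition}. The plan is to write both $\mathbf{R}_k^{\mathrm{h}}$ and $\mathbf{S}_k^{\mathrm{h}}$ in block-diagonal form relative to this decomposition, so that the desired identity reduces to the two ``diagonal'' identities $\mathbf{R}_k^* = \mathbf{S}_k$ and $\overline{\mathbf{R}}_k^* = \overline{\mathbf{S}}_k$.

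First I would observe that $\mathbf{R}_k^{\mathrm{h}} = \mathbf{R}_k \mathbf{P}_\mathscr{R} + \overline{\mathbf{R}}_k \overline{\mathbf{P}}_\mathscr{R}$, since restriction preserves the type of a form; here $\mathbf{P}_\mathscr{R}, \overline{\mathbf{P}}_\mathscr{R}$ are the holomorphic and anti-holomorphic projections on $\mathcal{A}_{\mathrm{harm}}(\mathscr{R})$, which by orthogonality are self-adjoint. The second step is to establish the companion identity $\overline{\mathbf{R}}_k^* = \overline{\mathbf{S}}_k$ by complex conjugation of Theorem~\ref{th:adjoint_identities}. Concretely, for $\bar\alpha \in \overline{\mathcal{A}(\mathscr{R})}$ and $\bar\beta \in \overline{\mathcal{A}(\riem_k)}$, the inner product identity $(\bar\alpha, \bar\beta) = \overline{(\alpha,\beta)}$ (an immediate consequence of $\ast\bar\gamma = i\bar\gamma$, $\ast\gamma = -i\gamma$, and $\overline{\alpha \wedge \bar\beta} = \bar\alpha \wedge \beta$) yields
\[
(\overline{\mathbf{R}}_k \bar\alpha, \bar\beta) = \overline{(\mathbf{R}_k \alpha, \beta)} = \overline{(\alpha, \mathbf{S}_k \beta)} = (\bar\alpha, \overline{\mathbf{S}}_k \bar\beta).
\]

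The third and final step is the block-diagonal calculation itself. For arbitrary $\alpha + \bar\beta \in \mathcal{A}_{\mathrm{harm}}(\mathscr{R})$ and $\gamma + \bar\delta \in \mathcal{A}_{\mathrm{harm}}(\riem_k)$, using the orthogonality $\mathcal{A} \perp \overline{\mathcal{A}}$ in both the source and target,
\begin{align*}
(\mathbf{R}_k^{\mathrm{h}}(\alpha + \bar\beta), \gamma + \bar\delta)
 &= (\mathbf{R}_k \alpha, \gamma) + (\overline{\mathbf{R}}_k \bar\beta, \bar\delta) \\
 &= (\alpha, \mathbf{S}_k \gamma) + (\bar\beta, \overline{\mathbf{S}}_k \bar\delta) \\
 &= (\alpha + \bar\beta, \mathbf{S}_k \mathbf{P}_k(\gamma + \bar\delta) + \overline{\mathbf{S}}_k \overline{\mathbf{P}}_k(\gamma + \bar\delta)) \\
 &= (\alpha + \bar\beta, \mathbf{S}_k^{\mathrm{h}}(\gamma + \bar\delta)),
\end{align*}
which is exactly the statement $(\mathbf{R}_k^{\mathrm{h}})^* = \mathbf{S}_k^{\mathrm{h}}$.

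Honestly there is no real obstacle here; the only thing to be careful about is that $\mathbf{S}_k$ lands in $\mathcal{A}(\mathscr{R})$ and $\overline{\mathbf{S}}_k$ lands in $\overline{\mathcal{A}(\mathscr{R})}$, which together span $\mathcal{A}_{\mathrm{harm}}(\mathscr{R})$ via the same orthogonal decomposition, so the output of $\mathbf{S}_k^{\mathrm{h}}$ correctly pairs against elements of $\mathcal{A}_{\mathrm{harm}}(\mathscr{R})$. The hypothesis that the genus of $\mathscr{R}$ is non-zero is inherited directly from Theorem~\ref{th:adjoint_identities} (it guarantees $\mathbf{R}_k^* = \mathbf{S}_k$ in the first place).
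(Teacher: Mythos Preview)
Your proof is correct and is precisely the ``elementary computation'' the paper alludes to; the paper does not spell out the details, but what you have written is exactly the intended block-diagonal argument from Theorem~\ref{th:adjoint_identities} and the orthogonal splitting $\mathcal{A}_{\mathrm{harm}} = \mathcal{A} \oplus \overline{\mathcal{A}}$.
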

 
 \begin{theorem}[Quadratic adjoint identities, Part I]  \label{th:quadratic_adjoint_one} 
    If $\mathscr{R}$ is of genus $g>0$ then
  \[  \mathbf{S}_1 \mathbf{S}_1^* + \mathbf{S}_2 \mathbf{S}_2^* = \mathbf{I}  \]
  and 
  \[  \overline{\mathbf{S}}_1 \overline{\mathbf{S}}_1^* + \overline{\mathbf{S}}_2 \overline{\mathbf{S}}_2^* = \mathbf{I}.      \]
 \end{theorem}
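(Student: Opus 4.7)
The plan is to reduce everything to the Bergman reproducing property via the basic adjoint identity $\mathbf{R}_k^* = \mathbf{S}_k$ proved in Theorem \ref{th:adjoint_identities}. Since that identity says $\mathbf{S}_k^* = \mathbf{R}_k$ as maps $\mathcal{A}(\mathscr{R}) \to \mathcal{A}(\riem_k)$, the first identity to prove is equivalent to
\[
\mathbf{S}_1 \mathbf{R}_1 \alpha + \mathbf{S}_2 \mathbf{R}_2 \alpha = \alpha
\]
for every $\alpha \in \mathcal{A}(\mathscr{R})$.

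First, I would unwind the definitions. By Definition of the Schiffer operator and of the restriction, the left-hand side evaluated at $z \in \mathscr{R}$ equals
\[
\iint_{\riem_1} K_{\mathscr{R}}(z,w) \wedge \alpha(w) + \iint_{\riem_2} K_{\mathscr{R}}(z,w) \wedge \alpha(w).
\]
Since $\Gamma = \Gamma_1 \cup \cdots \cup \Gamma_n$ is a finite union of quasicircles, it has two-dimensional Lebesgue measure zero (this is a standard consequence of quasiconformal absolute continuity on lines, and also follows from the fact that a quasicircle is the image of a rectifiable curve under a quasiconformal map and quasiconformal maps preserve sets of measure zero). Hence $\mathscr{R} = \riem_1 \sqcup \Gamma \sqcup \riem_2$ differs from $\riem_1 \sqcup \riem_2$ only by a null set, and the two integrals combine into a single integral over $\mathscr{R}$. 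The reproducing property \eqref{eq:Bergman_reproducing} then gives the value $\alpha(z)$, finishing the proof of the first identity.

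For the second identity, I would conjugate. Since complex conjugation is an antilinear isometry between $\mathcal{A}(U)$ and $\overline{\mathcal{A}(U)}$ for any $U$, one has $(\overline{\mathbf{M}})^* = \overline{\mathbf{M}^*}$ in general. Applied to $\mathbf{S}_k^* = \mathbf{R}_k$ this gives $\overline{\mathbf{S}}_k^* = \overline{\mathbf{R}}_k$, so
\[
\overline{\mathbf{S}}_1 \overline{\mathbf{S}}_1^* + \overline{\mathbf{S}}_2 \overline{\mathbf{S}}_2^* = \overline{\mathbf{S}}_1 \overline{\mathbf{R}}_1 + \overline{\mathbf{S}}_2 \overline{\mathbf{R}}_2 = \overline{\mathbf{S}_1 \mathbf{R}_1 + \mathbf{S}_2 \mathbf{R}_2} = \overline{\mathbf{I}} = \mathbf{I},
\]
where the penultimate step uses the first identity we just established.

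There is essentially no obstacle: the whole content of the theorem is packaged into Theorem \ref{th:adjoint_identities} (adjoint identity for $\mathbf{S}_k$ and $\mathbf{R}_k$) together with the Bergman reproducing kernel formula. The only point that deserves a line of justification is that the separating complex $\Gamma$ has zero area, so that the measure-theoretic disjoint decomposition $\mathscr{R} = \riem_1 \cup \riem_2$ (up to null set) is available. The genus hypothesis $g > 0$ is needed only insofar as it is required for Theorem \ref{th:adjoint_identities}; the argument above then invokes that theorem as a black box, so no separate handling of the genus is needed here.
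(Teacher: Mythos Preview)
Your proof is correct and follows essentially the same route as the paper's: invoke $\mathbf{S}_k^* = \mathbf{R}_k$ from Theorem~\ref{th:adjoint_identities}, combine the two integrals over $\riem_1$ and $\riem_2$ into one over $\mathscr{R}$ using that quasicircles have measure zero, apply the Bergman reproducing property, and then conjugate for the second identity. The paper's version is a bit terser but the argument is the same.
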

 \begin{proof}
  These identities follow from the 
  reproducing property of Bergman kernel.  For $\alpha \in \mathcal{A}(\mathscr{R})$ we have, using the fact that quasicircles have measure zero (see e.g. \cite{Lehto})
  \begin{align*}
    (\mathbf{S}_1 \mathbf{S}_1^* + \mathbf{S}_2 \mathbf{S}_2^* )\, \alpha (z) & = \iint_{\riem_1} K_{\mathscr{R}}(z,w) \alpha(w)  + \iint_{\riem_2} K_{\mathscr{R}}(z,w) \alpha(w) \\ & = \iint_{\mathscr{R}} K_{\mathscr{R}}(z,w) \alpha(w)
    = \alpha(w)
  \end{align*}
  which proves the first identity.  The second identity is the complex conjugate of the first.
 \end{proof}
 We will repeatedly use the fact that quasicircles have measure zero in this way, in order to express an integral over $\mathscr{R}$ as the sum of integrals over $\riem_1$ and $\riem_2$, without mentioning it each time. 
 
 To prove quadratic adjoint identities involving $\mathbf{T}$, we require a lemma.
 \begin{lemma}  \label{le:double_L_vanishes_general_genus}
 For any $w,z \in \mathscr{R}$, 
  \[  \iint_{\mathscr{R},\zeta} L_{\mathscr{R}}(z,\zeta) \wedge \overline{L_\mathscr{R}(\zeta,w)} = K_\mathscr{R}(z,w)      \]
  where the integral is interpreted as a principal value integral.  In particular, if the genus of $\mathscr{R}$ is zero then
   \[  \iint_{\mathscr{R},\zeta} L_{\mathscr{R}}(z,\zeta) \wedge \overline{L_\mathscr{R}(\zeta,w)} = 0.     \]
 \end{lemma}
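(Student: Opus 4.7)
The plan is to fix distinct points $z,w\in\mathscr{R}$, exhibit the integrand as an exact 1-form in $\zeta$ away from $\zeta=z$ and $\zeta=w$, and then extract the right-hand side by Stokes' theorem as a residue contribution from the two points.

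First I would introduce the primitive $\phi(\zeta):=\partial_z\mathscr{G}(\zeta;z,q)$ and $\omega(\zeta):=\overline{L_\mathscr{R}(\zeta,w)}$, viewing both as bi-forms with the $z$- and $w$-variables held fixed. On $\mathscr{R}\setminus\{z,w\}$ the function $\mathscr{G}$ is harmonic, so $\omega$ is $d_\zeta$-closed (its $\bar\partial_\zeta$ vanishes for type reasons on a $1$-dimensional complex manifold, and its $\partial_\zeta$ vanishes by harmonicity of $\mathscr{G}$ in $\zeta$). A direct computation from the definitions gives
\[
d_\zeta\phi \;=\; \pi i\,L_\mathscr{R}(z,\zeta)\;-\;\pi i\,K_\mathscr{R}(z,\zeta),
\]
and hence, since $K_\mathscr{R}(z,\zeta)$ and $\omega$ are both of type $(0,1)$ in $\zeta$, the wedge product $K_\mathscr{R}(z,\zeta)\wedge\omega$ vanishes and one obtains the key local identity
\[
L_\mathscr{R}(z,\zeta)\wedge\overline{L_\mathscr{R}(\zeta,w)} \;=\; \tfrac{1}{\pi i}\,d_\zeta(\phi\,\omega) \quad\text{on } \mathscr{R}\setminus\{z,w\}.
\]

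Next I would apply Stokes' theorem on $\mathscr{R}_\varepsilon:=\mathscr{R}\setminus(D_\varepsilon(z)\cup D_\varepsilon(w))$, with $\partial\mathscr{R}_\varepsilon$ oriented as $-C_\varepsilon(z)-C_\varepsilon(w)$ (where $C_\varepsilon(p)$ is the positively oriented boundary circle). As $\varepsilon\searrow 0$ the boundary contribution at $\zeta=z$ vanishes by Lemma~\ref{le:limiting_circle_Schiffer_identity}, because $\omega$ is smooth and anti-holomorphic in $\zeta$ at $\zeta=z$. The contribution at $\zeta=w$ is extracted by Taylor expanding $\phi$ around $\zeta=w$ and using the local expansion $\overline{L_\mathscr{R}(\zeta,w)}\sim \frac{d\bar\zeta\,d\bar w}{2\pi i(\bar\zeta-\bar w)^2}$ provided by Example~\ref{ex:sphere_kernels}; only the $\bar\partial_\zeta$-derivative term of $\phi$ survives the angular integration, and by the defining identity $K_\mathscr{R}(z,w)=-\tfrac{1}{\pi i}\partial_z\bar\partial_w\mathscr{G}(w;z,q)$ it produces exactly $\pi i\,K_\mathscr{R}(z,w)$. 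Collecting everything yields the identity $\iint_\mathscr{R}L_\mathscr{R}(z,\zeta)\wedge\overline{L_\mathscr{R}(\zeta,w)}=K_\mathscr{R}(z,w)$. The principal-value clause is essentially cosmetic: a direct angular-integration check shows that the small-disk integrals of $L\wedge\overline{L}$ around each of $z$ and $w$ vanish in the limit, so the PV interpretation agrees with $\lim_{\varepsilon\to 0}\iint_{\mathscr{R}_\varepsilon}$.

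For the particular case of genus zero the identity reduces immediately to $\iint_\mathscr{R}L\wedge\overline{L}=0$, because by Example~\ref{ex:sphere_kernels} (conformal invariance and the explicit Riemann-sphere computation) $K_\mathscr{R}\equiv 0$ on a compact surface of genus zero. The main obstacle is the bookkeeping in the residue calculation at $\zeta=w$: one must simultaneously track the orientation of $\partial\mathscr{R}_\varepsilon$, the sign convention $d\zeta\wedge d\bar\zeta=-2i\,dA$, and the prefactors $\pm\tfrac{1}{\pi i}$ in the definitions of $L_\mathscr{R}$ and $K_\mathscr{R}$, ensuring that the unique non-vanishing Fourier mode in the angular integration produces $K_\mathscr{R}(z,w)$ with the correct sign.
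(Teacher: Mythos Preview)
Your approach is correct and is essentially the mirror image of the paper's own argument: both write the integrand as $d_\zeta$ of something on $\mathscr{R}\setminus\{z,w\}$ and apply Stokes to reduce to two small-circle integrals, one of which vanishes and the other of which produces $K_\mathscr{R}(z,w)$. The difference is only in which factor you choose a primitive for. The paper takes $g(\zeta)=\overline{\tfrac{1}{\pi i}\partial_w\mathscr{G}(\zeta,w)}$ as a primitive for $\overline{L_\mathscr{R}(\zeta,w)}$ and writes $L_\mathscr{R}(z,\zeta)\wedge\overline{L_\mathscr{R}(\zeta,w)}=-d_\zeta(gL_\mathscr{R}(z,\zeta))$; the nontrivial residue then comes from $\zeta=z$ (where $L_\mathscr{R}(z,\zeta)$ is singular) and the $\zeta=w$ circle contributes nothing. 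You instead take $\phi(\zeta)=\partial_z\mathscr{G}(\zeta;z,q)$ so that $L_\mathscr{R}(z,\zeta)\wedge\overline{L_\mathscr{R}(\zeta,w)}=\tfrac{1}{\pi i}d_\zeta(\phi\,\omega)$; the nontrivial residue now comes from $\zeta=w$ (where $\omega$ is singular), and Lemma~\ref{le:limiting_circle_Schiffer_identity} kills the $\zeta=z$ circle. Either choice works and leads to the same computation up to this symmetry.

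One small correction: the local expansion $\overline{L_\mathscr{R}(\zeta,w)}\sim \tfrac{1}{2\pi i}\,\dfrac{d\bar\zeta\,d\bar w}{(\bar\zeta-\bar w)^2}$ is not a consequence of Example~\ref{ex:sphere_kernels} (which is specific to $\sphere$) but rather of the defining logarithmic singularity of $\mathscr{G}$ on any compact surface; this is exactly how the paper obtains it in its own proof. Your caveat about the sign bookkeeping is well taken---as in the paper's argument, all the content is in tracking the orientation of $\partial\mathscr{R}_\varepsilon$ and the $\tfrac{1}{\pi i}$ prefactors consistently.
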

 {\begin{proof}
  Fix $w=w_0$ and $z=z_0$ in the integrals above.  Let  $\gamma^\varepsilon_{w_0}$ be curves such that $\psi \circ \gamma^\varepsilon_{w_0}$ are given by $|\eta|=\varepsilon$ for a chart $\psi(\zeta)=\eta$ near $w_0$, which takes $w_0$ to $0$. Define $\gamma^\varepsilon_{z_0}$ similarly.  Let $\mathscr{R}^\varepsilon$ be the region in $\mathscr{R}$ bounded by the curves $\gamma^\varepsilon_{z_0}$ and $\gamma^\varepsilon_{w_0}$ but not containing $z_0$ and $w_0$.  We assume these curves have positive orientation with respect to $\mathscr{R}^\varepsilon$. 
  
   In these coordinates, we have (setting $u=\psi(z)$ and $v=\psi(w)$)
  \[ \frac{1}{\pi i} \partial_{\bar{w}} \mathscr{G}(\zeta,w) = - \left( \frac{1}{2\pi i} 
    \frac{1}{\bar{\eta}} + \overline{\phi(\eta)} \right)d\overline{v}  \]
  where $\phi$ is a smooth function of $\eta$ which is uniformly bounded near $z$. We suppress dependence on $u$ and $v$ because we are fixing $w=w_0$ and $z=z_0$; however, we retain $d\bar{v}$ to emphasize that the quantity is a form in the $w$ variable.
  
  We then have
  \begin{align*}
      \iint_\mathscr{R} L_\mathscr{R}(z;\zeta) \wedge_\zeta \overline{L_\mathscr{R}(\zeta;w)} & = \lim_{\varepsilon \searrow 0} \iint_{\mathscr{R}^\varepsilon}  L_\mathscr{R}(z;\zeta) \wedge_\zeta \overline{L_\mathscr{R}(\zeta;w)} \\
      & = \lim_{\varepsilon \searrow 0} \left[ \int_{\gamma^\varepsilon_{w_0}}  L_{\mathscr{R}}(z;\zeta)   \overline{ \frac{1}{\pi i} \partial_w \mathscr{G}(\zeta,w)}   
      + \int_{\gamma^\varepsilon_{z_0}} L_\mathscr{R}(z;\zeta)  \overline{ \frac{1}{\pi i} \partial_w \mathscr{G}(\zeta,w)}   \right]. \\
  \end{align*}
  
  Now in $\eta$-coordinates $L(z;\zeta) = \rho(\eta)\, d\eta$ for some holomorphic function $\rho(\eta)$, and so the first term is (where the integral is with respect to $\eta$)
  \[  \lim_{\varepsilon \rightarrow 0} \int_{\psi \circ \gamma^\varepsilon_{w_0}} \left( \rho(\eta)   \left( \frac{1}{2\pi i} 
    \frac{1}{\overline{\eta}} + \overline{\phi(\eta)}  \right)  d\eta \right)\,du \, d\bar{v} =0.   \]
   Here we have used the fact that if $\eta = \varepsilon e^{i\theta}$ then 
  \[  \frac{d\eta}{\overline{\eta}}=e^{2 i \theta} d\theta.      \]     
  
  On the other hand, in the second term it is $L_{\mathscr{R}}$ that is singular while $\partial_w \mathscr{G}$ is non-singular.  Fix $w$ and ignore the $dw$.  Now let $\eta=\phi(\zeta)$ be a holomorphic coordinate vanishing at $z$ and let the level curves $\gamma_z^\varepsilon$ be as above, and let $u=\phi(z)$ and $v=\phi(w)$. We may write 
  \[  \frac{1}{\pi i} \partial_{\bar{w}} \mathscr{G}(\zeta,w) = \left( h_1(\eta) + \overline{h_2(\eta)}\right) d\overline{u}       \]
  where $h_1$ and $h_2$ are holomorphic.  Now writing 
  $\overline{h_2(\eta)} = a_0 + a_1 \bar{\eta} + a_2 \bar{\eta}^2 + \cdots$
  and observing that (suppressing the fixed $z$, but keeping $dv$ to indicate the fact that it is a form) 
  \[  L_\mathscr{R}(z,\zeta) = \left(-  \frac{1}{2 \pi i} \frac{d\eta}{\eta^2} + k(\eta) d\eta  \right) dv    \]
  where $k$ is holomorphic.  Integrating this kernel against $\overline{h_2(\eta)} d\bar{u}$ is zero in the limit, so from this it is easily seen that 
  \begin{align*}
    \lim_{\varepsilon \rightarrow 0} \int_{\gamma^\varepsilon_z} L_\mathscr{R}(z;\zeta)   \overline{ \frac{1}{\pi i} \partial_w \mathscr{G}(\zeta,w)}  &  =   
    - \lim_{\varepsilon \rightarrow 0} \int_{\gamma^\varepsilon_z} L_\mathscr{R}(z;\zeta)    \frac{1}{\pi i} \partial_{\bar{w}} \mathscr{G}(\zeta,w) \, du d \bar{v} \\
    & = \lim_{\varepsilon \rightarrow 0} \int_{\psi \circ \gamma^\varepsilon_z}  
    h_1(\eta) \frac{d \eta}{2 \pi i \eta^2} \, du \, d \bar{v}  =  - h_1'(0)  \, du \, d \bar{v}
  \end{align*}
  where the final sign change results from the fact that the curve $\gamma^\varepsilon_z$ is negatively oriented with respect to $z$.   Now observing that 
  \[  - h_1'(0)  \, du \, d \bar{v}= - \frac{1}{\pi i} \partial_z \partial_{\bar{w}} \mathscr{G}(z,w)        = K_\mathscr{R}(z,w) \]
  the proof of the first claim is complete.   
  
  In the case that $\mathscr{R}$ has genus zero, by  Example \ref{ex:sphere_kernels} we have that $K_{\mathscr{R}}=0$, which proves the second claim. 
 \end{proof}}
 
Using this lemma, we can prove the following. 
 \begin{theorem}[Quadratic adjoint identities, part II]  \label{th:general_adjoint_double_identities}
  If $\mathscr{R}$ has genus $g>0$, then 
  \begin{align*}
    \mathbf{I} & = \mathbf{T}_{1,1}^* \mathbf{T}_{1,1} + \mathbf{T}_{1,2}^* \mathbf{T}_{1,2} + \overline{\mathbf{S}}_1^* \overline{\mathbf{S}}_1 \\
    \mathbf{I} & = \mathbf{T}_{2,1}^* \mathbf{T}_{2,1} + \mathbf{T}_{2,2}^* \mathbf{T}_{2,2} + \overline{\mathbf{S}}_2^* \overline{\mathbf{S}}_2 \\
    0 & = \mathbf{T}_{1,1}^* \mathbf{T}_{2,1} + \mathbf{T}_{1,2}^* \mathbf{T}_{2,2} + \overline{\mathbf{S}}_1^* \overline{\mathbf{S}}_2 \\
    0 & = \mathbf{T}_{2,2}^* \mathbf{T}_{1,2} + \mathbf{T}_{2,1}^* \mathbf{T}_{1,1} + \overline{\mathbf{S}}_2^* \overline{\mathbf{S}}_1. 
  \end{align*}
  If $\mathscr{R}$ has genus $g=0$, then 
   \begin{align*}
    \mathbf{I} & = \mathbf{T}_{1,1}^* \mathbf{T}_{1,1} + \mathbf{T}_{1,2}^* \mathbf{T}_{1,2}  \\
    \mathbf{I} & = \mathbf{T}_{2,1}^* \mathbf{T}_{2,1} + \mathbf{T}_{2,2}^* \mathbf{T}_{2,2}  \\
    0 & = \mathbf{T}_{1,1}^* \mathbf{T}_{2,1} + \mathbf{T}_{1,2}^* \mathbf{T}_{2,2} \\
    0 & = \mathbf{T}_{2,2}^* \mathbf{T}_{1,2} + \mathbf{T}_{2,1}^* \mathbf{T}_{1,1}.
  \end{align*}
 \end{theorem}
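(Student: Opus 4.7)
The four identities can be assembled into the single operator statement $\mathbf{M}^*\mathbf{M}=\mathbf{I}$, where
\[
\mathbf{M} = \begin{pmatrix} \mathbf{T}_{1,1} & \mathbf{T}_{2,1} \\ \mathbf{T}_{1,2} & \mathbf{T}_{2,2} \\ \overline{\mathbf{S}}_1 & \overline{\mathbf{S}}_2 \end{pmatrix} : \overline{\mathcal{A}(\riem_1)}\oplus\overline{\mathcal{A}(\riem_2)} \longrightarrow \mathcal{A}(\riem_1)\oplus\mathcal{A}(\riem_2)\oplus\overline{\mathcal{A}(\mathscr{R})},
\]
with the third row absent when $\mathscr{R}$ has genus zero by Example~\ref{ex:sphere_kernels}. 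The adjoint identifications $\mathbf{T}_{j,k}^* = \overline{\mathbf{T}}_{k,j}$ and $\overline{\mathbf{S}}_k^* = \overline{\mathbf{R}}_k$ from Theorem~\ref{th:adjoint_identities} show that the four scalar equations of the theorem are precisely the matrix entries of $\mathbf{M}^*\mathbf{M} = \mathbf{I}$. By polarization this reduces to the single Plancherel-type identity $\|(\overline{\alpha}_1,\overline{\alpha}_2)\|^2 = \|\mathbf{M}(\overline{\alpha}_1,\overline{\alpha}_2)\|^2$. Since quasicircles have measure zero, the left-hand side is $\|\overline{\alpha}\|^2_{\overline{\mathcal{A}(\riem_1\cup\riem_2)}}$ for the combined form $\overline{\alpha}$, and the cross inner product between the two summands vanishes by disjointness of supports, which is precisely the content of the off-diagonal identities.

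My plan is to establish the norm identity by direct kernel computation, reducing to Lemma~\ref{le:double_L_vanishes_general_genus} and the reproducing property of $K_\mathscr{R}$. Expanding $\|\mathbf{M}\overline{\alpha}\|^2$ by Fubini, the cross integrals coming from the $\mathbf{T}$-piece take the form
\[
\iint_{\mathscr{R},z} L_\mathscr{R}(\zeta,z)\wedge\overline{L_\mathscr{R}(z,w)}\,dA(z),
\]
which, by the symmetry $L_\mathscr{R}(z,\zeta)=L_\mathscr{R}(\zeta,z)$ and Lemma~\ref{le:double_L_vanishes_general_genus}, is a constant multiple of $K_\mathscr{R}(\zeta,w)$ (and is identically zero in the genus-zero case). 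The $\overline{\mathbf{S}}$-piece, evaluated via the reproducing property of $K_\mathscr{R}$, produces the same bilinear expression in $K_\mathscr{R}(\zeta,w)$ but with the opposite sign, so that the two contributions cancel in genus $>0$. What remains is exactly the Bergman reproducing integral on $\overline{\mathcal{A}(\riem_1\cup\riem_2)}$, recovering $\|\overline{\alpha}\|^2$. In the genus-zero case, the $\overline{\mathbf{S}}$ term is absent on both sides, and the identity reduces to the classical Bergman--Schiffer norm identity, which follows directly from the vanishing clause of Lemma~\ref{le:double_L_vanishes_general_genus}.

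The main obstacle is to justify the Fubini step in the presence of the $1/(z-w)^2$ principal-value singularity of $L_\mathscr{R}$. A naive swap of the principal-value order produces delta-like boundary contributions that would corrupt the identity. The remedy is the desingularization afforded by Schiffer's identity (Theorem~\ref{th:Schiffer_vanishing_identity}, equation~\eqref{eq:nonsingular_Schiffer}): on $\overline{\mathcal{A}(\riem_k)}$ the kernel $L_\mathscr{R}$ may be replaced by the bounded bi-differential $L_\mathscr{R} - L_{\riem_k}$ without altering the operator $\mathbf{T}_{k,k}$. After this replacement the triple integrals become absolutely convergent, Fubini applies in the classical sense, and the computation collapses to the statement of Lemma~\ref{le:double_L_vanishes_general_genus} together with the reproducing property of $K_\mathscr{R}$. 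Tracking the sign conventions arising from the identifications $d\bar z\wedge dz = 2i\,dA$ and the Hodge star on $(1,0)$ versus $(0,1)$ forms is the only further care required.
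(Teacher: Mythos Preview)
Your approach is the paper's: desingularize via Schiffer's identity (Theorem~\ref{th:Schiffer_vanishing_identity}), apply Fubini, and invoke Lemma~\ref{le:double_L_vanishes_general_genus}. The matrix packaging $\mathbf{M}^*\mathbf{M}=\mathbf{I}$ is a clean organizational device, and for the \emph{off-diagonal} entries (identities three and four) your description matches the paper's computation exactly. There, $w\in\riem_1$ and $\zeta\in\riem_2$ lie in different pieces, so after Fubini the correction term $\iint_{\riem_1,z}L_1(z,w)\wedge\overline{L_\mathscr{R}(z,\zeta)}$ vanishes by Schiffer's identity in the $z$-variable, because $\overline{L_\mathscr{R}(\cdot,\zeta)}\in\overline{\mathcal{A}(\riem_1)}$ when $\zeta\notin\mathrm{cl}(\riem_1)$. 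What survives is exactly $\iint_{\mathscr{R},z}L_\mathscr{R}\overline{L_\mathscr{R}}=K_\mathscr{R}$, which is the $\overline{\mathbf{S}}$-contribution with opposite sign.

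There is a genuine gap in your treatment of the \emph{diagonal} identities (one and two). When both $w$ and $\zeta$ lie in $\riem_1$, the form $\overline{L_\mathscr{R}(\cdot,\zeta)}$ has a pole at $\zeta\in\riem_1$ and is \emph{not} in $\overline{\mathcal{A}(\riem_1)}$; Schiffer's identity in $z$ no longer kills the cross terms. After desingularizing both factors you are left with
\[
\iint_{\mathscr{R},z}L_\mathscr{R}\overline{L_\mathscr{R}}\;-\;\iint_{\riem_1,z}L_1\overline{L_\mathscr{R}}\;-\;\iint_{\riem_1,z}L_\mathscr{R}\overline{L_1}\;+\;\iint_{\riem_1,z}L_1\overline{L_1},
\]
and only the first piece is handled by Lemma~\ref{le:double_L_vanishes_general_genus}. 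Your intuition that ``what remains is the Bergman reproducing integral on $\overline{\mathcal{A}(\riem_1\cup\riem_2)}$'' points in the right direction---the last term does yield $K_{\riem_1}$ via a bordered-surface analogue of Lemma~\ref{le:double_L_vanishes_general_genus} (the boundary contribution vanishes because $\partial_w g_1(z,w)=0$ for $z\in\partial\riem_1$), and this reproduces $\|\overline{\alpha}_1\|^2$---but the two mixed terms must still be evaluated separately by a Stokes argument, and you have not indicated how. The paper sidesteps exactly this difficulty by citing \cite{Schippers_Staubach_Plemelj} for the first identity and carrying out only the easier off-diagonal computation; your unified isometry approach obliges you to supply the diagonal argument yourself.
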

 \begin{proof}  Assume that $\mathscr{R}$ has genus $g >0$.    
  The first identity was proven in \cite{Schippers_Staubach_Plemelj}, in the case of one boundary curve.  The proof given there extends verbatim to the case of several boundary curves and disconnected components without issue. The second identity is just the first, with the roles of $\riem_1$ and $\riem_2$ switched. 
  The fourth identity is just the third with the roles of $\riem_1$ and $\riem_2$ interchanged.  So it is enough to prove the third identity. 
  
  Let $v \in \mathcal{A}(\riem_1)$ and $u \in \mathcal{A}(\riem_2)$, and denote the Schiffer kernels of $\riem_k$ by $L_k$ for $k=1,2$.  Then setting $M = \mathbf{T}_{1,1}^* \mathbf{T}_{2,1} + \mathbf{T}_{1,2}^*\mathbf{T}_{2,2}$ and applying Theorems \ref{th:Schiffer_vanishing_identity} and \ref{th:adjoint_identities} yields that 
  \begin{align*}
   2i \left< v, M u \right> & = 
    2i \left< \mathbf{T}_{1,1} v , \mathbf{T}_{2,1} u \right> + 2i \left< \mathbf{T}_{1,2} v,\mathbf{T}_{2,2} u \right> \\
    & =  \iint_{1,z} \iint_{1,w} \iint_{2,\zeta} L_{\mathscr{R}}(z,w) \wedge_w \overline{v(w)}
    \wedge_z \overline{L_{\mathscr{R}}(z,\zeta)} \wedge_\zeta u(\zeta)  \\
    & \ \ \ + \iint_{2,z} \iint_{1,w} \iint_{2,\zeta} L_{\mathscr{R}}(z,w) \wedge_w \overline{v(w)}
    \wedge_z \overline{L_{\mathscr{R}}(z,\zeta)} \wedge_\zeta u(\zeta) \\
    & = \iint_{1,z} \iint_{1,w} \iint_{2,\zeta} \left(L_{\mathscr{R}}(z,w) - L_1(z,w) \right) \wedge_w \overline{v(w)} \wedge_z
    \overline{L_{\mathscr{R}}(z,\zeta)}\wedge_\zeta u(\zeta) \\
    & \ \ \ + \iint_{2,z} \iint_{1,w} \iint_{2,\zeta} L_{\mathscr{R}}(z,w) \wedge_w \overline{v(w)}
    \wedge_z \overline{\left( L_{\mathscr{R}}(z,\zeta) -L_2(z,\zeta) \right)} \wedge_\zeta u(\zeta)  \\
    & =  \iint_{1,w} \iint_{2,\zeta}  \overline{v(w)} \wedge_w u(\zeta) \wedge_\zeta \iint_{1,z} \left(L_{\mathscr{R}}(z,w) - L_1(z,w) \right) \wedge_z 
    \overline{L_{\mathscr{R}}(z,\zeta)}      \\
    & \ \ \ + \iint_{1,w} \iint_{2,\zeta} \overline{v(w)}  \wedge_w u(\zeta) \wedge_\zeta \iint_{2,z} L_{\mathscr{R}}(z,w) \wedge_z 
    \overline{\left( L_{\mathscr{R}}(z,\zeta) -L_2(z,\zeta) \right)}. 
  \end{align*}
  Reorganizing the two terms above we obtain 
  \begin{align*}
   2i \left< v, M u \right> & = - \iint_{1,w} \iint_{2,\zeta}  \overline{v(w)} \wedge_w u(\zeta) \wedge_\zeta \iint_{1,z} L_1(z,w) \wedge_z
    \overline{L_{\mathscr{R}}(z,\zeta)}      \\
    & \ \ \ - \iint_{1,w} \iint_{2,\zeta} \overline{v(w)} \wedge_w u(\zeta) \wedge_\zeta \iint_{2,z} L_{\mathscr{R}}(z,w) \wedge_z 
    \overline{L_2(z,\zeta)}  \\
    & \ \ \ + 2 \iint_{1,w} \iint_{2,\zeta} \overline{v(w)} \wedge_w u(\zeta) \wedge_\zeta \iint_{\mathscr{R},z} L_{\mathscr{R}}(z,w) \wedge_z
    \overline{L_{\mathscr{R}}(z,\zeta)}.
  \end{align*}
  Observing that $\zeta$ is not in the closure of $\riem_1$, the first term vanishes by Schiffer's identity (i.e. Theorem \ref{th:Schiffer_vanishing_identity}) applied to $L_1$.  Similarly the second term vanishes because $z$ is not in the closure of $\Sigma_2$.  Thus applying Lemma \ref{le:double_L_vanishes_general_genus}  and Theorem \ref{th:adjoint_identities} yield that
  \begin{align*}
   2i \left< v,Mu \right> & =  2 \iint_{1,w} \iint_{2,\zeta} \overline{v(w)} \wedge_w u(\zeta) \wedge_\zeta K_{\mathscr{R}}(w,\zeta) \\
   & = - {2}  \iint_{1,w} \iint_{2,\zeta} \overline{v(w)} \wedge_w K_{\mathscr{R}}(w,\zeta)  \wedge_\zeta u(\zeta) \\
   & = - 2i \left< v, \overline{\mathbf{R}}_1  \overline{\mathbf{S}}_2 u\right>.
  \end{align*}
  This completes the proof in the case of non-zero genus.  If $\mathscr{R}$ has genus zero, then all the computations above are still valid.  We need only observe that in the last step $K_{\mathscr{R}}=0$ by Example \ref{ex:sphere_kernels}.    
 \end{proof}

 Taking complex conjugates and using the adjoint identities of Theorem \ref{th:adjoint_identities}, we also have for non-zero genus: 
 \begin{align} \label{eq:other_general_adjoint_double}
    \mathbf{I} & = \mathbf{T}_{1,1} \mathbf{T}_{1,1}^* + \mathbf{T}_{2,1} \mathbf{T}_{2,1}^* + {\mathbf{S}}_1^* {\mathbf{S}}_1 \nonumber \\
    \mathbf{I} & = \mathbf{T}_{1,2} \mathbf{T}_{1,2}^* + \mathbf{T}_{2,2} \mathbf{T}_{2,2}^* + {\mathbf{S}}_2^* {\mathbf{S}}_2 \nonumber \\
    0 & = \mathbf{T}_{1,1} \mathbf{T}_{1,2}^* + \mathbf{T}_{2,1} \mathbf{T}_{2,2}^* + {\mathbf{S}}_1^* {\mathbf{S}}_2 \\ \nonumber
    0 & = \mathbf{T}_{2,2} \mathbf{T}_{2,1}^* + \mathbf{T}_{1,2} \mathbf{T}_{1,1}^* + {\mathbf{S}}_2^* {\mathbf{S}}_1, 
  \end{align}
 and in the genus zero case we have 
  \begin{align*}  
    \mathbf{I} & = \mathbf{T}_{1,1} \mathbf{T}_{1,1}^* + \mathbf{T}_{2,1} \mathbf{T}_{2,1}^*  \\
    \mathbf{I} & = \mathbf{T}_{1,2} \mathbf{T}_{1,2}^* + \mathbf{T}_{2,2} \mathbf{T}_{2,2}^*  \\
    0 & = \mathbf{T}_{1,1} \mathbf{T}_{1,2}^* + \mathbf{T}_{2,1} \mathbf{T}_{2,2}^*  \\ 
    0 & = \mathbf{T}_{2,2} \mathbf{T}_{2,1}^* + \mathbf{T}_{1,2} \mathbf{T}_{1,1}^*.
  \end{align*}
  
 Finally we have the following identity:
  \begin{theorem}[Quadratic adjoint identities, part III]  \label{th:general_adjoint_identity_double_final}  If $\mathscr{R}$ has non-zero genus, then
   \begin{align*}
     0 & = \mathbf{T}_{1,1} \overline{\mathbf{S}}_1^* + \mathbf{T}_{2,1} \overline{\mathbf{S}}_2^* \\
     0 & = \mathbf{T}_{1,2} \overline{\mathbf{S}}_1^* + \mathbf{T}_{2,2} \overline{\mathbf{S}}_2^*.
   \end{align*}
  \end{theorem}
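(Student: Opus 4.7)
The plan is to reduce both identities, via Theorem \ref{th:adjoint_identities}, to a single vanishing statement for an integral over the compact surface $\mathscr{R}$. Since $\mathbf{S}_k^* = \mathbf{R}_k$ (a consequence of $\mathbf{R}_k^* = \mathbf{S}_k$) and taking conjugates gives $\overline{\mathbf{S}}_k^* = \overline{\mathbf{R}}_k$, applying the claimed operator to any $\overline{\beta} \in \overline{\mathcal{A}(\mathscr{R})}$ produces, for $z \in \riem_j$,
\[
  (\mathbf{T}_{1,j}\overline{\mathbf{S}}_1^* + \mathbf{T}_{2,j}\overline{\mathbf{S}}_2^*)\overline{\beta}(z) \;=\; \iint_{\riem_1,w} L_\mathscr{R}(z,w)\wedge \overline{\beta(w)}\;+\;\iint_{\riem_2,w} L_\mathscr{R}(z,w)\wedge \overline{\beta(w)},
\]
where the integral on the component containing $z$ is understood as a principal value. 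Because quasicircles have Lebesgue measure zero, this simply equals the principal-value integral of $L_\mathscr{R}(z,\cdot) \wedge \overline{\beta}$ over all of $\mathscr{R}$. So the task reduces to proving that
\[
  \text{P.V.}\iint_{\mathscr{R},w} L_\mathscr{R}(z,w)\wedge \overline{\beta(w)} = 0
\]
for every $\overline{\beta} \in \overline{\mathcal{A}(\mathscr{R})}$ and every $z \in \mathscr{R}$.

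To prove this vanishing I will remove a small coordinate disk $D_\varepsilon(z)$ and apply Stokes' theorem on $\mathscr{R}\setminus D_\varepsilon(z)$, which is a compact manifold whose only boundary is $-\partial D_\varepsilon(z)$ (since $\mathscr{R}$ is closed). The crucial computation is identifying a primitive: letting $\eta(w) = \frac{1}{\pi i}\partial_z \mathscr{G}(w;z)$, its $(1,0)_w$-component is $L_\mathscr{R}(z,w)$ and its $(0,1)_w$-component is $-K_\mathscr{R}(z,w)$. Wedging with $\overline{\beta(w)}$, the $K$ contribution automatically vanishes for type reasons ($(0,1)\wedge(0,1)=0$ in $w$), and $d_w\overline{\beta}=0$ because $\overline{\beta}$ is anti-holomorphic, so
\[
  d_w\!\left[\eta(w)\,\overline{\beta(w)}\right] \;=\; L_\mathscr{R}(z,w)\wedge \overline{\beta(w)}.
\]
Stokes then converts the surface integral into $-\frac{1}{\pi i}\int_{\partial D_\varepsilon(z)} \partial_z \mathscr{G}(w;z)\,\overline{\beta(w)}$, which tends to $0$ as $\varepsilon \searrow 0$ by the second identity in Lemma \ref{le:limiting_circle_Schiffer_identity}. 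This completes the proof.

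The content of the argument is a compact-surface analogue of Schiffer's identity (Theorem \ref{th:Schiffer_vanishing_identity}): in the bordered case the vanishing uses $\partial_z g_\riem = 0$ on $\partial\riem$, while here the boundary of the ambient surface is empty so only the circle $\partial D_\varepsilon(z)$ contributes. The main subtlety, and the only place where care is genuinely required, is the type-vanishing of the Bergman piece $K_\mathscr{R}(z,w)\wedge\overline{\beta(w)}$; without it, the computation would produce an extra term proportional to $\overline{\beta}(z)$, which is precisely what one does get in the reproducing calculation of Lemma \ref{le:double_L_vanishes_general_genus} but which is absent here because we integrate against a $(0,1)$-form rather than against $\overline{L_\mathscr{R}(\zeta,w)}$. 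The nonzero genus hypothesis is used only to invoke the identity $\mathbf{S}_k = \mathbf{R}_k^*$; the Stokes calculation itself is valid without restriction on genus (and in genus zero the two identities are vacuous, since $\mathcal{A}(\mathscr{R})=0$).
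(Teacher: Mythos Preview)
Your proof is correct and follows essentially the same route as the paper: reduce via $\overline{\mathbf{S}}_k^* = \overline{\mathbf{R}}_k$ to the vanishing of $\iint_{\mathscr{R}} L_\mathscr{R}(z,w)\wedge\overline{\beta(w)}$, then apply Stokes on $\mathscr{R}\setminus D_\varepsilon(z)$ with primitive $\frac{1}{\pi i}\partial_z\mathscr{G}$ and invoke Lemma~\ref{le:limiting_circle_Schiffer_identity}. Two minor points: the identity you need from Lemma~\ref{le:limiting_circle_Schiffer_identity} is the \emph{first} one (against $\overline{\alpha}$), not the second; and when you write that $\eta(w)$ has ``$(1,0)_w$-component $L_\mathscr{R}$'' you mean the components of $d_w\eta$, since $\eta$ itself is a $0$-form in $w$.
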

  \begin{proof}
   First, recall that $\mathbf{S}_k^* = \mathbf{R}_k$ by Theorem \ref{th:adjoint_identities}. Thus the identities are equivalent to showing that  
   \[  \iint_{\mathscr{R}} L_{\mathscr{R}}(z,\zeta) \wedge_\zeta \overline{\alpha(\zeta)} = 0   \]
   for $z \in \riem_k$, $k=1,2$ and all $\alpha\in \mathcal{A}(\mathscr{R})$.  
  
   Fix $z \in \riem_k$.  Let $\gamma_\varepsilon$ be a curve given by 
   $|z-\zeta|=\varepsilon$ in a local coordinate chart, with orientation chosen to be positive with respect to $z$.  
  Stokes' theorem yields that the principal value integral is given by 
   \[   -\frac{1}{\pi i} \lim_{\varepsilon \searrow 0}  \int_{\gamma_\varepsilon}  \partial_z \mathscr{G}(z;\zeta, q) \overline{\alpha(\zeta)}  \]
   where $\mathscr{G}$ is Green's function of $\mathscr{R}$.  
   By Lemma \ref{le:limiting_circle_Schiffer_identity} this is zero.  
  \end{proof}
  
Taking complex conjugates and using \ref{th:adjoint_identities} we also obtain 
  \begin{align} \label{eq:general_adjoint_identity_double_final}
   0 & = \mathbf{S}_1 \mathbf{T}_{1,1} + \mathbf{S}_{2} \mathbf{T}_{1,2} \nonumber \\
   0 & = \mathbf{S}_{1} \mathbf{T}_{2,1} + \mathbf{S}_{2} \mathbf{T}_{2,2}.
  \end{align}
\end{subsection}
\begin{subsection}{The Schiffer operators on harmonic measures}  \label{se:Schiffer_on_harmonic_measures}
A relationship between the Schiffer operators and the harmonic measure is established in the following result:
\begin{theorem} \label{th:T_and_S_on_harmonic_measures} 
 Let  $d\omega$ be a harmonic measure on $\riem_1$.  Then
 \[  \mathbf{T}_{1,1} \overline{\partial} \omega = - \partial \omega + \mathbf{R}_1 \mathbf{S}_1 {\partial} \omega      \]
 and
     \[  \mathbf{T}_{1,2} \overline{\partial} \omega = \mathbf{R}_2 \mathbf{S}_1 \partial \omega. \]
\end{theorem}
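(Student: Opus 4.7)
The plan is to reduce both identities to a single Stokes theorem computation using the function $F(w;z) := \mathscr{G}(w;z,q) - g_1(w;z)$ for the first identity and $F(w;z) := \mathscr{G}(w;z,q)$ for the second, with $q \in \riem_2$ chosen so that $F$ is smooth in $w$ on $\riem_1$. Since $\omega$ is real-valued and harmonic, $\partial\omega \in \mathcal{A}(\riem_1)$ and $\overline{\partial}\omega = \overline{\partial\omega} \in \overline{\mathcal{A}(\riem_1)}$ lie in $L^2$ by Proposition \ref{pr:harmonic_measures_L2}. For the first identity, Schiffer's identity (Theorem \ref{th:Schiffer_vanishing_identity}) applied to $L_1$ on $\riem_1$ yields $\iint_{\riem_1}L_1\wedge\overline{\partial}\omega = 0$, while the Bergman reproducing property \eqref{eq:Bergman_reproducing} applied to the holomorphic one-form $\partial\omega$ gives $\iint_{\riem_1}K_1\wedge\partial\omega = \partial\omega$. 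The identity $\mathbf{T}_{1,1}\overline{\partial}\omega = -\partial\omega + \mathbf{R}_1\mathbf{S}_1\partial\omega$ therefore becomes equivalent to the vanishing statement
\[
\iint_{\riem_1}\Bigl[(L_\mathscr{R}-L_1)\wedge\overline{\partial}\omega - (K_\mathscr{R}-K_1)\wedge\partial\omega\Bigr] = 0.
\]
For the second identity $(z \in \riem_2)$ no desingularization is needed, since $\mathscr{G}(\cdot;z,q)$ is already smooth in $w$ on $\riem_1$; the same reduction applies with $L_1, K_1$ removed and $F$ replaced by $\mathscr{G}$.

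Writing $L_\mathscr{R}-L_1 = \frac{1}{\pi i}\partial_z\partial_w F$ and $K_\mathscr{R}-K_1 = -\frac{1}{\pi i}\partial_z\overline{\partial}_w F$ and pulling $\partial_z$ outside the integral (justified by smoothness of $F$ in $(z,w)$), the task reduces to
\[
\iint_{\riem_1}\bigl[\partial_w F\wedge\overline{\partial}\omega + \overline{\partial}_w F\wedge\partial\omega\bigr] = 0.
\]
A direct Leibniz computation in local coordinates shows that the integrand equals $-d_w(d_w F\cdot\omega)$, using only $d^2 = 0$ and the vanishing of wedge products of matching bidegrees ($\partial_w F \wedge \partial\omega = 0 = \overline{\partial}_w F \wedge \overline{\partial}\omega$). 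Applying Stokes' theorem transforms the integral into $-\int_{\partial\riem_1}d_w F\cdot\omega$. Since $\omega$ extends continuously to $\partial\riem_1$ with value $\delta_{k,m}$ on $\partial_k\riem_1$ (Definition \ref{def:harmonic measure}), the anchor lemmas (Lemmas \ref{le:anchor_lemma_one}, \ref{le:anchor_lemma_two}), applied separately to $\partial_w F \in \mathcal{A}(A)$ and to the conjugate of $\overline{\partial}_w F$ in a collar $A$ of each boundary curve, allow one to replace $\omega$ by its locally constant CNT boundary values, reducing the boundary integral to $\int_{\partial_m\riem_1}d_w F$. This vanishes because $F$ is single-valued on $\mathscr{R}$ (as both $\mathscr{G}$ and $g_1$ are single-valued functions) and $\partial_m\riem_1$ is a closed curve.

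The main obstacle is applying Stokes' theorem on $\riem_1$, whose boundary consists of quasicircles and is generally non-rectifiable. The plan is to handle this by an exhaustion argument: use the collar chart of Green's function of $\riem_1$ (Lemma \ref{le:Greens_collar_chart}) to produce analytic level curves bounding regions $\riem_1^r \subset \riem_1$ with $\partial\riem_1^r \to \partial \riem_1$ as $r \nearrow 1$, apply Stokes' theorem on each smooth region $\riem_1^r$, and pass to the limit using dominated convergence on the (absolutely convergent) interior integral together with the anchor lemmas on the boundary integral. Continuity of $\omega$ up to the border and single-valuedness of $F$ then close out the argument.
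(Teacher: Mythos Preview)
Your proof is correct and takes a genuinely different route from the paper's.

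The paper's argument exploits a particular choice of limiting curves: it takes $\Gamma_\varepsilon$ to be the level sets $\omega = \varepsilon$ and $\omega = 1-\varepsilon$ of the harmonic measure itself (using the collar chart of Lemma~\ref{le:harmonic_measure_collar_chart}). On such curves $d\omega$ vanishes tangentially, so $\overline{\partial}\omega = -\partial\omega$ there, and after applying Stokes' theorem to $\mathbf{T}_{1,1}\overline{\partial}\omega$ one can simply swap $\overline{\partial}\omega$ for $-\partial\omega$ in the boundary integral and then reverse Stokes to produce the $K_\mathscr{R}$ integral defining $\mathbf{S}_1\partial\omega$. The residue at $z$ is handled via Lemma~\ref{le:limiting_circle_Schiffer_identity}.

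Your approach instead desingularizes with $L_1, K_1$ (via Theorem~\ref{th:Schiffer_vanishing_identity} and the Bergman reproducing property~\eqref{eq:Bergman_reproducing}), recognizes the resulting integrand as the exact form $-d_w(\omega\, d_w F)$ with $F = \mathscr{G} - g_1$, and concludes from single-valuedness of $F$ together with the constancy of $\omega$ on each boundary component. The paper's trick is slicker and avoids invoking the reproducing properties of $L_1$ and $K_1$, but depends on the special geometry of the level curves of $\omega$; your argument is more structural and would work equally well with any exhausting family of analytic curves, since the anchor lemmas make the limiting integrals chart-independent. Both routes ultimately rest on the same information---that $\omega$ is locally constant on $\partial\riem_1$---but extract it at different stages.
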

\begin{proof}  Assume that $\omega=1$ on one boundary of $\riem_1$ and $0$ on the others.  It is enough to prove the claim for such $\omega$.  {We will need to use a particular set of limiting curves in computing the boundary integrals, for which the computation simplifies.}
{Let $\Gamma_\varepsilon$ denote the union of the level sets $\omega = \varepsilon$, $\omega= 1-\varepsilon$}. Using Lemma \ref{le:harmonic_measure_collar_chart},  for $\varepsilon$ sufficiently small this consists of $n$ disjoint curves each homotopic to $\partial_k \riem_1$ for a particular $k$.  Also, let $\gamma_r$ be as in Lemma \ref{le:limiting_circle_Schiffer_identity}.  
 We then have that, fixing $q \in \riem_2$, 
 \begin{align*}
    \mathbf{T}_{1,1} \overline{\partial} \omega & = \frac{1}{ \pi i } \iint_{\riem_1} 
    \partial_z \partial_w \mathscr{G}(w;z,q) \wedge_w \overline{\partial} \omega (w) \\
    & = \lim_{\varepsilon \searrow 0} \frac{1}{ \pi i } \int_{\Gamma_\varepsilon,w} \partial_z \mathscr{G}(w;z,q)\, \overline{\partial} \omega(w) - 
    \lim_{r \searrow 0}\frac{1}{ \pi i } \int_{\gamma_r,w} \partial_z \mathscr{G}(w;z,q)\, \overline{\partial} \omega(w).
 \end{align*}
 Applying Lemma \ref{le:limiting_circle_Schiffer_identity} twice and using the fact that $\overline{\partial}\omega(w)=- \partial \omega(w)$ on the level curves $\Gamma_\varepsilon$, we see that 
 \begin{align*}
     \mathbf{T}_{1,1} \overline{\partial} \omega(z) & = - \lim_{\varepsilon \searrow 0} \frac{1}{ \pi i } \int_{\Gamma_\varepsilon,w} \partial_z \mathscr{G}(w;z,q)\, {\partial}\, \omega(w) \\
     & = - \lim_{\varepsilon \searrow 0} \frac{1}{ \pi i } \int_{\Gamma_\varepsilon,w} \partial_z \mathscr{G}(w;z,q)\, {\partial} \omega(w) + 
    \lim_{r \searrow 0}\frac{1}{ \pi i } \int_{\gamma_r,w} \partial_z \mathscr{G}(w;z,q)\, {\partial} \omega(w) \\
     &  \ \ \ \ \ \    - 
    \lim_{r \searrow 0}\frac{1}{ \pi i } \int_{\gamma_r,w} \partial_z \mathscr{G}(w;z,q)\, {\partial} \omega(w) \\
    & = - \frac{1}{ \pi i } \iint_{\riem_1} 
    \partial_z \overline{\partial}_w \mathscr{G}(w;z,q) \wedge_w  {\partial} \omega (w) - \partial \omega(z)  \\
    & = \mathbf{R}_1 \mathbf{S}_1 \partial \omega (z) - \partial \omega (z).
 \end{align*}
 The proof of the second claim is similar, except with the integrals over $\gamma_r$ removed, since for $z \in \riem_2$ there are no singularities in $\riem_1$.
\end{proof}

\begin{definition}
 We say that $u \in \mathcal{A}_{\text{harm}}(\mathscr{R})$ is \emph{piecewise exact} if 
\[  \mathbf{R}_k^{\mathrm{h}} u  \in \mathcal{A}^{\mathrm{e}}_{\mathrm{harm}}(\riem_k)   \]
for $k=1,2$, where $\mathbf{R}_k^{\mathrm{h}}$ is as in Definition \ref{def: restriction ops}.  We denote the space of piecewise exact harmonic forms on $\mathscr{R}$ by $\gls{peform}(\mathscr{R})$. 
\end{definition}

{\begin{definition}\label{def:exact overfare}
  If $\riem_2$ is connected, the \emph{exact overfare} can be defined as follows. Given 
 the spaces $\mathcal{A}^\mathrm{e}$ of Definition \ref{def: exact holo and harm forms} we define 
 \[  \gls{exacto}: \mathcal{A}^{\mathrm{e}}(\riem_2) \rightarrow \mathcal{A}^{\mathrm{e}}(\riem_1) \]
 to be the unique operator satisfying 
 \begin{equation}\label{charac of exact overfare}
   \mathbf{O}^{\mathrm{e}}_{2,1} d = d \mathbf{O}_{2,1}.    
 \end{equation}      
 If $\riem_1$ is connected we may define $\mathbf{O}^\mathrm{e}_{1,2}$ in the same way.
\end{definition}}
\begin{corollary} \label{co:overfare_piecewise_exact} 
 For any harmonic measure $d\omega \in \mathcal{A}_{\mathrm{harm}}(\riem_1)$, $\mathbf{S}_1 \partial \omega + \overline{\mathbf{S}}_1 \overline{\partial} \omega = \mathbf{S}_1^{\mathrm{h}} d\omega \in \mathcal{A}_{\mathrm{harm}}^{\mathrm{pe}}(\mathscr{R})$.  
 Furthermore, if $\riem_2$ is connected, then
 \[  \mathbf{O}^{\mathrm{e}}_{2,1} \left(  \mathbf{R}^{\mathrm{h}}_2 \mathbf{S}_1 d \omega \right) =   \mathbf{R}^\mathrm{h}_1 \mathbf{S}^\mathrm{h}_1  d \omega + d\omega.   \]
\end{corollary}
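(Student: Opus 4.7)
The plan is to realize the harmonic one-form $\mathbf{S}_1^{\mathrm{h}} d\omega$ on $\mathscr{R}$ as the exterior derivative of the Cauchy--Royden image $\mathbf{J}_1^q \omega$ applied to the harmonic function $\omega$ generating $d\omega$. This single observation delivers both assertions: it exhibits a global antiderivative on each side, proving piecewise exactness, and it reduces the overfare identity to the Plemelj--Sokhotski-type identity for $\mathbf{J}$ already established in Theorem \ref{th:Overfare_with_correction_functions}.

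First I would treat the piecewise exactness. Fix $q \in \mathscr{R}\setminus\Gamma$ and set $F := \mathbf{J}_1^q \omega \in \mathcal{D}_{\mathrm{harm}}(\riem_1 \cup \riem_2)$. Decomposing $dF = \partial F + \overline{\partial}F$, Theorem \ref{th:jump_derivatives} gives
\begin{align*}
 dF\big|_{\riem_1} &= \partial\omega + \mathbf{T}_{1,1}\overline{\partial}\omega + \overline{\mathbf{S}}_1\overline{\partial}\omega, \\
 dF\big|_{\riem_2} &= \mathbf{T}_{1,2}\overline{\partial}\omega + \overline{\mathbf{S}}_1\overline{\partial}\omega.
\end{align*}
On the other hand, since $\mathbf{P}_k d\omega = \partial\omega$ and $\overline{\mathbf{P}}_k d\omega = \overline{\partial}\omega$, the definition \eqref{defn: Shk} of $\mathbf{S}_k^{\mathrm{h}}$ together with Theorem \ref{th:T_and_S_on_harmonic_measures} yield
\[
 \mathbf{R}_1^{\mathrm{h}} \mathbf{S}_1^{\mathrm{h}} d\omega = \mathbf{R}_1\mathbf{S}_1\partial\omega + \overline{\mathbf{R}}_1\overline{\mathbf{S}}_1\overline{\partial}\omega = \bigl(\partial\omega + \mathbf{T}_{1,1}\overline{\partial}\omega\bigr) + \overline{\mathbf{R}}_1\overline{\mathbf{S}}_1\overline{\partial}\omega,
\]
which precisely matches $dF|_{\riem_1}$. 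The analogous computation on $\riem_2$, using the second identity of Theorem \ref{th:T_and_S_on_harmonic_measures}, shows $\mathbf{R}_2^{\mathrm{h}} \mathbf{S}_1^{\mathrm{h}} d\omega = dF|_{\riem_2}$. Thus $\mathbf{R}_k^{\mathrm{h}} \mathbf{S}_1^{\mathrm{h}} d\omega$ is exact on $\riem_k$ for $k=1,2$, so $\mathbf{S}_1^{\mathrm{h}} d\omega \in \mathcal{A}^{\mathrm{pe}}_{\mathrm{harm}}(\mathscr{R})$.

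For the second identity, I would invoke Theorem \ref{th:Overfare_with_correction_functions}: since $\riem_2$ is connected, part (2) gives, at the level of the homogeneous Dirichlet space,
\[
 \dot{\mathbf{O}}_{2,1}\dot{\mathbf{J}}_{1,2}\dot{\omega} = \dot{\mathbf{J}}_{1,1}\dot{\omega} - \dot{\omega}.
\]
Differentiating and using the defining property $\mathbf{O}^{\mathrm{e}}_{2,1} d = d\mathbf{O}_{2,1}$ of the exact overfare (Definition \ref{def:exact overfare}), together with the identifications $d\mathbf{J}_{1,k}^q\omega = \mathbf{R}_k^{\mathrm{h}}\mathbf{S}_1^{\mathrm{h}} d\omega$ established in the first step, yields the desired relation between $\mathbf{O}^{\mathrm{e}}_{2,1}(\mathbf{R}_2^{\mathrm{h}}\mathbf{S}_1^{\mathrm{h}} d\omega)$, $\mathbf{R}_1^{\mathrm{h}}\mathbf{S}_1^{\mathrm{h}} d\omega$, and $d\omega$.

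The only subtlety I anticipate is bookkeeping: one must be careful to pass from the homogeneous equivalence classes in Theorem \ref{th:Overfare_with_correction_functions}(2) to genuine equalities of one-forms after applying $d$ (which annihilates the locally constant ambiguity), and to verify that $\omega$ actually lies in $\dot{\mathcal{D}}_{\mathrm{harm}}(\riem_1)$ even when $\riem_1$ is disconnected, so that the hypothesis of the jump formula is satisfied. Both are routine. No deeper analytic obstacle arises because the hard work, namely bounded overfare and the CNT Plemelj--Sokhotski identity, has already been carried out.
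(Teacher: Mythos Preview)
Your proposal is correct and follows essentially the same route as the paper's proof: both exhibit $\mathbf{R}_k^{\mathrm{h}}\mathbf{S}_1^{\mathrm{h}}d\omega$ as $d\mathbf{J}_{1,k}^q\omega$ via Theorems \ref{th:jump_derivatives} and \ref{th:T_and_S_on_harmonic_measures}, and then differentiate the homogeneous jump identity of Theorem \ref{th:Overfare_with_correction_functions}(2) to obtain the overfare relation. The paper additionally makes an (inessential) reduction to real harmonic measures by complex linearity, but otherwise the arguments coincide.
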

\begin{proof}
 Since all operators involved are complex linear, it is enough to prove this for real harmonic measures $d\omega \in \mathcal{A}_{\mathrm{hm}}(\riem_1)$.  For such harmonic measures, by Theorems \ref{th:jump_derivatives} and \ref{th:T_and_S_on_harmonic_measures},
 \[     d\mathbf{J}^q_{1,1}\, \omega  = \partial \omega + \mathbf{T}_{1,1} \overline{\partial} \omega + \overline{\mathbf{R}}_1 \overline{\mathbf{S}}_1 \overline{\partial} \omega = \mathbf{R}_1 \mathbf{S}_1 \partial \omega + \overline{\mathbf{R}}_1 \overline{\mathbf{S}}_1 \overline{\partial} \omega  \]
 and 
 \[  d \mathbf{J}^q_{1,2}\, \omega = \mathbf{T}_{1,2} \overline{\partial} \omega + \overline{\mathbf{R}}_2 \overline{\mathbf{S}}_1 \overline{\partial} \omega = \mathbf{R}_2 \mathbf{S}_1 \partial \omega + \overline{\mathbf{R}}_2 \overline{\mathbf{S}}_1 \overline{\partial} \omega  \]
 which proves the first claim. 
 
 Now if $\riem_2$ is connected, then $\mathbf{O}_{2,1}^{\mathrm{e}}$ is well-defined (if not, it's only defined up to addition of a harmonic measure on $\riem_1$.) 
 By the transmitted jump formula (Theorem \ref{th:Overfare_with_correction_functions}), 
 \begin{align*}
     \dot{\mathbf{O}}_{2,1} \dot{\mathbf{J}}_{1,2} \, \dot{\omega} = \dot{\mathbf{J}}_{1,1} \, \dot{\omega} - \dot{\omega}.
 \end{align*}
 Taking $d$ of both sides and applying the previous two equations and \eqref{charac of exact overfare}, completes the proof of the theorem. 
\end{proof}
\begin{remark}  \label{re:nonself-overfare}
 Note that this shows that an element of $\mathcal{A}^{\mathrm{pe}}_{\text{harm}}$ need not be its own overfare.
\end{remark}

Another identity for the harmonic measures and $\mathbf{S}$ operator is the following. 

\begin{theorem} \label{th:S_on_harmonic_measures}
 Let $\omega_1$ be a harmonic function on $\riem_1$ which is constant on each boundary curve.  Let $\omega_2 = \mathbf{O}_{1,2}\, \omega_1$.  If either $\riem_1$ or $\riem_2$ is connected, then 
 \[ \mathbf{S}_1^{\mathrm{h}} \, d\omega_1 = -\mathbf{S}_2^{\mathrm{h}} \, d \omega_2. \]
\end{theorem}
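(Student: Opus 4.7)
The strategy is to recognize $\mathbf{S}_k^{\mathrm{h}} d\omega_k$ as the exterior derivative of the Cauchy--Royden integral $\mathbf{J}_k^q \omega_k$, and then invoke the two-sided agreement of Cauchy--Royden integrals from Theorem \ref{th:J_same_both_sides}.

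First I would establish the intermediate identity
\[ d\mathbf{J}_k^q \omega_k = \mathbf{S}_k^{\mathrm{h}} d\omega_k \qquad (k=1,2), \]
as forms on $\riem_1 \cup \riem_2$. Since $\omega_k$ is harmonic and constant on each boundary curve, by linearity (and by treating each connected component separately if $\riem_k$ is disconnected) one may reduce to the case where $\omega_k$ lies in the complex span of the harmonic measures of Definition \ref{def:harmonic measure}, so that Theorem \ref{th:T_and_S_on_harmonic_measures} is directly applicable. Combining the splitting of $d\mathbf{J}_k^q\omega_k$ into $\partial$ and $\overline{\partial}$ parts given by Theorem \ref{th:jump_derivatives} with the identities
\[  \mathbf{T}_{k,k}\overline{\partial}\omega_k = -\partial\omega_k + \mathbf{R}_k \mathbf{S}_k\partial\omega_k, \qquad \mathbf{T}_{k,j}\overline{\partial}\omega_k = \mathbf{R}_j\mathbf{S}_k\partial\omega_k \quad (j\neq k), \]
the explicit $\partial\omega_k$ term on $\riem_k$ cancels, and one obtains $d\mathbf{J}_k^q\omega_k|_{\riem_j} = \mathbf{R}_j^{\mathrm{h}}\mathbf{S}_k^{\mathrm{h}} d\omega_k$ for both $j=1,2$. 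Taken together, this is precisely $\mathbf{S}_k^{\mathrm{h}} d\omega_k$.

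Next, since either $\riem_1$ or $\riem_2$ is connected by hypothesis, part (2) of Theorem \ref{th:J_same_both_sides} applies (swapping the roles of $1$ and $2$ if it is $\riem_2$ rather than $\riem_1$ that is connected), which yields the equality
\[ \dot{\mathbf{J}}_1 \dot{\omega}_1 = -\dot{\mathbf{J}}_2 \dot{\mathbf{O}}_{1,2}\dot{\omega}_1 = -\dot{\mathbf{J}}_2 \dot{\omega}_2 \]
in $\dot{\mathcal{D}}(\riem_1\cup\riem_2)$, using $\omega_2 = \mathbf{O}_{1,2}\omega_1$. Applying $d$ to both sides -- which is well defined on $\dot{\mathcal{D}}$ since constants are killed, so that $q$-dependence disappears -- and substituting the identity of the first step gives the claimed equality
\[  \mathbf{S}_1^{\mathrm{h}} d\omega_1 = -\mathbf{S}_2^{\mathrm{h}} d\omega_2. \]

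The only real content lies in the first step, which is purely computational once Theorems \ref{th:jump_derivatives} and \ref{th:T_and_S_on_harmonic_measures} are available. The possible point of friction is verifying that Theorem \ref{th:T_and_S_on_harmonic_measures} applies when the relevant surface is disconnected; this is handled by working component by component, since the Schiffer and restriction operators respect this decomposition. Once this is dispatched, the two-step reduction above completes the proof without further analytical work.
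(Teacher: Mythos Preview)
Your proposal is correct and follows essentially the same approach as the paper's proof: both use Theorem~\ref{th:J_same_both_sides} to relate $\dot{\mathbf{J}}_1\dot{\omega}_1$ and $\dot{\mathbf{J}}_2\dot{\omega}_2$, then differentiate and simplify via Theorems~\ref{th:jump_derivatives} and~\ref{th:T_and_S_on_harmonic_measures}. The only cosmetic difference is that you compute $d\mathbf{J}_k^q\omega_k = \mathbf{R}_j^{\mathrm{h}}\mathbf{S}_k^{\mathrm{h}} d\omega_k$ on both pieces $\riem_1$ and $\riem_2$ directly, whereas the paper computes only the restriction to $\riem_2$ and then invokes analytic continuation to $\mathscr{R}$.
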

\begin{proof} Assume that $\riem_1$ is connected. 
 By Theorem \ref{th:J_same_both_sides}
 \[ \dot{\mathbf{J}}_{1,2}\, \dot{\omega}_1 = - \dot{\mathbf{J}}_{2,2} \dot{\mathbf{O}}_{1,2}\, \dot{\omega}_1 = - \dot{\mathbf{J}}_{2,2}\, \dot{\omega}_2.    \]
 Applying $d$ to both sides we get
 \begin{align*}
   \mathbf{R}_2 \mathbf{S}_1  \partial \omega_1 +  \overline{\mathbf{R}}_2 \overline{\mathbf{S}}_1 \overline{\partial}   \omega_1 & = \mathbf{T}_{1,2}\, \overline{\partial} \omega_1 +  {\mathbf{R}}_2  {\mathbf{S}}_1 \partial \omega_1 =
  d \mathbf{J}_{1,2}\, \omega_1 = - d \mathbf{J}_{2,2}\, \omega_2  \\
  & = - \partial \omega_2 - \mathbf{T}_{22}\, \overline{\partial} \omega_2 - \overline{\mathbf{R}}_{2} \overline{\mathbf{S}}_2\, \overline{\partial} \omega_2 \\ &  =  -{\mathbf{R}}_{2} {\mathbf{S}}_2 {\partial} \omega_2 - \overline{\mathbf{R}}_{2} \overline{\mathbf{S}}_2 \, \overline{\partial} \omega_2
 \end{align*}
 where we have used Theorem \ref{th:T_and_S_on_harmonic_measures}.  Therefore $\mathbf{R}_2^\mathrm{h} \mathbf{S}_1^\mathrm{h}\, d \omega_1 = - \mathbf{R}_2^\mathrm{h} \mathbf{S}_2^\mathrm{h} \, \omega_2$ which proves the claim by analytic continuation to $\mathscr{R}$. 
 
 If, on the other hand, $\riem_2$ is connected, we have $\omega_1=\mathbf{O}_{2,1} \omega_2$. One can now repeat the proof with the roles of $1$ and $2$ switched.
\end{proof}

This immediately leads to a characterization of which harmonic measures lie in the kernel of $\mathbf{S}$ and $\mathbf{T}_{12}$. 
{
\begin{corollary} \label{co:bridgeworthy_TFAE}
Let $\omega_1 \in \mathcal{D}_{\mathrm{harm}}(\riem_1)$ have constant boundary values. 
The following statements are equivalent. 
\begin{enumerate}[label=(\arabic*),font=\upshape]
    \item $\mathbf{S}_1\, \partial \omega_1 =0$; 
    \item $\mathbf{T}_{1,2}\, \overline{\partial} \omega_1 =0$;
    \item $\mathbf{T}_{1,1}\, \overline{\partial} \omega_1 = -\partial \omega_1$.
\end{enumerate}
If at least one of $\riem_1$ or $\riem_2$ is connected, then $(1)-(3)$ are also equivalent to each of the following: 
\begin{enumerate}[label=(\arabic*),font=\upshape] \setcounter{enumi}{3} 
    \item $\mathbf{S}_2 \, \partial\, \mathbf{O}_{1,2}\, \omega_1 =0$;
    \item $\mathbf{T}_{2,1}\, \overline{\partial}\, \mathbf{O}_{1,2}\, \omega_1 =0$;
    \item $\mathbf{T}_{2,2}\, \overline{\partial}\, \mathbf{O}_{1,2}\, \omega_1 = -\partial \, \mathbf{O}_{1,2}\, \omega_1$.
\end{enumerate}

The complex conjugates of the statements above also hold.
\end{corollary}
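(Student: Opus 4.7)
The plan is to reduce everything to the two identities furnished by Theorem \ref{th:T_and_S_on_harmonic_measures}, namely
\[  \mathbf{T}_{1,1}\,\overline{\partial}\omega_1 = -\partial\omega_1 + \mathbf{R}_1\mathbf{S}_1\,\partial\omega_1, \qquad \mathbf{T}_{1,2}\,\overline{\partial}\omega_1 = \mathbf{R}_2\mathbf{S}_1\,\partial\omega_1, \]
together with unique continuation on the compact surface $\mathscr{R}$. First, I would observe that $\mathbf{S}_1\,\partial\omega_1 \in \mathcal{A}(\mathscr{R})$ is holomorphic on the whole connected compact surface, so its restriction to either $\riem_1$ or $\riem_2$ vanishes if and only if it vanishes identically. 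Consequently, (1) is equivalent to $\mathbf{R}_1\mathbf{S}_1\,\partial\omega_1=0$ (which by the first identity is exactly (3)), and equivalent to $\mathbf{R}_2\mathbf{S}_1\,\partial\omega_1=0$ (which by the second identity is exactly (2)). This handles (1) $\Leftrightarrow$ (2) $\Leftrightarrow$ (3) without needing any connectivity hypothesis.

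Next, assume $\riem_1$ or $\riem_2$ is connected. Because $\omega_1$ is constant on each component of $\partial\riem_1$, its CNT boundary values are locally constant; by Theorem \ref{thm:bounded_overfare_existence} the overfare $\omega_2 := \mathbf{O}_{1,2}\,\omega_1$ has the same (locally constant) CNT boundary values on $\partial\riem_2 = \Gamma$, so $\omega_2$ itself is a complex linear combination of harmonic measures on $\riem_2$. Thus Theorems \ref{th:T_and_S_on_harmonic_measures} and \ref{th:S_on_harmonic_measures} apply to $\omega_2$ as well. From Theorem \ref{th:S_on_harmonic_measures},
\[  \mathbf{S}_1^{\mathrm{h}}\,d\omega_1 = -\mathbf{S}_2^{\mathrm{h}}\,d\omega_2, \]
and comparing bidegrees gives $\mathbf{S}_1\,\partial\omega_1 = -\mathbf{S}_2\,\partial\omega_2$. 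So (1) holds iff $\mathbf{S}_2\,\partial\omega_2 = 0$, which is (4). Applying the already established equivalence (1) $\Leftrightarrow$ (2) $\Leftrightarrow$ (3) to the datum $\omega_2$ on $\riem_2$ (with the roles of $\riem_1$ and $\riem_2$ interchanged throughout) yields (4) $\Leftrightarrow$ (5) $\Leftrightarrow$ (6).

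Finally, the complex conjugate versions follow by taking complex conjugates throughout, using that $\overline{\mathbf{S}}_k$, $\overline{\mathbf{T}}_{j,k}$ are just the conjugate operators and that $\overline{\partial\omega} = \overline{\partial}\,\overline{\omega}$.

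The main obstacle I anticipate is minor and bookkeeping in nature: ensuring that $\omega_2 = \mathbf{O}_{1,2}\,\omega_1$ genuinely lies in the span of harmonic measures on $\riem_2$ (so that Theorems \ref{th:T_and_S_on_harmonic_measures} and \ref{th:S_on_harmonic_measures} are applicable on the $\riem_2$-side), and that the connectivity hypothesis is needed precisely to define $\mathbf{O}_{1,2}$ or $\mathbf{O}_{2,1}$ and to invoke Theorem \ref{th:S_on_harmonic_measures} cleanly. The passage from ``vanishing on $\riem_k$'' to ``vanishing on $\mathscr{R}$'' via analytic continuation is the other essential ingredient, and it is what makes (2) and (3) capture the same information despite being localized to different sides.
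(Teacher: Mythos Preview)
Your proposal is correct and follows essentially the same route as the paper's proof: Theorem~\ref{th:T_and_S_on_harmonic_measures} gives $(1)\Leftrightarrow(2)\Leftrightarrow(3)$ and $(4)\Leftrightarrow(5)\Leftrightarrow(6)$ (via analytic continuation of $\mathbf{S}_k\partial\omega_k\in\mathcal{A}(\mathscr{R})$, which you make explicit), and Theorem~\ref{th:S_on_harmonic_measures} links $(1)$ and $(4)$ under the connectivity hypothesis. One minor remark: the overfare $\mathbf{O}_{1,2}$ itself exists without any connectivity assumption (Theorem~\ref{thm:bounded_overfare_existence}); connectivity enters only through Theorem~\ref{th:S_on_harmonic_measures}.
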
 
\begin{proof}
  The equivalence of the first three claims follows from Theorem \ref{th:T_and_S_on_harmonic_measures}, as does the equivalence of claims (4) through (6). If one of $\riem_1$ and $\riem_2$ is connected, then by Theorem \ref{th:S_on_harmonic_measures}, the holomorphic part of $\mathbf{S}_1^{\mathrm{h}} d \omega_1$ is zero if and only if the holomorphic part of $\mathbf{S}_2^{\mathrm{h}} d \mathbf{O}_{1,2} \omega_1$ is zero. This proves the equivalence of (1) and (4).
  The remaining claim is obvious.
\end{proof} 
}

 We review some facts about harmonic one-forms on compact Riemann surfaces; see for example \cite[Chapter III]{Farkas_Kra}. We start by recalling  the standard way to define harmonic one-forms $H_C$ such that 
\begin{equation}\label{eq:H_forms_definition}
     (\alpha,\ast H_C) = \int_C \alpha 
\end{equation}   
for all $\alpha \in \mathcal{A}_{\text{harm}}(\mathscr{R})$.  
Given a curve $C$, let $\Omega$ be a strip to the left of $C$, and let $f$ a real-valued function which is $1$ on $C$, smooth on $\Omega$, and $0$ outside of $\Omega$.  Thus, there is a steady increase from $0$ to $1$ as one approaches $C$ from the left, and a jump back down to $0$ as one crosses the curve.  Then $df$ is smooth, and we have 
\begin{align*}
 (\alpha,\ast df)_{\mathscr{R}} & = \iint_{\mathscr{R}} \alpha \wedge \ast \ast \overline{df} = -\iint_{\mathscr{R}} \alpha \wedge   {df}  \\
 & = -\iint_{\Omega} \alpha \wedge   {df} = \int_C f \alpha \\
 &= \int_C \alpha.
\end{align*} 
By the Hodge theorem, there is a unique harmonic one-form $H_C$ in the same equivalence class as $df$.  Now co-exact forms are orthogonal to closed forms because 
\[  (\alpha,\ast dg )_{\mathscr{R}} ={-} \iint_{\mathscr{R}} \alpha \wedge dg =  \iint_{\mathscr{R}} d (g \alpha) =0.    \]
Thus 
\[  (\alpha,\ast H_C)_{\mathscr{R}}  = (\alpha,\ast df)_{\mathscr{R}} = \int_C \alpha. \]

Now let $\mathscr{R}$ be separated into two surfaces $\riem_1$ and $\riem_2$ by a collection of curves $\partial_k \riem_1$ as in Figure \ref{fig:boundary_curves}.  We assume that $\riem_1$ and $\riem_2$ are both connected.

\begin{figure}
     \includegraphics[width=10cm]{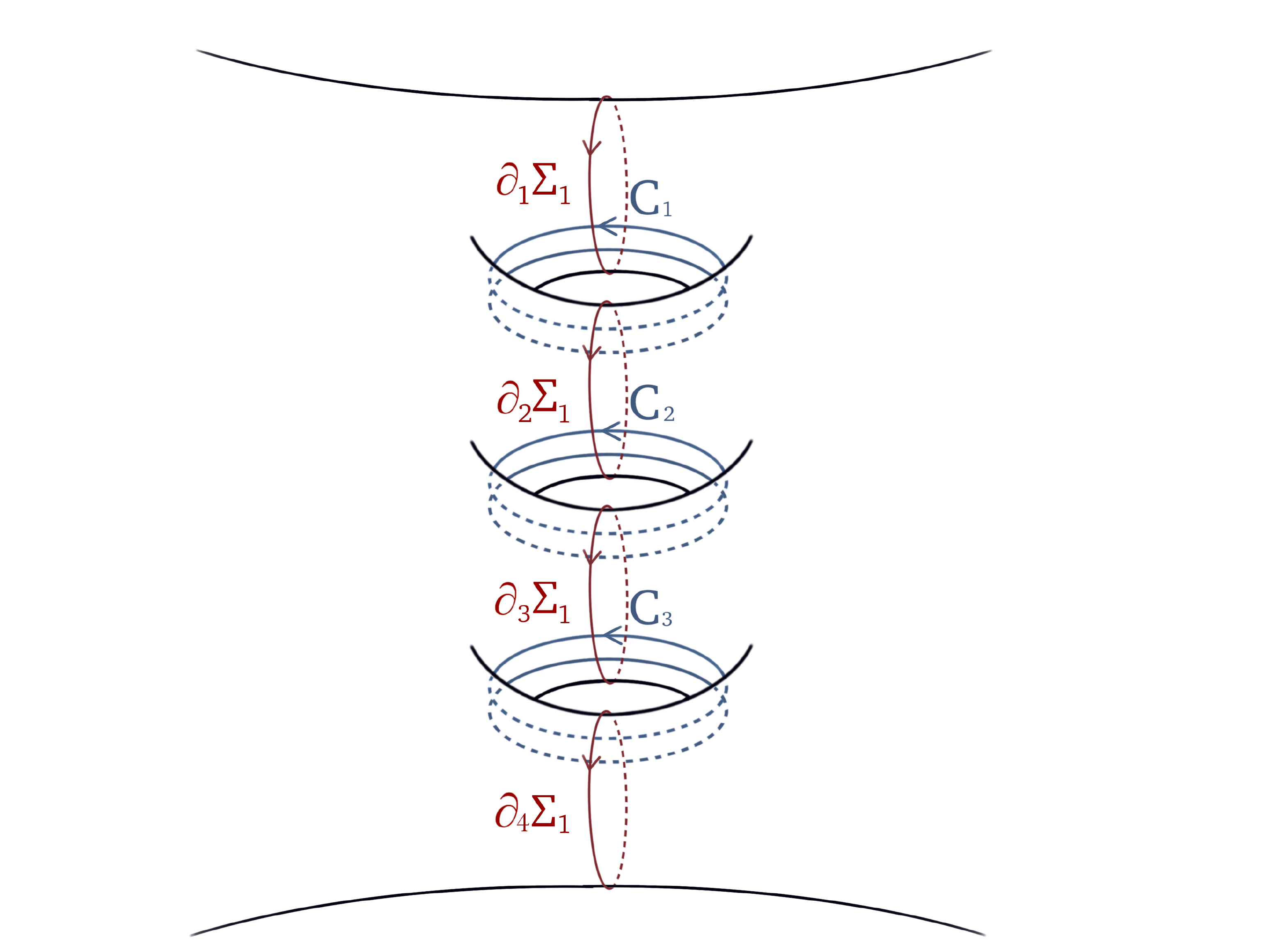}
     \caption{The one-forms $df_{k}$}
     \label{fig:boundary_curves}
 \end{figure} 
The strips defining $df_k$, $k=1,2,3$ are the horizontal strips and the boundary curves are the vertical curves. Since $H_{C_k}$ and $df_k$ are in the same cohomology class, we have (by taking analytic limiting curves approaching $\partial_m \riem_1$)
\[ \int_{\partial_m \riem_1}  H_{C_k} = \int_{\partial_m \riem_1} df_k. \]
Here we let $k=1,\ldots,n-1$ where $n$ is the number of boundary curves (in Figure \ref{fig:boundary_curves} we have $n=4$).
 
We see that
\begin{align}  \label{eq:integrals_of_Hcs} \nonumber
 \int_{\partial_1 \riem_1} H_{C_k} & = \left\{ \begin{array}{ll} -1 & k=1 \\ 0 & \text{otherwise} \end{array} \right. \\  \nonumber
\int_{\partial_2 \riem_1} H_{C_k} & = \left\{ \begin{array}{ll} 1 & k=1 \\ -1 & k=2 \\ 0 & \text{otherwise} \end{array} \right. \\ 
\vdots & \ \ \ \ \ \ \ \vdots \\ \nonumber
\int_{\partial_{n-1}  \riem_1} H_{C_k} & = \left\{ \begin{array}{ll} 1 & k={n-2} \\ -1 & k=n-1 \\ 0 & \text{otherwise} \end{array} \right. \\ \nonumber 
\int_{\partial_n \riem_1} H_{C_k} & = \left\{ \begin{array}{ll} 1 & k={n-1}  \\  0 & \text{otherwise} \end{array} \right. 
\end{align}
Furthermore, the integral around any $C_k$ or internal homology curve is zero.
\begin{remark}
 The reason for the differing first and last integrals is that we haven't included the redundant $H_{C_n}$ which corresponds to a curve traversing the outside of the entire surface.
\end{remark}
 
We have the following result:\\
\begin{theorem} \label{th:piecewise_exact_characterize}  
Assume that $\riem_1$ and $\riem_2$ are connected.   Then  
\[ \mathcal{A}_{\mathrm{harm}}^{\mathrm{pe}}(\mathscr{R})  = \{   \mathbf{S}_1 \partial \omega +   \overline{\mathbf{S}}_1 \overline{\partial} \omega : d\omega \in { \mathcal{A}_{\mathrm{harm}}(\riem_1) } \}.   \] 
\end{theorem}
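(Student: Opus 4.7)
The inclusion $\supseteq$ is immediate from Corollary \ref{co:overfare_piecewise_exact}, so the task is to prove $\subseteq$. Given $u \in \mathcal{A}_{\mathrm{harm}}^{\mathrm{pe}}(\mathscr{R})$, I use piecewise exactness and the connectedness of each $\riem_i$ to write $u|_{\riem_i} = dh_i$ with $h_i \in \mathcal{D}_{\mathrm{harm}}(\riem_i)$. The natural candidate for the preimage is
\[ \Omega := h_1 - \mathbf{O}_{2,1} h_2 \in \mathcal{D}_{\mathrm{harm}}(\riem_1), \]
and I will verify first that $d\Omega \in \mathcal{A}_{\mathrm{hm}}(\riem_1)$, and second that $\mathbf{S}_1^{\mathrm{h}} d\Omega = u$.

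For the first step, piecewise exactness forces $\int_{\partial_k \riem_i} u = 0$, so on a doubly-connected neighbourhood $U_k$ of each $\partial_k \riem_1 = \partial_k \riem_2$ the form $u$ admits a single-valued harmonic primitive $H_k$. On each side $H_k$ differs from $h_i$ by a constant, and comparing \emph{CNT} boundary values from the two sides yields $h_1|_{\partial_k}^{\mathrm{CNT}} - h_2|_{\partial_k}^{\mathrm{CNT}} = \mathrm{const}$. Since $\mathbf{O}_{2,1} h_2$ has the same \emph{CNT} boundary values on $\Gamma$ as $h_2$, this gives that $\Omega|_{\partial_k}^{\mathrm{CNT}}$ is constant for each $k$, so $\Omega$ is a $\mathbb{C}$-linear combination of harmonic measures.

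For the second step, Theorems \ref{th:T_and_S_on_harmonic_measures} and \ref{th:jump_derivatives} combine to give $d\mathbf{J}_{1,k}^q \omega = \mathbf{R}_k^{\mathrm{h}} \mathbf{S}_1^{\mathrm{h}} d\omega$ for every harmonic measure $\omega$, so it suffices to show $d\mathbf{J}_1^q \Omega = u$. Applying part (2) of Theorem \ref{th:J_same_both_sides} to $\dot{\mathbf{O}}_{2,1} \dot h_2$ gives $d\mathbf{J}_1^q(\mathbf{O}_{2,1} h_2) = -d\mathbf{J}_2^q h_2$, whence
\[ d\mathbf{J}_1^q \Omega = d\mathbf{J}_1^q h_1 + d\mathbf{J}_2^q h_2. \]
Writing the global decomposition $u = \alpha + \bar\beta$ with $\alpha,\beta \in \mathcal{A}(\mathscr{R})$, so that $\bar\partial h_i = \bar\beta|_{\riem_i}$, expanding via Theorem \ref{th:jump_derivatives} and restricting to $\riem_1$, the holomorphic part becomes $\partial h_1 + \mathbf{T}_{1,1}\bar\partial h_1 + \mathbf{T}_{2,1}\bar\partial h_2$; Theorem \ref{th:general_adjoint_identity_double_final} combined with $\mathbf{S}_k^* = \mathbf{R}_k$ collapses the last two terms to zero, leaving $\partial h_1$. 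The antiholomorphic part becomes $\bar{\mathbf{S}}_1 \bar\partial h_1 + \bar{\mathbf{S}}_2 \bar\partial h_2$, which by the conjugate of Theorem \ref{th:quadratic_adjoint_one} together with $\mathbf{S}_k^* = \mathbf{R}_k$ equals $\bar\beta|_{\riem_1} = \bar\partial h_1$. Thus $d\mathbf{J}_1^q \Omega = dh_1 = u$ on $\riem_1$, and the computation on $\riem_2$ is symmetric.

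The main obstacle is the first step: converting the global piecewise exactness into the local statement that \emph{CNT} boundary values match across each quasicircle up to a constant. The rest of the argument is essentially an assembly of the adjoint identities from Section \ref{se:Schiffer_adjoint_identities} with the jump formula. The degenerate case $g(\mathscr{R}) = 0$ forces $n=1$ once both $\riem_i$ are required to be connected, and then both sides of the theorem reduce to $\{0\}$.
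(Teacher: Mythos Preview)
Your proof is correct and takes a genuinely different route from the paper. The paper fixes curves $C_1,\ldots,C_{n-1}$ in $\riem_2$ connecting consecutive boundary components, introduces the dual harmonic forms $H_{C_j}$ on $\mathscr{R}$, and computes via $(\mathbf{S}_1^{\mathrm{h}})^* = \mathbf{R}_1^{\mathrm{h}}$ and Stokes that
\[
\int_{C_j} \mathbf{S}_1^{\mathrm{h}} d\omega_k \;=\; (d\omega_k, \mathbf{R}_1^{\mathrm{h}} \ast H_{C_j})_{\riem_1} \;=\; -\int_{\partial_k \riem_1} H_{C_j},
\]
then reads off from the explicit period table \eqref{eq:integrals_of_Hcs} that the resulting $(n-1)\times(n-1)$ matrix has full rank, so $\{\mathbf{S}_1^{\mathrm{h}} d\omega_k\}_{k=1}^{n-1}$ spans the $(n-1)$-dimensional space $\mathcal{A}_{\mathrm{harm}}^{\mathrm{pe}}(\mathscr{R})$. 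This is a basis/dimension argument with explicit homology curves.

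Your argument instead \emph{constructs} the preimage: given $u$, you form $\Omega = h_1 - \mathbf{O}_{2,1} h_2$ and verify $\mathbf{S}_1^{\mathrm{h}} d\Omega = u$ directly from the jump formula and the adjoint identities of Section~\ref{se:Schiffer_adjoint_identities}. The payoff is that you get an explicit inverse map $u \mapsto d\Omega$, and your argument sits entirely inside the operator calculus without needing the auxiliary curves $C_j$ or the period matrix \eqref{eq:integrals_of_Hcs}. The paper's approach, on the other hand, makes the geometric content (how the dimension $n-1$ arises from the dissected handles) more transparent and avoids the appeal to Theorem~\ref{th:J_same_both_sides}. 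Both are clean; yours leans harder on the machinery already in place and gives slightly more information.
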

\begin{proof}
 The fact that $\mathbf{S}_1 \partial \omega +   \overline{\mathbf{S}}_1 \overline{\partial} \omega$ is piecewise exact for $d\omega \in \mathcal{A}_{\mathrm{hm}}(\riem_1)$ follows from Theorem \ref{th:jump_derivatives}. We need to show that such forms span the space. 
 
 To do so we first need an identity. 
 Let $\omega_k$ be the harmonic function which is one on the $k$th boundary curve and zero on the remaining boundary curves. Set 
 \[  \beta_k = \mathbf{S}_1 \partial \omega_k +   \overline{\mathbf{S}}_1 \overline{\partial} \omega_k   \]
 We then have that by definition of $H_{C_j}$  
 \begin{align*}
   \int_{C_j} \beta_k & = (\beta_k,\ast H_{C_j})_{\mathscr{R}} \\
   & = (\mathbf{S}^{\mathrm{h}}_1 d\omega,\ast H_{C_j})_{\mathscr{R}} \\
   & = (d\omega, \mathbf{R}_1^{\mathrm{h}} \ast H_{C_j})_{\riem_1} \\
   & = \left(d\omega, \mathbf{R}_1^{\mathrm{h}}\left[ \frac{1}{2} \left(\ast H_{C_j} -i H_{C_j} \right)+  \frac{1}{2} \left(\ast H_{C_j} +i H_{C_j} \right) \right] \right)_{\riem_1}.
 \end{align*}
 So 
 \begin{align*}
   \int_{C_j} \beta_k & =  \left(  \partial \omega_k, \frac{1}{2} \mathbf{R}_1\left( \ast H_{C_j} -i H_{C_j} \right)  \right)_{\riem_1} +
   \left(  \, \overline{\partial} \omega_k, \frac{1}{2}\overline{\mathbf{R}_1}  \left( \ast H_{C_j} + i H_{C_j} \right)  \right)_{\riem_1}
   \\  & =  \left( d \omega_k, \frac{1}{2} \mathbf{R}_1\left( \ast H_{C_j} -i H_{C_j} \right)  \right)_{\riem_1} +
   \left(  \, d\omega_k, \frac{1}{2}\overline{\mathbf{R}_1}  \left( \ast H_{C_j} + i H_{C_j} \right)  \right)_{\riem_1}
   \\
   & = \left(  d \omega_k,  \mathbf{R}^{\mathrm{h}}_1 \ast H_{C_j}  \right)_{\riem_1}.
 \end{align*}
 
 So we have
 \begin{align} \label{eq:integral_beta_is_integral_H}
  \int_{C_j} \beta_k  
  & = - \iint_{\riem_1} d \omega_k \wedge H_{C_j} \nonumber \\
  & = - \int_{\partial \riem_1} \omega_k H_{C_j} \nonumber \\
  & = - \int_{\partial_k \riem_1} H_{C_j}.
 \end{align}
 It now follows immediately from \eqref{eq:integrals_of_Hcs} that the $\beta_k$ span $\mathcal{A}^{\mathrm{pe}}_{\mathrm{harm}}(\mathscr{R})$. This completes the proof.
\end{proof}

We also have the following elementary fact.
\begin{proposition} \label{pr:period_submatrix_also_positive}
 Fix a bordered Riemann surface $\riem$ of type $(g,n)$. Fix a subcollection $\gamma_1,\ldots,\gamma_m$ of the boundary curves $\{ \partial_1 \riem,\ldots, \partial_n \riem \}$. For any $c_1,\ldots,c_m \in \mathbb{C}$, there is an $\omega \in \mathcal{A}_{\mathrm{hm}}(\riem)$ whose boundary values are only non-zero on $\gamma_1,\ldots,\gamma_m$ such that 
 \[  \int_{\gamma_k} \omega = c_k.   \]
\end{proposition}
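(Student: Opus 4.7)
The plan is to reduce the proposition, in the spirit of Corollary \ref{co:boundary_periods_specified_starmeasure}, to an invertibility statement about a principal submatrix of the boundary period matrix $\Pi_{jk}$ of Definition \ref{periodmatrix}. Any element $\omega \in \mathcal{A}_{\mathrm{hm}}(\riem)$ whose primitive vanishes off $\gamma_1,\ldots,\gamma_m$ has the form $\omega = \sum_{j=1}^m a_j \, d\omega_{k_j}$, where $k_j \in \{1,\ldots,n\}$ is the index with $\gamma_j = \partial_{k_j}\riem$; the coefficients $a_1,\ldots,a_m$ are the free parameters and coincide with the boundary values of the primitive $\sum_j a_j \omega_{k_j}$. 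Interpreting the period $\int_{\gamma_k}$ in the conjugate sense $\int_{\gamma_k} \ast$, consistent with Corollary \ref{co:boundary_periods_specified_starmeasure}, the prescribed period conditions translate to the linear system $\hat\Pi\, a = c$, where $\hat\Pi$ is the $m \times m$ principal submatrix $\hat\Pi_{k,j} = \Pi_{k_k,k_j}$ of the full boundary period matrix.

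The remaining task is to show that $\hat\Pi$ is invertible (in fact positive definite) whenever $m < n$, which is precisely when arbitrary $c$ is attainable. I would choose any index $m_0 \in \{1,\ldots,n\} \setminus \{k_1,\ldots,k_m\}$; by Theorem \ref{th:period_matrix_invertible}, the $(n-1)\times(n-1)$ matrix obtained from $\Pi$ by omitting row and column $m_0$ is symmetric and positive definite, and $\hat\Pi$ is a principal submatrix of it. The standard linear-algebra fact that every principal submatrix of a positive definite matrix is positive definite then gives invertibility of $\hat\Pi$. A more direct route, mirroring the proof of Theorem \ref{th:period_matrix_invertible}, is to observe that the quadratic form of $\hat\Pi$ evaluated on $(a_1,\ldots,a_m)$ equals $\bigl\| d\bigl(\sum_j a_j \omega_{k_j}\bigr)\bigr\|^2_{\mathcal{A}_{\mathrm{harm}}(\riem)}$, which is strictly positive for non-zero $a$ since $\{d\omega_{k_1},\ldots,d\omega_{k_m}\}$ is linearly independent (it sits inside a linearly independent $(n-1)$-subfamily of $\{d\omega_1,\ldots,d\omega_n\}$ guaranteed by Theorem \ref{th:period_matrix_invertible}).

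There is no serious obstacle beyond the indexing bookkeeping: the proof is essentially the observation that positivity is inherited by principal submatrices, applied to the period matrix of Definition \ref{periodmatrix}. Once $\hat\Pi^{-1}$ exists, one sets $a = \hat\Pi^{-1} c$ and defines $\omega = \sum_{j=1}^m a_j \, d\omega_{k_j}$; by construction $\omega \in \mathcal{A}_{\mathrm{hm}}(\riem)$, its primitive is supported on $\gamma_1,\ldots,\gamma_m$, and it realizes the prescribed periods $c_1,\ldots,c_m$.
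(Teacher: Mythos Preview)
Your proposal is correct and follows essentially the same route as the paper: reduce to the linear system governed by the principal submatrix $(\Pi_{j_k j_l})_{k,l}$ of the boundary period matrix and invoke positive definiteness inherited from Theorem \ref{th:period_matrix_invertible}. You are in fact slightly more careful than the paper's proof, which says ``any square submatrix'' where it should say ``any principal submatrix'', and you correctly flag that one needs $m<n$ so that an index $m_0$ outside the subcollection exists to apply Theorem \ref{th:period_matrix_invertible}.
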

\begin{proof}
 Let $j_1,\ldots,j_m$ be the indices of the subcollection of curves; that is, $\gamma_{l} =\partial_{j_l} \riem$ for $l=1,\ldots,m$. Following a similar strategy as in the proof of Corollary \ref{co:boundary_periods_specified_starmeasure}, it is readily seen that we need to prove the existence of a solution to the system of equations 
  \[  c_k=  \int_{\gamma_k}  \sum_{l=1}^n a_l \ast d \omega_{j_l} =\sum_{l=1}^n \int_{\partial_{j_k} \riem}   a_l \ast d \omega_{j_l}= \sum_{l=1}^n\Pi_{j_k j_l} a_l .      \]
 This follows from the fact that any square submatrix of a positive-definite matrix is also positive-definite.
\end{proof}

Now we define a subclass of the harmonic measures which lie in the kernel of $\mathbf{T}_{1,2}$. 
  
 \begin{definition}  
  We say that $\omega \in \mathcal{D}_{\text{harm}}(\riem_1)$ is \emph{bridgeworthy} if
  \begin{enumerate}
   \item it is constant on each boundary curve;
   \item on any pair of boundary curves $\partial_k \riem_1$ and $\partial_m \riem_1$ that bound the same connected component of $\riem_2$, the boundary values of $\omega$ are equal.
  \end{enumerate}
  
  We say that $\alpha \in \mathcal{A}_{\mathrm{hm}}(\riem_1)$ is bridgeworthy if $\alpha = d\omega$ for some bridgeworthy harmonic function $\omega$.  Denote the collection of bridgeworthy harmonic functions by
 $\gls{Dbw}(\riem_1)$,  and the collection of bridgeworthy  harmonic one-forms by 
 $\gls{Abw}(\riem_1)$.  The same definitions apply to $\riem_2$.
 \end{definition}
 The name is meant to invoke the following geometric picture: $\omega$ is bridgeworthy if it has the same constant value on any pair of boundary curves which are connected by a ``bridge'' in $\riem_2$. 
 
 \begin{remark} If $\riem_1$ has more than one connected component, then  a bridgeworthy harmonic one-form has anti-derivatives which are not bridgeworthy. This is because one can add to a bridgeworthy harmonic function $\omega$ any function which is constant on connected components without changing $d\omega$. 
 \end{remark}

We will also need the following characterization of the kernel of $\mathbf{S}_k^{\mathrm{h}}$.
\begin{proposition}  \label{pr:kernel_S1h}   { Assume that either $\riem_1$ or $\riem_2$ is connected}.  Fix $k=1$ or $k=2$.  
 Let $d\omega_k \in \mathcal{A}_{\mathrm{hm}}(\riem_k)$.
 The following are equivalent.
 \begin{enumerate}[label=(\arabic*),font=\upshape]
     \item $\overline{\partial} \omega_k \in \overline{\partial} \mathcal{D}_{\mathrm{bw}}(\riem_k)$.
     \item $\partial \omega_k \in \partial \mathcal{D}_{\mathrm{bw}}(\riem_k)$.
     \item $\mathbf{S}_k^\mathrm{h} d\omega_k =0$.
     \item {$d\omega_k \in \mathcal{A}_{\mathrm {bw}}(\riem_k)$.}
 \end{enumerate}
\end{proposition}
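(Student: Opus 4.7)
My plan is to establish (4) $\Longrightarrow$ (1), (2), (3) directly, to deduce (1) $\Longrightarrow$ (4) and (2) $\Longrightarrow$ (4) by a boundary uniqueness argument, and to handle the central implication (3) $\Longrightarrow$ (4) by combining the Cauchy--Royden jump identities of Theorem \ref{th:jump_derivatives} with the transmitted jump formula, Theorem \ref{th:Overfare_with_correction_functions}(2). The implications (4) $\Longrightarrow$ (1),(2) are immediate from splitting $d\omega_k = du$ into its $(1,0)$ and $(0,1)$ parts. For (4) $\Longrightarrow$ (3): if $u \in \mathcal{D}_{\mathrm{bw}}(\riem_k)$, the bridging condition forces the boundary values of $u$ to agree on all boundary curves lying in a given connected component of $\riem_{k'}$, so uniqueness of the Dirichlet problem makes $\mathbf{O}_{k,k'}u$ locally constant on $\riem_{k'}$, whence $d\mathbf{O}_{k,k'}u = 0$. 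Theorem \ref{th:S_on_harmonic_measures}, which is applicable by the connectivity hypothesis, then gives $\mathbf{S}_k^{\mathrm{h}} du = -\mathbf{S}_{k'}^{\mathrm{h}} d\mathbf{O}_{k,k'}u = 0$; since $\omega_k - u$ is locally constant, one concludes $\mathbf{S}_k^{\mathrm{h}} d\omega_k = 0$.

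For (1) $\Longrightarrow$ (4) (and symmetrically for (2)), write $\overline{\partial}\omega_k = \overline{\partial} u$ with $u$ bridgeworthy. Then $h := \omega_k - u \in \mathcal{D}(\riem_k)$ is holomorphic with CNT boundary values that are constant on each boundary curve. Pulling back via a collar chart to an annulus and invoking the boundary uniqueness for $H^2$-type holomorphic functions (F.~and M.~Riesz, applied after a conformal identification of the collar with an annulus), I force $h$ to be locally constant on $\riem_k$. Consequently $d\omega_k = du \in d\mathcal{D}_{\mathrm{bw}}(\riem_k)$, giving (4).

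The main obstacle is (3) $\Longrightarrow$ (4), which I handle by splitting into two sub-cases according to which surface is connected.
\emph{Case A, $\riem_k$ connected.} Set $\eta_{k'} := \mathbf{O}_{k,k'}\omega_k$, so $d\eta_{k'} \in \mathcal{A}_{\mathrm{hm}}(\riem_{k'})$. From (3) and Corollary \ref{co:bridgeworthy_TFAE} I obtain $\mathbf{S}_{k'}\partial \eta_{k'} = 0$, and then Theorem \ref{th:T_and_S_on_harmonic_measures} gives both $\mathbf{T}_{k',k}\overline{\partial}\eta_{k'} = 0$ and $\mathbf{T}_{k',k'}\overline{\partial}\eta_{k'} = -\partial\eta_{k'}$. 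Substituting these into the formulas of Theorem \ref{th:jump_derivatives} yields $d\mathbf{J}_{k'}^q\eta_{k'} = 0$ on $\riem_k \cup \riem_{k'}$, i.e.\ $\dot{\mathbf{J}}_{k'}\dot{\eta}_{k'} = 0$. Applying Theorem \ref{th:Overfare_with_correction_functions}(2) with source $\riem_{k'}$ and target $\riem_k$ (which is connected, as required) produces $\dot{\eta}_{k'} = 0$, so $\eta_{k'}$ is locally constant on $\riem_{k'}$; this is precisely the bridging condition on the boundary values of $\omega_k$, proving (4).
\emph{Case B, $\riem_{k'}$ connected.} Apply the same jump computation directly to $\omega_k$: (3) together with Theorems \ref{th:T_and_S_on_harmonic_measures} and \ref{th:jump_derivatives} gives $d\mathbf{J}_k^q\omega_k = 0$ on $\riem_k \cup \riem_{k'}$, and then Theorem \ref{th:Overfare_with_correction_functions}(2) with source $\riem_k$ and connected target $\riem_{k'}$ gives $\dot{\omega}_k = 0$. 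Hence $d\omega_k = 0$, which lies trivially in $d\mathcal{D}_{\mathrm{bw}}(\riem_k)$.

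The delicate part is bookkeeping: I must ensure that in each sub-case Theorem \ref{th:Overfare_with_correction_functions}(2) is invoked with \emph{its} target surface connected (which is why Case A requires the overfare to be performed from $\riem_{k'}$ to $\riem_k$ rather than the other way around), and that the compound argument in Case A correctly identifies the locally constant pattern of $\eta_{k'}$ with the bridging condition on $\omega_k$. The complex conjugate claims then follow automatically since $\mathcal{A}_{\mathrm{hm}}$, $\mathcal{D}_{\mathrm{bw}}$, and $\mathcal{A}_{\mathrm{bw}}$ are all closed under conjugation, and Schiffer's identities respect it.
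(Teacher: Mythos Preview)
Your proof is correct and follows essentially the same route as the paper. Both arguments hinge on the combination of Theorem~\ref{th:T_and_S_on_harmonic_measures}, Corollary~\ref{co:bridgeworthy_TFAE}, Theorem~\ref{th:jump_derivatives}, and the transmitted jump formula Theorem~\ref{th:Overfare_with_correction_functions}(2), and both split the crucial implication (3)$\Rightarrow$(4) into two sub-cases according to which side is connected. Your organization is slightly tidier: you take (4) as the hub and route everything through it, whereas the paper proves (1)$\Leftrightarrow$(3), (2)$\Leftrightarrow$(3), (4)$\Rightarrow$(1),(2), and (1)$\Rightarrow$(4) separately. Your (4)$\Rightarrow$(3) via Theorem~\ref{th:S_on_harmonic_measures} is somewhat more direct than the paper's (1)$\Rightarrow$(3). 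Two small remarks: for the boundary uniqueness step in (1)$\Rightarrow$(4), the paper uses Schwarz reflection across the (analytic, in the double) boundary rather than an $H^2$/F.~and~M.~Riesz argument; both work, though the citation you want is closer to Privalov's uniqueness theorem than to F.~and~M.~Riesz proper. Also, in Case~A you silently use $\overline{\mathbf{S}}_{k'}\overline{\partial}\eta_{k'}=0$ to kill the $\overline{\partial}$-part of $\mathbf{J}_{k'}^q\eta_{k'}$; this follows from the conjugate half of (3) and Corollary~\ref{co:bridgeworthy_TFAE}, and is worth stating explicitly.
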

\begin{proof}  We assume throughout that $k=1$. It is then necessary and  sufficient to show the equivalence for both the cases that $\riem_1$ is connected and that $\riem_2$ is connected. The case $k=2$ is obtained by symmetry. 

  We first show that (1) implies (3); assume that (1) holds. {If $\riem_2$ is connected, then $\omega_1$ is constant, so $d\omega_1=0$ and (3) follows trivially.  Now assume that $\riem_1$ is connected.}   Let $H \in \mathcal{D}_{\mathrm{bw}}(\riem_1)$ be such that $\overline{\partial} H= \overline{\partial} \omega_1$.  Then $\mathbf{O}_{1,2} H$ is constant on connected components of $\riem_2$, so by Theorem \ref{th:J_same_both_sides}, $\dot{\mathbf{J}}_{1,2} H = - \dot{\mathbf{J}}_{2,2} \dot{\mathbf{O}}_{1,2} \dot{H}$ =0.  So by Theorem \ref{th:jump_derivatives}
  \[  \mathbf{T}_{1,2} \overline{\partial} H + \overline{\mathbf{R}}_1 \overline{\mathbf{S}}_1 \overline{\partial} H = d \dot{\mathbf{J}}_{1,2} \dot{H} = 0.   \]
  Since the holomorphic and anti-holomorphic parts must both be zero, we have $\mathbf{T}_{1,2} \overline{\partial} \omega_1 = 0$ and $\overline{\mathbf{R}}_1 \overline{\mathbf{S}}_1 \overline{\partial} \omega_1 =0$.  The latter implies that  $\overline{\mathbf{S}}_1 \overline{\partial} \omega_1 =0$ by analytic continuation.  The former together with Corollary \ref{co:bridgeworthy_TFAE} to $\overline{\omega}_1$, implies that
  \[  \mathbf{R}_1 \mathbf{S}_1 \partial \omega_1 =  0.  \]
  Hence $\mathbf{S}_1 \partial \omega_1 =0$ and therefore $\mathbf{S}_1^\mathrm{h} d\omega_1 =0$. This shows that (1) implies (3).  A similar argument shows that (2) implies (3). 
  
 {Now assume that (3) holds, and that $\riem_2$ is connected.} We will show that both (1) and (2) hold. Since holomorphic and anti-holomorphic parts of $\mathbf{S}_1^\mathrm{h} d\omega_1$ are zero, we have $\mathbf{S}_1 \partial \omega_1 = 0$ and 
  $\overline{\mathbf{S}}_1 \overline{\partial} \omega_1=0$.  By Corollary \ref{co:bridgeworthy_TFAE}, we also have that 
\[  \mathbf{T}_{1,2} \overline{\partial} \omega_1 = 0\ \ \ \text{and} \ \ \ \overline{\mathbf{T}}_{1,2} \partial \omega_1 =0.  \]
Here, to show the left equation, we have applied the equivalence of parts (1) and (3) of Corollary \ref{co:bridgeworthy_TFAE} directly; whereas to show the right equation, we applied the conjugates of the equivalence of parts (1) and (3) to $\overline{\omega_1}$ to see that 
\[ 0 = \overline{\mathbf{S}}_1 \overline{\partial} \omega_1   =  \overline{\mathbf{S}_1 \partial \overline{\omega_1}} \ \ \Rightarrow  \ \ 0 = \overline{\mathbf{T}_{1,2} \overline{\partial} \overline{\omega_1}}  =\overline{\mathbf{T}}_{1,2} \partial \omega_1.  \]

We thus have that
\[ d \mathbf{J}^q_{1,2} \omega_1 = \mathbf{T}_{1,2} \overline{\partial} \omega_1 + \overline{\mathbf{R}}_1 \overline{\mathbf{S}}_1 \overline{\partial} \omega_1 = 0,  \]
and so $\mathbf{J}^q_{1,2} \omega_1$ is constant on connected components of $\riem_2$. Hence $H = \mathbf{O}_{2,1} \mathbf{J}_{1,2}^q \omega_1$ is bridgeworthy.  Applying Theorem \ref{th:Overfare_with_correction_functions} yields 
\[  \dot{\mathbf{J}}_{1,1} \dot{\omega}_1 - \dot{H} = \dot{\omega}_1,  \]
from which we obtain
\[  d \dot{\mathbf{J}}_{1,1} \dot{\omega}_1 - d \dot{H}  = d \dot{\omega}_1.   \]
Now Theorems \ref{th:jump_derivatives} and \ref{th:S_on_harmonic_measures} yield that
\[ d \dot{\mathbf{J}}_{1,1} \dot{\omega}_1 =\partial \dot{\omega}_1 + \mathbf{T}_{1,1} \overline{\partial} \dot{\omega}_1 + \overline{\mathbf{R}}_1 \overline{\mathbf{S}}_1 \overline{\partial} \dot{\omega}_1 = 0,  \]
and inserting this in the above equation we obtain that $\overline{\partial} \omega_1 = - \overline{\partial} H$. This proves that (3) implies (1) {in the case that $\riem_2$ is connected.} 
A similar argument, using the fact that $\overline{\mathbf{J}}_{1,2}^q \omega_1$ is constant on connected components of $\riem_2$ shows that $\partial \omega_1 = - \partial G$ 
where $G = \mathbf{O}_{2,1} \mathbf{J}_{1,2}^q \omega_1$, so (3) implies (2) {in the case that $\riem_2$ is connected.}

{Now assume that $\riem_1$ is connected and that (3) holds.  We will show that (4) holds. We have that $\mathbf{S}_1^h d\omega_1 = 0$.  Then by Theorem \ref{th:S_on_harmonic_measures} $\mathbf{S}_2^h d\mathbf{O}_{2,1} \omega_1 =0$.  Thus
\[   \mathbf{S}_2 \partial \mathbf{O}_{1,2} \omega_1 = 0 \ \ \ \text{and} \ \ \ \overline{\mathbf{S}}_2 \overline{\partial} \mathbf{O}_{1,2} \omega_1 = 0.    \]
Applying Corollary \ref{co:bridgeworthy_TFAE} parts (1)--(3) and Theorem \ref{th:jump_derivatives} we see that
\[ d {\mathbf{J}}_{2,1} {\mathbf{O}}_{1,2} \omega_1 = \mathbf{T}_{2,1} \overline{\partial} \mathbf{O}_{2,1} \omega_1 + \overline{\mathbf{R}}_1
 \overline{\mathbf{S}}_2  \overline{\partial} \mathbf{O}_{1,2} \omega_1 =0. \]
Thus $\mathbf{J}_{2,1} \mathbf{O}_{1,2}$ is locally constant.  Similarly
\[   d \mathbf{J}_{2,2} \mathbf{O}_{1,2} \omega_1 = \overline{\partial} \mathbf{O}_{1,2} \omega_1 + \mathbf{T}_{2,2} \overline{\partial} \mathbf{O}_{1,2} \omega_1 + \overline{\mathbf{R}}_2 \overline{\mathbf{S}}_2 \overline{\partial} \mathbf{O}_{1.2} \omega_1 =0    \]
so $\mathbf{J}_{2,2} \mathbf{O}_{1,2} \omega$ is constant.  Thus by Theorem 
\ref{th:Overfare_with_correction_functions} part (2) we see that
\[ 0= - \dot{\mathbf{O}}_{1,2} \dot{\mathbf{J}}_{2,1} \dot{\mathbf{O}}_{1,2} \dot{\omega}_1 + \dot{\mathbf{J}}_{2,2} \dot{\mathbf{O}}_{1,2} \dot{\omega}_1 = \dot{\mathbf{O}}_{1,2} \dot{\omega}_1. \]
Thus $\mathbf{O}_{1,2} \omega$ is constant, that is, $\omega_1$ is bridgeworthy.  Thus (3) implies (4) in the case that $\riem_1$ is connected. 
 It is obvious that (4) implies (1) and (4) implies (2) independently of the connectivity assumption.  
 
 In summary, we have shown the equivalence of (1), (2), and (3), and furthermore that (4) implies (1) and (2).  It remains to show that (1) implies (4).   }
Assuming that (1) holds, we have that $\omega_1 = \tilde{\omega_1} + h$ where $\tilde{\omega_1}$ is bridgeworthy and $h$ is holomorphic on $\riem_1$.  So $h=\omega_1-\tilde{\omega_1}$ has constant boundary values on $\partial \riem_1$. Fix a connected component $\riem_1^0$ and treat it as a subset of its double, so that the boundary is an analytic curve. We have that by Schwarz reflection $h$ extends to a holomorphic function on a neighbourhood of $\partial \riem_1^0$. Since $h$ is constant on the boundary it is constant on $\riem_1^0$.
{We have shown that $\omega_1 = \tilde{\omega_1} + c$ where $c$ is constant on connected components and $\tilde{\omega_1}$ is bridgeworthy; thus $d\omega_1 = d \tilde{\omega_1} \in \mathcal{A}_{\mathrm bw}(\riem_1)$.
}      
\end{proof}


\end{subsection}
\end{section} 
\begin{section}{Dirichlet problem for $L^2$ harmonic one-forms}  \label{se:Dirichlet_problem}
\begin{subsection}{Assumptions throughout this section}
 In this section, we consider a Riemann surface $\riem$  of type $(g,n)$.
\end{subsection}
\begin{subsection}{About this section}  

  In this section, we give a complete theory and solution of the Dirichlet problem for $L^2$ one-forms. This includes developing a theory of their boundary values, which we show can be identified with the Sobolev space $H^{-1/2}(\partial \riem)$. Given an element of $H^{-1/2}(\partial \riem)$ together with sufficient cohomological data, there is a unique $L^2$ harmonic one-form on $\riem$ with those boundary values. Furthermore, the solution depends continuously on the data.  
 
 We also characterize the boundary values in terms of equivalence classes of $L^2$ harmonic one-forms defined in collar neighbourhoods. We show that there is a one-to-one correspondence between elements of $H^{-1/2}$ and such equivalence classes, and this allows us to use the theory of CNT boundary values developed in Section \ref{se:CNT_all} to solve the problem. 
 Anti-derivatives of such forms have well-defined CNT boundary values, which can be identified with elements of $H^{1/2}$ (after removing a period). This reflects the fact that $H^{-1/2}$ is in some sense the set of distributional derivatives of elements of $H^{1/2}$.  
 
 We outline the approach. In Section \ref{se:regular_Dirichlet_forms} we give the routine solution to the Dirichlet problem for smooth boundary values. This section does not contain any original material, but rather serves to outline how the cohomological data is dealt with without the distraction of analytic complications. In particular it establishes the cohomological preliminaries used in the proof of the general case. In Section \ref{se:boundary_values_and_Hnegativeonehalf}, we show the equivalence between the CNT and $H^{-1/2}$ boundary values of one-forms.
 The bulk of the main results, namely the proof of the well-posedness of the Dirichlet problem for CNT boundary values, is given in Section \ref{se:Dirichlet_full_solution}.  Finally, 
 in Section \ref{se:Dirichlet_forms_Hnegativeonehalf} we use the equivalence between $H^{-1/2}$ and CNT boundary values of one-forms, together with the solution to the CNT boundary value problem given in Section \ref{se:Dirichlet_full_solution}, to solve the $H^{-1/2}$ Dirichlet problem for $L^2$ one-forms. 
 

\end{subsection}  
\begin{subsection}{Formulation of the regular Dirichlet problem}
\label{se:regular_Dirichlet_forms}
 
 Let $\riem$ be a Riemann surface of type $(g,n)$.  We describe a network of smooth curves on $\riem$.  By Corollary \ref{co:caps_can_be_sewn_on} we can treat $\riem$ as a subset of a compact Riemann surface $\mathscr{R}$ obtained by either sewing on caps, or as a subset of the double.  In the latter case, the boundary curves are analytic, and in the former, the boundary curves can be taken to be analytic, if one sews on caps via analytic parametrizations.

 For the moment, let $\gamma_1,\ldots,\gamma_{2g}$ be specific simple smooth closed curves which are generators of the homology of the surface $\mathscr{R}$ obtained by sewing on caps.  We choose these curves such that they lie in $\riem$, and furthermore, such that when $\mathscr{R}$ is cut along these curves we obtain a polygonal decomposition of $\mathscr{R}$ in the standard way.  Denote by $c_k$ curves which are isotopic to the boundaries $\partial_k \riem$ for $k=1,\ldots,n$; we assume that these are non-intersecting. See Figure \ref{fig:polygon_with_holes} for a picture of the polygonal decomposition. 
 \begin{figure}
     \includegraphics[width=6cm]{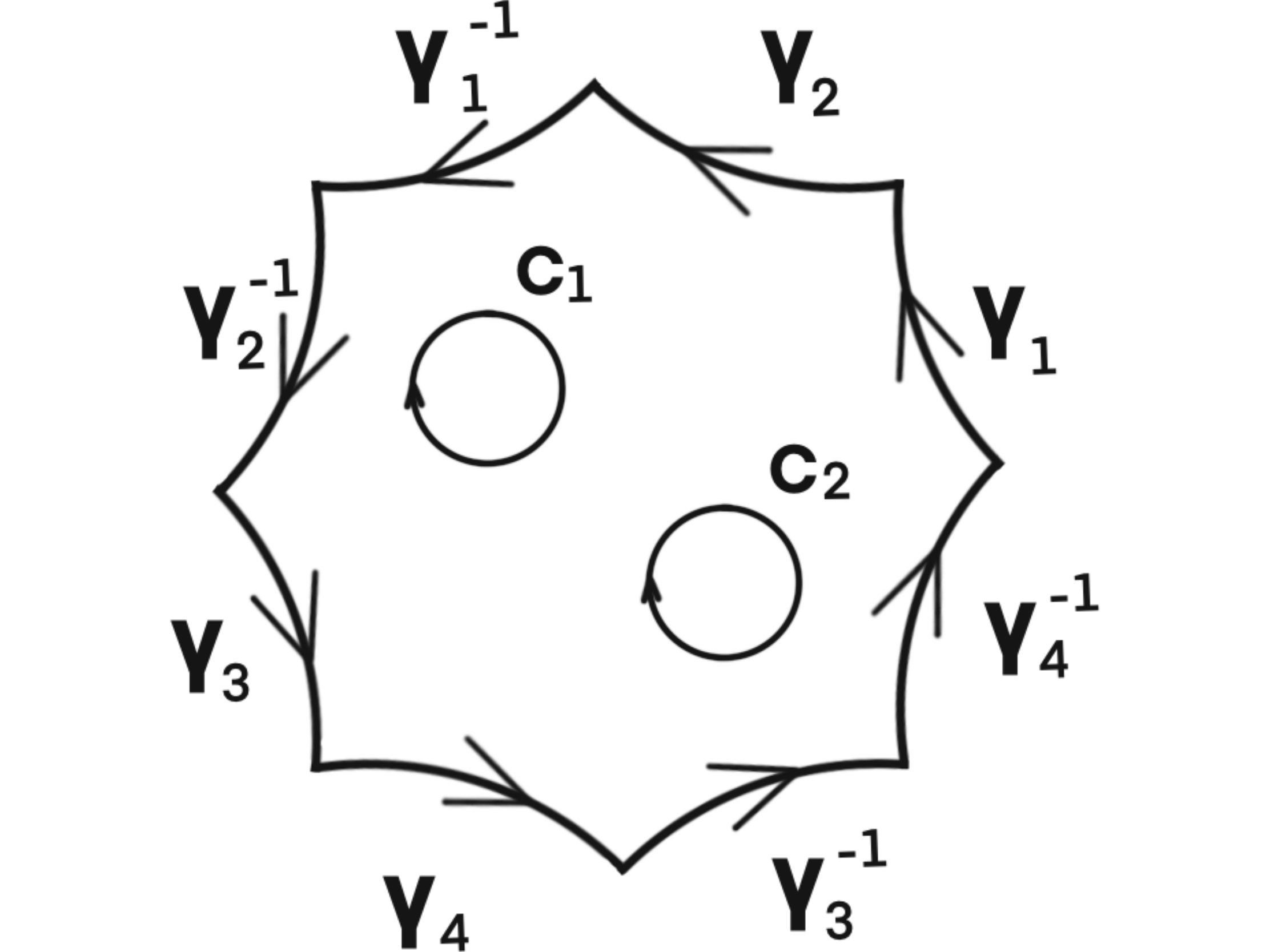}
     \caption{Polygononal decomposition of the bordered surface}
     \label{fig:polygon_with_holes}
 \end{figure}
  For any $\alpha \in \mathcal{A}_{\mathrm{harm}}(\riem)$, we have that $\int_{\gamma} \alpha$ depends only on the homotopy class of $\gamma$, so we can define 
  \[  \int_{\partial_k\riem} \alpha = \int_{c_k} \alpha, \]
  and with this definition, it is clear that if we let $\gamma$ denote the boundary of the polygon then 
  \begin{equation} \label{eq:sum_of_boundary_periods_zero}
   \sum_{k=1}^n  \int_{\partial_k \riem} \alpha = - \int_{\gamma} \alpha = 0     
  \end{equation}
  for any $\alpha \in \mathcal{A}_{\mathrm{harm}}(\riem)$.\\
 
 

 We also need the following facts regarding the double $\riem^d$ of $\riem$.  There is a basis of generators $\{ \Gamma_1,\ldots,\Gamma_{4g + 2n-2} \}$ for the homology of $\riem^d$ so that $\Gamma_k = \gamma_k$ for $k=1,\ldots,2g$ and $\Gamma_k = \partial_k\riem$ for $k=2g+2,\ldots,2g+n-1$.   
 
 Now let $\{\varepsilon_1,\ldots,\varepsilon_{4g+2n-2} \}$ be a dual basis of closed one-forms on $\riem^d$.  By the Hodge decomposition theorem these can be chosen to be harmonic.  
 We thus have 
 \begin{equation} \label{eq:dual_basis_double}
   \int_{\Gamma_k} \varepsilon_j = \delta_j^k,  \ \ \ \  j,k =1,\ldots,4g+2n-2 
 \end{equation}
 where $\delta_j^k$ is the Kronecker delta.

 Our data in the Dirichlet problem will consist of continuous one-forms on the boundary curves together with specified period information.  Since $\riem$ is a bordered surface, the notion of continuous or smooth one-forms is well-defined; explicitly, $\alpha$ is a continuous or smooth one-form if for some collar chart $\phi$ of $\partial_k \riem$, setting $\psi = \left. \phi \right|_{\partial_k \riem}$ its expression in coordinates is $\psi^* \alpha = h(e^{i\theta})\, d\theta$  for some continuous or smooth function $h$ (see Remark \ref{re:border_form_regularity_meaning}).   
 
 The $\mathcal{C}^\infty$ Dirichlet problem for one-forms is as follows. 
 Let $\riem$ be a Riemann surface of type $(g,n)$.  We refer to the following data as smooth Dirichlet data for forms on a Riemann surface:
 \begin{enumerate}[label=\roman*.]
     \item $\mathcal{C}^\infty$ one-forms $\beta_k$ on $\partial_k \riem$ for each $k=1,\ldots,n$, satisfying
      \[  \int_{\partial_1 \riem} \beta_1 + \cdots + \int_{\partial_n \riem} \beta_n =0;  \] 
     \item constants $\rho_1,\ldots,\rho_{n} \in \mathbb{C}$ satisfying
     \[  \rho_1 + \cdots + \rho_n =0;\]
     and 
     \item constants $\sigma_1,\ldots,\sigma_{2g} \in \mathbb{C}$.  
 \end{enumerate}
 \vspace{0.5cm}
\begin{definition}\label{defn:cont dirichlet data}
We say that a harmonic one-form $\alpha$ on $\riem$ solves the Dirichlet problem with data  $(\beta,\rho,\sigma)$ if 
 $\alpha$ extends smoothly to $\partial \riem$ and 
 \begin{enumerate}
     \item for any tangent vector $v_p$ to $\partial_k \riem$ at any point $p \in \partial_k \riem$, $\alpha(v_p)=\beta_k(v_p)$;
     \item for all $k =1,\ldots,n$
     \[  \int_{\partial_k \riem} \ast \alpha = \rho_k;   \]
     and 
     \item for all $k=1,\ldots,2g$
     \[  \int_{\gamma_k} \alpha = \sigma_k.  \]
 \end{enumerate} 
\end{definition}
 Note that the one-forms $\beta_k$ specify the periods around the boundary curves $\partial_k \riem$. Condition (2) is motivated as follows. For any harmonic measure $\sum_k d \omega_k$ and any solution $\alpha$, the form $\alpha + \sum_k d\omega_k$ still satisfies (1) and (3), because $\sum_k d\omega_k$ is exact and $\sum_k d\omega_k=0$ along $\partial\riem$.  In fact, this is the only indeterminacy and the condition (2) uniquely determines the solution.\\

 It is elementary that a solution exists; in fact, the smooth Dirichlet problem for one-forms is essentially a smooth Dirichlet problem for functions. One simply subtracts off forms whose periods match the data, so that one obtains boundary values of exact forms. One then solves the Dirichlet problem for functions with respect to the primitive on the boundary.   The solution to the problem for functions is well-known:
\begin{theorem}  \label{th:smooth_Dirichlet_solutionfunction} Let $X$ be a compact Riemannian manifold with boundary $\partial X,$ and $\Delta$ is the Laplacian on $X$. Then the Dirichlet problem
   \begin{equation} \label{smooth dirichlets problem}
    \begin{cases}
    \Delta u=0 \\
    u|_{\partial X}= f\in \mathcal{C}^{\infty}(\partial X)
    \end{cases}
  \end{equation}
 has a unique solution $u\in \mathcal{C}^{\infty}(X)$.
 \end{theorem}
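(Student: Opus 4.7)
The plan is to reduce the problem to a homogeneous boundary value problem by subtracting off a smooth extension, solve the resulting problem weakly by the direct method of the calculus of variations, and then bootstrap to full $\mathcal{C}^\infty$ regularity via elliptic estimates up to the boundary. First I would invoke the existence of a smooth extension: since $\partial X$ is a smooth compact submanifold of $X$, any $f \in \mathcal{C}^\infty(\partial X)$ extends to some $F \in \mathcal{C}^\infty(X)$ (construct $F$ using a collar neighbourhood of $\partial X$ together with a smooth cutoff). It then suffices to find $v \in \mathcal{C}^\infty(X)$ with $v|_{\partial X} = 0$ and $\Delta v = \Delta F$ on $X$, since $u := F - v$ will solve the original problem.

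For existence of $v$, I would work in the Hilbert space $H^1_0(X)$ of $H^1$ functions vanishing on $\partial X$ in the trace sense. On this space the Dirichlet form $a(v,w) = \int_X \langle dv, dw \rangle$ is coercive by the Poincar\'e inequality for $H^1_0$, and the right-hand side $w \mapsto -\int_X \langle dF, dw \rangle$ is a bounded linear functional. Lax--Milgram (or equivalently the direct method applied to the functional $v \mapsto \tfrac12 a(v,v) + \int_X \langle dF, dv\rangle$) then yields a unique $v \in H^1_0(X)$ with
\begin{equation*}
  \int_X \langle dv, dw \rangle = -\int_X \langle dF, dw\rangle \quad \text{for all } w \in H^1_0(X),
\end{equation*}
i.e.\ $\Delta v = \Delta F$ weakly. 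Uniqueness of the overall solution $u$ follows from the maximum principle (or from an energy argument applied to the difference of two solutions, which lies in $H^1_0$ and is harmonic, hence zero).

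The main obstacle is the regularity step. Interior $\mathcal{C}^\infty$ regularity of $v$ is standard from elliptic regularity for the Laplacian, so the real work is regularity up to $\partial X$. The strategy is to use a finite smooth atlas of boundary charts that locally straighten $\partial X$ to the hyperplane $\{x_n = 0\}$, pull the equation $\Delta v = \Delta F$ back to a second-order uniformly elliptic equation with smooth coefficients on a half-ball, and verify that $v$ still lies in $H^1$ with vanishing trace. One then runs the Nirenberg difference-quotient method in the tangential directions to gain tangential derivatives, and uses the PDE algebraically to recover the normal derivatives, iterating to obtain $v \in H^s$ for every $s$ locally near the boundary. A partition of unity argument and Sobolev embedding then yield $v \in \mathcal{C}^\infty(X)$.

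Finally I would note that the key technical inputs (smooth extension off the boundary, Poincar\'e inequality on $H^1_0$, boundary elliptic regularity for the Laplacian on smooth manifolds) are all standard in the literature on elliptic boundary value problems on Riemannian manifolds; see for instance \cite{Taylor}, Ch.~5. Accordingly, the proof of this theorem should be handled by citation rather than by being reproduced in full, as its purpose in this section is only to anchor the definition of smooth Dirichlet data for forms before the main analytic work of Section \ref{se:Dirichlet_problem} begins.
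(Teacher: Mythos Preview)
Your proposal is correct, and your final paragraph exactly matches the paper's treatment: the paper does not prove this theorem at all but simply cites a reference (H\"ormander, page 264, Example 1) in lieu of a proof. Your sketched variational argument is sound and standard, but as you anticipated, the statement is purely auxiliary and is dispatched by citation.
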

 For the proof see e.g. \cite{Hormander} page 264 Example 1.

  \begin{theorem} \label{th:smooth_Dirichlet_solution} For smooth Dirichlet data $(\beta,\rho,\sigma)$ there exists an $\alpha \in  \mathcal{C}^\infty (\mathrm{cl} (\, \riem))$ which solves the smooth Dirichlet problem.
 \end{theorem}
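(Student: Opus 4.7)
The strategy is to reduce the problem to the scalar Dirichlet problem (Theorem \ref{th:smooth_Dirichlet_solutionfunction}) by first absorbing the cohomological data into an auxiliary harmonic one-form, then fixing the tangential boundary values via an exact correction, and finally adjusting the normal periods by adding a real linear combination of harmonic measures. I will carry out the steps in three stages.

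\medskip
\noindent\textbf{Stage 1: Matching the periods around $\gamma_k$ and $\partial_k\riem$.}  Work on the double $\riem^d$ and recall the harmonic dual basis $\{\varepsilon_1,\dots,\varepsilon_{4g+2n-2}\}$ of \eqref{eq:dual_basis_double}, whose first $2g$ elements are dual to $\gamma_1,\dots,\gamma_{2g}$ and whose next $n-1$ elements are dual to $\partial_1\riem,\dots,\partial_{n-1}\riem$. Set
\[
 \eta \,=\, \sum_{k=1}^{2g} \sigma_k \, \varepsilon_k \;+\; \sum_{k=1}^{n-1} \Big(\int_{\partial_k\riem} \beta_k\Big)\, \varepsilon_{2g+k}.
\]
Then $\eta$ is harmonic on the double, hence smooth on $\mathrm{cl}(\riem)$, and its periods satisfy $\int_{\gamma_k}\eta = \sigma_k$ for $k=1,\dots,2g$ and $\int_{\partial_k\riem}\eta = \int_{\partial_k\riem}\beta_k$ for $k=1,\dots,n-1$. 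The missing period around $\partial_n\riem$ is automatic: by \eqref{eq:sum_of_boundary_periods_zero} applied to $\eta$ and by the compatibility hypothesis on $\beta$, both $\sum_k\int_{\partial_k\riem}\eta$ and $\sum_k\int_{\partial_k\riem}\beta_k$ vanish, and so $\int_{\partial_n\riem}\eta = \int_{\partial_n\riem}\beta_n$.

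\medskip
\noindent\textbf{Stage 2: Matching tangential boundary values by a scalar Dirichlet problem.}  Let $\widetilde\beta_k := \beta_k - \eta|_{\partial_k\riem}$, a smooth one-form on $\partial_k\riem$ with $\int_{\partial_k\riem}\widetilde\beta_k = 0$. Fix a basepoint $p_k \in \partial_k\riem$ and define
\[
 f_k(p) \,=\, \int_{p_k}^{p} \widetilde\beta_k \qquad (p \in \partial_k\riem);
\]
the vanishing period ensures $f_k$ is a well-defined smooth function on $\partial_k\riem$. Let $f$ be the smooth function on $\partial\riem$ with $f|_{\partial_k\riem} = f_k$. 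By Theorem \ref{th:smooth_Dirichlet_solutionfunction} (applied with $X = \mathrm{cl}(\riem)$ equipped with a smooth Riemannian metric), there is a unique $\phi \in \mathcal{C}^\infty(\mathrm{cl}(\riem))$ harmonic in $\riem$ with $\phi|_{\partial\riem} = f$. Now set $\alpha_0 := \eta + d\phi$. This is harmonic and smooth up to the boundary. For any tangent vector $v_p$ to $\partial_k\riem$, $d\phi(v_p) = (d f_k)(v_p) = \widetilde\beta_k(v_p)$, so $\alpha_0(v_p) = \eta(v_p) + \widetilde\beta_k(v_p) = \beta_k(v_p)$, giving condition (1). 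Condition (3) also holds since $d\phi$ is exact, and the $\partial_k\riem$ periods are unaffected.

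\medskip
\noindent\textbf{Stage 3: Adjusting the $\ast$-periods with harmonic measures.}  Set $\rho_k^{0} := \int_{\partial_k\riem}\ast\alpha_0$. Since $d\ast\alpha_0 = 0$ on $\riem$, Stokes' theorem gives $\sum_k \rho_k^{0} = 0 = \sum_k \rho_k$, so the required correction $\rho_k - \rho_k^{0}$ lies in the hyperplane relevant to Corollary \ref{co:boundary_periods_specified_starmeasure}. Apply that corollary (with $\lambda_k = \rho_k - \rho_k^{0}$) to obtain a form $\ast\mu \in \ast\mathcal{A}_{\mathrm{hm}}(\riem)$ with $\int_{\partial_k\riem}\ast\mu = \rho_k - \rho_k^0$ for all $k$. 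Equivalently, $\mu$ is a real linear combination of the harmonic measures $d\omega_1,\dots,d\omega_n$, hence exact and smooth up to $\partial\riem$, and the underlying potential is locally constant on $\partial\riem$; thus $\mu|_{\partial_k\riem}$ (in the tangential sense) vanishes. Define $\alpha := \alpha_0 + \mu$. Then $\alpha$ is harmonic and smooth on $\mathrm{cl}(\riem)$, condition (1) persists because $\mu$ is tangentially zero, condition (3) persists because $\mu$ is exact, and condition (2) now holds by construction.

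\medskip
\noindent\textbf{Main obstacle.}  The analytical work is essentially free once Theorem \ref{th:smooth_Dirichlet_solutionfunction} is in hand; the only real subtlety is bookkeeping of the cohomological data. Specifically, one has to recognize that the three pieces of data $(\beta,\rho,\sigma)$ correspond to three independent cohomological freedoms — the $\gamma_k$ periods, the tangential $\partial_k\riem$ periods, and the $\ast$-periods on $\partial_k\riem$ — and to pick the auxiliary forms $\eta$ and $\mu$ in the right order so that later adjustments do not undo earlier ones. The harmonic measures are precisely the forms that are tangentially trivial on $\partial\riem$ and cohomologically trivial on $\riem$, which is exactly why Stage 3 can be decoupled from Stages 1 and 2.
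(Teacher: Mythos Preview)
Your argument is correct and follows essentially the same three-step reduction as the paper: subtract an auxiliary harmonic form to absorb the periods, solve a scalar Dirichlet problem for the primitive of what remains on the boundary, and finally correct the $\ast$-periods with a harmonic measure. The only differences are organizational: you build a single auxiliary form $\eta$ from the dual forms $\varepsilon_1,\dots,\varepsilon_{2g+n-1}$ on the double (handling both the $\gamma_k$-periods and the $\partial_k\riem$-periods at once), whereas the paper splits this into a piece $\mu\in\ast\mathcal{A}_{\mathrm{hm}}$ carrying the boundary periods and a piece $\eta\in\operatorname{span}\{\varepsilon_1,\dots,\varepsilon_{2g}\}$ carrying the corrected $\gamma_k$-periods; and you perform the harmonic-measure adjustment after the scalar Dirichlet problem rather than before. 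One small wording slip: the coefficients of the harmonic measures in Stage~3 need not be real, since the $\rho_k$ are complex --- but Corollary~\ref{co:boundary_periods_specified_starmeasure} already allows complex $\lambda_k$, so this does not affect the argument.
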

 \begin{proof}  We assume that $\riem$ is included in its double, so that the boundary curves are analytic.  Setting 
  \begin{equation*} \label{eq:Dirichlet_theorem_cont_temp1}
     \lambda_k = \int_{\partial_k \riem} \beta_k 
  \end{equation*}
  for $k = 1,\ldots,n$,  by Corollary \ref{co:boundary_periods_specified_starmeasure} there is a
  $\mu \in \ast \mathcal{A}_{\mathrm{hm}}$ such that 
  \begin{equation} \label{eq:Dirichlet_theorem_cont_temp2}
    \int_{\partial_k \riem} \mu = \lambda_k   
  \end{equation}
  for every $k$ and  a harmonic one-form $\eta$ in the span of $\{ \varepsilon_1,\ldots,\varepsilon_{2g} \}$, which were defined in connection to \eqref{eq:dual_basis_double}, such that 
  \begin{equation*} \label{eq:Dirichlet_theorem_cont_temp3}
    \int_{\gamma_j} \eta = \sigma_j - \int_{\gamma_j} \mu     
  \end{equation*}
  for $j=1,\ldots,2g$. Since $\partial_k \riem$ is null-homotopic in $\mathscr{R}$, we have 
  \begin{equation} \label{eq:Dirichlet_theorem_cont_temp4} 
    \int_{\partial_k \riem} \eta = 0 
  \end{equation}
  for $k=1,\ldots,n$.   Observe that the one-forms $\eta$ and $\mu$ are smooth on $\partial \riem$.  
  
  Define functions $h_k$ on the boundary curves $\partial_k \riem$ as follows.  Each $h_k$ is the anti-derivative of $\beta_k - \mu - \eta$ on $c_k$, that is, for any tangent vector $v$ to the boundary $c_k$
   \begin{equation*} 
     dh_k(v) = \beta_k(v) - \mu(v) - \eta(v).   
   \end{equation*}
   Note that each anti-derivative is single-valued by \eqref{eq:Dirichlet_theorem_cont_temp1} and the definition of $\varepsilon_k$.   By Theorem \ref{th:period_matrix_invertible} we can add a suitable harmonic measure $d\omega \in \mathcal{A}_{\mathrm{hm}}(\riem)$ (which is exact and does not change the periods) in order to ensure that condition (2) in Definition \ref{defn:cont dirichlet data} is satisfied.  
   Solving now the ordinary Dirichlet problem with smooth data $h_1,\ldots,h_{n}$ on the boundary curves using Theorem \ref{th:smooth_Dirichlet_solution}, we obtain a smooth $h \in \mathcal{D}_{\mathrm{harm}}(\riem)$.  Then 
   \[   \alpha = dh + \mu + \eta  \]
   is the desired solution to the problem.  
 It is not hard to show that the solution is unique by keeping track of the periods and using uniqueness in Theorem \ref{smooth dirichlets problem}.
 \end{proof}

 
 
\end{subsection} 
\begin{subsection}{Boundary values of $L^2$ forms and $H^{-1/2}$}
\label{se:boundary_values_and_Hnegativeonehalf}
 
 In this section, we will show that $H^{-1/2}(\partial_k \riem)$ of a boundary curve $\partial_k \riem$ can be identified with an equivalence class of harmonic one-forms defined in a collar neighbourhood. The idea is fairly simple, and we give a sketch in the case of the circle $\mathbb{S}^1$ before launching into the details.  We can think of smooth one-forms $h(\theta) d \theta$ on the circle as dual to functions on the circle via the pairing
 \[  L_{h d\theta} (f)  = \int_{\mathbb{S}^1} f \cdot h d\theta. \]
 Of course if $h d\theta$ is in $H^{-1/2}(\mathbb{S}^1)$ and $f \in H^{1/2}(\mathbb{S}^1)$, then this only makes sense distributionally.
 
 On the other hand, given an $\alpha \in \mathcal{A}_{\mathrm{harm}}(\mathbb{A}_{r,1})$ for an annulus $\mathbb{A}_{r,1}$, by the First Anchor Lemma \ref{le:anchor_lemma_one} one can define a pairing
 \begin{equation} \label{eq:the_pairing} 
  \lim_{r \nearrow 1} \int_{|z|=r} f \alpha.  
 \end{equation}
 If $\alpha$ were smooth, we could identify this integral with 
 \[  \int_{\mathbb{S}^1} f \alpha. \]
 In the general case that $f$ is in $H^{1/2}(\mathbb{S}^1)$, it turns out that the pairing makes sense, and in fact all elements of $H^{-1/2}(\mathbb{S}^1)$ can be represented this way. The same idea works for the border of a Riemann surface, provided that we treat it as an analytic curve (see Remark \ref{re:Sobolev_space_uses_analytic_boundary}). 
 
 The remainder of this section is dedicated to filling in the details of this sketch. The payoff of this approach is that it makes it possible to use the machinery of CNT boundary values to solve the Dirichlet problem for one-forms with $H^{-1/2}$ boundary data. In this way one obtains a complete theory of the boundary values of $L^2$ harmonic one-forms for bordered surfaces.
  
 We begin by defining an equivalence relation, such that the equivalence classes represent the boundary values of the one-form.  Later we will see that each equivalence class can be identified with a unique element of $H^{-1/2}$, and vice-versa. 
 \begin{definition}[Equivalence relation for CNT Dirichlet boundary values of one-forms]  \label{de:form_Dirichlet_equivalence_relation}
 For collar neighbourhoods $A$ and $B$ of $\partial_k \riem$, 
 let $\alpha \in \mathcal{A}_{\mathrm{harm}}(A)$ and $\beta \in \mathcal{A}_{\mathrm{harm}}(B)$.  We say that $\alpha \sim \beta$
 if 
 \begin{enumerate} 
  \item there is a collar neighbourhood $U_k \subseteq A \cap B$ of $\partial_k \riem$ and a $\delta \in \mathcal{A}_{\mathrm{harm}}(U_k)$ such that $\alpha - \delta,\beta - \delta \in \mathcal{A}^{\mathrm{e}}_{\mathrm{harm}}(U_k)$; 
  \item if $f,g \in \mathcal{D}_{\mathrm{harm}}(U_k)$ are such that $df = \alpha - \delta$ and $dg = \beta - \delta$, then the CNT boundary values of $f-g$ are constant on $\partial_k \riem$ up to a null set.   
 \end{enumerate}
 \end{definition}
  In brief, $\alpha$ and $\beta$ are equivalent if their multi-valued primitives agree on the boundary up to integration constant. When the boundary curve is not clear from context, we will say ``$\alpha \sim \beta$ on $\partial_k \riem$''.
 
It turns out that if $\alpha \sim \beta$ via some $\delta$, then any one-form $\delta' \in \mathcal{A}_{\mathrm{harm}}(U')$ satisfying (1) also satisfies (2).  To see this, choose a collar neighbourhood $V \subset U \cap U'$, which exists by Proposition \ref{prop:collar_charts_intersection}.  Observe that 
 $\delta' - \delta = (\alpha - \delta) - (\alpha - \delta')$ has a primitive $h$ on $V$.  So if $f$ and $g$ are the primitives of $\alpha - \delta$ and $\beta - \delta$ respectively, then $f-h$ and $g-h$ are the unique primitives of $\alpha - \delta'$ and $\beta - \delta'$ up to constants.  But $(f-h)-(g-h) =f-g$ has constant CNT boundary values on $\partial_k \riem$ up to a null set, which proves the claim.  
 With this fact in hand, it is not hard to verify that $\sim$ is an equivalence relation. 
  
\begin{definition}\label{defn: Hprimes}[CNT Dirichlet boundary values for one-forms]
   Let $\mathcal{U}_k$ denote the collection of collar neighbourhoods of $\partial_k \riem$. Define
 \[  \gls{Dbvaluescomp} = \{ \alpha \in \mathcal{A}_{\mathrm{harm}}(U_k);\, U_k \in \mathcal{U}_k \}/\sim.         \]
 We also denote
 \[   \gls{Dbvalues} = \{ ([\alpha_1],\ldots,[\alpha_n]);\, \alpha_k \in \mathcal{H}'(\partial_k \riem) \}.     \] 
 If $\alpha \in \mathcal{A}_{\mathrm{harm}}(U)$ where $U$ contains a collar neighbourhood $U_k$ of each boundary, then we set
 \[ [\alpha] := ([\alpha_1],\ldots,[\alpha_n]),    \]
 where $\alpha_k = \left. \alpha \right|_{U_k}$.
 \end{definition}

 For fixed $k$, any equivalence class $[\beta] \in \mathcal{H}'(\partial_k \riem)$ has a well-defined boundary period.  Given a representative $\beta \in \mathcal{A}_{\mathrm{harm}}(U_k)$ for some collar neighbourhood $U_k$, let $c_k$ be a smooth closed curve in $U_k$ which is homotopic to $\partial_k \riem$, and define  
 \[  \int_{\partial_k \riem} [\beta] = \int_{c_k} \beta.     \]
 To see that this is well-defined, let $\beta' \in \mathcal{A}_{\mathrm{harm}}(U_k')$ be another representative of $[\beta]$ and $c_k'$ be another such curve.  By 
 Proposition \ref{prop:collar_charts_intersection} there is a canonical collar chart $\phi_{k,r}:U_{k,r} \rightarrow \mathbb{A}_{r,1}$ such that the inner boundary $\Gamma$ of $U_{k,r}$ is contained in $U_k \cap U_k'$ and $\phi_{k,r}$ extends analytically to $\Gamma$.  Since $\Gamma$ is isotopic to $\partial_k \riem$, it is isotopic to $c_k$ in $U_k$ and isotopic to $c_k'$ in $U_k'$.  Thus
 \[    \int_{c_k'} \beta' = \int_{\Gamma} \beta = \int_{c_k} \beta,    \]
 proving the claim.
 
 {It also follows directly from the definition of the equivalence classes that $\mathcal{H}'(\partial \riem)$ is conformally invariant in the following sense.
 \begin{proposition} \label{pr:Hprime_conformal_invariance}
  Let $\riem_1$ and $\riem_2$ be bordered surfaces and fix borders $\partial_{k_1} \riem_1$ and $\partial_{k_2} \riem_2$ which are homeomorphic to $\mathbb{S}^1$. Let $U$ and $V$ be collar neighbourhoods of $\partial_{k_1} \riem_1$ and $\partial_{k_2} \riem_2$ respectively, and let $f:U \rightarrow V$ be a conformal map. Then for any two representatives $\alpha$ and $\beta$ of $[\alpha] \in \mathcal{H}'(\partial_{k_2} \riem_2)$ we have 
  \[  [f^* \alpha] = [f^* \beta].  \]
  In particular, we have a well-defined pull-back map 
  \begin{align*}
      f^*:\mathcal{H}'(\partial_{k_2} \riem_{2}) & \rightarrow \mathcal{H}'(\partial_{k_1} \riem_{1}) \\
      [\alpha] & \mapsto [f^*\alpha].
  \end{align*}
 \end{proposition}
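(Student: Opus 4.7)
The plan is to unpack the equivalence relation defining $\mathcal{H}'(\partial_{k_2}\riem_2)$ and verify that each ingredient (harmonicity, existence of a common reference form $\delta$, exactness of differences, agreement of CNT boundary values up to a null set, and the constant-difference condition) is preserved under pullback by $f$. The key enabling facts are: conformal invariance of harmonicity of one-forms; commutativity of $f^*$ with $d$; the conformal invariance of CNT boundary values (Theorem \ref{th:conformal_inv_CNT}); and the fact that a conformal map of collar neighbourhoods extends analytically to the borders (Proposition \ref{prop:collar_charts_extend_analytically} and its consequences), so that the induced boundary homeomorphism is a quasisymmetry and in particular sends null sets to null sets (Lemma \ref{le:null_chart_independent}).

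First I would fix representatives $\alpha \in \mathcal{A}_{\mathrm{harm}}(A)$ and $\beta \in \mathcal{A}_{\mathrm{harm}}(B)$ of $[\alpha] \in \mathcal{H}'(\partial_{k_2}\riem_2)$ with $A,B \subseteq V$ collar neighbourhoods of $\partial_{k_2}\riem_2$, and produce, using Definition \ref{de:form_Dirichlet_equivalence_relation}, a collar neighbourhood $W \subseteq A \cap B$ and $\delta \in \mathcal{A}_{\mathrm{harm}}(W)$ with $\alpha - \delta = dh_\alpha$ and $\beta - \delta = dh_\beta$ on $W$, where $h_\alpha - h_\beta$ has CNT boundary values that are constant on $\partial_{k_2}\riem_2$ up to a null set. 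Set $W' = f^{-1}(W) \cap U$; then $W'$ is a collar neighbourhood of $\partial_{k_1}\riem_1$ (by Proposition \ref{prop:collar_charts_intersection} after possibly shrinking $W$), and by conformal invariance of the harmonic Bergman space one has $f^*\alpha, f^*\beta, f^*\delta \in \mathcal{A}_{\mathrm{harm}}(W')$.

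Since $f^*$ commutes with $d$, the forms $f^*\alpha - f^*\delta = d(h_\alpha \circ f)$ and $f^*\beta - f^*\delta = d(h_\beta \circ f)$ are exact on $W'$, so condition (1) of Definition \ref{de:form_Dirichlet_equivalence_relation} for $f^*\alpha \sim f^*\beta$ is satisfied with reference form $f^*\delta$. For condition (2), conformal invariance of CNT limits (Theorem \ref{th:conformal_inv_CNT}) gives that $(h_\alpha - h_\beta)\circ f$ has the same CNT boundary value at $p \in \partial_{k_1}\riem_1$ as $h_\alpha - h_\beta$ has at $f(p) \in \partial_{k_2}\riem_2$; the latter is constant outside a null set $I \subset \partial_{k_2}\riem_2$. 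It therefore remains to argue that $f^{-1}(I)$ is a null set in $\partial_{k_1}\riem_1$. This is where I would use that the conformal map of collar neighbourhoods extends to an analytic diffeomorphism of the border curves (by Schwarz reflection, via Proposition \ref{prop:collar_charts_extend_analytically}), hence is a quasisymmetry, and Lemma \ref{le:null_chart_independent} (combined with the chart-independence of the null set notion) shows that quasisymmetric images of null sets are null. Thus $f^*\alpha \sim f^*\beta$ on $\partial_{k_1}\riem_1$, so the map $[\alpha] \mapsto [f^*\alpha]$ is well-defined.

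The routine verifications — namely that the map descends to equivalence classes and that the composition with $(f^{-1})^*$ is the identity — follow formally from the definition, so there is no real obstacle beyond bookkeeping. The only mildly delicate point is the null-set transfer, but this is already handled by the quasisymmetric nature of the induced boundary diffeomorphism and the framework established in Section \ref{se:CNT_limits_BVs}. No new analytic input is required.
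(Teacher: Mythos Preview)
Your proposal is correct and is exactly the direct verification the paper has in mind: the authors do not give a proof but simply state that the result ``follows directly from the definition of the equivalence classes,'' and your argument is the natural unpacking of that claim. The only point worth noting is that the null-set transfer is even more immediate than you suggest: since $\phi_2 \circ f \circ \phi_1^{-1}$ (for collar charts $\phi_1,\phi_2$) is a conformal map between annuli extending by Schwarz reflection to an analytic diffeomorphism of $\mathbb{S}^1$, the induced boundary map is not merely quasisymmetric but real-analytic, and the chart-independence built into the definition of null sets (Lemma~\ref{le:null_chart_independent}) handles this automatically.
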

 }
  
{We will require the following elementary lemma, in order to define a norm on $\mathcal{H}'(\partial_k \riem)$. }
\begin{lemma} \label{le:nice_Hprime_representative_disk}
 Let $[\alpha]  \in \mathcal{H}'(\mathbb{S}^1)$ treated as the border of a subset of the disk $\mathbb{D}$. Then $\alpha$ has a {unique} representative of the form 
 \[  \alpha = f(z) dz + \overline{g(z)} d \bar{z} + \delta \]
 where $f(z),g(z) \in \mathcal{D}(\disk)$ have the form
 \[  f(z) = \sum_{n=2}^\infty f_n \, z^n, \  \ \  \overline{g(z)} = \sum_{n=2}^\infty \overline{g_n} \, \overline{z}^n, \]
 and 
 \[  \delta = \frac{a}{4 \pi i} \left( \frac{dz}{z} - \frac{d\overline{z}}{\overline{z}} \right).  \]
 for some constant $a \in \mathbb{C}$
\end{lemma}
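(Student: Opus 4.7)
The plan is to construct the canonical representative directly from the equivalence class by combining the period datum with the harmonic extension furnished by the Dirichlet theory of Section~\ref{se:CNT_all}, and then to argue uniqueness via the uniqueness of Dirichlet solutions. First I would verify by a direct computation in $z = e^{i\theta}$ coordinates that $\delta$ has boundary period $a$ (the two terms contribute equal amounts totalling $a$) and that its multi-valued primitive is $\tfrac{a}{2\pi} \arg z$, jumping by $a$ each revolution. This identifies $\delta$ as the unique form of its stated type that carries the period.

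For existence, pick any representative $\alpha \in \mathcal{A}_{\mathrm{harm}}(A)$ of $[\alpha]$ on a collar $A$ of $\mathbb{S}^1$, set $a = \int_{\partial_k \riem} [\alpha]$, and observe that $\alpha - \delta$ then has vanishing period and so is exact on a (possibly shrunken) collar, with single-valued harmonic primitive $F \in \mathcal{D}_{\mathrm{harm}}(A)$. By Theorem~\ref{th:CNT_BVs_collar_existence}, $F$ has CNT boundary values on $\mathbb{S}^1$ off a null set, well-defined modulo a constant. I would then apply the bounce operator $\mathbf{G}_{A,\disk}$ (Theorem~\ref{th:bounce_existence}) to extend these boundary values to $\tilde F \in \mathcal{D}_{\mathrm{harm}}(\disk)$. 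Decomposing orthogonally $\tilde F = F_+ + \overline{F_-}$ with $F_\pm$ holomorphic Dirichlet and normalized so that the first few Taylor coefficients vanish in accordance with the lemma's statement, one reads off $f$ from $F_+$ and $g$ from $F_-$; the form $f\,dz + \overline{g}\,d\bar z + \delta$ then lies in $[\alpha]$ because its single-valued primitive agrees with $F$ up to a constant on the boundary, as ensured by the second anchor lemma (Lemma~\ref{le:anchor_lemma_two}) applied to any common collar.

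Uniqueness is the cleaner half. Given two representatives $f_j\,dz + \overline{g_j}\,d\bar z + \delta$, $j=1,2$, in the same equivalence class, their difference is an exact harmonic one-form on $\disk$ with a single-valued primitive $\Phi \in \mathcal{D}_{\mathrm{harm}}(\disk)$, and the equivalence relation forces $\Phi$ to have constant CNT boundary values on $\mathbb{S}^1$ up to a null set. The uniqueness portion of Theorem~\ref{th:CNT_BVs_existence_uniqueness} then forces $\Phi$ to be constant on $\disk$, so that its holomorphic and anti-holomorphic parts are separately constant; the normalization of the Taylor series of $f_j, g_j$ then enforces $f_1 = f_2$ and $g_1 = g_2$. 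The main place where care is needed is in bookkeeping the normalization of $f$ and $g$ against the additive constant in the primitive and the low Fourier modes on the boundary, making sure that $\delta$ absorbs exactly the mode which would otherwise obstruct having a single-valued primitive; once that is sorted out, the argument is essentially the Hodge-type decomposition together with the CNT Dirichlet theory already in hand.
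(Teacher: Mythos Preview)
Your argument is correct and is essentially the paper's approach carried out in detail: the paper's proof is a two-line sketch that says the result follows from the definition of $\mathcal{H}'$ together with the Dirichlet problem on the disk, after noting that $b(dz/z + d\bar{z}/\bar{z}) \sim 0$, while you implement this concretely by subtracting $\delta$, lifting the primitive to the disk via the bounce operator $\mathbf{G}_{A,\disk}$, and differentiating. One small simplification: you do not need the second anchor lemma to verify that your representative lies in $[\alpha]$, since the bounce operator is \emph{defined} so that $\mathbf{G}_{A,\disk}F$ shares the CNT boundary values of $F$; the equivalence then follows directly from Definition~\ref{de:form_Dirichlet_equivalence_relation}.
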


\begin{proof}
 This follows easily from the definition of $\mathcal{H}'(\Gamma)$ and the existence of solutions to the Dirichlet problem on the disk, after observing that 
 \[  b\left( \frac{d z}{z} + \frac{d\bar{z}}{\bar{z}} \right) \]
 is equivalent to $0$ in $\mathcal{H}'(\mathbb{S}^1)$ for any $b \in \mathbb{C}$.  Uniqueness is obvious.
\end{proof}
 
 {This allows us to define a norm on $\mathcal{H}'(\mathbb{S}^1)$. Given any $[\alpha]$ let 
 \[  \alpha = f(z) dz + \overline{g(z)} d \overline{z} + \frac{\lambda}{4 \pi i} \left( \frac{dz}{z} - \frac{d\bar{z}}{\bar{z}} \right)  \] 
 be the representative given by Lemma \ref{le:nice_Hprime_representative_disk}. We define 
 \[  \| [\alpha] \|^2_{\mathcal{H}'(\mathbb{S}^1)} = \| f(z) dz + \overline{g(z)} d\bar{z} \|^2_{\mathcal{A}_{\mathrm{harm}}(\disk)} + |\lambda|^2. \]
 
 For any boundary curve $\partial_k \riem$, we define a norm on $\mathcal{H}'(\partial_k \riem)$ as follows. Choose a collar chart $\phi:U \rightarrow \mathbb{A}_{r,1}$ of $\partial_k \riem$. Implicitly using Proposition \ref{pr:Hprime_conformal_invariance}, we define 
 \begin{equation} \label{eq:Hprime_norm_using_collar}
  \| [\alpha] \|_{\mathcal{H}'(\partial_k \riem)} = \| \phi^*[ \alpha] \|_{\mathcal{H}'(\mathbb{S}^1)}.   
 \end{equation}
 This norm of course depends on the collar chart. However, we will see ahead that different collar charts induce equivalent norms. 
 
 Given a collection $\phi=(\phi_1,\ldots,\phi_n)$ of collar charts of $\partial_1 \riem,\ldots,\partial_n \riem$, we define a norm on $\mathcal{H}'(\partial \riem)$ by
 \begin{equation} \label{eq:Hprime_norm_full_boundary_chart}
     \| ([\alpha_1],\ldots,[\alpha_n]) \|^2_{\mathcal{H}'(\partial \riem)} = \| [\alpha_1] \|^2_{\mathcal{H}'(\partial_1 \riem)} + \cdots 
     \| [\alpha_n] \|^2_{\mathcal{H}'(\partial_n \riem)}. 
 \end{equation}
 Again, this norm depends on the collection of collar charts $\phi$. 
 }

 {
Regarding the norm defined above, we state the following lemma which will be useful in connection to Theorem \ref{th:Honehalf_reformulation} and Lemma \ref{le:Bphi_bounded} ahead.

\begin{lemma} \label{le:homogen_equiv}
 Let $\phi:U \rightarrow \mathbb{A}_{r,1}$ be a collar chart defined near $\partial_k \riem$ for fixed $k$. Then 
 \[  h \mapsto h \circ \phi  \]
 is a bounded isomorphism from $H^{1/2}(\mathbb{S}^1)$ to $H^{1/2}(\partial_k \riem)$. 
\end{lemma}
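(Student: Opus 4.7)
The plan is to reduce the claim to the invariance of Sobolev norms under diffeomorphisms (Lemma \ref{lem:invariance of sobolev under diffeo}) applied in local coordinates, after showing that the restriction of $\phi$ to the border is an analytic diffeomorphism in the appropriate sense.

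First I would invoke Proposition \ref{prop:collar_charts_extend_analytically}: any collar chart $\phi:U\to\mathbb{A}_{r,1}$ extends to a doubly-connected chart $\phi^d$ on a neighbourhood $U^d$ of $\partial_k\riem$ in the double $\riem^d$, taking $U^d$ biholomorphically onto the annulus $\mathbb{A}_{r,1/r}$ and sending $\partial_k\riem$ analytically onto $\mathbb{S}^1$. Following Remark \ref{re:Sobolev_space_uses_analytic_boundary}, the Sobolev space $H^{1/2}(\partial_k\riem)$ is the one associated to the analytic embedded submanifold structure that $\partial_k\riem$ inherits from its double; thus the map $\psi:=\left.\phi^d\right|_{\partial_k\riem}\colon \partial_k\riem\to\mathbb{S}^1$ is a real-analytic diffeomorphism of compact smooth $1$-manifolds with analytic inverse.

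Next I would use the standard fact that, on a compact smooth manifold without boundary, $H^{1/2}$ defined as in Definition \ref{defn: Sobolev norm} via any smooth atlas and partition of unity is, up to equivalence of norms, the same space; in the case of $\mathbb{S}^1$ this coincides with the Fourier-series definition used in the paper. Concretely, pick a finite smooth atlas $\{(V_j,\chi_j)\}$ of $\mathbb{S}^1$ together with a subordinate partition of unity $\{\eta_j\}$. Since $\psi$ is an analytic diffeomorphism, $\{(\psi^{-1}(V_j),\chi_j\circ\psi)\}$ is a smooth atlas of $\partial_k\riem$ with subordinate partition of unity $\{\eta_j\circ\psi\}$, and both the atlas maps and the transition to any other admissible atlas are smooth with smooth inverses.

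Given $h\in H^{1/2}(\mathbb{S}^1)$, write
\[
  \|h\circ\psi\|_{H^{1/2}(\partial_k\riem)}^2 \;=\; \sum_{j}\bigl\|\bigl((\eta_j\circ\psi)(h\circ\psi)\bigr)\circ(\chi_j\circ\psi)^{-1}\bigr\|_{H^{1/2}(\mathbb{R})}^2 \;=\; \sum_{j}\bigl\|(\eta_j h)\circ\chi_j^{-1}\bigr\|_{H^{1/2}(\mathbb{R})}^2,
\]
the last identity being just a change of labels inside the norm. Comparing with the corresponding expression defining $\|h\|_{H^{1/2}(\mathbb{S}^1)}$ through a different (fixed) reference atlas, the two norms differ only by transition maps between atlases; applying Lemma \ref{lem:invariance of sobolev under diffeo} to each such transition map on the precompact overlaps yields $\|h\circ\psi\|_{H^{1/2}(\partial_k\riem)}\approx \|h\|_{H^{1/2}(\mathbb{S}^1)}$. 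The same argument with $\psi^{-1}$ in place of $\psi$ gives the reverse inequality, so $\mathbf{C}_\phi=\mathbf{C}_\psi$ is bounded with bounded inverse from $H^{1/2}(\mathbb{S}^1)$ onto $H^{1/2}(\partial_k\riem)$.

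The only real obstacle is notational: one must keep careful track of the fact that $\partial_k\riem$ carries its intrinsic analytic structure from the double (not any extrinsic structure coming from a possibly non-smooth embedding into an ambient compact surface $\mathscr{R}$), so that Lemma \ref{lem:invariance of sobolev under diffeo} is actually applicable. Once this identification is in place, the statement is immediate from the diffeomorphism invariance of $H^{s}$ on compact manifolds.
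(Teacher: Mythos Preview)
Your argument is correct and follows essentially the same route as the paper: both extend $\phi$ across the border via Schwarz reflection (the paper cites Carath\'eodory's theorem and Schwarz reflection directly, you invoke Proposition~\ref{prop:collar_charts_extend_analytically}) to conclude that the restriction $\psi=\phi|_{\partial_k\riem}$ is an analytic diffeomorphism of compact $1$-manifolds, and then appeal to Lemma~\ref{lem:invariance of sobolev under diffeo}. Your version spells out the partition-of-unity reduction in more detail than the paper does, but the essential content is identical.
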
 
\begin{proof}  
  By Carath\'eodory's theorem and the Schwarz reflection principle, $\phi$ extends to a conformal map from a doubly connected neighbourhood $V$ of $\partial_k \riem$ to the annulus $\mathbb{A}_{r,1/r}$. The restriction of $\phi$ to $\partial_k \riem$ is thus an analytic diffeomorphism between the compact manifolds $\partial_k \riem$ and $\mathbb{S}^1$, from which and Lemma \ref{lem:invariance of sobolev under diffeo} the claim follows.  
\end{proof}

\begin{lemma}\label{homogen equivalence}
 Let $\varphi: C_1\to C_2$ be a quasisymmetric mapping between the closed smooth curves $C_j$, $j=1,2$. Then $\varphi$ induces an equivalence between  
 $\dot{H}^{\frac{1}{2}}(C_1)$ and $\dot{H}^{\frac{1}{2}}(C_2)$, i.e.
 
  $$\Vert f\Vert_{\dot{H}^{\frac{1}{2}}(C_2)}\approx \Vert f\circ \varphi\Vert_{\dot{H}^{\frac{1}{2}}(C_1)}.$$
  As a consequence, we  have that if $\phi_k$ is a quasisymmetric map from $ \mathbb{S}^1 \to \partial_k \Sigma $ then 
  $$\Vert f\Vert_{\dot{H}^{\frac{1}{2}}(\partial_k \Sigma)}\approx \Vert f\circ \varphi_k\Vert_{\dot{H}^{\frac{1}{2}}(\mathbb{S}^1)}.$$

 \end{lemma}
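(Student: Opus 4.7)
\textbf{Proof plan for Lemma \ref{homogen equivalence}.} The plan is to reduce everything to the case of the unit circle, where the Vodopyanov--Nag--Sullivan theorem applies. Specifically, since each $C_j$ is a smooth closed curve, I can choose smooth diffeomorphisms $\gamma_j:\mathbb{S}^1 \to C_j$ (for example, arc-length parametrizations rescaled to have period $2\pi$). Smooth diffeomorphisms are in particular bi-Lipschitz and therefore quasisymmetric, and the induced composition operators $\mathbf{C}_{\gamma_j}$ are bounded isomorphisms from $\dot{H}^{1/2}(C_j)$ onto $\dot{H}^{1/2}(\mathbb{S}^1)$. This equivalence follows from Lemma \ref{lem:invariance of sobolev under diffeo} after localizing with a smooth partition of unity on $C_j$ and transferring to Euclidean charts; equivalently, one can express the $\dot H^{1/2}$ seminorm through the Douglas formula and check directly that a $C^\infty$ change of variables produces an equivalent double integral.

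The composition $\tilde\varphi := \gamma_2^{-1}\circ \varphi\circ \gamma_1:\mathbb{S}^1 \to \mathbb{S}^1$ is a quasisymmetry, because the class of quasisymmetric maps between smooth closed curves is closed under composition with smooth diffeomorphisms on either side. By the Vodopyanov--Nag--Sullivan theorem (see \eqref{VNS} in the proof of Theorem \ref{th:WP_are_BZM}), the composition operator $\mathbf{C}_{\tilde\varphi}$ is a bounded automorphism of $\dot H^{1/2}(\mathbb{S}^1)$, with norm depending only on the quasisymmetric constant of $\tilde\varphi$.

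Now I would chain the three equivalences: for any $f \in \dot H^{1/2}(C_2)$,
\begin{equation*}
\Vert f\Vert_{\dot H^{1/2}(C_2)} \approx \Vert f\circ \gamma_2\Vert_{\dot H^{1/2}(\mathbb{S}^1)} \approx \Vert (f\circ \gamma_2)\circ \tilde\varphi\Vert_{\dot H^{1/2}(\mathbb{S}^1)} = \Vert f\circ\varphi\circ\gamma_1\Vert_{\dot H^{1/2}(\mathbb{S}^1)} \approx \Vert f\circ\varphi\Vert_{\dot H^{1/2}(C_1)},
\end{equation*}
using smooth-diffeomorphism invariance at the outer two equivalences and Vodopyanov--Nag--Sullivan at the middle one. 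The consequence for $\varphi_k:\mathbb{S}^1 \to \partial_k\riem$ is then immediate, since $\partial_k\riem$ is a smooth (in fact analytic) closed curve in the double $\riem^d$ (see Remark \ref{re:Sobolev_space_uses_analytic_boundary}), so the general equivalence applies with $C_1 = \mathbb{S}^1$ and $C_2 = \partial_k \riem$.

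The only mildly delicate point, and the step I would double-check, is the smooth-diffeomorphism invariance of $\dot H^{1/2}$ on a closed curve rather than the full $H^{1/2}$. On the homogeneous seminorm the $L^2$ term is absent, but this is handled transparently by the Douglas-type integral formulation: a $C^1$ change of variables with nonvanishing Jacobian yields a two-sided comparison of the double integrals because the kernel $|z-\zeta|^{-2}$ transforms in a controlled way. No other step is substantive; the work has been done in \cite{Vodopyanov,NS}.
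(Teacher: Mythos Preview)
Your argument is correct, and it differs from the paper's proof, which is a one-line citation: the lemma is recorded as a special case of Theorem~5.1 in \cite{Koskela} (Koskela--Yang--Zhou), a general result on pointwise characterizations of Besov and Triebel--Lizorkin spaces under quasiconformal mappings. Your reduction is more elementary and uses only material already at hand in the paper: smooth-diffeomorphism invariance of $\dot H^{1/2}$ (via Lemma~\ref{lem:invariance of sobolev under diffeo} or the Douglas integral) together with the Vodopyanov--Nag--Sullivan theorem on $\mathbb{S}^1$. The trade-off is that the cited theorem yields the equivalence in one stroke and in greater generality, whereas your route avoids importing a heavy external result and makes the dependence on the quasisymmetric constant transparent through the circle model. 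Both are valid; your version has the merit of being self-contained relative to the references already invoked elsewhere in the paper.
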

\begin{proof}
 This is just a special case of Theorem 5.1 in \cite{Koskela}.
\end{proof}

}
 
  Let $\riem$ be a bordered Riemann surface of type $(g,n)$.  Fixing $k$, we can define a pairing between elements of $H^{1/2}(\partial_k \riem)$ and $\mathcal{H}'(\partial_k \riem)$ as follows.  Given $[\alpha] \in \mathcal{H}'(\partial_k \riem)$ and $h \in H^{1/2}(\partial_k \riem)$, let $\alpha \in \mathcal{A}(U)$  be a representative of $[\alpha]$ for a collar neighbourhood $U$ of $\partial_k \riem$, and let $H \in \mathcal{D}_{\mathrm{harm}}(U')$ have CNT boundary values $h$. There exists at least one such $H$, by solving the Dirichlet problem on $\riem$ with $H=h$ on $\partial_k \riem$ and $0$ on the other boundary curves.
  By Proposition \ref{prop:collar_charts_intersection} we can choose a common collar neighbourhood $V \subset U \cap U'$.    Define
  \begin{equation} \label{eq:associated_H_minusonehalf_pairing}
   L_{[\alpha]} (h) = \int_{\partial_k \riem} [H\alpha] = \lim_{\epsilon \searrow 0} \int_{\Gamma_\epsilon} 
     H \alpha 
  \end{equation}
  for limiting curves $\Gamma_\epsilon$ approaching $\partial_k \riem$.  
  We have already shown that for fixed $H$ this is well-defined. By the second anchor Lemma \ref{le:anchor_lemma_two} for any two $H_m \in \mathcal{D}_{\mathrm{harm}}(U_m)$ on collar neighbourhoods $U_m$ for $m=1,2$ with the same boundary values, we have for fixed $\alpha$
  \[   \int_{\partial_k \riem} H_1 \alpha = \int_{\partial_k \riem} H_2 \alpha. \]
  Thus $L_{[\alpha]}$ is well-defined.\\ 
  
  {The pairing is invariant under pull-back. 
  \begin{proposition} \label{pr:pullback_pairing_confinv}
   Let $\riem_1$ and $\riem_2$ be bordered surfaces and fix borders $\partial_{k_1} \riem_1$ and $\partial_{k_2} \riem_2$ which are homeomorphic to $\mathbb{S}^1$. Let $U$ and $V$ be collar neighbourhoods of $\partial_{k_1} \riem_1$ and $\partial_{k_2} \riem_2$ respectively, and let $f:U \rightarrow V$ be a conformal map.  
   For any $H \in H^{1/2}(\partial_{k_2}\riem_{2})$, 
   \[  \int_{\partial_{k_2} \riem_2} [\alpha] H = \int_{\partial_{k_1} \riem_1} f^* [\alpha] \, H \circ f.  \]
  \end{proposition}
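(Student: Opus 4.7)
The plan is to reduce the identity to conformal invariance of contour integrals, combined with the two anchor lemmas (Lemma \ref{le:anchor_lemma_one} and Lemma \ref{le:anchor_lemma_two}) and the conformal invariance of CNT limits (Theorem \ref{th:conformal_inv_CNT}). The pairing \eqref{eq:associated_H_minusonehalf_pairing} is defined intrinsically as a limit of contour integrals against any harmonic extension of $H$, so both sides of the claimed equality can be written in a form where conformal invariance is transparent.

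First I would pick concrete representatives. Choose a representative $\alpha\in\mathcal{A}_{\mathrm{harm}}(W)$ of $[\alpha]$ on some collar neighbourhood $W\subseteq V$ of $\partial_{k_2}\riem_2$, and let $\widetilde H\in\mathcal{D}_{\mathrm{harm}}(W')$ be any harmonic extension whose CNT boundary values agree with $H$ on $\partial_{k_2}\riem_2$ up to a null set (produced, for instance, by solving the Dirichlet problem on $\riem_2$). Using Proposition \ref{prop:collar_charts_intersection}, shrink to a common collar $W_0\subseteq W\cap W'$. Then $f^{-1}(W_0)$ is a collar neighbourhood of $\partial_{k_1}\riem_1$ on which $f^*\alpha\in\mathcal{A}_{\mathrm{harm}}(f^{-1}(W_0))$ represents $f^*[\alpha]\in\mathcal{H}'(\partial_{k_1}\riem_1)$, by Proposition \ref{pr:Hprime_conformal_invariance}, and $\widetilde H\circ f\in\mathcal{D}_{\mathrm{harm}}(f^{-1}(W_0))$ has CNT boundary values $H\circ f$ on $\partial_{k_1}\riem_1$ by Theorem \ref{th:conformal_inv_CNT}.

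Next, let $\Gamma_\varepsilon\subset W_0$ be a family of analytic curves approaching $\partial_{k_2}\riem_2$, say the level curves $|\phi|=1-\varepsilon$ for a collar chart $\phi$ of $\partial_{k_2}\riem_2$ contained in $W_0$ (cf.\ Remark \ref{re:collar_chart_provides_isotopy}). Because $f$ is a biholomorphism and extends homeomorphically to the borders by Theorem \ref{theorem on collar charts}, the composition $\phi\circ f$ is itself a collar chart of $\partial_{k_1}\riem_1$, and hence $f^{-1}(\Gamma_\varepsilon)$ is a family of analytic curves approaching $\partial_{k_1}\riem_1$ in the sense of \eqref{eq:isotopy}. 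The ordinary change of variables formula for smooth contour integrals yields
\[
  \int_{\Gamma_\varepsilon}\widetilde H\,\alpha \;=\; \int_{f^{-1}(\Gamma_\varepsilon)}(\widetilde H\circ f)\,(f^*\alpha).
\]
Taking $\varepsilon\searrow 0$ and invoking Lemma \ref{le:anchor_lemma_one} on both sides, the left side converges to $\int_{\partial_{k_2}\riem_2}[\alpha]\,H$ as in \eqref{eq:associated_H_minusonehalf_pairing} (with extension $\widetilde H$), while the right side converges to $\int_{\partial_{k_1}\riem_1}f^*[\alpha]\,H\circ f$ (with extension $\widetilde H\circ f$). Independence of the particular harmonic extensions on each side is exactly Lemma \ref{le:anchor_lemma_two}, which is where the fact that $H\in H^{1/2}$ determines the pairing uniquely enters.

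The main conceptual point, and the only real obstacle, is the legitimacy of changing variables simultaneously with passage to the limit. This is not a delicate analytic issue once curves in $W_0$ are used, because along each fixed curve $\Gamma_\varepsilon$ everything is smooth and bounded, so the change of variables is classical; the only thing to verify is that $f^{-1}(\Gamma_\varepsilon)$ remains a valid limiting family, which is handled by the preceding remark that $\phi\circ f$ is a collar chart. With that, the proposition follows immediately.
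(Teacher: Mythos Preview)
Your proof is correct and follows essentially the same approach as the paper's own proof: choose a collar chart $\phi$ on the $\riem_2$ side, observe that $\phi\circ f$ is a collar chart on the $\riem_1$ side so that $f^{-1}(\Gamma_\varepsilon)$ is an admissible limiting family, apply change of variables on each fixed curve, and pass to the limit using the anchor lemmas and Proposition~\ref{pr:Hprime_conformal_invariance}. Your write-up is slightly more explicit about the role of Theorem~\ref{th:conformal_inv_CNT} and Lemma~\ref{le:anchor_lemma_two}, but the argument is the same.
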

  \begin{proof}
    Let $\phi:U_2 \rightarrow \mathbb{A}_{r,1}$ be a collar chart of $\partial_{k_2} \riem_2$.  Then $\phi \circ f:U_1 \rightarrow \mathbb{A}_{r,1}$ is a collar chart of $\partial_{k_1} \riem$, shrinking $U_2$ if necessary.  Let $\Gamma^2_r$ be the limiting curves $\phi^{-1}(|z|=r)$ induced by $\phi$, and similarly $\Gamma^1_r$ by $\phi \circ f$ (so that $f(\Gamma^1_r) = \Gamma^2_r)$.  
    
    Now choose a representative $\alpha$ of $[\alpha]$ and let $h$ be an extension of $H$ to a collar neighbourhood of $\partial_{k_2} \riem_2$.
    Then 
    by the Anchor Lemmas \ref{le:anchor_lemma_one} and \ref{le:anchor_lemma_two} and a change of variables, we have
    \begin{align*}
        \int_{\partial_{k_2} \riem_2} [\alpha] H & = \lim_{r \nearrow 1} \int_{\Gamma^2_r} \alpha h = \lim_{r \nearrow 1} \int_{\Gamma^1_r}
        f^* \alpha \, h \circ f \\
        & = \int_{\partial_{k_1} \riem_1} f^* [\alpha] H \circ f
    \end{align*}
    where in the last equality we have also used Proposition \ref{pr:Hprime_conformal_invariance}. 
  \end{proof}
  }
 
 \begin{theorem}  \label{th:Honehalf_reformulation}
  Let $\riem$ be a bordered Riemann surface of type $(g,n)$.  For any fixed $k  \in \{1,\ldots,n \}$, the bijection 
  \begin{align*}
      \mathcal{H}'(\partial_k \riem) & \rightarrow H^{-1/2}(\partial_k \riem) \\
      [\alpha] & \mapsto L_{[\alpha]} 
  \end{align*}
  is a bounded isomorphism.
 \end{theorem}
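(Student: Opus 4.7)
The plan is to reduce the theorem to the model case of the unit disk with boundary $\mathbb{S}^1$, and then carry out an explicit Fourier-series computation.

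First I would use conformal invariance to reduce the theorem to the case $\riem = \disk$ and $\partial_k \riem = \mathbb{S}^1$. Fix a collar chart $\phi:U \to \mathbb{A}_{r,1}$. By Proposition \ref{pr:Hprime_conformal_invariance} pull-back by $\phi$ gives a bounded isomorphism $\phi^*:\mathcal{H}'(\mathbb{S}^1) \to \mathcal{H}'(\partial_k \riem)$ (using the intrinsic definition of the norm \eqref{eq:Hprime_norm_using_collar}). By Lemma \ref{le:homogen_equiv}, $h \mapsto h \circ \phi$ is a bounded isomorphism $H^{1/2}(\mathbb{S}^1) \to H^{1/2}(\partial_k \riem)$, which dualizes to a bounded isomorphism of the $H^{-1/2}$ spaces. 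Proposition \ref{pr:pullback_pairing_confinv} says the pairing \eqref{eq:associated_H_minusonehalf_pairing} is compatible with these identifications, so it suffices to prove that $[\alpha] \mapsto L_{[\alpha]}$ is a bounded isomorphism $\mathcal{H}'(\mathbb{S}^1) \to H^{-1/2}(\mathbb{S}^1)$.

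Next I would exploit the canonical representative of Lemma \ref{le:nice_Hprime_representative_disk}, writing each $[\alpha]$ as $\alpha = f(z)\,dz + \overline{g(z)}\,d\bar z + \delta$ with $f, g \in \mathcal{D}(\disk)$ and $\delta$ the prescribed logarithmic-derivative part. For $h \in H^{1/2}(\mathbb{S}^1)$ with Fourier expansion $h = \sum h_n e^{in\theta}$, I would take $H$ to be the Poisson extension of $h$ to $\disk$ and evaluate
\[
L_{[\alpha]}(h) \;=\; \lim_{r \nearrow 1} \int_{|z|=r} H \,\alpha
\]
by substituting $z = re^{i\theta}$, expanding each factor in its Fourier series, and using orthogonality of $\{e^{im\theta}\}$ in the angular integral. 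This produces an explicit bilinear expression pairing the Fourier coefficients of $h$ with those of $f$, $g$ and the constant $\lambda$ in $\delta$. Boundedness $\|L_{[\alpha]}\|_{H^{-1/2}} \lesssim \|[\alpha]\|_{\mathcal{H}'}$ then follows by applying Cauchy--Schwarz in the weighted $\ell^2$-duality between $H^{1/2}(\mathbb{S}^1)$ and $H^{-1/2}(\mathbb{S}^1)$, using that the Bergman-norm Parseval identity $\|f\,dz\|_{\mathcal{A}(\disk)}^2 \approx \sum |f_n|^2/(n+1)$ matches the $H^{-1/2}$-weight $(1+|n|)^{-1}$ mode-by-mode. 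Injectivity is immediate from the uniqueness in Lemma \ref{le:nice_Hprime_representative_disk}: the explicit formula recovers each Fourier coefficient of the representative from $L_{[\alpha]}$, so $L_{[\alpha]}=0$ forces $[\alpha]=0$.

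For surjectivity with bounded inverse, given $L \in H^{-1/2}(\mathbb{S}^1)$ with coefficients $\alpha_n = L(e^{in\theta})$ satisfying $\sum |\alpha_n|^2/(1+|n|) < \infty$, I would invert the explicit formula to read off the Fourier coefficients of $f$ and $g$, and the value of $\lambda$, directly from the sequence $(\alpha_n)$. The summability condition on $(\alpha_n)$ translates, mode-by-mode, into membership of $f\,dz$ and $\overline{g}\,d\bar z$ in the Bergman space together with a finite $|\lambda|$, producing an $[\alpha] \in \mathcal{H}'(\mathbb{S}^1)$ with $L_{[\alpha]} = L$ and $\|[\alpha]\|_{\mathcal{H}'} \lesssim \|L\|_{H^{-1/2}}$. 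Combined with the boundedness step this establishes that $[\alpha] \mapsto L_{[\alpha]}$ is a bounded isomorphism. The main obstacle will be the careful book-keeping in the Fourier computation: one must handle the holomorphic, anti-holomorphic, and logarithmic parts of $\alpha$ simultaneously, show that each Fourier mode of $h$ is paired with exactly one component of the representative, and verify that the resulting weights on both sides match precisely so that Cauchy--Schwarz yields the required two-sided equivalence of norms.
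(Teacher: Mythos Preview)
Your proposal is correct and follows the same overall architecture as the paper: reduce to $\mathbb{S}^1$ via a collar chart using Proposition \ref{pr:pullback_pairing_confinv} and Lemma \ref{le:homogen_equiv}, then work with the canonical representative of Lemma \ref{le:nice_Hprime_representative_disk}. The execution differs in a minor but interesting way. For boundedness, the paper applies Stokes' theorem to rewrite the limiting integral as $aH(0) - \iint_{\disk}(\alpha-\delta)\wedge dH_1$ and then uses Cauchy--Schwarz together with Corollary \ref{good sobolev cor}; you instead propose a direct mode-by-mode Fourier computation with Parseval-type estimates. For surjectivity, the paper invokes the separately stated Theorem \ref{th:limiting_dual_expression}, which produces a representing $\alpha$ from $L\in H^{-1/2}(\mathbb{S}^1)$ via Riesz representation and Poisson extension, and then recovers the bounded inverse implicitly from the open mapping theorem; you construct the inverse explicitly by reading off Fourier coefficients, obtaining the norm bound on the inverse directly. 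Your route is more computational and self-contained, while the paper's Stokes-based argument is coordinate-free and avoids the book-keeping you flag as the main obstacle; both yield the same result with comparable effort.
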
 
 The proof of the theorem will require the following result. 
 \begin{theorem} \label{th:limiting_dual_expression}
  Let $L$ be in $H^{-1/2}(\mathbb{S}^1)$.  Then there is an $\alpha \in \mathcal{A}_{\mathrm{harm}}(\mathbb{A}_{r,1})$ such that
  \begin{equation} \label{eq:limiting_dual}
     L(f) = \lim_{s \nearrow 1} \int_{|z|=s}  f \alpha.   
  \end{equation}
 \end{theorem}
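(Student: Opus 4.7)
The approach is to construct $\alpha$ explicitly from the Fourier coefficients of $L$, verify the integral identity \eqref{eq:limiting_dual} first on trigonometric polynomials by a direct Fourier calculation, and then extend to all of $H^{1/2}(\mathbb{S}^1)$ by density and continuity.

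Let $\alpha_n := L(e^{in\theta})$ for $n \in \mathbb{Z}$. By the characterization of $H^{-1/2}(\mathbb{S}^1)$ recalled in \eqref{negative onehalf norm}, these coefficients satisfy $\sum_n |\alpha_n|^2/(1+|n|^2)^{1/2} < \infty$; in particular $\sum_{n \neq 0} |\alpha_n|^2/|n| < \infty$, and $|\alpha_n| = o(|n|^{1/2})$. Define
\[
p(z) := \frac{1}{2\pi i}\sum_{m=0}^{\infty} \alpha_{-m-1}\, z^m, \qquad
q(z) := \frac{1}{2\pi i}\sum_{m=-1}^{\infty} \overline{\alpha_{m+1}}\, z^m,
\]
which by the growth estimate above are holomorphic on $\{|z|<1\}$ and $\{0<|z|<1\}$ respectively, and set $\alpha := p(z)\,dz + \overline{q(z)}\,d\bar z$. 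A mode-by-mode computation in polar coordinates shows that the squared $L^2$ norms on $\mathbb{A}_{r,1}$ of $p\,dz$ and $\overline{q}\,d\bar z$ reduce, up to the single $z^{-1}$ term of $q$ which contributes a finite $|\alpha_0|^2\log(1/r)$ term, to tails of $\sum_{n\neq 0}|\alpha_n|^2/|n|$. Thus $\alpha \in \mathcal{A}_{\mathrm{harm}}(\mathbb{A}_{r,1})$ for any $r \in (0,1)$.

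To verify \eqref{eq:limiting_dual}, take $f = e^{in_0\theta}$ and extend harmonically by $F(se^{i\theta})=s^{|n_0|}e^{in_0\theta}$. Substituting into $\int_0^{2\pi} F(se^{i\theta})\,\alpha(se^{i\theta})$ and using orthogonality $\int_0^{2\pi} e^{ik\theta}\,d\theta = 2\pi\delta_{k,0}$ picks out a single term from each of the series defining $p$ and $q$; the constants in the definitions of $p,q$ are calibrated precisely so that the surviving contribution equals $s^{2|n_0|}\alpha_{n_0}$, which tends to $\alpha_{n_0}=L(e^{in_0\theta})$ as $s\nearrow 1$. Linearity extends this to all trigonometric polynomials. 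For general $f\in H^{1/2}(\mathbb{S}^1)$ with Poisson extension $F \in \mathcal{D}_{\mathrm{harm}}(\disk)$, the limit $\lim_{s\nearrow 1}\int_{|z|=s} F\,\alpha$ exists by the first Anchor Lemma (Lemma~\ref{le:anchor_lemma_one}) applied to $p\,dz$ and, after complex conjugation, to $\overline{q}\,d\bar z$. Moreover the limit depends continuously on $f$: the Stokes argument in the proof of Lemma~\ref{le:anchor_lemma_one} bounds the boundary integral in terms of a fixed interior contour integral plus $\|\alpha\|_{\mathcal{A}_{\mathrm{harm}}(\mathbb{A}_{r,1})}\,\|dF\|_{\mathcal{A}_{\mathrm{harm}}(\mathbb{A}_{r,1})}$, while Corollary~\ref{good sobolev cor} gives $\|F\|_{\mathcal{D}_{\mathrm{harm}}(\disk)}+|F(0)| \approx \|f\|_{H^{1/2}(\mathbb{S}^1)}$. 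Density of trigonometric polynomials in $H^{1/2}(\mathbb{S}^1)$ together with continuity of $L$ then yield \eqref{eq:limiting_dual} for every $f \in H^{1/2}(\mathbb{S}^1)$.

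The main obstacle is the second paragraph: the partition of the Fourier modes of $L$ between the holomorphic and anti-holomorphic pieces of $\alpha$ must be arranged so that the $L^2$ summability of $\alpha$ on $\mathbb{A}_{r,1}$ and the exact reproduction of $L$ on the dense subspace of trigonometric polynomials both hold, using only the weak summability $\sum_n |\alpha_n|^2/(1+|n|^2)^{1/2} < \infty$. Once the correct splitting is in hand, every other step is either a direct Fourier calculation or an appeal to the Anchor Lemmas already established.
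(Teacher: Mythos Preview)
Your proof is correct. It differs from the paper's argument, which proceeds via the Riesz representation theorem: the paper writes $L(f)=\langle f,F\rangle_{H^{1/2}}$ for a unique $F\in H^{1/2}(\mathbb{S}^1)$, rewrites this pairing as $\frac{1}{2\pi}\int_0^{2\pi} f\,(1-\partial_\theta^2)^{1/2}F\,d\theta$, and takes $\alpha$ to be the Poisson extension of $(1-\partial_\theta^2)^{1/2}F$, checking the $L^2$ bound on $\mathbb{A}_{r,1}$ by a Fourier computation.

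Your route---building $\alpha=p\,dz+\overline{q}\,d\bar z$ directly from the sequence $\alpha_n=L(e^{in\theta})$ and closing by density via the Anchor Lemma---is more hands-on but has two advantages in this paper's setting. First, it produces a genuine element of $\mathcal{A}_{\mathrm{harm}}(\mathbb{A}_{r,1})$ (a sum of a holomorphic and an antiholomorphic one-form), whereas the paper's $\alpha$ is written as a harmonic \emph{function} and the passage to a harmonic one-form is left implicit. Second, your extension step invokes exactly the machinery (Lemma~\ref{le:anchor_lemma_one} and Corollary~\ref{good sobolev cor}) already in place, so the argument slots cleanly into the surrounding section. The paper's approach, by contrast, is shorter and requires no separate density argument, since the Riesz/Parseval identity gives the formula for all $f\in H^{1/2}$ at once.
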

 \begin{proof}
 Since $H^{1/2}(\mathbb{S}^1)$ is a Hilbert space, Riesz representation theorem yields that there exists a unique $F\in H^{1/2}(\mathbb{S}^1)$ such that, if   $f=\sum_{n=-\infty}^{\infty} \hat{f}(n) e^{in\theta}$ and $F=\sum_{n=-\infty}^{\infty}\hat{F}(n) e^{in\theta}$ then
 $$
  L(f)= \langle f, F \rangle_{H^{1/2}(\mathbb{S}^1)}=\sum_{n=-\infty}^{\infty}\left(1+|n|^{2}\right)^{1/2} \hat{f}(n)\overline{\hat{F}(n)}
.$$
Moreover $\Vert L\Vert_{H^{-1/2}(\mathbb{S}^1)}= \Vert F\Vert_{H^{1/2}(\mathbb{S}^1)}.$
Now by Parseval's formula we also have
\begin{equation}
   \sum_{n=-\infty}^{\infty}\left(1+|n|^{2}\right)^{1/2} \hat{f}(n)\overline{\hat{F}(n)}=\frac{1}{2\pi} \int_{0}^{2\pi} f( e^{i\theta}) ((1-\partial^2_{\theta})^{1/2}F)(e^{i\theta})\, d\theta.
\end{equation}
This and the requirement of harmonicity of $\alpha$ suggests that the desired $\alpha$ should be taken as the Poisson extension of $((1-\partial^2_{\theta})^{1/2}F)(e^{i\theta})$ (i.e. its convolution with the Poisson kernel of the unit disk), which for $ s\leq 1$ yields that \begin{equation}
  \alpha(se^{i\theta})=\sum_{n=-\infty}^{\infty}\left(1+|n|^{2}\right)^{1/2}\hat{F}(n)\, s^{|n|} \, e^{in\theta}.  
\end{equation}  
 
Moreover, a  calculation reveals that for $0<r<1$ one has

\begin{equation}
    \Vert \alpha\Vert^{2}_{L^2 (\mathbb{A}_{r,1})}= \pi \sum_{n=-\infty}^{\infty}(1-r^{2|n|+2}) \frac{1+|n|^{2}}{1+|n|} |\hat{F}(n)|^2 \lesssim \sum_{n=-\infty}^{\infty} (1+|n|^{2})^{1/2} |\hat{F}(n)|^2 <\infty
\end{equation}
since $F\in H^{1/2}(\mathbb{S}^1).$ Therefore $\alpha \in \mathcal{A}_{\mathrm{harm}}(\mathbb{A}_{r,1})$, as desired.
\end{proof}

We now return to the proof of Theorem \ref{th:Honehalf_reformulation}. 
\begin{proof}[{\emph{\bf{Proof of Theorem \ref{th:Honehalf_reformulation}}}}]
 {
 Let $\phi:U \rightarrow \mathbb{A}_{r,1}$ be a collar chart. For any $h \in H^{1/2}(\partial_k \riem)$, recall that we have
 \begin{equation} \label{eq:pairing_pullback_invariance}
   \int_{\partial_k \riem} [\alpha]  h = \int_{\mathbb{S}^1} \phi^*[\alpha] h \circ \phi   
 \end{equation}
 by Proposition \ref{pr:pullback_pairing_confinv}. Thus, by Lemma \ref{le:homogen_equiv} and recalling the definition (\ref{eq:Hprime_norm_using_collar}) of the chart-dependent norm, it is enough to prove the claim on $\mathcal{H}'(\mathbb{S}^1)$.  }

 We first need to show that for any given 
 $[\alpha] \in \mathcal{H}'(\mathbb{S}^1)$, the linear functional   $L_{[\alpha]}$ is bounded, and hence in $H^{-1/2}(\mathbb{S}^1)$. To see this, let $\alpha$ be a representative as in Lemma \ref{le:nice_Hprime_representative_disk}, so that $\alpha- \delta$ is exact where 
{ {\[ \delta = \frac{a}{4\pi i} \left( \frac{dz}{z} - \frac{d\bar{z}}{\bar{z}} \right) \]}}
 for some $a \in \mathbb{C}$.
 Observe that 
 \[  a = \int_{\mathbb{S}^1} [\alpha].  \]
 
 For any $h \in H^{1/2}(\mathbb{S}^1)$ let $H$ be its unique harmonic extension in $\mathcal{D}_{\mathrm{harm}}(\disk)$, and write $H(z)=H_1(z) + H(0)$ where $H_1(0)=0$.
 Recall that 
 \[  \| h \|^2_{\mathcal{H}^{1/2}(\mathbb{S}^1)} = |H(0)|^2 + \| dH \|^2_{\mathcal{D}_{\mathrm{harm}}(\disk)}. \]
 {By the mean-value theorem for the harmonic function $H_1$, one has}
 \begin{align*}
     \lim_{r \nearrow 1} \int_{|z|=r} \alpha(z) H(z) & = 
     \lim_{r \nearrow 1} \int_{|z|=r}  H(0) \alpha(z) +  \lim_{r \nearrow 1} \int_{|z|=r} \alpha H_1(z) \\
     & = \lim_{r \nearrow 1} \int_{|z|=r}  H(0) \delta(z)+  \lim_{r \nearrow 1}\int_{|z|=r} (\alpha - \delta)  H_1(z) \\
     & = {{a}}H(0) - \iint_{\mathbb{D}} (\alpha- \delta) \wedge_w  dH_1(z), 
 \end{align*}
 {so the  Cauchy-Schwarz inequality and Corollary \ref{good sobolev cor} yield}
 \begin{align*}
   \left| \lim_{r \nearrow 1} \int_{|z|=r} \alpha(z) H(z) \right| 
    & \leq  | a H(0) | + \| \alpha- \delta \|_{\mathcal{A}_{\mathrm{harm}}(\disk)} \| H \|_{\mathcal{D}_{\mathrm{harm}}(\disk)} \\ & {\leq C \| \alpha  \|_{\mathcal{H}'(\mathbb{S}^1)} \| h \|_{H^{1/2}(\mathbb{S}^1)} },
 \end{align*}
for some constant $C$. 
 Thus $L_{[\alpha]} \in H^{-1/2}(\mathbb{S}^1)$. {The same computation also shows that the map $[\alpha] \rightarrow L_{[\alpha]}$ is bounded. }

The map $[\alpha] \rightarrow L_{[\alpha]}$ is surjective by Theorem \ref{th:Honehalf_reformulation}, so it remains to show that it is injective.
Assume that $L_{[\alpha]} h =0$ for all $h \in H^{1/2}(\mathbb{S}^1)$. Let $\alpha,\delta,f,\overline{g}$ be as in Lemma \ref{le:nice_Hprime_representative_disk}. Since 
\[  0 = L_{[\alpha]} (1) = \lim_{r \nearrow 1} \int_{|z|=r}  \alpha = b \]
we must have $a=0$. Similarly using $0=L_{[\alpha]}(z^n)=L_{[\alpha]}(\bar{z}^n)$ for all $n=1,\ldots,\infty$ shows that $f=\overline{g}=0$, so $\alpha=0$. Thus $[\alpha]=0$.
\end{proof}

{This shows that different collar charts induce equivalent norms, as promised.
\begin{corollary}
 For any fixed $k$, and any pair of collar charts $\phi,\psi$ near $\partial_k \riem$, the norm induced on $\mathcal{H}'(\partial_k \riem)$ by $\phi$ and $\psi$ are equivalent.
 
 Similarly, for any two collections of collar charts $\phi=(\phi_1,\ldots,\phi_n)$ and $\psi=(\psi_1,\ldots,\psi_n)$ of the boundaries $\partial_1 \riem,\ldots,\partial_n \riem$, the norms induced on $\mathcal{H}'(\partial \riem)$ by $\phi$ and $\psi$ are equivalent.
\end{corollary}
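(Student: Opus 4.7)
The plan is to use Theorem \ref{th:Honehalf_reformulation} to reduce everything to the intrinsic Sobolev norm on $H^{-1/2}(\partial_k \riem)$. For a single boundary curve $\partial_k \riem$ and a collar chart $\phi$, that theorem asserts that the evaluation map $[\alpha] \mapsto L_{[\alpha]}$ is a bounded isomorphism from $(\mathcal{H}'(\partial_k \riem), \|\cdot\|_{\mathcal{H}'(\partial_k \riem),\phi})$ onto $H^{-1/2}(\partial_k \riem)$, the latter equipped with the intrinsic Sobolev norm coming from the analytic manifold structure on $\partial_k \riem$ (viewed as an analytic curve in the double of $\riem$, cf.\ Remark \ref{re:Sobolev_space_uses_analytic_boundary}). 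The key observation is that the pairing \eqref{eq:associated_H_minusonehalf_pairing} defining $L_{[\alpha]}$ is expressed as the limiting integral $\lim_{\varepsilon\searrow 0}\int_{\Gamma_\varepsilon} H\alpha$, which by the First Anchor Lemma \ref{le:anchor_lemma_one} is independent of the choice of collar chart. Hence the target space and its norm do not depend on $\phi$.

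Applying the theorem separately with two collar charts $\phi$ and $\psi$ near $\partial_k \riem$ yields constants such that
\[    \|[\alpha]\|_{\mathcal{H}'(\partial_k \riem),\phi} \;\approx\; \|L_{[\alpha]}\|_{H^{-1/2}(\partial_k \riem)} \;\approx\; \|[\alpha]\|_{\mathcal{H}'(\partial_k \riem),\psi}, \]
which gives the first claim. The assertion about norms on $\mathcal{H}'(\partial \riem)$ induced by collections $\phi=(\phi_1,\ldots,\phi_n)$ and $\psi=(\psi_1,\ldots,\psi_n)$ follows at once, since by \eqref{eq:Hprime_norm_full_boundary_chart} the full-boundary norm is the orthogonal sum of the boundary-component norms, and the equivalence constants on each component combine to equivalence constants for the sum.

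The only subtle point in the above is to ensure that Theorem \ref{th:Honehalf_reformulation} furnishes a bounded inverse as well as a bounded forward map; the proof of that theorem displays boundedness of $[\alpha] \mapsto L_{[\alpha]}$ explicitly and establishes bijectivity, so since both $(\mathcal{H}'(\partial_k \riem),\|\cdot\|_\phi)$ and $H^{-1/2}(\partial_k \riem)$ are Banach spaces, the open mapping theorem immediately supplies boundedness of the inverse. (Alternatively, the norm estimate produced in the existence argument of Theorem \ref{th:limiting_dual_expression}, expressed via Fourier series on $\mathbb{S}^1$, gives the reverse bound directly.) With this in place, the equivalence of norms is automatic, and no further computation is required.
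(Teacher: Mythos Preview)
Your argument is correct and follows essentially the same route as the paper: both proofs invoke Theorem \ref{th:Honehalf_reformulation} to identify $(\mathcal{H}'(\partial_k\riem),\|\cdot\|_\phi)$ with $H^{-1/2}(\partial_k\riem)$ via the chart-independent pairing $[\alpha]\mapsto L_{[\alpha]}$, then compare the two chart-dependent norms through this common target. Your additional remarks on the First Anchor Lemma and the open mapping theorem simply make explicit what the paper leaves implicit in the phrase ``bounded isomorphism.''
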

\begin{proof}
  It suffices to establish the case of one boundary curve. Fixing a collar chart $\phi$ by Theorem \ref{th:Honehalf_reformulation} the map $[\alpha] \rightarrow L_{[\alpha]}$ is a bounded isomoprhism between $\mathcal{H}'(\partial_k \riem)$ and $H^{-1/2}(\partial_k \riem)$ with respect to the norm on $\mathcal{H}'(\partial_k \riem)$ induced by this chart. Since this is true for any collar chart, the norms induced by different collar charts must be equivalent. 
\end{proof}

}

Finally, we observe that harmonic measures generate the zero equivalence class of $\mathcal{H}'(\partial_k \riem)$ for any $k=1,\ldots,n$. 
\begin{proposition} \label{pr:harmonic_measures_zero}
  For any $d\omega \in \mathcal{A}_{\mathrm{hm}}(\riem)$ we have 
  \[  [d\omega]=0. \]
\end{proposition}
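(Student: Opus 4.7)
The plan is to unwind the definition of the equivalence relation in Definition \ref{de:form_Dirichlet_equivalence_relation} and observe that the zero choice of auxiliary form $\delta$ already does the work. By complex linearity, it suffices to prove $[d\omega_j] = 0$ in $\mathcal{H}'(\partial_k \riem)$ for each basis harmonic measure $d\omega_j$ and each $k \in \{1,\ldots,n\}$, since $\mathcal{A}_{\mathrm{hm}}(\riem)$ is by definition the complex linear span of the $d\omega_j$.

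Fix $j,k \in \{1,\ldots,n\}$ and fix any collar neighbourhood $U_k$ of $\partial_k \riem$. The form $d\omega_j$ restricted to $U_k$ lies in $\mathcal{A}_{\mathrm{harm}}(U_k)$ (by Proposition \ref{pr:harmonic_measures_L2}) and is exact with primitive $\omega_j$. To show $d\omega_j \sim 0$ on $\partial_k \riem$, I would take the auxiliary form in the equivalence definition to be $\delta = 0$. Then $d\omega_j - \delta = d\omega_j$ is exact with primitive $f = \omega_j$, while $0 - \delta = 0$ is exact with primitive $g = 0$. The two conditions of the equivalence relation reduce to the single requirement that $f - g = \omega_j$ have constant CNT boundary values on $\partial_k \riem$ up to a null set.

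This requirement is immediate from the very definition of the harmonic measures: $\omega_j$ is harmonic on $\riem$, continuous up to the closure, and equal to $\delta_{jk}$ identically on $\partial_k \riem$. Treating $\partial_k \riem$ as an analytic curve in the double $\riem^d$ (as in Proposition \ref{pr:harmonic_measures_L2}), $\omega_j$ in fact extends real analytically and the boundary value $\delta_{jk}$ is assumed in the pointwise (and hence CNT) sense on all of $\partial_k \riem$, so certainly up to a null set. Hence $d\omega_j \sim 0$ in $\mathcal{H}'(\partial_k \riem)$, and by linearity $[d\omega] = 0$ in each $\mathcal{H}'(\partial_k \riem)$, which by Definition \ref{defn: Hprimes} means $[d\omega] = 0$ in $\mathcal{H}'(\partial \riem)$.

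There is no real obstacle here; the only thing worth emphasizing is that one must verify the correct definition is being used, and recall that continuous boundary values automatically coincide with CNT boundary values away from a null set (indeed, everywhere). The proposition should be viewed as a consistency check: the finite-dimensional cohomological freedom encoded in the harmonic measures is invisible to the boundary-value functor $[\,\cdot\,]$, as it should be.
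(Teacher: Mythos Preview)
Your proof is correct and is in fact more direct than the paper's. The paper invokes the isomorphism $[\alpha] \mapsto L_{[\alpha]}$ of Theorem~\ref{th:Honehalf_reformulation} and then shows $L_{[d\omega]}=0$ by evaluating the limiting integral on a dense set of test functions, using that $d\omega$ extends across $\partial_k\riem$ and vanishes on tangent vectors there. Your argument bypasses the $H^{-1/2}$ identification entirely: you verify $d\omega_j \sim 0$ straight from Definition~\ref{de:form_Dirichlet_equivalence_relation} by choosing $\delta=0$ and observing that the primitive $\omega_j$ has the constant CNT boundary value $\delta_{jk}$. This is cleaner and uses less machinery; the paper's route, on the other hand, illustrates the duality viewpoint and foreshadows how $L_{[\alpha]}$ is computed in practice.
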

\begin{proof}
 By Theorem \ref{th:Honehalf_reformulation} it suffices to show that $L_{[d\omega]}=0$.  Since $L_{[d\omega]}$ is bounded, it suffices to show that it is zero on the dense subset $H^1_{\mathrm{conf}}(U)$ where $U$ is a doubly connected neighbourhood of $\partial_k \riem$ in the double of $\riem$.  Observing that $d\omega$ has an extension to the double, for any such $h \in H^1_{\mathrm{conf}}(U)$ we obtain
 \[ L_{[d\omega]} (h) = \int_{\partial_k \riem} h\, d\omega    \]
 where the integral on the right hand side can be evaluated directly on the curve $\partial_k \riem$.  Since $d\omega =0$ for vectors tangent to $\partial_k \riem$, this completes the proof.
\end{proof}

{ A model of the homogeneous space $\dot{H}^{-1/2}(\partial_k \riem)$ can also be given in terms of one-forms.  Consider the Sobolev space $\dot{H}^{1/2}(\partial_k \riem)$ to consist of functions modulo constants. Let $\dot{H}^{-1/2}(\partial_k \riem)$ denote its dual space.  

Observe that if $[\alpha]=[\beta]$ in $\mathcal{H}'(\partial_k \riem)$, and $\alpha$ is exact, then $\beta$ is also exact. Thus we may define
\[ \gls{BVexactcomp} = \{ [\alpha] \in \mathcal{H}'(\partial_k \riem) : [\alpha] \text{  has an exact representative} \}.    \]
We can similarly define $\gls{BVexact}$ as above. 

It is easy to see that for $[\alpha] \in \dot{H}'(\partial_k \riem)$, for any constant function $c \in H^{1/2}(\partial_k \riem)$ we have
\[  L_{[\alpha]} c = 0.  \]
Thus, $[\alpha]$ generates a well-defined functional on $H^{1/2}(\partial_k \riem)$.  We can define 

It is easy to see that we have 
 \begin{theorem}  \label{th:dot_Honehalf_reformulation}
  Let $\riem$ be a bordered Riemann surface of type $(g,n)$.  For any fixed $k  \in \{1,\ldots,n \}$, the bijection 
  \begin{align*}
      \dot{\mathcal{H}}'(\partial_k \riem) & \rightarrow \dot{H}^{-1/2}(\partial_k \riem) \\
      [\alpha] & \mapsto L_{[\alpha]} 
  \end{align*}
  is a bounded isomorphism.
 \end{theorem}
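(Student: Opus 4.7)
The strategy is to deduce this statement as an immediate corollary of the non-dotted isomorphism in Theorem \ref{th:Honehalf_reformulation}, by restricting the bijection $[\alpha] \mapsto L_{[\alpha]}$ to the appropriate closed subspaces on either side.

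First I would verify the period characterization: an equivalence class $[\alpha] \in \mathcal{H}'(\partial_k \riem)$ lies in $\dot{\mathcal{H}}'(\partial_k \riem)$ if and only if $\int_{\partial_k \riem}[\alpha] = 0$. If some representative $\alpha$ on a collar neighbourhood $U$ is exact, $\alpha = dh$, then Stokes' theorem applied to any smooth closed curve $\Gamma \subset U$ isotopic to $\partial_k \riem$ yields $\int_\Gamma \alpha = 0$. Conversely, any collar neighbourhood is an annulus, whose first homology is generated by a single loop isotopic to $\partial_k \riem$, so a harmonic one-form with vanishing period over that loop admits a single-valued primitive on $U$, giving an exact representative of $[\alpha]$.

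Second, I would translate this period condition into a vanishing condition for the functional $L_{[\alpha]}$. The constant function $1$ on $\partial_k \riem$ extends to the harmonic function identically equal to $1$ on any collar neighbourhood. By the First Anchor Lemma \ref{le:anchor_lemma_one},
\[ L_{[\alpha]}(1) \;=\; \lim_{\epsilon \searrow 0} \int_{\Gamma_\epsilon} \alpha \;=\; \int_{\partial_k \riem}[\alpha]. \]
Thus $L_{[\alpha]}$ annihilates every constant function precisely when $[\alpha] \in \dot{\mathcal{H}}'(\partial_k \riem)$. Since $\dot{H}^{-1/2}(\partial_k \riem)$ is by definition the annihilator of constants in $H^{-1/2}(\partial_k \riem)$, restricting the isomorphism of Theorem \ref{th:Honehalf_reformulation} yields a bijection between $\dot{\mathcal{H}}'(\partial_k \riem)$ and $\dot{H}^{-1/2}(\partial_k \riem)$.

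Third, boundedness of both the map and its inverse follow by restricting the bounded isomorphism of Theorem \ref{th:Honehalf_reformulation} to these closed subspaces, provided $\dot{\mathcal{H}}'(\partial_k \riem)$ is equipped with the norm inherited from $\mathcal{H}'(\partial_k \riem)$. Using the canonical representative in Lemma \ref{le:nice_Hprime_representative_disk}, exact classes correspond exactly to those with period coefficient $a = 0$, so that on $\mathbb{S}^1$ the norm on $\dot{\mathcal{H}}'$ reduces to the pure Bergman norm $\|f(z)\, dz + \overline{g(z)}\, d\bar z\|_{\mathcal{A}_{\mathrm{harm}}(\disk)}$, which is precisely the norm naturally paired with $\dot{H}^{1/2}(\mathbb{S}^1)$; transferring via a collar chart (and invoking Lemma \ref{homogen equivalence} on an arbitrary boundary) this extends to $\partial_k \riem$. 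I do not anticipate any serious obstacle: the argument is essentially bookkeeping on top of Theorem \ref{th:Honehalf_reformulation} and the identification of the constant $1$ with the generator of $H_1$ of a collar neighbourhood.
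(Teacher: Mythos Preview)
Your proposal is correct and matches the paper's approach: the paper does not actually give a proof of this theorem, stating only ``It is easy to see that we have'' and presenting it as an immediate consequence of Theorem~\ref{th:Honehalf_reformulation} together with the observation (made just before the statement) that $L_{[\alpha]}c = 0$ for constants $c$ whenever $[\alpha]$ has an exact representative. You have simply spelled out the details the paper omits, via the period characterization $L_{[\alpha]}(1) = \int_{\partial_k\riem}[\alpha]$ and the identification of $\dot{H}^{-1/2}$ with the annihilator of constants.
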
  
}
\end{subsection}
\begin{subsection}{Formulation and solution of the CNT Dirichlet problem for $L^2$ one-forms}   \label{se:Dirichlet_full_solution}

 We can now state the general Dirichlet problem for $L^2$ one-forms. 
 \begin{definition}[CNT Dirichlet data for one-forms]
 By CNT Dirichlet data for one-forms, we mean $([\beta],\rho,\sigma)$ where 
 \begin{enumerate}
     \item $[\beta] = ([\beta_1],\ldots,[\beta_n]) \in \mathcal{H}'(\partial \riem)$ 
     such that 
     \[   \int_{\partial_1\riem} [\beta_1] + \cdots + \int_{\partial_n \riem} [\beta_n] = 0; \]
     \item $\rho=(\rho_1,\ldots,\rho_n)\in \mathbb{C}^n$ satisfying
     \[  \rho_1 + \cdots + \rho_n =0;\]
     and
     \item $\sigma= (\sigma_1,\ldots,\sigma_{2g}) \in \mathbb{C}^{2g}$.  
 \end{enumerate} 
 \end{definition}
 The Dirichlet problem for this data is as follows.
 \begin{definition}[CNT Dirichlet problem for one-forms]\label{defn:Cnt dirichlet data}
  We say that a harmonic one-form $\alpha$ on $\riem$  solves the CNT Dirichlet problem with data  $([\beta],\rho,\sigma)$,  if
  $([\beta],\rho,\sigma)$ is CNT Dirichlet data and 
 \begin{enumerate}
     \item $[\alpha] = ([\beta_1],\ldots,[\beta_n])$;
     \item for all $k =1,\ldots,n$  
     \[  \int_{\partial_k \riem} \ast \alpha = \rho_k;   \]
     and 
     \item for all $k=1,\ldots,2g$
     \[  \int_{\gamma_k} \alpha : = \sigma_k.  \]
 \end{enumerate} 
 \end{definition}
 The CNT Dirichlet problem has a solution which depends continuously on the data.\\

  \begin{theorem}[Well-posedness of Dirichlet's problem for CNT data] \label{th:CNT_Dirichlet_problem_oneforms} For \emph{CNT} Dirichlet data $([\beta],\rho,\sigma)$ there exists a unique $\alpha \in  \mathcal{A}_{\mathrm{harm}}(\riem)$ which solves the Dirichlet problem. Moreover, the operator 
  \[   \gls{Dir}_{\partial \riem,\riem}: \mathcal{H}'(\partial \riem) \oplus \mathbb{C}^{2g+n-1} \rightarrow \mathcal{A}_{\mathrm{harm}}(\riem)      \]
  taking $([\beta],\rho,\sigma)$ to the solution
  is bounded.  Here the entries of $\mathbb{C}^{2g+n-1}$ are \[  (\rho_1,\ldots,\rho_{n-1},\sigma_1,\ldots,\sigma_{2g}).   \]  
 \end{theorem}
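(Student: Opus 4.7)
The plan is to follow the strategy of Theorem \ref{th:smooth_Dirichlet_solution}, replacing the classical ingredients (Corollary \ref{co:boundary_periods_specified_starmeasure} for boundary periods, the dual harmonic basis on $\riem^d$ for $a,b$-periods, and the smooth Dirichlet problem for a primitive) by their $L^2$/CNT counterparts. The key new tools will be the duality $\mathcal{H}'(\partial_k\riem)\cong H^{-1/2}(\partial_k\riem)$ from Theorem \ref{th:Honehalf_reformulation}, the bounce operator (Theorem \ref{th:bounce_bounded}), and the canonical representatives from Lemma \ref{le:nice_Hprime_representative_disk}. First I would set $\lambda_k=\int_{\partial_k\riem}[\beta_k]$ and, using Corollary \ref{co:boundary_periods_specified_starmeasure}, produce $\mu\in\ast\mathcal{A}_{\mathrm{hm}}(\riem)$ with $\int_{\partial_k\riem}\mu=\lambda_k$. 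Then, using a dual basis $\varepsilon_1,\dots,\varepsilon_{2g}$ of harmonic forms on the double (zero boundary periods), I build $\eta$ in their span with $\int_{\gamma_j}\eta=\sigma_j-\int_{\gamma_j}\mu$. Both $\mu$ and $\eta$ extend smoothly to $\mathrm{cl}(\riem)$ and their norms are controlled linearly by $\|[\beta]\|_{\mathcal{H}'(\partial\riem)}$ and $|\sigma|$.

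Next consider $[\beta']=[\beta]-[\mu]-[\eta]\in\mathcal{H}'(\partial\riem)$. By construction each $[\beta'_k]$ has zero boundary period, hence admits an exact representative. Choosing a collar chart $\phi_k:U_k\to\mathbb{A}_{r,1}$ and applying Lemma \ref{le:nice_Hprime_representative_disk} (pulled back via $\phi_k$), I select a canonical exact representative $\beta'_k=df_k$ with $f_k\in\mathcal{D}_{\mathrm{harm}}(U_k)$, normalized so that $\|\beta'_k\|_{\mathcal{A}_{\mathrm{harm}}(U_k)}$ and hence $\|f_k\|_{\mathcal{D}_{\mathrm{harm}}(U_k)}$ is comparable to $\|[\beta'_k]\|_{\mathcal{H}'(\partial_k\riem)}$. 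By Theorem \ref{th:CNT_BVs_collar_existence}, each $f_k$ has CNT boundary values on $\partial_k\riem$ up to a null set. Now apply the bounce operator to the collection: $h=\mathbf{G}_{U,\riem}(f_1,\dots,f_n)\in\mathcal{D}_{\mathrm{harm}}(\riem)$ with $U=U_1\cup\cdots\cup U_n$. Since $h-f_k$ has zero CNT boundary values on $\partial_k\riem$, the forms $dh$ and $df_k=\beta'_k$ represent the same class in $\mathcal{H}'(\partial_k\riem)$; thus $[dh]=[\beta']$. Set $\alpha_0=dh+\mu+\eta$, so that $[\alpha_0]=[\beta]$ and $\int_{\gamma_j}\alpha_0=\sigma_j$.

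It remains to correct the normal boundary periods. Let $\rho'_k=\rho_k-\int_{\partial_k\riem}\ast\alpha_0$; one checks directly that $\sum_k\rho'_k=0$. By Theorem \ref{th:period_matrix_invertible}, there is a unique $\omega$ in the span of the harmonic measures (modulo constants) with $\int_{\partial_k\riem}\ast d\omega=\rho'_k$ for $k=1,\ldots,n-1$ (the last period then follows automatically). Set $\alpha=\alpha_0+d\omega$. Since $[d\omega]=0$ by Proposition \ref{pr:harmonic_measures_zero} and $d\omega$ has trivial $a,b$-periods, $\alpha$ satisfies all three conditions of Definition \ref{defn:Cnt dirichlet data}. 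For uniqueness, if $\alpha_1,\alpha_2$ are two solutions, $\alpha_1-\alpha_2$ has trivial $\mathcal{H}'$-class, zero $a,b$-periods, and zero normal boundary periods; the first two give global exactness $\alpha_1-\alpha_2=d\phi$ with $\phi\in\mathcal{D}_{\mathrm{harm}}(\riem)$, the first forces the CNT boundary values of $\phi$ to be locally constant, hence $\phi$ lies in the span of $1,\omega_1,\dots,\omega_n$, and Theorem \ref{th:period_matrix_invertible} applied to the normal period conditions forces $\phi$ constant.

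Boundedness follows by tracking the estimate through each step: $\|\mu\|\lesssim|\lambda|\lesssim\|[\beta]\|_{\mathcal{H}'(\partial\riem)}$; $\|\eta\|\lesssim|\sigma|+\|\mu\|$; $\|f_k\|_{\mathcal{D}_{\mathrm{harm}}(U_k)}\lesssim\|[\beta'_k]\|_{\mathcal{H}'(\partial_k\riem)}$ by choice of canonical representative; $\|dh\|_{\mathcal{A}_{\mathrm{harm}}(\riem)}\lesssim\|h\|_{H^1_{\mathrm{conf}}(\riem)}\lesssim\|(f_1,\dots,f_n)\|_{H^1_{\mathrm{conf}}(U)}$ by Theorem \ref{th:bounce_bounded}; and $\|d\omega\|\lesssim|\rho|+\|\alpha_0\|$. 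The main obstacle will be the norm comparison step: verifying that the canonical primitive $f_k$ of a representative of $[\beta'_k]$ really does live in $H^1_{\mathrm{conf}}(U_k)$ with norm controlled by $\|[\beta'_k]\|_{\mathcal{H}'}$, since the bounce operator's bound is phrased in terms of the $H^1_{\mathrm{conf}}$-norm rather than the Dirichlet seminorm. This will require combining Lemma \ref{le:nice_Hprime_representative_disk}, the Douglas-type identity (\ref{Douglas formula}), and Corollary \ref{good sobolev cor} to relate the three norm systems on the collar.
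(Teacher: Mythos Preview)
Your proposal is correct and follows essentially the same route as the paper's proof: reduce to the exact case by subtracting $\mu\in\ast\mathcal{A}_{\mathrm{hm}}$ and $\eta\in\mathrm{span}\{\varepsilon_j\}$, build a primitive via the canonical representative of Lemma~\ref{le:nice_Hprime_representative_disk} pulled back through a collar chart, bounce it to $\riem$, and correct the $\ast$-periods by a harmonic measure. The paper packages the exact step as a bounded operator $\mathbf{E}$ and isolates your ``main obstacle'' as a separate lemma (the boundedness of $\mathbf{B}(\phi):\mathcal{H}'_{\mathrm e}(\partial_k\riem)\to H^1_{\mathrm{conf}}(\disk)$), where the zero-mode normalization $\int_{\mathbb{S}^1}H\,d\theta=0$ is exactly what makes the $H^1_{\mathrm{conf}}$-norm collapse to the Dirichlet seminorm and hence match $\|[\beta'_k]\|_{\mathcal{H}'}$.
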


   
 
Before we prove of this result,  we will need some preparations. To that end,
 fix $k \in \{1,\ldots,n\}$ and let 
 \[  \mathcal{H}_{\mathrm{e}}'(\partial_k \riem) = \left\{ [\alpha]
 \in \mathcal{H}'(\partial_k \riem) : \int_{\partial_k \riem} [\alpha] =0 \right\}.  \] 
 
 Let $\phi:U \rightarrow \mathbb{A}_{r,1}$ be a collar chart defined near $\partial_k \riem$. Define a linear map
 \[  \gls{Bphi}:\mathcal{H}_{\mathrm{e}}'(\partial_k \riem) \rightarrow H^1_{\mathrm{conf}}(\disk)  \]
 as follows.  Given $[\alpha] \in \mathcal{H}_{\mathrm{e}}'(\partial_k \riem)$, choose a representative $\alpha$ of $[\alpha]$ and 
 let $h \in H^1_{\mathrm{conf}}(U)$ be such that $h = d\alpha$  (shrinking $U$ if necessary using Proposition \ref{prop:collar_charts_intersection}).  
 Now let $H= \mathbf{G}_{\mathbb{A}_{r,1},\disk} h \circ \phi$ and
 observe that
 $H \in H^1_{\mathrm{conf}}(\disk)$ is the unique harmonic map whose CNT boundary values agree with those of $h \circ \phi$ up to a constant. We then impose the integral condition
 \begin{equation} \label{eq:zero_mode_condition}
    \int_{\mathbb{S}^1} H(e^{i\theta})\, d\theta = 0  
 \end{equation}
 on the boundary and set
 \[  \mathbf{B}(\phi) [\alpha] =H.     \]
 Equivalently, we may say that $H$ is the unique harmonic function on $\disk$ whose boundary values are $\left. h \circ \phi \right|_{\mathbb{S}^1}$ up to a constant, which satisfies \eqref{eq:zero_mode_condition}.
 \begin{lemma} \label{le:Bphi_bounded}
  For a collar chart $\phi:U \rightarrow \mathbb{A}_{r,1}$ near $\partial_k \riem$, $\mathbf{B}(\phi)$ is bounded.
 \end{lemma}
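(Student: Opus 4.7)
The plan is to transplant the whole problem to the disk via $\phi$ and exploit the canonical representative supplied by Lemma \ref{le:nice_Hprime_representative_disk}. First I would observe that, by the definition of the norm \eqref{eq:Hprime_norm_using_collar} and Proposition \ref{pr:Hprime_conformal_invariance}, it suffices to establish the estimate for the analogous map from $\mathcal{H}'_{\mathrm{e}}(\mathbb{S}^1)$ to $H^1_{\mathrm{conf}}(\disk)$ obtained by pulling back via $\phi$. Thus I will identify $[\alpha]$ with $\tilde{[\alpha]} = \phi^{-*}[\alpha] \in \mathcal{H}'_{\mathrm{e}}(\mathbb{S}^1)$ and work on the disk throughout.

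Second, since $\int_{\mathbb{S}^1} \tilde{[\alpha]} = 0$, the canonical representative of $\tilde{[\alpha]}$ given by Lemma \ref{le:nice_Hprime_representative_disk} has vanishing $\delta$-term: $\tilde{\alpha}_{\mathrm{can}} = f(z)\,dz + \overline{g(z)}\,d\bar{z}$ with $f,g \in \mathcal{D}(\disk)$ of the specified form. In particular $\tilde{\alpha}_{\mathrm{can}}$ is exact on all of $\disk$, with a harmonic primitive $h_{\mathrm{can}} \in \mathcal{D}_{\mathrm{harm}}(\disk)$ that I may normalize by $h_{\mathrm{can}}(0) = 0$. By construction, $\|\tilde{\alpha}_{\mathrm{can}}\|_{\mathcal{A}_{\mathrm{harm}}(\disk)} = \|[\alpha]\|_{\mathcal{H}'(\partial_k \riem)}$.

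Third, I would identify $\mathbf{B}(\phi)[\alpha]$ with $h_{\mathrm{can}}$. Any exact representative $\tilde{\alpha}$ of $\tilde{[\alpha]}$ defined in a collar neighbourhood of $\mathbb{S}^1$ has a primitive $\tilde{h}$ whose CNT boundary values agree with those of $h_{\mathrm{can}}$ up to an additive constant, by Definition \ref{de:form_Dirichlet_equivalence_relation} of the equivalence relation in $\mathcal{H}'(\mathbb{S}^1)$. Since $h_{\mathrm{can}}$ is already harmonic on the whole disk and $\mathbf{G}_{\mathbb{A}_{r,1},\disk}$ (which is bounded by Theorem \ref{th:bounce_bounded}) is determined by CNT boundary values up to a null set, $\mathbf{G}_{\mathbb{A}_{r,1},\disk}\tilde{h}$ agrees with $h_{\mathrm{can}}$ up to a constant, by uniqueness in Theorem \ref{th:CNT_BVs_existence_uniqueness}. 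The normalization \eqref{eq:zero_mode_condition} then forces this constant to be zero (by the mean-value property), so $\mathbf{B}(\phi)[\alpha] = h_{\mathrm{can}}$.

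Fourth, I would deduce the norm estimate. Because $H := \mathbf{B}(\phi)[\alpha] = h_{\mathrm{can}}$ satisfies $H(0) = 0$, the mean-value term $\mathscr{H}$ in the definition of $\|H\|_{H^1_{\mathrm{conf}}(\disk)}$ vanishes (taking $p = 0$ in \eqref{eq:constant_hk_definition_onecurve}), so by Theorem \ref{thm:equivnorms on riemsurfaces},
\[ \|\mathbf{B}(\phi)[\alpha]\|_{H^1_{\mathrm{conf}}(\disk)} \approx \|dH\|_{\mathcal{A}_{\mathrm{harm}}(\disk)} = \|\tilde{\alpha}_{\mathrm{can}}\|_{\mathcal{A}_{\mathrm{harm}}(\disk)} = \|[\alpha]\|_{\mathcal{H}'(\partial_k \riem)}. \]
The anticipated obstacle is the third step: the identification $\mathbf{B}(\phi)[\alpha] = h_{\mathrm{can}}$ requires that the construction be independent of the chosen representative of $[\alpha]$ and of the choice of primitive on the collar, which must be verified by combining the equivalence relation in $\mathcal{H}'(\mathbb{S}^1)$ with Lemma \ref{le:anchor_lemma_two} (so that the bounce depends only on the CNT boundary values up to a null set) and the uniqueness of CNT solutions in Theorem \ref{th:CNT_BVs_existence_uniqueness}.
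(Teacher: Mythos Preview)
Your argument is correct. It takes a somewhat different route from the paper's own proof, and the difference is worth noting.

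The paper argues through boundary Sobolev spaces: it observes that $\phi$ extends analytically across $\partial_k\riem$, so $h\mapsto h\circ\phi$ is a bounded bijection $\dot H^{1/2}(\partial_k\riem)\to\dot H^{1/2}(\mathbb S^1)$ (invoking Lemma~\ref{homogen equivalence}), then uses boundedness of the harmonic extension $\dot H^{1/2}(\mathbb S^1)\to\mathcal D_{\mathrm{harm}}(\disk)$, and finally the normalization \eqref{eq:zero_mode_condition} to identify the $H^1_{\mathrm{conf}}(\disk)$ norm with the Dirichlet semi-norm. Implicitly this relies on the equivalence between the $\mathcal H'_{\mathrm e}$ norm of $[\alpha]$ and the $\dot H^{1/2}$ norm of the boundary trace of a primitive.

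You instead stay entirely on the one-form side: after pushing forward by $\phi$, you invoke the canonical representative of Lemma~\ref{le:nice_Hprime_representative_disk} (with $\delta=0$ since the class is exact), take its global primitive $h_{\mathrm{can}}$ on $\disk$ normalized at $0$, and identify $\mathbf B(\phi)[\alpha]$ with $h_{\mathrm{can}}$ via uniqueness of the bounce. The norm identity then drops out of the very definition of $\|[\alpha]\|_{\mathcal H'}$. This is more self-contained --- it avoids the $\dot H^{1/2}$ machinery and Lemma~\ref{homogen equivalence} altogether --- and it makes transparent that $\|\mathbf B(\phi)[\alpha]\|_{H^1_{\mathrm{conf}}(\disk)}\approx\|[\alpha]\|_{\mathcal H'(\partial_k\riem)}$ with the equivalence constants coming only from Lemma~\ref{le:point_also_H1conf}. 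The paper's approach, on the other hand, highlights the connection to classical Sobolev trace theory. Both end with the same normalization trick.
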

 \begin{proof}
  Treating $\partial_k \riem$ as an analytic curve in the double, observe that $\phi$ has a biholomorphic extension taking a doubly-connected neighbourhood of $\partial_k \riem$ to $\mathbb{A}_{r,1/r}$ and $h \mapsto h \circ \phi$ is a bijection which is bounded from $\dot{H}^{1/2}(\partial_k \riem)$ to  $\dot{H}^{1/2}(\mathbb{S}^1)$, by Lemma \ref{homogen equivalence}.  Furthermore, since the extension of $h \circ \phi$ from $\dot{H}^{1/2}(\mathbb{S}^1)$ to $\mathcal{D}_{\text{harm}}(\disk)$ with any choice of constant is bounded with respect to the Dirichlet norm, and since condition \eqref{eq:zero_mode_condition} yields that
  \[   \| H \|_{H^1_{\mathrm{conf}}(\disk)} \approx \| H \|_{\mathcal{D}_{\text{harm}}(\disk)}, \]
 one obtains the desired boundedness result.
 \end{proof}

 \bigskip
 \begin{proof}
 [\emph{\bf{Proof of Theorem \ref{th:CNT_Dirichlet_problem_oneforms}}}]   
 
First, we show that the exact solution to the Dirichlet problem depends continuously on the data.  Let 
  \[ \mathcal{H}'_\mathrm{e}(\partial\riem) = \oplus_{k=1}^n \mathcal{H}'_\mathrm{e}(\partial_k \riem).  \]
  The solution to the boundary value problem or exact forms with data in $\mathcal{H}'_{\mathrm{e}}(\partial \riem)$ is as follows: given $([\alpha],\rho_1,\ldots,\rho_{n-1}) \in \mathcal{H}'_{\mathrm{e}}(\partial \riem) \oplus \mathbb{C}^{n-1}$ we want a one-form $\beta \in \mathcal{A}_{\mathrm{harm}}(\riem)$ such that $[\beta] = [\alpha]$
  and
  \begin{equation} \label{eq:temp_goal1}
    \int_{\partial_k \riem} \ast \beta = \rho_k,  \ \ \ k=1,\ldots,n. 
  \end{equation}
  We will define a map 
  \[  \gls{E}:\mathcal{H}'_\mathrm{e}(\partial \riem) \oplus \mathbb{C}^{n-1} \rightarrow \mathcal{A}_{\mathrm{harm}}(\riem)   \]
  taking data to the solution as follows. 
  First, for $k=1,\ldots,n$ let $\psi_k:U_k \rightarrow \mathbb{A}_{r_k,1}$ be the collar chart constructed in Lemma \ref{le:harmonic_measure_collar_chart}. By the same lemma, for any $[\alpha_k] \in \mathcal{H}'_e(\partial_k \riem)$
  \begin{equation} \label{eq:modes_temp}
    \int_{\partial_k \riem} \mathbf{C}_{\phi_k^{-1}} \mathbf{B}(\psi_k) [\alpha_k] \ast d \omega_k = \int_{\mathbb{S}^1} \mathbf{B}(\psi_k)[\alpha_k] \,d \theta=0.
  \end{equation}
  If we set $\psi=(\psi_1,\ldots,\psi_n)$ and  
  define 
  \[  \mathbf{B}(\psi) = \oplus_{k=1}^n \mathbf{B}(\psi_k) :\mathcal{H}_\mathrm{e}'(\partial \riem)  \rightarrow \oplus_{k=1}^n H^1_{\mathrm{conf}}(\mathbb{D}), \]
  by Lemma \ref{le:Bphi_bounded} this is bounded. Let $\mathbf{C}_{\phi^{-1}}$ be as in Lemma \ref{le:for_density_factorization}. 
  Let $\mathbf{R}^{\mathrm{h}}_{\disk^n,\mathbb{A}^n}= \oplus_{k=1}^n \mathbf{R}^{\mathrm{h}}_{\disk,\mathbb{A}_{r_{k},1}}$.
  Finally define 
  \begin{equation} \label{eq:E_definition}
   \mathbf{E}([\alpha],\rho_1,\ldots,\rho_{n-1}) = d \mathbf{G}_{U,\riem} \mathbf{C}_{\phi^{-1}} \mathbf{R}^{\mathrm{h}}_{\disk^n,\mathbb{A}^n} \mathbf{B}(\phi)[\alpha]+ \sum_{m=1}^{n-1}   b_m d\omega_m 
  \end{equation} 
  where the $c_k$ are defined by 
  \[   c_k = \int_{\partial_k \riem} \ast dH \ \ \  \mathrm{with} \ \ H =  \mathbf{G}_{U,\riem} \mathbf{C}_{\phi^{-1}} \mathbf{R}^{\mathrm{h}}_{\disk^n,\mathbb{A}^n} \mathbf{B}(\phi)[\alpha]   \]
  and the $b_k$ are defined by 
  \[ \rho_k -c_k = \sum_{m=1}^{n-1} \Pi_{km} b_m , \]
  with the help of Theorem \ref{th:period_matrix_invertible}.  
  
  We show that $\mathbf{E}([\alpha],(\rho_1,\ldots,\rho_{n-1})$ solves the boundary value problem.  
  By construction $\beta = \mathbf{E} ([\alpha],(\rho_1,\ldots,\rho_{n-1}))$ satisfies 
  \[ [\beta]=[\alpha]  \]
  since $[d\omega_k]=0$ for all $k=1,\ldots,n$ by Proposition \ref{pr:harmonic_measures_zero}.  To see that \eqref{eq:temp_goal1} is satisfied, we set $\beta = \mathbf{E}([\alpha],(\rho_1,\ldots,\rho_{n-1}))$ and compute
  \begin{align*}
      \int_{\partial_k \riem} \ast \beta & = c_k +  \sum_{m=1}^{n-1}   b_m  \int_{\partial_k \riem}  \ast d \omega_m  = c_k +\sum_{m=1}^{n-1} b_m \Pi_{km}    \\ 
      & =  \rho_k.
  \end{align*}
  
  Finally we show that $\mathbf{E}$ is bounded. The boundedness of the first term follows from 
  Theorem \ref{th:bounce_bounded}, Lemma \ref{le:Bphi_bounded}, and Lemma \ref{le:for_density_factorization}.  To bound the second term, observe that
  \begin{align*}
    c_k & = \int_{\partial_k \riem} \ast dH = \int_{\partial \riem} \omega_k \ast dH \\
    & = \iint_{\riem}  d \omega_k \wedge \ast dH,
  \end{align*}
  so 
  \[ \| (c_1,\cdots,c_{n-1}) \|_{\mathbb{C}^{n-1}} \leq \| H \|_{H^1_{\mathrm{conf}}(\riem)} \sup_{k=1,\ldots,n} \| d\omega_k \|_{\mathcal{A}_{\mathrm{harm}}(\riem)}.  \]
  This together with the facts that $H$ is bounded by the data, and that $\Pi$ is a finite matrix and therefore bounded,
  proves the claim.

  The remainder of the proof takes into account the cohomological data. 
  We are given an arbitrary $([\beta],\rho,\sigma) \in \mathcal{H}'(\partial \riem) \oplus \mathbb{C}^{2g+n-1}$.
  Setting 
  \begin{equation} \label{eq:Dirichlet_theorem_temp1}
     \lambda_k = \int_{\partial_k \riem} [\beta_k] 
  \end{equation}
  for $k = 1,\ldots,n$,  by Corollary \ref{co:boundary_periods_specified_starmeasure} there is a
  $\delta \in \ast \mathcal{A}_{\mathrm{hm}}(\riem)$ such that 
  \begin{equation} \label{eq:Dirichlet_theorem_temp2}
    \int_{\partial_k \riem} \delta = \lambda_k   
  \end{equation}
  for every $k$.
  Furthermore, there is a unique harmonic one-form $\eta$ in the span of $\{ \varepsilon_1,\ldots,\varepsilon_{2g} \}$ such that 
  \begin{equation} \label{eq:Dirichlet_theorem_temp3}
    \int_{\gamma_j} \eta = \sigma_j - \int_{\gamma_j} \delta    
  \end{equation}
  for $j=1,\ldots,2g$.  We also have by definition of $\varepsilon_k$ that
  \begin{equation} \label{eq:Dirichlet_theorem_temp4} 
    \int_{\partial_k \riem} \eta = 0,
  \end{equation}
  for $k=1,\ldots,n$.  
  Thus $[\beta - \delta -\eta] \in \mathcal{H}'_\mathrm{e}(\partial \riem)$. 
  
  We will require several estimates; the reader should keep in mind that they are elementary due to the fact that only finite-dimensional spaces are involved.
  Since $\delta$ is in the span of the finite-dimensional space $\ast \mathcal{A}_{\mathrm{hm}}$, and uniquely determined by $\lambda=(\lambda_1,\ldots,\lambda_{n-1}),$ we have that 
  \begin{equation} \label{eq:temp_continuous_depend_1} 
   \| [\delta] \|_{\mathcal{H}'(\partial \riem)} \leq C \| \lambda \|_{\mathbb{C}^{n-1}} \leq C \| [\beta] \|_{\mathcal{H}'(\partial \riem) \oplus  \mathbb{C}^{2g+n-1}}.  
  \end{equation} 
  Similarly 
 \begin{equation} \label{eq:temp_continuous_depend_2} 
   \| \delta \|_{\mathcal{A}_{\mathrm{harm}}(\riem)} \leq C \| \lambda \|_{\mathbb{C}^{n-1}} \leq C \| [\beta] \|_{\mathcal{H}'(\partial \riem) \oplus  \mathbb{C}^{2g+n-1}}.  
  \end{equation} 
  If desired, an explicit estimate could be obtained from the supremum over $k=1,\ldots,n-1$ of the norms of $\ast d \omega_k$, but this won't be needed. 
  
  Similarly, since the span of $\{ \varepsilon_1,\ldots, \varepsilon_{2g} \}$ is finite-dimensional, the dependence of $\eta$ on the data is continuous. Observe that 
  \[  e_j = \int_{\gamma_j} \delta, \ \ \ j =1,\ldots,2g \]
  depend linearly on $\delta$ and hence continuously on $\| [\beta] \|_{\mathcal{H}'(\partial \riem) \oplus  \mathbb{C}^{2g+n-1}}$. 
  Now by the definition \eqref{eq:Dirichlet_theorem_temp3} of $\eta$, 
  using the fact that $\eta$ is restricted to a finite-dimensional space that (denoting $e=(e_1,\ldots,e_{2g})$) we obtain
  \begin{equation} \label{eq:temp_continuous_depend_3} 
     \| [\eta] \|_{\mathcal{H}'(\partial \riem)} \leq C \| \sigma-e \|_{\mathbb{C}^{2g}} \leq C \| ([\beta],\rho,\sigma) \|_{\mathcal{H}'(\partial \riem) \oplus  \mathbb{C}^{2g+n-1}}.
  \end{equation} 
  Similarly
  \begin{equation} \label{eq:temp_continuous_depend_4} 
     \| \eta \|_{\mathcal{A}_{\mathrm{harm}}(\riem)} \leq  C \| ([\beta],\rho,\sigma) \|_{\mathcal{H}'(\partial \riem) \oplus  \mathbb{C}^{2g+n-1}}.
  \end{equation} 
  
  We need one further bound. Set $d = (d_1,\ldots,d_{n-1})$ where
  \[  d_k = \int_{\partial_k \riem} \ast (\eta + \delta) , \ \ k=1,\ldots,n \]
  (note that $d_n$ is $1-d_1- \cdots d_{n-1}$).
  The $d_k$'s depend boundedly on $\delta$ and $\eta$, so 
  \begin{equation} \label{eq:temp_continuous_depend_5}
   \| d \|_{\mathbb{C}_{n-1}} \leq \| ([\beta],\rho,\sigma) \|_{\mathcal{H}'(\partial \riem) \oplus  \mathbb{C}^{2g+n-1}}.
  \end{equation}
  
  Given the definitions of $\delta$, $\eta$, and $d$, it is easily verified that the solution to the Dirichlet problem is 
  \begin{equation} \label{eq:temp_solution_dirichlet}
   \mathbf{Dir}_{\partial \riem,\riem} ([\beta],\rho,\sigma) = 
     \mathbf{E}([\beta-\eta-\delta],\rho-d) + \eta + \delta.   
  \end{equation}
  The continuous dependence is now easily obtained: by boundedness of $\mathbf{E}$, \eqref{eq:temp_continuous_depend_1}, \eqref{eq:temp_continuous_depend_3}, and \eqref{eq:temp_continuous_depend_5} we have
  \begin{align*}
   \|    \mathbf{E}([\beta-\eta-\delta],\rho-d) \|_{\mathcal{A}_{\mathrm{harm}}(\riem)}&  \leq \| [\beta - \eta - \delta ]  \|_{\mathcal{H}'(\partial \riem)} + \| \rho-d \|_{\mathbb{C}^{n-1}} \\
   & \leq \| \beta \|_{\mathcal{H}'(\partial \riem)} +  \| \delta \|_{\mathcal{H}'(\partial \riem)} +  \| \eta \|_{\mathcal{H}'(\partial \riem)}  + \| \rho-d \|_{\mathbb{C}^{n-1}} \\
   & \leq C \| ([\beta],\rho,\sigma) \|_{\mathcal{H}'(\partial \riem) \oplus  \mathbb{C}^{2g+n-1}}.
  \end{align*}
Therefore \eqref{eq:temp_solution_dirichlet},  the above bound together with \eqref{eq:temp_continuous_depend_2} and \eqref{eq:temp_continuous_depend_4} yield that
  \[ \| \mathbf{Dir}_{\partial \riem,\riem} ([\beta],\rho,\sigma)\|_{\mathcal{A}_{\mathrm{harm}}(\riem)} \leq C \| ([\beta],\rho,\sigma) \|_{\mathcal{H}'(\partial \riem) \oplus  \mathbb{C}^{2g+n-1}}.  \]
  
  It remains to show that the solution is unique.  
  Let $\alpha'$ be another solution to the Dirichlet problem.  Conditions (1) and (3) of Definition \ref{eq:Dirichlet_theorem_temp3} imply that $\alpha' - \alpha$ is exact and has a global primitive $h$, which has constant CNT boundary values on $\partial \riem$.  So $h$ is in the linear span of the harmonic measures.  Condition (2) implies that $\alpha' = \alpha$.  Summing up, we have shown that the Dirichlet problem with the aforementioned CNT data is well-posed in the spaces that are given in the statement of the theorem.
 \end{proof} 
 \begin{remark}
  Because of condition (1) on CNT Dirichlet boundary data, one of the constants $\lambda_n$ in the $\mathcal{H}'(\partial \riem)$ is redundant and depends continuously on the other constants.  So one constant can be removed from the norm of $\mathcal{H}'(\partial \riem)$ in the estimate.  
 \end{remark}
 \begin{remark}[Special cases $n=1$ and $g=0$] \label{re:nequals_one_Dirichlet} If there is only one boundary curve $\partial_1 \riem$, then condition (2) requires that 
 \[  \int_{\partial_1 \riem} \ast \alpha = 0. \]
 This is true for any $\ast \alpha \in \mathcal{A}_{\mathrm{harm}}(\riem)$, so condition (2) may be omitted.   Similarly, in condition (1) it is required that 
 \[  \int_{\partial_1 \riem} [\beta_1] =0, \]
  which is true for any $[\beta_1] \in \mathcal{H}'(\partial \riem)$, and thus this part of condition (1) can be omitted.  
  
  If the genus $g$ of $\riem$ is zero, then the third condition is omitted.
  
  In either case, some steps in the proof of Theorem \ref{th:CNT_Dirichlet_problem_oneforms} can be omitted.
 \end{remark}

 The following proposition verifies that the CNT Dirichlet problem is natural.
 \begin{proposition}  If the Dirichlet data $([\beta],\rho,\sigma)$ is such that $[\beta]$ has a representative on a collar neighbourhood which is $\mathcal{C}^\infty$, then $\mathbf{Dir}_{\partial \riem,\riem}([\beta],\rho,\sigma)$ is the solution to the $\mathcal{C}^\infty$ Dirichlet problem. 
 \end{proposition}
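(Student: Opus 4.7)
The plan is to apply uniqueness of the CNT Dirichlet problem after producing the smooth Dirichlet solution as a competitor in the CNT problem.

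First, I would show that $([\beta],\rho,\sigma)$ canonically determines admissible smooth Dirichlet data in the sense of Definition \ref{defn:cont dirichlet data}. Choose a smooth representative $\tilde\beta_k$ of $[\beta_k]$ on a collar neighbourhood $U_k$ which extends smoothly up to $\partial_k \riem$, and let $\beta_k^\partial$ denote its pullback along $\partial_k \riem$. I would verify that $\beta_k^\partial$ depends only on $[\beta_k]$: given a second smooth representative $\tilde\beta_k'$, the equivalence relation in Definition \ref{de:form_Dirichlet_equivalence_relation} supplies a harmonic $\delta$ and primitives $f, g$ of $\tilde\beta_k - \delta$ and $\tilde\beta_k' - \delta$ on a common collar such that $f - g$ has constant CNT boundary values; since $f - g$ is smooth up to $\partial_k\riem$ in that case, this constancy is realized pointwise, so the tangential derivative vanishes and $\tilde\beta_k$, $\tilde\beta_k'$ agree when pulled back to $\partial_k \riem$. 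Together with $\sum_k \int_{\partial_k \riem} \beta_k^\partial = \sum_k \int_{\partial_k \riem} [\beta_k] = 0$, this shows $(\beta^\partial,\rho,\sigma)$ is legitimate smooth Dirichlet data.

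Next, Theorem \ref{th:smooth_Dirichlet_solution} supplies a smooth solution $\alpha_s \in \mathcal{C}^\infty(\mathrm{cl}(\riem))$ to the smooth Dirichlet problem with this data. I would argue that $\alpha_s$ also satisfies the CNT Dirichlet problem with data $([\beta],\rho,\sigma)$. Conditions (2) and (3) of Definition \ref{defn:Cnt dirichlet data} coincide verbatim with conditions (2) and (3) of Definition \ref{defn:cont dirichlet data} and hold by construction. The only substantive check is condition (1): that $[\alpha_s|_{U_k}] = [\tilde\beta_k]$ in $\mathcal{H}'(\partial_k \riem)$ for each $k$. Taking $\delta := \tilde\beta_k$ in Definition \ref{de:form_Dirichlet_equivalence_relation}, the form $\tilde\beta_k - \delta \equiv 0$ is trivially exact with primitive $0$. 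Let $\gamma := \alpha_s - \tilde\beta_k$ on a collar neighbourhood $V_k \subseteq U_k$ (shrunk so that $\mathrm{cl}(V_k)$ is a compact annulus, analytic in the double, on which both forms are smooth up to $\partial_k \riem$). Since both $\alpha_s$ and $\tilde\beta_k$ pull back to $\beta_k^\partial$ on $\partial_k \riem$, their periods around $\partial_k \riem$ agree, so $\int_{\partial_k\riem} \gamma = 0$, which implies $\gamma$ is exact on the annular $V_k$. A smooth primitive $f$ satisfies $df|_{\partial_k \riem} = 0$, and since $\partial_k \riem \cong \mathbb{S}^1$ is connected, $f$ is constant on $\partial_k\riem$, in particular constant in the CNT sense up to a null set. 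This establishes the equivalence, and uniqueness in Theorem \ref{th:CNT_Dirichlet_problem_oneforms} then forces $\alpha_s = \mathbf{Dir}_{\partial \riem,\riem}([\beta],\rho,\sigma)$.

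The main technical step is translating the smooth matching of boundary data into the intrinsic $\mathcal{H}'$-equivalence, which requires recognising that smoothness up to the border promotes a constant-up-to-null-set CNT boundary value to a pointwise constant. Once this is in hand the proof is essentially a uniqueness argument, and the conclusion follows.
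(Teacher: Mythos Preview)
Your proposal is correct and follows essentially the same strategy as the paper: produce the smooth solution via Theorem~\ref{th:smooth_Dirichlet_solution}, verify that it satisfies the CNT Dirichlet problem (the only nontrivial condition being $[\alpha_s]=[\beta]$), and conclude by uniqueness in Theorem~\ref{th:CNT_Dirichlet_problem_oneforms}. Your choice $\delta=\tilde\beta_k$ in checking the $\mathcal{H}'$-equivalence is a small simplification over the paper's more generic choice of $\delta_k$, and your explicit verification that the smooth boundary datum $\beta_k^\partial$ depends only on the class $[\beta_k]$ is a nice addition that the paper leaves implicit.
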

 \begin{proof}  
  Choose a representative $(\beta_1,\ldots,\beta_n)$ of $[\beta]$ on a collection of collar neighbourhoods $U_k$ of $\partial_k \riem$ for $k=1,\ldots,n$, which are smooth on $\partial_k \riem$.     
  By Theorem \ref{th:smooth_Dirichlet_solution} there is a $\mathcal{C}^\infty$ solution $\alpha$ to the Dirichlet problem with data given by $(\beta,\rho,\sigma)$ with $\beta$ given by the restriction of $\beta_k$ to the boundaries $\partial_k \riem$ for $k=1,\ldots,n$.  
  
  We claim that $\alpha$ is the solution to the CNT Dirichlet problem.  Once this is shown, the proof is complete thanks to uniqueness statement of Theorem \ref{th:CNT_Dirichlet_problem_oneforms}.  First, observe that since $\alpha$ is $\mathcal{C}^\infty$ on $\text{cl} \, \riem$, it is in $\mathcal{A}_{\mathrm{harm}}(\riem)$.  So we need only show that the CNT boundary values of the $\mathcal{C}^\infty$ solution are equal to $[\beta]$.
  
  To see this, choose one-forms $\delta_k$ on a collar neighbourhood $U_k$ of $\partial_k \riem$, which extend smoothly to $\partial_k \riem$ and such that
  \[  \int_{\partial_k \riem} (\alpha - \delta_k) =0 \]
  for $k=1,\ldots,n$.  This can be arranged for example by considering $\riem$ to be a subset of its double.   
  The primitive $h_k$ of $\alpha - \delta_k$ on $U_k$ is $\mathcal{C}^\infty$, and in particular extends continuously to $\partial_k \riem$ for $k=1,\ldots,n$.  But the CNT boundary values must equal the continuous extension by definition.  By definition of the $\mathcal{C}^\infty$ solution to the Dirichlet problem, $d h_k = \beta_k -\delta_k$ on the boundary so $[\alpha] = [\beta]$.  This completes the proof.
 \end{proof}
\end{subsection}
\begin{subsection}{Dirichlet problem for one-forms with $H^{-1/2}$ data}
 \label{se:Dirichlet_forms_Hnegativeonehalf}
 The solution to the Dirichlet problem can be phrased in terms of $H^{-1/2}$ boundary data as follows.  
 \begin{definition}[$H^{-1/2}$ data for one-forms]
  By $H^{-1/2}$ data for one-forms we mean the following:
  \begin{enumerate}
     \item $L=(L_1,\ldots,L_n) \in \bigoplus_{k=1}^n H^{-1/2}(\partial_k \riem)$ 
     such that 
     \[   L_1(1) + \cdots L_n(1)=0; \]
     \item $\rho=(\rho_1,\ldots,\rho_n)\in \mathbb{C}^n$ satisfying
     \[  \rho_1 + \cdots + \rho_n =0;\]
     and
     \item $\sigma= (\sigma_1,\ldots,\sigma_{2g}) \in \mathbb{C}^{2g}$.  
 \end{enumerate} 
 \end{definition}
 
 In the following, recall the definition \eqref{eq:associated_H_minusonehalf_pairing} for the element $L_{[\alpha]}$ of $H^{-1/2}(\partial_k \riem)$ associated to a one-form $\alpha$.  
 \begin{definition}[$H^{-1/2}$ Dirichlet problem for one-forms]\label{defn:Hminus12_dirichlet_data}
  We say that a harmonic one-form $\alpha$ on $\riem$  solves the $H^{-1/2}$ Dirichlet problem with $H^{-1/2}$ Dirichlet data $(L,\rho,\sigma)$ if
 \begin{enumerate}
     \item for $k=1,\ldots,n$, for any $h_k \in \mathcal{H}^{1/2}(\partial_k \riem)$ we have
     \[  L_k(h_k) = L_{[\alpha]} h_k;   \] 
     \item for all $k =1,\ldots,n$  
     \[  \int_{\partial_k \riem} \ast \alpha = \rho_k;   \]
     and 
     \item for all $k=1,\ldots,2g$
     \[  \int_{\gamma_k} \alpha : = \sigma_k.  \]
 \end{enumerate} 
 \end{definition}
 The CNT Dirichlet problem has a solution which depends continuously on the data.\\

  \begin{theorem}[Well-posedness of Dirichlet's problem for $H^{-1/2}$ data] \label{th:Hminusonehalf_Dirichlet_problem_oneforms} For $H^{-1/2}$  Dirichlet data $(L,\rho,\sigma)$ there exists a unique $\alpha \in  \mathcal{A}_{\mathrm{harm}}(\riem)$ which solves the Dirichlet problem.  The operator 
  \[   \widetilde{\mathbf{Dir}}_{\partial \riem,\riem}: \bigoplus_{k=1}^n \mathcal{H}^{-1/2}(\partial_k \riem) \oplus \mathbb{C}^{2g+n-1} \rightarrow \mathcal{A}_{\mathrm{harm}}(\riem)      \]
  taking $(L,\rho,\sigma)$ to the solution
  is bounded.  Here the entries of $\mathbb{C}^{2g+n-1}$ are \[  (\rho_1,\ldots,\rho_{n-1},\sigma_1,\ldots,\sigma_{2g}).   \]  
 \end{theorem}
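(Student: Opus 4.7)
The plan is to reduce this theorem to the already-established CNT Dirichlet problem (Theorem \ref{th:CNT_Dirichlet_problem_oneforms}) via the bounded isomorphism
\[ \Phi_k : \mathcal{H}'(\partial_k \riem) \longrightarrow H^{-1/2}(\partial_k \riem), \qquad [\alpha] \longmapsto L_{[\alpha]} \]
supplied by Theorem \ref{th:Honehalf_reformulation}. Given $H^{-1/2}$ Dirichlet data $(L,\rho,\sigma)$ with $L=(L_1,\ldots,L_n)$, I would first define $[\beta_k] := \Phi_k^{-1}(L_k) \in \mathcal{H}'(\partial_k \riem)$ for each $k$ and set $[\beta]=([\beta_1],\ldots,[\beta_n])$. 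The key observation making this translation consistent is that the constant function $1$ belongs to $H^{1/2}(\partial_k \riem)$, and evaluating $L_{[\beta_k]}$ on $1$ using the limit formula \eqref{eq:associated_H_minusonehalf_pairing} yields exactly the boundary period $\int_{\partial_k \riem} [\beta_k]$. Therefore the compatibility condition $L_1(1)+\cdots+L_n(1)=0$ is equivalent to the CNT compatibility condition $\sum_k \int_{\partial_k \riem} [\beta_k]=0$, so that $([\beta],\rho,\sigma)$ constitutes valid CNT Dirichlet data.

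Next I would set $\alpha := \mathbf{Dir}_{\partial \riem,\riem}([\beta],\rho,\sigma)$, the solution provided by Theorem \ref{th:CNT_Dirichlet_problem_oneforms}. By construction $[\alpha]=[\beta]$, which means $L_{[\alpha_k]}=L_{[\beta_k]}=L_k$ for each $k$, i.e.\ $\alpha$ satisfies condition (1) of Definition \ref{defn:Hminus12_dirichlet_data}. Conditions (2) and (3) are inherited directly from the CNT problem. Defining
\[ \widetilde{\mathbf{Dir}}_{\partial \riem,\riem}(L,\rho,\sigma) := \mathbf{Dir}_{\partial \riem,\riem}\bigl((\Phi_1^{-1}L_1,\ldots,\Phi_n^{-1}L_n),\rho,\sigma\bigr), \]
boundedness follows at once by composing the bounded maps: each $\Phi_k^{-1}$ is bounded by Theorem \ref{th:Honehalf_reformulation}, and $\mathbf{Dir}_{\partial \riem,\riem}$ is bounded by Theorem \ref{th:CNT_Dirichlet_problem_oneforms}.

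For uniqueness, suppose $\alpha$ and $\alpha'$ both solve the $H^{-1/2}$ Dirichlet problem with the same data. Then $\alpha-\alpha' \in \mathcal{A}_{\mathrm{harm}}(\riem)$ induces $[\alpha-\alpha'] \in \mathcal{H}'(\partial \riem)$ with $L_{[\alpha_k - \alpha'_k]}=L_k - L_k=0$ for each $k$; injectivity of $\Phi_k$ yields $[\alpha-\alpha']=0$, so that $\alpha-\alpha'$ has trivial CNT boundary data and the same cohomological data $(\rho,\rho,\sigma,\sigma)$, whence the uniqueness clause of Theorem \ref{th:CNT_Dirichlet_problem_oneforms} forces $\alpha=\alpha'$. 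No genuinely new obstacle should arise here: the whole content of the theorem is packaged into the isomorphism of Theorem \ref{th:Honehalf_reformulation} and the CNT solvability of Theorem \ref{th:CNT_Dirichlet_problem_oneforms}; the only point requiring minor care is verifying that the scalar compatibility $L_k(1)$ matches the boundary period, which is immediate from the definition \eqref{eq:associated_H_minusonehalf_pairing} of the pairing via limiting integrals.
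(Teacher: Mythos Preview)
Your proposal is correct and is precisely the argument the paper has in mind: the paper's own proof is a single sentence stating that the result follows immediately from Theorems \ref{th:Honehalf_reformulation} and \ref{th:CNT_Dirichlet_problem_oneforms}, and you have simply written out the straightforward translation through the isomorphism $[\alpha]\mapsto L_{[\alpha]}$ in detail, including the check that $L_k(1)=\int_{\partial_k\riem}[\beta_k]$ so the compatibility conditions match.
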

 \begin{proof}
  This follows immediately from Theorems \ref{th:Honehalf_reformulation} and \ref{th:CNT_Dirichlet_problem_oneforms}.  
 \end{proof}
 
\end{subsection}
\end{section}
\begin{section}{Overfare of harmonic one-forms} \label{se:Overfare}
\begin{subsection}{Assumptions throughout this section}  \label{se:Overfare_section_assumptions}
 The following assumptions 
 which will be in force throughout Section \ref{se:Overfare}. Additional hypotheses are added to the statement of each theorem where necessary.
 \begin{enumerate}
     \item $\mathscr{R}$ is a compact Riemann surface;
     \item $\Gamma = \Gamma_1 \cup \cdots \cup \Gamma_n$ is a collection of quasicircles;
     \item $\Gamma$ separates $\mathscr{R}$ into $\riem_1$ and $\riem_2$ in the sense of Definition \ref{de:separating_complex}.
 \end{enumerate} 
  
  We will furthermore assume that the ordering of the boundaries of $\partial \riem_1$ and $\partial \riem_2$ is such that $\partial_k \riem_1 = \partial_k \riem_2 = \Gamma_k$ as sets for $k=1,\ldots,n$.  
\end{subsection}
\begin{subsection}{About this Section}
 In this Section, we address the problem of overfare of one-forms. That is, given an $L^2$ harmonic one-form on $\riem_1$, we show that there is an $L^2$ harmonic one-form on $\riem_2$ with the same boundary values. To do this, we first show that the local boundary values in $H^{-1/2}(\partial_k \riem_1)$ (equivalently, in $\mathcal{H}'(\partial_k \riem_1)$) uniquely determine boundary values
 in $H^{-1/2}(\partial_k \riem_2)$ (equivalently, in $\mathcal{H}'(\partial_k \riem_2)$).   
 
 Of course, to uniquely determine the one-form on $\riem_2$ one also needs to specify cohomological data. One way to do this is simply to specify the CNT Dirichlet data for forms on $\riem_2$ as in Section \ref{se:Dirichlet_full_solution}. We also give an alternate approach, using forms in $\mathcal{A}_{\mathrm{harm}}(\mathscr{R})$ to specify the extra data. We call these forms catalyzing forms. This point of view  illuminates the scattering process and the cohomological properties of the Schiffer operators, as we will see in Sections \ref{se:index_cohomology} and \ref{se:scattering} ahead. It also plays a central role in our approach to the generalized period matrices in Section \ref{se:period_mapping}.  
\end{subsection}
\begin{subsection}{Partial overfare of one-forms}
 In this section we define overfare of one-forms and functions, and show that it exists and is bounded. 
 
 This subsection is devoted to a kind of ``partial'' overfare, where only the boundary data is mapped into the new surface. We first define this for $H^{1/2}$.  Recall that the Sobolev spaces are defined by treating the boundary curves of $\riem_k$ as analytic curves in the double. Thus, we distinguish $H^{1/2}(\partial_k \riem_1)$ and $H^{1/2}(\partial_k \riem_2)$.
 
  We define the partial overfare as follows. 
  Let $h_1 \in \mathcal{H}^{1/2}(\partial_1 \riem)$.  Let $\phi:U \rightarrow \mathbb{C}$ be a doubly-connected chart defined in a neighbourhood of $\partial_k \riem$, whose inner curves are analytic.  For any extension $H_1 \in \mathcal{D}_{\mathrm{harm}}(U_1)$ whose CNT boundary values equal $h$, let $H_2 \in \mathcal{D}_{\mathrm{harm}}(U_2)$ be as in Lemma \ref{le:local_Overfare}, and let $h_2$ be its CNT boundary values. We set
  \begin{align*}
   \mathbf{O}(\partial_k \riem_1,\partial_k \riem_2):H^{1/2}(\partial_k \riem_1) & \rightarrow H^{1/2}(\partial_k \riem_2) \\
   h_1 & \mapsto h_2.
  \end{align*}  
  We define $\mathbf{O}(\partial_k \riem_2,\partial_k \riem_1)$ similarly.
  
  \begin{proposition} \label{pr:local_overfare_functions_well_defined}
   Given $h \in H^{1/2}(\partial_k \riem_1)$, 
   let $H$ be any element of $\mathcal{D}_{\mathrm{harm}}(\riem_1)$ whose CNT boundary values equal $h$ on $\partial_k \riem_1$. Then the boundary values of $\mathbf{O}_{1,2} H$ equal $\mathbf{O}(\partial_k \riem_1,\partial_k \riem_2)h$.  
  \end{proposition}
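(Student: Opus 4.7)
}

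The plan is to exhibit a single harmonic function near $\partial_k \riem$ whose CNT boundary values on each side compute both quantities in the proposition, and then invoke uniqueness of CNT boundary values. First, let $\phi:U\to \mathbb{A}$ be the doubly-connected chart used in the definition of $\mathbf{O}(\partial_k\riem_1,\partial_k\riem_2)$, and denote by $U_1=U\cap\riem_1$, $U_2=U\cap\riem_2$ the two collar components. Given the globally defined $H\in \mathcal{D}_{\mathrm{harm}}(\riem_1)$ with CNT boundary values $h$ on $\partial_k\riem_1$, take the restriction $H_1:=H|_{U_1}\in \mathcal{D}_{\mathrm{harm}}(U_1)$ as the extension in the definition of $\mathbf{O}(\partial_k\riem_1,\partial_k\riem_2)h$. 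By Lemma \ref{le:local_Overfare} this produces a function $H_2^{\mathrm{loc}}\in \mathcal{D}_{\mathrm{harm}}(U_2)$ whose CNT boundary values on $\partial_k\riem_2$ agree with those of $H_1$ on $\partial_k\riem_1$ (equivalently, with $h$) up to a null set in $\Gamma_k$, and those CNT boundary values are, by definition, $\mathbf{O}(\partial_k\riem_1,\partial_k\riem_2)h$.

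Next, consider $\mathbf{O}_{1,2}H\in \mathcal{D}_{\mathrm{harm}}(\riem_2)$. By Theorem \ref{thm:bounded_overfare_existence}, its CNT boundary values on $\partial\riem_2$ agree with those of $H$ on $\partial\riem_1$ up to a null set, and in particular on $\partial_k\riem_2$ they agree with $h$ up to a null set. Restrict $\mathbf{O}_{1,2}H$ to a collar neighbourhood $V_2\subseteq U_2$ of $\partial_k\riem_2$ (chosen using Proposition \ref{prop:collar_charts_intersection} so that the restriction of $H_2^{\mathrm{loc}}$ to $V_2$ makes sense simultaneously). Both $H_2^{\mathrm{loc}}|_{V_2}$ and $(\mathbf{O}_{1,2}H)|_{V_2}$ are then elements of $\mathcal{D}_{\mathrm{harm}}(V_2)$ whose CNT boundary values on $\partial_k\riem_2$ agree with $h$ up to a null set; crucially, Theorem \ref{th:null_same_both_sides} guarantees that ``null'' is intrinsic to $\Gamma_k$ (independent of which side one approaches from), so all of these null-set identifications are compatible.

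Now invoke the uniqueness clause of Theorem \ref{th:CNT_BVs_existence_uniqueness}: two elements of $\mathcal{D}_{\mathrm{harm}}(V_2)$ with CNT boundary values agreeing up to a null set on $\partial_k\riem_2$ (and, after solving a Dirichlet problem to fix the values on the inner analytic boundary of $V_2$, agreeing up to a null set on all of $\partial V_2$) must coincide. Taking CNT boundary values of this identity back to $\partial_k\riem_2$, we conclude that the CNT boundary value of $\mathbf{O}_{1,2}H$ on $\partial_k\riem_2$ equals the CNT boundary value of $H_2^{\mathrm{loc}}$ on $\partial_k\riem_2$, which is $\mathbf{O}(\partial_k\riem_1,\partial_k\riem_2)h$. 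Since a null set has Lebesgue measure zero and both sides are $H^{1/2}(\partial_k\riem_2)$ representatives (using Remark \ref{re:Sobolev_CNT_agree} and the fact that $\partial_k\riem_2$ is an analytic curve in the double), the equality holds in $H^{1/2}(\partial_k\riem_2)$.

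The main technical point to watch is step two: one must align the collar neighbourhoods (using Proposition \ref{prop:collar_charts_intersection}) and use Theorem \ref{th:null_same_both_sides} to transfer the null-set agreement from the $\riem_1$-side chart to the $\riem_2$-side chart, since without this the two CNT boundary values would only be defined up to a-priori different null sets. Once that alignment is established, the identification is immediate from the uniqueness of CNT boundary values.
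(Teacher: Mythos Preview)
Your proof is correct and follows the same approach as the paper: both $\mathbf{O}(\partial_k\riem_1,\partial_k\riem_2)h$ and the CNT boundary values of $\mathbf{O}_{1,2}H$ agree with $h$ up to a null set, hence with each other, and equality in $H^{1/2}(\partial_k\riem_2)$ follows since null sets have Lebesgue measure zero. Your invocation of the uniqueness clause of Theorem~\ref{th:CNT_BVs_existence_uniqueness} on $V_2$ is an unnecessary (and somewhat muddled) detour---the two collar extensions $H_2^{\mathrm{loc}}|_{V_2}$ and $(\mathbf{O}_{1,2}H)|_{V_2}$ are genuinely different harmonic functions in general, but once you have established in your second paragraph that both have CNT boundary values on $\partial_k\riem_2$ agreeing with $h$ up to a null set, the conclusion is immediate without any need to make the functions themselves coincide.
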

  \begin{proof}
   This follows immediately from the observation that the CNT boundary values of $\mathbf{O}(\partial_k \riem_1,\partial_k \riem_2)h$ agree with those of $h$, and therefore with those of $H$. By definition of overfare, the CNT boundary values of $\mathbf{O}_{1,2} H$ agree with those of $H$.  
  \end{proof}
  
  In particular, $\mathbf{O}(\partial_k \riem_1,\partial_k \riem_2)$ is independent of the choice of extension $H_1$ and doubly-connected chart.
  
  We also have that the partial overfare is bounded.
  \begin{proposition} \label{pr:local_overfare_functions_bounded} 
  The following statements hold:\\
   
   \emph{(1)} 
    $\mathbf{O}(\partial_k \riem_1,\partial_k \riem_2)$ is bounded 
    as a map from $\dot{H}^{1/2}(\partial_k \riem_1)$ to $\dot{H}^{1/2}(\partial_k \riem_2)$.\\ 
    
  \emph{(2)}  If $\partial_k \riem_1$ is a \emph{BZM} quasicircle, then $\mathbf{O}(\partial_k \riem_1,\partial_k \riem_2)$ is bounded 
    as a map from ${H}^{1/2}(\partial_k \riem_1)$ to ${H}^{1/2}(\partial_k \riem_2)$.
  \end{proposition}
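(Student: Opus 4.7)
The plan is to reduce both statements to the already-proved local overfare bound of Lemma \ref{le:local_Overfare} by sandwiching the $H^{1/2}$ norms on the boundary between Dirichlet-type energies of harmonic extensions into collar neighbourhoods. The key conceptual point is that $\dot{H}^{1/2}(\partial_k\riem_j)$ is comparable (via a collar chart) to the Dirichlet seminorm of the harmonic extension of $h$ to a one-sided collar, and similarly $H^{1/2}$ is comparable to $H^1_{\mathrm{conf}}$ of that extension.

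For claim (1), given $h_1\in\dot{H}^{1/2}(\partial_k\riem_1)$, I would first pick a collar chart $\phi_1\colon U_1\to\mathbb{A}_{r,1}$ of $\partial_k\riem_1$ that extends analytically across $\partial_k\riem_1$ (treating it as an analytic curve in the double), transfer $h_1$ to $\mathbb{S}^1$, and solve the disk Dirichlet problem there. Using Douglas' identity \eqref{Douglas formula}, the invariance of $\dot H^{1/2}$ under analytic diffeomorphisms (Lemma \ref{homogen equivalence}), and restriction to $\mathbb{A}_{r,1}$, this produces a harmonic extension $H_1\in\mathcal{D}_{\mathrm{harm}}(U_1)$ whose CNT boundary values are $h_1$ and which satisfies $\|H_1\|_{\dot{\mathcal{D}}_{\mathrm{harm}}(U_1)}\lesssim \|h_1\|_{\dot{H}^{1/2}(\partial_k\riem_1)}$. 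Applying Lemma \ref{le:local_Overfare} to $H_1$ with a doubly-connected chart across $\partial_k\riem_1$ yields $H_2\in\mathcal{D}_{\mathrm{harm}}(U_2)$ with $\|H_2\|_{\dot{\mathcal{D}}_{\mathrm{harm}}(U_2)}\lesssim \|H_1\|_{\dot{\mathcal{D}}_{\mathrm{harm}}(U_1)}$, and by Proposition \ref{pr:local_overfare_functions_well_defined} its CNT trace is $h_2=\mathbf{O}(\partial_k\riem_1,\partial_k\riem_2)h_1$. The reverse inequality $\|h_2\|_{\dot{H}^{1/2}(\partial_k\riem_2)}\lesssim \|H_2\|_{\dot{\mathcal{D}}_{\mathrm{harm}}(U_2)}$ (by the Sobolev trace bound on the $\riem_2$-side collar, transferred again through an analytic collar chart) then chains the three estimates together.

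For claim (2), the argument is structurally identical, except every Dirichlet seminorm is replaced by the $H^1_{\mathrm{conf}}$-norm and every $\dot{H}^{1/2}$ is replaced by ${H}^{1/2}$. The BZM hypothesis is used precisely at the local overfare step: Lemma \ref{le:local_Overfare} gives $\|H_2\|_{H^1_{\mathrm{conf}}(U_2)}\lesssim \|H_1\|_{H^1_{\mathrm{conf}}(U_1)}$ only under this assumption. For the two endpoint comparisons I would use Corollary \ref{good sobolev cor} on the disk together with Lemma \ref{le:for_density_factorization} (which transfers $H^1_{\mathrm{conf}}$ boundedly through a collar chart) to see that on either side the full $H^{1/2}$-norm on $\partial_k\riem_j$ is equivalent to the $H^1_{\mathrm{conf}}$-norm of a chosen harmonic extension into the collar.

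The main technical obstacle is the norm equivalence between the boundary Sobolev norm on $\partial_k\riem_j$ and the Dirichlet energy (or $H^1_{\mathrm{conf}}$-norm) of a harmonic extension into a \emph{one-sided} collar, since the usual Douglas formula is stated for the full disk; the cleanest fix is to reduce to the disk via a collar chart that extends analytically across the border (available because the border can be treated as an analytic curve in the double, cf.\ Remark \ref{re:Sobolev_space_uses_analytic_boundary}), at which point Corollary \ref{good sobolev cor} and Lemma \ref{homogen equivalence} do all the work. After that, the proof is a three-line diagram chase.
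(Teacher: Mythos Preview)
Your proposal is correct and follows essentially the same route as the paper: sandwich the local overfare bound of Lemma~\ref{le:local_Overfare} between a harmonic extension estimate on the $\riem_1$-side and a trace estimate on the $\riem_2$-side, with the BZM hypothesis entering only through the $H^1_{\mathrm{conf}}$ version of that lemma. The paper's proof is even more compressed---it stays in the planar simply-connected domains $\Omega_1,\Omega_2$ produced by the doubly-connected chart and invokes standard Sobolev trace and extension for $H^1(\Omega_k)\leftrightarrow H^{1/2}(\gamma)$ and $\dot H^1(\Omega_k)\leftrightarrow \dot H^{1/2}(\gamma)$ directly---whereas you unpack the extension step via a separate collar chart, the disk Dirichlet problem, and Douglas' formula; but the structure is identical.
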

  \begin{proof}
   By Proposition \ref{pr:local_overfare_functions_well_defined} we may choose any doubly-connected chart to define the partial overfare. Choose a such a chart $\phi$ on a doubly-connected domain $U$ and let $U_1,U_2$ be collar neighbourhoods of $\partial_k \riem_1$ and $\partial_k \riem_2$ as in Lemma \ref{le:local_Overfare}.  As in the proof of that lemma, we obtain a pair of domains in the plane $\Omega_k$ bounded by $\gamma=\phi(\partial_k \riem_1) = \phi(\partial_k \riem_2)$. 
   Both claims now follows from Lemma \ref{le:local_Overfare}, and boundedness of Sobolev trace and extension from $H^1(\Omega_k)$ to $H^{1/2}(\gamma)$ and 
   $\dot{H}^1(\Omega)$ to $\dot{H}^{1/2}(\gamma)$. (Note that the definition of $H^{1/2}(\gamma)$ depends on the choice of side $\Omega_1$ or $\Omega_2$, treating $\gamma$ as an analytic curve in the double of $\Omega_1/\Omega_2$ respectively).   
  \end{proof}
  \begin{remark}[Unique extension from $H^{1/2}$ to $\mathcal{H}$] 
   \label{re:unique_extension_sobolev_to_CNT}
   Let $\Gamma$ be a border of a Riemann surface $\riem$. We treat $\Gamma$ as an analytic curve in the double. We assume for simplicity that there are no other boundary points, although the discussion holds in the general case.
   
   Elements of $H^{1/2}(\Gamma)$ which agree with each other almost everywhere are the same in that Sobolev space. On the other hand, functions in $\mathcal{H}(\Gamma)$ are the same only if they agree up to a (potential-theoretic) null set. Sets of measure zero need not be null; for example, in the circle, not every set of measure zero has logarithmic capacity zero. Thus, an element of  $H^{1/2}(\Gamma)$ does not a-priori lead to a well-defined element of $\mathcal{H}(\Gamma)$. 
   
   However, given $h \in H^{1/2}(\Gamma)$, a well-defined element of $\mathcal{H}(\Gamma)$ can be obtained as follows. Let $H \in H^1(\riem)$ be the unique harmonic Sobolev extension of $h$. In particular, $H \in \mathcal{D}_{\mathrm{harm}}(\riem)$ and thus has CNT boundary values $\tilde{h}$ defined except possibly on a null set.  Therefore $h$ determines a unique element of $\mathcal{H}(\Gamma)$.  
  \end{remark}
  \begin{remark}[Subtlety in defining overfare on $H^{1/2}$]
   There is an important technical subtlety in the definition of the partial overfare.  For simplicity, we assume that $\riem_1$ and $\riem_2$ have only one border $\partial \riem_1 = \partial \riem_2$ which is shared between them. As above, the discussion here applies to the general case. 
   
   Given $h_1 \in H^{1/2}(\partial_1 \riem)$, one might seek an element $h_2 \in H^{1/2}(\partial_2 \riem)$ which agrees with $h_1$ almost everywhere. This is not even well-defined, because sets of measure zero in $\partial_1 \riem$ are not necessarily of measure zero in $\partial_2 \riem$. For example, if $\Gamma$ is a quasicircle in the plane bounding $\Omega_1$ and $\Omega_2$, sets of measure zero in $\Gamma$ treated as an analytic curve in the double of $\Omega_1$ are precisely sets of harmonic measure zero. Sets of harmonic measure zero in $\Gamma$ with respect to $\Omega_1$ need not be harmonic measure zero with respect to $\Omega_2$. Thus the partial overfare cannot be formulated this way, necessitating the definition above and \ref{pr:local_overfare_functions_well_defined} and \ref{pr:local_overfare_functions_bounded}.
   
   On the other hand, using Remark \ref{re:unique_extension_sobolev_to_CNT} the definition of partial overfare can be stated succinctly as follows.  Given $h_1 \in H^{1/2}(\partial_k \riem_1)$, let $\tilde{h} \in \mathcal{H}(\partial_k \riem_1) = \mathcal{H}(\partial_k \riem_2)$ be the unique element corresponding to $h$. Then $\tilde{h}$ agrees with a unique element $h_2 \in H^{1/2}(\partial_k \riem_2)$, and we can set
   \[  h_2 = \mathbf{O}(\partial_k \riem_1,\partial_k \riem_2)h_1.  \]
  \end{remark} 
   
   Next, we will define a partial overfare of elements of $H^{-1/2}$. Again, recall that $H^{-1/2}(\partial_k \riem_m)$ is defined by treating $\partial_k \riem_m$ as an analytic curve in the double of $\riem_m$, and therefore we must distinguish $H^{-1/2}(\partial_k \riem_1)$ from $H^{-1/2}(\partial_k \riem_2)$.
   
   Let $L \in H^{-1/2}(\partial_k \riem_1)$. We define 
   \[  \mathbf{O}'(\partial_k \riem_1,\partial_k \riem_2):H^{-1/2}(\partial_k  \riem_1) \rightarrow H^{-1/2}(\partial_k \riem_2) \]
   by 
   \[   [\gls{oprime}(\partial_k \riem_1,\partial_k \riem_2) L](h) = -L(\mathbf{O}(\partial_k \riem_2,\partial_k \riem_1)h)  \ \ \ \text{for all} \ h \in H^{1/2}(\partial_k \riem_1).  \]
   $\mathbf{O}'(\partial_k \riem_2,\partial_k \riem_1)$ is defined similarly.
   \begin{remark}
    The negative sign is introduced in order to take into account the change of orientation of the boundary, as we will see below.
   \end{remark}
   
   We also define
   \[ \gls{oprimedot}_{1,2}:\dot{H}^{-1/2}(\partial_k \riem_1) \rightarrow \dot{H}^{-1/2}(\partial_k \riem_2)  \]
   and
   \[  \dot{\mathbf{O}}'_{2,1}:\dot{H}^{-1/2}(\partial_k \riem_2) \rightarrow \dot{H}^{-1/2}(\partial_k \riem_1)  \]
   in the obvious way. It is easily verified that these are well-defined. 
   
   \begin{proposition} \label{pr:overfare_prime_constant} For any $L \in H^{-1/2}(\partial_k \riem)$, 
     \[ [\mathbf{O}'(\partial_k \riem_1,\partial_k \riem_2)L] (1) = - L(1). \]
   \end{proposition}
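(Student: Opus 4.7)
The plan is to unfold the definition of $\mathbf{O}'$ and reduce the claim to showing that the partial overfare of the constant function is the constant function. By the definition
\[
[\mathbf{O}'(\partial_k \riem_1,\partial_k \riem_2)L](h) = -L\bigl(\mathbf{O}(\partial_k \riem_2,\partial_k \riem_1)h\bigr),
\]
evaluated at $h=1 \in H^{1/2}(\partial_k \riem_2)$, it suffices to prove that
\[
\mathbf{O}(\partial_k \riem_2,\partial_k \riem_1)(1) = 1 \quad \text{in } H^{1/2}(\partial_k \riem_1).
\]

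To verify this, I would use Proposition \ref{pr:local_overfare_functions_well_defined}: choose the trivial harmonic extension $H_2 \equiv 1$ on a collar neighbourhood $U_2$ of $\partial_k \riem_2$. Since $H_2 \in \mathcal{D}_{\mathrm{harm}}(U_2)$ (with vanishing Dirichlet semi-norm) and has CNT boundary values equal to $1$ on $\partial_k \riem_2$, the partial overfare at $1$ is exhibited by the CNT boundary values of the local overfare $\mathbf{O}(\phi)_{2,1}(1)$, where $\phi:U\to \mathbb{A}$ is a doubly-connected chart as in Lemma \ref{le:local_Overfare}.

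Next, I would inspect the factorization \eqref{eq:local_Overfare}. Each operator in the composition preserves the constant $1$: the pullbacks $\mathbf{C}_{\phi^{\pm 1}}$ trivially preserve constants, the bounce operator $\mathbf{G}_{\phi(U_2),\Omega_2}$ sends the constant $1$ to the constant $1$ by uniqueness of Dirichlet-bounded harmonic extensions (Theorem \ref{th:CNT_BVs_existence_uniqueness}), and the planar overfare $\mathbf{O}_{\Omega_2,\Omega_1}$ of Theorem \ref{th:overfare_sphere_control_Dirichlet} sends $1$ on $\Omega_2$ to $1$ on $\Omega_1$, again by uniqueness of the solution with prescribed CNT boundary values. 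Hence $\mathbf{O}(\phi)_{2,1}(1) \equiv 1$ on $U_1$, whose CNT boundary values on $\partial_k \riem_1$ are $1$.

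Combining the two steps gives $\mathbf{O}(\partial_k \riem_2,\partial_k \riem_1)(1) = 1$, and substituting into the displayed formula yields $[\mathbf{O}'(\partial_k \riem_1,\partial_k \riem_2)L](1) = -L(1)$, as claimed. There is no serious obstacle here: the only point requiring care is the routine verification that each constituent of the local overfare preserves the constant function, which ultimately rests on the uniqueness of the CNT boundary value problem. The negative sign in the definition of $\mathbf{O}'$ is precisely what converts $L \mapsto L(1)$ into $-L(1)$, reflecting the reversed orientation of the boundary curve when viewed from opposite sides of $\Gamma_k$.
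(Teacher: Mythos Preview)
Your proof is correct and follows essentially the same approach as the paper: both reduce the claim to the fact that the partial overfare sends the constant function $1$ to $1$, which the paper simply calls an ``easily-verified fact'' while you spell out the verification via the factorization \eqref{eq:local_Overfare} and uniqueness of CNT extensions.
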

   \begin{proof}
    This follows from the easily-verified fact that $\mathbf{O}(\partial_k \riem_1,\partial_k \riem_2) 1 = 1$.  
   \end{proof}
   \begin{proposition} \label{pr:Hminus_onehalf_local_over_bounded}
   The following statements are valid:\\
   
    \emph{(1)} The partial overfare $\gls{oprimedot}(\partial_k \riem_1,\partial_k \riem_2)$ is bounded  as a map from $\dot{H}^{-1/2}(\partial_k \riem_1)$ to $\dot{H}^{-1/2}(\partial_k \riem_2)$.\\ 
    
    \emph{(2)} If $\partial_k \riem$ is a \emph{BZM} quasicircle, then $\mathbf{O}'(\partial_k \riem_1,\partial_k \riem_2)$ is bounded  as a map from ${H}^{-1/2}(\partial_k \riem_1)$ to ${H}^{-1/2}(\partial_k \riem_2)$.
   \end{proposition}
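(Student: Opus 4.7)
The proof is essentially a duality argument: $\mathbf{O}'(\partial_k \riem_1, \partial_k \riem_2)$ is defined as the negative Banach space adjoint of the partial overfare on functions $\mathbf{O}(\partial_k \riem_2, \partial_k \riem_1)$, so its boundedness will follow from the boundedness of that adjoint together with the general fact that the dual of a bounded operator between Banach spaces is bounded with the same operator norm.

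More concretely, my plan is as follows. For part (1), I first verify that $\dot{\mathbf{O}}'(\partial_k \riem_1, \partial_k \riem_2)$ is well-defined on $\dot{H}^{-1/2}(\partial_k \riem_1)$. Recall that $\dot{H}^{-1/2}(\partial_k \riem_m)$ consists of functionals on $H^{1/2}(\partial_k \riem_m)$ which annihilate constants, since $\dot{H}^{1/2}$ is the quotient of $H^{1/2}$ by constants. Thus if $L \in \dot{H}^{-1/2}(\partial_k \riem_1)$, Proposition \ref{pr:overfare_prime_constant} gives $[\mathbf{O}'(\partial_k \riem_1, \partial_k \riem_2) L](1) = -L(1) = 0$, so the image lies in $\dot{H}^{-1/2}(\partial_k \riem_2)$. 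Then, from the defining identity
\[
[\mathbf{O}'(\partial_k \riem_1, \partial_k \riem_2) L](h) = -L\bigl(\mathbf{O}(\partial_k \riem_2, \partial_k \riem_1) h\bigr),
\]
I estimate, for any $h \in \dot{H}^{1/2}(\partial_k \riem_2)$,
\[
|[\mathbf{O}'(\partial_k \riem_1, \partial_k \riem_2) L](h)| \leq \|L\|_{\dot{H}^{-1/2}(\partial_k \riem_1)} \,\|\mathbf{O}(\partial_k \riem_2, \partial_k \riem_1) h\|_{\dot{H}^{1/2}(\partial_k \riem_1)},
\]
and invoke part (1) of Proposition \ref{pr:local_overfare_functions_bounded} to bound the right-hand side by a constant times $\|L\|_{\dot{H}^{-1/2}} \|h\|_{\dot{H}^{1/2}(\partial_k \riem_2)}$. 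Taking the supremum over $h$ with $\|h\|_{\dot{H}^{1/2}(\partial_k \riem_2)} = 1$ yields the required bound on $\|\dot{\mathbf{O}}'(\partial_k \riem_1, \partial_k \riem_2) L\|_{\dot{H}^{-1/2}(\partial_k \riem_2)}$.

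For part (2), the argument is identical except that I use the full norms. When $\partial_k \riem$ is a BZM quasicircle, part (2) of Proposition \ref{pr:local_overfare_functions_bounded} gives boundedness of $\mathbf{O}(\partial_k \riem_2, \partial_k \riem_1)$ from $H^{1/2}(\partial_k \riem_2)$ to $H^{1/2}(\partial_k \riem_1)$, and so the same duality computation with $\|h\|_{H^{1/2}(\partial_k \riem_2)}$ in place of $\|h\|_{\dot{H}^{1/2}(\partial_k \riem_2)}$ produces the bound on $\|\mathbf{O}'(\partial_k \riem_1, \partial_k \riem_2)L\|_{H^{-1/2}(\partial_k \riem_2)}$.

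The proof involves no serious obstacle; the only points requiring care are the verification that the homogeneous overfare respects the pairing modulo constants (handled by Proposition \ref{pr:overfare_prime_constant}) and the bookkeeping that the duality $H^{-1/2} \cong (H^{1/2})^*$ is being used with respect to the correct surface and border structure on each side (keeping in mind that, in the notation of Remark \ref{re:Sobolev_space_uses_analytic_boundary}, $\partial_k \riem_1$ and $\partial_k \riem_2$ coincide as sets but carry distinct analytic structures induced from the respective doubles). Once those issues are resolved, the proof reduces to the standard fact that adjoints preserve operator norms.
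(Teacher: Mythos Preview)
Your proof is correct and is precisely the duality argument that the paper has in mind: the paper's own proof simply reads ``This follows immediately from Proposition \ref{pr:local_overfare_functions_bounded},'' and your write-up spells out exactly how that implication goes via the adjoint/dual characterization of $\mathbf{O}'$.
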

   \begin{proof} 
    This follows immediately from Proposition \ref{pr:local_overfare_functions_bounded}. 
   \end{proof}

 The association between $H^{-1/2}(\partial_k \riem_m)$ and $\mathcal{H}'(\partial_k \riem_m)$ given by Theorem \ref{th:Honehalf_reformulation} immediately defines a bounded overfare
 \[  \mathbf{O}'(\partial_k \riem_1,\partial_k \riem_2) :\mathcal{H}'(\partial_k \riem_1) \rightarrow \mathcal{H}'(\partial_k \riem_2) \]
 and similarly for the homogeneous spaces
 \[  \mathbf{O}'(\partial_k \riem_1,\partial_k \riem_2) :\dot{\mathcal{H}}'(\partial_k \riem_1) \rightarrow \dot{\mathcal{H}}'(\partial_k \riem_2) \]
 
 We will use the same notation for the overfares on $H^{-1/2}(\partial_k \riem_m)$ and $\mathcal{H}'(\partial_k \riem_m)$.
 
 The partial overfare preserves periods:
 \begin{proposition} \label{pr:overfare_preserves_periods}
  For any $k =1,\ldots,n$ and $[\alpha] \in \mathcal{H}'(\partial_k \riem_1)$ we have that 
  \[  \int_{\partial_k \riem_2}  \mathbf{O}'(\partial_k \riem_1,\partial_k \riem_2) [\alpha] = - \int_{\partial_k \riem_1} [\alpha].  \]
  The same claim holds with the roles of $1$ and $2$ switched.
 \end{proposition}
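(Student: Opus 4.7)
The plan is to compute both sides by pairing against the constant function $1 \in H^{1/2}$ and exploiting the bijection $[\alpha] \leftrightarrow L_{[\alpha]}$ from Theorem \ref{th:Honehalf_reformulation}. The key observation is that the period $\int_{\partial_k \riem_m} [\alpha]$ equals $L_{[\alpha]}(1)$: indeed, choosing the constant harmonic extension $H \equiv 1$ of the constant boundary value $1$, the pairing formula \eqref{eq:associated_H_minusonehalf_pairing} gives
\[
L_{[\alpha]}(1) \;=\; \lim_{\epsilon \searrow 0} \int_{\Gamma_\epsilon} 1 \cdot \alpha \;=\; \int_{c_k} \alpha \;=\; \int_{\partial_k \riem_m} [\alpha],
\]
where $c_k$ is any smooth curve in the collar neighbourhood homotopic to $\partial_k \riem_m$. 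This computation is valid both for $m=1$ and $m=2$.

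Next, I would observe that the partial overfare of the constant function $1$ on $\partial_k \riem_2$ is the constant function $1$ on $\partial_k \riem_1$, i.e.\
\[
\mathbf{O}(\partial_k \riem_2,\partial_k \riem_1)\, 1 \;=\; 1.
\]
This is immediate from Proposition \ref{pr:local_overfare_functions_well_defined}: extending $1$ by the constant harmonic function $1 \in \mathcal{D}_{\mathrm{harm}}(\riem_2)$ produces a function whose CNT boundary values on $\partial_k \riem_1$ are identically $1$ (equivalently, the overfare of a constant harmonic function is a constant harmonic function with the same value).

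Putting these two observations together with the definition of $\mathbf{O}'$ yields, setting $[\beta] = \mathbf{O}'(\partial_k \riem_1,\partial_k \riem_2)[\alpha]$ and using Theorem \ref{th:Honehalf_reformulation} to identify $[\beta]$ with $L_{[\beta]}$,
\[
\int_{\partial_k \riem_2} [\beta] \;=\; L_{[\beta]}(1) \;=\; -\,L_{[\alpha]}\bigl(\mathbf{O}(\partial_k \riem_2,\partial_k \riem_1)\,1\bigr) \;=\; -\,L_{[\alpha]}(1) \;=\; -\int_{\partial_k \riem_1} [\alpha].
\]
The interchange of $1$ and $2$ follows by exactly the same argument, since the definition of $\mathbf{O}'$ is symmetric in the two sides. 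There is no real obstacle: once one recognizes that the period of $[\alpha]$ is precisely its pairing with the constant function, the minus sign built into the definition of $\mathbf{O}'$ (inserted to compensate for the opposite induced orientations on $\Gamma_k$ viewed from $\riem_1$ versus $\riem_2$) delivers the result directly.
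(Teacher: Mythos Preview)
Your proof is correct and follows essentially the same approach as the paper: the paper cites Proposition \ref{pr:overfare_prime_constant} (which encapsulates your observation that $\mathbf{O}(\partial_k\riem_2,\partial_k\riem_1)1=1$ together with the definition of $\mathbf{O}'$) and then observes that $L_{[\alpha]}(1)=\int_{\partial_k\riem_1}[\alpha]$, exactly as you do. You have simply unpacked that proposition inline.
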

 \begin{proof}
  This follows from Proposition \ref{pr:overfare_prime_constant} after observing that 
  \[  L_{[\alpha]}(1) = \int_{\partial_k \riem_1} [\alpha].  \]
 \end{proof}

   We immediately have the following:\\

   \begin{proposition}  \label{pr:extendible_forms_overfare_themselves}
    Let $U$ be a doubly-connected neighbourhood of $\partial_k \riem_1=\partial_k \riem_2$. 
    
     \emph{(1)} For any  
    $\alpha \in \mathcal{A}^{\mathrm{e}}_{\mathrm{harm}}(U)$ we have
    \[   \mathbf{O}'(\partial_k \riem_1,\partial_k \riem_2) [\alpha]=[\alpha] \]
    { where the equality above is in $\dot{H}^{-1/2}(\partial_k \riem_2)$.}
    
    \emph{(2)} If $\partial_k \riem_1$ is a \emph{BZM} quasicircle, then for any  
    $\alpha \in \mathcal{A}_{\mathrm{harm}}(U)$ we have
    \[   \mathbf{O}'(\partial_k \riem_1,\partial_k \riem_2) [\alpha]=[\alpha]. \]

   \end{proposition}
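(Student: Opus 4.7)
The plan is to use the isomorphism $\mathcal{H}'(\partial_k\riem_j)\cong H^{-1/2}(\partial_k\riem_j)$ of Theorem~\ref{th:Honehalf_reformulation} (and its dotted version) to recast each identity as an equality of linear functionals, and then to verify it by a Stokes-type computation on a doubly-connected neighbourhood of $\Gamma_k$. Unpacking the definition of $\mathbf{O}'$, the claim $\mathbf{O}'(\partial_k\riem_1,\partial_k\riem_2)[\alpha]=[\alpha]$ reduces to the bilinear identity
\[
L_{[\alpha|_{U\cap\riem_2}]}(h) + L_{[\alpha|_{U\cap\riem_1}]}\bigl(\mathbf{O}(\partial_k\riem_2,\partial_k\riem_1)h\bigr) = 0,
\]
to hold for all $h\in H^{1/2}(\partial_k\riem_2)$ in case (2), and for all $h\in\dot{H}^{1/2}(\partial_k\riem_2)$ in case (1). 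In case (1), both functionals descend to $\dot{H}^{-1/2}$ because $L_{[\alpha|_{\riem_j}]}(1)=\int_{\partial_k\riem_j}\alpha=0$ for exact $\alpha$; in case (2), the BZM hypothesis is needed precisely to guarantee that $\mathbf{O}(\partial_k\riem_2,\partial_k\riem_1)$ is well-defined on the full $H^{1/2}$ (Proposition~\ref{pr:local_overfare_functions_bounded}), so that the statement makes sense.

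Next I would set up Stokes. Using Proposition~\ref{prop:collar_charts_in_doubly_connected_charts}, pick a doubly-connected sub-neighbourhood $V\subseteq U$ of $\Gamma_k$ whose inner curves $\gamma_j\subset\riem_j$ are analytic, and set $V_j:=V\cap\riem_j$. Let $h_1:=\mathbf{O}(\partial_k\riem_2,\partial_k\riem_1)h$, and let $H_j\in\mathcal{D}_{\mathrm{harm}}(V_j)$ be harmonic extensions of $h_j$ into $V_j$; by construction $H_1$ and $H_2$ share the same CNT boundary values on $\Gamma_k$ up to a null set. The First Anchor Lemma~\ref{le:anchor_lemma_one} combined with Stokes applied to $H_j\alpha$ on $V_j$ (using $d\alpha=0$), after tracking orientations, yields
\[
L_{[\alpha|_{\riem_j}]}(h_j) = \iint_{V_j} dH_j \wedge \alpha + \int_{\gamma_j} H_j \alpha,
\]
with each $\gamma_j$ oriented positively with respect to $\riem_j$. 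Summing over $j=1,2$, using that $\Gamma_k$ has Lebesgue measure zero and that these orientations of $\gamma_1\cup\gamma_2$ together give minus the standard orientation on $\partial V$, produces
\[
L_1 + L_2 = \iint_V dH \wedge \alpha - \int_{\partial V} H \alpha,
\]
where $H$ denotes the piecewise function equal to $H_j$ on $V_j$.

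To conclude, I would establish the Stokes identity $\int_{\partial V} H\alpha = \iint_V dH\wedge\alpha$ across the quasicircle $\Gamma_k$, which then forces $L_1+L_2=0$. The only possible obstruction is a singular contribution to the distributional $d(H\alpha)$ supported on $\Gamma_k$, which would be proportional to the jump of the tangential component of $H\alpha$ across $\Gamma_k$; but since $H_1$ and $H_2$ agree as CNT boundary values on $\Gamma_k$ and $\alpha$ is smooth there, this tangential jump vanishes, so $d(H\alpha)$ has no singular part and coincides in $L^2$ with the pointwise $dH\wedge\alpha$, and distributional Stokes yields the identity. The main obstacle is precisely this last step: making rigorous the distributional Stokes calculation across the merely measurable curve $\Gamma_k$ and identifying matching CNT boundary values with matching tangential traces in a distributional sense, which rests on the quasicircle analysis of Section~\ref{se:CNT_all}, in particular Theorem~\ref{th:null_same_both_sides}. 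In case (1), one may alternatively integrate by parts using the single-valued primitive $\phi$ of $\alpha=d\phi$ on $U$ (single-valued on $V$ because $\int_{\Gamma_k}\alpha=0$), thereby transferring the test function role to the smooth $\phi\in H^1(V)$ and bypassing the regularity issue at $\Gamma_k$ altogether.
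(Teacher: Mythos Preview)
Your reduction to the bilinear identity is correct, but the argument has a genuine gap precisely where you yourself flag it: the step
\[
\int_{\partial V} H\alpha = \iint_V dH\wedge\alpha
\]
for the piecewise-defined $H$ across the quasicircle $\Gamma_k$. Your justification---that the distributional $d(H\alpha)$ has no singular part because $H_1$ and $H_2$ agree as CNT boundary values---is heuristic. CNT boundary values are defined only up to null sets and are not tangential traces in any Sobolev sense on a non-rectifiable curve; there is no trace theorem available on $\Gamma_k$ itself that would let you interpret ``no tangential jump'' rigorously. Theorem~\ref{th:null_same_both_sides} controls null sets, not distributional jumps, and does not furnish the needed identity. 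Your alternative for case (1) via the primitive $\phi$ of $\alpha$ does not bypass the issue either: after integrating by parts along the level curves you obtain $-L_{[dH_j]}(\phi)$, and you are back to comparing two functionals built from forms $dH_j$ defined only on one side each.

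The paper's proof avoids this obstacle entirely by a density argument you have missed. Rather than testing on all $h\in H^{1/2}(\partial_k\riem_2)$, one tests only on boundary values of $H\in\mathcal{D}_{\mathrm{harm}}(U)$, i.e.\ harmonic functions defined on the \emph{full} doubly-connected neighbourhood. Such boundary values are dense in $H^{1/2}(\partial_k\riem_2)$ (respectively $\dot H^{1/2}$), and by boundedness of the partial overfare (Proposition~\ref{pr:local_overfare_functions_bounded}) this suffices. For these test functions the overfare is trivial---$H$ already lives on both sides---so one simply applies Stokes on the shrinking region $U_\varepsilon$ bounded by $\Gamma_\varepsilon^1$ and $\Gamma_\varepsilon^2$, where both $H$ and $\alpha$ are smooth. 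This gives
\[
\left| L^2_{[\alpha]}H - L^1_{[\alpha]}H \right| \leq \|dH\|_{\mathcal{A}_{\mathrm{harm}}(U_\varepsilon)}\,\|\alpha\|_{\mathcal{A}_{\mathrm{harm}}(U_\varepsilon)},
\]
and since $\bigcap_\varepsilon U_\varepsilon=\Gamma_k$ has Lebesgue measure zero (quasicircles have measure zero), the right side tends to zero. No distributional argument across $\Gamma_k$ is ever needed.
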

   \begin{proof}
    {Denote by $L^m_{[\alpha]}$ the elements of $H^{-1/2}(\partial_k \riem_m)$ induced by $\alpha$ for $m=1,2$. We need to show that $L^1_{[\alpha]}=L^2_{[\alpha]}$.} By Proposition \ref{pr:local_overfare_functions_bounded} it is enough to prove this on the dense set $\mathcal{D}_{\mathrm{harm}}(U)$ in both cases (1) and (2).  Let $\Gamma_\varepsilon^m$ denote the limiting curves and $U_\varepsilon$ denote the region bounded by these curves. For $H$ in this dense set, we have
    \begin{align*}
     L^2_{[\alpha]} H - L^1_{[\alpha]} H & = \lim_{\epsilon \rightarrow 0}  \left( \int_{\Gamma^2_\varepsilon} \alpha H - \int_{\Gamma^1_\varepsilon} \alpha H \right) \\
     & = - \lim_{\epsilon \rightarrow 0} \iint_{U_\varepsilon} \alpha \wedge dH.
    \end{align*}
    Therefore by the Cauchy-Schwarz inequality, for all $\varepsilon>0$
    \[ 
    \left| L^2_{[\alpha]} H - L^1_{[\alpha]} H\right| \leq \| dH \|_{\mathcal{A}_{\mathrm{{{harm}}}}(U_\varepsilon)} \cdot \| \alpha \|_{\mathcal{A}_{\mathrm{harm}}(U_\varepsilon)}.   \]
    {Letting $\varepsilon$ go to  zero, the claim now follows from the facts that $U_{\varepsilon}\subset U$, $dH\in \mathcal{A}_{\mathrm{harm}}(U)$ and $\cap_{ \varepsilon} U_\varepsilon$ has measure zero because quasicircles have measure zero.} 
   \end{proof}
   In other words, one-forms which extend harmonically across a border are their own overfare. We will use this repeatedly in the next few sections.
 
  
 \end{subsection}
 \begin{subsection}{Overfare of one-forms}  \label{se:subsection_overfare_oneforms}
   We first recall some notation and establish conventions. Assume that $\riem_k$ are connected and have genus $g_k$ for $k=1,2$. Let
 \[   \{ \gamma_1^k,\ldots,\gamma_{2g_k}^k, \partial_1 \riem_k,\ldots,\partial_{n-1} \riem_k \}  \]
 be a set of generators for the fundamental group of $\riem_k$.   The generators $\partial_j \riem_k$ are common to both $\riem_1$ and $\riem_2$, when viewed as subsets of $\mathscr{R}$.  Note that these are not the same generators as those appearing in Section \ref{se:Dirichlet_problem}, since $\mathscr{R}$ need not be the double of either $\riem_1$ or $\riem_2$.   
 
  In this section we define a notion of overfare of one-forms. That is, given $\alpha_2 \in \mathcal{A}_{\mathrm{harm}}(\riem_2)$, we see a form $\alpha_1 \in \mathcal{A}_{\mathrm{harm}}(\riem_1)$ with the same boundary values. Needless to say, one must specify more data about $\alpha_1$ to make this well-posed, as we saw in Section \ref{se:Dirichlet_problem}.
  \begin{theorem}   \label{th:old_overfare_forms}
   Given $\alpha_2 \in \mathcal{A}_{\mathrm{harm}}(\riem_2)$, $\sigma_1,\ldots,\sigma_{2g} \in \mathbb{C}$ and $\rho_1,\ldots,\rho_{n-1}\in \mathbb{C}$, there is a unique $\alpha_1 \in \mathcal{A}_{\mathrm{harm}}(\riem_1)$ such that
   \begin{enumerate}
       \item \[  \mathbf{O}(\partial_k \riem_2,\partial_k \riem_1) [\alpha_2]=[\alpha_1],  \  \ \ k=1,\ldots,n;
      \]
      \item \[ \int_{\gamma_m} \alpha_1 = \sigma_m, \ \ \ m=1,\ldots,2g;   \]
      and
      \item \[  \int_{\partial_k \riem_1} \ast \alpha_1 = \rho_k, \ \ \ k=1,\ldots,n-1. \]
   \end{enumerate}
  \end{theorem}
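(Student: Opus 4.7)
The plan is to reduce this to the well-posedness of the CNT Dirichlet problem on $\riem_1$, namely Theorem \ref{th:CNT_Dirichlet_problem_oneforms}. First, I would produce the boundary data for $\riem_1$ by applying the partial overfare to the CNT boundary values of $\alpha_2$: for each $k=1,\ldots,n$, let $[\alpha_2]_k \in \mathcal{H}'(\partial_k \riem_2)$ denote the equivalence class of $\alpha_2$ restricted to a collar neighbourhood of $\partial_k \riem_2$, and set
\[
  [\beta_k] := \mathbf{O}'(\partial_k \riem_2,\partial_k \riem_1)[\alpha_2]_k \in \mathcal{H}'(\partial_k \riem_1).
\]
For the star-periods I would take $\rho_n := -(\rho_1+\cdots+\rho_{n-1})$, and keep the given $\sigma_1,\ldots,\sigma_{2g}$.

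Next I would verify that $([\beta],\rho,\sigma)$ is admissible CNT Dirichlet data in the sense of Definition \ref{defn:Cnt dirichlet data}. The compatibility condition $\rho_1+\cdots+\rho_n=0$ holds by construction. The condition $\sum_k \int_{\partial_k \riem_1}[\beta_k] = 0$ follows from Proposition \ref{pr:overfare_preserves_periods}, which gives
\[
  \int_{\partial_k \riem_1}[\beta_k] \;=\; -\int_{\partial_k \riem_2}[\alpha_2]_k,
\]
combined with equation \eqref{eq:sum_of_boundary_periods_zero} applied to the harmonic one-form $\alpha_2$ on the connected surface $\riem_2$, which ensures that the right-hand side sums to zero over $k$.

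With the admissibility established, Theorem \ref{th:CNT_Dirichlet_problem_oneforms} produces a unique $\alpha_1 = \mathbf{Dir}_{\partial \riem_1,\riem_1}([\beta],\rho,\sigma) \in \mathcal{A}_{\mathrm{harm}}(\riem_1)$ with $[\alpha_1]=[\beta]$ in $\mathcal{H}'(\partial \riem_1)$, with $\int_{\gamma_m}\alpha_1 = \sigma_m$ for $m=1,\ldots,2g$, and with $\int_{\partial_k \riem_1}\ast\alpha_1 = \rho_k$ for all $k=1,\ldots,n$. These are exactly the three conditions (1)--(3) of the theorem (condition (3) was imposed for $k=1,\ldots,n-1$).

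For uniqueness, suppose $\alpha_1'$ also satisfies (1)--(3). It solves the CNT Dirichlet problem with data $([\beta],\rho',\sigma)$, where $\rho'_k=\rho_k$ for $k<n$ and $\rho'_n=\int_{\partial_n \riem_1}\ast\alpha_1'$. Stokes' theorem together with $d\ast\alpha_1'=0$ forces $\sum_k \rho'_k=0$, so $\rho'_n=\rho_n$, and uniqueness in Theorem \ref{th:CNT_Dirichlet_problem_oneforms} gives $\alpha_1' = \alpha_1$. The main technical obstacle is really the verification of admissibility, and in particular ensuring that the period identity from Proposition \ref{pr:overfare_preserves_periods} interacts correctly with the sign convention on boundary orientations; once that is clean, the theorem falls out of the Dirichlet machinery of Section \ref{se:Dirichlet_problem}.
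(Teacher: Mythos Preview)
Your proof is correct and follows essentially the same route as the paper: the paper's own proof is a single line, ``This follows immediately from Theorems \ref{th:CNT_Dirichlet_problem_oneforms} and \ref{le:local_Overfare},'' and your argument simply spells out the steps behind that sentence---transporting the boundary data of $\alpha_2$ across $\Gamma$ via the partial overfare, verifying admissibility of the resulting CNT data (in particular the vanishing of the total boundary period via Proposition \ref{pr:overfare_preserves_periods} and \eqref{eq:sum_of_boundary_periods_zero}), and then invoking the well-posedness of the CNT Dirichlet problem on $\riem_1$. Your explicit uniqueness argument, forcing $\rho_n$ from the harmonicity of $\alpha_1'$ and Stokes, is a nice addition that the paper leaves implicit.
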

  \begin{proof}
   This follows immediately from Theorems \ref{th:CNT_Dirichlet_problem_oneforms} and \ref{le:local_Overfare}.  
  \end{proof}
  {
  \begin{remark}
   One can formulate continuous dependence of $\alpha_1$ on $\alpha_2,\sigma_1,\ldots,\sigma_{2g}$, and $\rho_1,\ldots,\rho_{n-1}$, for BZM quasicircles. We will take a different approach to overfare of forms ahead.
   \end{remark}
   }
  
  In order to view overfaring forms as a scattering process, we will reformulate the conditions as follows. The main idea is that we will use a one-form on the surface $\mathscr{R}$ to determine the extra data in the overfare.
  \begin{definition} \label{de:weakly_compatible}
   Let $\alpha_k \in \mathcal{A}_{\mathrm{harm}}(\riem_k)$ for $k=1,2$, and let $\zeta \in \mathcal{A}_{\mathrm{harm}}(\mathscr{R})$.  We say that $\alpha_1$ and $\alpha_2$ are weakly compatible with respect to $\zeta$ if
   \begin{enumerate}
       \item $\mathbf{O}(\partial_k \riem_2,\partial_k \riem_1) [\alpha_2]=[\alpha_1]$ for  $k=1,\ldots,n$, 
       \item $\alpha_k - \mathbf{R}^{\mathrm{h}}_k \zeta \in \mathcal{A}^{\mathrm{e}}_{\mathrm{harm}}(\riem_k)$ for $k=1,2$.\\
   \end{enumerate}
   
  We call $\zeta$ a \emph{weakly catalyzing one-form} for the pair $\alpha_1,\alpha_2$.
  \end{definition}
  
  It follows immediately from Theorem \ref{th:old_overfare_forms} that  weakly compatible forms exist.  
  \begin{corollary}  
   Given $\alpha_2 \in \mathcal{A}_{\mathrm{harm}}(\riem_2)$ and $\zeta \in \mathcal{A}_{\mathrm{harm}}(\mathscr{R})$ such that $\alpha_2 - \mathbf{R}_1^h \zeta \in \mathcal{A}^e_{\mathrm{harm}}(\riem_2)$, there is an $\alpha_1$ such that $\alpha_1$ and $\alpha_2$ are weakly compatible with respect to $\zeta$.  
  \end{corollary}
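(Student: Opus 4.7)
The plan is to construct $\alpha_1$ by overfaring a primitive of $\alpha_2 - \mathbf{R}_2^h\zeta$. Since this form is exact on $\riem_2$, I can pick $f_2 \in \mathcal{D}_{\mathrm{harm}}(\riem_2)$ with $df_2 = \alpha_2 - \mathbf{R}_2^h\zeta$. By Theorem~\ref{thm:bounded_overfare_existence} the overfare $f_1 := \mathbf{O}_{2,1}f_2 \in \mathcal{D}_{\mathrm{harm}}(\riem_1)$ exists, with CNT boundary values matching those of $f_2$ off a null set (using Theorem~\ref{th:null_same_both_sides} to identify null sets across $\Gamma$). I then set
\[
\alpha_1 := \mathbf{R}_1^h\zeta + df_1 \in \mathcal{A}_{\mathrm{harm}}(\riem_1).
\]
Condition~(2) of Definition~\ref{de:weakly_compatible} is immediate, since $\alpha_1 - \mathbf{R}_1^h\zeta = df_1$ is exact.

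For condition~(1), fix $k$ and write, by linearity of the equivalence class and of $\mathbf{O}'$,
\[
[\alpha_1] - \mathbf{O}'(\partial_k\riem_2,\partial_k\riem_1)[\alpha_2] = \bigl([\mathbf{R}_1^h\zeta] - \mathbf{O}'[\mathbf{R}_2^h\zeta]\bigr) + \bigl([df_1] - \mathbf{O}'[df_2]\bigr).
\]
The first bracket vanishes by Proposition~\ref{pr:extendible_forms_overfare_themselves}: $\zeta$ is harmonic on the full surface $\mathscr{R}$, so it lies in $\mathcal{A}_{\mathrm{harm}}(V)$ for any doubly-connected neighbourhood $V$ of $\partial_k\riem$, and such forms are their own overfare.

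The harder sub-claim is that the second bracket vanishes, i.e.\ that $\mathbf{O}'[df_2] = [df_1]$, which is the statement that taking boundary values of the differential commutes with overfare of harmonic functions. Using the isomorphism of Theorem~\ref{th:Honehalf_reformulation}, I would test against an arbitrary $h \in H^{1/2}(\partial_k\riem_1)$: choose a harmonic extension $H \in \mathcal{D}_{\mathrm{harm}}(\riem_1)$ of $h$ that vanishes on the remaining boundary components (solving a Dirichlet problem), set $H_2 := \mathbf{O}_{1,2}H$, and unwind the definition of $\mathbf{O}'$. Because $H$ and $H_2$ have zero CNT boundary values on the other $\partial_j\riem$, the anchor lemmas (Lemmas~\ref{le:anchor_lemma_one}--\ref{le:anchor_lemma_two}) collapse the claim to
\[
\lim_{\varepsilon \searrow 0}\int_{\Gamma^1_{k,\varepsilon}} H\,df_1 \; + \; \lim_{\varepsilon \searrow 0}\int_{\Gamma^2_{k,\varepsilon}} H_2\,df_2 \;=\; 0.
\]

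The main obstacle is this last identity, which has to be argued across a quasicircle where $df_j$ has no pointwise boundary values. The plan is a density--limiting argument: first establish it when $H$ and $f_j$ extend harmonically to a common doubly-connected neighbourhood of $\Gamma_k$, where the two integrands glue into a single $1$-form on a neighbourhood of $\Gamma_k$ and the two limits cancel by Stokes' theorem on $\mathscr{R}$; then pass to the general case using the density of such extensions (Theorem~\ref{th:density_of_holo_collar}), boundedness of the partial overfare on $H^{1/2}$ (Proposition~\ref{pr:local_overfare_functions_bounded}), boundedness of the function overfare $\mathbf{O}_{2,1}$ (Theorems~\ref{thm:bounded_overfare_conf}--\ref{thm:bounded_overfare_Dirichlet}, dropping to $\dot{\mathcal{D}}_{\mathrm{harm}}$ when $\riem_2$ is disconnected), and continuity of the pairing $[\alpha]\mapsto L_{[\alpha]}$ (Theorem~\ref{th:Honehalf_reformulation}). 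The boundary-period mode is handled separately by Proposition~\ref{pr:overfare_preserves_periods}, which already guarantees that $[\alpha_1]$ and $\mathbf{O}'[\alpha_2]$ agree in their period around $\partial_k\riem$.
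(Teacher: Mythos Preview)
Your construction is correct and gives the same $\alpha_1$ the paper produces, but you have taken a substantially longer route than the paper does. The paper treats this corollary as a one-line consequence of Theorem~\ref{th:old_overfare_forms}: one partial-overfares the boundary data $[\alpha_2]$ to obtain an element of $\mathcal{H}'(\partial\riem_1)$, and then solves the CNT Dirichlet problem on $\riem_1$ with internal periods $\sigma_m=\int_{\gamma_m}\zeta$. The boundary periods automatically agree with those of $\zeta$ by Proposition~\ref{pr:overfare_preserves_periods}, so $\alpha_1-\mathbf{R}_1^{\mathrm h}\zeta$ is exact and both conditions of Definition~\ref{de:weakly_compatible} hold. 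No appeal to function overfare, anchor lemmas, or density is needed at this stage, because Theorem~\ref{th:old_overfare_forms} has already packaged the analysis.

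By contrast, your explicit formula $\alpha_1=\mathbf{R}_1^{\mathrm h}\zeta+d\mathbf{O}_{2,1}f_2$ forces you to prove the commutation $\mathbf{O}'(\partial_k\riem_2,\partial_k\riem_1)[df_2]=[d\mathbf{O}_{2,1}f_2]$, which the paper never needs to establish directly. Your density/Stokes plan for this is reasonable but has two soft spots you should be aware of. First, the anchor lemmas as stated are for holomorphic $\alpha$; you will need to split $dH$ into holomorphic and anti-holomorphic parts before invoking them. Second, your approximation step uses continuity of $\mathbf{O}_{2,1}$, and the Dirichlet-seminorm bound (Theorem~\ref{thm:bounded_overfare_Dirichlet}) requires the \emph{originating} surface to be connected; your parenthetical ``dropping to $\dot{\mathcal{D}}_{\mathrm{harm}}$ when $\riem_2$ is disconnected'' has this backwards. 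The argument can be repaired by working locally near each $\Gamma_k$ and using the local overfare of Lemma~\ref{le:local_Overfare} (which is always $\mathcal{D}_{\mathrm{harm}}$-bounded), but this is additional work that the paper's route avoids entirely. The upside of your approach is that it yields a concrete formula for $\alpha_1$; the paper only asserts existence here and defers the explicit description to Theorem~\ref{th:compatibility_existence}.
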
 
  Of course $\alpha_1$ is not unique, and weak compatibility is obviously equivalent to conditions (1) and (2) of Theorem \ref{th:old_overfare_forms}.  
  
  We add a third condition to deal with the ambiguity.
  \begin{definition} \label{de:strongly_compatible}
   We say that $\alpha_k \in \mathcal{A}_{\mathrm{harm}}(\riem_k)$, $k=1,2$ are compatible with respect to $\zeta \in \mathcal{A}_{\mathrm{harm}}(\mathscr{R})$ if they are weakly compatible with respect to $\zeta$, and additionally
   \begin{enumerate}\setcounter{enumi}{2}
       \item
        \[ \mathbf{S}_1^{\mathrm{h}} \alpha_1 + \mathbf{S}_2^{\mathrm{h}} \alpha_2 = \zeta.  \]
   \end{enumerate}
   In this case we say that $\zeta$ is a catalyzing form.
  \end{definition}
  \begin{remark}
   We will say that $(\alpha_1,\alpha_2,\zeta)$ is a weakly compatible/compatible triple if $\alpha_1$ and $\alpha_2$ are weakly compatible/compatible with respect to $\zeta$.  Also,  we will say that $\alpha_1$ is weakly compatible/compatible with $\alpha_2$ and $\zeta$ if $(\alpha_1,\alpha_2,\zeta)$ is a compatible triple. 
  \end{remark}
  
  Some motivation for the third compatibility condition is in order.  Assume that $\riem_1$ and $\riem_2$ are connected, and refer to Theorem \ref{th:old_overfare_forms}.  An $\alpha_2+\overline{\beta}_2 \in \mathcal{A}_{\mathrm{harm}}(\riem_2)$ and $\zeta \in \mathcal{A}_{\mathrm{harm}}(\mathscr{R})$ specify the data (1) and (2) for the boundary value problem for $\alpha_1 + \overline{\beta}_1$ are specified in Theorem \ref{th:old_overfare_forms}. Thus $\alpha_1+\overline{\beta}_1$ is only determined up to a harmonic measure $d\omega \in \mathcal{A}_{\mathrm{harm}}(\riem_1)$, where the missing data (3) is required to determine a unique form.  
  
  Instead of giving the data in the form (3), we specify it using the third condition in Definition \ref{de:strongly_compatible}.  In this form, this data can be seen to also be specified by the catalying form $\zeta$. To see this, observe that weakly compatible forms $\alpha_k+\overline{\beta}_k$ with respect to the catalyzing form $\zeta$ are also weakly compatible with respect to $\zeta'$ if and only if $\zeta -\zeta' \in \mathcal{A}^{\mathrm{pe}}_{\mathrm{harm}}(\mathscr{R})$. By Theorem \ref{th:piecewise_exact_characterize} there is a $d\omega_1 \in \mathcal{A}_{\mathrm{harm}}(\riem_1)$ such that $\zeta-\zeta' =\mathbf{S}_1^{\mathrm{h}} d\omega_1$. Thus the remaining data is exactly specified by choosing a specific catalyzing form if one includes the third compatibility condition.  
  
  Incidentally, this also shows that compatible forms exist in the case that both $\riem_1$ and $\riem_2$ are connected. It is possible to extend this result by extending Theorem \ref{th:piecewise_exact_characterize}.  However we will prove existence in a different way. 
  
  The condition for compatibility is quite natural. Assuming that $\riem_2$ is connected, a reasonable definition for the overfare of $\alpha_2$ via the catalyzing form $\zeta$ is 
  \[  \alpha_1 = \mathbf{O}^{\mathrm{e}}_{2,1}\left[ \alpha_2 - \mathbf{R}_2^{\mathrm{h}} \zeta \right] + \mathbf{R}_1^{\mathrm{h}}\zeta,  \]
  in light of Proposition \ref{pr:extendible_forms_overfare_themselves}. 
  We will prove in Section \ref{se:scattering} that this $\alpha_1$ is indeed compatible, and in particular this proves existence of compatible forms in the case that only $\riem_2$ is connected. 
  
  We conclude with two observations on perturbations of compatible triples $(\alpha_1,\alpha_2,\zeta)$.   Given two forms $\alpha_k \in \mathcal{A}_{\mathrm{harm}}(\riem_k)$ with the same boundary values, there are many catalyzing one-forms $\zeta \in\mathcal{A}_{\mathrm{harm}}(\mathscr{R})$.
  \begin{proposition}  \label{pr:perturb_catalyst_by_pe}
   Let $\alpha_k \in \mathcal{A}_{\mathrm{harm}}(\riem_k)$, $k=1,2$ satisfy
   \[  \mathbf{O}(\partial_m {\riem_1},\partial_m \riem_2) [\alpha_1]=[\alpha_2]  \]
   for $k=1,2$.  There exists a {weakly }catalyzing one-form $\zeta \in \mathcal{A}_{\mathrm{harm}}(\mathscr{R})$ such that $(\alpha_1,\alpha_2,\zeta)$ is a weakly compatible triple. Furthemore, given any pair $\zeta,\zeta'$ of one-forms catalyzing the pair $\alpha_1,\alpha_2$, we have that $\zeta - \zeta'$ is piecewise exact.
  \end{proposition}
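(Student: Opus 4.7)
The plan is to prove uniqueness by a short subtraction argument, and to produce $\zeta$ by realizing it as the harmonic representative of a cohomology class on $\mathscr{R}$ whose restrictions to $\riem_1$ and $\riem_2$ agree with the classes $[\alpha_1]$ and $[\alpha_2]$.

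Uniqueness is immediate: if $\zeta,\zeta'\in\mathcal{A}_{\mathrm{harm}}(\mathscr{R})$ both weakly catalyze $(\alpha_1,\alpha_2)$, and I write $\alpha_k-\mathbf{R}_k^{\mathrm{h}}\zeta=df_k$ and $\alpha_k-\mathbf{R}_k^{\mathrm{h}}\zeta'=dg_k$ on $\riem_k$, then subtracting gives $\mathbf{R}_k^{\mathrm{h}}(\zeta-\zeta')=d(g_k-f_k)$, which is exact on each $\riem_k$, so $\zeta-\zeta'\in\mathcal{A}^{\mathrm{pe}}_{\mathrm{harm}}(\mathscr{R})$ by definition.

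For existence, the key input is that the hypothesis $\mathbf{O}(\partial_m\riem_1,\partial_m\riem_2)[\alpha_1]=[\alpha_2]$ together with Proposition \ref{pr:overfare_preserves_periods} yields the boundary period identity $\int_{\partial_m\riem_1}[\alpha_1]=-\int_{\partial_m\riem_2}[\alpha_2]$ for each $m=1,\ldots,n$; since $\partial_m\riem_1$ and $\partial_m\riem_2$ denote the common set $\Gamma_m$ endowed with opposite boundary orientations, this says $\alpha_1$ and $\alpha_2$ assign a single common period to the oriented cycle $\Gamma_m\subset\mathscr{R}$. I would then choose a basis of $H_1(\mathscr{R};\mathbb{Z})$ of cardinality $2g=2g_1+2g_2+2n-2$ (assuming both $\riem_k$ connected; the disconnected case is handled componentwise) consisting of the standard handle generators $\{a_i^{(k)},b_i^{(k)}\}$ of each $\riem_k$, the $n-1$ independent boundary cycles $\Gamma_1,\ldots,\Gamma_{n-1}$, and $n-1$ crossing cycles passing between $\riem_1$ and $\riem_2$. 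Since the Hodge theorem on the compact surface $\mathscr{R}$ identifies $\mathcal{A}_{\mathrm{harm}}(\mathscr{R})$ with $H^1(\mathscr{R};\mathbb{C})$ via the period map, I can define $\zeta$ by prescribing the periods of $\alpha_k$ on the handle cycles of $\riem_k$, the common boundary period identified above on each $\Gamma_m$, and arbitrary (e.g.\ zero) values on the crossing cycles. By construction $\alpha_k-\mathbf{R}_k^{\mathrm{h}}\zeta$ has vanishing period on every generator of $H_1(\riem_k;\mathbb{Z})$ (the orientation flip on $\partial_m\riem_2$ being compensated exactly by the sign in the identity above), hence is exact on $\riem_k$, so $\zeta$ weakly catalyzes $(\alpha_1,\alpha_2)$ as required.

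The main obstacle is that $\Gamma$ consists of non-rectifiable quasicircles, so the cycles $\Gamma_m$ themselves cannot be directly integrated over. This is resolved exactly as in Lemma \ref{le:anchor_lemma_one}: each boundary period is represented by a limit of integrals along analytic curves pushed into a collar neighbourhood of $\Gamma_m$ via the isotopy of any collar chart, and homotopy invariance of the periods of the smooth form $\zeta$ on $\mathscr{R}$ then matches the intrinsic boundary periods of $[\alpha_k]\in\mathcal{H}'(\partial_m\riem_k)$ already developed in the paper.
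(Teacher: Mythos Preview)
Your proof is correct and follows essentially the same approach as the paper: both use the subtraction argument for uniqueness, and for existence both invoke the Hodge theorem on $\mathscr{R}$ to produce a harmonic form with prescribed periods on a homology basis adapted to $\riem_1$ and $\riem_2$, using Proposition~\ref{pr:overfare_preserves_periods} to ensure the boundary-period compatibility condition needed for such a form to exist. Your version is slightly more explicit about the crossing cycles and the limiting-curve interpretation of the boundary periods, but these are exactly the conventions the paper has already set up and uses implicitly.
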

  \begin{proof}
   To prove existence, we need only choose any harmonic one-form on $\mathscr{R}$ whose periods agree with those of $\alpha_k$ on
   \[   \{ \gamma_1^k,\ldots,\gamma_{2g_k}^k, \partial_1 \riem_k,\ldots,\partial_{n-1} \riem_k \}  \]
   for $k=1,2$. This is possible because of the fact that
   \[  \int_{\partial_k \riem_1} [\alpha_1] = -\int_{\partial_k \riem_2} [\alpha_2] \]
   which follows from the condition $\mathbf{O}(\partial_m {\riem_1},\partial_m \riem_2) [\alpha_1]=[\alpha_2]$.
   
   Now let $\zeta,\zeta' \in \mathcal{A}_{\mathrm{harm}}(\mathscr{R})$ be catalyzing for the pair $\alpha_1,\alpha_2$.  Then 
   \[  \mathbf{R}^{\mathrm{h}}_k\zeta - \mathbf{R}^{\mathrm{h}}_k \zeta ' = (\mathbf{R}^{\mathrm{h}}_k\zeta- \alpha_k) - (\mathbf{R}^{\mathrm{h}}_k\zeta'- \alpha_k) \in \mathcal{A}_{\mathrm{harm}}^{\mathrm{e}}(\riem_k) \]
   for $k=1,2$, which completes the proof.
  \end{proof}
  
  Furthermore, 
  \begin{proposition}  \label{pr:perturb_both_by_harmonic_measure}
   {Assume that either $\riem_1$ or $\riem_2$ is connected. } 
   Let $\alpha_k \in \mathcal{A}_{\mathrm{harm}}(\riem_k)$ for $k=1,2$ be compatible with respect to $\zeta \in \mathcal{A}_{\mathrm{harm}}(\mathscr{R})$. Let $\omega_1$ and $\omega_2$ be harmonic functions which extend continuously to the boundary and are constant there.  Assume further that $\mathbf{O}_{1,2} \omega_1=\omega_2$. Then $\alpha_1 + d\omega_1$ and $\alpha_2 +d \omega_2$ are compatible with respect to $\zeta$. 
  \end{proposition}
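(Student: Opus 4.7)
The plan is to verify all three conditions of Definition \ref{de:strongly_compatible} for the triple $(\alpha_1 + d\omega_1, \alpha_2 + d\omega_2, \zeta)$ by exploiting linearity of every operator involved and handling each term separately. The point is that the perturbations $d\omega_k$ are, from the perspective of (1) and (2), essentially invisible, so the only non-trivial check is condition (3), and this is precisely where Theorem \ref{th:S_on_harmonic_measures} enters and where the hypothesis that $\riem_1$ or $\riem_2$ is connected is used.

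For condition (1), I would use that the partial overfare $\mathbf{O}(\partial_m \riem_1,\partial_m \riem_2)$ is a bounded linear operator on $\mathcal{H}'(\partial_m \riem_k)$, so it suffices to show $\mathbf{O}(\partial_m \riem_1,\partial_m \riem_2)[d\omega_1] = [d\omega_2]$ for each $m$ and then combine with the assumed relation for $\alpha_1,\alpha_2$. Now $\omega_k$ is harmonic on a collar neighbourhood of $\partial_m \riem_k$ with CNT boundary values equal to a constant on $\partial_m \riem_k$ (by the boundary-value hypothesis on $\omega_k$). Thus the primitive of $d\omega_k$ has constant CNT boundary values, and by Definition \ref{de:form_Dirichlet_equivalence_relation} we have $[d\omega_k]=0$ in $\mathcal{H}'(\partial_m \riem_k)$ for every $m$; see also Proposition \ref{pr:harmonic_measures_zero}. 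Consequently both sides of the claimed overfare equation reduce to the original equation for $[\alpha_1]$ and $[\alpha_2]$, which holds by hypothesis.

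For condition (2), since $\alpha_k - \mathbf{R}^{\mathrm{h}}_k \zeta \in \mathcal{A}^{\mathrm{e}}_{\mathrm{harm}}(\riem_k)$ by assumption and $d\omega_k$ is trivially exact on $\riem_k$, the sum $(\alpha_k + d\omega_k) - \mathbf{R}^{\mathrm{h}}_k \zeta$ lies in $\mathcal{A}^{\mathrm{e}}_{\mathrm{harm}}(\riem_k)$. This step is entirely formal.

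The main obstacle, and the heart of the proof, is condition (3). By linearity of $\mathbf{S}^{\mathrm{h}}_k$,
\begin{equation*}
\mathbf{S}^{\mathrm{h}}_1(\alpha_1 + d\omega_1) + \mathbf{S}^{\mathrm{h}}_2(\alpha_2 + d\omega_2)
= \bigl( \mathbf{S}^{\mathrm{h}}_1 \alpha_1 + \mathbf{S}^{\mathrm{h}}_2 \alpha_2 \bigr)
+ \bigl( \mathbf{S}^{\mathrm{h}}_1 d\omega_1 + \mathbf{S}^{\mathrm{h}}_2 d\omega_2 \bigr),
\end{equation*}
and the first bracket equals $\zeta$ by the assumed compatibility of $(\alpha_1,\alpha_2,\zeta)$. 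It remains to show the second bracket vanishes. But $\omega_k$ has constant boundary values on each boundary curve, and by hypothesis $\mathbf{O}_{1,2}\omega_1 = \omega_2$, so Theorem \ref{th:S_on_harmonic_measures} applies (this is exactly where we invoke the standing hypothesis that $\riem_1$ or $\riem_2$ is connected) and yields $\mathbf{S}^{\mathrm{h}}_1 d\omega_1 = - \mathbf{S}^{\mathrm{h}}_2 d\omega_2$. Combining these three verifications completes the argument.
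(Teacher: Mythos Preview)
Your proof is correct and follows essentially the same route as the paper: Proposition \ref{pr:harmonic_measures_zero} for condition (1), exactness of $d\omega_k$ for condition (2), and Theorem \ref{th:S_on_harmonic_measures} for condition (3). You supply slightly more detail than the paper does, but the structure and key ingredients are identical.
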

  \begin{proof}
   By Proposition \ref{pr:harmonic_measures_zero} condition (1) of compatibility is satisfied by $\alpha_1 + d\omega_1$ and $\alpha_2 + d\omega_2$. The fact that (2) continues to be satisfied follows immediately from the fact that $d\omega_1$ and $d\omega_2$ are exact.
   Finally, observe that by Theorem \ref{th:S_on_harmonic_measures} 
   \[ \mathbf{S}^{\mathrm{h}}_1 (\alpha_1 + d\omega_1) + \mathbf{S}^{\mathrm{h}}_2 (\alpha_2 + d\omega_2) = \mathbf{S}^{\mathrm{h}}_1 \alpha_1 + \mathbf{S}^{\mathrm{h}}_2 \alpha_2 = \zeta,  \]
   completing the proof.
  \end{proof}
 \end{subsection}
\end{section} 
\begin{section}{Schiffer operators: cohomology and index theorems} \label{se:index_cohomology}
\begin{subsection}{Assumptions throughout this section} In this section we will once again use the assumptions that were in force in Subsection \ref{se:assumptions_scattering_section}.  Additional hypotheses are added to the statement of each theorem where necessary.


  

\end{subsection}
\begin{subsection}{About this Section}
 This section contains geometric and algebraic results about the Schiffer operators introduced in Subsection \ref{subsec:defSchiffer}. We give a characterization of the image and kernel of $\mathbf{T}_{1,2}$, and use this to prove an index theorem for this operator in the case that $\riem_1$ and $\riem_2$ are connected, and in the case that $\riem_2$ is of genus $g$ with $n$ boundary curves capped by $n$ simply connected domains.  This index theorem relates the conformally invariant index to purely topological quantities.
 
 We proceed as follows. First, we investigate the effect of the Schiffer operators $\mathbf{T}_{j,k}$ and $\mathbf{S}_k$ on cohomology in Section \ref{se:Schiffer_cohomology}. The main tool is the ``overfared'' jump formula, which is used to prove Theorem \ref{th:Schiffer_cohomology} which says that certain linear combinations of the Schiffer operators produce exact forms. Together with the fact that $\mathbf{S}_k \mathbf{R}_k$ is an isomorphism, this completely characterizes the effect of $\mathbf{T}_{j,k}$ on cohomology classes. In Section \ref{se:Schiffer_kernel_image} we determine the kernel and image of the operator $\mathbf{T}_{1,2}$. These results also play a central role in in the construction of the generalized period matrix in Section \ref{se:period_mapping}. Once this is accomplished, we prove the index theorem in Section \ref{se:Schiffer_kernel_image}. 
\end{subsection}
\begin{subsection}{Schiffer operators and cohomology}  \label{se:Schiffer_cohomology}
 Our goal here is to investigate the kernels, images and even Fredholm indices of Schiffer operators, and their interaction with the cohomology classes of $\riem_1$ and $\riem_2$. 

 \begin{theorem}  \label{th:S_kernel_range}   The Schiffer operators $\mathbf{R}_k$ and $\mathbf{S}_k$ satisfy
  \begin{align*} 
    \mathrm{Ker}(\mathbf{R}_k) & = \{0\} \\
    \mathrm{Im}(\mathbf{S}_k) & = \mathcal{A}(\mathscr{R}) \\
    \mathrm{Ker}(\mathbf{S}_k) & = [\mathbf{R}_k \mathcal{A}(\mathscr{R}) ]^\perp 
  \end{align*}
  for $k=1,2$.  The image of $\mathbf{R}_k$ is a $g$-dimensional subspace. 
  The corresponding statements hold for the complex conjugates.
 \end{theorem}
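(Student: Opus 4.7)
The plan is to exploit the adjoint identity $\mathbf{S}_k = \mathbf{R}_k^*$ from Theorem \ref{th:adjoint_identities} together with the finite-dimensionality of $\mathcal{A}(\mathscr{R})$, so that the entire theorem reduces to routine Hilbert space facts once $\mathrm{Ker}(\mathbf{R}_k) = \{0\}$ is established. I would first dispose of the genus zero case as a triviality: in that case $\mathcal{A}(\mathscr{R}) = \{0\}$ by dimension count, and by Example \ref{ex:sphere_kernels} also $K_\mathscr{R} = 0$, so $\mathbf{S}_k = 0$, and all three identities hold trivially. Henceforth I assume $g > 0$.

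The first step is $\mathrm{Ker}(\mathbf{R}_k) = \{0\}$. If $\alpha \in \mathcal{A}(\mathscr{R})$ satisfies $\left.\alpha\right|_{\riem_k} = 0$, then since $\riem_k$ is a non-empty open subset of the connected compact Riemann surface $\mathscr{R}$, the identity theorem for holomorphic one-forms forces $\alpha = 0$ on all of $\mathscr{R}$. This also immediately gives that $\mathrm{Im}(\mathbf{R}_k)$ is a $g$-dimensional subspace, since $\dim_{\mathbb{C}} \mathcal{A}(\mathscr{R}) = g$ and $\mathbf{R}_k$ is injective. In particular, $\mathrm{Im}(\mathbf{R}_k)$ is a closed subspace of $\mathcal{A}(\riem_k)$ because it is finite-dimensional.

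The remaining two identities are then purely abstract. By Theorem \ref{th:adjoint_identities}, $\mathbf{S}_k = \mathbf{R}_k^*$. The general Hilbert space identity $\mathrm{Ker}(T^*) = \mathrm{Im}(T)^{\perp}$ applied to $T = \mathbf{R}_k$ yields
\[
  \mathrm{Ker}(\mathbf{S}_k) = \mathrm{Ker}(\mathbf{R}_k^*) = \mathrm{Im}(\mathbf{R}_k)^{\perp} = [\mathbf{R}_k \mathcal{A}(\mathscr{R})]^{\perp}.
\]
Because $\mathrm{Im}(\mathbf{R}_k)$ is closed, the closed range theorem (or direct orthogonal decomposition) gives $\mathrm{Im}(\mathbf{R}_k^*) = \mathrm{Ker}(\mathbf{R}_k)^{\perp}$, and combining with the first step,
\[
  \mathrm{Im}(\mathbf{S}_k) = \mathrm{Im}(\mathbf{R}_k^*) = \mathrm{Ker}(\mathbf{R}_k)^{\perp} = \{0\}^{\perp} = \mathcal{A}(\mathscr{R}),
\]
the last ambient space being $\mathcal{A}(\mathscr{R})$ itself. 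The statements for the complex conjugate operators follow by taking complex conjugates throughout.

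There is no real obstacle here: the only nontrivial input is the identity theorem for $\mathbf{R}_k$, and everything else is extracted mechanically from $\mathbf{S}_k = \mathbf{R}_k^*$ plus the observation that the range of $\mathbf{R}_k$ is automatically closed by finite-dimensionality. The somewhat substantive fact in the background is the adjoint identity itself, but that is already at our disposal from Theorem \ref{th:adjoint_identities}.
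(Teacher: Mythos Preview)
Your proof is correct and follows essentially the same approach as the paper: analytic continuation for injectivity of $\mathbf{R}_k$, then the adjoint identity $\mathbf{S}_k=\mathbf{R}_k^*$ from Theorem~\ref{th:adjoint_identities} together with standard Hilbert space facts. Your explicit handling of the genus zero case (where Theorem~\ref{th:adjoint_identities} is not stated) and your remark that finite-dimensionality of $\mathrm{Im}(\mathbf{R}_k)$ makes it closed are useful clarifications that the paper leaves implicit.
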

 \begin{proof} 
  The first statement is proven using analytic continuation.  The second statement follows from the first, and {Theorem \ref{th:adjoint_identities} which in turn yields that $\text{Im}(\mathbf{S}_k) = [\text{Ker} \, \mathbf{R}_k]^\perp$.}  The remaining statements are elementary.
 \end{proof}
 
 This yields the following:
 \begin{theorem}   \label{th:S_an_isomorphism} We have
  \begin{align*}
      \mathbf{S}_k \mathbf{R}_k : \mathcal{A}(\mathscr{R}) & \rightarrow \mathcal{A}(\mathscr{R}) \\
      \overline{\mathbf{S}}_k \overline{\mathbf{R}}_k : \overline{\mathcal{A}(\mathscr{R})} & \rightarrow \overline{\mathcal{A}(\mathscr{R})}
  \end{align*}
  are isomorphisms.
 \end{theorem}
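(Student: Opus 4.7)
The plan is to reduce the statement to a standard observation about positive self-adjoint operators, using the adjoint identity $\mathbf{S}_k = \mathbf{R}_k^*$ from Theorem~\ref{th:adjoint_identities} together with the kernel information from Theorem~\ref{th:S_kernel_range}. I would first dispense with the genus zero case: if $g = 0$, then by the uniformization theorem $\mathscr{R} \cong \sphere$, so $\mathcal{A}(\mathscr{R}) = \{0\}$ (and $\overline{\mathcal{A}(\mathscr{R})} = \{0\}$), and the claim is vacuous.

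For the main case $g > 0$, the key observations are: (i) the space $\mathcal{A}(\mathscr{R})$ is finite dimensional of dimension $g$, since $\mathscr{R}$ is compact; (ii) by Theorem~\ref{th:adjoint_identities} we have $\mathbf{R}_k^* = \mathbf{S}_k$, so
\[
\mathbf{S}_k \mathbf{R}_k = \mathbf{R}_k^* \mathbf{R}_k
\]
is self-adjoint and positive semi-definite on $\mathcal{A}(\mathscr{R})$; (iii) by Theorem~\ref{th:S_kernel_range}, $\operatorname{Ker}(\mathbf{R}_k) = \{0\}$. Combining these, if $\mathbf{S}_k \mathbf{R}_k \alpha = 0$ then $\langle \mathbf{R}_k^* \mathbf{R}_k \alpha, \alpha \rangle = \|\mathbf{R}_k \alpha\|^2 = 0$, forcing $\mathbf{R}_k \alpha = 0$ and hence $\alpha = 0$. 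So $\mathbf{S}_k \mathbf{R}_k$ is injective on the finite-dimensional space $\mathcal{A}(\mathscr{R})$, and therefore an isomorphism onto itself.

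The conjugate statement $\overline{\mathbf{S}}_k \overline{\mathbf{R}}_k$ follows by taking complex conjugates of the above argument, or equivalently by applying the same reasoning with $\overline{\mathbf{R}}_k^* = \overline{\mathbf{S}}_k$ and $\operatorname{Ker}(\overline{\mathbf{R}}_k) = \{0\}$, both of which are the conjugate versions recorded in Theorems~\ref{th:adjoint_identities} and \ref{th:S_kernel_range}.

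There is essentially no obstacle here: the entire content is packaged in the two previously established facts (kernel triviality of $\mathbf{R}_k$ and the adjoint identity $\mathbf{S}_k = \mathbf{R}_k^*$), together with the finite-dimensionality of $\mathcal{A}(\mathscr{R})$. The only minor point requiring attention is the genus zero case, which is handled separately but trivially.
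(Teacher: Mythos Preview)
Your proof is correct and slightly more streamlined than the paper's. The paper argues surjectivity first: it uses $\mathrm{Im}(\mathbf{S}_k) = \mathcal{A}(\mathscr{R})$ and the kernel description $\mathrm{Ker}(\mathbf{S}_k) = [\mathbf{R}_k \mathcal{A}(\mathscr{R})]^\perp$ from Theorem~\ref{th:S_kernel_range} to show that any $y \in \mathcal{A}(\mathscr{R})$ is hit by $\mathbf{S}_k$ applied to something in $\mathbf{R}_k \mathcal{A}(\mathscr{R})$, and then deduces injectivity from self-adjointness (equivalently, rank-nullity). You instead go straight to injectivity by recognizing $\mathbf{S}_k \mathbf{R}_k = \mathbf{R}_k^* \mathbf{R}_k$ as positive semi-definite and invoking $\mathrm{Ker}(\mathbf{R}_k) = \{0\}$, then use finite-dimensionality for surjectivity. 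Both rest on the same two ingredients (Theorems~\ref{th:adjoint_identities} and~\ref{th:S_kernel_range}); your route avoids the orthogonal decomposition of $\mathcal{A}(\riem_k)$ and is arguably the more transparent packaging. Your explicit handling of the genus zero case is also a nice touch, since the adjoint identity $\mathbf{R}_k^* = \mathbf{S}_k$ is only stated for $g > 0$.
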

 \begin{proof} It is enough to prove the first claim. 
  By Theorem \ref{th:S_kernel_range}, we have that $\mathbf{S}_k$ is surjective.  Thus for any $y \in \mathcal{A}(\mathscr{R})$ there is a $u \in \mathcal{A}(\riem_k)$ such that $\mathbf{S}_k u = y$.  Writing $u=v+w$ in terms of the orthogonal decomposition $\mathcal{A}(\riem_k) = [\mathbf{R}_k\mathcal{A}(\mathscr{R})]^\perp \oplus [\mathbf{R}_k \mathcal{A}(\mathscr{R})]$, since by Theorem \ref{th:S_kernel_range} we also have that $\mathbf{S}_k v=0$, we see that 
  \[ y = \mathbf{S}_k u = \mathbf{S}_k w   \]
  so $y \in \mathrm{Im}(\mathbf{S}_k \mathbf{R}_k),$ and thus $\mathbf{S}_k \mathbf{R}_k$ is surjective. Now since $\text{Ker}(\mathbf{S}_k \mathbf{R}_k )=(\mathrm{Im} (\mathbf{S}_k \mathbf{R}_k ))^{\perp}= \mathcal{A}(\mathscr{R}) ^{\perp}=0 $, we also have that $\mathbf{S}_k \mathbf{R}_k$ is injective.
  
 \end{proof}
  In what follows, we will apply the identities of Section \ref{se:Schiffer_Cauchy} to investigate how the Schiffer operators affect the cohomology classes of the one-forms to which they are applied.  The spaces
  \[  [\mathbf{R}_k \mathcal{A}(\mathscr{R})]^\perp = \{   {\alpha} \in \mathcal{A}(\riem_k) : (\alpha,{R}_k \beta) =0 \ \ \ \  \forall \beta \in \mathcal{A}(\mathscr{R}) \}= 
  [\mathbf{R}_k \mathcal{A}(\mathscr{R})]^\perp   \]
  and their complex conjugates
 \[   [\overline{\mathbf{R}}_k \overline{\mathcal{A}(\mathscr{R})}]^\perp  
    =  \{   \overline{\alpha} \in \overline{\mathcal{A}(\riem_k)} : (\overline{\alpha},{R}_k \overline{\beta}) =0 \ \ \ \  \forall \overline{\beta} \in \overline{\mathcal{A}(\mathscr{R})} \}  \]
  introduced in the proof above will play an important role.    
  \begin{remark}
    Throughout,  $[\mathbf{R}_k \mathcal{A}(\mathscr{R})]^\perp$ will always refer to the orthogonal complement in $\mathcal{A}(\riem_k)$ rather that in $\mathcal{A}_{\mathrm{harm}}(\riem_k)$, and similarly for $[\overline{\mathbf{R}}_k \overline{\mathcal{A}(\mathscr{R})}]^\perp$.  
  \end{remark}

     By a capped surface, we mean the special case that $\riem_1$ consists of $n$ simply-connected domains.   We say that $\riem_2$ is capped by $\riem_1$. 
    
    Recalling  $\mathbf{O}^\mathrm{e}_{2,1}$ from {Definition \ref{def:exact overfare}} and the projection operator \[  \overline{\mathbf{P}}_{1}: \mathcal{A}_{\mathrm{harm}}(\riem_1) \rightarrow \overline{\mathcal{A}(\riem_1)}, \] we have the following theorem of   M. Shirazi \cite{Shirazi_thesis,Schippers_Staubach_Shirazi}. 
  \begin{theorem} \label{th:Mohammad_isomorphism}
    Assume that $\riem_2$ is capped by $\riem_1$.  Then 
   \[\mathbf{T}_{1,2}([\overline{\mathbf{R}}_1 \overline{\mathcal{A}(\mathscr{R})}]^\perp  ) = 
   \mathcal{A}^\mathrm{e}(\riem_2) \]
   and 
   $\mathbf{T}_{1,2}:[\overline{\mathbf{R}}_1 \overline{\mathcal{A}(\mathscr{R})}]^\perp   \rightarrow \mathcal{A}^\mathrm{e}(\riem_2)$ is an isomorphism with inverse $-\overline{\mathbf{P}}_1 \mathbf{O}^\mathrm{e}_{2,1}$. In particular, 
   \[ \mathrm{Im}(\overline{\mathbf{P}}_1\mathbf{O}_{2,1}^{\mathrm e}) = [\overline{\mathbf{R}}_1 \overline{\mathcal{A}(\mathscr{R})}]^\perp.  \]
  \end{theorem}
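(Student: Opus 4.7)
The plan is to show that $\mathbf{T}_{1,2}$ restricts to a bijection from $[\overline{\mathbf{R}}_1\overline{\mathcal{A}(\mathscr{R})}]^\perp$ onto $\mathcal{A}^{\mathrm{e}}(\riem_2)$ with inverse $T := -\overline{\mathbf{P}}_1 \mathbf{O}^{\mathrm{e}}_{2,1}$. By Theorem \ref{th:S_kernel_range}, $[\overline{\mathbf{R}}_1\overline{\mathcal{A}(\mathscr{R})}]^\perp = \mathrm{Ker}(\overline{\mathbf{S}}_1)$. Two consequences of the capping hypothesis are decisive: since each cap is simply connected, every $\overline{\alpha} \in \overline{\mathcal{A}(\riem_1)}$ has the form $\overline{\partial}h$ for some $h \in \mathcal{D}_{\mathrm{harm}}(\riem_1)$; and since $\mathscr{R}$ is connected, $\riem_2$ is automatically connected, so $\mathbf{O}^{\mathrm{e}}_{2,1}$ is well-defined and Theorems \ref{th:Overfare_with_correction_functions}(2) and \ref{th:J_same_both_sides}(2) apply with $\riem_2$ as the originating surface.

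The forward inclusion $\mathbf{T}_{1,2}(\mathrm{Ker}(\overline{\mathbf{S}}_1)) \subseteq \mathcal{A}^{\mathrm{e}}(\riem_2)$ is immediate from Theorem \ref{th:jump_derivatives}: for $\overline{\alpha} = \overline{\partial}h \in \mathrm{Ker}(\overline{\mathbf{S}}_1)$, $\overline{\partial}\mathbf{J}^{q}_{1,2}h = \overline{\mathbf{S}}_1\overline{\alpha} = 0$, so $\mathbf{J}^{q}_{1,2}h$ is holomorphic on $\riem_2$ and $\mathbf{T}_{1,2}\overline{\alpha} = d\mathbf{J}^{q}_{1,2}h$ is exact. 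For the candidate inverse $T\beta = -\overline{\partial}G$ with $G = \mathbf{O}_{2,1}F$ and $dF = \beta$, I verify $T\beta \in \mathrm{Ker}(\overline{\mathbf{S}}_1)$ via pairing: for any $\gamma \in \mathcal{A}(\mathscr{R})$, since $\partial G \wedge \gamma = 0$ on a one-complex-dimensional manifold and $d\gamma = 0$, Stokes (taken through the Anchor Lemmas as a limit of approximating analytic curves) yields $(-\overline{\partial}G, \overline{\gamma})_{\riem_1} = i\int_{\partial\riem_1} G\gamma$. Choosing a common harmonic extension on a doubly-connected collar of $\partial\riem$ and applying Stokes on the shrinking annular strip, the measure-zero property of quasicircles gives $\int_{\partial\riem_1} G\gamma = -\int_{\partial\riem_2} F\gamma$, and the latter vanishes by Stokes on $\riem_2$ since $F\gamma$ is closed.

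For the left inverse $T\mathbf{T}_{1,2}\overline{\alpha} = \overline{\alpha}$, I apply $\mathbf{O}^{\mathrm{e}}_{2,1}$ to $\mathbf{T}_{1,2}\overline{\alpha} = d\mathbf{J}^{q}_{1,2}h$ and invoke Theorem \ref{th:Overfare_with_correction_functions}(2) to obtain $\mathbf{O}^{\mathrm{e}}_{2,1}\mathbf{T}_{1,2}\overline{\alpha} = d(\mathbf{J}^{q}_{1,1}h - h)$. By Theorem \ref{th:jump_derivatives} together with $\overline{\mathbf{S}}_1\overline{\alpha} = 0$, one has $d\mathbf{J}^{q}_{1,1}h = \partial h + \mathbf{T}_{1,1}\overline{\alpha}$, whence $d(\mathbf{J}^{q}_{1,1}h - h) = \mathbf{T}_{1,1}\overline{\alpha} - \overline{\alpha}$. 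Applying $-\overline{\mathbf{P}}_1$ annihilates the holomorphic term $\mathbf{T}_{1,1}\overline{\alpha}$ and returns $\overline{\alpha}$.

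The main obstacle is the right inverse $\mathbf{T}_{1,2}T\beta = \beta$, since $G$ is only harmonic on the caps and $\mathbf{J}^{q}_{1,2}G$ cannot be computed directly from its defining boundary integral. The resolution is to transfer the calculation to $\riem_2$, where $F$ is holomorphic. Since $\overline{\mathbf{S}}_1\overline{\partial}G = 0$ from the previous paragraph, $\mathbf{J}^{q}_{1,2}G$ is holomorphic on $\riem_2$ and $\mathbf{T}_{1,2}\overline{\partial}G = d\mathbf{J}^{q}_{1,2}G$. Theorem \ref{th:J_same_both_sides}(2), applicable since $\riem_2$ is connected, gives $\dot{\mathbf{J}}_2\dot{F} = -\dot{\mathbf{J}}_1\dot{G}$, whence $d\mathbf{J}^{q}_{2,2}F = -d\mathbf{J}^{q}_{1,2}G$ on $\riem_2$. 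By Theorem \ref{th:jump_derivatives} applied to holomorphic $F$ (so $\overline{\partial}F = 0$), $d\mathbf{J}^{q}_{2,2}F = \partial F = \beta$, so $d\mathbf{J}^{q}_{1,2}G = -\beta$ and $\mathbf{T}_{1,2}T\beta = -\mathbf{T}_{1,2}\overline{\partial}G = -d\mathbf{J}^{q}_{1,2}G = \beta$. The final identification $\mathrm{Im}(-\overline{\mathbf{P}}_1\mathbf{O}^{\mathrm{e}}_{2,1}) = [\overline{\mathbf{R}}_1\overline{\mathcal{A}(\mathscr{R})}]^\perp$ follows at once from the established bijection.
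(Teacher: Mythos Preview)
Your proof is correct and follows essentially the same route as the paper: both arguments hinge on Theorem~\ref{th:jump_derivatives} for the forward inclusion, Theorem~\ref{th:Overfare_with_correction_functions}(2) for the left inverse, and the combination of Theorem~\ref{th:J_same_both_sides}(2) with Theorem~\ref{th:jump_on_holomorphic} (equivalently, Theorem~\ref{th:jump_derivatives} applied to holomorphic $F$) for surjectivity/the right inverse.

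The one organizational difference is your second paragraph, where you verify $T\beta \in \mathrm{Ker}(\overline{\mathbf{S}}_1)$ by a direct Stokes pairing against $\overline{\gamma} \in \overline{\mathcal{A}(\mathscr{R})}$ before computing the right inverse. The paper instead obtains this fact as an immediate by-product of the surjectivity calculation: once $d\mathbf{J}^q_{1,2}G = -\beta$ is established and $\beta$ is holomorphic, one reads off $\overline{\mathbf{R}}_2\overline{\mathbf{S}}_1\overline{\partial}G = 0$, hence $\overline{\mathbf{S}}_1\overline{\partial}G = 0$ by analytic continuation. This is more economical and avoids the somewhat delicate limiting-Stokes argument across the quasicircle that you sketch (which is correct in spirit but effectively re-derives the mechanism behind Theorem~\ref{th:J_same_both_sides}). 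In fact your own step~4 already contains all the ingredients to conclude $\overline{\mathbf{S}}_1\overline{\partial}G = 0$ without the separate pairing argument, so you could streamline by dropping step~2 and extracting this conclusion from step~4 instead.
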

  \begin{proof}
   We show that $\mathbf{T}_{1,2}([\overline{\mathbf{R}}_1 \overline{\mathcal{A}(\mathscr{R})}]^\perp) \subseteq  
   \mathcal{A}^\mathrm{e}(\riem_2)$.  Since each connected component of $\riem_1$ is simply connected, for any $\overline{\alpha} \in [\overline{\mathbf{R}}_1 \overline{\mathcal{A}(\mathscr{R})}]^\perp$, there is an $H \in \overline{\mathcal{D}(\riem_1)}$ such that $\overline{\alpha} =\overline{\partial} H$. Since by Theorem \ref{th:S_kernel_range} $\overline{\mathbf{S}}_1 \overline{\alpha} =0$, Theorem   \ref{th:jump_derivatives} yields that
   \[ \mathbf{T}_{1,2} \overline{\alpha} = \mathbf{T}_{1,2} \overline{\alpha}  + \overline{\mathbf{S}}_1 \overline{\alpha} = d \mathbf{J}^q_{1,2} H \in \mathcal{A}^\mathrm{e}(\riem_2).   \]
   Note that the computation is valid for any fixed value of $q$.
   
   To show that it is onto, let ${\beta} \in \mathcal{A}^\mathrm{e}(\riem_2)$, so that there is some $h \in \mathcal{D}(\riem_2)$ such that $\partial h = \beta$. 
   Setting $H= - \mathbf{O}_{2,1} h$, we have by Theorem \ref{th:J_same_both_sides} 
   \[  \dot{\mathbf{J}}_{1,2} \dot{\mathbf{O}}_{1,2} H = - \dot{\mathbf{J}}_{1,2} \dot{\mathbf{O}}_{2,1} h = + \dot{\mathbf{J}}_{2,2}  h = \dot{h}. \]
    where we have used Theorem \ref{th:jump_on_holomorphic}.  
   Since $\beta = \partial h = dh$, Theorem
   \ref{th:jump_derivatives} yields
   \[  \beta = d \dot{\mathbf{J}}_{1,2} \dot{H} = \mathbf{T}_{1,2} \overline{\partial} H + \overline{\mathbf{R}}_2 \overline{\mathbf{S}}_1 \overline{\partial} H.  \]
   Since the left hand side is holomorphic, $\overline{\mathbf{R}}_2 \overline{\mathbf{S}}_1 \overline{\partial} H=0$, so by Theorem \ref{th:S_kernel_range} $\overline{\partial} H \in
  [\overline{\mathbf{R}}_1 \overline{\mathcal{A}(\mathscr{R})}]^\perp$.
  
  Next we show that $\mathbf{T}_{1,2}$ is injective on $[\overline{\mathbf{R}}_1 \overline{\mathcal{A}(\mathscr{R})}]^\perp$. Let $\overline{\alpha}  \in [\overline{\mathbf{R}}_1 \overline{\mathcal{A}(\mathscr{R})}]^\perp$. Again since the components of $\riem_1$ are simply connected, we can assume that $\overline{\alpha} = \overline{\partial} H$ for some $H \in \overline{\mathcal{D}(\riem_1)}$. By Theorem
  \ref{th:Overfare_with_correction_functions} 
  \[   \dot{\mathbf{O}}_{2,1} \dot{\mathbf{J}}_{12}  \dot{H} = \dot{\mathbf{J}}_{1,1} \dot{H} - \dot{H}
         \]
  so differentiating and applying Theorems \ref{th:jump_derivatives}
  and \ref{th:S_kernel_range} we obtain
  \[  \mathbf{O}^e_{1,2} \mathbf{T}_{1,2} \overline{\alpha}  = d \mathbf{J}^q_{1,1} H = \partial H + \mathbf{T}_{1,1} \overline{\partial} H - dH = -\overline{\alpha} + \mathbf{T}_{1,1} \overline{\alpha}.   \]
  Thus $-\overline{\mathbf{P}}_1\mathbf{O}^\mathrm{e}_{2,1}$ is a left inverse for the restriction of $\mathbf{T}_{1,2}$ to $[\overline{\mathbf{R}}_1 \overline{\mathcal{A}(\mathscr{R})}]^\perp$. This proves injectivity, and since we already have surjectivity and boundedness, the restriction of $\mathbf{T}_{1,2}$ is invertible with inverse $-\overline{\mathbf{P}}_1\mathbf{O}^\mathrm{e}_{2,1}$ as claimed.
  \end{proof}
 
 We will improve and extend this theorem in different ways below. 
 The following corollary and lemma allows us to make use of the jump formula to examine cohomology classes.  
 
\begin{corollary} \label{co:specify_internal_periods}
 Let $\riem$ be a Riemann surface of type $(g,n)$ with internal homology basis $\{\gamma_1,\ldots,\gamma_{2g}\}$. For any constants $\lambda_1,\ldots,\lambda_{2g} \in \mathbb{C}$ there is an $\alpha \in \mathcal{A}(\riem)$ such that
 \[  \int_{\gamma_k} \alpha =  \lambda_k,  \ \ \ \ \ k=1,\ldots,2g.    \]
 The same claim obviously holds for $\overline{\mathcal{A}(\riem)}$.  
\end{corollary}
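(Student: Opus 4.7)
The plan is to prove surjectivity of the period map
\[ P: \mathcal{A}(\riem) \to \mathbb{C}^{2g}, \qquad P(\alpha) = \left(\int_{\gamma_1} \alpha, \ldots, \int_{\gamma_{2g}} \alpha\right) \]
by a duality argument on the compact double $\riem^d$. Since $P$ is bounded and its target is finite dimensional, its image is closed; so it is enough to show that the only $(c_1,\ldots,c_{2g}) \in \mathbb{C}^{2g}$ annihilating $\mathrm{Im}(P)$ is the zero vector. The anti-holomorphic statement then follows by taking complex conjugates.

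The first step would be to note that restriction from $\riem^d$ gives an injection $\mathcal{A}(\riem^d) \hookrightarrow \mathcal{A}(\riem)$: holomorphic one-forms on the compact surface $\riem^d$ are bounded and $\riem$ has finite area, so the restrictions are automatically $L^2$, while injectivity is the identity principle. Now suppose $\sum_{k=1}^{2g} c_k \int_{\gamma_k} \alpha = 0$ for every $\alpha \in \mathcal{A}(\riem)$, and form the $1$-cycle $c = \sum_{k=1}^{2g} c_k \gamma_k$ in $\riem \subset \riem^d$. Testing against forms coming from $\mathcal{A}(\riem^d)$ yields $\int_c \alpha = 0$ for all $\alpha \in \mathcal{A}(\riem^d)$. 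Because $c$ is a real $1$-chain one has $\overline{\int_c \alpha} = \int_c \overline{\alpha}$, so $\int_c \overline{\alpha} = 0$ as well. By the Hodge decomposition on the compact $\riem^d$, $H^1(\riem^d;\mathbb{C}) = \mathcal{A}(\riem^d) \oplus \overline{\mathcal{A}(\riem^d)}$, so $\int_c \omega = 0$ for every closed one-form $\omega$ on $\riem^d$; by non-degeneracy of the de Rham pairing on the compact surface this forces $[c] = 0$ in $H_1(\riem^d;\mathbb{C})$.

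To finish I would need to verify that the classes $\gamma_1,\ldots,\gamma_{2g}$ remain linearly independent in $H_1(\riem^d;\mathbb{C})$, so that $[c] = 0$ implies $c_1 = \cdots = c_{2g} = 0$. This is where the main (modest) technical care is required: a short Mayer--Vietoris computation for $\riem^d = \riem \cup \widetilde{\riem}$ glued along $\partial \riem$ shows that the kernel of the inclusion $H_1(\riem;\mathbb{C}) \hookrightarrow H_1(\riem^d;\mathbb{C})$ is generated by the single boundary relation $\sum_k \partial_k \riem = 0$, which involves only the boundary curves and not the internal cycles. Hence the $\gamma_k$ remain independent in the double, and the proof is complete. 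The main obstacle is not conceptual but notational: one must set up the internal homology basis so that it is compatible with a symplectic basis of $\riem^d$, and ensure the Mayer--Vietoris argument is phrased so that the boundary relation is the unique obstruction to injectivity.
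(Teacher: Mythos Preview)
Your duality approach has a genuine gap at the sentence ``Because $c$ is a real $1$-chain one has $\overline{\int_c \alpha} = \int_c \overline{\alpha}$.'' The annihilating vector $(c_1,\ldots,c_{2g})$ lies in $\mathbb{C}^{2g}$, not $\mathbb{R}^{2g}$, so $c = \sum c_k \gamma_k$ is in general a \emph{complex} chain. What conjugation actually gives you from $\int_c \alpha = 0$ is $\int_{\bar c}\,\overline{\alpha} = 0$ with $\bar c = \sum \overline{c_k}\,\gamma_k$, not $\int_c \overline{\alpha} = 0$. Consequently you only know that $c$ annihilates $\mathcal{A}(\riem^d)$, a $G$-dimensional subspace of the $2G$-dimensional $H^1(\riem^d;\mathbb{C})$ (with $G = 2g+n-1$), and this is far from forcing $[c]=0$. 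Splitting $c$ into real and imaginary parts does not rescue the argument either: you then get two separate real chains $a,b$ with $\int_a \alpha + i\int_b \alpha = 0$ and, by conjugation, $\int_a \overline{\alpha} - i\int_b \overline{\alpha} = 0$, but these are two conditions on the pair $(a,b)$, not four, and do not pin down $a=b=0$. To repair the argument along your lines you would have to prove directly that the restricted period map $\mathcal{A}(\riem^d)\to\mathbb{C}^{2g}$, $\alpha\mapsto(\int_{\gamma_k}\alpha)$, is surjective, which is a nontrivial statement about the period matrix of the double and is not what you argued.

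For comparison, the paper takes a quite different and constructive route. It sews caps onto $\riem$ rather than doubling, obtaining a compact $\mathscr{R}$ of genus exactly $g$, so that $\gamma_1,\ldots,\gamma_{2g}$ is a full homology basis for $\mathscr{R}$. Hodge theory then produces a harmonic (not holomorphic) $\zeta = \xi + \overline{\eta}\in\mathcal{A}_{\mathrm{harm}}(\mathscr{R})$ with the prescribed periods; the remaining work is to replace the anti-holomorphic piece $\overline{\eta}$ by something holomorphic on $\riem$ with the same periods. This is done using the Schiffer operators: since $\overline{\mathbf{S}}_1\overline{\mathbf{R}}_1$ is an isomorphism one finds $\overline{\sigma}$ with $\overline{\mathbf{S}}_1\overline{\mathbf{R}}_1\overline{\sigma} = \overline{\eta}$, and then the jump identity shows $\mathbf{T}_{1,2}\overline{\mathbf{R}}_1\overline{\sigma} + \overline{\mathbf{R}}_2\overline{\eta}$ is exact, so $\mathbf{R}_2\xi - \mathbf{T}_{1,2}\overline{\mathbf{R}}_1\overline{\sigma}\in\mathcal{A}(\riem)$ has the required periods. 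The key point your argument misses, and which the paper's machinery supplies, is precisely a mechanism for trading anti-holomorphic periods on the compactification for holomorphic ones on $\riem$.
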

\begin{proof}  Sew on caps to $\riem$ to obtain a compact surface $\mathscr{R}$ of genus $g$, where $\riem_1$ are the caps and $\riem_2=\riem$.  So we prove the claim for $\riem= \riem_2$.  

 By the Hodge theorem applied to $\mathscr{R}$, there is a $\zeta = \xi + \overline{\eta} \in \mathcal{A}_{\text{harm}}(\mathscr{R})$ such that
 \[   \int_{\gamma_k} \zeta = \lambda_k      \]
 for $k=1,\ldots,n$.  Now since $\mathbf{S}_1 \mathbf{R}_1:\mathcal{A}(\mathscr{R}) \rightarrow \mathcal{A}(\mathscr{R})$ is an isomorphism by Theorem \ref{th:S_kernel_range},   there is a $\overline{\sigma} \in \overline{\mathcal{A}(\mathscr{R})}$ such that $\overline{\mathbf{S}}_1 \overline{\mathbf{R}}_1 \overline{\sigma} = \overline{\eta}$. 
 
 Since the components of $\riem_1$ are simply connected, $\overline{\mathbf{R}}_1 \overline{\sigma}$ is exact, so there is an $H \in \overline{\mathcal{D}_{\mathrm{harm}}(\riem_1)}$ such that $\overline{\partial} H = \overline{\mathbf{R}}_1 \overline{\sigma}$.  So 
 by Theorem \ref{th:jump_derivatives} we have 
 \[ \overline{\mathbf{R}}_2 \overline{\mathbf{S}}_1 \overline{\mathbf{R}}_1 \overline{\sigma}  + \mathbf{T}_{1,2} \overline{\mathbf{R}}_1 \overline{\sigma} = d \mathbf{J}^q_{1,2} H \in \mathcal{A}^\mathrm{e}_{\text{harm}}(\riem_2)  \]
 so 
 $\mathbf{R}_2 \xi - \mathbf{T}_{1,2} \overline{\mathbf{R}}_1 \overline{\sigma}$
 has the desired periods. 
\end{proof}

 \begin{lemma} \label{le:dbar_representative}
  Let $\riem$ be an arbitrary Riemann surface of type $(g,n)$.  Given any $\overline{\alpha} \in \overline{\mathcal{A}(\riem)}$, there is an $h \in \mathcal{D}_{\mathrm{harm}}(\riem)$ such that $\overline{\partial} h = \overline{\alpha}$.  Any other such $\tilde{h}$ is such that $h-\tilde{h} \in \mathcal{D}(\riem)$.  
  
  The corresponding statement holds for $\alpha \in \mathcal{A}(\riem)$, replacing $\overline{\partial}$ with $\partial$.  
 \end{lemma}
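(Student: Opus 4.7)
The plan is to find a holomorphic form $\beta \in \mathcal{A}(\riem)$ whose periods cancel those of $\overline{\alpha}$ on a homology basis of $\riem$; then $\beta + \overline{\alpha}$ is a closed harmonic one-form with every period zero, hence exact on $\riem$. Writing $\beta + \overline{\alpha} = dh$ gives a single-valued function $h$ with $\partial h = \beta$ holomorphic (so $h$ is harmonic) and $\overline{\partial}h = \overline{\alpha}$ by construction; since $dh \in L^2(\riem)$, we obtain $h \in \mathcal{D}_{\mathrm{harm}}(\riem)$, as required.

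I would fix the homology basis $\{\gamma_1,\ldots,\gamma_{2g},\partial_1\riem,\ldots,\partial_{n-1}\riem\}$ of $\riem$, the $n$th boundary being redundant since the sum of boundary periods of any closed form vanishes by \eqref{eq:sum_of_boundary_periods_zero}. Construction of $\beta$ would proceed in two stages. For the boundary periods, I would take a linear combination of the holomorphic forms $\partial\omega_k \in \mathcal{A}(\riem)$, where $\omega_k$ are the harmonic measures. Using $2\partial = d + i\ast d$ and the fact that $d\omega_k$ has zero tangential component along each $\partial_j\riem$, one computes
\[
\int_{\partial_j\riem}\partial\omega_k \;=\; \tfrac{i}{2}\int_{\partial_j\riem}\ast\,d\omega_k \;=\; \tfrac{i}{2}\,\Pi_{jk},
\]
and Theorem \ref{th:period_matrix_invertible} guarantees the reduced matrix $(\Pi_{jk})_{j,k\neq m}$ is invertible. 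Hence I can choose coefficients $c_k$ so that $\beta_1 := \sum_k c_k\,\partial\omega_k$ realizes the required boundary periods $-\int_{\partial_k\riem}\overline{\alpha}$.

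For the internal periods I would invoke Corollary \ref{co:specify_internal_periods} to produce $\beta_2 \in \mathcal{A}(\riem)$ with $\int_{\gamma_j}\beta_2 = -\int_{\gamma_j}(\overline{\alpha}+\beta_1)$. The critical observation, which I expect to be the main obstacle to spell out carefully, is that $\beta_2$ can simultaneously be arranged to have vanishing boundary periods, so adding it does not disturb the boundary matching already achieved. This follows by inspecting the proof of Corollary \ref{co:specify_internal_periods}: there $\beta_2$ is produced on $\riem$ viewed inside a capped compact surface $\mathscr{R}$, and each boundary curve $\partial_k\riem$ bounds a cap in $\mathscr{R}$. The explicit form is $\beta_2 = \mathbf{R}_2\xi - \mathbf{T}_{1,2}\overline{\mathbf{R}}_1\overline{\sigma}$, and Stokes' theorem forces its boundary periods to vanish: the first summand is a restriction from $\mathscr{R}$, and for the second one uses that $d\mathbf{J}^q_{1,2}H$ is exact while $\overline{\mathbf{R}}_2\overline{\mathbf{S}}_1\overline{\mathbf{R}}_1\overline{\sigma} = \overline{\mathbf{R}}_2\overline{\eta}$ is also a restriction from $\mathscr{R}$. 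Setting $\beta := \beta_1 + \beta_2$ completes the construction.

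For uniqueness, if $h,\tilde h \in \mathcal{D}_{\mathrm{harm}}(\riem)$ both satisfy $\overline{\partial}h = \overline{\partial}\tilde h = \overline{\alpha}$, then $\overline{\partial}(h-\tilde h) = 0$, so $h-\tilde h$ is holomorphic; since $d(h-\tilde h) = \partial(h-\tilde h) \in L^2(\riem)$, this places $h-\tilde h \in \mathcal{D}(\riem)$. The corresponding statement for $\alpha \in \mathcal{A}(\riem)$ follows by applying the entire argument to $\overline{\alpha}$ and taking complex conjugates throughout.
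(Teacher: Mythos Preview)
Your proof is correct and follows essentially the same strategy as the paper: build a holomorphic $\beta$ whose periods cancel those of $\overline{\alpha}$ by combining $\partial\omega_k$ (for boundary periods, via the boundary period matrix) with Corollary~\ref{co:specify_internal_periods} (for internal periods), then take $h$ to be a primitive of $\overline{\alpha}+\beta$. The only difference is the order of the two adjustments; your explicit verification that the form produced by Corollary~\ref{co:specify_internal_periods} has vanishing boundary periods (so the second step does not disturb the first) is a useful point that the paper's reversed ordering leaves implicit.
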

\begin{proof} Fix $\overline{\alpha} \in \overline{\mathcal{A}(\riem)}$.
 First, we show that there is a $\beta \in \mathcal{A}(\riem)$ such that $\overline{\alpha} - \beta$ is exact. 
 By Corollary \ref{co:specify_internal_periods}, for any $\overline{\alpha} \in \overline{\mathcal{A}(\riem)}$ we may find a ${\delta} \in {\mathcal{A}(\riem)}$ such that 
 \[  \int_{\gamma_k} ( \overline{\alpha} - {\delta} ) = 0    \]
 for $k=1,\ldots,2g$.  So it is enough to show that for any constants $\mu_1,\ldots,\mu_n$ such that $\mu_1 + \cdots + \mu_n = 0$  there is a ${\nu} \in {\mathcal{A}(\riem_2)}$ such that 
 \begin{equation} \label{eq:rep_lemma_temp}
    \int_{\partial_k \riem} \nu = \mu_k    
 \end{equation}
 for $k=1,\ldots,n$.  By Corollary \ref{co:boundary_periods_specified_starmeasure} there is a harmonic measure $d\omega \in \mathcal{A}_{\text{hm}}(\riem)$ such that $\ast d \omega$ satisfies \eqref{eq:rep_lemma_temp}.  Setting
 \[   \nu = \ast d \omega + i d\omega \in {\mathcal{A}(\riem)},      \]
 since $d\omega$ is exact, $\nu$ has the same periods as $\ast d \omega$. Setting $\beta = \delta + \nu$ proves the claim. 
 
 So let $\beta$ be such that $\overline{\alpha} - \beta$ is exact.  Letting $h$ be such that $d h= \overline{\alpha} - \beta$ we then have that $\overline{\partial} h=\overline{\alpha}$ as claimed. If $\overline{\partial} \tilde{h} = \overline{\alpha}$ then $\overline{\partial} (h-\tilde{h}) =0$ so $h-\tilde{h} \in \mathcal{D}(\riem)$.  
\end{proof}

 \begin{theorem} \label{th:Schiffer_cohomology}
  For any $\overline{\alpha} \in \overline{\mathcal{A}(\riem_1)}$,
  \begin{enumerate}
      \item  [$(1)$] $\mathbf{T}_{1,2} \overline{\alpha}+\overline{\mathbf{R}}_2 \overline{\mathbf{S}}_1 \overline{\alpha}$ is exact on $\riem_2$; and 
      \item [$(2)$] $-\overline{\alpha} + \mathbf{T}_{1,1} \overline{\alpha}+\overline{\mathbf{R}}_1 \overline{\mathbf{S}}_1 \overline{\alpha}$ is exact on $\riem_1$.
  \end{enumerate}
  If $\mathscr{R}$ has genus zero, then $\mathbf{T}_{1,2} \overline{\alpha}$ and $-\overline{\alpha} + \mathbf{T}_{1,2} \overline{\alpha}$ are exact. 
  
  The same statements apply to the complex conjugates, and all statements hold with $1$ and $2$ interchanged.  
 \end{theorem}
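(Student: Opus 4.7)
My plan is to exhibit the forms in (1) and (2) as $d$ of explicit functions obtained from the Cauchy--Royden operator. The key inputs are Theorem \ref{th:jump_derivatives}, which expresses the Wirtinger derivatives of $\mathbf{J}_1^q h$ in terms of the Schiffer operators applied to $\overline{\partial} h$, and Lemma \ref{le:dbar_representative}, which allows us to realize any $\overline{\alpha} \in \overline{\mathcal{A}(\riem_1)}$ as $\overline{\partial} h$ for some $h \in \mathcal{D}_{\mathrm{harm}}(\riem_1)$. Fix such a representative $h$ and fix any $q \in \mathscr{R} \setminus \Gamma$.

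For claim (1), I would restrict to $\riem_2$ and add the two identities in Theorem \ref{th:jump_derivatives}, using that $\overline{\mathbf{S}}_1\overline{\partial} h$ is a globally defined form on $\mathscr{R}$ whose restriction to $\riem_2$ is $\overline{\mathbf{R}}_2 \overline{\mathbf{S}}_1 \overline{\partial} h$:
\[
  d\mathbf{J}^q_{1,2} h \;=\; \partial \mathbf{J}^q_{1,2} h + \overline{\partial}\mathbf{J}^q_{1,2} h \;=\; \mathbf{T}_{1,2}\overline{\alpha} + \overline{\mathbf{R}}_2\overline{\mathbf{S}}_1\overline{\alpha}.
\]
Since $\mathbf{J}^q_{1,2} h$ is a (single-valued) function on $\riem_2$, this directly exhibits $\mathbf{T}_{1,2}\overline{\alpha} + \overline{\mathbf{R}}_2\overline{\mathbf{S}}_1\overline{\alpha}$ as an exact form on $\riem_2$. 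For claim (2), I would perform the analogous computation on $\riem_1$ and then subtract the extra $dh$ that appears: Theorem \ref{th:jump_derivatives} gives
\[
 d\mathbf{J}^q_{1,1} h \;=\; \partial h + \mathbf{T}_{1,1}\overline{\alpha} + \overline{\mathbf{R}}_1 \overline{\mathbf{S}}_1 \overline{\alpha},
\]
so that
\[
  -\overline{\alpha} + \mathbf{T}_{1,1}\overline{\alpha} + \overline{\mathbf{R}}_1\overline{\mathbf{S}}_1\overline{\alpha} \;=\; d\mathbf{J}^q_{1,1} h - (\partial h + \overline{\partial} h) \;=\; d\bigl(\mathbf{J}^q_{1,1} h - h\bigr),
\]
which is exact on $\riem_1$.

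For the genus-zero statement, I would invoke Example \ref{ex:sphere_kernels} and its remark: on a compact genus-zero surface, the Bergman kernel $K_{\mathscr{R}}$ vanishes identically, so $\mathbf{S}_k = 0$ and $\overline{\mathbf{S}}_k = 0$ for $k=1,2$. The general claims therefore collapse to exactness of $\mathbf{T}_{1,2}\overline{\alpha}$ on $\riem_2$ and of $-\overline{\alpha} + \mathbf{T}_{1,1}\overline{\alpha}$ on $\riem_1$. The complex-conjugate versions follow by taking the complex conjugate of the identities above, which swaps the roles of holomorphic and anti-holomorphic operators; the versions with $1$ and $2$ interchanged follow by noting that the argument used no asymmetric feature of $\riem_1$ versus $\riem_2$ beyond the choice of the surface on which $h$ is defined, so repeating the argument starting from $\overline{\beta} \in \overline{\mathcal{A}(\riem_2)}$ and the operator $\mathbf{J}_2^q$ yields the symmetric statements.

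I do not anticipate a genuine obstacle: the content is essentially algebraic bookkeeping once one has the differentiation formulas of Theorem \ref{th:jump_derivatives}. The only point that requires any care is the initial existence of a harmonic primitive $h$ with $\overline{\partial} h = \overline{\alpha}$ (since anti-holomorphic one-forms on a non-simply-connected surface need not be exact), which is precisely the content of Lemma \ref{le:dbar_representative}, and the (implicit) observation that the identity is independent of the choices of $q$ and of the particular primitive $h$, because adding a holomorphic function to $h$ or changing $q$ changes $\mathbf{J}^q_{1,k} h$ by an exact form.
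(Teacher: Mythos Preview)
Your proof is correct and follows essentially the same approach as the paper: both invoke Lemma~\ref{le:dbar_representative} to produce a harmonic primitive $h$ with $\overline{\partial} h=\overline{\alpha}$, then read off the exactness statements from the differentiation formulas of Theorem~\ref{th:jump_derivatives} via $d\mathbf{J}^q_{1,2}h$ and $d(\mathbf{J}^q_{1,1}h - h)$, and finally specialize to genus zero using $K_{\mathscr{R}}=0$. Your additional remark on independence of the choices of $q$ and $h$ is a nice clarification but not essential, and your correction of the apparent typo in the genus-zero statement (the second form should involve $\mathbf{T}_{1,1}$, not $\mathbf{T}_{1,2}$) is well taken.
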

 \begin{proof}
  By Lemma \ref{le:dbar_representative} there is an $h \in \mathcal{D}_{\mathrm{harm}}(\riem_1)$ such that $\overline{\partial} h = \overline{\alpha}$. 
  Also, Theorem \ref{th:jump_derivatives} yields that
  \begin{equation} \label{eq:jump_cohomology_identity}
    d \mathbf{J}^q_1 h = \left\{ \begin{array}{ll}  \partial h + \mathbf{T}_{1,1} \overline{\partial} h + \overline{\mathbf{R}}_1 \overline{\mathbf{S}}_1   \overline{\partial} h  & \riem_1 \\
    \mathbf{T}_{1,2} \overline{\partial} h + \overline{\mathbf{R}}_2 
    \overline{\mathbf{S}}_1 \overline{\partial} h & \riem_2. 
    \end{array} \right.         
  \end{equation}
  Now using $dh= \partial h + \overline{\partial} h = \partial h + \overline{\alpha}$ we see that 
  \begin{equation} \label{eq:jump_cohomology_identity_11}
   d \mathbf{J}^q_{1,1} h =  dh - \overline{\alpha} + \mathbf{T}_{1,1} \overline{\alpha}  + \overline{\mathbf{R}}_1 \overline{\mathbf{S}}_1   \overline{\alpha}   
  \end{equation}
  and 
  \begin{equation} \label{eq:jump_cohomology_identity_12} 
       d \mathbf{J}^q_{1,2} h =  \mathbf{T}_{1,2} \overline{\alpha} + \overline{\mathbf{R}}_2 
    \overline{\mathbf{S}}_1 \overline{\alpha}. 
  \end{equation}
  This proves claims (1) and (2).  If $\mathscr{R}$ has genus zero, then the third claim follows from the fact that $K_\mathscr{R}=0$ (see Example \ref{ex:sphere_kernels}).  
  The remaining claims are obvious.
 \end{proof}
 
 This simple fact is surprisingly illuminating.  We list two immediate corollaries. 
 \begin{corollary}  \label{co:one_plus_T_periods}
  Let $\overline{\alpha} \in \overline{A(\riem_1)}$.  
  For any curve $c$ in $\riem_1$, 
  \begin{align*}
  \int_{c} (\overline{\alpha} - \mathbf{T}_{1,1} \overline{\alpha}) & =  \int_c \overline{\mathbf{S}}_1 \overline{\alpha} \\ & =  \left<  \overline{\alpha}, \ast \overline{\mathbf{R}}_1 H_c  \right>_{\riem_1} \\ & =  \int_c \left[ \mathbf{I} - \mathbf{T}_{1,1}^* \mathbf{T}_{1,1} - \mathbf{T}_{1,2}^* \mathbf{T}_{1,2} \right] \overline{\alpha} 
  \end{align*}
  where $H_c$ is associated to $c$ by \emph{(\ref{eq:H_forms_definition})}.  The same formulas hold with $1$ and $2$ interchanged, as do the complex conjugates.  
 \end{corollary}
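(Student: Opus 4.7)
The plan is to establish the three asserted equalities in turn, each by specializing an identity already proved in the excerpt. All three arguments take place in $\mathcal{A}_{\mathrm{harm}}(\riem_1)$ and rely on the fact that $c \subset \riem_1$.

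For the first equality, I would apply Theorem \ref{th:Schiffer_cohomology}(2): the one-form $-\overline{\alpha} + \mathbf{T}_{1,1}\overline{\alpha} + \overline{\mathbf{R}}_1 \overline{\mathbf{S}}_1 \overline{\alpha}$ is exact on $\riem_1$, so its period around $c$ vanishes. Since $c$ lies in $\riem_1$, integration against $\overline{\mathbf{R}}_1 \overline{\mathbf{S}}_1 \overline{\alpha}$ equals integration against $\overline{\mathbf{S}}_1\overline{\alpha}$ itself, yielding $\int_c(\overline{\alpha}-\mathbf{T}_{1,1}\overline{\alpha}) = \int_c \overline{\mathbf{S}}_1\overline{\alpha}$. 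In the genus zero case one instead appeals to the final sentence of Theorem \ref{th:Schiffer_cohomology}, where $\overline{\mathbf{S}}_1=0$ by Example \ref{ex:sphere_kernels} and both sides are zero.

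For the second equality I would invoke the defining property \eqref{eq:H_forms_definition} of $H_c$, namely $(\beta,\ast H_c)_{\mathscr{R}} = \int_c \beta$ for every $\beta \in \mathcal{A}_{\mathrm{harm}}(\mathscr{R})$, applied to $\beta = \overline{\mathbf{S}}_1\overline{\alpha} \in \overline{\mathcal{A}(\mathscr{R})}$. Then I would decompose $\ast H_c = \mathbf{P}_{\mathscr{R}}(\ast H_c) + \overline{\mathbf{P}}_{\mathscr{R}}(\ast H_c)$; the holomorphic summand drops out by orthogonality, and Theorem \ref{th:adjoint_identities} ($\overline{\mathbf{S}}_1^*=\overline{\mathbf{R}}_1$) migrates the operator to the other factor, giving $(\overline{\alpha},\overline{\mathbf{R}}_1\overline{\mathbf{P}}_{\mathscr{R}}(\ast H_c))_{\riem_1}$. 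Since restriction commutes pointwise with $\ast$ and with the Hodge projections, this equals $(\overline{\alpha},\ast\overline{\mathbf{R}}_1 H_c)_{\riem_1}$ after a second use of orthogonality (against the holomorphic part in $\riem_1$), where $\overline{\mathbf{R}}_1$ is interpreted through the harmonic restriction $\mathbf{R}_1^{\mathrm{h}}$ of Definition \ref{def: restriction ops}.

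For the third equality I would simply rearrange the first identity of Theorem \ref{th:general_adjoint_double_identities}, obtaining $\mathbf{I} - \mathbf{T}_{1,1}^*\mathbf{T}_{1,1} - \mathbf{T}_{1,2}^*\mathbf{T}_{1,2} = \overline{\mathbf{S}}_1^*\overline{\mathbf{S}}_1 = \overline{\mathbf{R}}_1\overline{\mathbf{S}}_1$ via Theorem \ref{th:adjoint_identities}. Since $c \subset \riem_1$, integration over $c$ is insensitive to the restriction $\overline{\mathbf{R}}_1$, proving the claim. In the genus zero case this reduces to both sides vanishing, matching the vanishing already noted for the second expression. The statements with $1$ and $2$ interchanged, and the complex conjugate versions, follow by the same arguments applied to the symmetric analogues of Theorems \ref{th:Schiffer_cohomology}, \ref{th:adjoint_identities}, and \ref{th:general_adjoint_double_identities}, all of which are either stated or obtained by taking conjugates in the excerpt. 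No real obstacle is anticipated; the only mild care is in correctly interpreting the symbol $\overline{\mathbf{R}}_1 H_c$ for the harmonic (not holomorphic) form $H_c$, for which the harmonic restriction operator is the natural reading.
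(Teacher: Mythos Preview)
Your proposal is correct and follows essentially the same route as the paper's proof: the first equality via Theorem~\ref{th:Schiffer_cohomology}, the second via the defining property~\eqref{eq:H_forms_definition} of $H_c$ together with the adjoint identity $\overline{\mathbf{S}}_1^* = \overline{\mathbf{R}}_1$ and commutation of $\ast$ with restriction, and the third via the first identity in Theorem~\ref{th:general_adjoint_double_identities}. Your added detail on the orthogonality decomposition and the careful remark about interpreting $\overline{\mathbf{R}}_1 H_c$ for the harmonic form $H_c$ are welcome clarifications of what the paper leaves implicit.
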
 
 \begin{proof}
  The first equality follows directly from Theorem \ref{th:Schiffer_cohomology}.  The second equality follows from the definition of $H_c$ and Theorem \ref{th:adjoint_identities}, observing that $\ast$ commutes with $\overline{\mathbf{R}}_1$.  The final equality follows from the identity $\mathbf{T}_{1,1}^* \mathbf{T}_{1,1} + \mathbf{T}_{1,2}^* \mathbf{T}_{1,2} = \mathbf{I} - \overline{\mathbf{R}}_1 \overline{\mathbf{S}}_1$ given in Theorem \ref{th:general_adjoint_double_identities}.  
 \end{proof}

 \begin{corollary}   \label{co:image_Tonetwo_periods}
  For any curve $c$ in $\riem_2$ and $\overline{\alpha} \in 
  \overline{\mathcal{A}(\riem_2)}$
  \begin{align*}
   -\int_{c} \mathbf{T}_{1,2}   \overline{\alpha} & =  \int_c \overline{\mathbf{S}}_2 \overline{\alpha} \\
   & = \left< \overline{\alpha}, \ast \overline{\mathbf{R}}_2 H_c \right>_{\riem_2}  \\
   & = \int_c \left[ \mathbf{I} - \mathbf{T}_{2,2}^* \mathbf{T}_{2,2} - \mathbf{T}_{2,1}^* \mathbf{T}_{2,1} \right] \overline{\alpha}.
  \end{align*}
  The same statement holds with $1$ and $2$ interchanged, and with complex conjugates.
 \end{corollary}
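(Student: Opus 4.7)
The proof would run in exact parallel with that of Corollary \ref{co:one_plus_T_periods}, with the roles of $\riem_1$ and $\riem_2$ (and the pair of indices appearing in Theorem \ref{th:Schiffer_cohomology}) swapped throughout. My plan is to establish the three successive equalities in turn, each one corresponding to a distinct input from Section \ref{se:Schiffer_Cauchy}.

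For the first equality, I would apply Theorem \ref{th:Schiffer_cohomology} in the form appropriate to the image of the Schiffer operator on $\riem_2$: the relevant harmonic one-form (namely the sum of $\mathbf{T}\overline{\alpha}$ with a restriction of $\overline{\mathbf{S}}_2\overline{\alpha}$) is exact on $\riem_2$. Integrating along the closed curve $c \subset \riem_2$ annihilates the exact contribution, and since $c$ lies inside $\riem_2$ the restriction operator $\overline{\mathbf{R}}_2$ can be absorbed harmlessly, producing the first identity after a sign rearrangement.

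For the second equality, I would invoke the defining property \eqref{eq:H_forms_definition} of the harmonic representative $H_c$, namely $\int_c \omega = (\omega, \ast H_c)_{\mathscr{R}}$ for every $\omega \in \mathcal{A}_{\mathrm{harm}}(\mathscr{R})$. Specialising to $\omega = \overline{\mathbf{S}}_2 \overline{\alpha} \in \overline{\mathcal{A}(\mathscr{R})}$ and transferring $\overline{\mathbf{S}}_2$ to the other side of the inner product via its adjoint, supplied by Theorem \ref{th:adjoint_identities} as $\overline{\mathbf{S}}_2^{\ast} = \overline{\mathbf{R}}_2$, and using the fact that the Hodge star $\ast$ commutes with the restriction operator, one arrives at $\int_c \overline{\mathbf{S}}_2\overline{\alpha} = (\overline{\alpha}, \ast \overline{\mathbf{R}}_2 H_c)_{\riem_2}$.

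Finally, for the third equality, I would appeal to the quadratic adjoint identity of Theorem \ref{th:general_adjoint_double_identities}, which rearranges to $\mathbf{I} - \mathbf{T}_{2,2}^{\ast}\mathbf{T}_{2,2} - \mathbf{T}_{2,1}^{\ast}\mathbf{T}_{2,1} = \overline{\mathbf{S}}_2^{\ast}\overline{\mathbf{S}}_2 = \overline{\mathbf{R}}_2 \overline{\mathbf{S}}_2$; applying this to $\overline{\alpha}$ reproduces the restriction of $\overline{\mathbf{S}}_2\overline{\alpha}$ to $\riem_2$, whose integral along $c \subset \riem_2$ is $\int_c \overline{\mathbf{S}}_2\overline{\alpha}$, closing the chain. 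No genuine obstacle is anticipated: the proof is essentially mechanical, the only care needed being bookkeeping of subscripts and of the restriction operators $\overline{\mathbf{R}}_k$, together with the consistent exploitation of the fact that $c$ lies in $\riem_2$ so that such restrictions may be freely inserted or removed. The adjoint identity $\overline{\mathbf{S}}_2^{\ast} = \overline{\mathbf{R}}_2$ is the hinge that ties the four apparently disparate expressions into a single chain of equalities.
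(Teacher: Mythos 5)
Your approach matches the paper's exactly: the paper remarks that the proof is identical to that of Corollary \ref{co:one_plus_T_periods}, substituting the adjoint identity $\mathbf{T}_{2,2}^*\mathbf{T}_{2,2} + \mathbf{T}_{2,1}^*\mathbf{T}_{2,1} + \overline{\mathbf{R}}_2\overline{\mathbf{S}}_2 = \mathbf{I}$ from Theorem \ref{th:general_adjoint_double_identities} in the final step. The three ingredients you identify — Theorem \ref{th:Schiffer_cohomology} for the first equality, the defining property of $H_c$ together with the adjoint $\overline{\mathbf{S}}_2^* = \overline{\mathbf{R}}_2$ from Theorem \ref{th:adjoint_identities} (and the commutation of $\ast$ with restriction) for the second, and the quadratic adjoint identity for the third — are precisely those used in the paper's proof of Corollary \ref{co:one_plus_T_periods}.
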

 \begin{proof}  The proof is identical to that of Corollary \ref{co:one_plus_T_periods}, except in the last step we use the identity
   $\mathbf{T}_{2,2}^* \mathbf{T}_{2,2} + \mathbf{T}_{2,1}^* \mathbf{T}_{2,1} + \overline{\mathbf{R}}_2 \overline{\mathbf{S}}_2= \mathbf{I}$
   of Theorem \ref{th:general_adjoint_double_identities}.  
 \end{proof}

   If $\riem_2$ is connected, recall Definition \ref{def:exact overfare}: for 
 $\alpha \in \mathcal{A}^{\mathrm e}_{\text{harm}}(\riem_2)$, we define
 \[   \mathbf{O}^{\mathrm{e}}_{2,1} \alpha= d \mathbf{O}_{2,1} h      \]
 for $dh = \alpha$.  
  \begin{proposition} \label{prop:exact_transmitted_jump} Assume that $\riem_2$ is connected.  
  For $\overline{\alpha} \in \overline{\mathcal{A}(\riem_1)}$ we have
  \[  \mathbf{O}^{\mathrm{e}}_{2,1} \left[ \mathbf{T}_{1,2} \overline{\alpha} + \overline{\mathbf{R}}_{2} \overline{\mathbf{S}}_1 \overline{\alpha} \right] = -\overline{\alpha} + \mathbf{T}_{1,1} \overline{\alpha} + \overline{\mathbf{R}}_{1} \overline{\mathbf{S}}_1 \overline{\alpha}. \]
  In particular, if $\overline{\alpha} \in (\overline{\mathbf{R}}_1 \overline{\mathcal{A}(\mathscr{R})})^\perp$ we have
   \[  \mathbf{O}^{\mathrm{e}}_{2,1}  \mathbf{T}_{1,2} \overline{\alpha} = - \overline{\alpha} + \mathbf{T}_{1,1} \overline{\alpha}.   \]
   The complex conjugate statements hold, as do the statements with the roles of $1$ and $2$ interchanged.
  \end{proposition}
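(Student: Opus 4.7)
The plan is to reduce this directly to the overfared jump formula (Theorem \ref{th:Overfare_with_correction_functions} part (2)) after rewriting everything in terms of a single harmonic potential. First I would invoke Lemma \ref{le:dbar_representative} to produce $h \in \mathcal{D}_{\mathrm{harm}}(\riem_1)$ with $\overline{\partial} h = \overline{\alpha}$. Any other choice of $h$ differs by an element of $\mathcal{D}(\riem_1)$, but the identities below will depend only on $\overline{\partial} h$, so this ambiguity is harmless.

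Next I would compute both sides via Theorem \ref{th:jump_derivatives}, which is exactly the content of equations \eqref{eq:jump_cohomology_identity_11} and \eqref{eq:jump_cohomology_identity_12} from the proof of Theorem \ref{th:Schiffer_cohomology}:
\begin{align*}
 d\mathbf{J}^q_{1,2} h &= \mathbf{T}_{1,2} \overline{\alpha} + \overline{\mathbf{R}}_2 \overline{\mathbf{S}}_1 \overline{\alpha},\\
 d\mathbf{J}^q_{1,1} h &= dh - \overline{\alpha} + \mathbf{T}_{1,1} \overline{\alpha} + \overline{\mathbf{R}}_1 \overline{\mathbf{S}}_1 \overline{\alpha},
\end{align*}
where the latter uses $\partial h = dh - \overline{\alpha}$. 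Notice that the right-hand side of the second identity, once we subtract $dh$, depends only on $\overline{\alpha}$ (confirming the well-definedness mentioned above).

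The heart of the argument is to apply Theorem \ref{th:Overfare_with_correction_functions}(2), whose hypotheses are met since $\riem_2$ is connected and $\Gamma$ is a complex of quasicircles:
\[
 \dot{\mathbf{O}}_{2,1} \dot{\mathbf{J}}_{1,2}\, \dot{h} = \dot{\mathbf{J}}_{1,1}\, \dot{h} - \dot{h}.
\]
Because $d$ annihilates functions that are locally constant on connected components of $\riem_1$, applying $d$ descends unambiguously to this identity, and by the defining property $\mathbf{O}^{\mathrm{e}}_{2,1} d = d\, \mathbf{O}_{2,1}$ of the exact overfare (Definition \ref{def:exact overfare}), we obtain
\[
 \mathbf{O}^{\mathrm{e}}_{2,1}\, d\mathbf{J}^q_{1,2} h = d\mathbf{J}^q_{1,1} h - dh.
\]
Substituting the two displayed formulas above yields exactly the claimed identity. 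For the second assertion, Theorem \ref{th:S_kernel_range} gives $\mathrm{Ker}(\overline{\mathbf{S}}_1) = [\overline{\mathbf{R}}_1 \overline{\mathcal{A}(\mathscr{R})}]^\perp$, so both $\overline{\mathbf{R}}_1 \overline{\mathbf{S}}_1 \overline{\alpha}$ and $\overline{\mathbf{R}}_2 \overline{\mathbf{S}}_1 \overline{\alpha}$ vanish, producing the simplified formula.

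There is no serious obstacle here: the main step is matching the definitions carefully. The only point that requires a little care is checking that the $\partial h$ term cancels when we subtract $dh$ (which it does, leaving a formula purely in $\overline{\alpha}$), and that Theorem \ref{th:Overfare_with_correction_functions}(2) is the correct vehicle since we only have $\riem_2$ connected — part (1) is unavailable without the BZM hypothesis. The remaining claims follow by interchanging the roles of $\riem_1$ and $\riem_2$ and/or by complex conjugation.
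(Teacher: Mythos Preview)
Your proof is correct and follows essentially the same route as the paper: produce $h$ via Lemma~\ref{le:dbar_representative}, differentiate the first identity of Theorem~\ref{th:Overfare_with_correction_functions}, and read off the result using Theorem~\ref{th:jump_derivatives}. You are in fact more careful than the paper on one point: the paper cites part~(1) of Theorem~\ref{th:Overfare_with_correction_functions}, but since only $\riem_2$ connected (not BZM quasicircles) is assumed, part~(2) is the version actually available---you identify this correctly, and it makes no difference after applying $d$.
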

  \begin{proof}
   By Lemma \ref{le:dbar_representative} there is an $h \in \mathcal{D}_{\mathrm{harm}}(\riem_1)$ such that $\overline{\partial} h = \overline{\alpha}$.   By Theorem \ref{th:S_kernel_range} $\mathbf{S}_1 \overline{\alpha} = 0$. Differentiating both sides of the first expression appearing in Theorem \ref{th:Overfare_with_correction_functions} part (1)
   proves the claim.
  \end{proof}
 \end{subsection}
 \begin{subsection}{Kernel and image of the Schiffer operator $\mathbf{T}_{1,2}$} \label{se:Schiffer_kernel_image}
 
 We require a generalization of Theorem \ref{th:Mohammad_isomorphism}. Namely, we would like to characterize the kernel and image of $\mathbf{T}_{1,2}$ in general. We begin with a partial characterization.
 
 \begin{theorem} \label{th:general_T12_kernel_on_perp}  Assume that $\riem_2$ is connected.  Then 
  \[ \mathrm{Ker} ( \mathbf{T}_{1,2}) \cap [\overline{\mathbf{R}}_1 \overline{\mathcal{A}(\mathscr{R})}]^\perp = \{0 \}.   \] 
 \end{theorem}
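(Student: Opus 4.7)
The plan is to combine the characterization of $[\overline{\mathbf{R}}_1\overline{\mathcal{A}(\mathscr{R})}]^\perp$ as $\mathrm{Ker}(\overline{\mathbf{S}}_1)$ (from Theorem \ref{th:S_kernel_range}) with the exact transmitted jump formula of Proposition \ref{prop:exact_transmitted_jump}, and then exploit the orthogonality of $\mathcal{A}(\riem_1)$ and $\overline{\mathcal{A}(\riem_1)}$ inside $\mathcal{A}_{\mathrm{harm}}(\riem_1)$.

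More concretely, let $\overline{\alpha}\in \mathrm{Ker}(\mathbf{T}_{1,2}) \cap [\overline{\mathbf{R}}_1\overline{\mathcal{A}(\mathscr{R})}]^\perp$. By Theorem \ref{th:S_kernel_range} the orthogonal-complement condition is equivalent to $\overline{\mathbf{S}}_1\overline{\alpha}=0$. Since $\riem_2$ is connected, the second assertion of Proposition \ref{prop:exact_transmitted_jump} is available and gives
\[
  -\overline{\alpha} + \mathbf{T}_{1,1}\overline{\alpha} \;=\; \mathbf{O}^{\mathrm{e}}_{2,1}\mathbf{T}_{1,2}\overline{\alpha} \;=\; 0,
\]
using $\mathbf{T}_{1,2}\overline{\alpha}=0$. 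Hence $\overline{\alpha}=\mathbf{T}_{1,1}\overline{\alpha}$.

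The right-hand side lies in $\mathcal{A}(\riem_1)$ while the left-hand side lies in $\overline{\mathcal{A}(\riem_1)}$, and these two subspaces are orthogonal in $\mathcal{A}_{\mathrm{harm}}(\riem_1)$ by the direct sum decomposition \eqref{direct sum decomposition}. Therefore both sides vanish, so $\overline{\alpha}=0$, as required.

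I do not anticipate any serious obstacle here: the only subtlety is that Proposition \ref{prop:exact_transmitted_jump} requires $\riem_2$ to be connected in order for $\mathbf{O}^{\mathrm e}_{2,1}$ to be well-defined on exact forms, which is precisely the hypothesis of the theorem. The argument uses nothing beyond that proposition, the identification of the orthogonal complement with $\mathrm{Ker}(\overline{\mathbf{S}}_1)$, and the trivial intersection of holomorphic and anti-holomorphic Bergman spaces.
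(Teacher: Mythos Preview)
Your proof is correct and follows essentially the same route as the paper. The paper unpacks the argument by choosing $H\in\mathcal{D}_{\mathrm{harm}}(\riem_1)$ with $\overline{\partial}H=\overline{\alpha}$ via Lemma~\ref{le:dbar_representative}, then applies Theorem~\ref{th:jump_derivatives} and Theorem~\ref{th:Overfare_with_correction_functions} directly to reach $\overline{\alpha}=\mathbf{T}_{1,1}\overline{\alpha}$; you instead invoke Proposition~\ref{prop:exact_transmitted_jump}, which packages exactly those same ingredients, and then finish with the same holomorphic/anti-holomorphic splitting.
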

 \begin{proof}
  Assume that $\overline{\alpha} \in \mathrm{Ker} ( \mathbf{T}_{1,2}) \cap [\overline{\mathbf{R}}_1 \overline{\mathcal{A}(\mathscr{R})}]^\perp$. By Lemma \ref{le:dbar_representative} there is a $H \in \mathcal{D}_{\text{harm}}(\riem_1)$ such that $\overline{\partial} H = \overline{\alpha}$. We have $\overline{\mathbf{S}}_1 \overline{\partial} H=0$ by Theorem \ref{th:S_kernel_range} so by Theorem \ref{th:jump_derivatives}
  \[  d \mathbf{J}^q_{1,2} H = \mathbf{T}_{1,2} \overline{\alpha} + \overline{\mathbf{R}}_2 \overline{\mathbf{S}}_1 \overline{\alpha} = 0. \]
  Therefore $\mathbf{J}^q_{1,2} H$ is constant, from which it follows that $\mathbf{O}_{2,1} \mathbf{J}^q_{1,2} H$ is constant. By Theorem \ref{th:Overfare_with_correction_functions} we have
  \[  d(H - \mathbf{J}^q_{1,1} H) = - d \mathbf{O}_{2,1} \mathbf{J}^q_{1,2} H = 0   \]
  so again using Theorem \ref{th:jump_derivatives} and the fact that $\overline{\mathbf{S}}_1 \overline{\partial} H=0$ we obtain 
  \[ \partial H + \overline{\partial} H - \mathbf{T}_{1,1} \overline{\partial} H = 0,    \]
  so equating holomorphic and anti-holomorphic parts
  \[  \overline{\partial} H =0.  \]
  This completes the proof.
 \end{proof}
 
 We also have the following.
 \begin{theorem} \label{th:general_T12_on_perp_surjective_to_exact} 
 Assume that $\riem_2$ is connected.  
 The image of $[\overline{\mathbf{R}}_1 \overline{\mathcal{A}(\mathscr{R})}]^\perp$ under $\mathbf{T}_{1,2}$ is $\mathcal{A}^{\mathrm{e}}(\riem_2)$. 
 \end{theorem}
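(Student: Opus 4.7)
The plan is to split the statement into the inclusion $\mathbf{T}_{1,2}([\overline{\mathbf{R}}_1 \overline{\mathcal{A}(\mathscr{R})}]^\perp) \subseteq \mathcal{A}^{\mathrm{e}}(\riem_2)$ and the surjectivity onto $\mathcal{A}^{\mathrm{e}}(\riem_2)$. The inclusion is nearly immediate: given $\overline{\alpha}\in [\overline{\mathbf{R}}_1 \overline{\mathcal{A}(\mathscr{R})}]^\perp$, Theorem \ref{th:S_kernel_range} gives $\overline{\mathbf{S}}_1\overline{\alpha}=0$, and Theorem \ref{th:Schiffer_cohomology}(1) then says $\mathbf{T}_{1,2}\overline{\alpha}+\overline{\mathbf{R}}_2\overline{\mathbf{S}}_1\overline{\alpha}=\mathbf{T}_{1,2}\overline{\alpha}$ is exact on $\riem_2$. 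So the real content is surjectivity.

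For surjectivity, I would start with $\beta\in\mathcal{A}^{\mathrm{e}}(\riem_2)$ and write $\beta=dh$ for some $h\in\mathcal{D}(\riem_2)$. Put $H=-\mathbf{O}_{2,1}h\in\mathcal{D}_{\mathrm{harm}}(\riem_1)$; this is well-defined up to a constant on each component of $\riem_1$, which is harmless because only $dH$ and $\overline{\partial}H$ will matter. The key is the identity provided by Theorem \ref{th:J_same_both_sides} (applied with the roles of $\riem_1,\riem_2$ switched, using that $\riem_2$ is connected), which gives
\[
\dot{\mathbf{J}}_{2}\dot{h}=-\dot{\mathbf{J}}_{1}\dot{\mathbf{O}}_{2,1}\dot{h}=\dot{\mathbf{J}}_{1}\dot{H}.
\]
Restricting to $\riem_2$, differentiating, and invoking Theorem \ref{th:jump_on_holomorphic} to compute $d\dot{\mathbf{J}}_{2,2}\dot{h}=dh=\beta$ on one side, and Theorem \ref{th:jump_derivatives} on the other, produces
\[
\beta \;=\; \mathbf{T}_{1,2}\,\overline{\partial}H \;+\; \overline{\mathbf{R}}_2\,\overline{\mathbf{S}}_1\,\overline{\partial}H.
\]

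Now I would read off the conclusion. Since $\beta$ is holomorphic on $\riem_2$, equating holomorphic and anti-holomorphic parts forces $\overline{\mathbf{R}}_2\,\overline{\mathbf{S}}_1\,\overline{\partial}H=0$. But $\overline{\mathbf{S}}_1\,\overline{\partial}H\in\overline{\mathcal{A}(\mathscr{R})}$, so its vanishing on the open set $\riem_2$ implies it is identically zero on $\mathscr{R}$ by analytic continuation. Theorem \ref{th:S_kernel_range} then gives $\overline{\partial}H\in\ker\overline{\mathbf{S}}_1=[\overline{\mathbf{R}}_1\overline{\mathcal{A}(\mathscr{R})}]^\perp$, while the displayed identity collapses to $\mathbf{T}_{1,2}\,\overline{\partial}H=\beta$. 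This exhibits $\beta$ as $\mathbf{T}_{1,2}$ applied to an element of the perp, completing surjectivity.

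The main obstacle I anticipate is a bookkeeping one rather than a conceptual one: verifying that the pieces line up despite the mild irregularities that can occur when $\riem_1$ is disconnected. In particular, $H=-\mathbf{O}_{2,1}h$ is only well-defined modulo locally constant functions on $\riem_1$, so one must phrase the argument in the homogeneous Dirichlet space $\dot{\mathcal{D}}_{\mathrm{harm}}(\riem_1)$ where the transmitted identity from Theorem \ref{th:J_same_both_sides}(2) genuinely applies; then $\overline{\partial}H$ and $dH$ (hence $\mathbf{T}_{1,2}\overline{\partial}H$) are unambiguous. One should also note that no assumption about $\riem_1$ being connected or the quasicircles being BZM is needed here, precisely because we use the dotted version of Theorem \ref{th:J_same_both_sides}, which only requires $\riem_2$ connected.
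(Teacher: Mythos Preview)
Your proof is correct and follows essentially the same route as the paper's own argument: set $H=-\mathbf{O}_{2,1}h$, use Theorem \ref{th:J_same_both_sides} together with Theorem \ref{th:jump_on_holomorphic} to obtain $\dot{\mathbf{J}}_{1,2}\dot{H}=\dot{h}$, differentiate via Theorem \ref{th:jump_derivatives}, and then split into holomorphic and anti-holomorphic parts to force $\overline{\mathbf{S}}_1\overline{\partial}H=0$. Your explicit treatment of the inclusion $\mathbf{T}_{1,2}([\overline{\mathbf{R}}_1 \overline{\mathcal{A}(\mathscr{R})}]^\perp)\subseteq\mathcal{A}^{\mathrm{e}}(\riem_2)$ via Theorem \ref{th:Schiffer_cohomology} and your careful remarks about working in $\dot{\mathcal{D}}_{\mathrm{harm}}$ to handle the locally-constant ambiguity when $\riem_1$ is disconnected are welcome clarifications that the paper leaves implicit.
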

 \begin{proof}
  Given any $\beta \in \mathcal{A}^\mathrm{e}(\riem_2)$, let $h \in \mathcal{D}(\riem_2)$ be such that $\partial h = \beta$, which exists by conjugating Lemma \ref{le:dbar_representative}.  Note that $h$ is not necessarily uniquely defined. Set $H=-\mathbf{O}_{2,1}h$. Applying Theorems \ref{th:J_same_both_sides} and \ref{th:jump_on_holomorphic} we obtain that
  \[  \dot{\mathbf{J}}_{1,2} \dot{H} =  \dot{\mathbf{J}}_{2,2} h = \dot{h}.  \]
   Differentiating using Theorem \ref{th:jump_derivatives} we see that
  \[  \mathbf{T}_{1,2} \overline{\partial} H + \overline{\mathbf{R}}_2 \overline{\mathbf{S}}_1 \overline{\partial} H = \beta.  \]
  Since $\beta$ is holomorphic $\overline{\mathbf{R}}_2 \overline{\mathbf{S}}_1 \overline{\partial} H = 0$ so $\overline{\mathbf{S}}_1 \overline{\partial} H = 0$ and hence $\overline{\partial} H \in [\overline{\mathbf{R}}_1 \overline{\mathcal{A}(\mathscr{R})}]^\perp$ by Theorem \ref{th:S_kernel_range}. Furthermore $\mathbf{T}_{1,2} \overline{\partial} H = \beta$ completing the proof.
 \end{proof}
 
 Theorems \ref{th:general_T12_on_perp_surjective_to_exact} and \ref{th:general_T12_kernel_on_perp} taken together generalize Theorem \ref{th:Mohammad_isomorphism} and \cite[Theorem 4.22]{Schippers_Staubach_Plemelj}. We will extend it still further below. For now, we observe the following corollary.
 Recall the projections $\mathbf{P}_k= \mathbf{P}_{\riem_k}$ and 
 $\overline{\mathbf{P}}_k = \overline{\mathbf{P}}_{\riem_k}$ defined by 
 \eqref{eq:hol_antihol_projections_Bergman}.
 \begin{corollary} \label{co:connected_get_iso}
  Let $\riem_2$ be connected.  Then the restriction of $\mathbf{T}_{1,2}$ to $[\overline{\mathbf{R}}_1 \overline{\mathcal{A}(\mathscr{R})}]^\perp$ is an isomorphism onto $\mathcal{A}^\mathrm{e}(\riem_2)$, with inverse $-\overline{\mathbf{P}}_1 \mathbf{O}^\mathrm{e}_{2,1}$. 
 \end{corollary}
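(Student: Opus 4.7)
The plan is to assemble this corollary from three ingredients that have already been established just before: injectivity of the restriction from Theorem \ref{th:general_T12_kernel_on_perp}, surjectivity onto $\mathcal{A}^{\mathrm{e}}(\riem_2)$ from Theorem \ref{th:general_T12_on_perp_surjective_to_exact}, and the explicit identification of the inverse via Proposition \ref{prop:exact_transmitted_jump}. Boundedness is automatic since $\mathbf{T}_{1,2}$ is bounded (Theorem \ref{th:Schiffer_operators_bounded}), so the open mapping theorem will supply continuity of the inverse once bijectivity is known.

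First, I would observe that Theorems \ref{th:general_T12_kernel_on_perp} and \ref{th:general_T12_on_perp_surjective_to_exact} together yield that
\[
\mathbf{T}_{1,2}\big|_{[\overline{\mathbf{R}}_1 \overline{\mathcal{A}(\mathscr{R})}]^\perp} : [\overline{\mathbf{R}}_1 \overline{\mathcal{A}(\mathscr{R})}]^\perp \longrightarrow \mathcal{A}^{\mathrm{e}}(\riem_2)
\]
is a bounded bijection, hence a Banach-space isomorphism. This already establishes the first assertion.

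Next I would identify the inverse. For any $\overline{\alpha} \in [\overline{\mathbf{R}}_1 \overline{\mathcal{A}(\mathscr{R})}]^\perp$, Theorem \ref{th:S_kernel_range} gives $\overline{\mathbf{S}}_1 \overline{\alpha}=0$, so Proposition \ref{prop:exact_transmitted_jump} specializes to
\[
\mathbf{O}^{\mathrm{e}}_{2,1} \mathbf{T}_{1,2}\, \overline{\alpha} \;=\; -\overline{\alpha} + \mathbf{T}_{1,1}\,\overline{\alpha}.
\]
Applying the orthogonal projection $\overline{\mathbf{P}}_1:\mathcal{A}_{\mathrm{harm}}(\riem_1)\to \overline{\mathcal{A}(\riem_1)}$ and using that $\overline{\alpha}\in \overline{\mathcal{A}(\riem_1)}$ while $\mathbf{T}_{1,1}\overline{\alpha}\in\mathcal{A}(\riem_1)$ by definition of the Schiffer operator, the projection kills the holomorphic piece and preserves the anti-holomorphic one, giving
\[
-\overline{\mathbf{P}}_1\, \mathbf{O}^{\mathrm{e}}_{2,1} \mathbf{T}_{1,2}\, \overline{\alpha} \;=\; \overline{\alpha}.
\]
Thus $-\overline{\mathbf{P}}_1 \mathbf{O}^{\mathrm{e}}_{2,1}$ is a left inverse for the restricted operator; since that restricted operator is already an isomorphism, this left inverse is the two-sided inverse, completing the proof.

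The only potential subtlety is making sure the composition $-\overline{\mathbf{P}}_1 \mathbf{O}^{\mathrm{e}}_{2,1}$ is well-defined on $\mathcal{A}^{\mathrm{e}}(\riem_2)$; this is exactly why connectedness of $\riem_2$ is assumed, since it ensures that $\mathbf{O}^{\mathrm{e}}_{2,1}$ itself is well-defined (recall Definition \ref{def:exact overfare}, where the anti-derivative must be taken without ambiguity). Given that, the argument above is essentially just bookkeeping; the real work has been done in Theorems \ref{th:general_T12_kernel_on_perp} and \ref{th:general_T12_on_perp_surjective_to_exact} and Proposition \ref{prop:exact_transmitted_jump}, and I expect no further obstacles.
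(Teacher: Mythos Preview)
Your proof is correct and follows essentially the same route as the paper's: surjectivity from Theorem~\ref{th:general_T12_on_perp_surjective_to_exact}, injectivity from Theorem~\ref{th:general_T12_kernel_on_perp}, and the left-inverse identity from Proposition~\ref{prop:exact_transmitted_jump} followed by projection. If anything, your citations are slightly more streamlined, since the paper re-derives the key identity from Theorems~\ref{th:Overfare_with_correction_functions} and~\ref{th:jump_derivatives} rather than invoking Proposition~\ref{prop:exact_transmitted_jump} directly.
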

 \begin{proof} 
  The restriction of $\mathbf{T}_{1,2}$ is surjective by Theorem \ref{th:general_T12_on_perp_surjective_to_exact}. Since $\riem_2$ is connected, any function $\omega$ with the bridge property must have the same constant value on each boundary of $\riem_1$, and hence must be constant. So the kernel is trivial.  
  
  Observe that since $\riem_2$ is connected $\mathbf{O}^\mathrm{e}_{2,1}$ is well-defined by the requirement that $d \mathbf{O}_{2,1} = \mathbf{O}^\mathrm{e}_{2,1}$. The fact that this is the inverse follows as in previous proofs. Let $H \in \mathcal{D}_{\text{harm}}(\riem_1)$ be such that $\overline{\partial} H = \overline{\alpha}$. By Theorems \ref{th:Overfare_with_correction_functions},  \ref{th:jump_derivatives}, and the fact that $\overline{\mathbf{S}}_1 \overline{\alpha}=0$ we see that 
  \[ \mathbf{O}^\mathrm{e}_{2,1} \overline{\alpha} = -\overline{\alpha} + \mathbf{T}_{1,1} \overline{\alpha}. \]
  The claim follows immediately.
 \end{proof}

 In order to determine the image of $\mathbf{T}_{1,2}$, we define certain natural subspaces of $\mathcal{A}(\riem_2)$.  Assume that $\riem_2$ is connected (but not necessarily $\riem_1$).  
 Let $c^k_1,\ldots,c^k_{m_k}$ be a fixed homology basis of simple closed curves for $\riem_k$ for $k=1,2$.  
 
 We then have a linear map 
 \begin{align}\label{capital Xi}
  \Xi_k: \mathcal{A}(\mathscr{R}) & \rightarrow \mathbb{C}^{m_k} \\\nonumber
  u & \mapsto \left( \int_{c^k_1} u,\ldots, \int_{c^k_{m_k}} u \right)
 \end{align}
 and the linear map 
 \begin{align}\label{capital Xibar}
  \overline{\Xi_k}: \overline{\mathcal{A}(\mathscr{R})} & \rightarrow \mathbb{C}^{m_k} \\\nonumber
  \overline{u} & \mapsto \left( \int_{c^k_1} \overline{u},\ldots, \int_{c^k_{m_k}} \overline{u} \right). 
 \end{align}
 We then define the subspaces 
 \begin{align}\label{capital Xk}
    X_k & = \text{Im}(\Xi_k) \subseteq \mathbb{C}^{m_k} \\
    \overline{X}_k & = \text{Im}(\overline{\Xi}_k) \subseteq \mathbb{C}^{m_k}.
 \end{align}
 Although $\Xi_k$ and $\overline{\Xi}_k$ depend on the choice of basis, $X_k$ and $\overline{X}_k$ do not.  
 It will sometimes be convenient to choose specific homology bases and an ordering.  Note that some curves in the homology base of $\mathcal{A}(\mathscr{R})$ may appear in the homology base of both $\riem_1$ and $\riem_2$, and that some curves in the homology base of $\mathcal{A}(\mathscr{R})$ might not appear in the homology base of either $\riem_1$ or $\riem_2$.  

 
  
 We then define the following subspaces of $\mathcal{A}(\riem_2)$: 
 \begin{equation}\label{svensk a}
\gls{gota}({\riem}_k) = \left\{  \alpha \in \mathcal{A}({\riem}_k) : \left( \int_{c^k_1} \alpha ,\ldots,\int_{c^k_{m_k}} \alpha  \right) \in X_k  \right\}.    
 \end{equation}  
 and 
 \begin{equation}\label{svensk abar}
    \mathrm{\textgoth{A}}^-(\riem_k) = \left\{  \alpha \in \mathcal{A}(\riem_k) : \left( \int_{c^k_1} \alpha ,\ldots,\int_{c^{k}_{m_k}} \alpha  \right) \in \overline{X}_k         \right\}  
 \end{equation}     
 for $k=1,2$.  
 Note that it is most certainly {\bf not} true that $\mathrm{\textgoth{A}}^-(\riem_k) = \overline{\mathrm{\textgoth{A}}(\riem_k)}$.   
 
 The definition immediately implies that
 \begin{proposition}  For $k=1,2$, given $\alpha \in \mathcal{A}(\riem_k)$, it holds that
 $\alpha \in \emph{{\textgoth{A}}}(\riem_k)$ if and only if $\alpha$ is in the same cohomology class as an element of $\mathbf{R}_k\mathcal{A}(\mathscr{R})$.  Similarly, $\alpha \in \emph{{\textgoth{A}}}^-(\riem_k)$ if and only if $\alpha$ is in the same cohomology class as an element of $\overline{\mathbf{R}}_k \overline{\mathcal{A}(\mathscr{R})}$.  
 \end{proposition}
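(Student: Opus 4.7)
The plan is to reduce both equivalences to the standard de Rham fact that on a surface of finite topological type, a closed smooth one-form is exact if and only if all its periods vanish on a basis for the first homology. Since every element of $\mathcal{A}(\riem_k)$ is holomorphic and hence in particular closed and smooth, and since $c_1^k,\ldots,c_{m_k}^k$ form a homology basis of $\riem_k$ by construction, this criterion applies verbatim.

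For the forward direction of the first equivalence, I would start with $\alpha\in\mathrm{\textgoth{A}}(\riem_k)$. By definition the period vector $(\int_{c_1^k}\alpha,\ldots,\int_{c_{m_k}^k}\alpha)$ lies in $X_k=\mathrm{Im}(\Xi_k)$, so there exists $u\in\mathcal{A}(\mathscr{R})$ with $\int_{c_i^k}u=\int_{c_i^k}\alpha$ for every $i=1,\ldots,m_k$. The form $\alpha-\mathbf{R}_k u\in\mathcal{A}(\riem_k)$ is then closed with vanishing periods on a homology basis, hence exact on $\riem_k$, so $\alpha$ lies in the same cohomology class as $\mathbf{R}_k u\in\mathbf{R}_k\mathcal{A}(\mathscr{R})$. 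Conversely, if $\alpha-\mathbf{R}_k u=df$ for some $u\in\mathcal{A}(\mathscr{R})$ and some primitive $f$, integrating along each $c_i^k$ gives $\int_{c_i^k}\alpha=\int_{c_i^k}\mathbf{R}_k u=\int_{c_i^k}u$, so the period vector of $\alpha$ is $\Xi_k(u)\in X_k$, i.e.\ $\alpha\in\mathrm{\textgoth{A}}(\riem_k)$.

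For the second equivalence the argument is formally identical, replacing $\Xi_k$ by $\overline{\Xi}_k$, $X_k$ by $\overline{X}_k$, and $\mathbf{R}_k$ by $\overline{\mathbf{R}}_k$. The only conceptual point worth highlighting is that cohomologous here is understood in the sense of closed forms modulo exact forms, so there is no obstruction to a holomorphic one-form on $\riem_k$ being cohomologous to an anti-holomorphic form; indeed an exact one-form may have nontrivial holomorphic and anti-holomorphic parts (the decomposition \eqref{direct sum decomposition} is not compatible with exactness, as noted after \eqref{eq:inner_product_with_dbar_and_d}).

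There is essentially no obstacle to this argument: the content of the proposition is the tautology that $X_k$, respectively $\overline{X}_k$, is precisely the image in $\mathbb{C}^{m_k}$ of the period map applied to $\mathbf{R}_k\mathcal{A}(\mathscr{R})$, respectively $\overline{\mathbf{R}}_k\overline{\mathcal{A}(\mathscr{R})}$. The only thing to be careful about is making sure one invokes the de Rham criterion on $\riem_k$ itself (which is a bordered surface of finite type and therefore has finitely generated first homology with the $c_i^k$ as a basis), rather than on $\mathscr{R}$; this is automatic from the setup in Subsection \ref{se:Schiffer_kernel_image}.
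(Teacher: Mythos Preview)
Your argument is correct and is precisely the unpacking of the definitions that the paper intends; indeed, the paper introduces this proposition with ``The definition immediately implies that'' and gives no further proof. Your careful remark that cohomologous is meant in the de Rham sense (so a holomorphic form may well be cohomologous to an anti-holomorphic one) is a useful clarification the paper leaves implicit.
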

 
 Another useful fact is the following.
 \begin{proposition}  \label{pr:XplusXbar_is_all}  
  For $k=1,2$ 
  \[  X_k {+} \overline{X}_k = \mathbb{C}^{m_k}.  \]
 \end{proposition}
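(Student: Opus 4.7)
My plan is to realize $X_k + \overline{X}_k$ as the image of the period map on harmonic one-forms of $\mathscr{R}$ and then invoke classical Hodge--de Rham theory, reducing the question to a purely topological one about the inclusion $\riem_k \hookrightarrow \mathscr{R}$.

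First, I would use the Hodge decomposition $\mathcal{A}_{\mathrm{harm}}(\mathscr{R}) = \mathcal{A}(\mathscr{R}) \oplus \overline{\mathcal{A}(\mathscr{R})}$, valid since $\mathscr{R}$ is compact. For any $\omega = \alpha + \overline{\beta} \in \mathcal{A}_{\mathrm{harm}}(\mathscr{R})$, the period vector
\[
\left( \int_{c_1^k} \omega, \ldots, \int_{c_{m_k}^k} \omega \right)
\]
equals $\Xi_k(\alpha) + \overline{\Xi}_k(\overline{\beta})$, so $X_k + \overline{X}_k$ is precisely the image of the map
\[
P_k : \mathcal{A}_{\mathrm{harm}}(\mathscr{R}) \to \mathbb{C}^{m_k}, \qquad \omega \mapsto \left( \int_{c_1^k} \omega, \ldots, \int_{c_{m_k}^k} \omega \right).
\]

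Second, by the Hodge--de Rham theorem on the compact Riemann surface $\mathscr{R}$, the map sending a harmonic one-form to its period functional on $H_1(\mathscr{R}, \mathbb{Z})$ is a $\mathbb{C}$-linear isomorphism $\mathcal{A}_{\mathrm{harm}}(\mathscr{R}) \xrightarrow{\sim} H^1(\mathscr{R}, \mathbb{C})$. Under this identification $P_k$ becomes the composition of the pullback $\iota^* \colon H^1(\mathscr{R}, \mathbb{C}) \to H^1(\riem_k, \mathbb{C})$ with the evaluation isomorphism $H^1(\riem_k, \mathbb{C}) \xrightarrow{\sim} \mathbb{C}^{m_k}$ given by the basis $\{[c_i^k]\}$. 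Surjectivity of $P_k$ therefore reduces, by de Rham duality, to injectivity of $\iota_*: H_1(\riem_k, \mathbb{C}) \to H_1(\mathscr{R}, \mathbb{C})$, i.e.\ to the linear independence of the classes $[c_1^k]_\mathscr{R}, \ldots, [c_{m_k}^k]_\mathscr{R}$ in $H_1(\mathscr{R}, \mathbb{C})$.

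Third, I would establish this independence using the Mayer--Vietoris sequence for the decomposition $\mathscr{R} = \riem_1 \cup \riem_2$ glued along a thickening of $\Gamma$. The kernel of $\iota_*$ is controlled by the image of $H_1(\Gamma) \to H_1(\riem_k) \oplus H_1(\riem_{3-k})$; a nontrivial relation $\sum \lambda_i [c_i^k]_\mathscr{R} = 0$ would express the cycle $\sum \lambda_i c_i^k$, nontrivial in $H_1(\riem_k)$, as a boundary of a chain in $\riem_{3-k}$. Tracing through the sequence, this is incompatible with $\{c_i^k\}$ being a homology basis of $\riem_k$ once one uses the surface structure and the fact that $\Gamma$ is a disjoint union of simple closed curves.

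The main obstacle will be the careful bookkeeping in this final topological step, in particular the interaction between boundary cycles appearing in the homology basis $\{c_i^k\}$ and the cycles of $\Gamma$ bounding components of $\riem_{3-k}$. Once injectivity of $\iota_*$ is secured, the identification $X_k + \overline{X}_k = \operatorname{Im}(P_k) = \mathbb{C}^{m_k}$ follows immediately from the Hodge isomorphism.
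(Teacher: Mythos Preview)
Your approach is the same as the paper's: both identify $X_k + \overline{X}_k$ with the image of the period map $P_k : \mathcal{A}_{\mathrm{harm}}(\mathscr{R}) \to \mathbb{C}^{m_k}$ and invoke the Hodge theorem. The paper's one-sentence proof simply asserts surjectivity (``every possible configuration of periods \ldots\ can be attained''), implicitly taking for granted that the curves $c^k_i$ remain linearly independent in $H_1(\mathscr{R},\mathbb{C})$; you make this step explicit as injectivity of $\iota_* : H_1(\riem_k,\mathbb{C}) \to H_1(\mathscr{R},\mathbb{C})$ and propose a Mayer--Vietoris justification. In substance the two proofs coincide, yours being the more carefully unpacked version.

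Your hesitation at step three is warranted, because this is precisely where a hypothesis is needed. Mayer--Vietoris shows that $a \in \ker\bigl(\iota_* : H_1(\riem_k) \to H_1(\mathscr{R})\bigr)$ if and only if $a = \sum_i \lambda_i [\Gamma_i]_{\riem_k}$ for some scalars with $\sum_i \lambda_i [\Gamma_i]_{\riem_{3-k}} = 0$. When $\riem_{3-k}$ is connected the only relation among its boundary classes is $\sum_i [\Gamma_i] = 0$, forcing all $\lambda_i$ equal and hence $a = \lambda \sum_i [\Gamma_i]_{\riem_k} = 0$; this settles $k=1$ under the standing hypothesis that $\riem_2$ is connected, and both values of $k$ once $\riem_1$ is also assumed connected (as in Theorem~\ref{th:X_k_analysis}, where the dimension count is actually used). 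With $\riem_1$ disconnected, however, the claim can fail for $k=2$: take $\mathscr{R} = \sphere$, $\riem_2$ an annulus, $\riem_1$ two disks; then $m_2 = 1$ while $\mathcal{A}_{\mathrm{harm}}(\sphere) = 0$, so $X_2 + \overline{X}_2 = 0 \neq \mathbb{C}$. So your Mayer--Vietoris sketch cannot be completed without the extra connectivity assumption --- nor can the paper's shortcut.
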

 \begin{proof}
   This is an immediate consequence of the Hodge theorem, which says that every cohomology class on $\mathscr{R}$ has a representative in $\mathcal{A}_{\text{harm}}(\mathscr{R})$.  Thus every possible configuration of periods of $c^k_1,\ldots,c^k_{m_k}$ in $\mathscr{R}$ (and so, in particular of of $c^k_1,\ldots,c^k_{m_k}$ in $\riem_k$)  can be attained by an element of  $\mathcal{A}_{\text{harm}}(\mathscr{R})$.  
 \end{proof}
 

 The following theorem establishes the behaviour of $\mathbf{T}_{1,2}$ on its entire domain.    

 \begin{theorem} 
  \label{th:Tonetwo_iso_improved}  Assume that $\riem_2$ is connected.  
  Let 
  \[ \overline{W}_1 = \left\{\overline{\mu} \in \overline{\mathbf{R}}_1 \overline{\mathcal{A}(\mathscr{R})};\,\, \overline{\mathbf{R}}_2 \overline{\mathbf{S}}_1 {\overline{\mu} } \in \overline{\mathcal{A}^{\mathrm{e}}(\riem_2)} \right\}  \]
    We have 
    \begin{equation*}
      \mathrm{Im}(\mathbf{T}_{1,2})  = \emph{{\textgoth{A}}}^-(\riem_2)
    \end{equation*}
    and
    \begin{equation*}
     \mathrm{Ker}(\mathbf{T}_{1,2}) \cong  \overline{W}_1.
    \end{equation*}
    The same claim follows for the complex conjugates and with $1$ and $2$ interchanged.
 \end{theorem}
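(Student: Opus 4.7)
My approach splits into characterizing $\mathrm{Im}(\mathbf{T}_{1,2})$ and $\mathrm{Ker}(\mathbf{T}_{1,2})$ separately, both built around three tools: the cohomological identity of Theorem~\ref{th:Schiffer_cohomology}(1), which states that $\mathbf{T}_{1,2}\overline{\alpha}+\overline{\mathbf{R}}_2\overline{\mathbf{S}}_1\overline{\alpha}$ is exact on $\riem_2$; the fact (Theorems~\ref{th:general_T12_kernel_on_perp} and~\ref{th:general_T12_on_perp_surjective_to_exact}) that $\mathbf{T}_{1,2}$ restricts to an isomorphism from $[\overline{\mathbf{R}}_1\overline{\mathcal{A}(\mathscr{R})}]^\perp$ onto $\mathcal{A}^{\mathrm{e}}(\riem_2)$; and the invertibility of $\overline{\mathbf{S}}_1\overline{\mathbf{R}}_1$ on $\overline{\mathcal{A}(\mathscr{R})}$ from Theorem~\ref{th:S_an_isomorphism}.

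For $\mathrm{Im}(\mathbf{T}_{1,2})\subseteq\mathrm{\textgoth{A}}^-(\riem_2)$, I would integrate the exactness of Theorem~\ref{th:Schiffer_cohomology}(1) over each homology cycle $c_j^{\,2}$ to obtain $\int_{c_j^{\,2}}\mathbf{T}_{1,2}\overline{\alpha}=-\int_{c_j^{\,2}}\overline{\mathbf{R}}_2\overline{\mathbf{S}}_1\overline{\alpha}$. Since $\overline{\mathbf{S}}_1\overline{\alpha}\in\overline{\mathcal{A}(\mathscr{R})}$ by Theorem~\ref{th:S_kernel_range}, the tuple of periods of $\mathbf{T}_{1,2}\overline{\alpha}$ lies in $\overline{X}_2$, which is exactly the defining condition of $\mathrm{\textgoth{A}}^-(\riem_2)$. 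For the reverse inclusion, given $\beta\in\mathrm{\textgoth{A}}^-(\riem_2)$, I would choose $\overline{u}\in\overline{\mathcal{A}(\mathscr{R})}$ with $\overline{\Xi}_2(\overline{u})=\Xi_2(\beta)$; Theorem~\ref{th:S_an_isomorphism} then yields $\overline{v}\in\overline{\mathcal{A}(\mathscr{R})}$ with $\overline{\mathbf{S}}_1\overline{\mathbf{R}}_1\overline{v}=-\overline{u}$. Setting $\overline{\alpha}_0=\overline{\mathbf{R}}_1\overline{v}$, Theorem~\ref{th:Schiffer_cohomology}(1) forces $\mathbf{T}_{1,2}\overline{\alpha}_0$ to have the same periods as $\overline{\mathbf{R}}_2\overline{u}$ and hence as $\beta$. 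Therefore $\beta-\mathbf{T}_{1,2}\overline{\alpha}_0\in\mathcal{A}^{\mathrm{e}}(\riem_2)$, and Theorem~\ref{th:general_T12_on_perp_surjective_to_exact} supplies $\overline{\alpha}_1\in[\overline{\mathbf{R}}_1\overline{\mathcal{A}(\mathscr{R})}]^\perp$ with $\mathbf{T}_{1,2}\overline{\alpha}_1=\beta-\mathbf{T}_{1,2}\overline{\alpha}_0$, so $\beta=\mathbf{T}_{1,2}(\overline{\alpha}_0+\overline{\alpha}_1)$.

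For the kernel, I would decompose any $\overline{\alpha}\in\mathrm{Ker}(\mathbf{T}_{1,2})$ orthogonally as $\overline{\alpha}=\overline{\mu}+\overline{\nu}$ with $\overline{\mu}\in\overline{\mathbf{R}}_1\overline{\mathcal{A}(\mathscr{R})}$ and $\overline{\nu}\in[\overline{\mathbf{R}}_1\overline{\mathcal{A}(\mathscr{R})}]^\perp$. Since $\overline{\mathbf{S}}_1\overline{\nu}=0$ by Theorem~\ref{th:S_kernel_range}, Theorem~\ref{th:Schiffer_cohomology}(1) shows $\mathbf{T}_{1,2}\overline{\nu}$ is exact, and so $\mathbf{T}_{1,2}\overline{\mu}=-\mathbf{T}_{1,2}\overline{\nu}$ is also exact. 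Applying Theorem~\ref{th:Schiffer_cohomology}(1) to $\overline{\mu}$ then gives $\overline{\mathbf{R}}_2\overline{\mathbf{S}}_1\overline{\mu}\in\overline{\mathcal{A}^{\mathrm{e}}(\riem_2)}$, i.e.\ $\overline{\mu}\in\overline{W}_1$. Conversely, given $\overline{\mu}\in\overline{W}_1$, Theorem~\ref{th:Schiffer_cohomology}(1) yields $\mathbf{T}_{1,2}\overline{\mu}\in\mathcal{A}^{\mathrm{e}}(\riem_2)$, so Theorems~\ref{th:general_T12_on_perp_surjective_to_exact} and~\ref{th:general_T12_kernel_on_perp} furnish a unique $\overline{\nu}\in[\overline{\mathbf{R}}_1\overline{\mathcal{A}(\mathscr{R})}]^\perp$ with $\mathbf{T}_{1,2}\overline{\nu}=-\mathbf{T}_{1,2}\overline{\mu}$, placing $\overline{\mu}+\overline{\nu}$ in the kernel. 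The assignments $\overline{\alpha}\mapsto\overline{\mu}$ and $\overline{\mu}\mapsto\overline{\mu}+\overline{\nu}$ are then mutually inverse linear maps, giving $\mathrm{Ker}(\mathbf{T}_{1,2})\cong\overline{W}_1$. The complex conjugate and $1\!\leftrightarrow\!2$ statements follow by the symmetric roles of $\riem_1,\riem_2$ in all the invoked results.

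The hard step will be the surjectivity argument for the image: one must translate the \emph{exactness} conclusion of Theorem~\ref{th:Schiffer_cohomology}(1) into a clean \emph{equality of periods} that allows the isomorphism $\overline{\mathbf{S}}_1\overline{\mathbf{R}}_1$ to be used to absorb the non-exact part of $\beta$, leaving a residue in $\mathcal{A}^{\mathrm{e}}(\riem_2)$ to which the known surjectivity on $[\overline{\mathbf{R}}_1\overline{\mathcal{A}(\mathscr{R})}]^\perp$ can be applied. The kernel half is technically easier, since once the orthogonal decomposition and the ``exact implies $\overline{W}_1$'' direction of the Schiffer cohomology identity are established, the remainder is symbolic tracking of cohomology classes.
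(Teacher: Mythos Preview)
Your proposal is correct and follows essentially the same approach as the paper: both halves rely on the cohomological identity of Theorem~\ref{th:Schiffer_cohomology}(1), the isomorphism $\overline{\mathbf{S}}_1\overline{\mathbf{R}}_1$ from Theorem~\ref{th:S_an_isomorphism}, and the fact that $\mathbf{T}_{1,2}$ restricts to a bijection from $[\overline{\mathbf{R}}_1\overline{\mathcal{A}(\mathscr{R})}]^\perp$ onto $\mathcal{A}^{\mathrm{e}}(\riem_2)$ (Theorems~\ref{th:general_T12_kernel_on_perp} and~\ref{th:general_T12_on_perp_surjective_to_exact}). The paper packages the kernel isomorphism as a single map $\Phi:\overline{W}_1\to\mathrm{Ker}(\mathbf{T}_{1,2})$ and checks well-definedness, surjectivity, and injectivity separately, whereas you describe the forward and backward maps and observe they are mutual inverses; the content is identical.
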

 \begin{proof}
    It follows from Theorem \ref{th:Schiffer_cohomology} that $\text{Im}(\mathbf{T}_{1,2}) \subseteq \mathrm{\textgoth{A}}^-(\riem_2)$.  We show that $\mathrm{\textgoth{A}}^-(\riem_2)  \subseteq  \text{Im}(\mathbf{T}_{1,2})$. 
 Let $\beta \in \mathrm{\textgoth{A}}^-(\riem_2)$. Again applying Theorem  \ref{th:Schiffer_cohomology}, since $\overline{\mathbf{S}}_1 \overline{\mathbf{R}}_1$ is an isomorphism by Theorem \ref{th:S_an_isomorphism}, we can find $\overline{\gamma} \in \overline{\mathcal{A}(\mathscr{R})}$ such that 
   \[  \beta - \mathbf{T}_{1,2}   \overline{\mathbf{R}}_1 \overline{\gamma}  \in \mathcal{A}^e(\riem_2).  \]
 By Theorem \ref{th:general_T12_on_perp_surjective_to_exact} there is an $\overline{\alpha} \in (\overline{\mathbf{R}}_1 \overline{\mathcal{A}(R)})^\perp$ such that 
   \[  \mathbf{T}_{1,2} \overline{\alpha} = \beta - \mathbf{T}_{1,2}  \overline{\mathbf{R}}_1 \overline{\gamma} \]
   which completes the proof of the first claim.
   
   We now prove the second claim. 
   Let $\overline{\mu} \in \overline{W}_1$. Since $\mathbf{T}_{1,2} \overline{\mu} + \overline{\mathbf{R}}_2 \overline{\mathbf{S}}_1 \overline{\mu}$ is exact by Theorem \ref{th:Schiffer_cohomology}, so is $\mathbf{T}_{1,2} \mu$.  Thus by Theorem \ref{th:general_T12_on_perp_surjective_to_exact} there is an $\overline{\alpha} \in  (\overline{\mathbf{R}}_1 \overline{\mathcal{A}(R)})^\perp$ such that $\mathbf{T}_{1,2} \overline{\alpha} = - \mathbf{T}_{1,2} \overline{\mu}$ so $\overline{\alpha} + \overline{\mu} \in \mathrm{Ker} (\mathbf{T}_{1,2})$.  We define 
   \begin{align*}
      \Phi:\overline{W}_1 & \rightarrow \mathrm{Ker}(\mathbf{T}_{1,2}) \\
      \overline{\mu} & \mapsto \overline{\alpha} + \overline{\mu}.
   \end{align*}
   This is well-defined, since if $\overline{\alpha} + \overline{\mu}$ and $\overline{\beta} + \overline{\mu}$ are both in $\mathrm{Ker}( \mathbf{T}_{12})$ for $\overline{\beta},\overline{\alpha} \in [\overline{\mathbf{R}}_1 \overline{\mathcal{A}(\mathscr{R})}]^\perp$ then $\overline{\alpha} - \overline{\beta} \in [\overline{\mathbf{R}}_1 \overline{\mathcal{A}(\mathscr{R})}]^\perp \cap \mathrm{Ker}( \mathbf{T}_{1,2})$ so $\overline{\alpha} -\overline{\beta} =0$ by Theorem \ref{th:general_T12_kernel_on_perp}.  
   
   This map is surjective. Assume that $\overline{\gamma}  \in \mathrm{Ker}(\mathbf{T}_{1,2})$.  Write $\overline{\gamma} = \overline{\alpha} + \overline{\mu}$ for $\overline{\alpha} \in  (\overline{\mathbf{R}}_1 \overline{\mathcal{A}(R)})^\perp$ and $\overline{\mu} \in \overline{\mathbf{R}}_1 \overline{\mathcal{A}(\mathscr{R})}$. Since $\mathbf{T}_{1,2} \overline{\alpha} = -\mathbf{T}_{1,2} \overline{\mu}$ and the former is exact by Theorem \ref{th:general_T12_on_perp_surjective_to_exact} we see that $\mathbf{T}_{1,2} \overline{\mu}$ is exact. Thus by Theorem \ref{th:Schiffer_cohomology} $\overline{\mathbf{R}}_2 \overline{\mathbf{S}}_1 \overline{\mu}$ is exact.  So $\overline{\mu} \in \overline{W}_1$ and $\Phi(\overline{\gamma}) = \overline{\gamma}$. 
   
   This map is injective.  Assume that $\Phi(\overline{\mu})=0$. Then $\overline{\alpha} + \overline{\mu} =0$.  Using Theorem \ref{th:general_T12_kernel_on_perp} and the fact that $\overline{\alpha} \in [\overline{\mathbf{R}}_1 \overline{\mathcal{A}(\mathscr{R})}]^\perp$ we obtain that $\overline{\mu} \in [\overline{\mathbf{R}}_1 \overline{\mathcal{A}(\mathscr{R})}]^\perp$.  So $\overline{\mu}=0$. 
 \end{proof}
 \begin{remark} \label{re:explicit_Tpartialinverse_construction}
  The element $\overline{\alpha}$ corresponding to $\overline{\mu}$ in the definition of $\Phi$ can be constructed explicitly as follows.  Given $\overline{\mu} \in \overline{W}_1$, since $\mathbf{T}_{1,2} \overline{\mu}$ is exact, there is an $h \in \mathcal{D}(\riem_2)$ such that $\partial h = \mathbf{T}_{1,2} \overline{\mu}$. Set $H=  \mathbf{O}_{2,1} h$.   So 
  \[  \mathbf{J}_{1,2}^q H = -\mathbf{J}^q_{2,2} h = -h + c \]
  for some $c$ which is constant on connected components, by Theorems \ref{th:J_same_both_sides} and \ref{th:jump_on_holomorphic}.  So applying Theorem \ref{th:jump_derivatives} we obtain 
  \[  \mathbf{T}_{1,2} \overline{\partial} H + \overline{\mathbf{R}}_2  \overline{\mathbf{S}}_1 \overline{\partial} H =  d \mathbf{J}_{1,2} H = - \partial h = -\mathbf{T}_{1,2} \overline{\mu}. \]
  But since the right hand side is holomorphic, we must have that $\overline{\mathbf{R}}_2  \overline{\mathbf{S}}_1 \overline{\partial} H =0$ so by analytic continuation and Theorem \ref{th:S_kernel_range} we see that $\overline{\alpha} =\overline{\partial} H \in [\overline{\mathbf{R}}_1 \overline{\mathcal{A}(\mathscr{R})}]^\perp$ and
  \[  \mathbf{T}_{1,2} (\overline{\alpha} + \overline{\mu} ) =0. \]
  We may summarize this by saying that 
  \[  \Phi =  \mathbf{I} - \overline{\mathbf{P}}_1 d \mathbf{O}_{2,1} d^{-1} \mathbf{T}_{1,2}     \]
  observing that this is well-defined by the proof of the theorem.
 \end{remark}

   This has the following important consequence.
   \begin{corollary}  \label{co:T_plus_extra_surjective_capped}  Let $\riem_2$ be capped by $\riem_1$.  Then $\mathrm{ker}(\mathbf{T}_{1,2})$ is trivial.
   Furthermore, 
    any $\alpha \in \mathcal{A}(\riem_2)$ can be written 
    \[  \alpha = \mathbf{T}_{1,2} \overline{\gamma} + \mathbf{R}_2  \tau + \partial \omega  \]
    for unique $\overline{\gamma} \in \overline{\mathcal{A}(\riem_1)}$, $\tau \in \mathcal{A}(\mathscr{R})$, and $d\omega \in \mathcal{A}_{\mathrm{hm}}(\riem_2)$.     That is 
    \[    \mathcal{A}(\riem_2) = \emph{\textrm{\textgoth{A}}}^{-}(\riem_2) \oplus \mathbf{R}_2 \mathcal{A}(\mathscr{R}) \oplus \partial \mathcal{D}_{\mathrm{hm}}(\riem_2).       \]
    The same claim holds for complex conjugates.
   \end{corollary}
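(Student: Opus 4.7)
The plan is to proceed in three stages: establish $\ker(\mathbf{T}_{1,2}) = \{0\}$, then prove existence of the decomposition via period matching, and finally uniqueness. The key algebraic inputs will be Theorem \ref{th:Tonetwo_iso_improved} (kernel and image of $\mathbf{T}_{1,2}$), the quadratic adjoint identities of Section \ref{se:Schiffer_adjoint_identities}, the invertibility of the boundary period matrix (Theorem \ref{th:period_matrix_invertible}), and the isomorphism $\mathbf{S}_k \mathbf{R}_k$ of Theorem \ref{th:S_an_isomorphism}. The geometric input is the capping hypothesis, which makes every boundary curve $\partial_k \riem_2$ null-homologous in $\mathscr{R}$.

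For kernel triviality I would take any $\overline{\mu} = \overline{\mathbf{R}}_1 \overline{\beta}$ in the subspace $\overline{W}_1$ of Theorem \ref{th:Tonetwo_iso_improved} and, using $\overline{\mathbf{S}}_1 \overline{\mathbf{R}}_1 + \overline{\mathbf{S}}_2 \overline{\mathbf{R}}_2 = \mathbf{I}$ (Theorem \ref{th:quadratic_adjoint_one} combined with $\overline{\mathbf{S}}_k^* = \overline{\mathbf{R}}_k$), rewrite
\[  \overline{\mathbf{R}}_2 \overline{\mathbf{S}}_1 \overline{\mathbf{R}}_1 \overline{\beta} \;=\; \overline{\mathbf{R}}_2 \bigl( \overline{\beta} - \overline{\mathbf{S}}_2 \overline{\mathbf{R}}_2 \overline{\beta} \bigr). \]
Exactness of this quantity on $\riem_2$, together with null-homology of the caps, promotes exactness on $\riem_2$ to exactness on $\mathscr{R}$ (a form on $\mathscr{R}$ whose restriction to $\riem_2$ has vanishing $a_j, b_j$-periods automatically has vanishing periods on the null-homologous $\partial_k \riem_2$, hence on every cycle of $\mathscr{R}$). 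Hence $\overline{\beta} - \overline{\mathbf{S}}_2 \overline{\mathbf{R}}_2 \overline{\beta}$ is exact and anti-holomorphic on compact $\mathscr{R}$, so it vanishes; equivalently $\overline{\mathbf{S}}_1 \overline{\mathbf{R}}_1 \overline{\beta} = 0$, whence $\overline{\beta} = 0$ by Theorem \ref{th:S_an_isomorphism}. The genus-zero case is immediate since $\overline{\mathcal{A}(\mathscr{R})} = 0$.

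For existence, given $\alpha \in \mathcal{A}(\riem_2)$ I would first absorb its boundary periods into $\partial \omega$: because $\int_{\partial_j \riem_2} \partial \omega = \tfrac{i}{2} \int_{\partial_j \riem_2} \ast d\omega$ and $\Pi$ is invertible, a unique $\omega \in \mathcal{D}_{\mathrm{hm}}(\riem_2)$ (mod constants) does the job. The residual $\alpha - \partial \omega$ then has only internal periods in $\mathbb{C}^{2g}$. Under the capping hypothesis, $X_2$ and $\overline{X}_2$ each have complex dimension $g$ and intersect trivially (an element of the intersection would yield a cohomology class on $\mathscr{R}$ represented simultaneously by a holomorphic and an anti-holomorphic form, forcing it to vanish), so $\mathbb{C}^{2g} = X_2 \oplus \overline{X}_2$. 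Splitting the internal period vector accordingly and realizing the $X_2$-component by $\mathbf{R}_2 \tau$ for a suitable $\tau \in \mathcal{A}(\mathscr{R})$, the leftover $\alpha - \partial \omega - \mathbf{R}_2 \tau$ lies in $\mathrm{\textgoth{A}}^-(\riem_2) = \mathrm{Im}(\mathbf{T}_{1,2})$ by Theorem \ref{th:Tonetwo_iso_improved} and so equals $\mathbf{T}_{1,2} \overline{\gamma}$ for a unique $\overline{\gamma}$ by the kernel triviality already established. Uniqueness is then a straightforward bookkeeping: starting from an alleged null combination $\mathbf{T}_{1,2} \overline{\gamma} + \mathbf{R}_2 \tau + \partial \omega = 0$, the boundary periods of the first two summands vanish in the capped case, so $\partial \omega$ has vanishing boundary periods and $\omega = 0$ by Theorem \ref{th:period_matrix_invertible}; the $X_2 \oplus \overline{X}_2$ splitting of internal periods then forces $\mathbf{R}_2 \tau$ and $\mathbf{T}_{1,2} \overline{\gamma}$ to have vanishing periods separately, and null-homology of the caps promotes $\mathbf{R}_2 \tau$ being exact on $\riem_2$ to $\tau$ being exact on $\mathscr{R}$, hence zero, after which kernel triviality gives $\overline{\gamma} = 0$. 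The main obstacle I expect is the kernel-triviality argument, where the capping hypothesis must be used in exactly the right cohomological fashion.
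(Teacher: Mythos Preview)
Your proposal is correct and follows essentially the same route as the paper: kernel triviality via the observation that in the capped case exactness on $\riem_2$ promotes to exactness on $\mathscr{R}$ (hence $\overline{W}_1=\{0\}$ and Theorem~\ref{th:Tonetwo_iso_improved} applies), and then the direct-sum decomposition via period matching using $X_2\oplus\overline{X}_2=\mathbb{C}^{2g}$ on the internal periods together with the boundary period matrix for the $\partial\omega$ term. One minor remark: your detour through the quadratic identity $\overline{\mathbf{S}}_1\overline{\mathbf{R}}_1+\overline{\mathbf{S}}_2\overline{\mathbf{R}}_2=\mathbf{I}$ is unnecessary, since $\overline{\mathbf{S}}_1\overline{\mu}$ is already a global anti-holomorphic form on $\mathscr{R}$ and you can run the promotion-of-exactness argument directly on it; also, the paper's own proof appears to contain a typo (it writes $X_1,\overline{X}_1$ where $X_2,\overline{X}_2$ is clearly meant, since $\riem_1$ has trivial homology), which your version handles correctly.
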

   \begin{proof}  
    Since any exact form on $\mathscr{R}$ is zero, we see that $\overline{W}_1 = \{0 \}$, so the kernel is zero by Theorem \ref{th:Tonetwo_iso_improved}.  
    Since the periods of elements of $\mathbf{R}_2 \mathscr{A}_{\mathrm{harm}}(\mathscr{R})$ are zero around the boundary curves $\partial_k \riem_2$ for all $k$, we see that $\ast \mathcal{A}_{\mathrm{hm}}(\riem_2)$ and $\mathbf{R}_2 \mathscr{A}_{\mathrm{harm}}(\mathscr{R})$ are linearly independent. Using the decomposition 
    \[  \partial \omega = \frac{1}{2}\left( d\omega + i \ast d\omega \right) \]
    shows that $\partial \mathcal{D}_{\mathrm{harm}}(\riem_2)$ and $\mathbf{R}_2 \mathcal{A}_{\mathrm{harm}}(\mathscr{R})$ are linearly independent. 
    
    Now $X_1$ and $\overline{X}_1$ are linearly independent, since each has dimension $g$, and the dimension of $X_1 + \overline{X}_1$ is $2g$.  Thus since $\text{Im}(\mathbf{T}_{1,2}) = {\mathrm{\textgoth{A}}}^{-}(\riem_2)$ by Theorem \ref{th:Tonetwo_iso_improved}, this proves that 
    \[  \mathcal{A}(\riem_2) = {\textrm{\textgoth{A}}}^{-}(\riem_2)  + \mathbf{R}_2 \mathcal{A}(\mathscr{R}) + \partial \mathcal{D}_{\mathrm{hm}}(\riem_2).  \]
    decomposition. Linear independence proves that the decomposition is a direct sum, and uniqueness of $\tau$. The uniqueness of $\overline{\partial} \omega$ follows from Theorem \ref{th:period_matrix_invertible}, and uniqueness of $\overline{\gamma}$ follows from triviality of the kernel of $\mathbf{T}_{1,2}$.  
   \end{proof}
   
\end{subsection}
\begin{subsection}{Index of the Schiffer operator} \label{th:index_and_examples}

 In the following, we first observe that 
  \begin{equation} \label{eq:kernel_Xi_and_W}
   \mathrm{dim} \, \mathrm{Ker}(\Xi_1) = \mathrm{dim}\, W_2. 
  \end{equation} 
  This follows directly from the definitions together with the fact that $\mathbf{R}_1 \mathbf{S}_1$ is an isomorphism by Theorem \ref{th:S_kernel_range}.  
 The same claim holds with $1$ and $2$ interchanged, as does the complex conjugate. 
 
 \begin{theorem}  \label{th:X_k_analysis}
  Assume that $\riem_1$ and $\riem_2$ are connected. Let $g$ be the genus of $\mathscr{R}$ and $g_1$,$g_2$ be the genuses of $\riem_1$ and $\riem_2$ respectively. 
  \begin{equation}  \label{eq:dimX1_and_W_2}
   \mathrm{dim}\, X_1   = g - \mathrm{dim} W_2 
  \end{equation}
  and
  \begin{equation}  \label{eq:dimX1_and_X1intersectX1bar}
   \mathrm{dim}\, X_1  = g_1 + \frac{n-1}{2} + \frac{1}{2} \, \mathrm{dim}(X_1 \cap \overline{X}_1). 
  \end{equation}
  The same claims hold with $1$ and $2$ interchanged. 
 \end{theorem}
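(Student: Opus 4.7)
The plan is to reduce both equalities to elementary dimension counts, using only rank--nullity, inclusion--exclusion, and the identification \eqref{eq:kernel_Xi_and_W} recorded just above the theorem.

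For \eqref{eq:dimX1_and_W_2} I would apply rank--nullity to the $\mathbb{C}$-linear map $\Xi_1 \colon \mathcal{A}(\mathscr{R}) \to \mathbb{C}^{m_1}$. Since $\mathscr{R}$ is a compact Riemann surface of genus $g$ one has $\dim_{\mathbb{C}} \mathcal{A}(\mathscr{R}) = g$, and therefore
\[ g \;=\; \dim \ker(\Xi_1) + \dim X_1. \]
Substituting \eqref{eq:kernel_Xi_and_W} in the form $\dim \ker(\Xi_1) = \dim W_2$ immediately yields \eqref{eq:dimX1_and_W_2}. The identity \eqref{eq:kernel_Xi_and_W} itself can be obtained by combining Theorem \ref{th:S_an_isomorphism}, which says $\mathbf{S}_2 \mathbf{R}_2$ is a linear automorphism of $\mathcal{A}(\mathscr{R})$, with the Bergman reproducing identity $\mathbf{S}_1 \mathbf{R}_1 u + \mathbf{S}_2 \mathbf{R}_2 u = u$ arising from \eqref{eq:Bergman_reproducing} applied over $\mathscr{R} = \riem_1 \cup \riem_2$ (quasicircles having measure zero). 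Indeed, the substitution $v = \mathbf{S}_2 \mathbf{R}_2 u$ converts the exactness condition $\mathbf{R}_1 \mathbf{S}_2 \mu \in \mathcal{A}^{\mathrm{e}}(\riem_1)$ defining $W_2$ into the condition $\mathbf{R}_1 u \in \mathcal{A}^{\mathrm{e}}(\riem_1)$ defining $\ker(\Xi_1)$.

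For \eqref{eq:dimX1_and_X1intersectX1bar} I would combine three inputs. First, since $\riem_1$ has type $(g_1,n)$, its first homology has rank $m_1 = 2g_1 + n - 1$. Second, Proposition \ref{pr:XplusXbar_is_all} gives $X_1 + \overline{X}_1 = \mathbb{C}^{m_1}$. Third, the complex dimensions of $X_1$ and $\overline{X}_1$ agree. This last point is the one place where some care is needed: the map $u \mapsto \overline{u}$ is a conjugate-linear bijection $\mathcal{A}(\mathscr{R}) \to \overline{\mathcal{A}(\mathscr{R})}$, and the identity $\overline{\int_{c} u} = \int_{c} \overline{u}$ gives the intertwining $\overline{\Xi_1 u} = \overline{\Xi}_1 \overline{u}$, so that $\overline{X}_1$ is the image of $X_1$ under coordinate-wise complex conjugation on $\mathbb{C}^{m_1}$; since this latter map is a conjugate-linear bijection of $\mathbb{C}^{m_1}$ that sends complex subspaces to complex subspaces of the same complex dimension, we obtain $\dim_{\mathbb{C}} X_1 = \dim_{\mathbb{C}} \overline{X}_1$. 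Inclusion--exclusion now yields
\[ 2\dim X_1 - \dim(X_1 \cap \overline{X}_1) \;=\; \dim X_1 + \dim \overline{X}_1 - \dim(X_1 \cap \overline{X}_1) \;=\; \dim(X_1 + \overline{X}_1) \;=\; 2g_1 + n - 1, \]
which rearranges to \eqref{eq:dimX1_and_X1intersectX1bar}. The symmetric statements with the roles of $1$ and $2$ interchanged are proven identically.

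Neither step presents a serious obstacle: the only mild subtlety is the verification of \eqref{eq:kernel_Xi_and_W}, which amounts to bookkeeping with the Bergman reproducing identity together with the isomorphism statement of Theorem \ref{th:S_an_isomorphism}. All the deeper cohomological content is packaged into results already in hand.
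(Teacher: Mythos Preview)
Your proof is correct and follows essentially the same route as the paper's: rank--nullity applied to $\Xi_1$ together with \eqref{eq:kernel_Xi_and_W} for the first equality, and inclusion--exclusion on $X_1$ and $\overline{X}_1$ combined with $m_1 = 2g_1 + n - 1$ for the second. You are slightly more explicit than the paper in two places---the conjugate-linear bijection argument for $\dim X_1 = \dim \overline{X}_1$, and the derivation of \eqref{eq:kernel_Xi_and_W} from the identity $\mathbf{S}_1\mathbf{R}_1 + \mathbf{S}_2\mathbf{R}_2 = \mathbf{I}$ plus Theorem~\ref{th:S_an_isomorphism}---but these are exactly the justifications the paper has in mind and merely sketches.
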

 \begin{proof}
   The first claim follow from the fact that 
   \[  X_1 = \mathrm{Im} (\Xi_1), \ \ \  \]
   equation \eqref{eq:kernel_Xi_and_W}, and the fact that $\mathcal{A}(\mathscr{R})$ has dimension $g$.   
   
   To prove the second claim, first observe that every homology class in $\riem_1$ is represented by a homology class in $\mathscr{R}$ (note that this depends on the assumptions on the configuration $\riem_1$, $\riem_2$, $\mathscr{R}$).  So 
   \[ \mathrm{dim} (X_1+ \overline{X}_1) = 2g_1 + n-1.  \] 
   Using this together with the fact that 
   \[  \mathrm{dim}\, (X_1+\overline{X}_1) = \mathrm{dim}\, X_1 + \mathrm{dim}\,\overline{X}_1 - \mathrm{dim}\, (X_1 \cap \overline{X}_1) = 2 \, \mathrm{dim}\, X_1 - \mathrm{dim}\, (X_1 \cap \overline{X}_1)  \]
   proves the claim.
 \end{proof}
 Combining these two claims, together with properties of the Schiffer operator, results in the following. 
 \begin{theorem}  \label{th:Schiffer_index_theorem}
  Assume that $\riem_1$ and $\riem_2$ are connected. Let $g$ be the genus of $\mathscr{R}$ and $g_1$,$g_2$ be the genuses of $\riem_1$ and $\riem_2$ respectively. Then 
  \[ \mathrm{Index} (\mathbf{T}_{1,2}) = g_1 - g_2.  \]
  The same claim holds with $1$ and $2$ switched.  
 \end{theorem}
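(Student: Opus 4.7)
The plan is to invoke Theorem \ref{th:Tonetwo_iso_improved} to identify the kernel and image of $\mathbf{T}_{1,2}$ with finite-dimensional linear-algebraic data, and then to compute both dimensions using the period calculus from Theorem \ref{th:X_k_analysis} together with the Euler-characteristic relation between $g$, $g_1$, $g_2$ and $n$.

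First I would verify that $\mathbf{T}_{1,2}$ is Fredholm in the strict sense. By Theorem \ref{th:Tonetwo_iso_improved}, $\ker \mathbf{T}_{1,2} \cong \overline{W}_1$ is finite-dimensional (bounded above by $g = \dim \mathcal{A}(\mathscr{R})$), and the image $\mathrm{\textgoth{A}}^-(\riem_2)$ is the preimage, under the continuous period map $\Pi_2:\mathcal{A}(\riem_2) \to \mathbb{C}^{m_2}$ with $m_2 = 2g_2+n-1$, of the finite-dimensional subspace $\overline{X}_2$, so it has finite codimension in $\mathcal{A}(\riem_2)$ and is closed.

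Next I would compute $\dim \ker \mathbf{T}_{1,2}$. Exactly the argument underlying \eqref{eq:kernel_Xi_and_W} applies: since $\overline{\mathbf{S}}_1 \overline{\mathbf{R}}_1$ is an isomorphism on $\overline{\mathcal{A}(\mathscr{R})}$ by Theorem \ref{th:S_an_isomorphism}, the map $\overline{\tau} \mapsto \overline{\mathbf{R}}_1\overline{\tau}$ sets up a bijection $\overline{W}_1 \cong \ker \overline{\Xi}_2$; combined with Theorem \ref{th:X_k_analysis} (with the roles of $1$ and $2$ interchanged) this gives
\[
\dim \ker \mathbf{T}_{1,2} \;=\; \dim \overline{W}_1 \;=\; \dim \ker \overline{\Xi}_2 \;=\; g - \dim \overline{X}_2 \;=\; g - \dim X_2.
\]
For the cokernel, I would show that $\Pi_2: \mathcal{A}(\riem_2)\to \mathbb{C}^{m_2}$ is surjective: Corollary \ref{co:specify_internal_periods} realises any prescribed set of $2g_2$ internal periods, while the forms $\ast d\omega_k + i\, d\omega_k \in \mathcal{A}(\riem_2)$ introduced in the proof of Lemma \ref{le:dbar_representative}, together with Theorem \ref{th:period_matrix_invertible}, independently realise any prescribed boundary periods on the $n-1$ boundary curves used in the homology basis. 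Since $\mathrm{\textgoth{A}}^-(\riem_2) = \Pi_2^{-1}(\overline{X}_2)$, this yields
\[
\mathrm{codim}\; \mathrm{Im}\, \mathbf{T}_{1,2} \;=\; m_2 - \dim \overline{X}_2 \;=\; 2g_2 + n - 1 - \dim X_2.
\]

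Finally, I would apply additivity of Euler characteristic: decomposing $\mathscr{R}$ along $\Gamma$ (which has $\chi = 0$) gives $\chi(\mathscr{R}) = \chi(\riem_1) + \chi(\riem_2)$, and since $\chi(\riem_j) = 2 - 2g_j - n$ and $\chi(\mathscr{R}) = 2-2g$ one obtains the topological identity $g = g_1 + g_2 + n - 1$. Substituting,
\[
\mathrm{Index}(\mathbf{T}_{1,2}) \;=\; (g - \dim X_2) - (2g_2 + n - 1 - \dim X_2) \;=\; g - 2g_2 - n + 1 \;=\; g_1 - g_2,
\]
and crucially the unknown quantity $\dim X_2$ cancels, so one does not need to know the actual dimensions of $X_2$ or $\overline{X}_2$ individually. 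The main obstacle will be the clean proof that $\Pi_2$ is surjective; all the other steps follow by bookkeeping from results already established, but surjectivity of $\Pi_2$ must be assembled from Corollary \ref{co:specify_internal_periods} plus the harmonic-measure construction, with care that the sum of boundary periods is automatically zero so that only $n-1$ curves contribute to the count.
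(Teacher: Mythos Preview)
Your argument is correct and in fact more streamlined than the paper's. Both you and the paper compute $\dim\mathrm{Coker}\,\mathbf{T}_{1,2} = (2g_2+n-1)-\dim\overline{X}_2$, but from there the paper takes a detour: it invokes the adjoint identity $\mathbf{T}_{1,2}^* = \overline{\mathbf{T}}_{2,1}$ (Theorem~\ref{th:adjoint_identities}) to rewrite the cokernel dimension as $\dim W_2$, then uses Theorem~\ref{th:X_k_analysis} twice to deduce the auxiliary fact $\dim(X_1\cap\overline{X}_1)=\dim(X_2\cap\overline{X}_2)$ before finally obtaining $\dim W_1-\dim W_2 = g_1-g_2$. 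You instead compute $\dim\ker\mathbf{T}_{1,2}=g-\dim X_2$ directly from \eqref{eq:kernel_Xi_and_W}, so that $\dim X_2$ cancels and the Euler-characteristic identity $g=g_1+g_2+n-1$ finishes the job immediately. Your route avoids the adjoint identity entirely; the paper's route, while longer, yields the extra equality $\dim(X_1\cap\overline{X}_1)=\dim(X_2\cap\overline{X}_2)$ as a byproduct (recorded in the Remark following the theorem).

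One point to tighten. Your surjectivity argument for $\Pi_2$ says the two families ``independently'' realise internal and boundary periods, but surjectivity of both coordinate projections does not by itself force surjectivity onto $\mathbb{C}^{2g_2+n-1}$. What makes it work is that the forms produced by Corollary~\ref{co:specify_internal_periods} are constructed on a capped surface and hence have \emph{zero} boundary periods (each $\partial_k\riem_2$ bounds a disk there); so those forms hit all of $\mathbb{C}^{2g_2}\times\{0\}$, and then the $\partial\omega$-forms adjust the boundary periods. Alternatively, Lemma~\ref{le:dbar_representative} already shows that every cohomology class on $\riem_2$ is represented by a holomorphic form, which is exactly surjectivity of $\Pi_2$. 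Also note a sign: $\ast d\omega + i\,d\omega = 2i\,\overline{\partial}\omega$ is anti-holomorphic; the holomorphic form with the same periods as $\ast d\omega$ is $-2i\,\partial\omega = \ast d\omega - i\,d\omega$.
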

 \begin{proof}
  Combining the two equations in Theorem \ref{th:X_k_analysis}, we obtain \begin{align}  \label{eq:index_temp_zero}
      g- \mathrm{dim}\, W_2 & = g_1 + \frac{n-1}{2} + \frac{1}{2} \mathrm{dim}\,(X_1 \cap \overline{X}_1 ) \nonumber \\ 
      g- \mathrm{dim}\, W_1 & = g_2 + \frac{n-1}{2} + \frac{1}{2} \mathrm{dim}\,(X_2 \cap \overline{X}_2 ) 
  \end{align}
  so since $g_1 + g_2 + n-1 = g$ we obtain 
  \begin{align} \label{eq:index_proof_temp} 
     \mathrm{dim}\, W_2 + \frac{1}{2} \mathrm{dim}\, (X_1\cap \overline{X}_1) & = g_2 + \frac{n-1}{2} \nonumber \\ 
      \mathrm{dim}\, W_1 + \frac{1}{2} \mathrm{dim}\, (X_2\cap \overline{X}_2) & = g_1 + \frac{n-1}{2}
  \end{align}
  
  Next we compute the dimension of the cokernel of $\mathbf{T}_{1,2}$. By Theorem \ref{th:Tonetwo_iso_improved}, we have that all harmonic forms with periods in $\overline{X}_2$ are in the image of $\mathbf{T}_{1,2}$, from which we conclude that
  \begin{align*}
   \mathrm{dim}\, \mathrm{Coker}( \mathbf{T}_{1,2}) & = 2g_2 + n-1 -
   \mathrm{dim}\, X_2  \\
   & = g_2 + \frac{n-1}{2} - \frac{1}{2} \mathrm{dim}\,(X_2 \cap \overline{X}_2)
  \end{align*}
  where we have used equation \eqref{eq:dimX1_and_X1intersectX1bar} with $1$ replaced by $2$.  However, since
  $\mathbf{T}_{1,2}^* = \overline{\mathbf{T}}_{2,1}$, by Theorem \ref{th:adjoint_identities} we have 
  \[  \mathrm{dim}\, \mathrm{Coker}( \mathbf{T}_{1,2}) = \mathrm{dim}\, \mathrm{Ker}\, \mathbf{T}_{2,1} = \mathrm{dim}\, W_2 \]
  where we have used Theorem \ref{th:Tonetwo_iso_improved}. Thus 
  \begin{equation*}
   \mathrm{dim}\, W_2 = g_2 + \frac{n-1}{2} - \frac{1}{2} \mathrm{dim}\,(X_2 \cap \overline{X}_2)
  \end{equation*} 
  which upon comparison with \eqref{eq:index_proof_temp} yields that
  \begin{equation} \label{eq:intersections_X12_equal}
     \frac{1}{2} \, \mathrm{dim}\,(X_1 \cap \overline{X}_1) =  
  \frac{1}{2} \, \mathrm{dim}\,(X_2 \cap \overline{X}_2).   
  \end{equation} 
  Now using this fact together with equation \eqref{eq:index_proof_temp} we obtain  
  \begin{align*}
     \mathrm{dim}\, \mathrm{Ker}( \mathbf{T}_{1,2}) - \mathrm{dim}\, \mathrm{Coker}( \mathbf{T}_{1,2}) & = \mathrm{dim}\, \mathrm{Ker}( \mathbf{T}_{1,2}) - \mathrm{dim}\, \mathrm{Ker}( \overline{\mathbf{T}}_{2,1}) \\
     & =\mathrm{dim}\, W_1 - \mathrm{dim}\, W_2 \\
     & = g_1 - g_2 
  \end{align*}
  as claimed. 
  
  To prove the final claim, just switch the roles of $1$ and $2$ in the proof, which can be done by the symmetry of the conditions.
 \end{proof}
 
 \begin{remark}  Under the same assumptions, the proof also shows the following interesting facts. 
  By equations \eqref{eq:index_temp_zero} and \eqref{eq:index_proof_temp}, together with the fact that
  $\mathrm{dim}\, (X_1 \cap \overline{X}_1) = \mathrm{dim}\, (X_2 \cap \overline{X}_2)$ by \eqref{eq:intersections_X12_equal}, we obtain  
  \[ g_1 -g_2 = \mathrm{dim}\, W_1 - \mathrm{dim}\,W_2 = \mathrm{dim}\, X_1 - \mathrm{dim}\, X_2.  \] 
  Furthermore, \eqref{eq:index_proof_temp} implies that 
  \[  \mathrm{dim}\, (X_k \cap X_k) = n - 1 \ \  \mathrm{mod}\, 2  \]
  for $k=1,2$.  
 \end{remark}
 
 We also have the following.
 \begin{theorem} \label{th:index_capped_surface}
  Let $\riem_2$ be a surface of genus $g$ capped by $\riem_1$, where $\riem_1$ has $n$ connected components.  Then 
  \[  \mathrm{Index} ( \mathbf{T}_{1,2}) = - \mathrm{Index} ( \mathbf{T}_{2,1}) = 1-n-g.  \]
 \end{theorem}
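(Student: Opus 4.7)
The plan is to read off the index directly from the decomposition provided by Corollary \ref{co:T_plus_extra_surjective_capped}, which in the capped case determines both the kernel and cokernel of $\mathbf{T}_{1,2}$ exactly.

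First I would record the two key pieces of cohomological information. Since $\riem_2$ has genus $g$ and $n$ boundary curves, capping with $n$ simply connected components yields a closed surface $\mathscr{R}$ of genus $g$, so $\dim \mathcal{A}(\mathscr{R}) = g$. Moreover, since each connected component of $\riem_1$ is simply connected, $\overline{W}_1 = \{0\}$ and Theorem \ref{th:Tonetwo_iso_improved} gives $\mathrm{Ker}(\mathbf{T}_{1,2}) = \{0\}$.

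Next, Corollary \ref{co:T_plus_extra_surjective_capped} yields the direct sum decomposition
\[
\mathcal{A}(\riem_2) = \mathrm{\textgoth{A}}^{-}(\riem_2) \oplus \mathbf{R}_2\, \mathcal{A}(\mathscr{R}) \oplus \partial \mathcal{D}_{\mathrm{hm}}(\riem_2),
\]
together with the identification $\mathrm{Im}(\mathbf{T}_{1,2}) = \mathrm{\textgoth{A}}^{-}(\riem_2)$. Thus the cokernel of $\mathbf{T}_{1,2}$ is canonically isomorphic to $\mathbf{R}_2\, \mathcal{A}(\mathscr{R}) \oplus \partial \mathcal{D}_{\mathrm{hm}}(\riem_2)$. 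The restriction $\mathbf{R}_2$ is injective by Theorem \ref{th:S_kernel_range}, so the first summand has dimension $g$; and since there are exactly $n-1$ linearly independent harmonic measures (as $\omega_1 + \cdots + \omega_n = 1$), the holomorphic part $\partial \mathcal{D}_{\mathrm{hm}}(\riem_2)$ is also $(n-1)$-dimensional. Hence
\[
\dim \mathrm{Coker}(\mathbf{T}_{1,2}) = g + (n-1),
\]
and combined with the triviality of the kernel this gives
\[
\mathrm{Index}(\mathbf{T}_{1,2}) = 0 - \bigl(g + n -1\bigr) = 1 - n - g.
\]

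Finally, to handle $\mathbf{T}_{2,1}$ I would invoke the adjoint identity $\mathbf{T}_{2,1}^{*} = \overline{\mathbf{T}}_{1,2}$ from Theorem \ref{th:adjoint_identities}. Taking adjoints (and conjugates) preserves Fredholmness and reverses the sign of the index, so $\mathrm{Index}(\mathbf{T}_{2,1}) = -\mathrm{Index}(\mathbf{T}_{1,2}) = n + g - 1$, completing the proof. The only step that requires any care, rather than bookkeeping, is verifying the dimension count of $\partial \mathcal{D}_{\mathrm{hm}}(\riem_2)$ and of $\mathbf{R}_2 \mathcal{A}(\mathscr{R})$, together with the observation that capping produces $g(\mathscr{R}) = g$; all the substantive analytic work has already been absorbed into Theorems \ref{th:Tonetwo_iso_improved} and Corollary \ref{co:T_plus_extra_surjective_capped}.
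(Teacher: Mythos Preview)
Your proof is correct and follows essentially the same route as the paper: invoke Corollary~\ref{co:T_plus_extra_surjective_capped} to get trivial kernel and the direct-sum decomposition of $\mathcal{A}(\riem_2)$, count $\dim \mathbf{R}_2\mathcal{A}(\mathscr{R})=g$ and $\dim \partial\mathcal{D}_{\mathrm{hm}}(\riem_2)=n-1$ for the cokernel, and then flip the sign via the adjoint identity $\mathbf{T}_{2,1}^* = \overline{\mathbf{T}}_{1,2}$. Your write-up is in fact a bit more explicit than the paper's about why $g(\mathscr{R})=g$ and why $\mathbf{R}_2$ is injective; the only place where your justification differs slightly is the reason for $\overline{W}_1=\{0\}$ (the paper phrases it as ``exact forms on $\mathscr{R}$ vanish''), but the conclusion is the same and in any case already contained in Corollary~\ref{co:T_plus_extra_surjective_capped}.
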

 \begin{proof}
  The fact that the index of $\mathbf{T}_{1,2}$ is $n-1-g$ follows directly from Corollary \ref{co:T_plus_extra_surjective_capped} together with the facts that the dimension of $\partial \mathcal{D}_{\mathrm{harm}}(\riem_2)$ is $n-1$ and the dimension of $\mathcal{A}(\mathscr{R})$ is $g$. Using \ref{th:adjoint_identities} we have 
  \[ \mathrm{Index} ( \mathbf{T}_{2,1}) = - \mathrm{Index} ( \mathbf{T}_{2,1}^* )=  - \mathrm{Index} ( \overline{\mathbf{T}}_{1,2}) = - \mathrm{Index} ( {\mathbf{T}}_{1,2})\]
  which completes the proof.
 \end{proof}
 
 {The index theorems above connect conformally invariant quantities (the index of $\mathbf{T}_{1,2}$) to topologically invariant quantities. 
 The Schiffer operators are conformally invariant, as we saw in \eqref{eq:Schiffer_operators_conformally_invariant}. Thus their spectra, kernels, images, and indices are all conformally invariant. 
 Because the spaces $W_k$ are conformally but not obviously topologically invariant, it is interesting that they cancel in the proof of Theorem \ref{th:Schiffer_index_theorem}, and only topological data remains. The question then arises: are the dimensions of the cokernel and kernel themselves topological invariants? In other words, is it possible to choose topologically equivalent configurations with distinct dimensions for the cokernel and kernel of $\mathbf{T}_{1,2}$?  Either answer would be of great interest. 
 
 In fact, the kernels and cokernels are related to the image of the period map of $\mathscr{R}$, restricted to homology curves in $\riem_1$ or $\riem_2$.  The following example, the case of a genus two torus sliced by one curve, illustrates this. We also explicitly compute $W_1$ (defined implicitly in Theorem \ref{th:Tonetwo_iso_improved}) for this example, which turns out to be trivial. Although it therefore does not provide a counterexample to the topological invariance of the cokernel and kernel, the approach might however be a promising way to seek one. We leave this as an open problem.
 }

{Returning to the problem of computation of $W_1$, following \cite{Roydenperiods}, we start by recalling some basic facts regarding periods and related matrices. Let the compact Riemann surface \(\mathscr{R}\) have a canonical homology basis \(\left\{A_{j}, B_{j}\right\},\) where the \(A_{j}\) and \(B_{j}\)
are smooth simple closed curves with intersection numbers given
by
$$
\begin{array}{l}{\left[A_{j} \times B_{k}\right]=\delta_{j k}} \\ {\left[A_{j} \times A_{k}\right]=0} \\ {\left[B_{j} \times B_{k}\right]=0}\end{array}.
$$
When we are not interested in the intersection properties, we set $C_{j+g}=B_{j},\, 1 \leq j \leq g.$



If $\alpha$ and $\alpha^{\prime}$ are two closed forms with periods $a_{j}, b_{j}$ and $a_{j}^{\prime}, b_{j}^{\prime},$ respectively, around $\left\{A_{j}, B_{j}\right\}$, then  \emph{Riemann's bilinear relations} (or Riemann's period relations) state that
\begin{equation}\label{Riemann relations}
    \int_{\mathscr{R}} \alpha \wedge \alpha^{\prime}=\sum_{j}\left(a_{j} b_{j}^{\prime}-a_{j}^{\prime} b_{j}\right).
\end{equation}

Now let \(\omega_{j}\) be the harmonic one-form on \(\mathscr{R}\) whose period
around \(C_{k}\) is \(\delta_{j k} \) and \(w_{j}\) be the holomorphic one-form on \(\mathscr{R}\)  whose periods around \(A_{k}\) are \(\delta_{j k} .\) Then the entries of the so-called \emph{Riemann matrix} \(\Pi=\left[\pi_{j k}\right]\)
are given by
$$
\pi_{j k}=\int_{B_{k}} w_{j}.
$$
Since \(w_{j} \wedge w_{k}=0,\) 
\eqref{Riemann relations} yields that
\begin{equation}\label{symmetry of Riemann}
    0=\int w_{j} \wedge w_{k}=\pi_{k j}-\pi_{j k}.
\end{equation}

Thus \(\Pi\) is a symmetric matrix. 


Note also that, since period of \(w_{j}\) is \(\delta_{j k}\) around \(A_{k}\) and is \(\pi_{j k}\) around \(B_{k},\)
one has
\begin{equation}\label{Roydens trick}
  w_{j}=\omega_{j}+\sum_{k} \pi_{j k} \omega_{k+g}.  
\end{equation}
\begin{figure}
     \includegraphics[width=8cm]{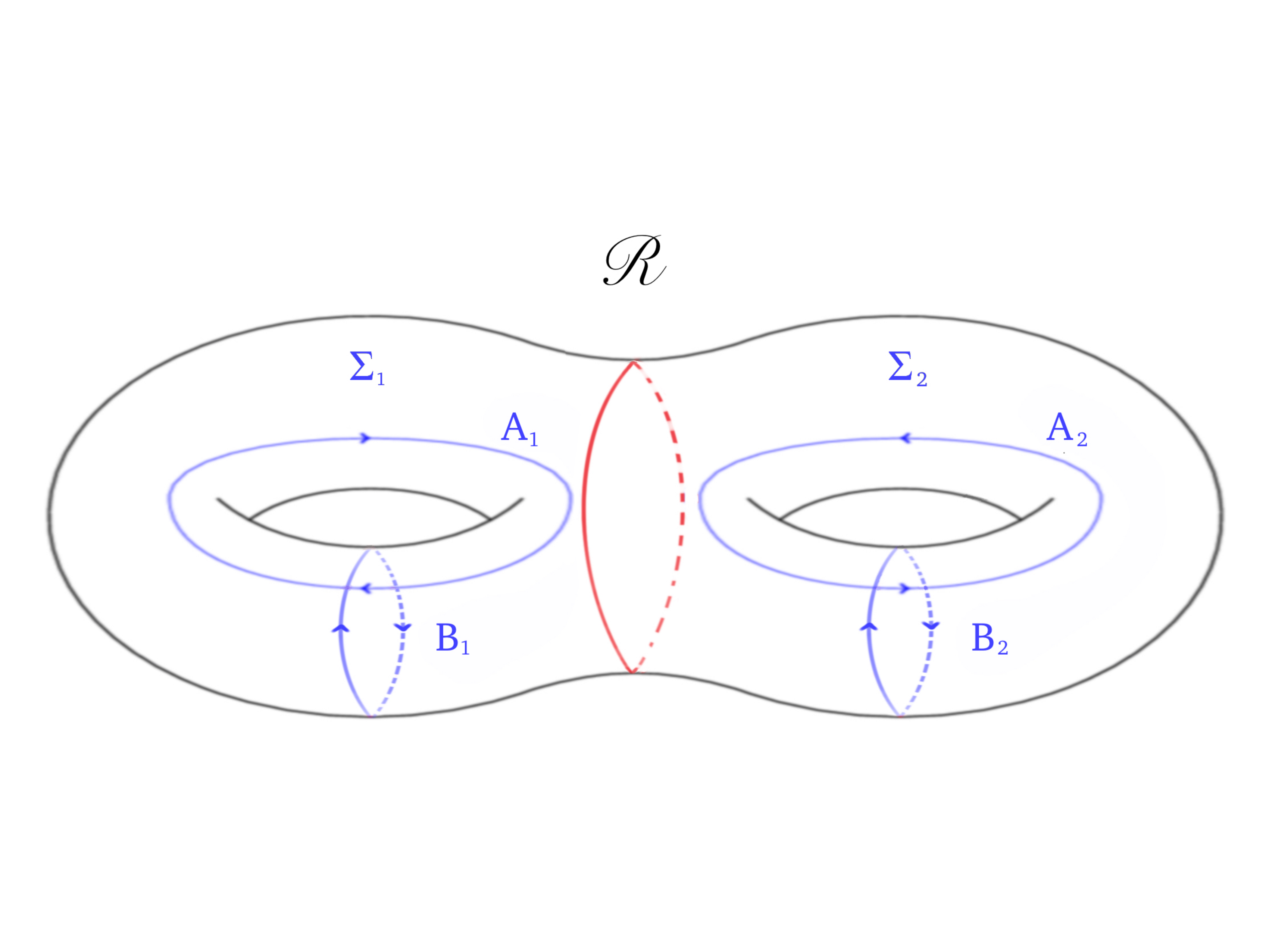}
     \caption{Genus two Riemann surface and its homology basis}
     \label{fig:period_curves}
 \end{figure} 
Now let $\mathscr {R}$ be the Riemann surface depicted in Figure \ref{fig:period_curves} and let us apply the information above to the problem of characterization of the set 
\begin{equation}\label{charac Vj}
    V_2:=\{w\in{\mathcal{A}(\mathscr{R})};\, \mathbf{R}_{2}{w} \in {\mathcal{A}^{\mathrm{e}}(\Sigma_2)}\}.
\end{equation}

We note that in the computation of $W_1$ mentioned above, we can confine ourselves to the computation of $V_2$. Here we have that \(w_{1}=\omega_{1}+\sum_{k=1}^{2}\pi_{1 k} \omega_{k+2}\) and
\(w_{2}=\omega_{2}+\sum_{k=1}^{2} \pi_{2 k} \omega_{k+2}\) and we seek a holomorphic one-form given by \(N_{1} w_{1}+N_{2} w_{2},\) with $N_j \in \mathbb{C},$ which is in \(V_{2}\). Therefore
$$\int_{A_{2}} (N_{1} w_{1}+N_{2} w_{2})=0 .$$ However, \eqref{Roydens trick} yields that $$0=\int_{A_{2}} N_{1} w_{1}+N_{2} w_{2}=\int_{A_{2}} \Big(N_{1} \omega_{1}+N_{1} \sum_{k=1}^{2} \pi_{1 k} \omega_{k+2}+N_{2} \omega_{2}+N_{2} \sum_{k} \pi_{2 k} \omega_{k+2}\Big)
=N_{2}.$$
Furthermore
\(\int_{B_{2}} N_{1} w_{1}=\int_{B_{2}} (N_{1} \omega_{1}+N_{1} \sum_{k=1} ^{2}\pi_{1 k} \omega_{k+2})=N_{1} \pi_{12}\).
Hence \(V_{2}\) is non-empty if and only if \(\pi_{12}=0\). Therefore by \eqref{symmetry of Riemann}, for $\mathscr{R}$ as in Figure \ref{fig:period_curves}, $V_2$ is non-empty if and only if the Riemann matrix has the form
\begin{equation}
  \begin{pmatrix}
\pi_{11} & 0\\
0 & \pi_{22}
\end{pmatrix}
 \end{equation}
 However, as was shown by M. Gerstenhaber in \cite{Gerstenhaber}, no  surface of genus 2 has  a diagonal matrix for a Riemann matrix, and therefore $V_2$ is indeed empty.

} 
\end{subsection}
\end{section}

\begin{section}{Scattering}   \label{se:scattering}

\begin{subsection}{Assumptions throughout this section} In this section we will once again use the assumptions that were in force in Subsection \ref{se:assumptions_scattering_section}.  

We will further assume that $\riem_2$ is connected. However we will explicitly state this assumption throughout.

\end{subsection}
\begin{subsection}{About this Section} 
 This section is devoted to the construction of the scattering matrix associated to the overfare of one-forms defined in Section \ref{se:subsection_overfare_oneforms}, and to the proof of its unitarity.  We give an explicit form in terms of the integral operators $\mathbf{T}_{1,k}$ and $\mathbf{S}_k$ of Schiffer.
 
 In a sense, we have already proven unitarity of the scattering matrix; indeed it follows immediately from the identities in Section \ref{se:Schiffer_adjoint_identities}.  The difficulty is to explicitly state and prove the form of the scattering matrix. 
 
 We do this in three main steps.  The first two steps are completed in Section \ref{se:decompositions_compatibility}, and the final step is completed in Section \ref{se:scattering_subsection}.

 The first step involves decomposing arbitrary harmonic one-forms on $\riem_2$ in terms of (a) restrictions of harmonic forms on $\mathscr{R}$, (b) forms in the image of $\mathbf{T}_{1,2}$ and $\overline{\mathbf{T}}_{1,2}$, and (c) harmonic measures on $\riem_2$.  Similar decompositions are given on $\riem_1$. They are motivated by the results of Section \ref{se:index_cohomology}, which showed the interrelation between the cohomology of forms  in the range of the operator $\mathbf{T}_{1,2}$ applied to the restrictions of forms on $\mathscr{R}$, and the cohomology of forms in the range of the adjoints of the restriction operator. The decompositions are by necessity somewhat intricate, but they also have a certain elegance and inevitability.

 Once this decomposition is given, in step two we apply the jump formula and cohomology identities to express the overfare $\mathbf{O}^{\mathrm{e}}_{2,1}$ in terms of the restriction operators, their adjoints, and the Schiffer operators $\mathbf{T}_{1,1}$.   These formulas show in particular that the overfare process produces compatible forms, as promised in Section \ref{se:Overfare}. This proves one of the unitarity relations.

 Section \ref{se:scattering_subsection} contains the third step in the proof of the unitarity of the scattering matrix, using the remaining adjoint identities of Section \ref{se:Schiffer_adjoint_identities} as well as the decompositions for the form and its overfare in \ref{se:decompositions_compatibility}.  Finally, in Section \ref{se:scattering_analogies} we give a heuristic discussion of the interpretation of our matrix as a scattering matrix. 
\end{subsection}

\begin{subsection}{Decompositions of harmonic forms and compatibility}  \label{se:decompositions_compatibility}   \ \ \

  We will need the following lemma.  In its statement and proof, we suppress restriction operators to reduce clutter, since they are clear from context.  Because of the asymmetry in the conditions for $\riem_1$ and $\riem_2$, in the statements and proofs there will be repeated division into the two cases.

  \begin{lemma} \label{le:scattering_preparation}   Assume that $\riem_2$ is connected.\\

 \emph{\bf{(1) (Case of $\riem_1$).}}
   Let $\xi \in \mathcal{A}(\mathscr{R})$, $\overline{\eta} \in \mathcal{A}(\mathscr{R})$, $\alpha_1 \in \mathcal{A}(\riem_1)$, and  $\overline{\beta_1} \in \mathcal{A}(\riem_1)$.  Assume that 
  $(\alpha_1 + \overline{\beta}_1 ) - \mathbf{R}^{\mathrm{h}}_1 (\xi + \overline{\eta}) \in \mathcal{A}^\mathrm{e}_{\mathrm{harm}}(\riem_1)$.
  There are $\overline{m},\overline{s} \in \overline{\mathcal{A}(\mathscr{R})}$,
  $n,t \in \mathcal{A}(\mathscr{R})$, such that 
  \begin{align*}
      \alpha_1 - \overline{m} - t & \in \mathcal{A}^\mathrm{e}_{\mathrm{harm}}(\riem_1) \\
      \overline{\beta}_1  - n - \overline{s} & \in \mathcal{A}^\mathrm{e}_{\mathrm{harm}}(\riem_1) 
  \end{align*}
  and 
  \begin{align*}
      \overline{\eta} & = \overline{m} + \overline{s} \\
      \xi & = n + t. 
  \end{align*} 
  
    \emph{\bf{(2) (Case of $\riem_2$).}}
  Let $\xi \in \mathcal{A}(\mathscr{R})$, $\overline{\eta} \in \mathcal{A}(\mathscr{R})$, $\alpha_2 \in \mathcal{A}(\riem_2)$, and  $\overline{\beta_2} \in \mathcal{A}(\riem_2)$.  Assume that 
  $(\alpha_2 + \overline{\beta}_2 ) - \mathbf{R}^{\mathrm{h}}_2 (\xi + \overline{\eta}) \in \mathcal{A}^\mathrm{e}_{\mathrm{harm}}(\riem_2)$.  
  There are $\overline{m},\overline{s} \in \overline{\mathcal{A}(\mathscr{R})}$,
  $n,t \in \mathcal{A}(\mathscr{R})$, and $d\omega \in \mathcal{A}_{\mathrm{bw}}(\riem_2)$ such that 
  \begin{align*}
      \alpha_2 - \partial \omega - \overline{m} - t & \in \mathcal{A}^\mathrm{e}_{\mathrm{harm}}(\riem_2) \\
      \overline{\beta}_2 - \overline{\partial} \omega - n - \overline{s} & \in \mathcal{A}^\mathrm{e}_{\mathrm{harm}}(\riem_2) 
  \end{align*}
  and 
  \begin{align*}
      \overline{\eta} & = \overline{m} + \overline{s} \\
      \xi & = n + t. 
  \end{align*}
  \end{lemma}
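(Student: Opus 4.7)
The plan is to reduce both parts to a period-matching problem on the homology of $\riem_k$, and then to invoke Proposition \ref{pr:XplusXbar_is_all}. That proposition asserts $X_k + \overline{X}_k = \mathbb{C}^{m_k}$, so every period configuration on a homology basis of $\riem_k$ can be realized as the sum of the periods of a holomorphic form and an anti-holomorphic form restricted from $\mathscr{R}$. Combined with the standard fact that a harmonic form on a bordered Riemann surface is exact iff its periods vanish on a basis of $H_1$, this will produce the required decompositions.

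For Part (1), I would fix the homology basis $c^1_1,\ldots,c^1_{m_1}$ of $\riem_1$ and split the period vector $\bigl(\int_{c^1_1}\alpha_1,\ldots,\int_{c^1_{m_1}}\alpha_1\bigr)$ as $v_1 + v_2$ with $v_1 \in X_1$ and $v_2 \in \overline{X}_1$ via Proposition \ref{pr:XplusXbar_is_all}. Then pick $t \in \mathcal{A}(\mathscr{R})$ with $\Xi_1(t) = v_1$ and $\overline{m} \in \overline{\mathcal{A}(\mathscr{R})}$ with $\overline{\Xi}_1(\overline{m}) = v_2$, so that $\alpha_1 - \mathbf{R}^{\mathrm{h}}_1(t + \overline{m})$ is harmonic on $\riem_1$ with trivial periods, hence exact. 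Setting $n := \xi - t$ and $\overline{s} := \overline{\eta} - \overline{m}$ automatically enforces $\xi = n + t$ and $\overline{\eta} = \overline{m} + \overline{s}$, and the identity
\[ \overline{\beta}_1 - \mathbf{R}^{\mathrm{h}}_1(n + \overline{s}) = \bigl[\alpha_1 + \overline{\beta}_1 - \mathbf{R}^{\mathrm{h}}_1(\xi + \overline{\eta})\bigr] - \bigl[\alpha_1 - \mathbf{R}^{\mathrm{h}}_1(t + \overline{m})\bigr] \]
expresses the remaining quantity as a difference of two exact forms (the first by hypothesis, the second by construction), and is therefore itself exact.

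For Part (2), the same argument applies verbatim with $\riem_2$ in place of $\riem_1$ and $\omega \equiv 0$, so that $\partial\omega = \overline{\partial}\omega = 0$ and Proposition \ref{pr:XplusXbar_is_all} is invoked with $k = 2$. Thus the bridgeworthy harmonic measure is not required for existence; its inclusion in the statement is motivated by the scattering construction of the following subsection, where the freedom to shift by a bridgeworthy $\partial\omega$ is needed to align the decomposition with the catalyzing form $\zeta$ and the compatibility condition $\mathbf{S}_1^{\mathrm{h}}\alpha_1 + \mathbf{S}_2^{\mathrm{h}}\alpha_2 = \zeta$.

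The only substantive point is the invocation of Proposition \ref{pr:XplusXbar_is_all}, which itself rests on the Hodge decomposition of $\mathcal{A}_{\mathrm{harm}}(\mathscr{R})$; once that surjectivity is in hand, the proof reduces to a straightforward period-chasing exercise and presents no further analytic difficulty.
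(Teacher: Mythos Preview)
Your argument for Part~(1) is correct and is essentially the paper's own claim~(a) packaged through Proposition~\ref{pr:XplusXbar_is_all}; when $\riem_2$ is connected, the homology curves of $\riem_1$ inject into $H_1(\mathscr{R})$, so $X_1+\overline{X}_1=\mathbb{C}^{m_1}$ is genuine and your period-splitting works.

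Part~(2), however, has a real gap. Your claim that the same argument runs with $\omega\equiv 0$ rests on Proposition~\ref{pr:XplusXbar_is_all} for $k=2$, and that identity \emph{fails} once $\riem_1$ is disconnected. Concretely: take $\mathscr{R}=\sphere$, let $\riem_1$ be two disjoint disks, and $\riem_2$ the annulus between them. Then $\mathcal{A}_{\mathrm{harm}}(\mathscr{R})=\{0\}$, so $X_2=\overline{X}_2=\{0\}$, while $m_2=1$. With $\xi=\overline{\eta}=0$ you are forced to take $t=n=\overline m=\overline s=0$; but $\alpha_2=dz/z$, $\overline{\beta}_2=-d\bar z/\bar z$ satisfy the hypothesis (their sum is $d\log|z|^2$, exact) and $\alpha_2$ is not exact. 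So no choice with $\omega=0$ can work.

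The underlying obstruction is that the map $H_1(\riem_2)\to H_1(\mathscr{R})$ has a kernel of dimension $q-1$, where $q$ is the number of components of $\riem_1$: the boundary curves of each component of $\riem_1$ sum to zero in $H_1(\mathscr{R})$, which is invisible from inside $\riem_2$. Periods living in that kernel cannot be matched by any form from $\mathscr{R}$; they must be absorbed by $\ast d\omega$ for a bridgeworthy $\omega$ (constant on the boundaries of each component of $\riem_1$), which is exactly why the paper's claim~(b) introduces $d\omega$ and then proves the additional nontrivial linear-independence statement for $\{\ast d\omega_j\}\cup\{H_{C^2_l}\}$. The bridgeworthy form is therefore not a bookkeeping device for later compatibility—it is required for existence in Part~(2), and your proof must supply it.
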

  Note that we are not claiming any relation between the $m,n,s,t$ in parts (1) and (2).  
  \begin{proof} 
   In the proof, we will require a basis for the cohomology of $\mathscr{R}$, which we now describe.  Let $g_k$, $k=1,\, 2,$ be the genus of $\Sigma_k$. Assume that there are $p$ curves in the complex $\Gamma$, and assume that there are $q$ connected components  $\riem_1^j$, $j=1,\ldots,q$ of $\riem_1$. Let $g_1^1,\ldots,g_1^q$ be the genuses of these components, so that $g_1 = g_1^1 + \cdots g_1^q$. 
 We then have that 
 \[ g_1+g_2+(p-q) = g,  \]
 so we can define $g_d= p-q$ to be the number of ``dissected handles''.
 Choose a homology basis for $\mathscr{R}$ consisting of 
 \begin{itemize}
     \item $2g_1$ curves $C^1_1,\ldots,C^1_{2g_1}$ corresponding to the handles in $\riem_1$;
     \item $2g_2$ curves $C^2_1,\ldots,C^2_{2g_2}$ corresponding to the handles in $\riem_2$; 
     \item a collection of boundary curves $\Gamma_1,\ldots,\Gamma_{p-q}$ containing $n_j-1$ boundary curves of $\riem_1^j$ for $j=1,\ldots,q$;
     \item a collection of curves $b_1,\ldots,b_{p-q}$ encircling each dissected handle. 
 \end{itemize} 
 There are $m$ boundary curves which are not in the span of this basis, one for each connected component of $\riem_1$.  
 
 We first claim that 
 \begin{enumerate}[label=(\alph*),font=\upshape]
     \item given any $\gamma_1 \in \mathcal{A}_{\mathrm{harm}}(\riem_1)$, there is a $\zeta_1 \in \mathcal{A}_{\mathrm{harm}}(\mathscr{R})$ such that $\gamma_1  - \zeta_1$ is exact in $\riem_1$; and 
     \item  given any $\gamma_2 \in \mathcal{A}_{\mathrm{harm}}(\riem_2)$, there is a $\zeta_2 \in \mathcal{A}_{\mathrm{harm}}(\riem_2)$ and a $d\omega\in \mathcal{A}_{\mathrm{bw}}(\riem_2)$ such that $\gamma_2 - \ast d\omega - \zeta_2$ is exact in $\riem_2$.
 \end{enumerate}
 To show this, consider a dual basis of harmonic one-forms on $\mathscr{R}$ which we denote by $H = \{ H_{C_j^k}, H_{\Gamma_l}, { H_{b_r}} \}$ with the usual meaning. Claim (a) follows from the fact that there is a unique element 
 \[  \zeta_1 \in \mathrm{span}  
 \{ H_{C_1^1},\ldots,H_{C_{2g_1}^1},H_{\Gamma_1},\ldots,H_{\Gamma_{p-q}}  \}  \]
 such that $\gamma_1 - \zeta_1$ is exact, since the set of curves spans the homology of each connected component of $\riem_1$.  
 
 To prove claim (b), observe that 
 one may remove all the $C^2_j$ periods of $\gamma_2$ using a 
 \[  \zeta_2 \in \mathrm{span} \{ H_{C_1^2},\ldots,H_{C_{2g_2}^2} \}.     \]
 That is, we can arrange that $\gamma_2 - \zeta_2$
 has zero periods over all $\Gamma_1,\ldots,\Gamma_{p-q}$ and $C^2_1,\ldots,C^2_{2g_2}$.
 
 As observed above, there are $q$ boundary curves,  $\gamma_1,\ldots,\gamma_{q}$ say, which are not contained in the collection $\Gamma_1,\ldots,\Gamma_{p-q}$. Any one of these, say $\gamma_q$, is a linear combination of the remaining curves $\gamma_1,\ldots,\gamma_{q-1}$.  {Let $\omega_j \in \mathcal{D}_{\mathrm {bw}}(\riem_2)$ be one on the boundary of the $j$th connected component of $\riem_1$ and $0$ on the others. Since one may specify the period of $\ast d\omega$ on $\gamma_1,\ldots,\gamma_{q-1}$ by Proposition \ref{pr:period_submatrix_also_positive}, it is enough to show that $\{ \ast d\omega_1,\ldots,\ast d\omega_{q-1} \} \cup \{ H_{C_1^2},\ldots,H_{C_{2g_2}^2} \}$
 is linearly independent.    }
 
 Assume that 
 \[  \sum_{j=1}^{q-1} \mu_j \ast d\omega_j + \sum_{l} \lambda_l H_{C^2_l} =0   \]
 where $H_l$ range over the elements of $H$.  We must have
 \[  0= \int_{\gamma_r}   \Big( \sum_{j=1}^{q-1} \mu_j \ast d\omega_j + \sum_{l} \lambda_l H_{C^2_l}\Big) = \mu_r \ast d \omega_r      \]
 for $r=1,\ldots,q-1$, so $\mu_r = 0$ by Theorem \ref{th:period_matrix_invertible}.  Thus 
 \[ \sum_{l} \lambda_l H_{C^2_l} =0 \]
 and the claim now follows from linear independence of elements of $\{ H_{C_1^2},\ldots,H_{C_{2g_2}^2} \}.$

 We shall first prove claim (2) of the lemma, which has the additional issue of bridgeworthy form. The proof of claim (1) is similar to that of (2), but without this complication. Apply claim (b) of the proof above to obtain $\zeta_2$, $\omega$, $\zeta_2'$, and $\omega'$ such that 
 \begin{align} \label{eq:remove_bridge_temp}
      \alpha_2 -  \zeta_2 - \partial \omega & \in \mathcal{A}^{\mathrm{e}}_{\mathrm{harm}}(\riem_2) \nonumber \\
     \overline{\beta}_2 - \zeta_2' - \overline{\partial} \omega' & \in \mathcal{A}^{\mathrm{e}}_{\mathrm{harm}}(\riem_2).
 \end{align}
 Here we have used the facts that 
 \begin{equation} \label{eq:partial_decomp_temp} 
   \partial \omega = 1/2 (d\omega +i \ast d \omega)  \ \ \ \text{and} \ \ \  \overline{\partial} \omega = 1/2 (d\omega - i \ast d \omega). 
 \end{equation}
 First, we will show that we may take $\overline{\partial} \omega'= \overline{\partial} \omega$ in \eqref{eq:remove_bridge_temp}.  To see this, observe that  
 \[  \alpha_2 + \overline{\beta}_2 - (\xi + \overline{\eta})   \]
 and 
 \[  \alpha_2 + \overline{\beta}_2 -  \zeta_2 - \zeta_2' - \partial \omega - \overline{\partial} \omega'  \]
 are both exact in $\riem_2$.  Subtracting we see that 
 \[  -\xi - \overline{\eta} + \zeta_2 + \zeta_2' + \partial \omega + \overline{\partial} \omega'   \]
 is exact. By the linear independence of the periods of  $\mathcal{A}_{\mathrm{harm}}(\mathscr{R})$ and $\ast d\omega$ for $\omega$ bridgeworthy established above, we must have that 
 $\partial \omega + \overline{\partial} \omega'$ is exact.  Again using \eqref{eq:partial_decomp_temp}, we see that the periods in $\riem_2$ of $\ast \omega$ and $\ast \omega'$ agree, so we may take $\omega'=\omega$ in \eqref{eq:remove_bridge_temp} as claimed. 
 
 Writing $\zeta_2=\overline{M}+T$ and $\zeta_2'=N + \overline{S}$ for 
   $\overline{M},\overline{S} \in \overline{\mathcal{A}(\mathscr{R})}$ and $N,T \in \mathcal{A}(\mathscr{R})$, we have thus shown 
    \begin{align*}
      \alpha_2 - \overline{M} - T - \partial \omega& \in \mathcal{A}^\mathrm{e}_{\mathrm{harm}}(\riem_2) \\
      \beta_2 - N - \overline{S} - \overline{\partial} \omega & \in \mathcal{A}^\mathrm{e}_{\mathrm{harm}}(\riem_2).
  \end{align*}  
  Note that $M$, $T$, $N$, and $S$ are not uniquely determined, and we must adjust them to complete the theorem. 
  Since $\alpha_2 + \overline{\beta}_2 - ( \xi + \overline{\eta})$ and $d\omega$ are in $\mathcal{A}^\mathrm{e}_{\mathrm{harm}}(\riem_2)$ we have 
  \[  \overline{M} + T + N + \overline{S} -(\xi + \overline{\eta}) \in \mathcal{A}^{\mathrm{e}}_{\mathrm{harm}}(\riem_2).       \]
  Define $u \in \mathcal{A}(\mathscr{R})$ and $\overline{v} \in \mathcal{A}(\mathscr{R})$ by
  \begin{align*}
      u & = N + T - \xi \\
      \overline{v} & = \overline{M} + \overline{S} - \overline{\eta};
  \end{align*}
  These satisfy $\Xi_2(u) = -\Xi_2(\overline{v})$, where $\Xi_2$ is the map defined in \eqref{capital Xi} where the integrals are evaluated over the curves $C^2_j$, $j=1, \dots, 2g_2$. Therefore if we set
  \begin{align*}
      \overline{m} & = \overline{M} - \overline{v}/2\\
      t & = T - u/2 \\
      \overline{s} & = \overline{S} -\overline{v}/2 \\
      n & = N -u/2
  \end{align*}
  it still holds that 
  \begin{align*}
    \alpha_2 - \overline{m} - t & \in \mathcal{A}^\mathrm{e}_{\text{harm}}(\riem_2)  \\
    \overline{\beta}_2 - \overline{s} -n & \in {\mathcal{A}^\mathrm{e}_{\text{harm}}(\riem_2)}.  
  \end{align*}
  Since $\overline{m} + \overline{s} = \overline{\eta}$ and $n + t = \xi$ this completes the proof of part (2) of the lemma.
  
  The proof of part (1) is identical, except that one may start directly with 
  \begin{align*}
      \alpha_1 - \overline{M}- T & \in \mathcal{A}^{\mathrm{e}}_{\mathrm{harm}}(\riem_1) \\
      \overline{\beta}_1 - N - \overline{S} & \in \mathcal{A}^{\mathrm{e}}_{\mathrm{harm}}(\riem_1).
  \end{align*}
  (Here of course the $M,T,N,S$ are not necessarily the same as those in the proof of (2).)
  \end{proof}

  In Section \ref{se:Overfare} we saw that given $\alpha_2+ \overline{\beta}_2 \in \mathcal{A}_{\mathrm{harm}}(\riem_2)$ and $\zeta = \xi + \overline{\eta} \in \mathcal{A}_{\mathrm{harm}}(\mathscr{R})$, there is a weakly compatible one-form $\alpha_1 + \overline{\beta}_1 \in \mathcal{A}_{\mathrm{harm}}(\riem_1)$ with respect to $\zeta$. As promised, we now show that there is a $\alpha_1 + \overline{\beta}_1$ which is in fact compatible, in the case that $\riem_2$ is connected. Furthermore, this compatible form is given by  
  \[  \alpha_1 + \overline{\beta}_1 = \mathbf{O}^\mathrm{e}\left[ \alpha_2 + \overline{\beta}_2 - \mathbf{R}_2 \xi - \overline{\mathbf{R}}_2 \overline{\eta} \right] +  \mathbf{R}_1 \xi + \overline{\mathbf{R}}_1 \overline{\eta}.  \]
  
  To do this, we require a decomposition for harmonic one-forms which is convenient from the point of view of the action of the Schiffer operators. This decomposition will also play a central role in the proof of the unitarity of the scattering matrix. \\
  
  \begin{lemma}[Decomposition lemma] \label{le:decomposition_lemma}
   Assume that $\riem_2$ is connected.\\ 
   
   \emph{\bf{(1) (Case of $\riem_1$).}}  
   Let $\alpha_1,\beta_1 \in \mathcal{A}(\riem_1)$ and $\xi,\eta \in \mathcal{A}(\mathscr{R})$ be such that $\alpha_1 + \overline{\beta}_1 - (\xi+\overline{\eta})$ is exact in $\riem_1$. Then there are $\tau_2,\sigma_2 \in \mathbf{R}_2 \mathcal{A}(\mathscr{R})$ and $\mu_2,\nu_2 \in [\mathbf{R}_2 \mathcal{A}(\mathscr{R}))]^\perp$  such that 
   \begin{align}  \label{eq:compatible_breakdown_A1}
      \alpha_1 - \overline{\mathbf{R}}_1 \overline{\mathbf{S}}_2 \overline{\mu}_2 - \mathbf{R}_1 \mathbf{S}_2 \tau_2 & \in \mathcal{A}^\mathrm{e}_{\mathrm{harm}}(\riem_1) \nonumber\\ 
      \overline{\beta}_1 - \mathbf{R}_1 \mathbf{S}_2 \nu_2 - \overline{\mathbf{R}}_1 \overline{\mathbf{S}}_2 \overline{\sigma}_2  & \in \mathcal{A}^\mathrm{e}_{\mathrm{harm}}(\riem_1)   
  \end{align}
  and 
 \begin{align} \label{eq:compatible_breakdown_B1}
    \overline{\mathbf{S}}_2 \overline{\mu}_2 +  \overline{\mathbf{S}}_2 \overline{\sigma}_2
   & = \overline{\eta} \nonumber  \\
    \mathbf{S}_2 \nu_2 + \mathbf{S}_2 \tau_2 & = \xi.  
 \end{align} 
 Furthermore, there are $\gamma_2, \rho_2 \in \mathcal{A}(\riem_1)$ such that
 \begin{align} \label{eq:compatible_breakdown_C1}
     \alpha_1 & = \mathbf{T}_{2,1} \overline{\gamma}_2 + \mathbf{R}_1 \mathbf{S}_2 \tau_2 \\ \nonumber
     \overline{\beta}_1 & = \overline{\mathbf{T}}_{2,1} \rho_2 + \overline{\mathbf{R}_1} \overline{\mathbf{S}}_2 \overline{\sigma}_2
 \end{align}
 where $\overline{\gamma}_k$ and $\rho_k$ are decomposed as follows:  
 \begin{align} \label{eq:compatible_breakdown_D1}
       \overline{\gamma}_2 & = - \overline{\mu}_2 + \overline{\delta}_2, \ \ \  \overline{\mu}_2 \in [\overline{\mathbf{R}}_2 \overline{\mathcal{A}(\mathscr{R})}], \ \ \  \overline{\delta}_2 \in [\overline{\mathbf{R}}_2 \overline{\mathcal{A}(\mathscr{R})}]^\perp,  \nonumber \\
       \rho_2 & = - \nu_2 + \varepsilon_2, \ \ \ \nu_2 \in \mathbf{R}_2 \mathcal{A}(\mathscr{R}), \ \ \ \varepsilon_2 \in [\mathbf{R}_2 \mathcal{A}(\mathscr{R}) ]^\perp. 
 \end{align} 
   
   \emph{\bf{(2) (Case of  $\riem_2$)}}.  Let $\alpha_2,\beta_2 \in \mathcal{A}(\riem_2)$ and $\xi,\eta \in \mathcal{A}(\mathscr{R})$ be such that $\alpha_2 + \overline{\beta}_2 - (\xi+\overline{\eta})$ is exact in $\riem_2$. Then there are $\tau_1,\sigma_1 \in \mathbf{R}_1 \mathcal{A}(\mathscr{R})$, $\mu_1,\tau_1 \in [\mathbf{R}_1 \mathcal{A}(\mathscr{R}))]^\perp$, 
   and a $d\omega \in \mathcal{A}_{\mathrm{bw}}(\riem_2)$, such that 
   \begin{align} \label{eq:compatible_breakdown_A2}
          \alpha_2 -\partial \omega - \overline{\mathbf{R}}_2 \overline{\mathbf{S}}_1 \overline{\mu}_1 - \mathbf{R}_2 \mathbf{S}_1 \tau_1 & \in \mathcal{A}^\mathrm{e}_{\mathrm{harm}}(\riem_2) \nonumber \\  
      \overline{\beta}_2 - \overline{\partial} \omega- \mathbf{R}_2 \mathbf{S}_1 \nu_1 - \overline{\mathbf{R}}_2 \overline{\mathbf{S}}_1 \overline{\sigma}_1  & \in \mathcal{A}^\mathrm{e}_{\mathrm{harm}}(\riem_2)  
   \end{align}
   and 
   \begin{align} \label{eq:compatible_breakdown_B2}
       \overline{\mathbf{S}}_1 \overline{\mu}_1 + \overline{\mathbf{S}}_1 \overline{\sigma}_1 & = \overline{\eta} \nonumber \\
       \mathbf{S}_1 \nu_1 + \mathbf{S}_1 \tau_1 & = \zeta. 
   \end{align}
   Furthermore, there are $\gamma_1,\rho_1 \in \mathcal{A}(\riem_1)$ such that 
   \begin{align} \label{eq:compatible_breakdown_C2}
         \alpha_2 & = \partial \omega + \mathbf{T}_{1,2} \overline{\gamma}_1 + \mathbf{R}_2 \mathbf{S}_1 \tau_1  \nonumber \\ 
     \overline{\beta}_2 & = \overline{\partial} \omega + \overline{\mathbf{T}}_{1,2} \rho_1 + \overline{\mathbf{R}}_2 \overline{\mathbf{S}}_1 \overline{\sigma}_1 
   \end{align}  
    where $\overline{\gamma}_1$ and $\rho_1$ are decomposed as follows:
 \begin{align} \label{eq:compatible_breakdown_D2}
       \overline{\gamma}_1 & = - \overline{\mu}_1 + \overline{\delta}_1, \ \ \  \overline{\mu}_1 \in [\overline{\mathbf{R}}_1 \overline{\mathcal{A}(\mathscr{R})}], \ \ \  \overline{\delta}_1 \in [\overline{\mathbf{R}}_1 \overline{\mathcal{A}(\mathscr{R})}]^\perp,  \nonumber \\
       \rho_1 & = - \nu_1 + \varepsilon_1, \ \ \ \nu_1 \in \mathbf{R}_1 \mathcal{A}(\mathscr{R}), \ \ \ \varepsilon_1 \in [\mathbf{R}_1 \mathcal{A}(\mathscr{R}) ]^\perp. 
 \end{align} 
  \end{lemma}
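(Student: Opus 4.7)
The proof will refine the coarse decomposition of Lemma \ref{le:scattering_preparation} into the Schiffer-operator form demanded by (\ref{eq:compatible_breakdown_A1})--(\ref{eq:compatible_breakdown_D2}). The three essential ingredients will be: the isomorphism $\mathbf{S}_k \mathbf{R}_k: \mathcal{A}(\mathscr{R}) \to \mathcal{A}(\mathscr{R})$ from Theorem \ref{th:S_an_isomorphism}, the cohomology identities of Theorem \ref{th:Schiffer_cohomology} showing that $\mathbf{T}_{1,2} \overline{\mu} + \overline{\mathbf{R}}_2 \overline{\mathbf{S}}_1 \overline{\mu}$ is exact on $\riem_2$, and the fact that $\mathbf{T}_{1,2}$ restricts to an isomorphism from $[\overline{\mathbf{R}}_1 \overline{\mathcal{A}(\mathscr{R})}]^\perp$ onto $\mathcal{A}^{\mathrm{e}}(\riem_2)$ by Theorems \ref{th:general_T12_on_perp_surjective_to_exact} and \ref{th:general_T12_kernel_on_perp}.

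I treat part (2) first. Lemma \ref{le:scattering_preparation}(2) yields $\overline{M}, \overline{S} \in \overline{\mathcal{A}(\mathscr{R})}$, $N, T \in \mathcal{A}(\mathscr{R})$, and a bridgeworthy $d\omega \in \mathcal{A}_{\mathrm{bw}}(\riem_2)$ such that $\alpha_2 - \partial \omega - \overline{\mathbf{R}}_2 \overline{M} - \mathbf{R}_2 T$ and $\overline{\beta}_2 - \overline{\partial} \omega - \mathbf{R}_2 N - \overline{\mathbf{R}}_2 \overline{S}$ lie in $\mathcal{A}^{\mathrm{e}}_{\mathrm{harm}}(\riem_2)$, with $\overline{M} + \overline{S} = \overline{\eta}$ and $N + T = \xi$. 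By Theorem \ref{th:S_an_isomorphism} there are unique $\tau_1, \sigma_1, \mu_1, \nu_1 \in \mathbf{R}_1 \mathcal{A}(\mathscr{R})$ determined by $\mathbf{S}_1 \tau_1 = T$, $\overline{\mathbf{S}}_1 \overline{\sigma}_1 = \overline{S}$, $\overline{\mathbf{S}}_1 \overline{\mu}_1 = \overline{M}$, and $\mathbf{S}_1 \nu_1 = N$; these substitutions establish (\ref{eq:compatible_breakdown_A2}) and (\ref{eq:compatible_breakdown_B2}).

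To promote (\ref{eq:compatible_breakdown_A2}) to the exact identity (\ref{eq:compatible_breakdown_C2}), I add $\mathbf{T}_{1,2} \overline{\mu}_1$ to both sides. By Theorem \ref{th:Schiffer_cohomology}(1), $\mathbf{T}_{1,2} \overline{\mu}_1 + \overline{\mathbf{R}}_2 \overline{\mathbf{S}}_1 \overline{\mu}_1 \in \mathcal{A}^{\mathrm{e}}(\riem_2)$, so the residue $\alpha_2 - \partial\omega - \mathbf{R}_2 \mathbf{S}_1 \tau_1 + \mathbf{T}_{1,2} \overline{\mu}_1$ is exact and holomorphic on $\riem_2$. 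Theorem \ref{th:general_T12_on_perp_surjective_to_exact} (which requires $\riem_2$ connected) then delivers an $\overline{\delta}_1 \in [\overline{\mathbf{R}}_1 \overline{\mathcal{A}(\mathscr{R})}]^\perp$ with $\mathbf{T}_{1,2} \overline{\delta}_1$ equal to this residue, and $\overline{\delta}_1$ is unique by Theorem \ref{th:general_T12_kernel_on_perp}. Setting $\overline{\gamma}_1 := -\overline{\mu}_1 + \overline{\delta}_1$ yields the first line of (\ref{eq:compatible_breakdown_C2}) together with the orthogonal decomposition (\ref{eq:compatible_breakdown_D2}); the conjugate argument applied to $\overline{\beta}_2$ produces $\rho_1 = -\nu_1 + \varepsilon_1$.

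For part (1) the scheme is the same with $\riem_1$ and $\riem_2$ swapped, and without the bridgeworthy summand since Lemma \ref{le:scattering_preparation}(1) already provides a clean decomposition of $\alpha_1, \overline{\beta}_1$. The main obstacle will be that the analogue of Theorem \ref{th:general_T12_on_perp_surjective_to_exact} for $\mathbf{T}_{2,1}$ acting from $[\overline{\mathbf{R}}_2 \overline{\mathcal{A}(\mathscr{R})}]^\perp$ onto $\mathcal{A}^{\mathrm{e}}(\riem_1)$ nominally asks for $\riem_1$ connected, which we do not assume. Since the standing hypothesis is only that $\riem_2$ is connected, I expect to close this gap by overfaring the exact holomorphic residue on $\riem_1$ back to a primitive on the connected $\riem_2$ via $\mathbf{O}_{1,2}$ and then reversing the jump-formula computation used in the proof of Theorem \ref{th:general_T12_on_perp_surjective_to_exact}; the delicate part will be to check that the resulting correction assembles into a single element of the global perp $[\overline{\mathbf{R}}_2 \overline{\mathcal{A}(\mathscr{R})}]^\perp$, rather than into the looser direct sum over the connected components of $\riem_1$ of the per-component perps.
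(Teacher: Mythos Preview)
Your approach is essentially the paper's: Lemma~\ref{le:scattering_preparation} supplies the forms $\overline{M},\overline{S},N,T$ (and $d\omega$ in case (2)), Theorem~\ref{th:S_an_isomorphism} converts them to $\overline{\mu}_k,\overline{\sigma}_k,\nu_k,\tau_k$, and the Schiffer cohomology identity of Theorem~\ref{th:Schiffer_cohomology} together with the surjectivity result of Theorem~\ref{th:general_T12_on_perp_surjective_to_exact} produces $\overline{\delta}_k$ and hence $\overline{\gamma}_k$. The paper simply cites Theorem~\ref{th:Tonetwo_iso_improved} for both parts, which packages these ingredients.

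Your caution about part~(1) is well placed: the literal interchange of indices in Theorem~\ref{th:general_T12_on_perp_surjective_to_exact} would ask for $\riem_1$ connected. But your proposed fix works and is simpler than you anticipate. Given the exact holomorphic residue $\beta \in \mathcal{A}^{\mathrm{e}}(\riem_1)$, choose a holomorphic primitive $h$ on $\riem_1$ and set $H := -\mathbf{O}_{1,2} h \in \mathcal{D}_{\mathrm{harm}}(\riem_2)$, which exists by Theorem~\ref{thm:bounded_overfare_existence} without any connectivity assumption. Since $\riem_2$ is connected, Theorem~\ref{th:J_same_both_sides}(2) applies to $H$ to give $\dot{\mathbf{J}}_2 \dot{H} = -\dot{\mathbf{J}}_1 \dot{\mathbf{O}}_{2,1}\dot{H} = \dot{\mathbf{J}}_1 \dot{h}$, and then Theorems~\ref{th:jump_on_holomorphic} and~\ref{th:jump_derivatives} yield $\mathbf{T}_{2,1}\overline{\partial} H = \beta$ with $\overline{\partial} H \in [\overline{\mathbf{R}}_2\overline{\mathcal{A}(\mathscr{R})}]^\perp$, exactly as in the proof of Theorem~\ref{th:general_T12_on_perp_surjective_to_exact}.

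The ``delicate part'' you flag is a non-issue: $H$ lives on the connected surface $\riem_2$, so $\overline{\partial} H$ is automatically a single element of $\overline{\mathcal{A}(\riem_2)}$, and membership in $[\overline{\mathbf{R}}_2\overline{\mathcal{A}(\mathscr{R})}]^\perp$ follows from $\overline{\mathbf{S}}_2\overline{\partial} H = 0$ via Theorem~\ref{th:S_kernel_range}. No assembly over components of $\riem_1$ is required.
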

  \begin{proof}
   The claims \eqref{eq:compatible_breakdown_A1} and \eqref{eq:compatible_breakdown_B1} in part (1) follow directly from   Lemma \ref{le:scattering_preparation} part (1), using the fact that $\mathbf{S}_2 \mathbf{R}_2$ is an isomorphism by Theorem  \ref{th:S_an_isomorphism}.  Similarly, the claims \eqref{eq:compatible_breakdown_A2} and  \eqref{eq:compatible_breakdown_B2} in part (2) follow directly from   Lemma \ref{le:scattering_preparation} part (2), using the fact that $\mathbf{S}_1 \mathbf{R}_1$ is an isomorphism by Theorem  \ref{th:S_an_isomorphism}. 
   
   The claims \eqref{eq:compatible_breakdown_C1} and \eqref{eq:compatible_breakdown_C2} follow directly from Theorem  \ref{th:Tonetwo_iso_improved}. The decompositions of $\gamma_k$ and $\rho_k$ in \eqref{eq:compatible_breakdown_D1} and \eqref{eq:compatible_breakdown_D2} follow from Theorems \ref{th:S_kernel_range},  \ref{th:Schiffer_cohomology}, and \ref{th:general_T12_on_perp_surjective_to_exact}.
  \end{proof}
  
  \begin{theorem} \label{th:compatibility_existence}  
   Assume that $\riem_2$ is connected.\\
   
   \emph{\bf{(1) (Case of $\riem_1$).}}  Let $\alpha_1+ \beta_1 \in \mathcal{A}_{\mathrm{harm}}(\riem_1)$ and $\zeta = \xi + \overline{\eta} \in \mathcal{A}(\mathscr{R})$ be such that $\alpha_1 + \overline{\beta}_1 - \mathbf{R}_1^h \zeta \in \mathcal{A}^{\mathrm{e}}_{\mathrm{harm}}(\riem_1)$. 
   There is a $\alpha_2 + \overline{\beta}_2 \in \mathcal{A}_{\mathrm{harm}}(\riem_2)$ which is compatible with $\alpha_1 + \overline{\beta}_1$ {with respect to} $\zeta$.  Given any other compatible $\alpha_2'+\overline{\beta}_2'$, the difference $\alpha_2'+ \overline{\beta}_2' - \alpha_2 +\overline{\beta}_2 \in \mathcal{A}_{\mathrm{bw}}(\riem_2)$. 
   
   Furthermore, if $\rho_2$, $\gamma_2$, $\tau_2$, and $\sigma_2$ are given as in \emph{Lemma \ref{le:decomposition_lemma} part (1)}, then there is a $d\omega \in \mathcal{A}_{\mathrm{bw}}(\riem_2)$ such that  
   \begin{align} \label{eq:compatibility_theorem_overfared_1}
    \alpha_2 & = -\rho_2 + \mathbf{T}_{2,2} \overline{\gamma}_2 + \mathbf{R}_2 \mathbf{S}_2 \tau_2 + \partial \omega \nonumber \\
    \overline{\beta}_2 & = - \overline{\gamma}_2 + \overline{\mathbf{T}}_{2,2} \rho_2 + \overline{\mathbf{R}}_2 \overline{\mathbf{S}}_2 \overline{\sigma}_2 + \overline{\partial} \omega. 
   \end{align}

   \emph{\bf{(2) (Case of $\riem_2$).}} Let $\alpha_2+ \beta_2 \in \mathcal{A}_{\mathrm{harm}}(\riem_2)$ and $\zeta = \xi + \overline{\eta} \in \mathcal{A}(\mathscr{R})$ be such that $\alpha_2 + \overline{\beta}_2 - \mathbf{R}_2^{\mathrm{h}} \zeta \in \mathcal{A}^{\mathrm{e}}_{\mathrm{harm}}(\riem_2)$. 
   There is a unique $\alpha_1 + \overline{\beta}_1 \in \mathcal{A}_{\mathrm{harm}}(\riem_1)$ which is compatible with $\alpha_2 + \overline{\beta}_2$ {with respect to} $\zeta$, given by 
   \[  \alpha_1 + \overline{\beta}_1 = \mathbf{O}^{\mathrm{e}}_{2,1} \left( \alpha_2 + \overline{\beta}_2 - \mathbf{R}_2^{\mathrm{h}} \zeta\right) 
   + \mathbf{R}_1^{\mathrm{h}} \zeta.   \]
   
   Furthermore, $\rho_1$, $\gamma_1$, $\tau_1$, and $\sigma_1$ are given as in Lemma \ref{le:decomposition_lemma} part(2), then 
   \begin{align}  \label{eq:compatibility_theorem_overfared_2}
    \alpha_1 & = -\rho_1 + \mathbf{T}_{1,1} \overline{\gamma}_1 + \mathbf{R}_1 \mathbf{S}_1 \tau_1  \nonumber \\
    \overline{\beta}_1 & = - \overline{\gamma}_1 + \overline{\mathbf{T}}_{1,1} \rho_1 + \overline{\mathbf{R}}_1 \overline{\mathbf{S}}_1 \overline{\sigma}_1. 
   \end{align}
  \end{theorem}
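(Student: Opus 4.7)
The plan is to prove Part (2) first, then use its structure together with the decomposition lemma to dispatch Part (1). For Part (2) I would begin by defining the candidate
\[
 \alpha_1 + \overline{\beta}_1 \;:=\; \mathbf{O}^{\mathrm{e}}_{2,1}\bigl(\alpha_2 + \overline{\beta}_2 - \mathbf{R}_2^{\mathrm{h}} \zeta\bigr) + \mathbf{R}_1^{\mathrm{h}} \zeta,
\]
which makes sense because $\alpha_2 + \overline{\beta}_2 - \mathbf{R}_2^{\mathrm{h}}\zeta$ lies in $\mathcal{A}^{\mathrm{e}}_{\mathrm{harm}}(\riem_2)$ and $\riem_2$ is connected, so $\mathbf{O}^{\mathrm{e}}_{2,1}$ is well-defined. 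The weak compatibility conditions (1) and (2) of Definition \ref{de:strongly_compatible} are straightforward: (1) follows from Proposition \ref{pr:extendible_forms_overfare_themselves} (which gives that $\mathbf{R}_2^{\mathrm{h}}\zeta$ and $\mathbf{R}_1^{\mathrm{h}}\zeta$ have matching boundary values) combined with the defining relation $\mathbf{O}^{\mathrm{e}}_{2,1}\,d=d\,\mathbf{O}_{2,1}$, and (2) is immediate from the construction.

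The principal technical step is condition (3), namely $\mathbf{S}_1^{\mathrm{h}}\alpha_1 + \mathbf{S}_2^{\mathrm{h}}\alpha_2 = \zeta$, together with the explicit formula \eqref{eq:compatibility_theorem_overfared_2}. The plan is to apply Lemma \ref{le:decomposition_lemma}(2) to write
\[
 \alpha_2 = \partial\omega + \mathbf{T}_{1,2}\overline{\gamma}_1 + \mathbf{R}_2\mathbf{S}_1 \tau_1, \qquad \overline{\beta}_2 = \overline{\partial}\omega + \overline{\mathbf{T}}_{1,2}\rho_1 + \overline{\mathbf{R}}_2\overline{\mathbf{S}}_1\overline{\sigma}_1,
\]
and to use the accompanying identities $\xi = \mathbf{S}_1 \nu_1 + \mathbf{S}_1 \tau_1$ and $\overline{\eta} = \overline{\mathbf{S}}_1\overline{\mu}_1 + \overline{\mathbf{S}}_1 \overline{\sigma}_1$. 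Subtracting $\mathbf{R}_2^{\mathrm{h}}\zeta$ reduces the argument of $\mathbf{O}^{\mathrm{e}}_{2,1}$ to $d\omega + \mathbf{T}_{1,2}\overline{\gamma}_1 + \overline{\mathbf{T}}_{1,2}\rho_1 - \mathbf{R}_2\mathbf{S}_1\nu_1 - \overline{\mathbf{R}}_2\overline{\mathbf{S}}_1\overline{\mu}_1$. The bridgeworthy piece $d\omega$ overfares to zero (its primitive is locally constant on $\riem_1$), the $\mathbf{T}_{1,2}$ terms transform by the transmitted jump formula of Proposition \ref{prop:exact_transmitted_jump}, and the $\mathbf{R}_2$ terms are their own overfare by Proposition \ref{pr:extendible_forms_overfare_themselves}. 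Assembling these contributions with $\mathbf{R}_1^{\mathrm{h}}\zeta$ will yield \eqref{eq:compatibility_theorem_overfared_2}. Condition (3) then follows by applying $\mathbf{S}^{\mathrm{h}}_k$ to the resulting expressions and invoking the adjoint identities of Theorems \ref{th:quadratic_adjoint_one} and \ref{th:general_adjoint_identity_double_final}, in particular $\mathbf{S}_1\mathbf{R}_1 + \mathbf{S}_2\mathbf{R}_2 = \mathbf{I}$, together with the vanishing $\overline{\mathbf{S}}_1\overline{\delta}_1 = 0$ which comes from $\overline{\delta}_1 \in [\overline{\mathbf{R}}_1\overline{\mathcal{A}(\mathscr{R})}]^\perp$ via Theorem \ref{th:S_kernel_range}. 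Uniqueness in Part (2) is then easy: any two compatible $\alpha_1+\overline{\beta}_1$ would differ by an exact element of $\mathcal{A}_{\mathrm{hm}}(\riem_1)$ lying in the kernel of $\mathbf{S}_1^{\mathrm{h}}$, so by Proposition \ref{pr:kernel_S1h} the difference is bridgeworthy in $\riem_1$; since $\riem_2$ is connected, bridgeworthiness forces the function to be globally constant, hence the form vanishes.

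For Part (1), the strategy is to decompose the given $\alpha_1+\overline{\beta}_1$ by Lemma \ref{le:decomposition_lemma}(1) and \emph{define} $\alpha_2,\overline{\beta}_2$ directly by formulas \eqref{eq:compatibility_theorem_overfared_1}. Condition (2) of compatibility then falls out from Theorem \ref{th:Schiffer_cohomology} applied to $\mathbf{T}_{2,2}$ and to $\overline{\mathbf{R}}_2\overline{\mathbf{S}}_2\overline{\sigma}_2$; condition (1) (matching boundary values on $\Gamma$) will follow by the transmitted jump formula Theorem \ref{th:Overfare_with_correction_functions} applied in the reverse direction, exactly mirroring the computation of Part (2); and condition (3) reduces once again to the adjoint identity $\mathbf{S}_1\mathbf{R}_1 + \mathbf{S}_2\mathbf{R}_2 = \mathbf{I}$. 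The non-uniqueness arises from the freedom in the choice of bridgeworthy form $d\omega \in \mathcal{A}_{\mathrm{bw}}(\riem_2)$ appearing in the decomposition, and Proposition \ref{pr:perturb_both_by_harmonic_measure} shows that adding such an $\omega$ preserves all three compatibility conditions, accounting for exactly the stated indeterminacy.

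The main obstacle will be the algebraic bookkeeping in verifying condition (3) of Part (2) and, simultaneously, extracting the clean form \eqref{eq:compatibility_theorem_overfared_2}: the decomposition of $\overline{\gamma}_1 = -\overline{\mu}_1 + \overline{\delta}_1$ (and analogously for $\rho_1$) into components parallel and perpendicular to $\overline{\mathbf{R}}_1\overline{\mathcal{A}(\mathscr{R})}$ must be tracked through Proposition \ref{prop:exact_transmitted_jump}, and the cross terms must be seen to recombine into the Schiffer expressions via the adjoint identities. A secondary subtlety is the treatment of the bridgeworthy piece: although $d\omega$ is exact in $\riem_2$, its holomorphic and antiholomorphic parts $\partial\omega$ and $\overline{\partial}\omega$ are not individually exact, so one must be careful to apply $\mathbf{O}^{\mathrm{e}}_{2,1}$ to $d\omega$ as a whole before splitting into types.
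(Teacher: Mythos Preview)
Your approach matches the paper's essentially line for line: prove Part (2) first by applying the decomposition lemma, overfaring via Proposition \ref{prop:exact_transmitted_jump} (with the bridgeworthy piece sent to zero), and verifying condition (3) through the adjoint identities $\mathbf{S}_1\mathbf{R}_1+\mathbf{S}_2\mathbf{R}_2=\mathbf{I}$ and \eqref{eq:general_adjoint_identity_double_final}; then for Part (1) define $\alpha_2+\overline{\beta}_2$ by the formulas \eqref{eq:compatibility_theorem_overfared_1} and verify compatibility by the mirrored computation. One small point: for the indeterminacy in Part (1), Proposition \ref{pr:perturb_both_by_harmonic_measure} only shows that bridgeworthy perturbations \emph{preserve} compatibility; to get the converse (that any two compatible $\alpha_2+\overline{\beta}_2$ differ by a bridgeworthy form) you should reuse the argument from Part (2), namely that the difference lies in $\mathcal{A}_{\mathrm{hm}}(\riem_2)\cap\ker\mathbf{S}_2^{\mathrm{h}}$ and hence is bridgeworthy by Proposition \ref{pr:kernel_S1h}.
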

  \begin{proof}
    We first prove (2). By the assumptions we have the decomposition \eqref{eq:compatible_breakdown_A2} - \eqref{eq:compatible_breakdown_D2} of Lemma \ref{le:decomposition_lemma}.  
    
    Using \eqref{eq:compatible_breakdown_C2}, \eqref{eq:compatible_breakdown_B2}, and Theorem \ref{th:S_kernel_range} in that order, we see that
    \begin{align}  \label{eq:tempy_mctempface}
     \alpha_2 + \overline{\beta}_2 - \mathbf{R}_2 \xi - \overline{\mathbf{R}}_2 \overline{\eta} & = d\omega + \mathbf{T}_{1,2} \overline{\gamma}_1 + \mathbf{R}_2 \mathbf{S}_1 \tau_1 + \overline{\mathbf{T}}_{1,2} \rho_1 + \overline{\mathbf{R}}_2 \overline{\mathbf{S}}_1 \overline{\sigma}_1 - \mathbf{R}_2 \xi - \overline{\mathbf{R}}_2 \overline{\eta} \nonumber \\
     & = d\omega + \mathbf{T}_{1,2} \overline{\gamma}_1 - \mathbf{R}_2 \mathbf{S}_1 \nu_1 + \overline{\mathbf{T}}_{1,1} \rho_1 - \overline{\mathbf{R}}_2 \overline{\mathbf{S}}_1 \overline{\mu}_1 \nonumber \\ 
      & = d\omega + \mathbf{T}_{1,2} \overline{\gamma}_1 + \mathbf{R}_2 \mathbf{S}_1 \rho_1 + \overline{\mathbf{T}}_{1,1} \rho_1 + \overline{\mathbf{R}}_2 \overline{\mathbf{S}}_1 \overline{\gamma}_1.
    \end{align}
    Now since $d\omega$ is bridgeworthy, $\mathbf{O}^{\mathrm{e}}_{2,1} d\omega = 0$. Together with Proposition \ref{prop:exact_transmitted_jump} we obtain 
    \[  \mathbf{O}_{2,1}^{\mathrm{e}} \left( \alpha_2 + \overline{\beta}_2 - \mathbf{R}_2^{\mathrm{h}} \zeta \right) = -\overline{\gamma}_1 + \mathbf{T}_{1,1} \overline{\gamma}_1 + \overline{\mathbf{R}}_1 \mathbf{S}_1 \overline{\gamma}_1 
    - \rho_1 + \overline{\mathbf{T}}_{1,1} \rho_1 + \mathbf{R}_1 \mathbf{S}_1 \rho_1. \] 
    If we define  
    \[ \alpha_1 + \overline{\beta}_1 =  \mathbf{O}_{2,1}^{\mathrm{e}} \left( \alpha_2 + \overline{\beta}_2 - \mathbf{R}_2^{\mathrm{h}} \zeta \right) + \mathbf{R}_1^{\mathrm{h}} \zeta  \]
    \eqref{eq:compatible_breakdown_C2} again, we obtain that 
    $\alpha_1 + \overline{\beta}_1$ satisfies \eqref{eq:compatibility_theorem_overfared_2}. Furthermore, by construction $\alpha_1 + \overline{\beta}_1$ is weakly compatible with $\alpha_2 + \overline{\beta}_2$ {with respect to} $\zeta$. 
    
    Next we show that $\alpha_1 + \overline{\beta}_1$ is compatible.  Applying $\mathbf{S}_1$ to the expression \eqref{eq:compatibility_theorem_overfared_2} for $\alpha_1$,
    we obtain
   \begin{align} \label{eq:S_compatibility_temp_with_correction}
     \mathbf{S}_1 \alpha_1 & = - \mathbf{S}_1 \rho_1 + \mathbf{S}_1 \mathbf{T}_{1,1} \overline{\gamma}_1 + \mathbf{S}_1 \mathbf{R}_1 \mathbf{S}_1 \tau_1  & \nonumber \\
      & = - \mathbf{S}_1 \rho_1 - \mathbf{S}_2 \mathbf{T}_{1,2} \overline{\gamma}_1 + \mathbf{S}_1 \mathbf{R}_1 \mathbf{S}_1 \tau_1  &  \  \text{{\scriptsize{ Equation \eqref{eq:general_adjoint_identity_double_final}}}} \nonumber \\
      & = - \mathbf{S}_1 \rho_1 - \mathbf{S}_2 \left(  \alpha_2 - \mathbf{R}_2 \mathbf{S}_1 \tau_1 \right) + \mathbf{S}_1 \mathbf{R}_1 \mathbf{S}_1 \tau_1  &  \text{\scriptsize Equation \eqref{eq:compatible_breakdown_C2}}  \\
      & = \mathbf{S}_1 \nu_1 - \mathbf{S}_2   \alpha_2  + \left( \mathbf{S}_2 \mathbf{R}_2    + \mathbf{S}_1 \mathbf{R}_1  \right) \mathbf{S}_1 \tau_1   &  \text{\scriptsize Equation \eqref{eq:compatible_breakdown_D2}, Thm \ref{th:S_kernel_range}} \nonumber \\
      & = - \mathbf{S}_2 \alpha_2 + \xi.
      & \text{\scriptsize Equation \eqref{eq:compatible_breakdown_B2}, Thm \ref{th:quadratic_adjoint_one}} \nonumber
   \end{align}  
   This proves the claim.
   
   Finally, if $\alpha_1' + \overline{\beta}_1'$ is another compatible form, it is in particular weakly compatible. Thus $\alpha_1' + \overline{\beta}_1' - (\alpha_1 + \overline{\beta}_1)$ is an exact form $d\omega$ vanishing on the boundary, and hence is in $\mathcal{A}_{\mathrm{hm}}(\riem_1)$. However, since both are compatible, we also have that $\mathbf{S}_1^{\mathrm{h}} d\omega = 0$ so by Proposition \ref{pr:kernel_S1h} $d\omega$ is bridgeworthy. Since $\riem_2$ is connected, the only bridgeworthy form on $\riem_1$ is $0$. This completes the proof of (2). 
   
   We now prove (1). By the assumptions we have the decomposition \eqref{eq:compatible_breakdown_A1} - \eqref{eq:compatible_breakdown_D1} of Lemma \ref{le:decomposition_lemma}.  Let $G,H \in \mathcal{D}_{\mathrm{harm}}(\riem_2)$ be such that $\partial G = \rho_2$ and $\overline{\partial} H = \overline{\gamma}_2$.  
   Such a $G$ and $H$ are guaranteed to exist by Lemma \ref{le:dbar_representative}. Since by Theorem  \ref{th:Overfare_with_correction_functions} 
   \[ \dot{\mathbf{J}}_{2,1} H = \dot{\mathbf{O}}_{2,1} \dot{\mathbf{J}}_{2,2} \dot{H} - \dot{\mathbf{O}}_{2,1} \dot{H} \]
   it follows using Theorem \ref{th:jump_derivatives} that
   \begin{equation} \label{eq:exact_same_boundary_values_one}
      d \mathbf{J}^q_{2,1} H = \mathbf{T}_{2,1} \overline{\gamma}_2 + \overline{\mathbf{R}}_1 \overline{\mathbf{S}}_2 \overline{\gamma}_2 \ \ \  \text{and} \ \ \  d(\mathbf{J}^q_{2,2} H - H )= - \overline{\gamma}_2 + \mathbf{T}_{2,2} \overline{\gamma}_2 + \overline{\mathbf{R}}_2 \overline{\mathbf{S}}_2 \overline{\gamma}_2 
   \end{equation}
   are exact and have primitives with the same CNT boundary values. Similarly using $G$ we obtain that
   \begin{equation} \label{eq:exact_same_boundary_values_two}
     \overline{\mathbf{T}}_{2,1} {\rho}_2 + {\mathbf{R}}_1  {\mathbf{S}}_2 {\rho}_2 \ \ \  \text{and} \ \ \   - {\rho}_2 + \overline{\mathbf{T}}_{2,2} {\rho}_2 + {\mathbf{R}}_2 {\mathbf{S}}_2 {\rho}_2 
   \end{equation}
   are exact and have primitives with the same boundary values. 
   
   Set now
   \begin{align*}
       \alpha_2 & = -\rho_2 + \mathbf{T}_{2,2} \overline{\gamma}_2 + \mathbf{R}_2 \mathbf{S}_2 \tau_2 \\
       \overline{\beta}_2 & = -\overline{\gamma}_2 + \overline{\mathbf{T}}_{2,2} \rho_2 + \overline{\mathbf{R}}_2 \overline{\mathbf{S}}_2 \overline{\mathbf{\sigma}}_2. 
   \end{align*}
   We will show that this is compatible with $\alpha_1 + \overline{\beta}_1$ with respect to $\zeta$. 
   
   To see that it is weakly compatible, it is enough to show that 
   \[  \alpha_2 + \overline{\beta}_2 - \mathbf{R}_2 \xi - \overline{\mathbf{R}}_2 \overline{\eta} \ \ \ \text{and} \ \ \ \alpha_1 + \overline{\beta}_1 - \mathbf{R}_1 \xi - \overline{\mathbf{R}}_1 \overline{\eta} \]
   are exact and have the same boundary values. We are given that the left expression is exact; a computation identical to \eqref{eq:tempy_mctempface} in part (2) shows that 
   \[  \alpha_1 + \overline{\beta}_1 - \mathbf{R}_1 \xi - \overline{\mathbf{R}}_1 \overline{\eta} = \mathbf{T}_{2,1} \overline{\gamma}_2 + \mathbf{R}_1 \mathbf{S}_2 \rho_2 + \overline{\mathbf{T}}_{2,1} \rho_2 + \overline{\mathbf{R}}_1 \overline{\mathbf{S}}_2 \overline{\gamma}_2 \]
   and we also have that
   \[  \alpha_2 + \overline{\beta}_2 - \mathbf{R}_2 \xi - \overline{\mathbf{R}}_2 \overline{\eta} = -\rho_2 + \mathbf{T}_{2,2} \overline{\gamma}_2 + \mathbf{R}_2 \mathbf{S}_2 \rho_2 - \overline{\gamma}_2 + \overline{\mathbf{T}}_{2,2} \rho_2 + \overline{\mathbf{R}}_2 \overline{\mathbf{S}}_2 \overline{\gamma}_2. \]
   Weak compatibility now follows from the fact that the left and right sides of \eqref{eq:exact_same_boundary_values_one} and \eqref{eq:exact_same_boundary_values_two} are exact and have primitives with the same boundary values.
   
   To show compatibility, we repeat the computation of \eqref{eq:S_compatibility_temp_with_correction} with the indices switched, and without the $\omega$ term.   
   
   Now any other weakly compatible form is of the form $\alpha_2 + \overline{\beta}_2 + d\omega$ for some $d\omega \in \mathcal{A}_{\mathrm{harm}}(\riem_2)$. If this is compatible, we must have that $\mathbf{S}_1^{\mathrm{h}} d\omega=0$, and thus $d\omega$ is bridgeworthy by Proposition \ref{pr:kernel_S1h}. 
  \end{proof}
  
  We have thus proven the characterization of compatibility promised in Section \ref{se:subsection_overfare_oneforms}. Though it is contained in the statement of Theorem \ref{th:compatibility_existence}, it deserves to be singled out.
  \begin{corollary}
   Assume that $\riem_2$ is connected. Then $\alpha_1 \in \mathcal{A}_{\mathrm{harm}}(\riem_1)$ and $\alpha_2 \in \mathcal{A}_{\mathrm{harm}}(\riem_2)$ are compatible with respect to $\zeta \in \mathcal{A}_{\mathrm{harm}}(\mathscr{R})$ if and only if 
   \[ \alpha_1 = \mathbf{O}^{\mathrm{e}}_{2,1} \left(\alpha_2 - \mathbf{R}_2^{\mathrm{h}} \zeta \right) + \mathbf{R}_1^{\mathrm{h}} \zeta.  \]
  \end{corollary}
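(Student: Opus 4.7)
The plan is to observe that this corollary is essentially a repackaging of Theorem \ref{th:compatibility_existence}(2), stated in a cleaner ``if and only if'' form, and to derive it directly from that theorem. The notational difference, namely writing $\alpha_1$ and $\alpha_2$ rather than $\alpha_1+\overline{\beta}_1$ and $\alpha_2+\overline{\beta}_2$, is immaterial since an arbitrary element of $\mathcal{A}_{\mathrm{harm}}(\riem_k)$ is automatically a sum of its holomorphic and anti-holomorphic parts.

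For the forward implication, I would start by assuming that $\alpha_1$ and $\alpha_2$ are compatible with respect to $\zeta$ in the sense of Definition \ref{de:strongly_compatible}. Then the weak compatibility condition (2) in Definition \ref{de:weakly_compatible} yields $\alpha_2 - \mathbf{R}_2^{\mathrm{h}}\zeta \in \mathcal{A}^{\mathrm{e}}_{\mathrm{harm}}(\riem_2)$, which puts us exactly in the hypothesis of Theorem \ref{th:compatibility_existence}(2). By the existence-and-uniqueness clause of that theorem (recalling that $\riem_2$ is connected, so that $\mathbf{O}^{\mathrm{e}}_{2,1}$ is well-defined by Definition \ref{def:exact overfare}), the unique compatible form is the one given by the displayed formula, and so $\alpha_1$ must equal it.

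For the reverse implication, I would assume that the formula holds; this implicitly asserts that $\alpha_2 - \mathbf{R}_2^{\mathrm{h}}\zeta$ is exact on $\riem_2$ (otherwise $\mathbf{O}^{\mathrm{e}}_{2,1}$ does not apply to it). Hence the hypothesis of Theorem \ref{th:compatibility_existence}(2) is again satisfied. The existence portion of that theorem constructs a compatible $\alpha_1'$ as $\mathbf{O}^{\mathrm{e}}_{2,1}(\alpha_2 - \mathbf{R}_2^{\mathrm{h}}\zeta) + \mathbf{R}_1^{\mathrm{h}}\zeta$, and by the uniqueness clause this $\alpha_1'$ is the only compatible choice; since our $\alpha_1$ equals this $\alpha_1'$ by the formula, it is compatible.

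There is no serious obstacle here; the work has already been carried out in the proof of Theorem \ref{th:compatibility_existence}. The only minor point to emphasize in the write-up is that the formula is only meaningful once one knows $\alpha_2 - \mathbf{R}_2^{\mathrm{h}}\zeta$ lies in the domain of $\mathbf{O}^{\mathrm{e}}_{2,1}$, i.e. is exact on $\riem_2$; this exactness is exactly the weak compatibility condition (2), which is built into the definition of compatibility and hence available from both directions of the equivalence.
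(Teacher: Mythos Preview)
Your proposal is correct and matches the paper's own treatment: the paper explicitly remarks that the corollary is ``contained in the statement of Theorem \ref{th:compatibility_existence}'' and does not give a separate proof. Your forward and reverse implications both invoke exactly the existence and uniqueness clauses of Theorem \ref{th:compatibility_existence}(2), which is precisely the intended argument.
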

  
  We also have several special cases worthy of attention.
  \begin{corollary}
   Assume that both $\riem_1$ and $\riem_2$ are connected. Given $\alpha_k \in \mathcal{A}_{\mathrm{harm}}(\riem_k)$ and $\zeta \in \mathcal{A}_{\mathrm{harm}}(\mathscr{R})$. The following are equivalent.
   \begin{enumerate}[label=(\arabic*),font=\upshape]
       \item $\alpha_k$ are compatible with respect to $\zeta$;
       \item $\alpha_1 = \mathbf{O}^{\mathrm{e}}_{2,1} \left( \alpha_2 - \mathbf{R}_2^{\mathrm{h}} \zeta \right) + \mathbf{R}_1^{\mathrm{h}} \zeta$;
       \item $\alpha_2 = \mathbf{O}^{\mathrm{e}}_{1,2} \left( \alpha_1 - \mathbf{R}_1^{\mathrm{h}} \zeta \right) + \mathbf{R}_2^{\mathrm{h}} \zeta$.
   \end{enumerate}
   In particular, given $\alpha_1$, there is a unique $\alpha_2$ which is compatible with $\alpha_1$ with respect to $\zeta$.  The same claim holds with the indices $1$ and $2$ interchanged.
  \end{corollary}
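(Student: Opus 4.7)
The plan is to reduce everything to the previous corollary (which established the equivalence of (1) and (2) under the sole assumption that $\riem_2$ is connected) together with a symmetry argument. The key observation is that both Definition \ref{de:weakly_compatible} and Definition \ref{de:strongly_compatible} are manifestly symmetric in $\riem_1$ and $\riem_2$: weak compatibility requires matching CNT boundary values on $\partial_k \riem_1 = \partial_k \riem_2 = \Gamma_k$ and that $\alpha_k - \mathbf{R}_k^{\mathrm{h}}\zeta$ be exact for both $k$, while the strong compatibility condition $\mathbf{S}_1^{\mathrm{h}} \alpha_1 + \mathbf{S}_2^{\mathrm{h}} \alpha_2 = \zeta$ is visibly symmetric under swapping indices. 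Hence the notion of ``compatibility of $\alpha_1$ and $\alpha_2$ with respect to $\zeta$'' is intrinsic to the pair, and every theorem about it applies with the two surfaces interchanged, provided the hypotheses (connectedness of one of them) hold after the swap.

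First, since $\riem_2$ is connected by assumption, the immediately preceding corollary gives $(1) \Leftrightarrow (2)$ directly. Next, I would invoke the same corollary with the roles of $\riem_1$ and $\riem_2$ interchanged; this is legitimate precisely because the additional hypothesis ``$\riem_1$ is connected'' of this corollary supplies the connectedness requirement needed in the swapped statement. By the symmetry of compatibility noted above, this yields $(1) \Leftrightarrow (3)$. Putting these together establishes the three-way equivalence.

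For uniqueness of $\alpha_2$ given $\alpha_1$, I would apply Theorem \ref{th:compatibility_existence} part (2) with the roles of $\riem_1$ and $\riem_2$ swapped. That theorem requires the ``target side'' (playing the role of $\riem_2$ in its statement) to be connected, and in our swapped application this target side is $\riem_1$, which is connected by hypothesis. The theorem then produces a unique $\alpha_2$ compatible with the given $\alpha_1$ with respect to $\zeta$, namely the one given by formula (3); the corresponding statement with the roles of $1$ and $2$ interchanged is obtained the same way.

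There is essentially no analytic obstacle here, since all the heavy lifting (existence of compatible forms, the explicit formula via $\mathbf{O}^{\mathrm{e}}$, and uniqueness up to bridgeworthy forms) was done in Theorem \ref{th:compatibility_existence} and its corollary. The only point requiring care is the bookkeeping on uniqueness: in the previous corollary, uniqueness was obtained because the ambiguity in the compatible form sits in $\mathcal{A}_{\mathrm{bw}}(\riem_2)$, and connectedness of $\riem_2$ forced this space to be trivial. When both surfaces are connected, both $\mathcal{A}_{\mathrm{bw}}(\riem_1)$ and $\mathcal{A}_{\mathrm{bw}}(\riem_2)$ are trivial, so uniqueness holds on either side, giving the final sentence of the corollary.
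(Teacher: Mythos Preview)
Your approach is correct and is precisely what the paper intends: the corollary is stated without proof, as it follows immediately from the preceding corollary (giving $(1)\Leftrightarrow(2)$ when $\riem_2$ is connected) together with the evident symmetry of Definitions~\ref{de:weakly_compatible} and~\ref{de:strongly_compatible} under interchanging $\riem_1$ and $\riem_2$, which becomes available once $\riem_1$ is also assumed connected. One small bookkeeping slip in your last paragraph: in Theorem~\ref{th:compatibility_existence}(2) the ambiguity of the compatible form on $\riem_1$ lies in $\mathcal{A}_{\mathrm{bw}}(\riem_1)$, and it is connectedness of $\riem_2$ (not $\riem_2$'s own bridgeworthy space) that forces this to vanish; your conclusion is unaffected since under the double connectedness hypothesis both $\mathcal{A}_{\mathrm{bw}}(\riem_1)$ and $\mathcal{A}_{\mathrm{bw}}(\riem_2)$ are trivial.
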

  
  \begin{corollary}  Assume that the separating complex of curves consists of a single curve. Given $\alpha_k \in \mathcal{A}_{\mathrm{harm}}(\riem_k)$ for $k=1,2$, they are weakly compatible with respect to $\zeta$ if and only if they are compatible with respect to $\zeta$.  
  \end{corollary}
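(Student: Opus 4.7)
Compatibility trivially implies weak compatibility, so the content lies in the reverse direction. The plan is to reduce to the uniqueness statement in Theorem \ref{th:compatibility_existence} by exploiting the fact that with a single separating curve, each side carries no nontrivial harmonic measures.

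First I would record the topological setup. A single separating Jordan curve $\Gamma$ partitions the compact surface $\mathscr{R}$ into exactly two connected components, so both $\riem_1$ and $\riem_2$ are connected and each has exactly one boundary curve, namely $\Gamma$. In particular the only harmonic function with constant boundary values on $\partial \riem_k$ is a global constant, so $\mathcal{A}_{\mathrm{hm}}(\riem_k)=\{0\}$ for $k=1,2$.

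Next, given weakly compatible $\alpha_1,\alpha_2$ with respect to $\zeta$, since $\riem_2$ is connected Theorem \ref{th:compatibility_existence}(2) produces a compatible pair $(\tilde\alpha_1,\alpha_2)$ with respect to $\zeta$, namely
\[
\tilde\alpha_1 = \mathbf{O}^{\mathrm{e}}_{2,1}\bigl(\alpha_2 - \mathbf{R}_2^{\mathrm{h}}\zeta\bigr) + \mathbf{R}_1^{\mathrm{h}}\zeta.
\]
This $\tilde\alpha_1$ is in particular weakly compatible with $\alpha_2$ with respect to $\zeta$. By definition of weak compatibility, both $\alpha_1 - \mathbf{R}_1^{\mathrm{h}}\zeta$ and $\tilde\alpha_1 - \mathbf{R}_1^{\mathrm{h}}\zeta$ lie in $\mathcal{A}^{\mathrm{e}}_{\mathrm{harm}}(\riem_1)$ and have the same partial overfare of $[\alpha_2]$ as their class in $\mathcal{H}'(\partial \riem_1)$. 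Therefore the difference $\eta := \alpha_1 - \tilde\alpha_1$ is exact on $\riem_1$ with $[\eta]=0$ in $\mathcal{H}'(\partial \riem_1)$.

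Now I would finish by picking a primitive $\omega$ with $d\omega = \eta$, which is globally defined on $\riem_1$ because $\eta$ is exact. The condition $[\eta]=0$ means, in terms of CNT boundary values of the primitive, that $\omega$ has constant CNT boundary values on $\partial_1 \riem_1 = \Gamma$. Since $\riem_1$ is connected and has a single boundary curve, these constants are a single constant $c$, and the uniqueness statement for the Dirichlet problem (Theorem \ref{th:CNT_BVs_existence_uniqueness}) forces $\omega \equiv c$, hence $\eta = 0$ and $\alpha_1 = \tilde\alpha_1$. Thus $\alpha_1$ is compatible. The main obstacle is conceptual rather than technical: one must be careful that the vanishing of $[\eta] \in \mathcal{H}'(\partial \riem_1)$ translates into constancy of a \emph{global} primitive, which is exactly where the assumption of a single separating curve (and hence both connectivity of $\riem_1$ and uniqueness of the boundary component) enters decisively.
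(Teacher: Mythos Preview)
Your proof is correct and is exactly the intended argument: with a single separating curve both $\riem_1$ and $\riem_2$ are connected with one boundary component, so $\mathcal{A}_{\mathrm{hm}}(\riem_1)=\{0\}$, and hence any two forms on $\riem_1$ weakly compatible with the same $(\alpha_2,\zeta)$ must coincide; in particular the given $\alpha_1$ equals the compatible one produced by Theorem \ref{th:compatibility_existence}(2). The paper states this corollary without proof, as it follows immediately from the uniqueness discussion at the end of the proof of Theorem \ref{th:compatibility_existence} together with the observation that $\mathcal{A}_{\mathrm{hm}}(\riem_1)$ is trivial in the single-curve case.
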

 \end{subsection}
 \begin{subsection}{Unitarity of the scattering matrix}
 
  \label{se:scattering_subsection}
 
 We now show that the scattering matrix of overfare is a unitary matrix whose blocks are Schiffer operators.  We divide this into cases $g \neq 0$ and $g=0$.  
 \begin{theorem}[Scattering matrix, $g \neq 0$]   \label{th:unitary_scattering_genus_nonzero}  Assume that the genus of $\mathscr{R}$ is non-zero, and that $\riem_2$ is connected.  
 
  Assume that $\alpha_1 + \overline{\beta}_1$ and $\alpha_2 + \overline{\beta}_2$ are compatible with respect to $\zeta = \xi + \overline{\eta} \in \mathcal{A}_{\mathrm{harm}}(\mathscr{R})$.  Then
  \begin{equation}  \label{eq:scattering_matrix}
     \left( \begin{array}{c} \overline{\beta}_1 \\ \overline{\beta}_2 \\ \xi \end{array} \right)
     = \left( \begin{array}{ccc} - \overline{\mathbf{T}}_{1,1} & - \overline{\mathbf{T}}_{2,1} & \overline{\mathbf{R}}_1 \\
      - \overline{\mathbf{T}}_{1,2} & - \overline{\mathbf{T}}_{2,2} & \overline{\mathbf{R}}_2 \\ \mathbf{S}_1 & \mathbf{S}_2 & 0 \end{array} \right) 
      \left( \begin{array}{c} {\alpha}_1 \\ {\alpha}_2 \\ \overline{\eta} \end{array} \right).
  \end{equation}
  This matrix is unitary.
 \end{theorem}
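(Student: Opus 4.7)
The plan is to prove the theorem in two stages: first verifying the scattering matrix identity row-by-row for compatible triples, and then establishing unitarity by block-wise computation of $M^*M$ using the adjoint identities from Section~\ref{se:Schiffer_adjoint_identities}.

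For the matrix identity, I would dispatch the three rows separately. The third row, $\xi = \mathbf{S}_1 \alpha_1 + \mathbf{S}_2 \alpha_2$, is immediate: it is the holomorphic part of compatibility condition~(3) in Definition~\ref{de:strongly_compatible}, which expands as $(\mathbf{S}_1 \alpha_1 + \overline{\mathbf{S}}_1 \overline{\beta}_1) + (\mathbf{S}_2 \alpha_2 + \overline{\mathbf{S}}_2 \overline{\beta}_2) = \xi + \overline{\eta}$. For the first row, I would substitute the formulas
\[ \alpha_1 = -\rho_1 + \mathbf{T}_{1,1}\overline{\gamma}_1 + \mathbf{R}_1 \mathbf{S}_1 \tau_1, \qquad \alpha_2 = \partial\omega + \mathbf{T}_{1,2}\overline{\gamma}_1 + \mathbf{R}_2 \mathbf{S}_1 \tau_1 \]
from Theorem~\ref{th:compatibility_existence}(2) and Lemma~\ref{le:decomposition_lemma}(2) into $-\overline{\mathbf{T}}_{1,1}\alpha_1 - \overline{\mathbf{T}}_{2,1}\alpha_2 + \overline{\mathbf{R}}_1\overline{\eta}$ and collect terms by component. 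The coefficient of $\overline{\gamma}_1$ simplifies via Theorem~\ref{th:general_adjoint_double_identities} since $\mathbf{T}_{1,1}^*\mathbf{T}_{1,1} + \mathbf{T}_{1,2}^*\mathbf{T}_{1,2} = \mathbf{I} - \overline{\mathbf{R}}_1\overline{\mathbf{S}}_1$; the coefficient of $\tau_1$ is $-(\mathbf{T}_{1,1}^*\mathbf{R}_1 + \mathbf{T}_{1,2}^*\mathbf{R}_2)\mathbf{S}_1 = 0$ by taking adjoints in \eqref{eq:general_adjoint_identity_double_final}; the $\partial\omega$ contribution vanishes because $\mathbf{T}_{2,1}\overline{\partial}\omega = \mathbf{R}_1\mathbf{S}_2\partial\omega = 0$ for bridgeworthy $\omega$ by Theorem~\ref{th:T_and_S_on_harmonic_measures} and Proposition~\ref{pr:kernel_S1h}; finally, using $\overline{\mathbf{S}}_1(\overline{\mu}_1 + \overline{\sigma}_1) = \overline{\eta}$ together with the decomposition $\overline{\gamma}_1 = -\overline{\mu}_1 + \overline{\delta}_1$ where $\overline{\mathbf{S}}_1 \overline{\delta}_1 = 0$ by Theorem~\ref{th:S_kernel_range}, the surviving terms telescope to $-\overline{\gamma}_1 + \overline{\mathbf{T}}_{1,1}\rho_1 + \overline{\mathbf{R}}_1\overline{\mathbf{S}}_1\overline{\sigma}_1 = \overline{\beta}_1$. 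The second row follows by the symmetric computation with indices $1$ and $2$ suitably interchanged in the coefficients.

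For unitarity, I compute the adjoint of $M$ using $\mathbf{T}_{j,k}^* = \overline{\mathbf{T}}_{k,j}$ and $\mathbf{R}_k^* = \mathbf{S}_k$ from Theorem~\ref{th:adjoint_identities}, obtaining
\[ M^* = \begin{pmatrix} -\mathbf{T}_{1,1} & -\mathbf{T}_{2,1} & \mathbf{R}_1 \\ -\mathbf{T}_{1,2} & -\mathbf{T}_{2,2} & \mathbf{R}_2 \\ \overline{\mathbf{S}}_1 & \overline{\mathbf{S}}_2 & 0 \end{pmatrix}, \]
and then verify $M^*M = \mathbf{I}$ block by block. The diagonal blocks $(M^*M)_{11}$ and $(M^*M)_{22}$ reduce to the first two identities of \eqref{eq:other_general_adjoint_double}; the $(3,3)$ block becomes $\overline{\mathbf{S}}_1\overline{\mathbf{S}}_1^* + \overline{\mathbf{S}}_2\overline{\mathbf{S}}_2^* = \mathbf{I}$ by Theorem~\ref{th:quadratic_adjoint_one}. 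The off-diagonal blocks $(M^*M)_{12}$ and $(M^*M)_{21}$ vanish by the last two identities of \eqref{eq:other_general_adjoint_double}; the blocks $(M^*M)_{13}$ and $(M^*M)_{23}$ vanish by Theorem~\ref{th:general_adjoint_identity_double_final}; and $(M^*M)_{31}$, $(M^*M)_{32}$ vanish by the complex conjugate of \eqref{eq:general_adjoint_identity_double_final}. The identity $MM^* = \mathbf{I}$ follows from the analogous dual computation.

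The main obstacle is the careful bookkeeping in the first stage --- tracking how the various components of the decomposition from Lemma~\ref{le:decomposition_lemma} interact when operated on by the Schiffer operators, and ensuring that each invocation of a quadratic adjoint identity is deployed against the correct orthogonal decomposition. The unitarity computation, by contrast, is essentially mechanical once the adjoint identities of Section~\ref{se:Schiffer_adjoint_identities} are in hand; indeed, those identities were organized precisely so that this block decomposition collapses to the identity.
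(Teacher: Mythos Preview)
Your proposal is correct and follows essentially the same route as the paper: the third row is the holomorphic part of compatibility condition~(3), the first two rows are obtained by feeding the decompositions of Lemma~\ref{le:decomposition_lemma} and Theorem~\ref{th:compatibility_existence} through the quadratic adjoint identities of Theorems~\ref{th:general_adjoint_double_identities} and~\ref{th:general_adjoint_identity_double_final} together with Corollary~\ref{co:bridgeworthy_TFAE} for the bridgeworthy term, and unitarity is exactly the block-wise reading of Theorems~\ref{th:quadratic_adjoint_one}, \ref{th:general_adjoint_double_identities}, \ref{th:general_adjoint_identity_double_final}. The only organizational difference is that you substitute both $\alpha_1$ and $\alpha_2$ simultaneously into the right-hand side and collect, whereas the paper applies $\overline{\mathbf{T}}_{1,1}$ (resp.\ $\overline{\mathbf{T}}_{2,2}$) to one of the expressions and then recognizes the other inside the result; these are the same computation in a different order.
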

 \begin{proof} 
  Unitarity follows from Theorems \ref{th:quadratic_adjoint_one}, \ref{th:general_adjoint_double_identities}, and \ref{th:general_adjoint_identity_double_final}.  So it only remains to show that the matrix equation holds.  The bottom entry of the left and right hand side are equal by part (3) of the definition of compatibility, so we need only demonstrate that the other two entries are equal.
  
  We then have that both parts of Lemma \ref{le:decomposition_lemma} hold. For $k=1,2$ let $\gamma_k$, $\rho_k$, $\tau_k$, $\sigma_k$, $\mu_k$, and $\nu_k$ be as in Lemma \ref{le:decomposition_lemma}, so that   \eqref{eq:compatible_breakdown_A1}-\eqref{eq:compatible_breakdown_D1} and \eqref{eq:compatible_breakdown_A2}-\eqref{eq:compatible_breakdown_D2} hold. 
 
 Note that $\mathbf{S}_k \nu_k = \mathbf{S}_k \rho_k$ and $\mathbf{S}_k \mu_k = \mathbf{S}_k \gamma_k$ for $k=1,2$, 
 since $\overline{\delta}_k \in [\overline{\mathbf{R}}_k \overline{\mathcal{A}(\mathscr{R})}]^\perp$ and 
 the integral kernel of $\mathbf{S}_k$ is in $\overline{\mathcal{A}(\mathscr{R})}$. Similarly  $\overline{\mathbf{S}}_k \overline{\mu}_k = \overline{\mathbf{S}}_k \overline{\gamma}_k$.

 Next, applying $\mathbf{T}_{1,1}$ to the first equation of  \eqref{eq:compatibility_theorem_overfared_2}, and inserting the second, we obtain
 \begin{align*}
     \overline{\mathbf{T}}_{1,1} \alpha_1 & = - \overline{\mathbf{T}}_{1,1} \rho_1 + \overline{\mathbf{T}}_{1,1} \mathbf{T}_{1,1} \overline{\gamma_1}  + \overline{\mathbf{T}}_{1,1} \mathbf{R}_1 \mathbf{S}_1 \tau_1 \\ 
      & = - \overline{\beta}_1 - \overline{\gamma_1} + \overline{\mathbf{R}}_1 \overline{\mathbf{S}}_1 \overline{\sigma}_1 + \overline{\mathbf{T}}_{1,1} 
      \mathbf{T}_{1,1} \overline{\gamma_1} + \overline{\mathbf{T}}_{1,1} 
      \mathbf{R}_1 \mathbf{S}_1 \tau_1.
 \end{align*}
 Now applying the first identity of Theorem \ref{th:general_adjoint_double_identities} and the first line of Theorem \ref{th:general_adjoint_identity_double_final} in that order, we obtain 
 \begin{align*}
     \overline{\mathbf{T}}_{1,1} \alpha_1 & = - \overline{\beta_1} + \overline{\mathbf{R}}_{1} \overline{\mathbf{S}}_1 \overline{\sigma_1} - \overline{\mathbf{T}}_{2,1} \mathbf{T}_{1,2} \overline{\gamma_1} - \overline{\mathbf{R}}_1 \overline{\mathbf{S}}_1 \overline{\gamma_1} 
     + \overline{\mathbf{T}}_{1,1} \mathbf{R}_1 \mathbf{S}_1 \tau_1 & \\ 
       & = - \overline{\beta}_1 + \overline{\mathbf{R}}_1 \overline{\mathbf{S}}_1 \overline{\sigma_1} - \overline{\mathbf{T}}_{2,1} \mathbf{T}_{1,2} \overline{\gamma_1} - \overline{\mathbf{R}}_{1} \overline{\mathbf{S}}_1 \overline{\gamma_1} - \overline{\mathbf{T}}_{2,1} \mathbf{R}_2 \mathbf{S}_1 \tau_1 & \\
       & =  - \overline{\beta}_1 - \overline{\mathbf{T}}_{2,1} \alpha_2 + \overline{\mathbf{T}}_{2,1} \partial \omega +
       \overline{\mathbf{R}}_1 \overline{\mathbf{S}}_1 \overline{\sigma}_1 + \overline{\mathbf{R}}_1 \overline{\mathbf{S}}_1  \overline{\mu}_1 & {\text{\scriptsize Eqn \eqref{eq:compatible_breakdown_C2}, Thm \ref{th:S_kernel_range}  }} \\
       & =  - \overline{\beta}_1 - \overline{\mathbf{T}}_{2,1} \alpha_2 + \overline{\mathbf{R}}_1 \overline{\eta}  & {\text{\scriptsize Eqn \eqref{eq:compatible_breakdown_B2}}}.
 \end{align*}
 Rearranging and using the fact that $\overline{\mathbf{T}}_{1,2} \partial \omega=0$ by Corollary \ref{co:bridgeworthy_TFAE}, we get
 \begin{equation} \label{eq:full_unitary_good2}
     \overline{\beta}_1 = - \overline{\mathbf{T}}_{1,1} \alpha_1  - \overline{\mathbf{T}}_{2,1} \alpha_2 + \overline{\mathbf{R}}_1 \overline{\eta}  
 \end{equation}
 as desired.
 
 The proof of the remaining equation is similar, with small differences arising from the asymmetry of the assumptions. Applying $\mathbf{T}_{2,2}$ to the first equation of  \eqref{eq:compatibility_theorem_overfared_1}, and inserting the second, we obtain
  \begin{align*}
     \overline{\mathbf{T}}_{2,2} \alpha_1 & = - \overline{\mathbf{T}}_{2,2} \rho_2 + \overline{\mathbf{T}}_{2,2} \mathbf{T}_{2,2} \overline{\gamma_2}  + \overline{\mathbf{T}}_{2,2} \mathbf{R}_2 \mathbf{S}_2 \tau_2 + \overline{\mathbf{T}}_{2,2} \partial \omega\\ 
      & = - \overline{\beta}_2 - \overline{\gamma_2} + \overline{\mathbf{R}}_2 \overline{\mathbf{S}}_2 \overline{\sigma}_2 + \overline{\mathbf{T}}_{2,2} 
      \mathbf{T}_{2,2} \overline{\gamma_2} + \overline{\mathbf{T}}_{2,2} 
      \mathbf{R}_2 \mathbf{S}_2 \tau_2
 \end{align*}
 where in the last equality we have used the fact that $\overline{\mathbf{T}}_{2,2} \partial \omega = - \overline{\partial} \omega$ by Corollary  \ref{co:bridgeworthy_TFAE}. 
  Now as above, applying the second identity of Theorem \ref{th:general_adjoint_double_identities} and the second line of Theorem \ref{th:general_adjoint_identity_double_final}  we obtain 
 \begin{align*}
     \overline{\mathbf{T}}_{2,2} \alpha_1 
       & = - \overline{\beta}_2 + \overline{\mathbf{R}}_2 \overline{\mathbf{S}}_2 \overline{\sigma_2} - \overline{\mathbf{T}}_{1,2} \mathbf{T}_{2,1} \overline{\gamma_2} - \overline{\mathbf{R}}_{2} \overline{\mathbf{S}}_2 \overline{\gamma_2} - \overline{\mathbf{T}}_{1,2} \mathbf{R}_1 \mathbf{S}_2 \tau_2 & \\
       & =  - \overline{\beta}_2 - \overline{\mathbf{T}}_{1,2} \alpha_1 +
       \overline{\mathbf{R}}_2 \overline{\mathbf{S}}_2 \overline{\sigma}_2 + \overline{\mathbf{R}}_2 \overline{\mathbf{S}}_2  \overline{\mu}_2 & {\text{\scriptsize Eqn \eqref{eq:compatible_breakdown_C1}, Thm \ref{th:S_kernel_range}  }} \\
       & =  - \overline{\beta}_2 - \overline{\mathbf{T}}_{1,2} \alpha_1 + \overline{\mathbf{R}}_2 \overline{\eta}  & {\text{\scriptsize Eqn \eqref{eq:compatible_breakdown_B1}}}.
 \end{align*}
 This completes the proof.
 \end{proof}

 In the genus zero case we have the following.
 \begin{theorem}[Scattering matrix genus zero]  \label{th:unitary_scattering_genus_zero} Assume that $g=0$ and $\riem_2$ is connected. and let $\alpha_k + \overline{\beta}_k \in \mathcal{A}_{\mathrm{harm}}(\riem_k)$.  Assume that $\alpha_1 + \overline{\beta}_1 = \mathbf{O}^{\mathrm e}(\alpha_2 + \overline{\beta_2})$.
  Then $\alpha_k$ and $\beta_k$ satisfy
   \begin{equation}  \label{eq:scattering_matrix_genus_zero}
     \left( \begin{array}{c} \overline{\beta}_1 \\ \overline{\beta}_2   \end{array} \right)
     = \left( \begin{array}{cc} {-} \overline{\mathbf{T}}_{1,1} &  { -}\overline{\mathbf{T}}_{2,1}  \\
       {-}\overline{\mathbf{T}}_{1,2} &  { -}\overline{\mathbf{T}}_{2,2}    \end{array} \right) 
      \left( \begin{array}{c} {\alpha}_1 \\ {\alpha}_2  \end{array} \right).
  \end{equation}
  and the matrix is unitary.
 \end{theorem}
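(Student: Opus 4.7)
The plan is to mimic the proof of Theorem \ref{th:unitary_scattering_genus_nonzero} while exploiting the drastic simplifications available in the genus zero case. First I would observe that when $\mathscr{R}$ has genus zero, it is biholomorphic to $\sphere$, so $\mathcal{A}(\mathscr{R}) = \{0\}$ and hence $\mathcal{A}_{\mathrm{harm}}(\mathscr{R}) = \{0\}$. By Example \ref{ex:sphere_kernels} this gives $K_{\mathscr{R}} = 0$, whence the operators $\mathbf{S}_k$ are zero and the restriction operators $\mathbf{R}_k$ act on the trivial space. Consequently the only catalyzing form available is $\zeta = \xi + \overline{\eta} = 0$, condition (3) of Definition \ref{de:strongly_compatible} is automatic, and weak compatibility coincides with compatibility. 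The matrix equation \eqref{eq:scattering_matrix} of Theorem \ref{th:unitary_scattering_genus_nonzero} then formally collapses, since the third row and column all involve $\mathbf{R}_k$, $\mathbf{S}_k$, or $\zeta$, to precisely \eqref{eq:scattering_matrix_genus_zero}.

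The next step is to check that the hypothesis $\alpha_1 + \overline{\beta}_1 = \mathbf{O}^{\mathrm{e}}_{2,1}(\alpha_2 + \overline{\beta}_2)$ places the pair in the compatibility framework with $\zeta = 0$. This uses the fact that $\mathbf{O}^{\mathrm{e}}_{2,1}$ is defined on $\mathcal{A}^{\mathrm{e}}_{\mathrm{harm}}(\riem_2)$ (well-defined since $\riem_2$ is connected) and produces a form in $\mathcal{A}^{\mathrm{e}}_{\mathrm{harm}}(\riem_1)$ with matching CNT boundary values on the complex $\Gamma$, so the compatibility conditions of Definition \ref{de:strongly_compatible} are verified. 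In particular, Theorem \ref{th:compatibility_existence} part (2) applies, and from the decomposition \eqref{eq:compatibility_theorem_overfared_2}, the $\mathbf{R}\mathbf{S}$-terms drop and $d\omega = 0$ since $\riem_2$ is connected (and the only bridgeworthy forms are trivial), leaving
\[
\alpha_1 = -\rho_1 + \mathbf{T}_{1,1} \overline{\gamma}_1, \qquad
\overline{\beta}_1 = -\overline{\gamma}_1 + \overline{\mathbf{T}}_{1,1} \rho_1,
\]
with a similar pair for $\alpha_2,\overline{\beta}_2$ from \eqref{eq:compatibility_theorem_overfared_1}. From here I would derive \eqref{eq:scattering_matrix_genus_zero} exactly as in the non-zero genus proof: apply $\overline{\mathbf{T}}_{1,1}$ to the expression for $\alpha_1$, use the genus zero identity $\overline{\mathbf{T}}_{1,1} \mathbf{T}_{1,1} = \mathbf{I} - \mathbf{T}_{1,2}^* \mathbf{T}_{1,2}$ from Theorem \ref{th:general_adjoint_double_identities} and the genus zero version of Theorem \ref{th:general_adjoint_identity_double_final} (namely $0 = \mathbf{T}_{1,1}^*\mathbf{T}_{2,1} + \mathbf{T}_{1,2}^*\mathbf{T}_{2,2}$), and substitute \eqref{eq:compatible_breakdown_C2} to express the result in terms of $\alpha_2$. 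This yields $\overline{\beta}_1 = -\overline{\mathbf{T}}_{1,1}\alpha_1 - \overline{\mathbf{T}}_{2,1}\alpha_2$, and the analogous computation starting from the expression for $\alpha_2$ yields the second row.

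Finally, unitarity of the resulting $2\times 2$ operator matrix follows immediately from the genus zero versions of Theorems \ref{th:quadratic_adjoint_one}, \ref{th:general_adjoint_double_identities}, and their complex conjugates (equations of the form \eqref{eq:other_general_adjoint_double}): the diagonal entries of $M^* M$ and $M M^*$ equal $\mathbf{T}_{k,1}^* \mathbf{T}_{k,1} + \mathbf{T}_{k,2}^* \mathbf{T}_{k,2} = \mathbf{I}$, while the off-diagonal entries vanish by the mixed identities. No new adjoint identities are required beyond those already established in Section \ref{se:Schiffer_adjoint_identities}. The main obstacle is not analytic or computational but bookkeeping: carefully matching the simplified cohomological decompositions in Lemma \ref{le:decomposition_lemma} to the genus zero setting (where $\tau_k, \sigma_k \in \mathbf{R}_k\mathcal{A}(\mathscr{R}) = \{0\}$ and $\overline{\gamma}_k \in [\overline{\mathbf{R}}_k \overline{\mathcal{A}(\mathscr{R})}]^\perp = \overline{\mathcal{A}(\riem_k)}$), and ensuring the derivation of the matrix equation mirrors the genus non-zero proof while correctly discarding terms that vanish identically.
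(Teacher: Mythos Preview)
Your proposal is correct and follows exactly the paper's approach: the paper's proof is the single sentence ``One obtains the much simpler proof by setting all elements of $\mathcal{A}_{\mathrm{harm}}(\mathscr{R})$ to zero in the proof of Theorem \ref{th:unitary_scattering_genus_nonzero},'' and you have faithfully expanded that outline. One small slip: bridgeworthy forms on $\riem_2$ are \emph{not} forced to be trivial by connectedness of $\riem_2$ (their nontriviality depends on the component structure of $\riem_1$), so the $d\omega$ in the decomposition of $\alpha_2,\overline{\beta}_2$ need not vanish; this is harmless, however, since in the genus-nonzero argument those terms are dispatched via Corollary \ref{co:bridgeworthy_TFAE} (namely $\overline{\mathbf{T}}_{2,1}\partial\omega=0$ and $\overline{\mathbf{T}}_{2,2}\partial\omega=-\overline{\partial}\omega$) rather than by being zero, and that treatment carries over unchanged to $g=0$.
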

 \begin{proof} 
   One obtains the much simpler proof by setting all elements of $\mathcal{A}_{\text{harm}}(\mathscr{R})$ to zero in the proof of Theorem \ref{th:unitary_scattering_genus_nonzero}. 
 \end{proof}
 
 We conclude this section with some observations on the action of the scattering matrix on harmonic measures. {These can be viewed as symmetries of the scattering process.}
 
 Fix $\alpha_k+ \overline{\beta}_k \in \mathcal{A}_{\mathrm{harm}}(\riem_k)$, $k=1,2$, are compatible with respect to $\zeta = \xi + \overline{\eta} \in \mathcal{A}_{\mathrm{harm}}(\mathscr{R})$. By Theorem  \ref{th:compatibility_existence} part (1), if $\alpha_2'+\overline{\beta}_2' \in \mathcal{A}_{\mathrm{harm}}(\riem_2)$ is another compatible form we have that 
 \begin{align*}
   \alpha_2' & = \alpha_2 + \partial \omega \\
   \overline{\beta}_2' & = \overline{\beta}_2 + \overline{\partial} \omega 
 \end{align*} 
 for some bridgeworthy form $d\omega \in \mathcal{A}_{\mathrm{bw}}(\riem_2)$. This is reflected by the matrix equation 
  \begin{equation}  \label{eq:scattering_matrix_on_bridgeworthy}
     \left( \begin{array}{c} 0  \\ \overline{\partial}  \omega \\ 0 \end{array} \right)
     = \left( \begin{array}{ccc} - \overline{\mathbf{T}}_{1,1} & - \overline{\mathbf{T}}_{2,1} & \overline{\mathbf{R}}_1 \\
      - \overline{\mathbf{T}}_{1,2} & - \overline{\mathbf{T}}_{2,2} & \overline{\mathbf{R}}_2 \\ \mathbf{S}_1 & \mathbf{S}_2 & 0 \end{array} \right) 
      \left( \begin{array}{c} 0  \\ \partial \omega \\ 0 \end{array} \right)
  \end{equation}
  which follows from Theorem \ref{th:general_T12_kernel_on_perp} and Corollary \ref{co:bridgeworthy_TFAE}. 
 
  By Proposition \ref{pr:perturb_both_by_harmonic_measure} if we fix $\zeta$ and simultaneously perturb the other two forms by a harmonic measure, the resulting forms are still compatible.  That is, if $\omega_k \in \mathcal{A}_{\mathrm{hm}}(\riem_k)$ for $k=1,2$ satisfy $\mathbf{O}_{1,2} \omega_1=\omega_2$ then 
  \begin{align*}
      \alpha_1' + \overline{\beta}_1' & = \alpha_1 + \overline{\beta}_1 + d\omega_1 \\
      \alpha_2' + \overline{\beta}_2' & = \alpha_2 + \overline{\beta}_2 + d\omega_2 
  \end{align*}
  are also compatible with respect to $\zeta$. This is in turn reflected in the following matrix equation:
  \begin{equation}  \label{eq:scattering_matrix_on_harmonic_measures}
     \left( \begin{array}{c} \overline{\partial} \omega_1 \\ \overline{\partial} \omega_2 \\ 0 \end{array} \right)
     = \left( \begin{array}{ccc} - \overline{\mathbf{T}}_{1,1} & - \overline{\mathbf{T}}_{2,1} & \overline{\mathbf{R}}_1 \\
      - \overline{\mathbf{T}}_{1,2} & - \overline{\mathbf{T}}_{2,2} & \overline{\mathbf{R}}_2 \\ \mathbf{S}_1 & \mathbf{S}_2 & 0 \end{array} \right) 
      \left( \begin{array}{c} \partial \omega_1 \\ \partial \omega_2 \\ 0 \end{array} \right)
  \end{equation}
  which follows from Theorems \ref{th:T_and_S_on_harmonic_measures} and \ref{th:S_on_harmonic_measures}.
 \end{subsection}
 \begin{subsection}{Analogies with classical potential scattering} \label{se:scattering_analogies}
  
  In this section we describe the analogy between the scattering matrix in Theorems \ref{th:unitary_scattering_genus_nonzero} and \ref{th:unitary_scattering_genus_zero} and scattering by a potential. We will restrict to the genus zero case in the former, and compare it to scattering by a potential well in one dimension.
  
  Let \(a(x, D)=\)
\(D^{2}+V(x)\) denote the $1$-dimensional Schr\"odinger operator, with \(D=-i \partial_{x}\), where the
potential \(V(x)\) is smooth and goes to 0 sufficiently fast as \(x\) goes to infinity. Because of this decay assumption at infinity, for $\lambda \in \R$ the solutions of the stationary Schr\"odinger equation

$$
a(x,D) u=\lambda^2 u,
$$
should behave like
$$
a_{+}^{l, r}(\lambda) \mathrm{e}^{i \lambda x }+a_{-}^{l, r}(\lambda) \mathrm{e}^{-i \lambda x },
$$
as $x \rightarrow \pm \infty$, where $l$ and $r$ stand for left and right and correspond respectively to $x \rightarrow-\infty$
and $x \rightarrow+\infty$. The so-called {\it{Jost solutions}} $\mathscr{J}_{\pm}^{l, r}$ are the solutions which behave exactly
as $\mathrm{e}^{i \lambda x}$ or $\mathrm{e}^{-i \lambda x}$ as $x \rightarrow \pm \infty$, and are obviously solutions to the equation $D^2 u= \lambda^2 u$, i.e. the original equation without any potential.\\
Now the scattering problem amounts to finding the components of a solution $u$ of the Schr\"odinger equation in
the basis $\left(\mathscr{J}_{+}^{r}, \mathscr{J}_{-}^{l}\right)$ of the outgoing Jost solutions, if one knows the components of $u$ in the basis
$\left(\mathscr{J}_{+}^{l}, \mathscr{J}_{-}^{r}\right)$ of the incoming Jost solutions.\\
In scattering theory and quantum mechanics,
the $2\times 2$ matrix 
$$\mathbb{S}(\lambda )=\left(\begin{array}{cc}s_{11} & s_{12} \\ s_{21} & s_{22}\end{array}\right)$$
that relates these components
is called the \emph{scattering matrix}. It also turns out that if the potential $V$ is real on
the real axis, then $\overline{\mathscr{J}_{+}^{l, r}}= \mathscr{J}_{-}^{l, r}$ and the scattering matrix
$\mathbb{S}(\lambda)$ is unitary. 

Turning to scattering in quasicircles, let $\riem_1$ and $\riem_2$ be identified with domains in the Riemann sphere $\sphere$. By M\"obius invariance we can assume without loss of generality that $\riem_2$ contains the point at $\infty$ whilst $\riem_1$ contains $0$.  The punctured plane is conformally a cylinder, with the points at $0$ and $\infty$ infinitely far away, with the quasicircle separating $0$ from $\infty$. 
We then identify left with $0$ and right with $\infty$. The quasicircle can be thought of as a potential well with a possibly highly irregular support set, whose Hausdorff dimension is in $[1,2)$.

The problem then is, given left moving solutions (harmonic forms in $\riem_1$) find right moving solutions (harmonic forms in $\riem_2$) which overfare through the potential well. The function behaves harmonically as $z \rightarrow 0/\infty$, but not across the potential well. The holomorphic forms are identified with solutions to the harmonic scattering problem  with expansions of the form 
\[  \alpha_k = \left( \sum_{k=1}^\infty \alpha_k^n z^n  \right) dz  \]
and the anti-holomorphic forms are identified with solutions to the harmonic scattering problem with  expansions of the form 
\[  \overline{\beta}_k = \left( \sum_{k=1}^\infty \beta^k_n z^{-n}  \right) dz.  \]
Thus we identify $\pm$ with holomorphic/anti-holomorphic respectively, and 
\[ \overline{\mathscr{J}_{+}^{l}} \sim \alpha_1, \ \ \  \overline{\mathscr{J}_{-}^{l}} \sim \overline{\beta}_1, \ \ \  \mathscr{J}_{+}^{r} \sim \alpha_2, \ \ \ \mathscr{J}_{-}^{r} \sim \overline{\beta}_2.  \]

 




 \end{subsection}
\end{section}
\begin{section}{The period mapping}  \label{se:period_mapping}
\begin{subsection}{Assumptions throughout this section}   
  
  \label{se:period_mapping_assumptions}
  Once again, we state the assumptions in order to avoid repetitions.  These will be in force throughout Section \ref{se:period_mapping}.   

 \begin{enumerate}
     \item $\mathscr{R}$ is a compact Riemann surface of genus $g$, with $n$ punctures $p_1,\ldots,p_n$;
     \item $\Gamma = \Gamma_1 \cup \cdots \cup \Gamma_n$ is a collection of quasicircles;
     \item $\Gamma$ separates $\mathscr{R}$ into $\riem_1$ and $\riem_2$ in the sense of Definition \ref{de:separating_complex};
     \item $\riem_2$ is connected; 
     \item $\riem_1 = \Omega_1 \cup \cdots \cup \Omega_n$ where $\Omega_1,\ldots,\Omega_n$ are simply-connected sets with disjoint closures;
     \item $p_k \in \Omega_k$ for $k=1,\ldots,n$.   
 \end{enumerate} 
 Recall that we refer to the domains $\Omega_k$ as ``caps'' (see Definition \ref{de:sewing_on_caps}).
 
 For definiteness, we will assume that $\Gamma$ have the orientation of $\riem_1$.  
\end{subsection}
\begin{subsection}{About this section}
     In this section we generalize the classical period mapping for compact surfaces to surfaces with border. This new period mapping relates both to the cohomology of the set of holomorphic one-forms on the compact surface $\mathscr{R}$, and to the structure of the set of boundary values of holomorphic one-forms on $\riem_2$. Thus it unifies both the classical polarization induced by the holomorphic one-forms on the compact surface (relating cohomology to complex structure) with the period maps of genus zero surfaces with boundary studied by various authors, including the Kirillov-Yuri'ev-Nag-Sullivan period map of the Teichm\"uller space of the disk \cite{KY2}, \cite{NS}, \cite{Takhtajan_Teo_Memoirs}, \cite{RSS_genus_zero},    
 
 In Section \ref{se:period_map_upsilon} we define a canonical isomorphism parametrizing the set of holomorphic one-forms on a surface $\riem_2$ of genus $g$ with $n$ boundary curves. We then use this to define a natural polarization and a map whose graph is the set of holomorphic one-forms on $\riem_2$.  In Section \ref{se:KYNS_period} we show how this generalizes both the classical and KYNS period mappings, and relate it to the Grunsky inequalities. Finally, in Section \ref{se:holomorphic_BVP} we use the machinery of the previous sections to give a reduction of the boundary value problem for holomorphic one-forms with $H^{-1/2}$ data to a non-singular integral equation on the $n$-fold direct sum of the Bergman space of the disk.
\end{subsection}
\begin{subsection}{The generalized period map}  \label{se:period_map_upsilon}

    Consider the space
    \[   \mathcal{A}_{\mathrm{harm}}(\riem_1) \oplus \mathcal{A}_{\mathrm{harm}}(\mathscr{R}).   \]
   
    We have the two projections  
   \[ \gls{pcap} = \mathbf{P}_{\riem_1} \oplus  {\overline{\mathbf{P}}}_{\mathscr{R}} : \mathcal{A}_{\mathrm{harm}}(\riem_1) \oplus \mathcal{A}_{\mathrm{harm}}(\mathscr{R}) \rightarrow \mathcal{A}(\riem_1) \oplus \overline{\mathcal{A}(\mathscr{R})}    \]
   and 
   \[   \overline{\mathbf{P}}_{\mathrm{cap}} = \overline{\mathbf{P}}_{\riem_1} \oplus  {\mathbf{P}}_{\mathscr{R}} : \mathcal{A}_{\mathrm{harm}}(\riem_1) \oplus \mathcal{A}_{\mathrm{harm}}(\mathscr{R}) \rightarrow \overline{\mathcal{A}(\riem_1)} \oplus {\mathcal{A}(\mathscr{R})}   \]
   where $\mathbf{P}_{\riem_1}$ was defined in \eqref{eq:hol_antihol_projections_Bergman} and  
   \[   {\mathbf{P}}_{\mathscr{R}}: \mathcal{A}_{\mathrm{harm}}(\mathscr{R}) \rightarrow \mathcal{A}(\mathscr{R})   \]
   is the projection onto the holomorphic part, and similarly $ {\overline{\mathbf{P}}}_{\mathscr{R}}$ is the projection onto the anti-holomorphic part.  The projections are obviously bounded.

   We define the following operator, which we will shortly show is an isomorphism.    
   \begin{align}\label{defn: captial theta}
       \gls{theta}: \overline{\mathcal{A}(\riem_1)} \oplus \mathcal{A}(\mathscr{R}) & \rightarrow \mathcal{A}^{\mathrm{se}}(\riem_2)  \\
      \nonumber (\overline{\gamma},\tau) & \mapsto - \mathbf{T}_{1,2} \overline{\gamma} +  {\mathbf{R}}_2 \tau.  
   \end{align}
   This is obviously bounded.

  We define an augmented overfare operator which contains the extra data of the cohomology class. This will be a factor of the inverse of $\Theta$. First, observe that given $\beta \in \mathcal{A}^{\mathrm{se}}_{\text{harm}}(\riem_2)$, there is a unique one-form $\sigma \in \mathcal{A}(\mathscr{R})$ whose restriction $\mathbf{R}^\mathrm{h}_2 \sigma$ is in the same cohomology class as $\beta$.  Thus $\alpha \in \mathcal{A}(\riem_1)$ and $\beta \in \mathcal{A}(\riem_2)$ can be compatible only via the form $\sigma$.  Also, since $\riem_2$ is connected, the exact overfare is well-defined.  Thus, given $\beta$, we have the following uniquely determined compatible form $\hat{\mathbf{O}} \beta \in \mathcal{A}_{\text{harm}}(\riem_1)$:
    \begin{align*}
       \gls{ohat}: \mathcal{A}^{\mathrm{se}}_{\mathrm{harm}}(\riem_2) & \rightarrow \mathcal{A}_{\mathrm{harm}}(\riem_1)  \\
      \beta & \mapsto \mathbf{O}^{\mathrm{e}}(\beta -  {\mathbf{R}}_2^{\mathrm{h}} \sigma) +
        {\mathbf{R}}_1^{\mathrm{h}}\sigma  
    \end{align*}
    where $\sigma$ is the unique element of 
    $\mathcal{A}_{\mathrm{harm}}(\mathscr{R})$ such that $\beta-  {\mathbf{R}}_2^{\mathrm{h}} \sigma$ is exact.  
 Using this, we define the augmented overfare map
 \begin{align}   \label{eq:Oaugmented_definition}  
   \gls{augo}: \mathcal{A}^{\mathrm{se}}_{\mathrm{harm}}(\riem_2) & \rightarrow \mathcal{A}_{\mathrm{harm}}(\riem_1) \oplus \mathcal{A}_{\mathrm{harm}}(\mathscr{R})  \nonumber \\
   \beta   & \mapsto (  \hat{\mathbf{O}} \beta, \sigma )
 \end{align}
 where $\sigma$ is the unique element of $\mathcal{A}_{\mathrm{harm}}(\mathscr{R})$ such that $\beta -  {\mathbf{R}}^{\mathrm{h}}_2  \sigma$ is exact.

   \begin{theorem}   \label{th:Theta_isomorphism} $\Theta$ is an isomorphism with inverse $\overline{\mathbf{P}}_{\mathrm{cap}} \mathbf{O}^{\mathrm{aug}}$.  
   \end{theorem}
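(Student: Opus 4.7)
The plan is to first verify that $\Theta$ actually lands in $\mathcal{A}^{\mathrm{se}}(\riem_2)$, then invoke the direct sum decomposition of Corollary \ref{co:T_plus_extra_surjective_capped} to establish bijectivity, and finally verify the inverse formula by direct computation using Proposition \ref{prop:exact_transmitted_jump}.

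First I would check that $\Theta(\overline{\gamma},\tau) = -\mathbf{T}_{1,2}\overline{\gamma} + \mathbf{R}_2\tau$ is semi-exact. For each $k$, since $\tau$ is holomorphic (hence closed) on the simply connected cap $\Omega_k$, Stokes' theorem forces $\int_{\partial_k \riem_2} \mathbf{R}_2 \tau = 0$. By Theorem \ref{th:Schiffer_cohomology}, $\mathbf{T}_{1,2}\overline{\gamma}$ is cohomologous on $\riem_2$ to $-\overline{\mathbf{R}}_2 \overline{\mathbf{S}}_1 \overline{\gamma}$, which is the restriction of an anti-holomorphic form on $\mathscr{R}$ and therefore also has vanishing period around $\partial \Omega_k$. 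Thus $\mathrm{Im}(\Theta) \subseteq \mathcal{A}^{\mathrm{se}}(\riem_2)$.

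Next, I would apply Corollary \ref{co:T_plus_extra_surjective_capped}, which in the capped-surface setting gives
\[  \mathcal{A}(\riem_2) = \mathrm{\textgoth{A}}^{-}(\riem_2) \oplus \mathbf{R}_2 \mathcal{A}(\mathscr{R}) \oplus \partial \mathcal{D}_{\mathrm{hm}}(\riem_2),     \]
together with the triviality of $\mathrm{Ker}(\mathbf{T}_{1,2})$ (so the first summand is genuinely parametrized by $\overline{\mathcal{A}(\riem_1)}$, since $\mathrm{Im}(\mathbf{T}_{1,2})=\mathrm{\textgoth{A}}^{-}(\riem_2)$ by Theorem \ref{th:Tonetwo_iso_improved}). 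I claim the semi-exact subspace equals $\mathrm{\textgoth{A}}^{-}(\riem_2)\oplus \mathbf{R}_2\mathcal{A}(\mathscr{R})$: given a semi-exact form, its harmonic-measure component $\partial \omega = \sum c_k \partial \omega_k$ contributes period $\tfrac{i}{2}\sum_k c_k \Pi_{jk}$ around $\partial_j \riem_2$, and since the other summands are semi-exact by the first step, the invertibility of the boundary period matrix (Theorem \ref{th:period_matrix_invertible}) forces $c_k=0$. Combining with the injectivity of $\mathbf{R}_2$ on $\mathcal{A}(\mathscr{R})$ (Theorem \ref{th:S_kernel_range}), $\Theta$ is a bounded bijection onto $\mathcal{A}^{\mathrm{se}}(\riem_2)$, hence a topological isomorphism by the open mapping theorem.

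For the inverse formula it suffices, by bijectivity, to check that $\overline{\mathbf{P}}_{\mathrm{cap}} \mathbf{O}^{\mathrm{aug}} \Theta = \mathbf{I}$. Given $(\overline{\gamma},\tau)$, set $\beta=\Theta(\overline{\gamma},\tau)$. By Theorem \ref{th:Schiffer_cohomology}, $\beta - \mathbf{R}_2^{\mathrm{h}}(\tau+\overline{\mathbf{S}}_1 \overline{\gamma}) = -\mathbf{T}_{1,2}\overline{\gamma} - \overline{\mathbf{R}}_2 \overline{\mathbf{S}}_1 \overline{\gamma}$ is exact on $\riem_2$, so the uniquely determined cohomology lift in the definition of $\mathbf{O}^{\mathrm{aug}}$ is $\sigma = \tau + \overline{\mathbf{S}}_1 \overline{\gamma}$, and hence $\mathbf{P}_{\mathscr{R}}\sigma = \tau$. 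Applying Proposition \ref{prop:exact_transmitted_jump} gives
\[  \mathbf{O}_{2,1}^{\mathrm{e}}(\beta-\mathbf{R}_2^{\mathrm{h}}\sigma) = \overline{\gamma} - \mathbf{T}_{1,1}\overline{\gamma} - \overline{\mathbf{R}}_1 \overline{\mathbf{S}}_1 \overline{\gamma},  \]
so that $\hat{\mathbf{O}} \beta = \overline{\gamma} - \mathbf{T}_{1,1}\overline{\gamma} + \mathbf{R}_1\tau$ after the $\overline{\mathbf{R}}_1 \overline{\mathbf{S}}_1 \overline{\gamma}$ contributions cancel against $\mathbf{R}_1^{\mathrm{h}}\sigma$. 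The anti-holomorphic projection $\overline{\mathbf{P}}_{\riem_1}$ then extracts $\overline{\gamma}$, completing the verification.

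The main obstacle is bookkeeping in the last step: correctly identifying $\sigma$ via the cohomology identity of Theorem \ref{th:Schiffer_cohomology} and lining up the three pieces of $\hat{\mathbf{O}}\beta$ so that the $\overline{\mathbf{R}}_1 \overline{\mathbf{S}}_1 \overline{\gamma}$ terms cancel exactly. A secondary technical point is the semi-exactness reduction, which rests entirely on the non-degeneracy of the boundary period matrix supplied by Theorem \ref{th:period_matrix_invertible}.
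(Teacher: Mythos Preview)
Your proof is correct and follows essentially the same route as the paper: invoke Corollary~\ref{co:T_plus_extra_surjective_capped} for bijectivity and then verify $\overline{\mathbf{P}}_{\mathrm{cap}}\mathbf{O}^{\mathrm{aug}}\Theta=\mathbf{I}$ via Theorem~\ref{th:Schiffer_cohomology} and Proposition~\ref{prop:exact_transmitted_jump}, with the same identification $\sigma=\tau+\overline{\mathbf{S}}_1\overline{\gamma}$ and the same cancellation of the $\overline{\mathbf{R}}_1\overline{\mathbf{S}}_1\overline{\gamma}$ terms. You are in fact more careful than the paper on two points it leaves implicit: that $\mathrm{Im}(\Theta)\subseteq\mathcal{A}^{\mathrm{se}}(\riem_2)$, and that the semi-exact subspace coincides with $\mathrm{\textgoth{A}}^{-}(\riem_2)\oplus\mathbf{R}_2\mathcal{A}(\mathscr{R})$ inside the decomposition of Corollary~\ref{co:T_plus_extra_surjective_capped}, which you handle cleanly with the boundary period matrix.
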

   \begin{proof} The fact that $\Theta$ is an isomorphism follows directly from Corollary \ref{co:T_plus_extra_surjective_capped}.  
    
    We show it is injective.  Let $(\overline{\gamma},\tau) \in \overline{\mathcal{A}(\riem_1)} \oplus \mathcal{A}(\mathscr{R})$.  
    We then have that 
    \[    - \mathbf{T}_{1,2} \overline{\gamma} +  {\mathbf{R}}_2 \tau - (\overline{\mathbf{R}}_2 \overline{\mathbf{S}}_1 \overline{\gamma} + \mathbf{R}_2  {\tau} )    
            \]
    is exact by Theorem \ref{th:Schiffer_cohomology}.  So applying \eqref{eq:Oaugmented_definition}, we obtain 
    \begin{align}   \label{eq:augmented_overfare_explicit}
     \mathbf{O}^{\mathrm{aug}}  (- \mathbf{T}_{1,2} \overline{\gamma} +  {\mathbf{R}}_2 \tau ) & = ( \hat{\mathbf{O}}(-\mathbf{T}_{1,2} \overline{\gamma} +  {\mathbf{R}}_2 \tau ),\overline{\mathbf{S}}_1 \overline{\gamma} + \tau   )   \nonumber \\
     & = ( \overline{\gamma} -  \mathbf{T}_{1,1} \overline{\gamma}
       + \mathbf{R}_1 \tau_1, \overline{\mathbf{S}}_1 \overline{\delta} +  \tau  )
    \end{align}
    where we have used Proposition \ref{prop:exact_transmitted_jump} to show that
    \begin{align*}
        \hat{\mathbf{O}}(- \mathbf{T}_{1,2} \overline{\gamma} +  {\mathbf{R}}_2 \tau )  & = \mathbf{O}^{\mathrm{e}} (- \mathbf{T}_{1,2} \overline{\gamma} - {\overline{\mathbf{R}}}_1 \overline{\mathbf{S}}_1 \overline{\gamma})  + {\overline{\mathbf{R}}}_1 \overline{\mathbf{S}}_1 \overline{\gamma}  +  {\mathbf{R}}_1 \tau \\
       & = \overline{\gamma} -  \mathbf{T}_{1,1} \overline{\gamma} - \overline{\mathbf{R}}_1 \overline{\mathbf{S}}_1 \overline{\gamma} 
       + \overline{\mathbf{R}}_1 \overline{\mathbf{S}}_1 \overline{\gamma} + \mathbf{R}_1 \tau \\
       & = \overline{\gamma} -  \mathbf{T}_{1,1} \overline{\gamma}
       + {\mathbf{R}}_1 \tau.
    \end{align*}
    Thus 
    \begin{align*}
     \overline{\mathbf{P}}_{\mathrm{cap}} \mathbf{O}^{\mathrm{aug}} \Theta (\overline{\gamma},\tau) & =  \overline{\mathbf{P}}_{\mathrm{cap}} ( \overline{\gamma} -  \mathbf{T}_{1,1} \overline{\gamma}
       + \mathbf{R}_1 \tau,  \overline{\mathbf{S}}_1\overline{\gamma} +  \tau  ) \\
       & = (\overline{\gamma}, \tau).   
    \end{align*}
    This shows that $\overline{\mathbf{P}}_{\mathrm{cap}} \mathbf{O}^{\mathrm{aug}}$ is  a left inverse
    of $\Theta$, and hence $\Theta$ is one-to-one.   
    
    Since $\Theta$ is bounded and bijective, it is an isomorphism, and the left inverse equals the right inverse.
   \end{proof}
   
   The decomposition
   \begin{equation}  \label{eq:riemtwo_decomposition}
      \mathcal{A}^{\mathrm{se}}_{\mathrm{harm}}(\riem_2) = \mathcal{A}^{\mathrm{se}}(\riem_2) \oplus \overline{\mathcal{A}^{\mathrm{se}}(\riem_2)}    
   \end{equation}
   induces a polarization on 
   \[  \mathcal{A}_{\mathrm{harm}}(\riem_1) \oplus \mathcal{A}_{\mathrm{harm}}(\mathscr{R}) \]
  by 
   \begin{align} \label{eq:polarization_general_non_trivialized}
       \mathcal{A}_{\mathrm{harm}}(\riem_1) \oplus \mathcal{A}_{\mathrm{harm}}(\mathscr{R}) 
       & = \mathbf{O}^{\mathrm{aug}} \mathcal{A}^{\mathrm{se}}(\riem_2) \oplus \mathbf{O}^{\mathrm{aug}} \overline{\mathcal{A}^{\mathrm{se}}(\riem_2)} \nonumber \\
       & = W \oplus \overline{W}.  
   \end{align}
    We also have the fixed polarization 
   \[    \mathcal{A}_{\mathrm{harm}}(\riem_1) \oplus \mathcal{A}_{\mathrm{harm}}(\mathscr{R}) = W_0 \oplus \overline{W_0}  \]
   where 
   \[   W_0 = \overline{\mathcal{A}(\riem_1)} \oplus \mathcal{A}(\mathscr{R}).   \]
   Observe that by Theorem \ref{th:Theta_isomorphism} the new positive polarization $W$ can be written  
   \begin{equation*} 
     W = \mathbf{O}^{\mathrm{aug}} \, \Theta \, W_0. 
   \end{equation*}
   
   Furthermore we have the following result.  Define 
   \begin{align}\label{defn:upsilon}
      \gls{period} :\overline{\mathcal{A}(\riem_1)} \oplus \mathcal{A}(\mathscr{R})  &  \rightarrow \mathcal{A}(\riem_1) \oplus \overline{\mathcal{A}(\mathscr{R})} \\
      \nonumber(\overline{\gamma},\tau) & \mapsto ( - \mathbf{T}_{1,1} \overline{\gamma} +  {\mathbf{R}}_1 \tau,  \overline{\mathbf{S}}_1 \overline{\gamma} ). 
   \end{align}
   Then 
   \begin{theorem}  \label{th:upsilon_expression} We have that $\mathbf{\Upsilon} =  {\mathbf{P}}_{\mathrm{cap}} \mathbf{O}^{\mathrm{aug}} \Theta$.  In particular, 
     the positive polarization $W$ is the graph of $\Upsilon$.  
   \end{theorem}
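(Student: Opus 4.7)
The plan is to verify the identity $\mathbf{\Upsilon} = \mathbf{P}_{\mathrm{cap}} \mathbf{O}^{\mathrm{aug}} \Theta$ by direct computation, reusing the explicit expression for $\mathbf{O}^{\mathrm{aug}}\Theta$ that was already established in the proof of Theorem \ref{th:Theta_isomorphism}, and then deducing the graph description of $W$ from the inverse identity $\overline{\mathbf{P}}_{\mathrm{cap}} \mathbf{O}^{\mathrm{aug}}\Theta = \mathrm{Id}$.

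First I would recall formula \eqref{eq:augmented_overfare_explicit}, which says
\[
\mathbf{O}^{\mathrm{aug}} \, \Theta (\overline{\gamma}, \tau) = \bigl( \overline{\gamma} - \mathbf{T}_{1,1}\overline{\gamma} + \mathbf{R}_1 \tau, \ \overline{\mathbf{S}}_1 \overline{\gamma} + \tau \bigr).
\]
Now apply $\mathbf{P}_{\mathrm{cap}} = \mathbf{P}_{\riem_1} \oplus \overline{\mathbf{P}}_{\mathscr{R}}$ componentwise. In the first slot, $\mathbf{T}_{1,1}\overline{\gamma}$ and $\mathbf{R}_1 \tau$ already lie in $\mathcal{A}(\riem_1)$ while $\overline{\gamma} \in \overline{\mathcal{A}(\riem_1)}$ is killed by $\mathbf{P}_{\riem_1}$, giving $-\mathbf{T}_{1,1}\overline{\gamma}+\mathbf{R}_1\tau$. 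In the second slot, $\overline{\mathbf{S}}_1 \overline{\gamma} \in \overline{\mathcal{A}(\mathscr{R})}$ is preserved by $\overline{\mathbf{P}}_{\mathscr{R}}$, while $\tau \in \mathcal{A}(\mathscr{R})$ is annihilated, leaving $\overline{\mathbf{S}}_1 \overline{\gamma}$. Comparing with the definition \eqref{defn:upsilon} of $\mathbf{\Upsilon}$ proves the first assertion.

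For the graph statement, observe that by Theorem \ref{th:Theta_isomorphism} the operator $\overline{\mathbf{P}}_{\mathrm{cap}} \mathbf{O}^{\mathrm{aug}}\Theta$ is the identity on $W_0 = \overline{\mathcal{A}(\riem_1)} \oplus \mathcal{A}(\mathscr{R})$. Decomposing against the polarization $\mathcal{A}_{\mathrm{harm}}(\riem_1) \oplus \mathcal{A}_{\mathrm{harm}}(\mathscr{R}) = W_0 \oplus \overline{W_0}$ via $\mathrm{Id} = \overline{\mathbf{P}}_{\mathrm{cap}} + \mathbf{P}_{\mathrm{cap}}$, for any $w_0 \in W_0$ we have
\[
\mathbf{O}^{\mathrm{aug}}\Theta \, w_0 = \overline{\mathbf{P}}_{\mathrm{cap}} \mathbf{O}^{\mathrm{aug}}\Theta \, w_0 + \mathbf{P}_{\mathrm{cap}} \mathbf{O}^{\mathrm{aug}}\Theta \, w_0 = w_0 + \mathbf{\Upsilon} w_0.
\]
Since $W = \mathbf{O}^{\mathrm{aug}}\Theta\, W_0$ by \eqref{eq:polarization_general_non_trivialized} and the definition of $W$, this exhibits $W$ as $\{w_0 + \mathbf{\Upsilon} w_0 : w_0 \in W_0\}$, i.e.\ the graph of $\mathbf{\Upsilon}: W_0 \to \overline{W_0}$.

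I do not anticipate a genuine obstacle here: the hard analytic and cohomological content was absorbed into Theorems \ref{th:Schiffer_cohomology}, \ref{th:Theta_isomorphism}, and Proposition \ref{prop:exact_transmitted_jump}, which together yield \eqref{eq:augmented_overfare_explicit}. The only point requiring mild care is verifying that the projections $\mathbf{P}_{\riem_1}$ and $\overline{\mathbf{P}}_{\mathscr{R}}$ act as claimed on each summand — in particular, that $\mathbf{T}_{1,1}\overline{\gamma}$, $\mathbf{R}_1\tau$, $\overline{\mathbf{S}}_1\overline{\gamma}$, and $\tau$ belong to the expected holomorphic/anti-holomorphic components, which follows immediately from the definitions of the Schiffer and restriction operators in Section \ref{subsec:defSchiffer}.
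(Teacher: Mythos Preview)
Your proof is correct and follows exactly the same approach as the paper. The paper's proof simply cites \eqref{eq:augmented_overfare_explicit} for the first claim and then invokes $\overline{\mathbf{P}}_{\mathrm{cap}}\mathbf{O}^{\mathrm{aug}}\Theta=\mathbf{I}$ from Theorem~\ref{th:Theta_isomorphism} for the second; you have spelled out the projection computation and the graph argument in detail, but the underlying reasoning is identical.
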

   \begin{proof}
    The first claim follows from \eqref{eq:augmented_overfare_explicit}.  The second claim follows from the first claim together with the fact that $\overline{\mathbf{P}}_{\mathrm{cap}} \mathbf{O}^{\mathrm{aug}} \Theta = \mathbf{I}$ by Theorem \ref{th:Theta_isomorphism}.    
   \end{proof}
   
   We also have
   \begin{theorem} \label{th:upsilon_bounded}
    The operator norm $\| \mathbf{\Upsilon} \| <1$.   
   \end{theorem}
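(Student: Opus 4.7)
The plan is to identify $\mathbf{\Upsilon}$ (up to complex conjugation) with a $2\times 2$ sub-block of the unitary scattering matrix of Theorem \ref{th:unitary_scattering_genus_nonzero}, and then exploit the capped structure (Corollary \ref{co:T_plus_extra_surjective_capped}) to force the ``omitted block'' to be bounded below. Since complex conjugation is norm-preserving, it suffices to bound the norm of $\overline{\mathbf{\Upsilon}}$. I would apply the matrix $M$ of Theorem \ref{th:unitary_scattering_genus_nonzero} to a triple of the form $(\gamma, 0, \overline{\tau})$ with $\gamma \in \mathcal{A}(\riem_1)$ and $\overline{\tau} \in \overline{\mathcal{A}(\mathscr{R})}$; the first and third output components are precisely $\overline{\mathbf{\Upsilon}}(\gamma, \overline{\tau})$, while the second component is $\Psi(\gamma, \overline{\tau}) := -\overline{\mathbf{T}}_{1,2}\gamma + \overline{\mathbf{R}}_2 \overline{\tau}$. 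Unitarity of $M$ then yields the key identity
\[ \|\overline{\mathbf{\Upsilon}}(\gamma, \overline{\tau})\|^2 = \|(\gamma, \overline{\tau})\|^2 - \|\Psi(\gamma, \overline{\tau})\|^2. \]

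The heart of the argument is to show that $\Psi$ is bounded below, i.e. that there exists $c>0$ with $\|\Psi(\gamma,\overline{\tau})\| \geq c\|(\gamma,\overline{\tau})\|$. This is where the capped hypothesis enters decisively. The complex conjugate of Corollary \ref{co:T_plus_extra_surjective_capped} gives the direct sum decomposition
\[ \overline{\mathcal{A}(\riem_2)} = \overline{\mathrm{\textgoth{A}}^-(\riem_2)} \oplus \overline{\mathbf{R}}_2 \overline{\mathcal{A}(\mathscr{R})} \oplus \overline{\partial} \mathcal{D}_{\mathrm{hm}}(\riem_2), \]
in which the last two summands are finite-dimensional (of dimensions $g$ and $n-1$ respectively) and the first is closed, being the preimage under the continuous period map of the finite-dimensional subspace $X_2 \subset \mathbb{C}^{m_2}$. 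Since a closed subspace plus a finite-dimensional subspace is always closed, the sum $\overline{\mathrm{\textgoth{A}}^-(\riem_2)} \oplus \overline{\mathbf{R}}_2 \overline{\mathcal{A}(\mathscr{R})}$ is a closed subspace of $\overline{\mathcal{A}(\riem_2)}$, and the direct sum is topologically direct. Corollary \ref{co:T_plus_extra_surjective_capped} further tells us that $\overline{\mathbf{T}}_{1,2}$ has trivial kernel and maps $\mathcal{A}(\riem_1)$ onto $\overline{\mathrm{\textgoth{A}}^-(\riem_2)}$, while $\overline{\mathbf{R}}_2$ is injective from $\overline{\mathcal{A}(\mathscr{R})}$ onto $\overline{\mathbf{R}}_2 \overline{\mathcal{A}(\mathscr{R})}$. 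Consequently $\Psi$ is a bounded bijection of Hilbert spaces onto a closed subspace, and the open mapping theorem produces the desired lower bound.

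Combining the two steps gives $\|\overline{\mathbf{\Upsilon}}(\gamma,\overline{\tau})\|^2 \leq (1-c^2)\|(\gamma,\overline{\tau})\|^2$, hence $\|\mathbf{\Upsilon}\| = \|\overline{\mathbf{\Upsilon}}\| \leq \sqrt{1-c^2} < 1$. The genus-zero case is identical in spirit, using Theorem \ref{th:unitary_scattering_genus_zero} in place of Theorem \ref{th:unitary_scattering_genus_nonzero}: there $\mathcal{A}(\mathscr{R})=0$ and the quadratic identity $\|\mathbf{T}_{1,1}\overline{\gamma}\|^2 + \|\mathbf{T}_{1,2}\overline{\gamma}\|^2 = \|\overline{\gamma}\|^2$ reduces matters to showing that $\mathbf{T}_{1,2}$ itself is bounded below, which again follows from Corollary \ref{co:T_plus_extra_surjective_capped}. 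The main obstacle throughout is the passage from injectivity to a uniform lower bound: triviality of $\ker \Psi$ is not enough in infinite dimensions, and it is only the topologically direct sum supplied by the capped hypothesis, combined with the open mapping theorem, that converts the qualitative bijection statement of Corollary \ref{co:T_plus_extra_surjective_capped} into the quantitative estimate needed for the strict inequality.
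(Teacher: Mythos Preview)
Your proof is correct and essentially the same as the paper's: both establish the key identity $\|\mathbf{\Upsilon}(\overline{\gamma},\tau)\|^2 = \|(\overline{\gamma},\tau)\|^2 - \|\Theta(\overline{\gamma},\tau)\|^2$ (your $\Psi$ is exactly $\overline{\Theta}$) and then conclude from the fact that $\Theta$ is bounded below. The only cosmetic difference is that you obtain the identity by invoking the unitarity of the scattering matrix (Theorem~\ref{th:unitary_scattering_genus_nonzero}), whereas the paper re-derives it directly from the adjoint identities of Section~\ref{se:Schiffer_adjoint_identities}; for the lower bound the paper cites Theorem~\ref{th:Theta_isomorphism}, which packages your open-mapping argument from Corollary~\ref{co:T_plus_extra_surjective_capped}.
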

   \begin{proof}
    Using the notation of the proof of Theorem \ref{th:Theta_isomorphism}, we have 
    \begin{align}  \label{eq:upsilon_bound_temp_one} 
        \| - \mathbf{T}_{1,1} \overline{\gamma} +  {\mathbf{R}}_1 \tau \|^2 
        & = \| \mathbf{T}_{1,1} \overline{\gamma} \|^2 - 2 \text{Re} \left< \mathbf{T}_{1,1} \overline{\gamma}, \mathbf{R}_1 \tau \right> 
        + \| \mathbf{R}_1 \tau \|^2.
    \end{align}
    Now by Theorem \ref{th:adjoint_identities} and equation (\ref{eq:general_adjoint_identity_double_final}) we have that
    \begin{align} \label{eq:upsilon_bound_temp_two}
     \left< \mathbf{T}_{1,1} \overline{\gamma}, \mathbf{R}_1 \tau \right> & = \left< \mathbf{S}_1 \mathbf{T}_{1,1} \overline{\gamma} ,\tau \right> 
     = - \left< \mathbf{S}_2 \mathbf{T}_{1,2} \overline{\gamma} ,\tau \right> \nonumber \\
     & = - \left< \mathbf{T}_{1,2} \overline{\gamma}, \mathbf{R}_2 \tau \right>.
    \end{align}
    Furthermore Theorems \ref{th:adjoint_identities} and \ref{th:quadratic_adjoint_one} also yield that
    \begin{equation}  \label{eq:upsilon_bound_temp_three}
        \| \mathbf{R}_1 \tau \|^2 = \| \tau \|^2 - \| \mathbf{R}_2 \tau \|^2.
    \end{equation}
    Inserting \eqref{eq:upsilon_bound_temp_two} and \eqref{eq:upsilon_bound_temp_three} in \eqref{eq:upsilon_bound_temp_one} we obtain 
    \begin{equation} \label{eq:upsilon_bound_temp_four}
        \| - \mathbf{T}_{1,1} \overline{\gamma} +  {\mathbf{R}}_1 \tau \|^2  = 
        \| \mathbf{T}_{1,1} \overline{\gamma} \|^2 + 2 \text{Re} \left<  \mathbf{T}_{1,2} \overline{\gamma} ,\mathbf{R}_2 \tau \right> - \| \mathbf{R}_2 \tau \|^2 + \| \tau \|^2.  
    \end{equation}
    By Theorem \ref{th:quadratic_adjoint_one}, 
    \[  \| \mathbf{T}_{1,1} \overline{\gamma} \|^2 + \| \overline{\mathbf{S}}_{1} \overline{\gamma} \|^2 = \| \overline{\gamma} \|^2 - \| \mathbf{T}_{1,2} \overline{\gamma} \|^2  \]
    which when inserted in \eqref{eq:upsilon_bound_temp_four} yields
    \begin{align*}
        \| (-\mathbf{T}_{1,1} \overline{\gamma} +  {\mathbf{R}}_1 \tau,\overline{\mathbf{S}}_1 \overline{\gamma}) \|^2 & = 
        \| \overline{\gamma} \|^2 -\| \mathbf{T}_{1,2} \overline{\gamma} \|^2 + 2 \text{Re} \left<  \mathbf{T}_{1,2} \overline{\gamma} ,\mathbf{R}_2 \tau \right> - \| \mathbf{R}_2 \tau \|^2 + \| \tau \|^2  \\
        & = \| (\overline{\gamma},\tau )\|^2 - \| - \mathbf{T}_{1,2} \overline{\gamma} + \mathbf{R}_2 \tau \|^2. 
    \end{align*}
    Now since $(\overline{\gamma},\tau) \mapsto -\mathbf{T}_{1,2} \overline{\gamma} + \mathbf{R}_2 \tau$ is an isomorphism 
   by Theorem \ref{th:Theta_isomorphism}, there is a $c<1$ (uniform for all $(\overline{\gamma},\tau)$) such that
    \[ \| - \mathbf{T}_{1,2} \overline{\gamma} + \mathbf{R}_2 \tau \| \geq c \| (\overline{\gamma},\tau) \|,   \]
    therefore
    \begin{equation*}
        \| (-\mathbf{T}_{1,1} \overline{\gamma} + {\mathbf{R}}_1 \tau,\overline{\mathbf{S}}_1 \overline{\gamma}) \|^2 \leq (1-c^2) 
        \| (\overline{\gamma},\tau) \|^2,
    \end{equation*}
    and the proof is completed.
   \end{proof}
\end{subsection}
\begin{subsection}{Generalized polarizations}
 \label{se:KYNS_period} 
 Theorems \ref{th:upsilon_expression} and \ref{th:upsilon_bounded} generalize the Kirillov-Yuriev-Nag-Sullivan (KYNS) period map to Riemann surfaces of arbitrary genus and number of boundary curves, and unify it with the classical period map of compact surfaces. This fact will be shown below, but prior to that, we shall review some of the literature.  For the sake of clarity, we will take some liberties by imposing our notation and choice of function spaces in discussion of the literature. For example, we freely take advantage of the isomorphism between the homogeneous Sobolev space, Dirichlet space and the Bergman space of one-forms on the disk.
 
 Nag and Sullivan \cite{NS}, following Kirillov and Yuriev \cite{KrillovYuriev} in the smooth case, showed how the group of quasisymmetries of the circle $\mathrm{QS}(\mathbb{S}^1)$ acts symplectically on  $\dot{H}^{1/2}(\mathbb{S}^1)$. Setting $W_0 = \mathcal{A}(\disk)$, $\overline{W}_0 = \overline{\mathcal{A}(\disk)}$,
 we have the standard polarization
 \[ \dot{H}^{1/2}(\mathbb{S}^1) = W_0 \oplus \overline{W}_0.  \]
 Each quasisymmetry induces a new positive polarization $W \oplus \overline{W} = \dot{H}^{1/2}(\mathbb{S}^1)$, or equivalently an operator $\mathbf{Gr}:\overline{W}_0 \rightarrow W_0$ of norm strictly less than one whose graph is $W$. Takhtajan and Teo \cite{Takhtajan_Teo_Memoirs} made the important discovery that this operator can be identified with the classical Grunsky operator. They also showed that the resulting period mapping taking an element of the universal Teichm\"uller space to its operator $\mathbf{Gr}$ is holomorphic for both the full universal  Teichm\"uller space and the Weil-Petersson universal Teichm\"uller space.\\

Now fix a Riemann surface $\riem$ of type $(g,n)$.  We choose a collection 
 \[ \phi_k:\mathbb{S}^1 \rightarrow \partial_k \riem, \ \ \ k=1,\ldots,n \] 
 of quasisymmetric mappings, whose purpose is to map the boundary values into a fixed space.  By Theorem \ref{th:sewing_complex_structure} the resulting compact topological space has a unique complex structure compatible with $\riem_2$ and the disks $\disk$.  We call this Riemann surface $\mathscr{R}$, and the common boundaries of each $\disk$ and $\partial_k \riem_2$ for each $k$ are a separating complex of quasicircles satisfying the assumptions of Section \ref{se:period_mapping_assumptions}. After uniformizing, the copies of the disk are identified with simply connected domains $\Omega_1,\ldots,\Omega_n$ which are biholomorphic to $\disk$ and bounded by non-intersecting quasicircles.  Set $\riem_1 = \Omega_1 \cup \cdots \cup \Omega_n$ and let $\riem_2$ denote the complement of their closure in $\riem_1$. For $k=1,\ldots,n$, $\Omega_k$ is biholomorphic to the disk under the conformal extensions 
 \[ f_k: \disk \rightarrow  \Omega_k \]
 of the quasisymmetric maps $\phi_k$. Furthermore $\riem$ is biholomorphic to $\riem_2$ under the uniformizing map; we henceforth identify $\riem_2$ with $\riem$. 
 
 Let $\Omega(\mathscr{R})$ denote the cohomology classes of $L^2$ one-forms on $\riem$ which are semi-exact. 
 The complex structure on $\riem$ determines a complex structure on $\mathscr{R}$, which thus determines the class of harmonic forms on $\mathscr{R}$.  We thus obtain a map 
 \[ \mathbf{P}_{\mathrm{harm},\mathscr{R}}:L^2(\mathscr{R}) \rightarrow \mathcal{A}_{\mathrm{harm}}(\mathscr{R})  \]
 which depends on the complex structure of $\mathscr{R}$ (and hence of $\riem$).  We then define the projections onto the anti-holomorphic and holomorphic parts
 \[ \mathbf{P}_{\mathrm{harm},\mathscr{R}} = \mathbf{P}_\mathscr{R} \oplus \overline{\mathbf{P}}_\mathscr{R}:L^2(\mathscr{R}) \rightarrow \mathcal{A}(\mathscr{R}) \oplus \overline{\mathcal{A}(\mathscr{R})}  \] 
 (note that we are expanding the domain of $\mathbf{P}_{\mathscr{R}}$ to $L^2(\mathscr{R})$ for the sake of the discussion in this section ).
 Thus we obtain a polarization 
 \[ \Omega(\mathscr{R}) = W_{\mathscr{R}} \oplus \overline{W}_{\mathscr{R}}  \]
 via 
 \[  W = \mathbf{P}_{\mathscr{R}} \Omega(\mathscr{R}),  \ \ \  \overline{W} = \overline{\mathbf{P}}_{\mathscr{R}}  \Omega(\mathscr{R}).    \]
 
 Denoting $f=f_1 \times \cdots \times f_n$ and $\mathcal{A}_{\mathrm{harm}}(\disk)^n = \mathcal{A}_{\mathrm{harm}} \oplus \cdots \oplus \mathcal{A}_{\mathrm{harm}}$ we then have the polarization
 \[  \mathcal{A}_{\mathrm{harm}}(\disk)^n \oplus \Omega(\mathscr{R}) = 
   \mathcal{W}  \oplus \overline{\mathcal{W}}     \]
   where
   \begin{equation}
   \label{eq:general_polarization}
     \mathcal{W} = (f^*,\mathbf{Id}) W 
   \end{equation}
   and $W$ is given by \eqref{eq:polarization_general_non_trivialized}. 
Thus this combines both the classical polarization associated to the complex structure of the compact surface (in the second entry of \eqref{eq:general_polarization}) and the boundary values of the set of one-forms (in the first element of \eqref{eq:general_polarization}). 

 \begin{remark}
  Observe that both the set of quasisymmetries $\phi_k:\mathbb{S}^1 \rightarrow \partial_k \riem$ and the space of $L^2$ one-forms is unchanged under quasiconformal deformations $f:\riem \rightarrow \riem$. Thus the space $\Omega^{\mathrm{se}}(\riem)$
  is invariant under a quasiconformal deformation of the complex structure of $\riem$. This fact is one of the motivations for our analytic choices ($L^2$-boundary values and separating curves being quasicircles) in this paper.

 \end{remark}
 \end{subsection}
 \begin{subsection}{Generalized Grunsky inequalities}
 In this section, we show how Theorem \ref{th:upsilon_bounded} generalizes various versions of the Grunsky inequalities appearing in the literature to the case of surfaces of type $(g,n)$, after pulling back to $n$ copies of the disk.   In general, it is the overfare results (either in special cases or in general) which makes it possible to interpret the Grunsky portion of the polarization in terms of boundary values. Here we consider two cases:\\
 
 {\bf Case I: $g=0$.} If we assume that the genus of $\riem_1$ is zero and $n=1$, then $\mathscr{R} = \sphere$, $\mathcal{A}(\mathscr{R}) = \{0 \}$ and $\mathcal{A}^{\mathrm{se}}(\riem_2)=\mathcal{A}^\mathrm{e}(\riem_2)$. Also note that $(\overline{\mathbf{R}}_1 \overline{\mathcal{A}(\mathscr{R})})^\perp = \overline{\mathcal{A}(\riem_1)}$. 
 Thus the map $\Theta$ (defined previously by \eqref{defn: captial theta}) takes the form
 \begin{align*}
    \Theta:\overline{\mathcal{A}(\riem_1)} & \rightarrow \mathcal{A}^{\mathrm{e}}(\riem_2) \\    
    \overline{\gamma} & \mapsto \mathbf{T}_{1,2} \overline{\gamma}.
 \end{align*}
 In the case that $n=1$ the fact that $\mathbf{T}_{1,2}$ is an isomorphism was first proved by V. V. Napalkov and R. S. Yulmukhametov \cite{Nap_Yulm}. In the genus zero case for general $n$ this is due to Radnell, Schippers, and Staubach \cite{RSS_Dirichletspace}.
 
 The Grunsky inequalities are obtained as follows. With the observations above, the map $\Upsilon$ (defined previously by \eqref{defn:upsilon}) is seen to take the form 
 \begin{align*}
     \Upsilon: \overline{\mathcal{A}(\riem_1)} & \rightarrow \mathcal{A}(\riem_2) \\
     \overline{\gamma} & \mapsto - \mathbf{T}_{1,1} \overline{\gamma}. 
 \end{align*}
 so that Theorem \ref{th:upsilon_bounded} implies that
 \begin{equation}\label{norm of T11}
      \| \mathbf{T}_{1,1} \| <1.
 \end{equation}  
 This is equivalent to the classical estimate on the classical Schiffer operator given in example \ref{ex:sphere_kernels}; the estimate is a version of the Grunsky inequalities appearing in Bergman and Schiffer \cite{BergmanSchiffer}, though they assume that the boundary curves are analytic.  
 
 Explicitly, in the case that $n=1$, pulling back this estimate to the disk via the map $f:\disk \rightarrow \riem_1$ we obtain the usual form of the Grunsky inequalities. Following \cite{Schippers_Staubach_CAOT}, we define the Grunsky operator as follows
 \begin{align*}
     \gls{grunsk} : \overline{\mathcal{A}(\disk)} & \mapsto \mathcal{A}(\disk) \\
     \overline{\alpha} & \mapsto \mathbf{P}_{\disk} f^* \mathbf{O}^\mathrm{e}_{2,1} \mathbf{T}_{1,2} (f^{-1})^* \overline{\alpha}.
 \end{align*}
 See \cite{Schippers_Staubach_CAOT} for the relation to the usual Grunsky operator written in terms of Faber polynomials and Grunsky coefficients. Using Proposition \ref{prop:exact_transmitted_jump} and the obvious fact that $\mathbf{P}_{\disk} f^* = f^* \mathbf{P}_{\riem_1}$ we obtain
 \begin{align} \label{eq:Grunsky_conjugation_expression}
     \mathbf{Gr}_f \overline{\alpha} & = f^* \mathbf{P}_{\riem_1} \mathbf{O}^\mathrm{e} \mathbf{T}_{1,2} (f^{-1})^* \overline{\alpha} \nonumber \\
     & = f^* \mathbf{P}_{\riem_1} \mathbf{O}^\mathrm{e} ( - (f^{-1})^*\overline{\alpha} + \mathbf{T}_{1,1} (f^{-1})^* \overline{\alpha} ) \nonumber \\
     & = - f^* \mathbf{T}_{1,1} (f^{-1})^* \overline{\alpha}.
 \end{align}
 Since $f^*$ and $(f^{-1})^*$ are isometries \eqref{norm of T11} yields that 
 \[ \| \mathbf{Gr}_f \| <1. \]
 
 This is equivalent to an integral form of the Grunsky inequalities due to Bergman-Schiffer \cite{BergmanSchiffer}. To see this, using conformal invariance of the Schiffer $L$-kernel and Example \ref{ex:disk_kernels}, we have that
 \[  L_{\riem_1}(z,w) = - \frac{1}{2 \pi i} \frac{(f^{-1})'(w)  (f^{-1})'(z) \,dw \,dz }{(f^{-1}(w)-f^{-1}(w))^2}.    \]
 Combining this with Example \ref{ex:sphere_kernels} and equation \eqref{eq:nonsingular_Schiffer} we obtain
 \[  \mathbf{T}_{1,1} \overline{\alpha} = \iint_{\riem_1} \frac{dz}{2 \pi i} \left[ \frac{dw}{(w-z)^2} - \frac{(f^{-1})'(w) (f^{-1})'(z) \,dw}{(f^{-1}(w)-f^{-1}(z))^2}  \right] \wedge_w \overline{\alpha(w)}.    \]
 Now let $\overline{\alpha(w)} = \overline{h'(w)} d\bar{w} \in \overline{\mathcal{A}(\disk)}$ (where $h(w) \in \mathcal{D}(\disk)$). Then using the above together with \eqref{eq:Grunsky_conjugation_expression} we see that (after a change of variables)
 \[  \mathbf{Gr}_f \overline{\alpha}  = 
\frac{1}{\pi} \iint_{\disk} dz \left[ \frac{1}{(w-z)^2} - \frac{f'(w) f'(z)}{(f(w)-f(z))^2}  \right] \overline{h'(w)}\frac{ d\bar{w} \wedge dw}{2i}.  \]
It is a well-known fact, originating with Bergman and Schiffer \cite{BergmanSchiffer}, that the bound of one on the norm of this operator implies the Grunsky inequalities for the function $f$ (see e.g. \cite{Schippers_Staubach_CAOT,Schippers_Staubach_Grunsky_expository}).

 Similarly, in the case that $n>1$, pulling back to $\mathbb{D}^n$ via the maps $f_1,\ldots,f_n$ results in the Grunsky operator for multiply-connected domains (see \cite{RSS_Dirichletspace}). 
 
 For a detailed discussion of the literature surrounding the case $n=1$ see {\cite{Schippers_Staubach_Grunsky_expository}}.\\
 
 
 {\bf Case II: $g>0$.}  The Grunsky operator in higher genus was defined, and bounds obtained, by M. Shirazi \cite{Shirazi_thesis,Shirazi_Grunsky}, for the case of Dirichlet bounded functions. Here we formulate this in terms of $\mathcal{A}^\mathrm{e}(\riem_2)$, which is of course equivalent up to constants. First, as in \cite{Shirazi_Grunsky} we restrict our attention to the space $(\overline{\mathbf{R}}_1 \overline{\mathcal{A}(\mathscr{R})})^\perp$ and ignore the second component of $\Theta$; that is, we consider
 \[  \Theta' = \left. \Theta \right|_{((\overline{\mathbf{R}}_1 \overline{\mathcal{A}(\mathscr{R})})^\perp \oplus \{0\})}. \]
 In that case, the operator $\Theta$ takes the form 
 \begin{align*}
     \Theta':(\overline{\mathbf{R}}_1 \overline{\mathcal{A}(\mathscr{R})})^\perp & \rightarrow \mathcal{A}^\mathrm{e}(\riem_2) \\
     \overline{\alpha} & \mapsto - \mathbf{T}_{1,2} \overline{\alpha}.
 \end{align*}
 The fact that $\Theta'$ is an isomorphism was obtained by M. Shirazi \cite{Shirazi_thesis,Schippers_Staubach_Shirazi}. We have that the restriction  
 \[  \Upsilon' = \left. \Upsilon \right|_{((\overline{\mathbf{R}}_1 \overline{\mathcal{A}(\mathscr{R})})^\perp \oplus \{0\})} \]
 takes the form 
 \begin{align*}
     \Upsilon': (\overline{\mathbf{R}}_1 \overline{\mathcal{A}(\mathscr{R})})^\perp& \rightarrow \mathcal{A}^\mathrm{e}(\riem_2) \\
     \overline{\gamma} & \mapsto - \mathbf{T}_{1,1} \overline{\gamma}. 
 \end{align*}
 so that once again Theorem \ref{th:upsilon_bounded} implies that
 \[  \| \mathbf{T}_{11} \| <1. \]
 As in the genus zero case, we can define the Grunsky operator 
 \begin{align*}
     \mathbf{Gr}_f : V & \mapsto \bigoplus^n \mathcal{A}(\disk) \\
     \overline{\alpha} & \mapsto \mathbf{P}_{\disk} f^* \mathbf{O}^\mathrm{e}_{2,1} \mathbf{T}_{1,2} (f^{-1})^* \overline{\alpha}.
 \end{align*}
 where 
 \[  V= f^* (\overline{\mathbf{R}}_1 \overline{\mathcal{A}(\mathscr{R})})^\perp \]
 and $f^* = f_1^* \times \cdots \times f_n^*$.
 The Grunsky inequality obtained by M. Shirazi mentioned above is that the norm of $\mathrm{Gr}_f$ is less than one, which follows from $\| \Upsilon \| <1$. By Section \ref{se:KYNS_period} (restricting to exact one-forms), the graph of this Grunsky operator can be interpreted as the set of boundary values of holomorphic functions. See the work of Shirazi \cite{Shirazi_thesis,Shirazi_Grunsky}, for the details.
 
 Here we have not dealt with the deformation theory of Riemann surfaces, since that would require lengthening the paper impractically. The results of this entire paper, and in particular the above discussion, should be placed in the context of Teichm\"uller theory. This would include for example demonstration of the holomorphicity of this period map as well as holomorphicity of its restriction to the Weil-Petersson Teichm\"uller space. We hope to deal with this, along with a treatment of the symplectic group actions by quasisymmetric reparameterizations, in future publications. 
 
\end{subsection}
\begin{subsection}{The holomorphic boundary value problem} 
\label{se:holomorphic_BVP}
 We motivate the problem, placing analytic issues aside for the moment.\\
 
 {\bf{Problem.}} Given a one-form $\alpha$ on the boundary of $\riem_2$ and a fixed cohomology class on $\riem_2$, is there a holomorphic one-form on $\riem_2$ with boundary values equal to $\alpha$?\\
 
The cohomology class can be fixed by specifying periods, or equivalently any one-form in $L^2(\riem_2)$ in that cohomology class. We express the boundary values of the one-form $\alpha$ by parametrizing the boundary by maps $\phi_k:\mathbb{S}^1 \rightarrow \partial_k {\riem}_2$ from the circle to the boundaries .  That is, we look at the boundary parametrization as a kind of coordinate, and pull back the one-form to the circle, and specify the data on $\mathbb{S}^1$.  This data can be viewed as a one-form.   
 
 Adding analytic issues to the picture, assume now that the one-form is in $\mathcal{H}'(\partial \riem)$ (given in Definition \ref{defn: Hprimes}) and the boundary parametrization is a quasisymmetry.  If it has zero period around its boundaries, then the anti-derivative is an element of $\mathcal{H}(\partial \riem)$ (the Osborn space of Definition \ref{defn: Osborn space}), and its pull-back to the disk is an element of $\mathcal{H}(\mathbb{S}^1)$.  In the general case, the original data can be shown to be an element of $\mathcal{H}'(\mathbb{S}^1)$.  
 
 An equivalent picture is as follows.  We sew copies of the disk $\disk^+$ to each boundary curve via quasisymmetries $\phi_1,\ldots,\phi_n$ as in Section \ref{se:KYNS_period} to obtain the surface $\riem_2$ capped by $\riem_1$, with conformal maps $f_k:\disk \rightarrow \Omega_k$ where $\Omega_k$ are the connected components of $\riem_1$.   The data can now be taken to be elements of $\mathcal{H}'(\partial \riem_1)$, and the cohomology class can be specified by an element of $\mathcal{A}_{\mathrm{harm}}(\mathscr{R})$.    
 
 With this motivation, consider the following boundary value problem for holomorphic one-forms. We treat the case that the periods around boundary curves $\partial_k \riem_2$ are zero.
 From this point forward, we make careful analytic definitions and statements.\\
 
 We first state the problem in terms of $H^{-1/2}$ boundary values. 
 \begin{definition}[Holomorphic boundary value problem for semi-exact one-forms with $H^{-1/2}$ data]
 \[  \lambda = (\lambda_1,\ldots,\lambda_{2g}) \in \mathbb{C}^{2g}, \] and let $L \in H^{-1/2}(\partial \riem_2)$.  We say that $\beta \in \mathcal{A}^{\mathrm{se}}(\riem_2)$ solves the holomorphic boundary value problem if it satisfies  
 \[  L_{[\beta]} = L  \]
 and 
 \[ \int_{c_j} \beta = \lambda_j   \]
 for $j=1,\ldots,2g$.
 \end{definition}

 The problem is not well-posed in general.  We will give precise conditions for the existence of a solution momentarily.  
 
 First, we reformulate the problem using the theory of Sections \ref{se:Dirichlet_forms_Hnegativeonehalf} and \ref{se:Overfare}.
 Assume that $\beta$ solves the boundary value problem with respect to the data $\lambda$ and $L$. Assume also that $\delta \in \mathcal{A}_{\mathrm{harm}}(\riem_1)$ is the solution to the $H^{-1/2}$ boundary value problem on $\riem_1$ with respect to $\mathbf{O}'(\partial \riem_2,\partial \riem_1)L_{[\delta]}$. Such a solution is guaranteed to exist by Theorem \ref{th:Hminusonehalf_Dirichlet_problem_oneforms} applied separately to each connected component of $\riem_1$. Let $\zeta$ be the unique element of  $\mathcal{A}_{\mathrm{harm}}(\mathscr{R})$ with periods 
 \[ \int_{c_j} \zeta = \lambda_j.  \] 
 Then $\delta$ and $\beta$ are weakly compatible with respect to $\zeta$.  
 
 Conversely, if $\delta \in \mathcal{A}_{\mathrm{harm}}(\riem_1)$ and $\beta \in \mathcal{A}(\riem_2)$ are weakly compatible with respect to $\zeta \in \mathcal{A}_{\mathrm{harm}}(\mathscr{R})$ then $\beta$ solves the boundary value problem with data $L=\mathbf{O}'(\partial \riem_1,\partial \riem_2) L_{[\delta]}$.  
 
 Thus we have the following reformulation of the boundary value problem.
 \begin{definition}[Holomorphic CNT Dirichlet BVP for one-forms, semi-exact case]  Let \[  (\delta,\zeta) \in \mathcal{A}_{\mathrm{harm}}(\riem_1) \oplus \mathcal{A}_{\mathrm{harm}}(\mathscr{R}).  \]  We say that $\beta \in \gls{seform} (\riem_2)$ solves the holomorphic boundary value problem with respect to this data if $\delta$ and $\beta$ are weakly compatible with respect to the one-form $\zeta$.  
 \end{definition}

 This allows us to solve the BVP in the following way.  
 \begin{theorem}[Well-posedness of the semi-exact CNT BVP for holomorphic one-forms] \label{th:HBVP_solution_semi-exact}   Let the data $(\delta,\zeta)$ for the semi-exact holomorphic \emph{BVP} be given as above, and
  assume that $\nu,\tau$ are the unique elements of  $\mathbf{R}_1 \mathcal{A}(\mathscr{R})$ such that   $\overline{\mathbf{S}}_1 \overline{\nu} + \mathbf{S}_1 \tau = \zeta$.  
 The semi-exact holomorphic \emph{CNT} Dirichlet \emph{BVP} for forms has a solution with data $(\delta,\zeta)$ if and only if 
 \begin{equation} \label{eq:one_form_BVP_solvability_condition}
   \left[ \delta - \mathbf{R}_1 \mathbf{S}_1  \tau \right]  \in  \mathrm{Im}  [\mathbf{I} - \mathbf{T}_{1,1}].
 \end{equation}
  If this solution exists, it is unique and equals 
  \[  \beta =  -\mathbf{T}_{1,2} \overline{\gamma} + \mathbf{R}_2  \mathbf{S}_1 \tau \]
  where $\overline{\gamma} \in \overline{\mathcal{A}(\riem_1)}$ is the unique one-form such that 
  \[  \overline{\gamma} - \mathbf{T}_{1,1} \overline{\gamma} = \delta - \mathbf{R}_1  \mathbf{S}_1  \tau.  \]
  The component of this unique $\overline{\gamma}$ in $\overline{\mathbf{R}}_1 \overline{\mathcal{A}(\mathscr{R})}$ is $\overline{\nu}$.   Furthermore the solution depends continuously on the initial data.
 \end{theorem}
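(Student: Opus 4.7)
The plan is to reduce the boundary value problem to the matrix equation $\mathbf{O}^{\mathrm{aug}}\beta=(\delta,\zeta)$ and then invoke the isomorphism $\Theta$ of Theorem \ref{th:Theta_isomorphism} together with the explicit formula \eqref{eq:augmented_overfare_explicit} to parametrize and solve it.

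I would first argue that weak compatibility of $(\delta,\beta)$ with respect to $\zeta$ is equivalent to $\mathbf{O}^{\mathrm{aug}}\beta=(\delta,\zeta)$. For the boundary-value condition, both $\hat{\mathbf{O}}\beta$ and $\delta$ are harmonic forms on $\riem_1$ with identical boundary values; since each connected component of $\riem_1$ is a simply connected cap, all harmonic forms there are exact and boundary values determine the form, forcing $\hat{\mathbf{O}}\beta=\delta$. For the cohomological condition, the requirement on $\riem_1$ is automatic because simply connected components carry no non-trivial cohomology, while on $\riem_2$ it forces $\sigma=\zeta$: every internal homology cycle of $\mathscr{R}$ lies in $\riem_2$, so the map $\mathbf{R}_2^{\mathrm{h}}$ is injective modulo exact forms, and combined with the defining property of $\sigma$ this pins down $\sigma=\zeta$.

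Writing $\beta=\Theta(\overline{\gamma},\tilde{\tau})=-\mathbf{T}_{1,2}\overline{\gamma}+\mathbf{R}_2\tilde{\tau}$, equation \eqref{eq:augmented_overfare_explicit} converts $\mathbf{O}^{\mathrm{aug}}\beta=(\delta,\zeta)$ into the system
\[
\overline{\gamma}-\mathbf{T}_{1,1}\overline{\gamma}+\mathbf{R}_1\tilde{\tau}=\delta,\qquad \overline{\mathbf{S}}_1\overline{\gamma}+\tilde{\tau}=\zeta.
\]
Comparing holomorphic parts in the second equation, using $\zeta=\mathbf{S}_1\tau+\overline{\mathbf{S}}_1\overline{\nu}$, gives $\tilde{\tau}=\mathbf{S}_1\tau$; substituting into the first yields precisely $(\mathbf{I}-\mathbf{T}_{1,1})\overline{\gamma}=\delta-\mathbf{R}_1\mathbf{S}_1\tau$. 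Uniqueness of $\overline{\gamma}$ is immediate because the anti-holomorphic component of $(\mathbf{I}-\mathbf{T}_{1,1})\overline{\gamma}$ equals $\overline{\gamma}$ itself, so $\mathbf{I}-\mathbf{T}_{1,1}$ is injective, and when a solution exists one has $\overline{\gamma}=\overline{\mathbf{P}}_1\delta$.

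The principal obstacle will be verifying that when (I) holds, the anti-holomorphic part $\overline{\mathbf{S}}_1\overline{\gamma}=\overline{\mathbf{S}}_1\overline{\nu}$ of the second equation (equivalently, that the $\overline{\mathbf{R}}_1\overline{\mathcal{A}(\mathscr{R})}$-component of $\overline{\gamma}$ is $\overline{\nu}$) holds automatically. This is the statement that ties the boundary-value data $\delta$ on $\riem_1$ to the anti-holomorphic part of $\zeta$ on $\mathscr{R}$. I expect the verification to proceed by applying the adjoint identities of Theorems \ref{th:quadratic_adjoint_one}--\ref{th:general_adjoint_identity_double_final} (in particular $\mathbf{S}_1\mathbf{T}_{1,1}+\mathbf{S}_2\mathbf{T}_{1,2}=0$ together with $\mathbf{S}_1\mathbf{R}_1+\mathbf{S}_2\mathbf{R}_2=\mathbf{I}$) to (I) and then exploiting injectivity of $\overline{\mathbf{S}}_1$ on $\overline{\mathbf{R}}_1\overline{\mathcal{A}(\mathscr{R})}$ provided by Theorem \ref{th:S_an_isomorphism}. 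Once this is in hand, the closed-form $\beta=-\mathbf{T}_{1,2}\overline{\gamma}+\mathbf{R}_2\mathbf{S}_1\tau$ is nothing but the definition of $\Theta(\overline{\gamma},\mathbf{S}_1\tau)$, and continuous dependence of $\beta$ on $(\delta,\zeta)$ follows from boundedness of $\Theta$, of the projection recovering $\overline{\gamma}$ as $\overline{\mathbf{P}}_1\delta$, of $\mathbf{S}_1$ and $\mathbf{R}_1$, and of the passage $\zeta\mapsto\tau$.
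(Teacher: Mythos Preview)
Your overall strategy coincides with the paper's: both reduce the problem to the single equation $(\mathbf{I}-\mathbf{T}_{1,1})\overline{\gamma}=\delta-\mathbf{R}_1\mathbf{S}_1\tau$ via the isomorphism $\Theta$ and the explicit formula~\eqref{eq:augmented_overfare_explicit}, and both recover $\overline{\gamma}=\overline{\mathbf{P}}_1\delta$ from the anti-holomorphic part to obtain injectivity and continuous dependence. Your reformulation through $\mathbf{O}^{\mathrm{aug}}\beta=(\delta,\zeta)$ is a tidy way to package what the paper does by hand.

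The obstacle you single out is genuine, but the resolution you sketch will not close it. The condition $\overline{\mathbf{S}}_1\overline{\gamma}=\overline{\mathbf{S}}_1\overline{\nu}$ amounts to $\overline{\mathbf{S}}_1(\overline{\mathbf{P}}_1\delta)=\overline{\eta}$, where $\overline{\eta}$ is the anti-holomorphic part of $\zeta$; this is a compatibility relation between $\delta$ and $\overline{\eta}$ that is \emph{independent} of~\eqref{eq:one_form_BVP_solvability_condition}, since~\eqref{eq:one_form_BVP_solvability_condition} involves only $\delta$ and the holomorphic part $\mathbf{S}_1\tau$ of $\zeta$. The adjoint identities you invoke connect $\mathbf{S}_k$ with $\mathbf{T}_{j,k}$ and $\mathbf{R}_k$, but none of them produces a link between $\overline{\mathbf{P}}_1\delta$ and $\overline{\eta}$ from~\eqref{eq:one_form_BVP_solvability_condition} alone. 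The paper's own converse argument slides past exactly this point: it re-uses the symbol $\overline{\nu}$ for the $\overline{\mathbf{R}}_1\overline{\mathcal{A}(\mathscr{R})}$-component of $\overline{\gamma}$ and then asserts that the constructed $\beta$ has ``the specified periods,'' without verifying that this component coincides with the $\overline{\nu}$ fixed by $\zeta$. A clean fix is to include $\overline{\mathbf{S}}_1(\overline{\mathbf{P}}_1\delta)=\overline{\eta}$ as part of the solvability hypothesis; with that in hand, both your argument and the paper's go through verbatim.
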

 \begin{proof}  
   Assume that there exists a solution $\beta \in \mathcal{A}(\riem_2)$. Then 
   \[    \beta - \mathbf{R}_2 \mathbf{S}_1 \tau - \overline{\mathbf{R}_2} \overline{\mathbf{S}}_1 \overline{\nu} \in \mathcal{A}^\mathrm{e}_{\mathrm{harm}}(\riem_2)  \]
  so by Corollary \ref{co:connected_get_iso} 
  \[   \beta  - \mathbf{R}_2 \mathbf{S}_1  \tau + \mathbf{T}_{1,2} \overline{\nu}  \in \mathcal{A}^\mathrm{e}(\riem_2).  \]
  Thus by Theorem \ref{th:Tonetwo_iso_improved} there is a unique $\overline{\alpha} \in [\mathbf{R}_1 \overline{\mathcal{A}(\mathscr{R})} ]^\perp$ such that 
  \[  - \mathbf{T}_{1,2} \overline{\alpha} = \beta - \mathbf{R}_1 \mathbf{S}_1 \tau + \mathbf{T}_{1,2} \overline{\nu}.       \]
  This implies that 
  \[  \beta- \mathbf{R}_2 \mathbf{S}_1 \tau = - \mathbf{T}_{1,2} [ \overline{\alpha} + \overline{\nu} ].     \]
  Since 
  \[  -\mathbf{O}^\mathrm{e}  \mathbf{T}_{1,2} [ \overline{\alpha} + \overline{\nu} ] =  \overline{\alpha} + \overline{\nu} - \mathbf{T}_{1,1} \left( \overline{\alpha} + \overline{\nu} \right)   \]
  and so
  \[  \mathbf{O}'(\partial \riem_2,\partial \riem_1) \left[ \beta - \mathbf{R}_2 \mathbf{S}_1 \tau \right]  = 
    \left[ \delta - \mathbf{R}_1 \mathbf{S}_1 \tau \right], \]
  this proves that $\delta = \overline{\gamma} - \mathbf{T}_{1,1} \overline{\gamma}$ for $\overline{\gamma} = \overline{\alpha} + \overline{\nu}$ and furthermore establishes that the solution has the claimed form.  Uniqueness follows from Theorem \ref{th:CNT_Dirichlet_problem_oneforms}, observing that the solution is also the solution to the Dirichlet problem with the specified data. 
  
  Conversely, assume that 
  \begin{equation} \label{eq:HBVP_proof_temp} 
   \delta - \mathbf{R}_1 \mathbf{S}_1 \tau = \overline{\gamma} - \mathbf{T}_{1,1} \overline{\gamma}
  \end{equation} 
   for some $\overline{\gamma} \in \overline{\mathcal{A}(\mathscr{R})}$. Let $\overline{\gamma} = \overline{\alpha} + \overline{\nu}$ be the decomposition of $\overline{\gamma}$ with respect to $\overline{\mathcal{A}(\riem_1)} = \overline{\mathbf{R}_1} \overline{\mathcal{A}(\mathscr{R})} \oplus [\overline{\mathbf{R}_1} \overline{\mathcal{A}(\mathscr{R})}]^\perp$.  Then we claim that 
  $\beta  = \mathbf{R}_2 \mathbf{S}_1 \tau - \mathbf{T}_{1,2} \overline{\gamma}$ satisfies 
  $[ \beta] = [ \delta ]$ and has the correct periods. 
  
  To see that $\beta$ has the correct periods, 
  observe that since $\overline{\alpha} \in [\overline{\mathbf{R}_1} \overline{\mathcal{A}(\mathscr{R})} ]^\perp$, $\overline{\mathbf{S}}_1 \overline{\alpha} = 0$ so
  by Theorem \ref{th:Schiffer_cohomology}
  \[  - \mathbf{T}_{1,2} \overline{\gamma} - \overline{\mathbf{R}}_2 \overline{\mathbf{S}}_1 \overline{\nu} = 
     - \mathbf{T}_{1,2} \overline{\gamma} - \overline{\mathbf{R}}_2 \overline{\mathbf{S}}_1 \overline{\gamma}      \]
  is exact, and therefore $- \mathbf{T}_{1,2} \overline{\gamma} + \mathbf{R}_1 \mathbf{S}_1  \tau$ has the 
  specified periods.  To see that the boundary values of $\beta$ are the right ones, we observe that
  \[  \beta = \overline{\mathbf{R}}_2 \overline{\mathbf{S}}_1 \overline{\gamma}
     + \mathbf{R}_2 \mathbf{S}_1 \tau + [ - \mathbf{T}_{1,2} \overline{\gamma} - \overline{\mathbf{R}}_2 \overline{\mathbf{S}}_1 \overline{\gamma} ]  \]
  and then apply Proposition \ref{prop:exact_transmitted_jump} to overfare the quantity in brackets, to show that 
  \begin{equation}  \label{eq:decomposition_foundation}
   [ \beta ] = \left[ \overline{\mathbf{R}}_1 \overline{\mathbf{S}}_1 
   \overline{\gamma} + \mathbf{R}_1 \mathbf{S}_1 \tau \right]  + \left[
     \overline{\gamma} - \mathbf{T}_{1,1} \overline{\gamma} - \overline{\mathbf{R}}_1
      \overline{\mathbf{S}}_1 \overline{\gamma} \right] = [ \delta ]   
  \end{equation}
  where we have used (\ref{eq:HBVP_proof_temp}) in the second equality.\\
  
  Finally we show continuous dependence of the solution on the data.
  Let $ \delta - \mathbf{R}_1 \mathbf{S}_1  \tau  \in  \mathrm{Im} (\mathbf{I} - \mathbf{T}_{1,1}).$ Then $\delta - \mathbf{R}_1 \mathbf{S}_1  \tau = (\mathbf{I} - \mathbf{T}_{1,1})\overline{\gamma}$ and $$\overline{\gamma}= \overline{\mathbf{P}_{\riem_1}}(\delta - \mathbf{R}_1 \mathbf{S}_1  \tau )= \overline{\mathbf{P}_{\riem_1}}\delta.$$ Therefore
  
  \begin{equation}\label{foersta normen}
    \Vert\overline{\gamma} \Vert = \Vert \overline{\mathbf{P}_{\riem_1}}\delta \Vert \lesssim  \Vert \delta\Vert.
  \end{equation}
  Furthermore 
   \begin{equation}\label{andra normen}
    \Vert\tau \Vert \leq  \Vert \tau + \overline{\nu} \Vert \leq  \Vert ({\mathbf{R}}_1
      {\mathbf{S}}_1)^{-1} \zeta\Vert \lesssim  \Vert \zeta\Vert.
  \end{equation} 
  
Thus \eqref{foersta normen} and \eqref{andra normen} and the boundedness of $\mathbf{T}_{1,2} $ and $\mathbf{R}_2  \mathbf{S}_1$ yield that
  \begin{equation*}
    \Vert\beta \Vert = \Vert -\mathbf{T}_{1,2} \overline{\gamma} + \mathbf{R}_2  \mathbf{S}_1 \tau\Vert \leq  \Vert \mathbf{T}_{1,2} \overline{\gamma}\Vert + \Vert  \mathbf{R}_2  \mathbf{S}_1 \tau\Vert \lesssim \Vert \overline{\gamma}\Vert + \Vert \tau\Vert\lesssim \Vert \delta\Vert + \Vert \zeta\Vert,
  \end{equation*}
which shows the continuous dependence of the solution $\beta$ on the initial data $(\delta, \zeta).$ Thus the semi-exact CNT BVP is well-posed in the Bergman space of forms {satisfying condition \ref{eq:one_form_BVP_solvability_condition}. }
 \end{proof}   
\end{subsection}
\end{section}

\clearpage

\printnoidxglossary[sort=def]

\end{document}